\mathchardef\mhyphen="2D
\DeclareFontFamily{U}{pxsyc}{}
\DeclareFontShape{U}{pxsyc}{m}{n}{<-> pxsyc}{}
\DeclareFontShape{U}{pxsyc}{bx}{n}{<-> pxbsyc}{}
\DeclareFontShape{U}{pxsyc}{l}{n}{<->ssub * pxsyc/m/n}{}
\DeclareFontShape{U}{pxsyc}{b}{n}{<->ssub * pxsyc/bx/n}{}
\DeclareSymbolFont{symbolsC}{U}{pxsyc}{m}{n}
\DeclareMathSymbol{\Perp}{\mathbin}{symbolsC}{121}
\DeclareFontFamily{U}{mathb}{\hyphenchar\font45}
\DeclareFontShape{U}{mathb}{m}{n}{
      <5> <6> <7> <8> <9> <10> gen * mathb
      <10.95> mathb10 <12> <14.4> <17.28> <20.74> <24.88> mathb12
      }{}
\DeclareSymbolFont{mathb}{U}{mathb}{m}{n}
\DeclareMathSymbol{\corresponds}{\mathbin}{mathb}{'034}
\DeclareMathSymbol{\operateson}{\mathbin}{mathb}{'374}
\DeclareMathSymbol{\operatedonby}{\mathbin}{mathb}{'375}
\DeclareFontFamily{U}{mathx}{\hyphenchar\font45}
\DeclareFontShape{U}{mathx}{m}{n}{
      <5> <6> <7> <8> <9> <10>
      <10.95> <12> <14.4> <17.28> <20.74> <24.88>
      mathx10
      }{}
\DeclareSymbolFont{mathx}{U}{mathx}{m}{n}
\DeclareMathSymbol{\bigtimes}{1}{mathx}{'221}
\title{Geometric Transversality in Higher Genus Gromov-Witten Theory}
\author{Andreas Gerstenberger}
\date{\today}
\begin{document}

\renewcommand{\H}{\mathbb{H}}
\newcommand{\D}{\mathbb{D}}
\newcommand{\field}{\mathds{k}}
\newcommand{\R}{\mathds{R}}
\newcommand{\C}{\mathds{C}}
\newcommand{\Z}{\mathds{Z}}
\newcommand{\N}{\mathds{N}}
\newcommand{\Q}{\mathds{Q}}
\newcommand{\Ztwo}{\Z_2}
\def\set#1{\{\s@t#1&\}}
\def\s@t#1&#2&{#1\;|\;#2}
\def\infset#1{{\inf\{\s@t#1&\}}}
\def\s@t#1&#2&{#1\;|\;#2}
\def\supset#1{{\sup\{\s@t#1&\}}}
\def\s@t#1&#2&{#1\;|\;#2}
\def\minset#1{{\min\{\s@t#1&\}}}
\def\s@t#1&#2&{#1\;|\;#2}
\def\maxset#1{{\max\{\s@t#1&\}}}
\def\s@t#1&#2&{#1\;|\;#2}
\newcommand{\definedas}{\mathrel{\mathop:}=}
\newcommand{\defines}{=\mathrel{\mathop:}}
\newcommand{\definedequiv}{\mathrel{\mathop:}\Leftrightarrow}

\newcommand{\tensoralg}{\mathrm{T}}
\newcommand{\symmalg}{\mathrm{S}}
\newcommand{\extalg}{\Lambda}
\newcommand{\Pol}{\operatorname{Pol}}
\newcommand{\extmult}{\mathrm{e}}
\newcommand{\insertion}{\iota}
\newcommand{\Sym}{\operatorname{Sym}}

\newcommand{\category}[1]{\text{\textnormal{\textbf{#1}}}}
\newcommand{\catC}{\category{C}}
\newcommand{\kAlg}{\text{\textnormal{$\field$-\textbf{Alg}}}}
\newcommand{\abGrp}{\text{\textnormal{\textbf{ab-Grp}}}}
\newcommand{\Top}{\text{\textnormal{\textbf{Top}}}}
\newcommand{\pchTop}{\text{\textnormal{\textbf{pch-Top}}}}
\newcommand{\mor}[3]{\operatorname{Mor}_{#1}({#2},{#3})}
\newcommand{\Mor}{\operatorname{Mor}}
\renewcommand{\hom}{\operatorname{Hom}}
\newcommand{\Hom}{\operatorname{Hom}}
\newcommand{\Iso}{\operatorname{Iso}}
\newcommand{\Aut}{\operatorname{Aut}}
\newcommand{\End}{\operatorname{End}}
\newcommand{\equ}{\operatorname{Equ}}

\newcommand{\open}{\text{\textnormal{\textbf{Open}}}}
\newcommand{\sheaf}{\mathscr}
\renewcommand{\O}{\mathcal{O}}
\newcommand{\Prshf}[2]{\text{\textnormal{\textbf{Prshf}}}({#1},{#2})}
\newcommand{\Shf}[2]{\text{\textnormal{\textbf{Shf}}}({#1},{#2})}
\newcommand{\homol}[3]{H^{#1}({#2};{#3})}
\newcommand{\stack}{\mathfrak}

\newcommand{\ie}{i.\,e.~}
\newcommand{\st}{s.\,t.~}
\newcommand{\wrt}{w.\,r.\,t.~}
\newcommand{\Wlog}{W.\,l.\,o.\,g.~}
\newcommand{\smallwlog}{w.\,l.\,o.\,g.~}
\newcommand{\eg}{e.\,g.~}
\newcommand{\cf}{cf.~}
\renewcommand{\iff}{iff~}
\newcommand{\etc}{etc.~}
\newcommand{\iid}{i.\,i.\,d.~}
\newcommand{\fs}{f.\,s.~}
\newcommand{\fa}{f.\,a.~}
\newcommand{\zB}{z.\,B.~}
\renewcommand{\dh}{d.\,h.~}
\renewcommand{\ae}{a.\,e.~}

\newcommand{\matrices}{\mathrm{Mat}}
\newcommand{\trace}{\operatorname{Tr}}
\newcommand{\supertrace}{\operatorname{Str}}
\newcommand{\superdet}{\operatorname{Sdet}}
\newcommand{\transpose}[1]{^\mathrm{t}{#1}}
\newcommand{\supertranspose}[1]{^\mathrm{st}{#1}}
\newcommand{\parity}{\mathrm{p}}
\newcommand{\commutator}[2]{[#1,#2]}
\newcommand{\supercommutator}[2]{\commutator{#1}{#2}_\mathrm{s}}
\newcommand{\im}{\operatorname{im}}
\newcommand{\envelopingalg}[1]{\mathcal{U}(#1)}

\newcommand{\Cf}{\mathcal{C}}
\newcommand{\Cffinord}{\mathcal{C}_{\mathrm{fo}}}
\newcommand{\Gammafinord}{\Gamma_{\mathrm{fo}}}
\newcommand{\Omegafinord}{\Omega_{\mathrm{fo}}}
\newcommand{\Xfinord}{\mathfrak{X}_{\mathrm{fo}}}
\newcommand{\DiffOpfinord}{\mathrm{DO}_{\mathrm{fo}}}
\newcommand{\DiffOp}{\mathrm{DO}}
\newcommand{\TDO}{\mathrm{TDO}}
\newcommand{\TDOfinord}{\mathrm{TDO}_{\mathrm{fo}}}
\newcommand{\functionalforms}{\mathcal{F}}
\newcommand{\euler}{\mathfrak{E}}
\newcommand{\eulerlagrange}{\mathfrak{E}}
\newcommand{\interioreuler}{\mathfrak{I}}
\newcommand{\functional}[1]{\mathfrak{#1}}
\newcommand{\functionals}{\mathfrak{Func}}
\newcommand{\contactideal}{\mathcal{C}}
\newcommand{\pdr}{\mathcal{R}}
\renewcommand{\d}{\mathrm{d}}
\newcommand{\Lie}{\mathcal{L}}
\newcommand{\DiffeoLoc}{\mathrm{Diff}_{\mathrm{loc}}}
\newcommand{\Diff}{\mathrm{Diff}}
\newcommand{\laplace}{\triangle}
\newcommand{\dvol}{\mathrm{dvol}}
\newcommand{\vol}{\mathrm{vol}}
\newcommand{\Lagrangian}{\mathcal{L}}
\newcommand{\dbar}{\overline{\partial}}
\newcommand{\ind}{\operatorname{ind}}
\newcommand{\inj}{\mathrm{inj}}
\newcommand{\dist}{\mathrm{dist}}
\newcommand{\coker}{\operatorname{coker}}

\newcommand{\permutations}{\mathcal{S}}
\newcommand{\sign}{\operatorname{sign}}
\newcommand{\GL}{\mathrm{GL}}
\newcommand{\U}{\mathrm{U}}
\newcommand{\SU}{\mathrm{SU}}
\newcommand{\SO}{\mathrm{SO}}

\newcommand{\projRinfty}{\mathrm{p}}
\newcommand{\hor}{\mathrm{Hor}}
\newcommand{\conn}{\mathrm{Conn}}
\newcommand{\der}{\mathrm{Der}}
\newcommand{\graph}{\mathrm{Graph}}
\newcommand{\prolongation}{\operatorname{pr}}
\newcommand{\totalvf}[1]{\operatorname{tot}({#1})}
\newcommand{\evolutionaryvf}[1]{{#1}_{\mathrm{ev}}}
\newcommand{\evol}{\mathrm{Evol}}
\newcommand{\diag}{\mathrm{diag}}
\newcommand{\framebundle}[1]{\mathcal{F}(#1)}
\newcommand{\orthogonalframebundle}[1]{\mathcal{O}(#1)}

\newcommand{\sigmaAlg}[1]{\mathscr{#1}}
\newcommand{\saF}{\sigmaAlg{F}}
\newcommand{\saB}{\sigmaAlg{B}}
\newcommand{\prob}[1]{\mathbb{#1}}
\newcommand{\varProb}[1]{\mathbf{#1}}
\newcommand{\wpo}{w.\,p.\,$1$}
\newcommand{\Fix}{\mathrm{Fix}}
\newcommand{\condexp}[2]{\prob{E}(#1\,|\,#2)}
\newcommand{\T}{\mathbb{T}}
\newcommand{\lstint}{\llbracket}
\newcommand{\rstint}{\rrbracket}
\newcommand{\cadlag}{c\`adl\`ag}
\newcommand{\quadrvar}[1]{\langle #1\rangle}
\newcommand{\pathspace}[1]{\Pi(#1)}
\newcommand{\martingales}{\mathcal{M}}

\newcommand{\inv}{^{-1}}
\newcommand{\id}{\mathrm{id}}
\newcommand{\eval}{\operatorname{ev}}
\newcommand{\ev}{\eval}
\newcommand{\supp}{\operatorname{supp}}
\newenvironment{notyetdone}{{\large\textbf{Not yet done: }}}{}
\newcommand{\interior}[1]{\operatorname{int}(#1)}
\newcommand{\compact}{\mathrm{c}}
\newcommand{\cpct}{\compact}
\newcommand{\bounded}{\mathrm{b}}
\newcommand{\bdd}{\bounded}
\newcommand{\loc}{\mathrm{loc}}
\newcommand{\double}{\mathrm{d}}
\newcommand{\Max}{\mathrm{max}}
\renewcommand{\i}{\mathbf{i}}
\newcommand{\connectedsum}{\operatorname{\#}}
\newcommand{\imaginaryPart}{\operatorname{Im}}
\newcommand{\realPart}{\operatorname{Re}}
\newcommand{\onto}{\twoheadrightarrow}
\newcommand{\into}{\hookrightarrow}
\newcommand{\pr}{\mathrm{pr}}
\newcommand{\Cpr}{\mathrm{Pr}}
\newcommand{\codim}{\operatorname{codim}}
\newcommand{\closure}{\operatorname{cl}}

\newcommand{\moduli}{\mathcal}
\newcommand{\Met}{\mathrm{Met}}
\newcommand{\J}{\mathcal{J}}
\newcommand{\JMet}{\J\text{-}\Met}
\newcommand{\Witt}{\mathsf{W}}
\newcommand{\Vir}{\mathsf{Vir}}
\newcommand{\bra}[1]{\langle #1|}
\newcommand{\ket}[1]{|#1\rangle}
\newcommand{\bracket}[2]{\langle #1|#2\rangle}
\newcommand{\correlator}[1]{\langle #1\rangle}
\newcommand{\Asa}{\mathrm{Asa}}
\newcommand{\Liouv}{\mathrm{Liouv}}
\newcommand{\LiouvilleBdle}{\mathfrak{Lb}}

\newcommand{\SLE}{\mathrm{SLE}}
\newcommand{\Schottky}{_{\mathrm{SD}}}
\newcommand{\origin}{\mathrm{o}}
\newcommand{\infinity}{\infty}
\newcommand{\poles}{\mathrm{p}}
\newcommand{\zeroes}{\mathrm{z}}
\newcommand{\horHeight}{h_{\mathrm{h}}}
\newcommand{\vertHeight}{h_{\mathrm{v}}}
\newcommand{\quadrDiff}{\mathcal{Q}}
\newcommand{\finiteQuadrDiff}{\mathcal{Q}^\mathrm{fin}}

\newcommand{\Sphere}[1]{\mathrm{S}^{#1}}
\newcommand{\RiemCurv}{\mathrm{Rm}}
\newcommand{\SecCurv}{\mathrm{Sec}}
\newcommand{\RicCurv}{\mathrm{Ric}}
\newcommand{\ScalCurv}{\mathrm{Scal}}

\newcommand{\norml}{\|}
\newcommand{\normr}{\|}

\newcommand{\cl}{\operatorname{cl}}


\theoremstyle{plain}
	\newtheorem{theorem}{Theorem}[section]
	\newtheorem{proposition}{Proposition}[section]
	\newtheorem{lemma}{Lemma}[section]
	\newtheorem{corollary}{Corollary}[section]
	\newtheorem{conjecture}{Conjecture}[section]
\theoremstyle{definition}
	\newtheorem{defn}{Definition}[section]
	\newtheorem{construction}{Construction}[section]
	\newtheorem{example}{Example}[section]
	\newtheorem{exercise}{Exercise}[section]
	\newtheorem{convention}{Convention}[section]
\theoremstyle{remark}
	\newtheorem{remark}{Remark}[section]
	\newtheorem{comment}{Commentary}[section]
	\newtheorem*{claim}{Claim}

\maketitle

\selectlanguage{english}

\begin{abstract}
The construction of manifold structures and fundamental classes on the (compactified) moduli spaces appearing in Gromov-Witten theory is a long-standing problem.
Up until recently, most successful approaches involved the imposition of topological constraints like semi-positivity on the underlying symplectic manifold to deal with this situation.
One conceptually very appealing approach that removed most of these restrictions is the approach by K. Cieliebak and K. Mohnke via complex hypersurfaces, \cite{MR2399678}.
In contrast to other approaches using abstract perturbation theory, it has the advantage that the objects to be studied still are spaces of holomorphic maps defined on Riemann surfaces.

This article aims to generalise this from the case of surfaces of genus $0$ dealt with in \cite{MR2399678} to the general case, also using some of the methods from \cite{MR1954264} and symplectic field theory, namely the compactness results from \cite{MR2026549}.
\end{abstract}

\pagenumbering{roman}
\tableofcontents

\pagenumbering{arabic}

\section{Introduction}

A much studied question in contemporary symplectic geometry that was started in \cite{MR809718} and taken further in, among many others, \cite{MR1366548} and \cite{MR1483992} concerns the existence of holomorphic curves.
In its simplest form, this means that given a closed symplectic manifold $(X, \omega)$ and an $\omega$-compatible (or tame) almost complex structure $J$ on $X$, as well as a Riemann surface $(S, j)$ and a homology class $A \in H_2(X)$, does there exist a holomorphic map $u : S \to X$, \ie $J\circ \d u = \d u\circ j$, that represents the homology class $A$ (if $A = 0$, then a trivial answer to this question is provided by the constant maps)?
The usual strategy to answer this question is the following: Find a way to ``count'' holomorphic curves (in homology class $A$) for a set of almost complex structures on $X$ that are dense (at least in a connected neighbourhood of the given $J$) in $\mathcal{J}_\omega(X)$ (the set of $\omega$-compatible almost complex structures on $X$) and in a way that is invariant under deformations of the almost complex structures.
Invariance here means that for a homotopy/deformation $(J_t)_{t\in [0,1]}$, the counts of $J_0$- and of $J_1$-holomorphic curves coincide.
Then by Gromov's compactness theorem, \cf \cite{MR1451624} and the references therein, one can conclude the existence of an, although broken, $J$-holomorphic curve.
The way this question is studied is usually the following: \\
Fix numbers $g, n \in \N_0$ with $2g - 2 + n > 0$. Then (see Definitions \ref{Definition_Riemann_surfaces_I} and \ref{Definition_Riemann_surfaces_II} for the notation used in the following)
\begin{align*}
\mathcal{M}_{g,n}(X, A, J) \definedas \{(S, j, r_\ast, u) \;|\; & (S, j, r_\ast) \text{ smooth marked Riemann surface} \\
& \text{of type $(g,n)$, } u : S \to X \text{ $j$-$J$-holomorphic,} \\
& [u] = A\}/_\sim\text{,}
\end{align*}
where $(S, j, r_\ast, u) \sim (S', j', r'_\ast, u')$ \iff there exists a diffeomorphism \\
$\phi \in \Diff((S, j, r_\ast), (S', j', r'_\ast))$ with $\phi^\ast u' = u$.
This comes with two maps
\begin{align*}
\ev : \mathcal{M}_{g,n}(X, A, J) &\to X^n \\
[(S, r_\ast, j, u)] &\mapsto [u(r_1), \dots, u(r_n)]\text{,}
\intertext{and}
\pi^{\mathcal{M}}_M : \mathcal{M}_{g,n}(X, A, J) &\to M_{g,n} \\
[(S, j, r_\ast, u)] &\mapsto [(S, j, r_\ast)]\text{,}
\end{align*}
where $M_{g,n}$ is the moduli space of smooth marked Riemann surfaces of type $(g,n)$, defined by
\begin{align*}
M_{g,n} \definedas \{(S, j, r_\ast) \;|\; & (S, j, r_\ast) \text{ smooth marked Riemann } \\
& \text{surface of type $(g,n)$}\}/_\sim\text{,}
\end{align*}
where $(S, j, r_\ast) \sim (S', j', r'_\ast)$ \iff $\Diff((S, j, r_\ast), (S', j', r'_\ast)) \neq \emptyset$. \\
``Counting invariant under deformations'' then usually refers to the question of whether, for a dense subset of $J$ in $\mathcal{J}_\omega(X)$, $\mathcal{M}_{g,n}(X, A, J)$ is an oriented manifold of a certain expected dimension that carries a fundamental class \st
\begin{equation}\label{Equation_Fundamental_cycle}
\pi^\mathcal{M}_M\times \ev : \mathcal{M}_{g,n}(X, A, J) \to M_{g,n}\times X^n
\end{equation}
defines a (singular or otherwise) chain and hence homology class in the image.
One asks that for any two such almost complex structures $J_0, J_1$ there exists a deformation $(J_t)_{t\in [0,1]}$ \st $\bigcup_{t\in [0,1]} \mathcal{M}_{g,n}(X, A, J_t)$ defines a cobordism between $\mathcal{M}_{g,n}(X, A, J_0)$ and $\mathcal{M}_{g,n}(X, A, J_1)$ that via $\pi^\mathcal{M}_M \times \ev$ induces a chain equivalence between the chains defined by these two spaces so that the corresponding homology classes coincide.
Assuming that one can construct a well-defined homology class in this way, one would then like to use Poincar{\'e}-duality in the image, in the form of intersection theory in homology, to define numerical invariants.

In this text a variant of the above construction will be carried out, roughly following the course of action from \cite{MR2399678}, where the motivation for the changes needed will be given below.
Before that, I will first state the main results in a simpler form, which can be found with all details added as Theorems \ref{Theorem_Main_Theorem_1} and \ref{Theorem_Main_Theorem_2} in Section \ref{Section_Definition_and_Outline}.
The third main theorem, Theorem \ref{Theorem_Main_Theorem_3}, deals with the dependence of the pseudocycle defined below on the choice of regular nodal family of marked Riemann surfaces (\cf below). \\
The definition of a Gromov-Witten pseudocycle given here a priori depends on a number of data, the first of which is a choice of regular nodal family of marked Riemann surfaces $(\Sigma \to M, R_\ast)$ of type $(g,n)$, which comes with associated regular nodal families of marked Riemann surfaces $(\Sigma^\ell \to M^\ell, R^\ell_\ast, T^\ell_\ast)$ of type $(g, n+\ell)$, \ie with $\ell \geq 0$ additional marked points, see Section \ref{Section_III.1}.
Each of the complex manifolds $M^\ell$ comes with a forgetful map $\pi^\ell : M^\ell \to M$ defined via forgetting the additional $\ell$ marked points and stabilising and contains an open subset $\overset{\circ}{M}{}^\ell$ with complement of codimension at least $2$ \st the fibres of $\Sigma^\ell$ over $\overset{\circ}{M}{}^\ell$ are smooth.
Given a symplectic manifold $(X,\omega)$ with integer symplectic form, \ie $[\omega] \in H^2(X;\Z)$, and an $\omega$-compatible almost complex structure, the second piece of data is that of a Donaldson pair $(Y,J_0)$, see \cite{MR2399678}, \ie an approximately $J_0$-holomorphic (in particular symplectic) hypersurface $Y \subseteq X$ with Poincar{\'e}-dual $\mathrm{PD}(Y) = D[\omega]$, where $D \in \N$ is called the degree of $Y$.
Because no restriction on the genus $g$ of the surfaces under consideration is put, to achieve the necessary transversality results, one has to use Hamiltonian perturbations, which are a special kind of connection on the symplectic fibration $\tilde{X}^\ell \definedas \Sigma^\ell\times X$, see Subsection \ref{Subsection_Hamiltonian_perturbations}.
Given a complex structure $J$ on $X$, each such Hamiltonian perturbation $H$ defines for $b \in M^\ell$ a perturbed (\wrt to the product structure) almost complex structure $J^H_b$ on $\tilde{X}^\ell_b \definedas \Sigma^\ell_b\times X$.
Setting $\tilde{Y}^\ell \definedas \Sigma^\ell \times Y$, there is a notion of $\tilde{Y}^\ell$-compatible Hamiltonian perturbation (\cf Section \ref{Section_Hypersurfaces}) and given such, one can for $b \in M^\ell$ look at $J^H_b$-holomorphic sections $u : \Sigma^\ell_b \to \tilde{X}^\ell_b$.
Given a homology class $A \in H_2(X;\Z)$, one can then define moduli spaces of holomorphic sections representing homology class $A$ and mapping the additional marked points $T^\ell_\ast$ to $\tilde{Y}^\ell$:
\begin{align*}
\hat{\mathcal{M}}(\tilde{X}^\ell, \tilde{Y}^\ell, A, J, H) \definedas \{ u_b : \Sigma^\ell_b \to \tilde{X}^\ell_b \;|\; & \text{$b\in \overset{\circ}{M}{}^\ell$, $u_b$ $J^H_b$-holomorphic}, \\
& u_b(T^\ell_i(b)) \in \tilde{Y}^\ell \;\forall\, i=1,\dots, \ell \\
& [\pr_2\circ u_b] = A \in H_2(X;\Z)\}\text{.}
\end{align*}
This moduli space allows the definition of a Gromov-Witten map (not yet shown to be a well-defined pseudocycle in any way)
\[
\mathrm{gw}^\ell_{\Sigma}(X, Y, A, J, H) : \hat{\mathcal{M}}(\tilde{X}^\ell, \tilde{Y}^\ell, A, J, H) \to M\times X^n\text{,}
\]
defined by mapping $u_b$ to $(\pi^\ell(b), \ev^{R^\ell_\ast}(u_b))$, where $\ev^{R^\ell_\ast}$ is given by evaluation at the first $n$ marked points.
Using this, one can formulate the main results:

\begin{theorem}\label{Theorem_Main_Theorems_Summary}
Let $(X, \omega)$ be a symplectic manifold with integer symplectic form, \ie $[\omega] \in H^2(X;\Z)$.
Let also a homology class $0 \neq A \in H_2(X;\Z)$ be given.
\begin{enumerate}[a)]
  \item Given an $\omega$-compatible almost complex structure $J_0$, there exist the following:
\begin{enumerate}[i)]
  \item an integer $D^\ast = D^\ast(X, \omega, J_0)$;
  \item for every $D\geq D^\ast$ a symplectic hypersurface $Y \subseteq X$, making $(Y, J_0)$ a Donaldson pair of degree $D$;
  \item a well-defined nonempty set of $\omega$-compatible almost complex structures $J$ making $Y$ a complex hypersurface;
  \item setting $\ell \definedas D\omega(A)$ a well-defined nonempty set of Hamiltonian perturbations $H$,
\end{enumerate}
such that $\mathrm{gw}^\ell_{\Sigma}(X, Y, A, J, H)$ is a well-defined pseudocycle in $M\times X^n$ of dimension ($\chi = 2-2g$ is the Euler characteristic, $c_1(A)$ the first Chern class of $X$ evaluated on $A$)
\[
\dim_\C(X)\chi + 2c_1(A) + \dim_\R(M)\text{.}
\]
  \item For choices as above, the rational pseudocycle
\[
\mathrm{gw}_{\Sigma}(X, Y, A, J, H) \definedas \frac{1}{\ell!}\mathrm{gw}^\ell_{\Sigma}(X, Y, A, J, H)
\]
is independent of the choices of $(Y, J_0)$, $J$ and $H$ up to rational cobordism and hence there is a well-defined equivalence class
\[
\mathrm{gw}_\Sigma(X,A)
\]
of rational pseudocycles in $M\times X^n$ of dimension
\[
\dim_\C(X)\chi + 2c_1(A) + \dim_\R(M)\text{.}
\]
\end{enumerate}
\end{theorem}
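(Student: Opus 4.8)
Parts a) and b) are the informal version of the detailed statements given below as Theorems~\ref{Theorem_Main_Theorem_1} and~\ref{Theorem_Main_Theorem_2}, so the task is not to give a self-contained argument but to assemble, in the right order, the transversality and compactness packages developed in the body of the paper. For part a) I would argue in three stages. First, the ambient data: applying Donaldson's asymptotically holomorphic theory in the quantitative form used in \cite{MR2399678} to the integral symplectic manifold $(X,\omega)$ produces a threshold $D^\ast = D^\ast(X,\omega,J_0)$ such that for every $D \geq D^\ast$ there is an approximately $J_0$-holomorphic symplectic hypersurface $Y$ with $\mathrm{PD}(Y) = D[\omega]$, which gives (i) and (ii), while the set of $\omega$-compatible $J$ for which $Y$ is a genuine $J$-complex hypersurface is a nonempty set, giving (iii). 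Second, the analytic set-up and transversality: over the smooth locus $\overset{\circ}{M}{}^\ell \subseteq M^\ell$ one studies $J^H_b$-holomorphic sections $u_b : \Sigma^\ell_b \to \tilde{X}^\ell_b$ with $u_b(T^\ell_i(b)) \in \tilde{Y}^\ell$, and a Sard--Smale argument for the universal moduli space shows that for $(J,H)$ in a set of the second category the vertical linearisation is surjective, so that $\hat{\mathcal{M}}(\tilde{X}^\ell,\tilde{Y}^\ell,A,J,H)$ is a manifold of the advertised dimension --- the fibrewise Cauchy--Riemann index contributing $\dim_\C(X)\chi + 2c_1(A)$, the base contributing $\dim_\R(M)$, and the $2\ell$ incidence conditions cancelling exactly the $2\ell$ extra moduli of the $\ell$ auxiliary points because $\ell = D\omega(A)$ is the homological intersection number of a class-$A$ curve with $Y$. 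Here the Hamiltonian perturbations of the product fibration $\tilde{X}^\ell = \Sigma^\ell \times X$ play the role that domain-dependent almost complex structures play in the genus-$0$ theory: in positive genus the domains carry holomorphic vector fields and are not automatically somewhere injective, so the perturbation must be carried by the total space instead.

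Third, and this is where the real work lies, the compactification: stretching the neck along $Y$ and invoking the symplectic field theory compactness of \cite{MR2026549}, combined with the monotonicity and linking estimates of \cite{MR2399678} and the refinements in \cite{MR1954264}, one shows that in every limit configuration no irreducible component is mapped entirely into $Y$, that each non-ghost component of class $A_i$ meets $\tilde{Y}^\ell$ in at least $D\omega(A_i)$ points counted with multiplicity, and hence that the $\ell = D\omega(A)$ auxiliary points are forced onto the non-ghost part one per expected intersection --- leaving no room for additional bubbling, for a component sinking into $Y$, or for a node of the domain to appear without dropping the image of $\mathrm{gw}^\ell_{\Sigma}$ by real codimension at least $2$, whatever the type of degeneration. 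Feeding ``a manifold of dimension $d$ with boundary of dimension at most $d-2$'' into the pseudocycle criterion, and weighting by $\frac{1}{\ell!}$ to account for the $\permutations_\ell$-action permuting the auxiliary points, then yields that $\frac{1}{\ell!}\mathrm{gw}^\ell_{\Sigma}(X,Y,A,J,H)$ is a rational pseudocycle of the dimension stated.

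For part b) one runs the same machinery in one-parameter families. Given two admissible tuples one first connects them keeping $Y$, and hence $D$ and $\ell$, fixed: a generic path $(J_t,H_t)_{t\in[0,1]}$ makes $\bigcup_{t}\hat{\mathcal{M}}(\tilde{X}^\ell,\tilde{Y}^\ell,A,J_t,H_t)$ a manifold with boundary, and the codimension estimates of part a) show its image under the corresponding family of $\mathrm{gw}^\ell_{\Sigma}$-maps to be a rational pseudocycle cobordism. To change the Donaldson pair one compares hypersurfaces of degrees $D$ and $D'$, either through the isotopy of large-degree Donaldson hypersurfaces or, following \cite{MR2399678}, through a degree-$(D+D')$ configuration; the normalisation by $\frac{1}{\ell!}$ with $\ell = D\omega(A)$ is exactly what makes the two counts agree, since replacing $D$ by $D'$ scales the number of forced intersections and the symmetry factor in the same way, and rational coefficients are in any case unavoidable because of possible multiply covered components. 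Chaining these cobordisms produces the well-defined equivalence class $\mathrm{gw}_\Sigma(X,A)$. The single hardest point is the compactification stage of part a): obtaining transversality in arbitrary genus from Hamiltonian perturbations on $\Sigma^\ell \times X$ alone, \emph{while at the same time} proving the codimension-at-least-$2$ bound on every non-principal stratum of a compactification that now also contains Deligne--Mumford-type degenerations of the domain --- the genus-$0$ arguments of \cite{MR2399678} exploit the absence of domain automorphisms and the simple combinatorics of rational bubble trees, neither of which survives in higher genus, so they must be replaced by the neck-stretching compactness of \cite{MR2026549} and a careful analysis of how the $\ell$ auxiliary points can distribute in the limit.
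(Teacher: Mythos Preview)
Your overall architecture is right --- reduce to Theorems~\ref{Theorem_Main_Theorem_1} and~\ref{Theorem_Main_Theorem_2}, get transversality on the top stratum via Sard--Smale, then prove a codimension-$2$ boundary estimate --- but the compactification step contains a genuine error. You claim that the neck-stretching compactness of \cite{MR2026549} and \cite{MR1954264} shows that ``in every limit configuration no irreducible component is mapped entirely into $Y$''. This is false in positive genus, and it is exactly the difficulty the paper is written to overcome. The argument you sketch is the genus-$0$ one from \cite{MR2399678}: for $J \in \mathcal{J}_\omega(X,Y,E)$ every $J$-sphere in $Y$ is constant, so non-ghost sphere components cannot lie in $Y$ and the $\ell$ auxiliary points must distribute one per intersection. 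In higher genus a positive-genus component of the domain can very well be mapped entirely into $Y$; the paper only shows (Lemma~\ref{Lemma_Reduction_to_vanishing_A}) that for $D$ large and generic $H^Y$ such a component must represent the \emph{zero} class, not that it is absent. A component $S^Y_i$ of genus $g^Y_i$ mapped to $Y$ in class $0$ contributes $\dim_\C(Y)(2-2g^Y_i) = \dim_\C(X)(2-2g^Y_i) + (2g^Y_i - 2)$ to the expected dimension, which for $g^Y_i \geq 2$ is \emph{larger} than the $X$-contribution; moreover the auxiliary marked points landing on $S^Y_i$ no longer cut down the dimension. So your codimension-$2$ estimate simply fails at this level of analysis.

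What the paper actually does (Subsections~\ref{Subsection_Refined_compactness_result}--\ref{Subsection_Putting_it_all_together}) is to use the SFT compactness not to exclude components in $Y$, but to equip each such component with a nonzero \emph{meromorphic section} $\xi_i$ of the pulled-back normal bundle $(u_i)^\ast(V\tilde{Y}^\ell)^{\perp_\omega}$, with simple zeroes at the auxiliary marked points on $S^Y_i$ and poles at the nodes; this requires choosing $J \in \mathcal{J}_{\omega,\mathrm{ni}}(X,Y)$ and $H \in \mathcal{H}_{\mathrm{ni}}(\tilde{X}^\ell,\tilde{Y}^\ell,J)$ so that the normal linearised operator is complex linear (Corollary~\ref{Corollary_Ddbar_complex_linear_II}). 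The moduli of such meromorphic sections, modulo the free $\C^\ast$-action, has index $\chi^Y - 2$ by Riemann--Roch, which exactly cancels the $2g^Y_i - 2$ excess above, while the pole orders at the nodes match the tangency orders of $u^X$ to $Y$ and recover the ``missing'' $\ell^Y$ intersection conditions (Lemma~\ref{Lemma_SFT_Compactness}). Only after this bookkeeping does the boundary have codimension $\geq 2$ (Theorem~\ref{Theorem_Step_4}). Your outline for part~b) is essentially correct in spirit, but note that the comparison of two Donaldson pairs also goes through this refined boundary analysis (Theorem~\ref{Theorem_Main_Theorem_3_Extension_of_1}), now with a decomposition $I = I^X \amalg I^Y \amalg I^{\overline{Y}} \amalg I^Z$, so the gap in part~a) propagates there as well.
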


The above results were part of the author's PhD thesis at LMU Munich.
After the thesis was submitted (on $11^\text{th}$ February, 2013), the preprint \cite{1302.3472} was posted on the arXiv in which the authors independently prove similar results.

Two immediate reasons for the more complicated definition above are that neither is $M_{g,n}$ a manifold nor is it compact, so one cannot expect there to be Poincar{\'e}-duality in singular homology.
This also applies, \eg by taking $X$ to be a point, to $\mathcal{M}_{g,n}(X, A, J)$, since in general only closed oriented manifolds can be expected to carry a fundamental class in singular homology. \\
To fix the second problem, one has to compactify $M_{g,n}$ and $\mathcal{M}_{g,n}(X, A, J)$.
For $M_{g,n}$ this is done via the Deligne-Mumford compactification
\begin{align*}
\overline{M}_{g,n} \definedas \{(S, j, r_\ast, \nu) \;|\; & (S, j, r_\ast, \nu) \text{ stable marked nodal} \\
& \text{Riemann surface of type $(g,n)$}\}/_\sim\text{,}
\end{align*}
where $(S, j, r_\ast, \nu) \sim (S', j', r'_\ast, \nu')$ \iff $\Diff((S, j, r_\ast, \nu'), (S', j', r'_\ast, \nu')) \neq \emptyset$.
The compactification of $\mathcal{M}_{g,n}(X, A, J)$ by Gromov is a more difficult concept that requires some more preparation. But a first step is to define the moduli space of nodal holomorphic curves in $X$,
\begin{align*}
\overline{\mathcal{M}}_{g,n}(X, A, J) \definedas \{(S, j, r_\ast, \nu, u) \;|\; & (S, j, r_\ast, \nu) \text{ stable marked nodal Riemann surface} \\
& \text{of type $(g,n)$, } u : S \to X \text{ $j$-$J$-holomorphic,} \\
& u(n_1) = u(n_2) \;\forall\, \{n_1,n_2\} \in \nu,\; [u] = A\}/_\sim\text{,}
\end{align*}
where $(S, r_\ast, j, \nu, u) \sim (S', r'_\ast, j', \nu', u')$ \iff there exists a diffeomorphism \\
$\phi \in \Diff((S, j, r_\ast, \nu), (S', j', r'_\ast, \nu'))$ with $\phi^\ast u' = u$. \\
Analogously to before there are then also canonical extensions
\begin{align*}
\pi^\mathcal{M}_M : \overline{\mathcal{M}}_{g,n}(X, A, J) &\to \overline{M}_{g,n}
\intertext{and}
\ev : \overline{\mathcal{M}}_{g,n}(X, A, J) &\to X^n\text{.}
\end{align*}
This still leaves the first problem, namely that (for $g > 1$) $\overline{M}_{g,n}$ (as well as $M_{g,n}$) is not a manifold but only a complex orbifold, as is shown in \cite{MR2262197}.
So $\overline{M}_{g,n}$ (as a topological space) can be decomposed in two ways: By signature, \ie by homeomorphism type of the underlying surface, and via the stratification coming from the orbifold structure.
Since the morphisms in the groupoids (from \cite{MR2262197}) defining the orbifold structure are given by ismorphisms of nodal surfaces, which in particular preserve the signature, this stratification is compatible with the decomposition by orbit type.
More explicitely, if as in \cite{MR2262197}, esp.~Definitions 6.2 and 6.4, $(\pi : \Sigma \to M, R_\ast)$ is a universal marked nodal family of type $(g,n)$ and $(M, \Gamma, s, t, e, i, m)$ is the associated groupoid, then $M$ has a stratification by locally closed submanifolds. Here, two points $b, b' \in M$ lie on the same stratum \iff $\Sigma_b$ and $\Sigma_{b'}$ have the same signature (as marked nodal Riemann surfaces).
If an orbit of the groupoid structure on $M$ intersects a stratum of this stratification, then it is completely contained in that stratum.
Although this gives $\overline{M}_{g,n}$ a stratification with a connected top-dimensional stratum and all other strata of codimension at least two, this does not suffice to have Poincar{\'e}-duality in singular homology (examples for this can be found \eg in \cite{MacPherson_1990}).
The standard way, started in \cite{MR717614}, to remedy this is to regard, instead of $\overline{M}_{g,n}$, certain closed complex manifolds $\overline{M}^\lambda$ with maps $\pi^\lambda : \overline{M}^\lambda \to \overline{M}_{g,n}$ that are, in a certain sense, branched coverings (for existence results, see \eg \cite{MR1257324} or \cite{MR1739177} and the references therein).
Since one of the goals in this text is to keep to manifolds and smooth maps, esp.~to the description of $\overline{M}_{g,n}$ provided in \cite{MR2262197}, it is hard to make this precise.
But at least the part $M^\lambda$ of such a manifold $\overline{M}^\lambda$ that maps to $M_{g,n} \subseteq \overline{M}_{g,n}$ has an easy description: Remembering that if $\mathcal{T}_{g,n}$ denotes Teichm{\"u}ller space and $\Gamma_{g,n}$ denotes the mapping class group (both of smooth surfaces of type $(g,n)$), then $M_{g,n} \cong \mathcal{T}_{g,n}/_{\Gamma_{g,n}}$.
If $\Gamma^\lambda \subseteq \Gamma_{g,n}$ is a finite index normal subgroup that operates freely on $\mathcal{T}_{g,n}$, then $M^\lambda \definedas \mathcal{T}_{g,n}/_{\Gamma_{g,n}^\lambda}$ is a smooth manifold on which the finite group $G^\lambda \definedas \Gamma_{g,n}/_{\Gamma_{g,n}^\lambda}$ operates and the canonical projection $M^\lambda = \mathcal{T}_{g,n}/_{\Gamma_{g,n}^\lambda} \to (\mathcal{T}_{g,n}/_{\Gamma_{g,n}^\lambda})/_{G^\lambda} = \mathcal{T}_{g,n}/_{\Gamma_{g,n}} = M_{g,n}$ is an orbifold covering.
Now assume that such a manifold $M = \overline{M}^\lambda$ has been picked and let $\upsilon : M \to \overline{M}_{g,n}$ be the projection. \\
This requires to also modify the definition of $\overline{\mathcal{M}}_{g,n}(X, A, J)$, for there is no a priori reason for the map $\pi^\mathcal{M}_M : \overline{\mathcal{M}}_{g,n}(X, A, J) \to \overline{M}_{g,n}$ to factor through $M$.
Also, since the goal is to define a manifold of maps, it stands to reason to first of all fix the domains on which the maps that are the elements of this manifold are defined.
Since $\overline{M}_{g,n}$ contains equivalence classes of surfaces of different homeomorphism types, one first of all has to define a notion of smooth family of such nodal surfaces.
The notion used in this text is that of a (regular) marked nodal family of Riemann surfaces as in \cite{MR2262197}.
So the goal is not only to have a manifold $M$ as above together with a map $\upsilon : M \to \overline{M}_{g,n}$, but for this map to be defined via a regular marked nodal family of Riemann surfaces $(\pi : \Sigma \to M, R_\ast)$ \ie the map $\upsilon : M \to \overline{M}_{g,n}$ is supposed to map $b \in M$ to the equivalence class of the fibre $\Sigma_b$ of $\Sigma$ over $b$.
Or, in the reverse direction, $(\pi : \Sigma \to M, R_\ast)$ is a smooth choice of a marked nodal Riemann surface in the equivalence class $\upsilon(b)$ for each $b \in M$.
Collecting the basic definitions for and properties of such families is done at the beginning of this thesis in Section \ref{Section_III.1}.
Aside from this, that section also contains two results, Propositions \ref{Proposition_Reordering_marked_points} and \ref{Proposition_Sequence_of_branched_coverings}, that are not found in \cite{MR2262197}, but will be important in the later parts of this text, esp.~in the definition of the Gromov compactification in Section \ref{Section_Compactification}.
Namely first there is a natural operation on a stable marked nodal Riemann surface of type $(g,n+1)$, that forgets the last marked point and stabilises, \ie contracts every component that becomes unstable after removing the last marked point.
This provides a well-defined map
\[
f^{n+1}_{\mathrm{stab}} : \overline{M}_{g,n+1} \to \overline{M}_{g,n}\text{.}
\]
And second, there is an action
\[
\mathcal{S}_n\times\overline{M}_{g,n} \to \overline{M}_{g,n}
\]
of the permutation group $\mathcal{S}_n$ of $\{1, \dots, n\}$ on $\overline{M}_{g,n}$ by permuting the labels of the marked points of a marked nodal Riemann surface.
The question addressed in Propositions \ref{Proposition_Reordering_marked_points} and \ref{Proposition_Sequence_of_branched_coverings} then is, assuming that for every $n$ a marked nodal family $(\pi^n : \Sigma^n \to M^n, R^n_\ast)$ with induced map $\upsilon^n : M^n \to \overline{M}_{g,n}$ as above has been chosen, of whether or not one can lift these maps and actions to smooth ones on the manifolds $M^n$ which are covered by bundle morphisms on the $\Sigma^n$, \ie
\[
\xymatrix{
\Sigma^{n+1} \ar[d]_-{\pi^{n+1}} \ar[r] & \Sigma^n \ar[d]^-{\pi^n} \\
M^{n+1} \ar[d]_-{\upsilon^{n+1}} \ar[r] & M^n \ar[d]^-{\upsilon^{n}} \\
\overline{M}_{g,n+1} \ar[r]^-{f^{n+1}_{\mathrm{stab}}} & \overline{M}_{g,n}
}
\qquad\qquad\qquad
\xymatrix{
\mathcal{S}_n\times\Sigma^{n} \ar[d]_-{\id\times \pi^{n}} \ar[r] & \Sigma^n \ar[d]^-{\pi^n} \\
\mathcal{S}_n\times M^n \ar[d]_-{\id \times \upsilon^n} \ar[r] & M^n \ar[d]^-{\upsilon^n} \\
\mathcal{S}_n\times\overline{M}_{g,n} \ar[r] & \overline{M}_{g,n}
}
\]
This has the additional advantage that along the way the question of existence of the regular marked nodal family of Riemann surfaces $(\pi : \Sigma \to M, R_\ast)$ defining $\upsilon$ is reduced to the case $n=0$ and given such a choice, for all other values of $n$ there is then a natural one.
Also, it gives concrete differential-geometric meaning to the adages that ``the universal curve over $\overline{M}_{g,n}$ is isomorphic to $\overline{M}_{g,n+1}$'' and that adding marked points to a marked nodal Riemann surface kills automorphisms and doesn't add new ones.
Section \ref{Section_III.1} concludes with a remark about the construction of invariants, given the data that has been established so far. \\
The above results are well-known in the setting of algebraic geometry, usually phrased as saying that the above maps are representable morphisms between Deligne-Mumford stacks and that the universal curve over the Deligne-Mumford stack with $n$ marked points is equivalent to Deligne-Mumford stack with $n+1$ marked points.
While it is possible with some care to make this precise also in a (complex) differential-geometric setting, the author feels the language is not really that well-known by most differential geometers, hence the rather explicit formulations in Section \ref{Section_III.1}.

Now given a nodal family of marked Riemann surfaces $(\pi : \Sigma \to M, R_\ast)$, one can for $b\in M$ and a desingularisation $\hat{\iota} : S \to \Sigma_b \subseteq \Sigma$ make the definition
\begin{align*}
\mathcal{M}_b(\Sigma, X, A, J) &\definedas \{ u : \Sigma_b \to X \;|\; \hat{\iota}^\ast u : S \to \hat{X} \text{ is $j$-$J$-holomorphic, } [u] = A\}\text{,} \\
\mathcal{M}(\Sigma, X, A, J) &\definedas \coprod_{b\in M} \mathcal{M}_b(\Sigma, X, A, J)\text{.}
\end{align*}
The important difference to the definitions from before is that the elements of $\mathcal{M}(\Sigma, X, A, J)$ now are actual maps defined on the fibres of $\Sigma$ and not equivalence classes of maps any more (``all automorphisms have been fixed'').
By definition there are canonical maps
\[
\xymatrix{
\mathcal{M}(\Sigma, X, A, J) \ar[d] \ar[r] & \overline{\mathcal{M}}_{g,n}(X, A, J) \ar[d] \\
M \ar[r] & \overline{M}_{g,n}\text{.}
}
\]
Now that one has an actual set of maps to work with, there is a better chance to equip this set with a manifold structure using the usual methods from the Fredholm-theory of the Cauchy-Riemann operator.

In Section \ref{Section_Construction_smooth_structures} the construction of a smooth structure on $\mathcal{M}(\Sigma, X, A, J)$, or rather a generalisation of that space, is examined.
First of all, remember that on $M$ there is the stratification by signature, where a stratum is defined by the condition that the homeomorphism type of the fibres does not change.
Since general gluing results are quite difficult to prove and outside of the scope of the methods employed in this text, smooth structures will only be defined on the restrictions of the (universal) moduli spaces to these strata.
Over one of these strata the situation then basically can be reduced to the consideration of a smooth fibre bundle $\rho : S \to B$ with typical fibre a fixed smooth surface. Also, a smooth bundle endomorphism $j : VS \to VS$ ($VS$ is the vertical tangent bundle) with $j^2 = -\id$ is given, that turns every fibre $S_b$ into a Riemann surface $(S_b, j_b)$, together with sections $R_i : B \to S$, $i = 1, \dots, n$.
If this bundle is (topologically) trivial, then the construction follows the lines of the discussion in \cite{MR2045629} or \cite{MR2399678} rather closely: For a fixed Riemann surface (\ie the case where $B$ is a point), one constructs the universal Cauchy-Riemann operator \wrt an appropriately chosen Banach manifold of perturbations and hence the universal moduli space just as in these references.
At this point some familiarity with (universal) Cauchy-Riemann operators, and this line of argument via the Sard-Smale theorem is assumed.
Since we allow surfaces of arbitrary genus, this necessitates the use of Hamiltonian perturbations as in Chapter 8 in \cite{MR2045629}.
For the constant maps are always holomorphic, \wrt to any holomorphic structure on the target and it is easy to see that this also holds for domain dependent complex structures as used in \cite{MR2399678}.
But the Fredholm index of the Cauchy-Riemann operator at a constant map in the case of genus greater than 1 is negative, which contradicts transversality.
So instead of the space $\mathcal{M}(\Sigma, X, A, J)$ one considers spaces $\mathcal{M}(\tilde{X}, A, J, H)$, where $\tilde{X} \definedas \Sigma \times X$ is the trivial bundle and $H$ is a Hamiltonian perturbation on $\tilde{X}$ as defined in Subsection \ref{Subsection_Hamiltonian_perturbations} and the references therein. \\
If $B$ is not a point but the bundle $S$ over $B$ is topologically trivial, then the construction of the universal moduli space is essentially a parametrised version of the previous one. \\
In the case of varying complex structures that is not dealt with in \cite{MR2045629} (which only deals with a fixed complex structure and varying marked points and \cite{MR2399678} restricts to the genus $0$ case, where there is essentially only one complex structure) one has to consider the case of a topologically nontrivial family of surfaces.
The problem here is that there no longer is a globally defined Banach manifold on which to define a universal Cauchy-Riemann operator (see the explanation on page \pageref{Page_No_universal_CR_operator} and the references there) due to the failure of the diffeomorphism group of the base to act smoothly on the Sobolev spaces of sections of a fibre bundle over that base.
This requires one to patch together universal moduli spaces obtained via a trivialisation after restricting to an open subset of $B$ ``by hand''.
This is done in the discussion leading up to Corollary \ref{Corollary_Smooth_structure_on_M}.
Similar but slightly less difficult problems also arise for the smoothness of the evaluation maps at the varying marked points, which are dealt with in Subsection \ref{Subsection_Evaluation_maps}.

At that point, what one has achieved is the following:
A universal moduli space $\mathcal{M}(\tilde{X}, A, J, \mathcal{H}(\tilde{X}))$ has been defined that comes together with three maps
\begin{align*}
\pi^{\mathcal{M}}_M : \mathcal{M}(\tilde{X}, A, J, \mathcal{H}(\tilde{X})) &\to M\text{,} \\
\ev : \mathcal{M}(\tilde{X}, A, J, \mathcal{H}(\tilde{X})) &\to X^n
\intertext{and}
\pi^{\mathcal{M}}_{\mathcal{H}} : \mathcal{M}(\tilde{X}, A, J, \mathcal{H}(\tilde{X})) &\to \mathcal{H}(\tilde{X})
\end{align*}
\st if $B \subseteq M$ is a stratum of the stratification on $M$ by signature, then $(\pi^\mathcal{M}_M)\inv(B)$ is a smooth Banach manifold and the restriction of $\pi^\mathcal{M}_\mathcal{H}$ to $(\pi^\mathcal{M}_M)\inv(B)$ is a Fredholm map of the correct expected index $\dim_\C(X)\chi + 2c_1(A) + \dim_\R(B)$, where $\chi$ is the Euler characteristic of the surfaces in the family $\Sigma$ (which is $2(1-g)$). \\
Section \ref{Section_Compactification} then first of all equips this space with a topology that makes all of the above maps continuous, which is basically a variation of the classical Gromov topology. \\
Unfortunately, with this topology $\mathcal{M}(\tilde{X}, A, J, H)$ is not compact, due to the well-known bubbling phenomena.
Usually, these are dealt with by imposing topological conditions like semipositivity on $X$, see \eg \cite{MR2045629}, Section 6.4.
In \cite{MR2399678} a different approach was first introduced for the genus $0$ case and by now also applied succesfully in \cite{MR2563686} and \cite{1202.4685} to other situations, which in this text will be extended to the case of positive genus.
To do so first of all a description of the problem is given:
Remember that there were the operations of forgetting the last marked point and stabilising and permuting the marked points on the Deligne-Mumford moduli spaces $\overline{M}_{g,n}$.
These lift to maps and actions, for $\tilde{\ell} \geq \ell$,
\[
\xymatrix{
\Sigma^{\tilde{\ell}} \ar[d]_-{\pi^{\tilde{\ell}}} \ar[r]^-{\hat{\pi}^{\ell}_\ell} & \Sigma^{\ell} \ar[d]^-{\pi^{\ell}} \\
M^{\tilde{\ell}} \ar[r]^-{\pi^{\tilde{\ell}}_\ell} & M^{\ell}
}
\qquad\qquad\qquad
\xymatrix{
\mathcal{S}_\ell\times \Sigma^\ell \ar[d]_-{\id\times \pi^\ell} \ar[r]^-{\hat{\sigma}^\ell} & \Sigma^\ell \ar[d]^-{\pi^\ell} \\
\mathcal{S}_\ell\times M^\ell \ar[r]^-{\sigma^\ell} & M^\ell\text{,}
}
\]
where $\pi^\ell : \Sigma^\ell \to M^\ell$ is obtained from $\pi : \Sigma \to M$ by adding $\ell \geq 0$ additional marked points.
There are then induced maps
\[
(\hat{\pi}^{\tilde{\ell}}_\ell)^\ast : (\pi^{\tilde{\ell}}_\ell)^\ast \mathcal{M}((\hat{\pi}^\ell_0)^\ast \tilde{X}, A, J, (\hat{\pi}^\ell_0)^\ast \mathcal{H}(\tilde{X})) \to \mathcal{M}((\hat{\pi}^{\tilde{\ell}}_0)^\ast\tilde{X}, A, J, (\hat{\pi}^{\tilde{\ell}}_0)^\ast \mathcal{H}(\tilde{X}))
\]
and actions
\[
\tilde{\sigma}^\ell : \mathcal{S}_\ell \times \mathcal{M}((\hat{\pi}^\ell_0)^\ast\tilde{X}, A, J, (\hat{\pi}^\ell_0)^\ast\mathcal{H}(\tilde{X})) \to \mathcal{M}((\hat{\pi}^\ell_0)^\ast\tilde{X}, A, J, (\hat{\pi}^\ell_0)^\ast\mathcal{H}(\tilde{X}))\text{.}
\]
Using these structures one can define the Gromov compactification $\overline{\mathcal{M}}(\tilde{X}, A, J, \mathcal{H}(\tilde{X}))$ of $\mathcal{M}(\tilde{X}, A, J, \mathcal{H}(\tilde{X}))$ as the colimit of the spaces $\mathcal{M}((\hat{\pi}^\ell_0)^\ast\tilde{X}, A, J, (\hat{\pi}^\ell_0)^\ast\mathcal{H}(\tilde{X}))$ over the above maps and actions (\cf Definition \ref{Definition_Equivalence_Relation_Gromov_Compactness} and Remark \ref{Remark_Gromov_compactification_explicit}).
More explicitely, this compactification consists of the union over all the spaces \\
$\mathcal{M}((\hat{\pi}^\ell_0)^\ast \tilde{X}, A, J, (\hat{\pi}^\ell_0)^\ast \mathcal{H}(\tilde{X}))$ for $\ell \geq 0$, where two holomorphic sections $u'$ and $u''$ with domains $\Sigma^{\ell'}_{b'}$ and $\Sigma^{\ell''}_{b''}$ are identified if there exists the following: An $\tilde{\ell} \geq \ell', \ell''$ and a $b \in M^{\tilde{\ell}}$ as well as a holomorphic section $u$ with domain $\Sigma^{\tilde{\ell}}_b$ \st $\Sigma^{\ell'}_{b'}$ is obtained from $\Sigma^{\tilde{\ell}}_{b}$ by forgetting the last $\tilde{\ell} - \ell'$ marked points and the corresponding map $\Sigma^{\tilde{\ell}}_b \to \Sigma^{\ell'}_{b'}$ pulls $u'$ back to $u$.
Also, after possibly reordering the last $\tilde{\ell}$ marked points, $\Sigma^{\ell''}_{b''}$ is obtained from $\Sigma^{\tilde{\ell}}_{b}$ by forgetting the last $\tilde{\ell} - \ell''$ marked points and the corresponding map $\Sigma^{\tilde{\ell}}_b \to \Sigma^{\ell''}_{b''}$ pulls $u''$ back to $u$. \\
As before, $\overline{\mathcal{M}}(\tilde{X}, A, J, \mathcal{H}(\tilde{X}))$ comes with natural maps $\pi^{\overline{\mathcal{M}}}_M : \overline{\mathcal{M}}(\tilde{X}, A, J, \mathcal{H}(\tilde{X})) \to M$ and $\pi^{\overline{\mathcal{M}}}_\mathcal{H} : \overline{\mathcal{M}}(\tilde{X}, A, J, \mathcal{H}(\tilde{X})) \to \mathcal{H}(\tilde{X})$.
Roughly, the transversality problem then is that the Hamiltonian perturbations $(\hat{\pi}^\ell_0)^\ast H \in (\hat{\pi}^\ell_0)^\ast\mathcal{H}(\tilde{X})$ vanish on ghost components, \ie those components of $\Sigma^\ell$ that are mapped to a point under $\hat{\pi}^\ell_0$ or equivalently those that become unstable after forgetting the last $\ell$ marked points.
The solution to this problem, first applied in the genus $0$ case in \cite{MR2399678} and which will be extended to the present situation in the rest of this text, can now roughly be described as follows: \\
Construct subsets $\mathcal{K}^\ell \subseteq \mathcal{H}((\hat{\pi}^\ell_0)^\ast \tilde{X})$ of Hamiltonian perturbations, compatible under $\hat{\pi}^\ell_{\ell'}$ in the sense that $(\hat{\pi}^\ell_{\ell'})^\ast \mathcal{K}^{\ell'} \subseteq \mathcal{K}^\ell$ for all $\ell\geq \ell'$, and for every $\ell$ sufficiently large a subset $\mathcal{N}^\ell(\mathcal{K}^\ell) \subseteq \mathcal{M}((\hat{\pi}^\ell_0)^\ast \tilde{X}, A, J, \mathcal{K}^\ell)$ with $\pi^\mathcal{M}_M(\mathcal{N}^\ell(\mathcal{K}^\ell)) \subseteq \overset{\circ}{M}{}^\ell$ (the part corresponding to smooth curves, as in Section \ref{Section_III.1}) \st the closure of $\mathcal{N}^\ell(\mathcal{K}^\ell)$ in $\overline{\mathcal{M}}((\hat{\pi}^\ell_0)^\ast \tilde{X}, A, J, \mathcal{K}^\ell)$, lies in $\mathcal{M}((\hat{\pi}^\ell_0)^\ast \tilde{X}, A, J, \mathcal{K}^\ell)$.
Hence the restriction of $\pi^{\mathcal{M}}_{\mathcal{H}}$ to the closure of $\mathcal{N}^\ell(\mathcal{K}^\ell)$ is proper. \\
Since over $\overset{\circ}{M}{}^\ell$, $\hat{\pi}^\ell_0$ is an isomorphism on every fibre, for every $H \in \mathcal{K}^0$ there is a well-defined map $(\hat{\pi}^\ell_0)_\ast : \mathcal{N}^\ell((\hat{\pi}^\ell_0)^\ast H) \to \mathcal{M}(\tilde{X}, A, J, H)$ (the left-hand side is defined in the obvious way) given by $u \mapsto ((\hat{\pi}^\ell_{0,b})^{-1})^\ast u$, where $\pi^\mathcal{M}_M(u) = b$. \\
Then for generic $H \in \mathcal{K}^0$ the above will be \st $\mathcal{N}^\ell((\hat{\pi}^\ell_0)^\ast H)$ is a manifold of the correct dimension, invariant under the $\mathcal{S}_\ell$-action and the map $(\hat{\pi}^\ell_0)_\ast$ is an $\ell!$-sheeted covering on the complement of a subset of codimension at least $2$ (see Lemma \ref{Lemma_ell_factorial_sheeted_covering}).
For Hamiltonian perturbations of this form, apart from compactness, unfortunately not much can be said about the closure of $\mathcal{N}^\ell((\hat{\pi}^\ell_0)^\ast H)$.
But for generic $H \in \mathcal{K}^\ell$ it will be shown that the boundary of $\mathcal{N}^\ell(H)$ can be covered by manifolds of real dimension at least $2$ less than that of $\mathcal{N}^\ell(H)$, which suffices for the definition of a pseudocycle.

Roughly speaking, the $\mathcal{N}^\ell(\mathcal{K}^\ell)$ will be defined as follows: \\
Under the assumption that $[\omega] \in H^2(X;\Z)$, $\mathcal{N}^\ell(\mathcal{K}^\ell)$ and $\mathcal{K}^\ell$ depend on a choice of $J\in \mathcal{J}_\omega(X)$ and a closed $J$-complex submanifold $Y \subseteq X$ of real codimension $2$ with $\operatorname{PD}(Y) = D[\omega]$ for some integer $D\in \N$.
Then for $\ell \definedas D\omega(A)$, let $\tilde{X}^\ell \definedas \Sigma^\ell \times X$, $\tilde{Y}^\ell \definedas \Sigma^\ell \times Y$.
The $\mathcal{K}^\ell$ then are spaces of Hamiltonian perturbations on $\tilde{X}^\ell$ that are compatible with $\tilde{Y}^\ell$ in a certain way, see Definition \ref{Definition_Y_compatible_Hamiltonian_perturbation}.
If $\overset{\circ}{\Sigma}{}^\ell$ and $\overset{\circ}{M}{}^\ell$ denote the parts of $\Sigma^\ell$ and $M^\ell$, respectively, that correspond to the smooth curves, then the $\mathcal{N}^\ell(\mathcal{K}^\ell)$ are defined to be those holomorphic sections with domains in $\overset{\circ}{\Sigma}{}^\ell$ that map the last $\ell$ markings to $\tilde{Y}^\ell$. \\
One then has to show that the thus defined spaces $\mathcal{N}^\ell(\mathcal{K}^\ell)$ satisfy the properties above.
A major point in showing this is the positivity of intersection numbers of a holomorphic curve with a complex hypersurface.
Namely one can show that a (connected) holomorphic curve either has only a finite number of intersection points with a complex hypersurface or is completely contained in the hypersurface.
Furthermore, at each intersection point, the holomorphic curve is tangent to the hypersurface of some finite order $k$ and each such intersection point contributes by $k+1$ to the (homological) intersection number.
That all this still holds in a suitable sense in the presence of a Hamiltonian perturbation that satisfies suitable compatibility conditions is shown in Section \ref{Section_Hypersurfaces}. \\
Since for a holomorphic curve $u$ in the homology class $A$, $[Y]\cdot [u] = [Y]\cdot A = \operatorname{PD}(Y)(A) = D\omega(A) = \ell$, it follows that if there are $\ell$ disjoint intersection points, then these are unique up to reordering.
So for $H \in \mathcal{K}^0$, $\mathcal{N}^\ell((\hat{\pi}^\ell_0)^\ast H)$ defines an $\ell!$-sheeted covering of its image in $\mathcal{M}(X, A, J, H)$.
To show that, after a suitable perturbation, the complement of this image has codimension at least $2$, one has to consider spaces of holomorphic curves that intersect $Y$ in fewer than $\ell$ points. But, as was stated above, these then need to have a tangency of higher order at one of the intersection points.
It was shown in \cite{MR2399678} that these tangency conditions cut out, again after a suitable perturbation, submanifolds of the moduli space of holomorphic curves that have the correct (\ie high enough) codimensions.

Another major point is that, extending a result from the same reference, one can show that for suitably chosen $Y$, $J$ and $H$, $\mathcal{K}^\ell(H)$ has compact closure in $\mathcal{M}(\tilde{X}^\ell, A, J, H)$.
The boundary of $\mathcal{K}^\ell(H)$ in $\mathcal{M}(\tilde{X}^\ell, A, J, H)$ can then be described in terms of nodal holomorphic curves that have some components mapped into $Y$ and some components intersecting the complement of $Y$ in $X$.
Via a transversality argument, one then has to show that the spaces of such curves can be covered by manifolds of codimension at least $2$.
To do so, one first of all shows that, again for suitably chosen $H$, any component that lies in $Y$ needs to represent homology class $0$. \\ 
In the genus $0$ case this suffices, for a result in \cite{MR2399678} shows that one can choose $J$ \st any holomorphic sphere with image in $Y$ is constant (which is used in the proof of the compactness statement above).
This means that one can actually replace each such component with a point, \ie such a curve factors through a nodal curve with fewer components.
It is then shown in \cite{MR2399678} that this implies a tangency condition to $Y$ for this curve which suffices to give the necessary estimates on the dimension. \\
In the case of higher genus curves, this argument does not suffice for the following reason: Assume the domain $S$ of a curve in the boundary of $\mathcal{N}^\ell(H)$ has several components, some of which are mapped to $Y$, denoted by $S_i^Y$, say, and the others, denoted $S_j^X$, intersect $Y$ only in a finte number of points.
Then this curve lies in a moduli space that is the subset, cut out by the matching conditions at the nodes, of the product of the moduli spaces of curves defined on the $S^Y_i$ with target $Y$ and of the moduli spaces of curves defined on the $S^X_j$ with target $X$. \\
The reason one has to regard moduli spaces of curves in $Y$ (naively, a curve in $Y$ is in particular a curve in $X$) is that because of the compatibility condition of the Hamiltonian perturbations with $Y$, one otherwise can't achieve transversality. \\
If the genus of $S^Y_i$ is $g^Y_i$ then the contribution to the dimension formula of the moduli space of curve on $S^Y_i$ in $Y$ by the Riemann-Roch theorem is (for vanishing homology class) given by $\dim_\C(Y)(2-2g^Y_i) = \dim_\C(X)(2-2g^Y_i) + 2g^Y_i-2$, which is larger than that for curves in $X$.
Hence although these moduli spaces then cover the boundary of $\mathcal{N}^\ell(H)$, their dimensions are too large. \\
A further problem is that some of the additional $\ell$ marked point may lie on a component that is mapped to $Y$.
This means that the condition that these marked points lie on $Y$ does not provide for a nontrivial condition on these curves and does not serve to cut down the dimension of the moduli space any more. \\
The solution to this problem is to use an SFT-type compactness theorem, in this text from \cite{MR1954264}, for related results see also \cite{MR2026549}, esp.~the ``stretching of the neck'' construction.
This provides a more detailed description of the boundary of $\mathcal{N}^\ell(H)$.
The important consequence of this result here is that every component that is mapped to $Y$ comes together with a nonvanishing meromorphic section of the normal bundle of $Y$ in $X$ along the image of the curve.
First of all this provides an additional condition on the moduli spaces associated to the parts of a curve that are mapped to $Y$, which serves to cut down the dimension by exactly the factors $2(1-g^Y_i)$ above by which these were too large. \\
Additionally, these meromorphic sections are known to have zeroes only at the nodes and at the additional marked points and to have poles only at the nodes.
Also these satisfy the following matching conditions: If at a node, both components of the curve that border on the node are mapped into $Y$ and the meromorphic section over one has a zero of order $k$, then the other has a pole of order $k$ and vice versa.
If one component is mapped to $Y$ and the other intersects $Y$ only in a finite number of points, then the meromorphic section over the first has a pole of some order $k$ and the other has a tangency to $X$ at the node of order $k$.
Since every component in $Y$ represents homology class $0$, the first Chern number of the pullback of the normal bundle to $Y$ in $X$ under the holomorphic map vanishes.
Hence the total order of the poles equals the total order of the zeroes of a meromorphic section on every component.
The matching conditions above then imply that the total order of tangency to $Y$ of the part of the curve that is not mapped into $Y$ is still given by $\ell$.

\emph{Acknowledgements.}\quad The author wishes to thank K. Cieliebak for his support as advisor during the writing of the thesis this paper grew out of and K. Mohnke for comments that helped in improving the exposition.

The author also would like to thank A. Zinger for pointing out an error in Lemma \ref{Lemma_Existence_Transverse_Hypersurface} in the first version of this article. \\
The statement of that lemma had to be weakened and the proof of Theorem \ref{Theorem_Main_Theorem_3_Extension_of_1} has been modified accordingly.
\section{Families of complex curves}\label{Section_III.1}

When regarding moduli spaces of holomorphic curves in a symplectic manifold, where the complex structure on the domain is not fixed, as \eg in \cite{MR2045629}, Chapter 8, but is allowed to vary, before one can hope to define a smooth structure on such a moduli space, first of all one has to decide on a smooth space over which the complex structure on the domain is allowed to vary.
To a certain extent this is a matter of choice, the constructions later on certainly work for an arbitrary family $\rho : S \to B$, where $B$ is any manifold, $S \to B$ is a smooth fibre bundle and $j \in \Gamma(\End(VS))$ is a smooth family of (almost) complex structures on the vertical tangent bundle $VS = \ker \rho_\ast$ of $S$.
On the other hand, usually one would like to use the ``universal family'' of Riemann surfaces of a given genus $g$ and a given number of marked points $n$, the moduli space $M_{g,n}$ of Riemann surfaces of genus $g$ with $n$ marked points, or to get a compact moduli space, the Deligne-Mumford moduli space $\overline{M}_{g,n}$ of nodal Riemann surfaces.
But unless one is in the genus $0$ case, neither $M_{g,n}$, nor $\overline{M}_{g,n}$ is a smooth manifold (not even a set in certain interpretations), but depending on point of view an orbifold, Deligne-Mumford-stack, etc.
To make a definite choice in notation, without further qualification $\overline{M}_{g,n}$ will always denote the (compact Hausdorff) topological space underlying the Deligne-Mumford orbifold.
Then, at least locally, a function $B \to \overline{M}_{g,n}$ for a manifold $B$ should be given by a family of (nodal) Riemann surfaces of genus $g$ over $B$ together with $n$ sections defining the markings.
Regarding $\overline{M}_{g,n}$ simply as the quotient space of the groupoid with objects all nodal Riemann surfaces of genus $g$ with $n$ marked points and morphisms biholomorphic maps that respect the markings, the map corresponding to a family simply maps a point in $B$ to the equivalence class of the fibre over $b$.
The for the present purpose best way to make the above precise can be found in \cite{MR2262197} and hence all the notions of (proper {\'e}tale) Lie groupoid, (universal, marked) nodal family and related concepts used in this text are exactly the ones from \cite{MR2262197}, Sections 2--6. More explicitely, the following are the basic notions to be dealt with here, all taken from \cite{MR2262197}:
\begin{defn}\label{Definition_Riemann_surfaces_I}
{\ } \\
\vspace{-.7cm}
\begin{enumerate}
  \item A \emph{surface} is a closed oriented $2$-dimensional manifold $S$.
  \item A \emph{nodal surface} is a pair $(S, \nu)$, consisting of a surface $S$ together with a set of unordered pairs
	\[
	  \nu = \{\{n_1^1,n_1^2\},\dots, \{n_d^1,n_d^2\}\}
	\]
   of pairwise distinct points, called the \emph{nodal points}, $n^1_1, \dots, n^2_d \in S$. The points $n_i^1$ and $n_i^2$ defining one of the unordered pairs in $\nu$ will be said to correspond to the same node. Note that $S$ in this definition is still a smooth surface. \\
   A surface $S$ is considered as the nodal surface $(S, \emptyset)$.
  \item A \emph{marked nodal surface} is a triple $(S, r_\ast, \nu)$, where $(S, \nu)$ is a nodal surface and
	\[
	  r_\ast = (r_1, \dots, r_n)
	\]
 	is an ordered tuple of pairwise distinct points on $S$, called the \emph{marked points}, that are disjoint from all the nodal points. \\
	The marked and nodal points are also called \emph{special points}. \\
   A nodal surface $(S, \nu)$ is considered as the nodal surface $(S, \emptyset, \nu)$.
  \item The \emph{signature} of a marked nodal surface $(S, r_\ast, \nu)$ is the labelled graph with vertices $\{S_i\}_{i\in I}$ the connected components of $S$ and for every pair of nodal points $n_j^1, n_j^2$ corresponding to the same node an edge from $S_{i_1}$ to $S_{i_2}$, where $n_j^1 \in S_{i_1}$ and $n_j^2 \in S_{i_2}$. Each vertex $S_i$ is labelled by the genus $g_i$ of $S_i$ and the subset $\{r_j \in \{r_1, \dots, r_n\} \;|\; r_j \in S_i\}$.
  \item The \emph{Euler characteristic} $\chi(S, \nu)$ of a nodal surface $(S, \nu)$ is defined as the Euler characteristic of the smooth surface obtained by removing disk neighbourhoods of each pair of nodal points corresponding to the same node and gluing the resulting boundary components by an orientation reversing diffeomorphism. If that same smooth surface is connected, then $(S, \nu)$ is called connected.
  \item A marked nodal surface $(S, r_\ast, \nu)$ is said to be of type $(g,n)$, where $g,n\in\N_0$, if $(S, \nu)$ is connected, $\chi(S, \nu) = 2(1-g)$ and $r_\ast = (r_1, \dots, r_n)$. \\
Its signature is then also said to be of type $(g,n)$.
  \item An \emph{isomorphism of marked nodal surfaces} $(S, r_\ast, \nu)$ and $(S', r'_\ast, \nu')$ is an orientation preserving diffeomorphism $\phi : S \to S'$ \st $\phi(r_\ast) = r'_\ast$ and $\phi_\ast \nu = \nu'$ in the sense that if $r_\ast = (r_1, \dots, r_n)$, then $r'_\ast = (\phi(r_1), \dots, \phi(r_n))$ and $\phi$ maps each pair of nodal points on $S$ correponding to the same node to a pair of nodal points on $S'$ corresponding to the same node. \\
  An automorphism of $(S, r_\ast, \nu)$ is an isomorphism from this marked nodal surface to itself. \\
The sets consisting of these will be denoted by $\Diff((S, r_\ast, \nu), (S', r'_\ast, \nu'))$ and $\Aut(S, r_\ast, \nu)$ (which is a group), respectively.
\end{enumerate}
\end{defn}

\begin{remark}
\begin{enumerate}
  \item Two marked nodal surfaces are isomorphic \iff their signatures are isomorphic as labelled graphs.
  \item If the number of pairs of nodal points of a marked nodal surface $(S, r_\ast, \nu)$ is $d \in \N_0$ and $\{S_i\}_{i\in I}$ are the connected components of $S$, then $\chi(S, \nu) = \sum_{i\in I} \chi(S_i) - 2d = \sum_{i\in I} 2(1-g_i) - 2d$, where $g_i$ is the genus of $S_i$.
\end{enumerate}
\end{remark}

\begin{defn}\label{Definition_Riemann_surfaces_II}
\begin{enumerate}
  \item A \emph{marked nodal Riemann surface} is a tuple $(S, j, r_\ast, \nu)$ consisting of a marked nodal surface $(S, r_\ast, \nu)$ together with a complex structure $j \in \Gamma(\End(TS))$, $j^2 = -\id$, that induces the given orientation on $S$.
  \item An \emph{isomorphism of marked nodal Riemann surfaces} $(S, j, r_\ast, \nu)$ and \\ $(S', j', r'_\ast, \nu')$ is an isomorphism $\phi$ of the marked nodal surfaces $(S, r_\ast, \nu)$ and $(S', r'_\ast, \nu')$ \st $\phi_\ast j = j'$. The set of these will be denoted
\[
\Diff((S, j, r_\ast, \nu), (S', j', r'_\ast, \nu))\text{.}
\]
  An automorphism of $(S, j, r_\ast, \nu)$ is an isomorphism of this marked nodal Riemann surface to itself. The group of automorphisms of $(S, j, r_\ast, \nu)$ will be denoted by $\Aut(S, j, r_\ast, \nu)$.
  \item A marked nodal Riemann surface is called \emph{stable}, if $\Aut(S, j, r_\ast, \nu)$ is finite. This is the case \iff every component of $S$ of genus zero contains at least three special points and every component of $S$ of genus one contains at least one special point. \\
The signature of a stable marked nodal Riemann surface is called a \emph{stable signature}.
  \item For $g,n \in \N_0$ with $n > 2(1-g)$, as a set, the \emph{Deligne-Mumford moduli space (of type $(g,n)$)} $\overline{M}_{g,n}$ is the set of isomorphism classes of stable marked nodal Riemann surfaces of type $(g,n)$. 
\end{enumerate}
\end{defn}

\begin{remark}
That $\overline{M}_{g,n}$ indeed is a set is shown by picking, for every isomorphism class of stable signature of type $(g,n)$, a marked nodal surface of this signature. There are only finitely many choices of ismorphism classes of stable signatures of fixed type. For each such choice one then considers ismorphism classes of complex structures on a fixed surface, which, as sections of a bundle, form a set.
\end{remark}

The above only defines $\overline{M}_{g,n}$ as a set, so next a description of the smooth (or holomorphic) structure is required.
One way to define such a structure is by describing holomorphic functions from complex manifolds into $\overline{M}_{g,n}$.
Because $\overline{M}_{g,n}$ is supposed to serve as a kind of moduli space for marked nodal Riemann surfaces, a holomorphic map into $\overline{M}_{g,n}$ should correspond to holomorphic families of marked nodal Riemann surfaces, where by family of marked nodal Riemann surfaces, the following is meant:
\begin{defn}\label{Definition_Marked_nodal_family}
\begin{enumerate}
  \item \label{Definition_Marked_nodal_family_1} A \emph{marked nodal family of Riemann surfaces} is a pair $(\pi : \Sigma \to B, R_\ast)$, where $\Sigma$ and $B$ are complex manifolds with $\dim_\C(\Sigma) = \dim_\C(B) + 1$, $\pi : \Sigma \to B$ is a proper holomorphic map and $R_\ast = (R_1, \dots, R_n)$ is a sequence of pairwise disjoint complex submanifolds of $\Sigma$ \st the following hold: \\
For every $z\in \Sigma$, there exist holomorphic coordinates $(t_0, \dots, t_s)$, $s\definedas \dim_\C(B) = \dim_\C(\Sigma-1)$, around $z$ in $\Sigma$ and $(v_1, \dots, v_s)$ around $\pi(z)$ in $B$, mapping $z$ to $0 \in \C^{s+1}$ and $\pi(z)$ to $0 \in \C^{s}$, respectively, \st in these coordinates, $\pi$ is given by either
\begin{align}
	(t_0, \dots, t_s) &\mapsto (t_1, \dots, t_s)
\intertext{or}
	(t_0, \dots, t_s) &\mapsto (t_0t_1, t_2, \dots, t_s)\text{.}\label{Equation_nodal_coordinates}
\end{align}
In the first case, $p$ is called a \emph{regular point}, in the second case, $p$ is called a \emph{node} of \emph{nodal point}. \\
Furthermore, for each $i = 1, \dots, n$, $\pi|_{R_i} : R_i \to B$ is assumed to be a diffeomorphism.
Each $R_i$ hence defines a section of $\pi : \Sigma \to B$, with which it will usually be identified.
  \item A \emph{desingularisation} of a fibre $(\Sigma_b, R_{\ast, b})$, for $b\in B$ and $R_{\ast, b} \definedas R_\ast \cap \Sigma_b$, of a marked nodal family of Riemann surfaces $(\pi : \Sigma \to B, R_\ast)$ is a marked nodal Riemann surface $(S, j, r_\ast, \nu)$ together with a surjective holomorphic immersion $\hat{\iota} : S \to \Sigma_b \subseteq \Sigma$, that is an embedding from the complement of the nodal points on $S$ onto the complement of the nodes on $\Sigma_b$ and maps every pair of nodal points on $S$ corresponding to the same node to a node on $\Sigma_b$. Furthermore, if $R_\ast = (R_1, \dots, R_n)$, then $r_\ast = (r_1, \dots, r_n)$ and for each $i = 1, \dots, n$, $\hat{\iota}(r_i) = \Sigma_b\cap R_i$.
  \item A \emph{morphism} between marked nodal families of Riemann surfaces $(\pi : \Sigma \to B, R_\ast)$ and $(\pi' : \Sigma' \to B', R'_\ast)$ is a pair of holomorphic maps $\phi : B \to B'$ and $\Phi : \Sigma \to \Sigma'$ \st $\pi' \circ \Phi = \phi \circ \pi : \Sigma \to B'$. Furthermore, for every $b\in B$, if $(S, j, r_\ast, \nu)$ is a marked nodal Riemann surface and $\hat{\iota} : S \to \Sigma_b$ is a desingularisation of the fibre of $\Sigma$ over $b$, then $\Phi \circ \hat{\iota} : S \to \Sigma'_{\phi(b)}$ is a desingularisation of the fibre of $\Sigma'$ over $\phi(b)$.
  \item The \emph{signature} of a fibre $(\Sigma_b, R_{\ast, b})$, for $b\in B$, of a marked nodal family of Riemann surfaces $(\pi : \Sigma \to B, R_\ast)$ is the (isomorphism class of the) signature of a desingularisation of $(\Sigma_b, R_{\ast, b})$. \\
$(\Sigma_b, R_{\ast, b})$ is said to be \emph{stable} (\emph{of type $(g,n)$}), if a desingularisation of $(\Sigma_b, R_{\ast, b})$ is stable (of type $(g,n)$). \\
$(\pi : \Sigma \to b, R_\ast)$ is called stable (of type $(g,n)$), if every fibre is stable (of type $(g,n)$).
\end{enumerate}
\end{defn}

The above is well-defined by Lemma 4.3 in \cite{MR2262197}, \ie every fibre of a marked nodal family of Riemann surfaces has a desingularisation and for any two desingularisations of the same fibre, there is a unique isomorphism of the marked nodal Riemann surfaces that commutes with the maps to the fibre. \\
Hence every stable marked nodal family of Riemann surfaces of type $(g,n)$ comes with a well-defined map to $\overline{M}_{g,n}$, mapping a point in the base to the isomorphism class of a marked nodal Riemann surface of a desingularisation of the fibre over the point. The requirement that the maps obtained in this way are smooth then gives a criterion by which one can define a topology on $\overline{M}_{g,n}$, namely the finest one \st all the maps of this form are continuous. This abstract way of defining the topology does not provide a way to deal with the usual questions of topology like the verification of the Hausdorff property, $2^\text{nd}$-countability and compactness.
To deal with these, one singles out a special type of stable marked nodal family that serve as charts for an orbifold structure on $\overline{M}_{g,n}$ and define the topology as well:
\begin{defn}
Let $(S, j, r_\ast, \nu)$ be a stable marked nodal Riemann surface of type $(g,n)$. A \emph{(nodal) unfolding} of $(S, j, r_\ast, \nu)$ is a stable marked nodal family of Riemann surfaces of type $(g,n)$ $(\pi : \Sigma \to B, R_\ast)$ together with a point $b\in B$ and a desingularisation $\hat{\iota} : S \to \Sigma_b \subseteq \Sigma$ of the fibre over $b$. \\
The unfolding is called \emph{universal}, \iff for every other nodal unfolding $(\pi' : \Sigma' \to B', R'_\ast), b'\in B', \hat{\iota}' : S \to \Sigma'_{b'}$, there exists a unique germ of a morphism $(\Phi, \phi) : (\pi : \Sigma \to B, R_\ast) \to (\pi' : \Sigma' \to B', R'_\ast)$ \st $\phi(b) = b'$ and $\Phi \circ \hat{\iota} = \hat{\iota}'$.
\end{defn}

Some of the main theorems from \cite{MR2262197} can now be summed up as follows:
\begin{theorem}
\begin{enumerate}
  \item A marked nodal Riemann surface admits a universal unfolding \iff it is stable.
  \item If $(\pi : \Sigma \to B, R_\ast), b\in B, \hat{\iota} : S \to \Sigma_b$ is a universal nodal unfolding of the marked nodal Riemann surface $(S, j, r_\ast, \nu)$, then there exists a neighbourhood $U \subseteq B$ of $b$ \st it is a universal unfolding of every desingularisation of every fibre $\Sigma_{b'}$ for $b'\in U$.
\end{enumerate}
\end{theorem}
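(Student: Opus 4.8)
The plan is to deduce both parts from the classical deformation theory of (nodal) Riemann surfaces, the guiding principle being that \emph{stability is precisely the condition that there are no infinitesimal automorphisms}, and that this vanishing is exactly what upgrades a versal unfolding to a universal one.

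For the implication ``stable $\Rightarrow$ a universal unfolding exists'' I would first reduce to the smooth case by normalising: given $(S, j, r_\ast, \nu)$ with nodal pairs $\{n^1_k, n^2_k\}$, $k = 1, \dots, d$, pass to the normalisation $\tilde S \to S$, pull back $j$, and promote the $2d$ preimages of the nodal points to extra marked points; the resulting smooth (possibly disconnected) marked Riemann surface with $n + 2d$ marked points is stable in the ordinary sense exactly when $(S, j, r_\ast, \nu)$ is stable. For a smooth stable marked Riemann surface one constructs the universal unfolding by Kodaira--Spencer/Kuranishi theory: deformations are governed by the sheaf $\Theta$ of holomorphic vector fields vanishing at the marked points, stability gives $H^0(\Theta) = 0$ while one-dimensionality of the fibre forces $H^2 = 0$, so deformations are unobstructed, the base is a smooth ball of the expected dimension $h^1(\Theta)$ (summed over components), and $H^0(\Theta) = 0$ makes the Kodaira--Spencer map an isomorphism, \ie the unfolding is effective; concretely one realises this family as a finite-dimensional slice transverse to the $\Diff$-orbit of $j$ in the space of complex structures, via the implicit function theorem for $\dbar$. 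One then re-introduces the $d$ nodes by plumbing: starting from the universal unfolding $(\tilde\Sigma \to \tilde B, \tilde R_\ast)$ of the normalisation, choose fibrewise holomorphic coordinates $x_k, y_k$ vanishing along the two sections coming from the $k$-th nodal pair, pass to $B \definedas \tilde B \times \D^d$ with coordinates $(\tilde b, t_1, \dots, t_d)$, excise neighbourhoods of those sections and glue along $x_k y_k = t_k$; this produces a stable marked nodal family $(\pi : \Sigma \to B, R_\ast)$ of type $(g,n)$ whose fibre over $(\tilde b_0, 0)$ carries a desingularisation $\hat\iota$ onto $(S, j, r_\ast, \nu)$, and a dimension count gives $\dim_\C B = 3g - 3 + n$.

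The force of this construction is its \emph{universality}, and I expect the verification of universality to be the main obstacle, as it is the technical core of \cite{MR2262197}. Given any other unfolding $(\pi' : \Sigma' \to B', R'_\ast)$ at $b'$ with desingularisation $\hat\iota' : S \to \Sigma'_{b'}$, the nodes of the central fibre deform to a divisor in $B'$ and acquire ``gluing parameters'' $t'_1, \dots, t'_d$ holomorphic near $b'$; performing the relative normalisation of $\Sigma'$, invoking the universality of $\tilde\Sigma \to \tilde B$ to classify the normalised family, and recording the $t'_k$ produces the base map to $B = \tilde B \times \D^d$ and, covering it, the bundle map, with the required normalisations. Uniqueness is the delicate point: on the normalisation side it is the rigidity built into $H^0(\Theta) = 0$, and near each node the only apparent freedom is the choice of the plumbing coordinates $x_k, y_k$, which is pinned down by $\Phi \circ \hat\iota' = \hat\iota$ together with a careful implicit-function analysis of holomorphic maps between the local models $\{xy = t\}$. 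For the converse, ``a universal unfolding exists $\Rightarrow$ stable'', I would argue by contraposition: if $\Aut(S, j, r_\ast, \nu)$ is infinite it is a positive-dimensional complex Lie group, so there is a nontrivial holomorphic vector field on $S$ vanishing at all special points and a germ of a one-parameter family $t \mapsto \alpha_t$ of automorphisms with $\alpha_0 = \id$; confronting the trivial unfolding $S \times \D \to \D$ with the supposed universal one then produces a one-parameter family of pairwise distinct germs of morphisms all satisfying the normalisation over $t = 0$, contradicting uniqueness --- infinitesimally, the Kodaira--Spencer map would have a kernel, so no effective family can exist.

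For part (2), note that the two properties making an unfolding universal --- stability of the fibre over the base point, and bijectivity of the Kodaira--Spencer map there --- are both open conditions on the base point. Stability of the fibre is open because a small deformation of a stable marked nodal curve is again stable: the only nearby degenerations are partial smoothings, which merge components and/or raise genera and hence cannot create a genus-$0$ component with fewer than three or a genus-$1$ component with fewer than one special point. Bijectivity of Kodaira--Spencer is open by the openness of completeness (surjectivity of Kodaira--Spencer is an open condition) together with the constancy along the family of $\dim_\C B$ and of the expected dimension $3g - 3 + n$ of the deformation space of $\Sigma_{b'}$ (valid wherever $\Sigma_{b'}$ is stable, so that it has no infinitesimal automorphisms), which forces a surjective Kodaira--Spencer map to be bijective. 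Hence on the open neighbourhood $U$ of $b$ on which $\Sigma_{b'}$ remains stable, $(\pi : \Sigma|_U \to U, R_\ast|_U)$ restricts to a universal unfolding at every $b' \in U$, and therefore, by the very definition of a universal unfolding, it is a universal unfolding of every desingularisation of every fibre $\Sigma_{b'}$, $b' \in U$.
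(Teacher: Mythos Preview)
The paper does not prove this theorem; it is stated explicitly as a summary of results from \cite{MR2262197}, with no argument given. What the paper does supply, immediately afterward, is Construction~\ref{Construction_Universal_nodal_families}, which records the explicit form of the universal unfoldings extracted from the proof of Theorem~5.6 in \cite{MR2262197}: for each smooth component a polydisk parametrising complex structures and marked points, then a product with $\D^d$ of gluing parameters, with the total family assembled by the plumbing identification $x_i y_i = z_i$ near each node.

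Your sketch lines up with that construction and with the strategy of \cite{MR2262197}: normalise to reduce to smooth stable components, build the universal family for each via a slice transverse to the diffeomorphism orbit (using $H^0(\Theta) = 0$ from stability to get effectiveness), then cross with $\D^d$ and plumb along $x_k y_k = t_k$ --- this is exactly part~\ref{Construction_Universal_nodal_families_4} of the construction the paper quotes. You correctly flag that the verification of universality, in particular uniqueness of the classifying morphism near the local models $\{xy = t\}$, is the technical core and defer it to \cite{MR2262197}; the paper does the same. Your contrapositive for the converse is the standard one. For part~(2), your openness argument (stability is open, Kodaira--Spencer bijectivity is open given constant dimensions) is the right heuristic; in the nodal setting the relevant deformation space is handled in \cite{MR2262197} through their notion of infinitesimal universality rather than a bare semicontinuity statement, but since the paper reproduces none of that argument either, your sketch is at the same level of completeness as what the paper itself provides.
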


\begin{defn}
A \emph{local universal marked nodal family of Riemann surfaces of type $(g,n)$} is a stable marked nodal family of Riemann surfaces $(\pi : \Sigma \to B, R_\ast)$ of type $(g,n)$ with the property that for every $b \in B$ and every desingularisation $\hat{\iota} : S \to \Sigma_b$ of $\Sigma_b$ by a stable marked nodal Riemann surface $(S, j, r_\ast, \nu)$ of type $(g,n)$, $(\pi : \Sigma \to B, R_\ast), b, \hat{\iota} : S\to \Sigma_b$ is a universal unfolding of $(S, j, r_\ast, \nu)$. \\
If the canonical map $B \to \overline{M}_{g,n}$ is surjective, then it is called a \emph{universal marked nodal family of Riemann surfaces of type $(g,n)$}.
\end{defn}

A further important result about universal unfoldings, apart from the existence result above and uniqueness result built into the definition is that one can actually give a fairly explicit construction for them.
The relevant results can be found in the proof of Theorem 5.6 in \cite{MR2262197}, which comes in two parts, in Section 8 in the proof of Theorem 8.9 for the case of a marked Riemann surface without nodes and in Section 12 in the presence of nodes:
\begin{construction}\label{Construction_Universal_nodal_families}
\begin{enumerate}
  \item For a marked (nodal) Riemann surface $(S, j, r_\ast, \emptyset)$ of type $(g,n)$ with $S$ connected and $g \geq 2$, one can choose $(\pi : \Sigma \to B, R_\ast), b \in B, \hat{\iota} : S \to \Sigma_b$ in the following way:
\begin{itemize}
  \item $B = \D^{3(g - 1)}\times \D^n\cong \D^{3(g-1)+n}$;
  \item $b = \{0,0\}$;
  \item $\Sigma = B\times S$;
  \item The complex structure on $\Sigma$ is of the form $T_{(b,z)} \Sigma = T_b B \times T_zS \ni (X, \xi) \mapsto (\mathbf{i}X, \hat{j}(b_0) \xi)$, for $b = (b_0, (b_1, \dots, b_n)) \in B = \D^{3(g-1)}\times \D^n$, where $\mathbf{i}$ is the standard complex structure on $D^{3(g-1)}\times \D^n$ and $\hat{j} : \D^{3(g-1)} \to \mathcal{J}(S)$ is a holomorphic map to the set of complex structures on $S$ with $\hat{j}(0) = j$.
  \item The markings are of the form $R_i(b) = (b, \iota_i(b_0, b_i))$, for $b = (b_0, (b_1, \dots, b_n)) \in B = \D^{3(g-1)}\times \D^n$, where $\iota_i(b_0, 0) = r_i$ and for every $b_0 \in \D^{3(g-1)}$, the $\iota_i(b_0, \cdot) : \D \to S$ are $\hat{j}(b_0)$-holomorphic embeddings with pairwise disjoint images.
\end{itemize}
  \item For a marked (nodal) Riemann surface $(S, j, r_\ast, \emptyset)$ of type $(1,n)$ with $S$ connected and $n \geq 1$, one can choose $(\pi : \Sigma \to B, R_\ast), b \in B, \hat{\iota} : S \to \Sigma_b$ in the following way:
\begin{itemize}
  \item $B = \D\times \D^{n-1} \cong \D^{3(g-1)+n}$;
  \item $b = \{0,0\}$;
  \item $\Sigma = B\times S$;
  \item The complex structure on $\Sigma$ is of the form $T_{(b,z)} \Sigma = T_b B \times T_zS \ni (X, \xi) \mapsto (\mathbf{i}X, \hat{j}(b_0) \xi)$, for $b = (b_0, (b_1, \dots, b_{n-1})) \in B = \D\times \D^{n-1}$, where $\mathbf{i}$ is the standard complex structure on $D\times \D^{n-1}$ and $\hat{j} : \D \to \mathcal{J}(S)$ is a holomorphic map to the set of complex structures on $S$ with $\hat{j}(0) = j$.
  \item The markings are of the form $R_1(b) = (b, r_1)$ and for $i = 2, \dots, n$, $R_i(b) = (b, \iota_i(b_0, b_i))$, for $b = (b_0, (b_1, \dots, b_n)) \in B = \D\times \D^{n-1}$, where $\iota_i(b_0, 0) = r_i$ and for every $b_0 \in \D$, the $\iota_i(b_0, \cdot) : \D \to S$ are $\hat{j}(b_0)$-holomorphic embeddings with pairwise disjoint images that do not contain $r_1$ in their closures.
\end{itemize}
  \item For a marked (nodal) Riemann surface $(S, j, r_\ast, \emptyset)$ of type $(0,n)$ with $S$ connected and $n \geq 3$, one can choose $(\pi : \Sigma \to B, R_\ast), b \in B, \hat{\iota} : S \to \Sigma_b$ in the following way:
\begin{itemize}
  \item $B = \D^{n-3} \cong \D^{3(g-1)+n}$;
  \item $b = \{0\}$;
  \item $\Sigma = B\times S$;
  \item The complex structure on $\Sigma$ is the product of the standard complex structure on $\D^{n-3}$ and $j$.
  \item The markings are of the form $R_i(b) = (b, r_i)$ for $i = 1, 2, 3$ and for $i = 4, \dots, n$, $R_i(b) = (b, \iota_i(b_i))$, for $b = (b_1, \dots, b_n) \in B = \D^{n-3}$, where $\iota_i(0) = r_i$ and the $\iota_i : \D \to S$ are $j$-holomorphic embeddings with pairwise disjoint images that do not contain $r_1, r_2, r_3$ in their closures.
\end{itemize}
  \item \label{Construction_Universal_nodal_families_4} In the general case $(S, j, r_\ast, \nu)$, choose a numbering $\nu = \{\{n^1_1,n^2_1\}, \dots, \{n^1_d, n^2_d\}\}$ and consider the marked Riemann surface (without nodes) $(S, j, (r_\ast, n^1_\ast, n^2_\ast), \emptyset)$ where all the nodes have been replaced by marked points.
Denote by $\{S_i\}_{i\in I}$ the connected components of $S$ and by $g_i$ their genera.
Then for every $i\in I$, $(S_i, j|_{S_i}, (r^i_\ast, n^{1,i}_\ast, n^{2,i}_\ast))$ is a marked Riemann surface of one of the types above, where $r^i_\ast$ consists of those $r_j$ with $r_j \in S_i$ and analogously for $n^{1,i}_\ast$ and $n^{2,i}_\ast$.
Let $(\pi_i : \Sigma_i \to B_i, (R^i_\ast, N^{1,i}_\ast, N^{2,i}_\ast)), 0 \in B_i, \hat{\iota}_i : S_i \to \Sigma_{i,0}$ be the corresponding universal unfolding from above.
If $n_i \definedas |r^i_\ast|$, $d^{1,i} \definedas |n^{1,i}|$, $d^{2,i} \definedas |n^{2,i}|$, then $\dim_\C(B_i) = 3(g_i-1) + n_i + d^{1,i} + d^{2,i}$.
Define $B \definedas \left(\bigtimes_{i\in I} B_i\right) \times \D^d$ and $\hat{\Sigma} \definedas \bigsqcup_{i\in I} \pr_i^\ast \Sigma_i$, where $\pr_i : B \to B_i$ is the projection.
$B$ has dimension $\dim_\C(B) = \sum_{i\in I} \dim_\C(B_i) + d = \sum_{i\in I}(3(1-g_i) + n_i + d^{1,i} + d^{2,i}) + d = 3(\sum_{i\in I}(g_i-1) + d) + n = 3(g-1) + n$.
Denote by $\hat{\pi} : \hat{\Sigma} \to B$ the obvious projection.
This comes with markings $\hat{R}_\ast, \hat{N}^1_\ast, \hat{N}^2_\ast$, which are the pullbacks of the markings of the $R^i_\ast, N^{1,i}_\ast, N^{2,i}_\ast$ above.
Also, one can choose disjoint open sets $U_i, V_i \subseteq \hat{\Sigma}$, $i = 1, \dots, d$ that are tubular neighbourhoods of the $\hat{N}^1_\ast, \hat{N}^2_\ast$ that do not meet the $\hat{R}_\ast$ and come with holomorphic functions $x_i : U_i \to \D$ and $y_i : V_i \to \D$ \st $x_i(\hat{N}^1_i) = 0$, $y_i(\hat{N}^2_i) = 0$ and $(\hat{\pi}, x_i)$ and $(\hat{\pi}, y_i)$ are coordinates on $\hat{\Sigma}$.
For each $i = 1, \dots, d$, let $K_i \definedas \{\xi \in U_i \;|\; x_i(\xi) \leq |z_i|, \hat{\pi}(\xi) = (b, z_1, \dots, z_d), z_i \neq 0\}$ and $L_i \definedas \{\xi' \in V_i \;|\; y_i(\xi') \leq |z_i|, \hat{\pi}(\xi') = (b, z_1, \dots, z_d), z_i \neq 0\}$.
Also let $\hat{\Sigma}' \definedas \hat{\Sigma} \setminus \bigcup_{i=1}^d K_i\cup L_i$.
Now define $\Sigma \definedas \hat{\Sigma}'/_\sim$, where the equivalence relation on $\hat{\Sigma}'$ is generated by the following identification, for $\xi \in U_i, \xi' \in V_i$:
\begin{align*}
\xi \sim \xi' \quad \definedequiv \quad & \hat{\pi}(\xi) = \hat{\pi}(\xi') = (b, z_1, \dots, z_d) \\
& \text{and either } x_i(\xi)y_i(\xi') = z_i \neq 0 \\
& \text{or } x_i(\xi) = y_i(\xi') = z_i = 0\text{.}
\end{align*}
The projection $\pi : \Sigma \to B$ is given by $\pi([\xi]) \definedas \hat{\pi}(\xi)$ and the markings are given by the images of the $\hat{R}_\ast$ under the projection onto the quotient. \\
The above differs from the construction in the proof of Theorem 5.6 in \cite{MR2262197} by the removal of the subsets $K_i$ and $L_i$ from $\hat{\Sigma}$. But otherwise it seems to me the map $\pi : \Sigma \to B$ thus constructed does not have as fibres nodal surfaces.
\end{enumerate}
\end{construction}

The existence and explicit construction of the universal unfoldings above is useful for a number of reasons:
\begin{enumerate}
  \item Let $\pi : \Sigma \to B$ be the unfolding of a marked nodal Riemann surface $(S, j, r_\ast, \nu)$ with $d$ nodes from case \ref{Construction_Universal_nodal_families_4}.~above. Then $B$ is of the form $B = B_0\times \D^d$, so has coordinates $(b_0, z_1, \dots, z_d)$ and is stratified by the following locally closed submanifolds: Let $N \subseteq \{1, \dots, d\}$ be a subset.
Then one can look at the subset $B^N \definedas \{(b_0, z_1, \dots, z_d) \in B \;|\; z_i = 0 \text{ for } i\in N\}$.
These are precisely the subsets for which all $\Sigma_b$, $b\in B^N$, have the same signature.
Since the signatures of the fibres are preserved under morphisms of nodal families, these stratifications of the universal unfoldings of all stable marked nodal Riemann surfaces of type $(g,n)$ induces a stratification of $\overline{M}_{g,n}$, called the \emph{stratification by signature}. \\
Also, if $(\pi : \Sigma \to B, R_\ast)$ is any local universal marked nodal family of Riemann surfaces of type $(g,n)$, it also carries an induced stratification by signature.
  \item \label{Consequences_of_univ_unfoldings_2} If $(\pi : \Sigma \to B, R_\ast)$ is a local universal marked nodal family of Riemann surfaces of type $(g,n)$, then over every stratum of the stratification by signature one has the following parametrised version of a desingularisation.
Namely let $b\in B$ and let $(S, j, r_\ast, \nu), \hat{\iota} : S \to \Sigma_b$ be desingularisation of $\Sigma_b$.
Associated to this desingularisation is the universal unfolding from \ref{Construction_Universal_nodal_families_4}.~above, which defines a smooth (trivial) fibre bundle $\hat{\pi} : \hat{\Sigma} \to C$, where $C = C_0\times \D^d$, $d$ being the number of nodes on $\Sigma_b$.
Making $B$ small enough, this comes with a unique pair of maps $\phi : C \to B$ and $\Phi : \hat{\Sigma} \to \Sigma$. If $\hat{\Sigma}/_\sim$ is the quotient that defines the universal unfolding as in \ref{Construction_Universal_nodal_families_4}.~above, then there is a unique morphism $(\Phi', \phi)$ from $\hat{\Sigma}/_\sim$ to $\Sigma$ \st $\phi$ maps $(0,0)\in B_0\times \D^d$ to $b\in B$ and one can define $\Phi$ as the composition of $\Phi'$ with the projection from $\hat{\Sigma}$ to $\hat{\Sigma}/_\sim$.
Then $C' \definedas C_0 \times \{0\} \subseteq C$ is precisely the part of $C$ that gets mapped to the stratum $B'$ of the stratification by signature on $B$ that corresponds to the signature of $(S, j, r_\ast, \nu)$.
Also, the restriction $\hat{S} \definedas \hat{\Sigma}|_{C'}$ is a holomorphic family $\rho : \hat{S} \to C'$ of smooth Riemann surfaces, with a complex structure $\hat{j}$ on $\hat{S}$, that comes with $n$ sections $\hat{R}_\ast$ corresponding to the markings on $S$ and $d$ pairs of section $\hat{N}^1_\ast, \hat{N}^2_\ast$ corresponding to the nodes.
Furthermore, it comes with canonical maps $\iota : C' \to B$ and $\hat{\iota} : \hat{S} \to \Sigma$ that have the property that for every $c\in C'$, $(\hat{S}_c, \hat{j}_c, \hat{R}_{\ast,c}, \{\{\hat{N}^1_{i,c}, \hat{N}^2_{i,c}\}\}_{i=1}^d)$ together with $\hat{\iota}_c : \hat{S} \to \Sigma_{\iota(c)}$ is a desingularisation of $\Sigma_{\iota(c)}$.
By the universal properties of a universal unfolding and local universal marked nodal family, one can do this for every $b \in B'$, and the resulting (trivial) fibre bundles as above patch together to a fibre bundle over $\rho : \hat{S} \to B'$ with fibres smooth Riemann surfaces and that comes with $n$ sections $\hat{R}_\ast$. Furthermore, the $N^1_\ast, N^2_\ast$ define a discrete subbundle $\hat{N} \subseteq \hat{S}$ with structure group $\mathcal{S}(d,2)$ defined to be the subgroup of permutations of a set $(n^1_1, n^2_1, \dots, n^1_d, n^2_d)$, generated by the permutations in the lower indices, $(n^1_1, n^2_1, \dots, n^1_d, n^2_d) \mapsto (n^1_{\sigma(1)}, n^2_{\sigma(1)}, \dots, n^1_{\sigma(d)}, n^2_{\sigma(d)})$ for $\sigma \in \mathcal{S}(d)$ and switching a pair of upper indices, $(n^1_1, n^2_1, \dots, n^1_d, n^2_d) \mapsto (n^1_1, n^2_1, \dots, n^{\tau(1)}_j, n^{\tau(d)}_j, \dots, n^1_d, n^2_d)$ for $\tau \in \mathcal{S}(2)$.
So
\[
(\rho : \hat{S} \to C', \hat{R}_\ast, \hat{N})
\]
is a triple consisting of a smooth fibre bundle with fibre $S$ and structure group $\Aut(S, r_\ast, \nu)$, an $n$-tuple of sections of $\hat{S}$ and a discrete subbundle with fibre a $d$-tuple of pairs of points and structure group $\mathcal{S}(d,2)$.
\begin{defn}\label{Definition_Desingularisation}
A \emph{(parametrised) desingularisation} of a marked nodal family of Riemann surfaces is a tuple $(\rho : \hat{S} \to C', \hat{R}^\ast, \hat{N}, \iota, \hat{\iota})$, where $\rho : \hat{S} \to C'$ is a smooth fibre bundle equipped with a smooth family of complex structures $j$. $\hat{R}_\ast = (\hat{R}_1, \dots, \hat{R}_n)$ is an $n$-tuple of sections of $\rho : \hat{S} \to C'$, $N\subseteq \hat{S}$ is a $\mathcal{S}(d,2)$-subbundle and $\iota : C' \to B$ is an embedding of $C'$ as a locally closed submanifold of $B$. Furthermore, for every $b \in C'$, $(\hat{S}_b, j_b, \hat{R}_\ast(b), N_b), \iota(b), \hat{\iota}_b : \hat{S}_b \to \Sigma_{\iota(b)}$ is a desingularisation in the original sense.
\end{defn}
  \item It allows to single out ``especially nice'' maps to $\overline{M}_{g,n}$ that come from nodal families.
The most desirable case here would be the (local) universal marked nodal families.
Unfortunately, for the definition of invariants, one would like for the base of the (universal) family of marked nodal Riemann surfaces to be compact, which in general is not possible.
The next best kind of maps are the following:
Let $\pi : \Sigma \to B$ be a nodal family, $b\in B$ and let $(S, j, r_\ast, \nu), \kappa : S \to \Sigma_b$ be a desingularisation of $\Sigma_b$.
Associated to $(S, j, r_\ast, \nu)$ is a universal unfolding $(\tilde{\pi} : \tilde{\Sigma} \to \tilde{B}, \tilde{R}_\ast), \tilde{b} = (0,0) \in \tilde{B}, \tilde{\iota} : S \to \tilde{\Sigma}_b$, where $\tilde{B} = \D^{3(g-1) + n - d} \times \D^d$ and $d$ is the number of nodes on $\Sigma_b$.
By the universal property there then exists a neighbourhood $U \subseteq B$ of $b$ and a morphism
\[
\xymatrix{
\Sigma|_U \ar[r]^-{\Phi} \ar[d]_-{\pi} & \tilde{\Sigma} \ar[d]^-{\tilde{\pi}} \\
U \ar[r]^-{\phi} & \tilde{B}\text{.}
}
\]
Choosing $U$ to be a coordinate neighbourhood of $b$, holomorphically equivalent to $\D^{r}$, $r\definedas \dim_\C(B)$, with complex coordinates $(z_1, \dots, z_r)$, $\phi$ is equivalent to a map $\D^r \to \D^{3(g-1) + n - d} \times \D^d$.
A requirement one can then pose on the nodal family $\pi : \Sigma \to B$ is that $\dim_\C(B) = 3(g-1) + n$ and that around every point $b \in B$ one can choose the coordinate system as above \st in these coordinates $\phi$ is given by the map
\begin{align*}
\D^{3(g-1) + n} &\to \D^{3(g-1) + n - d} \times \D^d \\
(z_1, \dots, z_{3(g-1)+n}) &\mapsto ((z_1, \dots, z_{3(g-1)+n-d}), (z_{3(g-1)+n-d+1}^{l_1}, \dots, z_{3(g-1)+n}^{l_d}))
\end{align*}
for some constants $l_1, \dots, l_d \in \N^d$ (depending on $b \in B$), or in other words a branched covering that branches exactly over the strata of the stratification by signature.
\begin{defn}\label{Definition_Orbifold_branched_covering}
A marked nodal family of Riemann surfaces of type $(g,n)$ with the properties above is called an \emph{orbifold branched covering of $\overline{M}_{g,n}$ that branches over the Deligne-Mumford boundary}.
\end{defn}
This implies that on $B$ there also is a well-defined stratification by signature, where each stratum is a locally closed submanifold of complex codimension given by the number of nodes of a surface of that signature (\ie the number of edges of the graph).
If $\phi$, $U$ and $\tilde{B}$ are as above, then the restriction of $\phi$ to every stratum of the stratification by signature on $U$ is a (non-branched) covering of the corresponding stratum on $\tilde{B}$.
Also, one can pull back the parametrised desingularisations from \ref{Consequences_of_univ_unfoldings_2}.~above over the strata on each $\tilde{B}$ to the strata on $B$ to get over each such stratum $B_i$ a parametrised desingularisation $(\rho : \hat{S} \to B_i, \hat{R}_\ast, \hat{N})$.
  \item\label{Forgetting_last_marked_point} Last, one can examine the interactions between universal families of type $(g,n)$, where $g$ is fixed, but for different values of $n$, in these local models.
In the genus $g = 0$ case, it is well known that $\overline{M}_{0,n}$ is a closed complex manifold itself (follows from the results in \cite{MR2262197} because a stable sphere carries no nontrivial automorphisms) and there is a well-defined smooth map $\overline{M}_{0,n+1} \to \overline{M}_{0,n}$ that is defined by forgetting the $(n+1)^{\text{st}}$ marked point and stabilising. Furthermore, this map $\overline{M}_{0,n+1} \to \overline{M}_{0,n}$ defines a universal marked nodal family, see \cite{MR2262197}, Example 6.7. \\
In the higher genus case, the situation is built around the following model: \\
Let $(S, j, r_\ast, \nu)$ and $(\tilde{S}, \tilde{j}, \tilde{r}_\ast, \tilde{\nu})$ be stable marked nodal Riemann surfaces of types $(g,n)$ and $(g,n+1)$, respectively.
$(S, j, r_\ast, \nu)$ is said to be \emph{obtained from $(\tilde{S}, \tilde{j}, \tilde{r}_\ast, \tilde{\nu})$ by forgetting the last marked point and stabilising}, if the following holds: Let $\tilde{S}_i$ be the connected component of $\tilde{S}$ with $\tilde{r}_{n+1} \in \tilde{S}_i$. 
One has to distinguish three cases:
\begin{enumerate}
  \item If $\tilde{S}_i$ together with the special points on it other than $\tilde{r}_{n+1}$ is still stable, then define $\tilde{S}' \definedas \tilde{S}$, $\tilde{j}' \definedas \tilde{j}$, $\tilde{r}'_\ast \definedas (\tilde{r}_1, \dots, \tilde{r}_n)$ and $\tilde{\nu}' \definedas \tilde{\nu}$.
\end{enumerate}
Otherwise, define $\tilde{S}' \definedas \tilde{S} \setminus \tilde{S}_i$ and $\tilde{j}' \definedas \tilde{j}|_{\tilde{S}'}$.
$\tilde{S}_i$ then is a sphere with three special points, for if the genus of $\tilde{S}_i$ is $\geq 2$, then it is stable without any special points and if the genus is $1$, then because $(g,n)$ is also a stable type, \ie $n\geq 1$, and $\tilde{S}$ is connected, $\tilde{S}_i$ either contains a marked point other than $\tilde{r}_{n+1}$ (if $\tilde{S} = \tilde{S}_i$ is connected) or a nodal point.
The other two special points apart from $\tilde{r}_{n+1}$ then are either a nodal point and another marked point or two nodal points.
\begin{enumerate}\setcounter{enumii}{1}
  \item In the first case, let $\tilde{r}_l$ be the second marked point on $\tilde{S}_i$ and let $\tilde{n}^2_d$ be the nodal point on $\tilde{S}_i$.
Define $\tilde{r}'_\ast = (\tilde{r}_1, \dots, \tilde{n}^1_d, \dots, \tilde{r}_n)$, where $\tilde{n}^1_d$ replaces $\tilde{r}_l$, and $\tilde{\nu}' \definedas \{\{\tilde{n}^1_1, \tilde{n}^2_1\}, \dots, \{\tilde{n}^1_{d-1}, \tilde{n}^2_{d-1}\}\}$.
  \item In the second case, the two nodal points cannot correspond to the same node, for that would imply by connectedness of $\tilde{S}$ that $\tilde{S} = \tilde{S}_i$, so $g=1$ and there would be at least two marked points.
So assume $\tilde{\nu} = \{\{\tilde{n}^1_1, \tilde{n}^2_1\}, \dots, \{\tilde{n}^1_d, \tilde{n}^2_d\}\}$ and that the two nodal points on $\tilde{S}_i$ are $\tilde{n}^2_{d-1}$ and $\tilde{n}^1_d$.
Define $\tilde{r}'_\ast \definedas (\tilde{r}_1, \dots, \tilde{r}_n)$ and 
\[
\tilde{\nu}' \definedas \{\{\tilde{n}^1_1, \tilde{n}^2_1\}, \dots, \{\tilde{n}^1_{d-2}, \tilde{n}^2_{d-2}\}, \{\tilde{n}^1_{d-1}, \tilde{n}^2_d\}\}\text{.}
\]
\end{enumerate}
In all of these cases, $(\tilde{S}', \tilde{j}', \tilde{r}'_\ast, \tilde{\nu}')$ is a stable marked nodal surface of type $(g,n)$.
If $(\tilde{S}', \tilde{j}', \tilde{r}'_\ast, \tilde{\nu}')$ and $(S, j, r_\ast, \nu)$ are isomorphic, then the latter is said to be obtained from the former by forgetting the last marked point and stabilising. \\
Furthermore, the choice of such an isomorphism defines a (open and closed) holomorphic embedding of $S$ into $\tilde{S}$ that maps special points to special points (but may map a marked point to a nodal point).
Also, this inclusion defines an injection of $\Aut(\tilde{S}, \tilde{j}, \tilde{r}_\ast, \tilde{\nu})$ into $\Aut(S, j, r_\ast, \nu)$ (because the automorphism group of a sphere with three special points is trivial).
More precisely, there is a one-to-one correspondence between points on $S$ that are not nodal points or pairs of nodal points corresponding to the same node and stable marked nodal surfaces $(\tilde{S}, \tilde{j}, \tilde{r}_\ast, \tilde{\nu})$ of type $(g,n+1)$ up to unique equivalence as above: \\
If $z\in S$ is neither a marked point nor a nodal point, define $\tilde{S} \definedas S$, $\tilde{j} \definedas j$, $\tilde{r}_i \definedas r_i$ for $i = 1, \dots, n$, $r_{n+1} \definedas z$ and $\tilde{\nu} \definedas \nu$.
This corresponds to case (a) above, which conversely defines $z \definedas \tilde{r}_{n+1}$. \\
If $z = r_l \in S$ for some $l\in \{1, \dots, n\}$, define $\tilde{S} \definedas S \amalg S^2$, where $S^2 = \C \cup \{\infty\}$, $\tilde{j}|_S = j$ and $\tilde{j}|_{S^2}$ is the standard complex structure, $\tilde{r}_i = r_i$ for $i = 1, \dots, n$ with $i\neq l$, $\tilde{r}_l = \infty \in S^2$, $\tilde{r}_{n+1} \definedas 1 \in S^2$ and $\tilde{\nu} \definedas \nu \cup \{\{r_l, 0\}\}$ ($0\in S^2$). This corresponds to case (b) above, which conversely defines $z \definedas \tilde{r}_l$. \\
If w.\,l.\,o.\,g.~$\nu = \{\{n^1_1, n^2_1\}, \dots, \{n^1_{d-1}, n^2_{d-1}\}\}$ and $z = \{n^1_{d-1}, n^2_{d-1}\}$, define $\tilde{S} \definedas S\amalg S^2$, $\tilde{j}|_S = j$ and $\tilde{j}|_{S^2}$ the standard complex structure, $\tilde{r}_i \definedas r_i$ for $i = 1, \dots, n$, $r_{n+1} \definedas 1 \in S^2$ and
\[
\tilde{\nu} \definedas \{\{n^1_1, n^2_1\}, \dots, \{n^1_{d-2}, n^2_{d-2}\}, \{n^1_{d-1}, 0\}, \{\infty, n^2_{d-1}\}\}\text{.}
\]
This corresponds to case (c) above, which conversely defines $z \definedas \{\tilde{n}^1_{d-1}, \tilde{n}^2_d\}$.
\end{enumerate}

Marked nodal families of Riemann surfaces of type $(g,n)$ that define an orbifold branched covering of $\overline{M}_{g,n}$ that branches over the Deligne-Mumford boundary (hence in particular local universal marked nodal families) are a special case of a type of marked nodal family that is called regular in \cite{MR2262197} (Definition 12.1) and for which the above construction of forgetting the last marked point and stabilising has a global generalisation.
\begin{defn}\label{Definition_Regular_family}
Let $(\pi : \Sigma \to B, R_\ast)$ be a marked nodal family of Riemann surfaces.
Let $C \subseteq \Sigma$ be the submanifold of nodal points, which comes with the immersion $\pi|_C : C \to B$.
Given $b \in B$, $(\pi : \Sigma \to B, R_\ast)$ is called \emph{regular at $b$} if all self-intersections of $\pi(C)$ in $b$ are transverse in the following sense:
Either $b \not\in \pi(C)$ or if $b \in \pi(C)$, let $C_b \definedas C \cap \Sigma_b = \{n_1, \dots, n_d\}$, a finite set of points.
Then for all $1 \leq m \leq d$, $1 \leq i_1 < \cdots < i_m \leq d$
\[
\dim_\C(\im(\pi_{\ast, n_{i_1}}) \cap \dots \cap \im(\pi_{\ast, n_{i_m}})) = \dim_\C(B) - m\text{.}
\]
$(\pi : \Sigma \to B, R_\ast)$ is called \emph{regular} if it is regular at all points $b\in B$.
\end{defn}
By definition of a marked nodal family of Riemann surfaces, in the notation of the previous definition and if $b \in \pi(C)$, the following hold: For $i = 1, \dots, d$ there exist neighbourhoods $U_i \subseteq \Sigma$ of the $n_i$ not containing any of the marked points, neighbourhoods $V_i \subseteq B$ of $b$ and holomorphic maps $x_i, y_i : U_i \to \D$, $z_i : V_i \to \D$ obtained from a nodal coordinate system as in Equation \ref{Equation_nodal_coordinates} \st $(x_i, y_i) : U_i \to \D^2$ and $z_i : V_i \to \D$ are submersions and $z_i \circ \pi|_{U_i} = x_iy_i : U_i \to \D$.
Also, $C\cap U_i = (x_i, y_i)\inv(0,0)$, $\pi_\ast : \ker((x_i, y_i)_\ast) \to \ker(z_{i,\ast})$ is an isomorphism and $\im(\pi_{\ast, n_i}) = \ker(z_{i,\ast})$.
Making the $U_i$ and $V_i$ smaller, one can assume that $V_1 = \cdots = V_d \defines V$.
The transversality condition above then states that the $z_{i,\ast, b} : T_bB \to T_0\D$ are linearly independent.
By the implicit function theorem, after possibly making $V$ and the $U_i$ smaller, one hence can find holomorphic functions $t_1, \dots, t_k : V \to \D$, $k \definedas \dim_\C(B) - d$, \st $(z_1, \dots, z_d, t_1, \dots, t_k) : V \to \D^{\dim_\C(B)}$ is a holomorphic coordinate system on $B$ and \st $(z_1\circ \pi|_{U_i}, \dots, z_{i-1} \circ \pi|_{U_i}, x_i, y_i, z_{i+1} \circ \pi|_{U_i}, \dots, z_d\circ \pi|_{U_i}, t_1\circ \pi|_{U_i}, \dots, t_k\circ \pi|_{U_i}) : U_i \to \D^{\dim_\C(\Sigma)}$ is a holomorphic coordinate system on $\Sigma$.

\begin{lemma}\label{Lemma_Forget_marked_point}
Let $(\pi : \Sigma \to B, R_\ast)$ be a regular marked nodal family of Riemann surfaces of type $(g,n)$.
Then there exists a regular marked nodal family of Riemann surfaces $(\tilde{\pi} : \tilde{\Sigma} \to \Sigma, \tilde{R}_\ast)$ of type $(g,n+1)$ together with a holomorphic map $\hat{\pi} : \tilde{\Sigma} \to \Sigma$ with the following properties:
\[
\xymatrix{
\tilde{\Sigma} \ar[r]^-{\hat{\pi}} \ar[d]_-{\tilde{\pi}} & \Sigma \ar[d]^-{\pi} \\
\Sigma \ar[r]^-{\pi} & B\ar@{}[ul]|-{\circlearrowleft}
}
\]
commutes. Also, let $(S, j, r_\ast, \nu), b\in B, \hat{\iota} : S \to \Sigma$ be a desingularisation of $\Sigma_b$ and let $(\tilde{S}, \tilde{j}, \tilde{r}_\ast, \tilde{\nu})$ be a stable marked nodal Riemann surface of type $(g, n+1)$ \st $(S, j, r_\ast, \nu)$ is obtained from $(\tilde{S}, \tilde{j}, \tilde{r}_\ast, \tilde{\nu})$ by forgetting the last marked point and stabilising. Let $\kappa : S\to \tilde{S}$ be the resulting embedding.
Then there exists a unique $z \in \Sigma_b$ and a unique $\tilde{\iota} : \tilde{S} \to \tilde{\Sigma}_z \subseteq \tilde{\Sigma}$ \st $(\tilde{S}, \tilde{j}, \tilde{r}_\ast, \tilde{\nu}), z \in \Sigma, \tilde{\iota} : \tilde{S} \to \tilde{\Sigma}$ is a desingularisation and $\hat{\pi} \circ \tilde{\iota} \circ \kappa = \hat{\iota}$:
\[
\xymatrix{
\tilde{S} \ar[r]^-{\tilde{\iota}} & \tilde{\Sigma} \ar[d]^-{\hat{\pi}} \\
S \ar[r]^-{\hat{\iota}} \ar[u]^-{\kappa} & \Sigma\text{.}\ar@{}[ul]|-{\circlearrowleft}
}
\]
The stratification by signature on $\Sigma$ as base space of the marked nodal family $(\tilde{\pi} : \tilde{\Sigma} \to \Sigma, \tilde{R}_\ast)$ is given in the following way: For every stratum $C \subseteq B$ of the stratification by signature on $B$ consider the following subsets of $\pi\inv(C)$: The complement of the markings and nodes in $\pi\inv(C)$, for every marking $R_i$ the subset $R_i(C)$ and the connected components of the set of nodes in $\pi\inv(C)$. In particular, the restriction of $\pi$ to each of these is a submersion onto $C$. \\
If $(\pi : \Sigma \to B, R_\ast)$ is a local universal family or defines an orbifold branched covering of $\overline{M}_{g,n}$, then so does $(\tilde{\pi} : \tilde{\Sigma} \to \Sigma, \tilde{R}_\ast)$ (of $\overline{M}_{g,n+1}$).
\end{lemma}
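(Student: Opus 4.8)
The plan is to build $\tilde\Sigma$ by gluing the three explicit local models underlying the point--surface correspondence of item~\ref{Forgetting_last_marked_point}, carried out in families over the new base $\Sigma$, and then to verify the asserted properties stratum by stratum using the parametrised desingularisations of item~\ref{Consequences_of_univ_unfoldings_2} and Definition~\ref{Definition_Desingularisation}. Cover $\Sigma$ by open sets adapted to the position of a point $z_0\in\Sigma$ in its fibre $\Sigma_{\pi(z_0)}$. \textbf{(A)} If $z_0$ is a regular point of $\pi$ lying on no $R_i$ and on no node, pick a neighbourhood $W$ with $\pi|_W$ a submersion and $W$ disjoint from the nodal locus $C$ and from all $R_i$, and set $\tilde\Sigma|_W\definedas W\times_B\Sigma$: since $\pi|_W$ is a submersion this is a complex manifold of dimension $\dim_\C\Sigma+1$, with $\tilde\pi\definedas\pr_1$ and $\hat\pi\definedas\pr_2$ holomorphic, new section $\tilde R_{n+1}\definedas\{(w,w):w\in W\}$ and $\tilde R_i\definedas W\times_B R_i$; the fibre of $\tilde\pi$ over $w$ is $\Sigma_{\pi(w)}$ marked by the old $n$ points and the extra smooth point $w$, hence stable of type $(g,n+1)$, and $\tilde\pi$ carries nodal coordinates pulled back from those of $\pi$. \textbf{(B)} Near $z_0=R_i(\pi(z_0))$, choose a nodal coordinate on $\Sigma$ adapted to $R_i$ to reduce to the standard model $B=\D^s$, $\Sigma=\D^{s+1}$, $\pi$ the projection, $R_i=\{x=0\}$, and take for $\tilde\Sigma$ the family whose fibre over $x\neq0$ is the same curve with the new marking at abscissa $x$ and whose fibre over $\{x=0\}$ acquires a rational bubble at $R_i$ carrying the moved marking and $\tilde r_{n+1}$; this is case (b) of item~\ref{Forgetting_last_marked_point}, and the bubbling is produced by the plumbing of Construction~\ref{Construction_Universal_nodal_families_4}, so $\tilde\Sigma|_U$ is a complex manifold and $\tilde\pi$ has nodal coordinates of the form in Equation~\ref{Equation_nodal_coordinates} at the new node. \textbf{(C)} Near a node $z_0$ one uses the coordinates $(x_i,y_i)$ on $\Sigma$ and $z_i=x_iy_i$ on $B$ and plumbs a rational bubble between the two branches along $C$, which is case (c) of item~\ref{Forgetting_last_marked_point}. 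On overlaps all three models restrict to model (A), where they agree canonically, so they glue to $\tilde\pi:\tilde\Sigma\to\Sigma$ together with $\hat\pi:\tilde\Sigma\to\Sigma$ satisfying $\pi\circ\hat\pi=\pi\circ\tilde\pi$; $\tilde\pi$ is proper because $\pi$ is and all bubbles are compact, and each fibre is either a stable type-$(g,n)$ curve with one extra marked point or that curve with a three-pointed rational bubble attached, so is stable of type $(g,n+1)$.

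\textbf{Desingularisations and stratification.} Given $(S,j,r_\ast,\nu),b,\hat\iota$ and $(\tilde S,\tilde j,\tilde r_\ast,\tilde\nu)$ obtained from it by forgetting the last point and stabilising with embedding $\kappa:S\to\tilde S$, item~\ref{Forgetting_last_marked_point} produces a unique point $z\in\Sigma_b$ (the $\hat\iota$-image of the point of $S$ that the correspondence attaches to $\tilde S$, or the marked/nodal point it creates), and inspection of the three local models above produces a unique desingularisation $\tilde\iota:\tilde S\to\tilde\Sigma_z$ with $\hat\pi\circ\tilde\iota\circ\kappa=\hat\iota$; uniqueness is forced since two desingularisations of the same fibre differ by a unique isomorphism (Lemma 4.3 in \cite{MR2262197}), which here must commute with $\hat\pi$ and $\kappa$. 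The stratification statement is read off the construction: a point $z\in\pi\inv(C)$ over a stratum $C$ of $B$ lies in the new nodal locus exactly when $z$ is a node of $\Sigma_{\pi(z)}$ (case (C) subdivides one edge of the signature into two edges plus a new vertex) or $z$ lies on some $R_i$ (case (B) attaches a new vertex carrying the moved marking), whereas a smooth non-special $z$ only adds a marked point to an existing vertex; hence over each $C$ the relevant pieces of the new base $\Sigma$ are the complement of the markings and nodes in $\pi\inv(C)$, the sets $R_i(C)$, and the connected components of the node set in $\pi\inv(C)$, and $\pi$ restricts to a submersion onto $C$ on each.

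\textbf{Regularity and preservation of the universal/branched-covering properties.} Away from $\pi\inv(C)$ regularity is immediate from model (A). The main obstacle is the regularity check at a point $z\in\pi\inv(C)$: the nodes of $\tilde\pi$ near $z$ are the (pairwise transverse, by regularity of $\pi$) pullbacks of the old nodes together with the one or two new nodes produced by the bubble, and one must exhibit holomorphic defining functions on $\Sigma$ near $z$ and check their differentials are linearly independent. Using the adapted coordinates on $\Sigma$ afforded by regularity of $\pi$ (the $z_i\circ\pi$, $x_i$, $y_i$, $t_k$), the new node(s) are cut out by $x_i$, respectively $y_i$, respectively a coordinate transverse to $R_i$, whose differentials are independent of the $d(z_i\circ\pi)$; this gives the condition of Definition~\ref{Definition_Regular_family}. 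Finally, $(\tilde\pi:\tilde\Sigma\to\Sigma,\tilde R_\ast)$ is again a local universal family, resp.\ an orbifold branched covering of $\overline M_{g,n+1}$ over the Deligne--Mumford boundary: the dimension is right since $\dim_\C\Sigma=\dim_\C B+1=3(g-1)+(n+1)$; by the fibrewise correspondence of item~\ref{Forgetting_last_marked_point} every desingularisation of every fibre of $\tilde\pi$ occurs; adding a marked point to a universal unfolding again yields a universal unfolding, since the induced map $\Aut(\tilde S,\tilde j,\tilde r_\ast,\tilde\nu)\to\Aut(S,j,r_\ast,\nu)$ is injective and the new rational bubble has trivial automorphism group, so the argument of \cite{MR2262197} behind Construction~\ref{Construction_Universal_nodal_families_4} applies; and in each branched-covering chart the shape of the map $\phi$ is unchanged away from the bubble and is the identity (no new branching) along it.
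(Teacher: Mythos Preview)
Your proposal is correct and follows essentially the same approach as the paper: both construct $\tilde\Sigma$ locally over $\Sigma$ in three cases according to whether the base point is a generic regular point, lies on a marking, or is a node, using the fibre product in case~(A) and a plumbed sphere bubble in cases~(B) and~(C), and then glue. The paper gives more explicit gluing formulas in cases~(B) and~(C) (in particular writing out $\hat\pi$ on the bubble and explaining in case~(C) why the naive self-pullback of $\pi$ fails and must be resolved by the sphere), whereas you supply slightly more detail on the regularity check and on why the universal/branched-covering property is inherited; these are differences of exposition rather than of strategy.
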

\begin{proof}
Let $(\pi : \Sigma \to B, R_\ast)$ be as in the statement of the lemma.
The goal is to show that for every $z\in \Sigma$ there exists a neighbourhood $U'\subseteq \Sigma$ of $z$ that is the domain of a (nodal) coordinate system as in Definition \ref{Definition_Marked_nodal_family}, \ref{Definition_Marked_nodal_family_1}.~and is also the base of a marked nodal family of the type indicated in the statement of the lemma.
I will only indicate the definitions of $\tilde{\Sigma}$, $\tilde{\pi}$, $\hat{\pi}$ and the $\tilde{R}_i$, which are a variation of the constructions in the proof of Theorem 5.6 in \cite{MR2262197}. The smooth structure on $\tilde{\Sigma}$ is then also defined analogously to the smooth structures defined in that reference and the other properties of $\hat{\pi}$ follow from the remarks in \ref{Forgetting_last_marked_point}.~preceeding Definition \ref{Definition_Regular_family}. \\
The statements about local universal families and orbifold branched coverings then follow  because applying the construction below to the explicit local models from Construction \ref{Construction_Universal_nodal_families} produces again one of those local models.

If $z \in\Sigma_b$ is not one of the marked or nodal points, let $U' \subseteq \Sigma$ be a neighbourhood of $z$ disjoint from all the marked or nodal points and \st $\pi|_{U'} : U' \to B$ is a holomorphic submersion onto $B$.
Define $\tilde{\Sigma}|_{U'} \definedas (\pi|_{U'})^\ast \Sigma$, $\tilde{\pi}|_{\tilde{\Sigma}|_{U'}} : \tilde{\Sigma}|_{U'} \to U'$ is the canonical projection, $\tilde{R}_i \definedas (\pi|_{U'})^\ast R_i$ for $i = 1, \dots, n$ and $\tilde{R}_{n+1}(z') \definedas z' \in \Sigma_{\pi(z')} = \tilde{\Sigma}_{z'}$.
The restriction $\hat{\pi}|_{\tilde{\Sigma}|_{U'}} : \tilde{\Sigma}|_{U'} \to \Sigma$ is given by the canonical map $(\pi|_{U'})^\ast\Sigma \to \Sigma$.

If $z = R_l(b)$ for some $l \in \{1, \dots, n\}$, then there exists a neighbourhood  $U'\subseteq \Sigma$ of $z$ that does not contain any nodal points or marked points aside from those of the form $R_l(b')$ for $b' \in B$.
Also, as in Construction \ref{Construction_Universal_nodal_families}, \ref{Construction_Universal_nodal_families_4}., one can assume that there exists a holomorphic function $x' : U' \to \D$ \st $(\pi|_{U'}, x') : U' \to B\times \D$ is a holomorphic coordinate system on $U'$ and that $x'(R_l) = \{0\}$.
Define $U \definedas (\pi|_{U'})^\ast U' \subseteq (\pi|_{U'})^\ast\Sigma$ and $x \definedas x'\circ \Phi : U\to \D$, where $\Phi : U \to U'$ is the canonical bundle map covering $\pi|_{U'} : U' \to B$.
Consider $V \definedas U'\times\D \subseteq U'\times S^2$ and the function $y : V \to \D$ given by projection onto the second factor.
Let $q_1 : (\pi|_{U'})^\ast \Sigma \to U'$, $q_2 : U'\times S^2 \to U'$ be the projections and let $K \definedas \{\xi \in U \;|\; |x(\xi)| \leq |x'(q_1(\xi))| \neq 0 \}$, $L \definedas \{\xi' \in V \;|\; |y(\xi')| \leq |x'(q_2(\xi))| \neq 0\}$.
Denoting $\hat{\Sigma}_1 \definedas (\pi|_{U'})^\ast \Sigma \setminus K$, $\hat{\Sigma}_2 \definedas (U'\times S^2)\setminus L$ one can define $\tilde{\Sigma}|_{U'} \definedas \hat{\Sigma}_1 \amalg \hat{\Sigma}_2/_\sim$, where the equivalence relation is defined as in Construction \ref{Construction_Universal_nodal_families}, \ref{Construction_Universal_nodal_families_4}.
Namely $\xi \sim \xi'$ for $\xi\in U$, $\xi'\in V$ with $q_1(\xi) = q_2(\xi')$ and either $x(\xi)y(\xi') = x'(q_1(\xi)) \neq 0$ or $x(\xi) = y(\xi') = x'(q_1(\xi)) = 0$. \\
The projection $\tilde{\pi}|_{\tilde{\Sigma}|_{U'}} : \tilde{\Sigma}|_{U'} \to U'$ is then induced by the map $q_1\amalg q_2 : \hat{\Sigma}_1 \amalg \hat{\Sigma}_2 \to U'$. \\
The markings $\tilde{R}_i$ for $i \in \{1, \dots, n\}\setminus\{l\}$ are defined by $\tilde{R}_i \definedas (\pi|_{U'})^\ast R_i \subseteq (\pi|_{U'})^\ast \Sigma \setminus U \subseteq \tilde{\Sigma}|_{U'}$. $\tilde{R}_l \definedas U' \times \{\infty\} \subseteq U'\times S^2 \setminus V \subseteq \tilde{\Sigma}|_{U'}$ and $\tilde{R}_{n+1} \definedas U' \times \{1\} \subseteq U'\times S^2 \setminus V \subseteq \tilde{\Sigma}|_{U'}$. \\
The restriction $\hat{\pi}|_{\tilde{\Sigma}|_{U'}} : \tilde{\Sigma}|_{U'} \to \Sigma$ is given as follows: On $(\pi|_{U'})^\ast \Sigma \setminus U$, $\hat{\pi}|_{\tilde{\Sigma}|_{U'}}$ is given by the canonical morphism $(\pi|_{U'})^\ast \Sigma \to \Sigma$. To define $\hat{\pi}|_{\tilde{\Sigma}|_{U'}}$ on the remaining part of $\tilde{\Sigma}|_{U'}$, let $\zeta \in U'$. If $x'(\zeta) = 0$, $\tilde{\Sigma}_\zeta$ is the union of $\Sigma_{\pi(\zeta)}$ with $S^2$, with $R_l(\pi(\zeta)) \in \Sigma_{\pi(\zeta)}$ and $0 \in S^2$ identified. Let $\hat{\pi}|_{\tilde{\Sigma}_\zeta}$ be the identity on $\Sigma_{\pi(\zeta)}$ and on $S^2$ the constant map to $R_l(\pi(\zeta))$.
If $x'(\zeta) \neq 0$, $\tilde{\Sigma}_\zeta$ is given by the union of $\Sigma_{\pi(\zeta)} \setminus \{ z' \in \Sigma_{\pi(\zeta)} \;|\; |x'(z')| \leq |x'(\zeta)|\}$ with $S^2 \setminus \{ z' \in S^2 \;|\; |z'| \leq |x'(\zeta)| \}$, where $w \in \D \setminus \{ z' \in S^2 \;|\; |z'| \leq |x'(\zeta)| \} \subseteq S^2 \setminus \{ z' \in S^2 \;|\; |z'| \leq |x'(\zeta)| \}$ is identified with $(x'|_{U'_{\pi(\zeta)}})\inv\left( \frac{x'(\zeta)}{w}\right)$, where $U'_{\pi(\zeta)} \definedas U' \cap \Sigma|_{\pi(\zeta)}$.
Let $\hat{\pi}|_{\tilde{\Sigma}|_{U'}}$ be the identity on $\Sigma_{\pi(\zeta)} \setminus \{ z' \in \Sigma_{\pi(\zeta)} \;|\; |x'(z')| \leq |x'(\zeta)|\}$ and on $S^2 \setminus \{ z' \in S^2 \;|\; |z'| \leq |x'(\zeta)| \}$ be given by the map $w \mapsto (x'|_{U'_{\pi(\zeta)}})\inv\left( \frac{x'(\zeta)}{w}\right)$.
This is then a well-defined holomorphic diffeomorphism that maps $\infty \in S^2$ to $(x'|_{U'_{\pi(\zeta)}})\inv(0) = R_l(\pi(\zeta))$ and $1 \in S^2$ to $(x'|_{U'_{\pi(\zeta)}})\inv(x'(\zeta)) = \zeta$.

Finally, for $z$ one of the nodes, let the notation be as in the remark just before the statement of the lemma and assume w.\,l.\,o.\,g.~that $z = n_1$.
Denote $U' \definedas U_1$, $(x,y) \definedas (x_1,y_1) : U' \to \D^2$, $z' \definedas z_1 : V \to \D$.
Let $C_i \definedas C \cap U_i$, $C' \definedas C_1$. Note that $(\pi|_{U'})^\ast (\Sigma \setminus C')$ is a well-defined complex manifold and the projection onto $U'$ at every point is either a holomorphic submersion or has a neighbourhood that is the domain of nodal coordinates as in \ref{Equation_nodal_coordinates}.
I.\,e.~$(\pi|_{U'})^\ast (\Sigma \setminus C') \to U'$ satisfies the definition of a marked nodal family of Riemann surfaces, apart from the properness condition and the fibres are punctured marked nodal surfaces instead of marked nodal surfaces.
This is clear away from the subsets $(\pi|_{U'})^\ast C_i$, for the projection $\pi$ is a submersion away from the nodes.
In a neighbourhood of one of the $(\pi|_{U'})^\ast C_i$ for $i \geq 2$, $i = 2$, say, \wrt the coordinates described before the statement of the lemma, an explicit description of $(\pi|_{U'})^\ast (\Sigma \setminus C') \to U'$ is the following:
$\pi|_{U'} : U' \to V$ in coordinates is the map $f_1 : \D^{k+1} \to \D^k$, $(x,y,z_2, \dots, z_k) \mapsto (xy, z_2, \dots, z_k)$, $k \definedas \dim_\C(B)$, whereas $\pi|_{U_2} : U_2 \to V$ in coordinates is the map $f_2 : \D^{k+1} \to \D^k$, $(z_1, x_2, y_2, z_3, \dots, z_k) \mapsto (z_1, x_2y_2, z_3, \dots, z_k)$.
The pullback of the latter by the former hence explicitely is given by the map with domain
\begin{multline*}
\{(w_1, w_2) \in \D^{k+1} \times \D^{k+1} \;|\; f_1(w_1) = f_2(w_2)\} \\
= \{((x, y, x_2y_2, z_3, \dots, z_k), (xy, x_2, y_2, z_3, \dots, z_k)) \in \D^{k+1}\times \D^{k+1} \;|\; \\
(x, y, x_2, y_2, z_3, \dots, z_k) \in \D^{k+2} \} \cong \D^{k+2}
\end{multline*}
and projection given by $(x, y, x_2, y_2, z_3, \dots, z_k) \mapsto (x, y, x_2y_2, z_3, \dots, z_k)$.
Now to turn $(\pi|_{U'})^\ast(\Sigma\setminus C')$ into a marked nodal family, work in the local coordinates as before and consider the subset
\begin{multline*}
K \definedas \{(\zeta, z') \in U'\times S^2 \;|\; x(\zeta) \neq 0, |z'| \leq |x(\zeta)|\} \\
\cup \{(\zeta, z') \in U'\times S^2 \;|\; y(\zeta) \neq 0, |z'| \geq \frac{1}{|y(\zeta)|}\}
\end{multline*}
of $U'\times S^2$.
Then $\tilde{\Sigma}|_{U'} \definedas (\pi|_{U'})^\ast (\Sigma \setminus C') \amalg (U'\times S^2 \setminus K)/_\sim$, where the equivalence relation is defined as follows: \\
If $\zeta \in U'$ has coordinates $(x(\zeta), y(\zeta), z_1(\zeta), \dots, z_k(\zeta))$ with $x(\zeta) \neq 0$, $y(\zeta) \neq 0$, then $z' \in \{\zeta\}\times S^2 \setminus K$ is identified with the point on $((\pi|_{U'})^\ast (\Sigma \setminus C'))_\zeta = \Sigma_{\pi(\zeta)} \setminus C'$ with coordinates $\left(\frac{x(\zeta)}{z'}, z'y(\zeta), z_2(\zeta), \dots, z_k(\zeta)\right)$. \\
If $\zeta \in U'$ has coordinates $(x(\zeta), y(\zeta), z_1(\zeta), \dots, z_k(\zeta))$ with $x(\zeta) = 0$, $y(\zeta) \neq 0$, then $z' \in \{\zeta\}\times S^2 \setminus K$ with $z' \neq 0$ is identified with the point on $((\pi|_{U'})^\ast (\Sigma \setminus C'))_\zeta = \Sigma_{\pi(\zeta)} \setminus C'$ with coordinates $\left(0, z'y(\zeta), z_2(\zeta), \dots, z_k(\zeta)\right)$. \\
Analogously, if $\zeta \in U'$ has coordinates $(x(\zeta), y(\zeta), z_1(\zeta), \dots, z_k(\zeta))$ with $x(\zeta) \neq 0$, $y(\zeta) = 0$, then $z' \in \{\zeta\}\times S^2 \setminus K$ with $z' \neq \infty$ is identified with the point on $((\pi|_{U'})^\ast (\Sigma \setminus C'))_\zeta = \Sigma_{\pi(\zeta)} \setminus C'$ with coordinates $\left(\frac{x(\zeta)}{z'}, 0, z_2(\zeta), \dots, z_k(\zeta)\right)$. \\
Finally, if $\zeta \in U'$ has coordinates $(0, 0, z_1(\zeta), \dots, z_k(\zeta))$, then no identification takes place. \\
The projection $\tilde{\pi}|_{\tilde{\Sigma}|_{U'}} : \tilde{\Sigma}|_{U'} \to U'$ is induced by the projections $(\pi|_{U'})^\ast (\Sigma \setminus C') \to U'$ and $U'\times S^2 \to U'$. \\
The markings $\tilde{R}_i$ for $i = 1, \dots, n$ are given by the images of the pullbacks $(\pi|_{U'})^\ast R_i$ under the projection to the quotient $\tilde{\Sigma}|_{U'}$ and $\tilde{R}_{n+1}$ is the image of $U' \times \{1\}$ under the projection to $\tilde{\Sigma}|_{U'}$. \\
The restriction $\hat{\pi}|_{\tilde{\Sigma}|_{U'}} : \tilde{\Sigma}|_{U'} \to \Sigma$ to the image of $(\pi|_{U'})^\ast (\Sigma \setminus C')$ is given by the canonical map to $\Sigma$. This covers all of $\tilde{\Sigma}|_{U'}$ apart from the points $\{(\zeta, 0) \in U'\times S^2 \;|\; x(\zeta) = 0\}$, $\{(\zeta, \infty) \in U'\times S^2 \;|\; y(\zeta) = 0\}$ and $\{(\zeta, z') \in U'\times S^2 \;|\; x(\zeta) = y(\zeta) = 0\}$.
Each such point $(\zeta, z')$ is mapped to the point $\Sigma_{\pi(\zeta)} \cap C'$ in $\Sigma_{\pi(\zeta)}$. \\
Note that by construction, under $\hat{\pi}|_{\tilde{\Sigma}|_{U'}}$, the point corresponding to $(\zeta, 1) \in U'\times S^2$, \ie $\tilde{R}_{n+1}(\zeta)$ is mapped to $\zeta$. \\
The important thing here is the following: In the notation from before, locally the projection in a neighbourhood of the first node looks like the map $f_1 : \D^{k+1} \to \D^k$, $(x, y, z_2, \dots, z_k) \mapsto (xy, z_2, \dots, z_k)$, and analogously for the other nodes.
In these local coordinates, the pullback of $f_i$ for $i \geq 2$ by $f_1$ gave a well-defined nodal coordinate system.
But for $i = 1$ this is not the case, because both the subset $\{x = 0\}$ and the subset $\{y = 0 \}$ get mapped to $\{0\} \times \D^{k-1}$, the stratum along which the first node perseveres.
So the set of nodes in the naive pullback of $f_1$ by itself would have a set of nodes that looks like two hyperplanes intersecting transversely at the origin, which is not a submanifold, hence there can't exist a nodal coordinate system at this intersection.
The construction above then ``resolves'' this intersection by inserting a sphere, producing two different nodes at $(0, 0, z_2, \dots, z_k)$, one corresponding to the one which perseveres along $(0, y, z_2, \dots, z_k)$, the other to the one that perseveres along $(x, 0, z_2, \dots, z_k)$.
\end{proof}

As long as one does not impose any compactness condition, the existence of a local universal family \st the induced map to $\overline{M}_{g,n}$ is surjective is shown in \cite{MR2262197}, Proposition 6.3.
In the genus $g = 0$ case, one can also find such a family even with compact base space, for $\overline{M}_{0,n}$ itself is a complex manifold.
In the case of genus $g > 0$, such a result will not hold true.
But one can ask instead for the existence of a marked nodal family $(\pi : \Sigma \to B, R_\ast)$ that defines an orbifold branched covering of $\overline{M}_{g,n}$ that branches over the Deligne-Mumford boundary, maps $B$ surjectively onto $\overline{M}_{g,n}$ and has a compact base space $B$.
By the previous lemma, if one can show such a result for Riemann surfaces of type $(g,n)$, then the result also holds for all $(g,n')$ with $n' \geq n$.
First results in this direction were proved by Looijenga in \cite{MR1257324}, where it is shown that $\overline{M}_g$ has a finite branched covering by a smooth projective variety.
The difference to the result that I would like to use here is that this covering morphism does not come from a marked nodal family (which requires in particular the total space $\Sigma$ to be smooth), which is not the case for the branched covering constructed in \cite{MR1257324}.
But, although the construction in \cite{MR1257324} doesn't produce the desired result, see Proposition 1.4 in \cite{MR1739177}, there seems (to the author's limited understanding of algebraic geometry) to be a generalisation of that construction, see Theorem 3.9 in op.~cit. This shows, in conjunction with the previous lemma, \ie apply Theorem 3.9 in \cite{MR1739177} to get the marked nodal family $(\pi : \Sigma \to M, R_\ast)$ below and then apply the previous Lemma to get the families $(\pi^\ell : \Sigma^\ell \to M^\ell, R^\ell_\ast, T^\ell_\ast)$ for $\ell \geq 1$, the following conjecturally stated result.
\begin{proposition}\label{Proposition_Sequence_of_branched_coverings}
There exists a sequence of marked nodal families $(\pi^\ell : \Sigma^\ell \to M^\ell, R^\ell_\ast, T^\ell_\ast)$ for $\ell \geq 0$ of Riemann surfaces of type $(g, n+\ell)$, with markings $R^\ell_1, \dots, R^\ell_n$, $T^\ell_1, \dots, T^\ell_\ell$ \st $\Sigma^\ell = M^{\ell+1}$ for all $\ell \geq 0$, together with maps $\hat{\pi}^\ell : \Sigma^{\ell+1} \to \Sigma^{\ell}$ \st
\[
\xymatrix{
\cdots\ar[r]^-{\hat{\pi}^{\ell+1}} & \Sigma^{\ell+1} \ar[r]^-{\hat{\pi}^{\ell}} \ar[d]^-{\pi^{\ell+1}} & \Sigma^\ell \ar[r]^-{\hat{\pi}^{\ell-1}} \ar[d]^-{\pi^{\ell}} & \Sigma^{\ell-1} \ar[r]^-{\hat{\pi}^{\ell-2}} \ar[d]^-{\pi^{\ell-1}} & \cdots\ar[r]^-{\hat{\pi}^1} & \Sigma^1\ar[r]^-{\hat{\pi}^{0}} \ar[d]^-{\pi^{1}} & \Sigma^0 \ar[d]^-{\pi^0} \ar@{=}[r] & \Sigma \ar[d]^-{\pi} \\
\cdots\ar[r]^-{\pi^{\ell+1}} & M^{\ell+1} \ar[r]^-{\pi^{\ell}} & M^\ell \ar[r]^-{\pi^{\ell-1}} & M^{\ell-1} \ar[r]^-{\pi^{\ell-2}} & \cdots \ar[r]^-{\pi^1} & M^1 \ar[r]^-{\pi^{0}} & M^0 \ar@{=}[r] & M
}
\]
{\allowdisplaybreaks
\begin{align*}
&\xymatrix{
\Sigma^\ell \ar[r]^-{\hat{\pi}^{\ell-1}} \ar[d]^-{\pi^\ell} & \Sigma^{\ell-1} \ar[d]_-{\pi^{\ell-1}} \\
M^\ell \ar[r]^-{\pi^{\ell-1}} \ar@/^1pc/[u]^-{R^\ell_j} & M^{\ell-1} \ar@/_1pc/[u]_-{R^{\ell-1}_j\quad{\displaystyle\forall\, j=1,\dots, n}}
}
\\
&\xymatrix{
\Sigma^\ell \ar[r]^-{\hat{\pi}^{\ell-1}} \ar[d]^-{\pi^\ell} & \Sigma^{\ell-1} \ar[d]_-{\pi^{\ell-1}} \\
M^\ell \ar[r]^-{\pi^{\ell-1}} \ar@/^1pc/[u]^-{T^\ell_j} & M^{\ell-1} \ar@/_1pc/[u]_-{T^{\ell-1}_j\quad{\displaystyle\forall\, j=1,\dots, \ell-1}}
}
\end{align*}
}
all commute,
\[
\hat{\pi}^{\ell-1}\circ T^\ell_\ell = \id : \Sigma^{\ell-1} \to M^\ell
\]
and where $M$ is assumed to be closed, and hence so are the $M^\ell$ for all $\ell\geq 0$. Furthermore, for all $\ell \geq 0$, $(\pi^\ell : \Sigma^\ell \to M^\ell, R^\ell_\ast, T^\ell_\ast)$ defines an orbifold branched covering of $\overline{M}_{g,n+\ell}$ that branches over the Deligne-Mumford boundary and for every $z \in \Sigma^{\ell} = M^{\ell+1}$, putting $b \definedas \pi^{\ell}(z) \in M^{\ell}$, the map
\begin{multline*}
\hat{\pi}^{\ell-1}_z : (\Sigma^\ell_z, R^\ell_1(z), \dots, R^\ell_n(z), T^\ell_1(z), \dots, T^\ell_{\ell-1}(z)) \to  \\
\to (\Sigma^{\ell-1}_{b}, R^{\ell-1}_1(b), \dots, R^{\ell-1}_n(b), T^{\ell-1}_1(b), \dots, T^{\ell-1}_{\ell-1}(b))
\end{multline*}
is stabilising, \ie biholomorphic on every stable component of
\[
(\Sigma^\ell_z, R^\ell_1(z), \dots, R^\ell_n(z), T^\ell_1(z), \dots, T^\ell_{\ell-1}(z))
\]
and constant on every unstable component (of which there is at most one).
For $\ell > k$ denote the compositions
\begin{align*}
\hat{\pi}^\ell_k \definedas \hat{\pi}^{k}\circ \hat{\pi}^{k+1}\circ \cdots\circ \hat{\pi}^{\ell-1} : \Sigma^\ell &\to \Sigma^{k}
\intertext{and}
\pi^\ell_k \definedas \pi^k\circ \pi^{k+1}\circ \cdots\circ \pi^{\ell-1} : M^\ell &\to M^{k}\text{.}
\end{align*}
If $M^\ell_i$ is a stratum of the stratification of $M^\ell$ by signature, then for $k \leq \ell$ there exists a signature $j(i)$ \st $\pi^\ell_k|_{M^\ell_i} : M^\ell_i \to M^k_{j(i)}$ is a submersion.
\end{proposition}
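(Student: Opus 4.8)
The plan is to carry out the construction announced just before the statement: produce the $\ell=0$ family from the algebro-geometric input and then generate the whole tower by repeatedly applying Lemma \ref{Lemma_Forget_marked_point}, reading off each asserted property from that lemma and from the model computations in \ref{Forgetting_last_marked_point}. For the base case I would fix a regular marked nodal family $(\pi : \Sigma \to M, R_\ast)$ of type $(g,n)$ that defines an orbifold branched covering of $\overline{M}_{g,n}$ branching over the Deligne-Mumford boundary, with $M$ closed and $M \to \overline{M}_{g,n}$ surjective. For $g = 0$ this is immediate since $\overline{M}_{0,n}$ is itself a closed complex manifold; for $g \ge 1$ its existence is the content of Theorem~3.9 of \cite{MR1739177} (applied to $\overline{M}_g$, followed --- should that reference be formulated only in the unmarked case --- by $n$ applications of Lemma \ref{Lemma_Forget_marked_point} to pass from type $(g,0)$ to type $(g,n)$, which keeps the base compact since $\pi$ is proper). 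Put $M^0 \definedas M$, $\Sigma^0 \definedas \Sigma$, $R^0_\ast \definedas R_\ast$, with no $T$-markings at level $0$.

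Next I would build the tower by induction on $\ell$. Assume $(\pi^\ell : \Sigma^\ell \to M^\ell, R^\ell_\ast, T^\ell_\ast)$ has been constructed as a regular marked nodal family of type $(g, n+\ell)$, an orbifold branched covering of $\overline{M}_{g,n+\ell}$ branching over the boundary, with $M^\ell$ closed. Applying Lemma \ref{Lemma_Forget_marked_point} to it produces a regular marked nodal family $(\tilde\pi : \tilde\Sigma \to \Sigma^\ell, \tilde R_\ast)$ of type $(g, n+\ell+1)$, again an orbifold branched covering over the boundary, and a holomorphic map $\hat\pi : \tilde\Sigma \to \Sigma^\ell$ fitting into the commuting square of that lemma. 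I would then \emph{define} $M^{\ell+1} \definedas \Sigma^\ell$ (so that $\Sigma^\ell = M^{\ell+1}$ holds by construction), $\Sigma^{\ell+1} \definedas \tilde\Sigma$, $\pi^{\ell+1} \definedas \tilde\pi$, $\hat\pi^\ell \definedas \hat\pi$, and relabel the $n+\ell+1$ markings $\tilde R_1, \dots, \tilde R_{n+\ell+1}$ as $R^{\ell+1}_1, \dots, R^{\ell+1}_n$ (the pullbacks of $R^\ell_1, \dots, R^\ell_n$), $T^{\ell+1}_1, \dots, T^{\ell+1}_\ell$ (the pullbacks of $T^\ell_1, \dots, T^\ell_\ell$) and $T^{\ell+1}_{\ell+1} \definedas \tilde R_{n+\ell+1}$, the tautological marking. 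Since $\pi^\ell$ is proper and $M^\ell$ is compact, $M^{\ell+1} = \Sigma^\ell = (\pi^\ell)\inv(M^\ell)$ is compact, so the induction continues and all $M^\ell$ are closed; surjectivity of $M^{\ell+1} \to \overline{M}_{g,n+\ell+1}$ is inherited from that of $M^\ell \to \overline{M}_{g,n+\ell}$ together with the surjectivity of $f^{n+\ell+1}_{\mathrm{stab}}$ and the one-to-one correspondence in \ref{Forgetting_last_marked_point}, by which every stable marked nodal surface of type $(g, n+\ell+1)$ arises from a fibre of $\pi^\ell$ by marking a point (a regular point, a marked point, or a node) on it.

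With the tower in hand, the remaining assertions are read off. Commutativity of the large diagram and of the two diagrams involving the sections $R^\ell_j$ and $T^\ell_j$ is the commuting square of Lemma \ref{Lemma_Forget_marked_point} at each level, together with the fact that $\hat\pi^{\ell-1}$ carries each pullback marking $R^\ell_j = (\pi^\ell)^\ast R^{\ell-1}_j$ (resp.\ $T^\ell_j$) back to $R^{\ell-1}_j$ (resp.\ $T^{\ell-1}_j$); the relation $\hat\pi^{\ell-1} \circ T^\ell_\ell = \id$ is precisely the closing observation in the proof of Lemma \ref{Lemma_Forget_marked_point} that $\hat\pi$ sends the tautological marking to the base point. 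The stabilising property of $\hat\pi^{\ell-1}$ on fibres --- biholomorphic on the stable components, constant on the at most one unstable component, which is a sphere with three special points --- is exactly what the three cases of the construction in the proof of Lemma \ref{Lemma_Forget_marked_point} produce (identity on the $\Sigma^{\ell-1}$-part, collapse of the inserted sphere), combined with the three-case analysis in \ref{Forgetting_last_marked_point}. For the stratum statement I would first treat a single step: by the explicit description of the stratification by signature on $M^{\ell+1} = \Sigma^\ell$ in Lemma \ref{Lemma_Forget_marked_point}, each stratum of $M^{\ell+1}$ lies over a unique stratum $C$ of $M^\ell$ --- being the complement of the special points in $(\pi^\ell)\inv(C)$, or one of the sets $R^\ell_j(C)$, $T^\ell_j(C)$, or a connected component of the nodal locus over $C$ --- and $\pi^\ell$ restricts on it to a submersion onto $C$; hence $\pi^{\ell+1}_\ell = \pi^\ell$ maps strata submersively onto strata. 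The general case $k \le \ell$ then follows by induction on $\ell - k$ using $\pi^\ell_k = \pi^{\ell-1}_k \circ \pi^{\ell-1}$, since a stratum maps into a stratum at each step and a composite of submersions is a submersion.

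The one genuinely delicate point --- and the reason the result is stated conjecturally --- is the very first step: obtaining a marked nodal family in the precise sense used here, in particular with \emph{smooth} total space, that branched-covers $\overline{M}_g$ over the boundary with compact base. Looijenga's construction in \cite{MR1257324} does not have this property (\cf Proposition~1.4 in \cite{MR1739177}), and verifying that Theorem~3.9 of \cite{MR1739177} does is beyond the present scope. Everything downstream is bookkeeping; the only parts requiring a little care are reconciling the relabelled markings across levels and, in the last item, checking that ``stratum maps submersively onto stratum'' is stable under composition.
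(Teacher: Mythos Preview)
Your proposal is correct and follows essentially the same approach as the paper, which does not give a formal proof but only the paragraph preceding the proposition: apply Theorem~3.9 of \cite{MR1739177} to obtain the base family $(\pi : \Sigma \to M, R_\ast)$ and then iterate Lemma~\ref{Lemma_Forget_marked_point} to produce the tower. Your write-up is in fact considerably more detailed than the paper's own sketch --- you spell out the relabelling of markings, the compactness of $M^{\ell+1}$ via properness, the stratum-to-stratum submersion argument by induction on $\ell-k$, and you explicitly flag the same caveat the paper flags (``conjecturally stated'') about whether the cited algebro-geometric input really produces a family with smooth total space in the sense required here.
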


\begin{defn}\label{Definition_ghost_component}
In the notation of the proposition above, a component of $\Sigma^\ell_b$, for any $b\in M^\ell$ and $\ell \in \N$, that is mapped to a point under $\hat{\pi}^\ell_0$ is called a \emph{ghost component}.
\end{defn}

On the spaces in Proposition \ref{Proposition_Sequence_of_branched_coverings} there are also canonical actions of permutation groups of the last $\ell$ markings, as follows directly from the construction of the spaces $\Sigma^\ell$, $M^\ell$ and maps $\pi^\ell$, $\hat{\pi}^\ell$.
\begin{proposition}\label{Proposition_Reordering_marked_points}
In the notation of the previous proposition: \\
For $\ell \geq 1$, let $\mathcal{S}_\ell$ be the group of permutations $\{1, \dots, \ell\}$.
Then there exist actions $\sigma^\ell$ and $\hat{\sigma}^\ell$ of $\mathcal{S}_\ell$ on $M^\ell$ and $\Sigma^\ell$ \st
\begin{equation}\label{Equation_Permutation_of_marked_points_1}
\xymatrix{
\mathcal{S}_\ell\times \Sigma^\ell \ar[r]^-{\hat{\sigma}^\ell} \ar[d]_-{\id\times \pi^\ell} & \Sigma^\ell \ar[d]^-{\pi^\ell} \\
\mathcal{S}_\ell \times M^\ell \ar[r]^-{\sigma^\ell} & M^\ell
}
\end{equation}
commutes. \\
Furthermore, for any $g\in \mathcal{S}_\ell$ and $k \in \{1, \dots, \ell\}$, 
\begin{equation}\label{Equation_Permutation_of_marked_points_2}
\hat{\sigma}^\ell_{g\inv}\circ T^\ell_k\circ \sigma^\ell_g = T^\ell_{g(k)}
\end{equation}
and under the inclusion $\mathcal{S}_\ell \subseteq \mathcal{S}_{\ell+1}$ as permutations of $\{1, \dots, \ell+1\}$ leaving $\ell+1$ fixed and the identification $\Sigma^\ell = M^{\ell+1}$,
\begin{equation}\label{Equation_Permutation_of_marked_points_3}
\xymatrix{
\Sigma^{\ell+1} \ar[r]^-{\hat{\sigma}^{\ell+1}_g} \ar[d]_-{\hat{\pi}^{\ell}} & \Sigma^{\ell+1} \ar[d]^-{\hat{\pi}^{\ell}} \\
\Sigma^{\ell} \ar[r]^-{\hat{\sigma}^{\ell}_g} & \Sigma^{\ell}
}\qquad
\xymatrix{
M^{\ell+1} \ar[r]^-{\sigma^{\ell+1}_g} \ar[d]_-{\pi^{\ell}} & M^{\ell+1} \ar[d]^-{\pi^{\ell}} \\
M^{\ell} \ar[r]^-{\sigma^{\ell}_g} & M^{\ell}
}
\end{equation}
commute and
\begin{equation}\label{Equation_Permutation_of_marked_points_4}
\hat{\sigma}^\ell = \sigma^{\ell+1}|_{\mathcal{S}_\ell \times M^{\ell+1}} : \mathcal{S}_\ell \times \Sigma^\ell \to \Sigma^\ell\text{.}
\end{equation}\label{Equation_Permutation_of_marked_points_5}
Denoting by $\tau_{\ell,\ell+1}\in \mathcal{S}_{\ell+1}$ the transposition exchanging $\ell$ and $\ell+1$,
\begin{equation}
\hat{\pi}^{\ell-1} = \pi^{\ell}\circ \sigma^{\ell+1}_{\tau_{\ell,\ell+1}} : M^{\ell+1} = \Sigma^\ell \to M^\ell = \Sigma^{\ell-1}\text{.}
\end{equation}
\end{proposition}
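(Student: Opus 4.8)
The plan is to argue by induction on $\ell$, using that --- by Proposition~\ref{Proposition_Sequence_of_branched_coverings} and the construction behind it --- for $\ell\ge 1$ the tower is built one level at a time via Lemma~\ref{Lemma_Forget_marked_point}: $M^\ell=\Sigma^{\ell-1}$, and $(\pi^\ell:\Sigma^\ell\to M^\ell,R^\ell_\ast,T^\ell_\ast)$ is the family produced by the construction in the proof of Lemma~\ref{Lemma_Forget_marked_point} applied to $(\pi^{\ell-1}:\Sigma^{\ell-1}\to M^{\ell-1},(R^{\ell-1}_\ast,T^{\ell-1}_\ast))$, with $T^\ell_\ell$ the new tautological marking and $T^\ell_j$ for $j<\ell$, as well as $R^\ell_j$, pulled back. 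Since $\mathcal{S}_1$ is trivial the case $\ell=1$ is vacuous, so suppose the statement for $\ell-1$. As $\mathcal{S}_\ell$ is generated by the subgroup $\mathcal{S}_{\ell-1}$ fixing $\ell$ together with the transposition $\tau\definedas\tau_{\ell-1,\ell}$, it is enough to define $\sigma^\ell$ and $\hat\sigma^\ell$ on these generators, to verify the Coxeter relations so that they assemble into an $\mathcal{S}_\ell$-action, and then to read off (\ref{Equation_Permutation_of_marked_points_1})--(\ref{Equation_Permutation_of_marked_points_5}).

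For $g\in\mathcal{S}_{\ell-1}$ the maps are forced by $M^\ell=\Sigma^{\ell-1}$: I would set $\sigma^\ell_g\definedas\hat\sigma^{\ell-1}_g$, which is precisely (\ref{Equation_Permutation_of_marked_points_4}) at index $\ell-1$. Since $\hat\sigma^{\ell-1}_g$ is an automorphism of the marked nodal family $\pi^{\ell-1}:\Sigma^{\ell-1}\to M^{\ell-1}$ permuting the markings $T^{\ell-1}_\ast$ by $g$, the naturality of the universal-curve construction in Lemma~\ref{Lemma_Forget_marked_point} lifts it to an automorphism $\hat\sigma^\ell_g$ of $\pi^\ell:\Sigma^\ell\to M^\ell$ fixing the new marking $T^\ell_\ell$ and permuting $T^\ell_1,\dots,T^\ell_{\ell-1}$ by $g$; this makes (\ref{Equation_Permutation_of_marked_points_1}), (\ref{Equation_Permutation_of_marked_points_2}) and both squares of (\ref{Equation_Permutation_of_marked_points_3}) hold for such $g$.

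The substance is the transposition $\tau$. By the first case in the proof of Lemma~\ref{Lemma_Forget_marked_point}, over the dense open subset of $M^\ell=\Sigma^{\ell-1}$ lying off all special points the family $\pi^{\ell-1}:\Sigma^{\ell-1}\to M^{\ell-1}=\Sigma^{\ell-2}$ is the pullback $(\pi^{\ell-2})^\ast\Sigma^{\ell-2}$, which identifies that subset with the generic locus of the fibre product $\Sigma^{\ell-2}\times_{M^{\ell-2}}\Sigma^{\ell-2}$, the two projections to $\Sigma^{\ell-2}=M^{\ell-1}$ being $\pi^{\ell-1}$ and $\hat\pi^{\ell-2}$, and the curve over $(u,v)$ carrying its two ``added'' marked points at the positions $u$ (which is $T_{\ell-1}$) and $v$ (which is $T_\ell$). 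The remaining two cases in that proof extend this fibre product across the collision locus --- the relative diagonal and the loci where an added point meets a marking or node --- by inserting an $S^2$ carrying the colliding marked points, and this recipe is manifestly symmetric in the two added points; hence the factor-exchange involution of the fibre product extends to a biholomorphic involution $\sigma^\ell_\tau$ of $M^\ell$. By construction $\pi^{\ell-1}\circ\sigma^\ell_\tau=\hat\pi^{\ell-2}$, which is an instance of (\ref{Equation_Permutation_of_marked_points_5}), and $\sigma^\ell_\tau$ exchanges $T^\ell_{\ell-1}$ and $T^\ell_\ell$; feeding $\sigma^\ell_\tau$ back into Lemma~\ref{Lemma_Forget_marked_point} gives its lift $\hat\sigma^\ell_\tau$ together with (\ref{Equation_Permutation_of_marked_points_1}) and (\ref{Equation_Permutation_of_marked_points_2}) for $g=\tau$.

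Finally I would check the Coxeter relations for the adjacent transpositions $s_i\definedas\tau_{i,i+1}$, where $\sigma^\ell_{s_i}$ for $i\le\ell-2$ comes from the $\mathcal{S}_{\ell-1}$-part and $\sigma^\ell_{s_{\ell-1}}=\sigma^\ell_\tau$: the relation $(\sigma^\ell_{s_{\ell-1}})^2=\id$ holds since $\sigma^\ell_\tau$ was built as an involution, and the commutations $\sigma^\ell_{s_{\ell-1}}\sigma^\ell_{s_i}=\sigma^\ell_{s_i}\sigma^\ell_{s_{\ell-1}}$ for $i\le\ell-3$ together with the braid relation $\sigma^\ell_{s_{\ell-1}}\sigma^\ell_{s_{\ell-2}}\sigma^\ell_{s_{\ell-1}}=\sigma^\ell_{s_{\ell-2}}\sigma^\ell_{s_{\ell-1}}\sigma^\ell_{s_{\ell-2}}$ I would verify over the dense generic locus, where iterating the fibre-product description exhibits that locus of $M^\ell$ as a fibred power of $\Sigma^{\ell-m}$ over $M^{\ell-m}$ whose $m$ factors are indexed by the last $m$ marked points, and on which each of the relevant $\sigma^\ell_{s_i}$ acts --- by the inductive hypothesis, since the maps built at earlier stages were all factor-transpositions of such fibred powers --- as a transposition of two factors; equality on the dense locus then propagates by holomorphy. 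This yields the $\mathcal{S}_\ell$-action $\sigma^\ell$ on $M^\ell$; lifting each $\sigma^\ell_g$ through Lemma~\ref{Lemma_Forget_marked_point} gives $\hat\sigma^\ell$ and (\ref{Equation_Permutation_of_marked_points_1}), (\ref{Equation_Permutation_of_marked_points_4}) in general, whereupon (\ref{Equation_Permutation_of_marked_points_2}) and (\ref{Equation_Permutation_of_marked_points_3}) for arbitrary $g$ follow from the generator cases because both sides of each identity are multiplicative in $g$. \textbf{The main obstacle} is the construction of $\sigma^\ell_\tau$ together with the braid relation: both reduce to unwinding the explicit local nodal-coordinate models in the proof of Lemma~\ref{Lemma_Forget_marked_point} and in Construction~\ref{Construction_Universal_nodal_families} and checking that ``insert an $S^2$ carrying the colliding marked points'' is symmetric under permuting those points, so that the factor-exchange extends holomorphically and is independent of the patching; everything else is formal, resting on the naturality already implicit in Lemma~\ref{Lemma_Forget_marked_point}.
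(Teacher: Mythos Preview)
Your approach is essentially the same as the paper's: induction on $\ell$, with $\sigma^\ell|_{\mathcal{S}_{\ell-1}}$ forced by (\ref{Equation_Permutation_of_marked_points_4}) and the remaining transposition $\tau_{\ell-1,\ell}$ defined via the factor-exchange on the fibre product $\Sigma^{\ell-2}\times_{M^{\ell-2}}\Sigma^{\ell-2}$ extended across the collision locus by the sphere-insertion of Lemma~\ref{Lemma_Forget_marked_point}, then lifted to $\hat\sigma^\ell$ by the uniqueness in that lemma. Your explicit discussion of the Coxeter relations actually goes beyond the paper, which states upfront that it gives ``not a complete proof, just a description of the construction'' and omits this verification entirely.
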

\begin{proof}
Again, not a complete proof, just a description of the construction of the actions. \\
Actually, the above characterisation serves at the same time as definition of these actions by induction: Because $\mathcal{S}_1 = \{\id\}$, the actions on $M^1$ and $\Sigma^1$ are automatically the identity.
Now assume that the actions of $\mathcal{S}_k$ for $k = 1, \dots, \ell$ have been defined.
Equation \ref{Equation_Permutation_of_marked_points_4} defines the restriction of $\sigma^{\ell+1}$ to $\mathcal{S}_\ell\times M^{\ell+1}$.
Equation \ref{Equation_Permutation_of_marked_points_2} requires that, for $g\in \mathcal{S}_{\ell}$, \ie $g(\ell+1) = \ell+1$, and $b \in M^{\ell+1}$, $\hat{\sigma}^{\ell+1}_{g}(T_{\ell+1}^{\ell+1}(b)) = T^{\ell+1}_{\ell+1}(\sigma^{\ell+1}_{g}(b))$.
Equation \ref{Equation_Permutation_of_marked_points_3} then gives $\hat{\sigma}^\ell_g\circ \hat{\pi}^\ell(T^{\ell+1}_{\ell+1}(b)) = \hat{\pi}^\ell(T^{\ell+1}_{\ell+1}(\sigma^{\ell+1}_g(b))) \in \Sigma^\ell_{\pi^{\ell}(b)}$.
Defining $z\definedas \hat{\pi}^\ell(T^{\ell+1}_{\ell+1}(b)) \in \Sigma^\ell_{\pi^{\ell}(b)}$, this shows that $\Sigma^\ell_{\pi^{\ell}(b)}$ is obtained from $\Sigma^{\ell+1}_b$ by forgetting the last marked point and stabilising associated to the point $z$ and $\Sigma^\ell_{\pi^\ell(\sigma^{\ell+1}_g(b))}$ is obtained from $\Sigma^{\ell+1}_{\sigma^{\ell+1}_g(b)}$ by forgetting the last marked point and stabilising associated to the point $\hat{\sigma}^\ell_g(z)$.
Lemma \ref{Lemma_Forget_marked_point} then gives a unique lift $\hat{\sigma}^{\ell+1}_g : \Sigma^{\ell+1}_b \to \Sigma^{\ell+1}_{\sigma^{\ell+1}_g(b)}$ of $\hat{\sigma}^\ell_g : \Sigma^\ell_{\pi^\ell(b)} \to \Sigma^\ell_{\sigma^\ell_g(\pi^\ell(b))}$ that maps $T^{\ell+1}_{\ell+1}(b)$ to $T^{\ell+1}_{\ell+1}(\sigma^{\ell+1}_g(b))$. \\
Because $\mathcal{S}_{\ell+1}$ is generated by $\mathcal{S}_\ell$ and $\tau_{\ell,\ell+1}$, it suffices to define $\sigma^{\ell+1}_{\tau_{\ell,\ell+1}}$ and $\hat{\sigma}^{\ell+1}_{\tau_{\ell,\ell+1}}$.
Now $M^{\ell+1}$ by definition in Lemma \ref{Lemma_Forget_marked_point} is the union of $(\Sigma^\ell \times_{\pi^\ell, M^\ell, \pi^\ell} \Sigma^\ell) \setminus C$, where $C$ is the diagonal in this fibre product over the nodes and markings in $\Sigma^\ell$, with a collection of spheres.
The action of $\sigma^{\ell+1}_{\tau_{\ell,\ell+1}}$ is then the one induced by the action on $\Sigma^\ell \times_{\pi^\ell, M^\ell, \pi^\ell} \Sigma^\ell$ exchanging the factors, and the identity on the spheres filling in $C$. \\
$\hat{\sigma}^{\ell+1}_{\tau_{\ell,\ell+1}}$ is then defined analogously to $\hat{\sigma}^{\ell+1}_g$ for $g \in \mathcal{S}_\ell$ before.
\end{proof}

The compactness statement in Proposition \ref{Proposition_Sequence_of_branched_coverings} is important for the following reason: In the genus $g = 0$ case, $\overline{M}_{0,n}$ is a compact complex manifold, hence in particular it is oriented and carries a fundamental class in its top homology group (with any coefficient group).
Hence any smooth (or continuous) map from $\overline{M}_{0,n}$ to another manifold defines a homology class in that manifold.
Now in the case of positive genus this holds no longer true for $\overline{M}_{g,n}$ itself.
But for any universal marked nodal family $(\pi : \Sigma \to M, R)$ of type $(g,n)$, $\overline{M}_{g,n}$ is the quotient space of the associated groupoid as in Definition 6.4 in \cite{MR2262197}.
As such both $\overline{M}_{g,n}$ as its quotient space and $M$ as the space of objects of this groupoid carry a stratification by orbit type, see \cite{1101.0180}, esp.~Section 5.
A stratum of $M$ in this stratification is a connected component of an equivalence class of the relation on $M$ given by abstract isomorphism of automorphism groups.
The stratification on $\overline{M}_{g,n}$ is then the one induced by the quotient map.
Since the morphisms of the associated groupoid are given by isomorphisms of nodal surfaces, this stratification respects the stratification by signature.
Now let $(\pi : \Sigma \to M, R_\ast)$ be a marked nodal family of Riemann surfaces of type $(g,n)$ with $M$ closed and \st the induced map $\upsilon : M \to \overline{M}_{g,n}$ defines an orbifold branched covering that branches over the Deligne-Mumford boundary.
Let $\overset{\circ}{M}$ be the top-dimensional part of the stratification by signature, \ie the set of those $b\in M$ \st $\Sigma_b$ is a smooth Riemann surface.
Let correspondingly $M_{g,n}$ be the part of $\overline{M}_{g,n}$ consisting of the equivalence classes of smooth Riemann surfaces.
Then $M_{g,n}$ is an orbifold, an orbifold structure (in the sense of Definition 2.4 in \cite{MR2262197}) being defined by the restriction of the orbifold structure for $\overline{M}_{g,n}$ constructed in \cite{MR2262197}.
By definition, $\overset{\circ}{\upsilon} \definedas \upsilon|_{\overset{\circ}{M}} : \overset{\circ}{M} \to M_{g,n}$ defines a (finite non-branched) orbifold covering.
Defining $\overset{\circ}{\Sigma} \definedas \Sigma|_{\overset{\circ}{M}}$, $\overset{\circ}{R}_\ast \definedas R_\ast|_{\overset{\circ}{M}}$, one can hence form the associated groupoid to the marked family of Riemann surfaces $(\overset{\circ}{\Sigma} \to \overset{\circ}{M}, \overset{\circ}{R}_\ast)$ as in Definition 6.4 in \cite{MR2262197}, which defines a groupoid structure on $M_{g,n}$.
$\upsilon$ as a branched orbifold covering is an open map and since $M$ is assumed compact, it is also a closed map.
Since $\overline{M}_{g,n}$ is connected, the restriction of $\upsilon$ to every connected component of $M$ is surjective and one can assume w.\,l.\,o.\,g.~that $M$ is connected as well.
Since the complement of $\overset{\circ}{M}$ in $M$ consists of submanifolds of real codimension at least two, $\overset{\circ}{M}$ is then connected as well.
Since the groupoid associated to $(\overset{\circ}{\Sigma} \to \overset{\circ}{M}, \overset{\circ}{R}_\ast)$, being complex is oriented, the stratification by orbit type on $\overset{\circ}{M}$ has a unique connected top-dimensional stratum and all other strata have codimension at least two in $\overset{\circ}{M}$.
Denote this top-dimensional stratum by $\overset{\circ\circ}{M}$.

Because $M$ is compact, one can assign two well-defined numbers, $|\mathcal{O}(\overset{\circ\circ}{M})|$, the length of the orbit $\mathcal{O}(b)$ of any point $b\in \overset{\circ\circ}{M}$ (by compactness of $M$ this is a finite number) and $|\Aut(\overset{\circ\circ}{M})|$, the order of the automorphism group $\Aut(b)$ of any point $b\in \overset{\circ\circ}{M}$ (which is a finite number by properness of the groupoid, irrespective of whether $M$ is compact or not).
With the help of these, to any map $f : \overline{M}_{g,n} \to X$, where $X$ is any manifold, \st $f\circ \pi^M_{M_{g,n}} : M \to X$ is smooth, $\pi^M_{M_{g,n}}$ being the quotient projection, one can assign a well-defined rational pseudocycle (as defined in Section 1 of \cite{MR2399678})
\[
\frac{1}{|\Aut(\overset{\circ\circ}{M})||\mathcal{O}(\overset{\circ\circ}{M})|} f\circ \pi^M_{M_{g,n}}|_{\overset{\circ\circ}{M}}\text{.}
\]

\section{Construction of smooth structures on moduli spaces}\label{Section_Construction_smooth_structures}

Throughout this section, fix a marked nodal family $(\pi : \Sigma \to M, R_\ast)$ of type $(g,n)$ and choose a metric $h$ on $\Sigma$ that is hermitian on every fibre of $\Sigma$.
Furthermore, let $(\kappa : X \to M, \omega)$ be a family of symplectic manifolds with fibres symplectomorphic to a closed symplectic manifold $(X_0, \omega_0)$ (in other words a fibre bundle with fibre $X_0$ and structure group $\mathrm{Symp}(X_0,\omega_0)$, the symplectomorphism group of $(X_0, \omega_0)$). Define $\tilde{\kappa} : \tilde{X} \to \Sigma$ as the pullback of $\kappa : X \to M$ to $\Sigma$ via $\pi$.
As before, $\mathcal{J}_\omega(X)$ is the set of $\omega$-compatible vertical almost complex structures on $X$, \ie bundle morphisms $J \in \End(VX)$ with $J^2 = -\id$ and \st $\omega(\cdot, J\cdot)$ defines a metric on $VX$.
In other words, for any $b\in M$, $J_b$ is a compatible almost complex structure on the symplectic manifold $(X_b, \omega_b)$.
Such a $J\in \mathcal{J}_\omega(X)$ is chosen and $\tilde{X}$ is equipped with the almost complex structure given by the pullback of $J$ to $\Sigma$ via the projection onto $M$ (and again denoted by $J$), and the metric $g^J$ on $V\tilde{X}$ defined by $\omega$ and $J$.
Finally, a locally trivial family $A$ of $2^\text{nd}$ homology classes $(A_b)_{b\in M}$, $A_b \in H_2(X_b;\Z)$, in the fibres of $X$ is fixed in the sense that there exists a covering $(U_i)_{i\in I}$ of $M$ and trivialisations $\phi_i : X|_{U_i} \cong U_i\times X_0$ \st $(\pr_2)_\ast \circ (\phi_i|_{X_b})_\ast A_b \in H_2(X_0; \Z)$ is independent of $b \in U_i$.

\subsection{Hamiltonian perturbations}\label{Subsection_Hamiltonian_perturbations}

For almost all of the notions and results on Hamiltonian perturbations,
see \cite{MR2045629}, Section 8.1.

The basic (separable) Banach space from which all perturbations will be chosen is defined in analogy with \cite{MR2399678}, Section 3.
\begin{defn}\label{Definition_Hamiltonian_perturbation}
Let $\varepsilon = (\varepsilon_i)_{i\in\N_0}$ be a fixed sequence of positive numbers.
Denote by $\tilde{\kappa} : \tilde{X} \to \Sigma$ the projection.
The space of Floer's $C^\varepsilon$-sections of $\tilde{\kappa}^\ast T^\ast\Sigma$ is
\[
\Gamma^\varepsilon(\tilde{\kappa}^\ast T^\ast\Sigma) \definedas 
\{ H\in \Gamma(\tilde{\kappa}^\ast T^\ast \Sigma) \;|\; \sum_{i=0}^\infty 
\varepsilon_i\|H\|_{C^i} < \infty \}\text{.}
\]
Let $C \subseteq \Sigma$ be the set of special points, \ie the union of all the markings and nodal points, and define $\tilde{C} \definedas \tilde{\kappa}\inv(C) \subseteq \tilde{X}$.
$C \subseteq \Sigma$ is a submanifold that intersects every fibre of $\Sigma$ in a finite number of points.
Define ($\operatorname{cl}$ denotes the closure)
\[
\Gamma^\varepsilon_0(\tilde{\kappa}^\ast T^\ast \Sigma) \definedas 
\operatorname{cl}\{ H \in \Gamma^\varepsilon(\tilde{\kappa}^\ast T^\ast \Sigma) \;|\; \supp(H) \subseteq \tilde{X} \setminus \tilde{C}\}\text{.}
\]
Let $0 < \delta < \frac{1}{4}$.
The space of \emph{Hamiltonian perturbations} is defined to be the open ball of radius $\delta$ in $\Gamma^\varepsilon_0(\tilde{\kappa}^\ast T^\ast \Sigma)$, \ie
\[
\mathcal{H}_{\varepsilon,\delta}(\tilde{X}) \definedas 
\{ H\in \Gamma^\varepsilon_0(\tilde{\kappa}^\ast T^\ast \Sigma) \;|\; \sum_{i=0}^\infty 
\varepsilon_i\|H\|_{C^i} < \delta \}\text{,}
\]
where $\varepsilon$ is chosen as in \cite{MR948771}, Lemma 5.1.
The subscripts $\varepsilon$ and $\delta$ will usually be dropped, \ie $\mathcal{H}(\tilde{X}) \definedas \mathcal{H}_{\varepsilon,\delta}(\tilde{X})$.
\end{defn}

The reason for the appearance of the constant $\delta$ in the above definition is so one can apply Exercise 8.1.3 from \cite{MR2045629}, and for any desingularisation $\hat{\iota}_b : S_b \to \Sigma_b \subseteq \Sigma$, for $b\in M$, of a fibre of $\Sigma$, equip the total space $\hat{X}_b$ of the pullback of the fibration $\tilde{X}$ to $S_b$, with a symplectic form, see (the proof of) Lemma \ref{Lemma_Dstar}.

\begin{construction}\label{Construction_Hamiltonian_connection}
Let $(S, j, r_\ast, \nu), b\in M, \hat{\iota} : S \to \Sigma_b$ be a desingularisation of $\Sigma_b$ and let $\hat{X} \definedas \hat{\iota}^\ast \tilde{X}$ with projection $\hat{\kappa} : \hat{X} \to S$.
Using $\hat{X} \cong S\times X_b$, an element $H \in \mathcal{H}(\tilde{X})$ defines a linear map $H_b : TS \to \coprod_{z\in S} C^\infty(\hat{X}_z, \R) \cong S\times C^\infty(X_b, \R)$ in the following way:
If $\zeta_z\in T_zS_b = V_zS\subseteq T_zS$, then $H_b(\zeta_z) : \hat{X}_z = X_b \to \R$, $x \mapsto H_{\tilde{\iota}(x)}(\hat{\iota}_\ast \zeta_z)$, where $\tilde{\iota} : \hat{X} \to \tilde{X}$ is the canonical map covering $\hat{\iota} : S \to \Sigma$. In this way, $H_b$ is considered as a $1$-form on $S$ with values in the smooth functions on the fibres of $\hat{X}$. \\
Furthermore, for $\zeta_z\in T_zS$, to the function $H_b(\zeta_z) \in C^\infty(\hat{X}_z,\R)$ corresponds a Hamiltonian vector field $X_{H_b(\zeta_z)}\in \mathfrak{X}(\hat{X}_z)$.
In this way one gets a fibrewise linear function $X_H : TS \to \coprod_{z\in S} \mathfrak{X}(\hat{X}_z)$, \ie a $1$-form with values in the space of Hamiltonian vector fields on the fibres of $\hat{X}$. \\
$H_b$ then defines a connection on $\hat{X}$ with projection onto the vertical tangent bundle given by
\begin{align*}
\pi^{T\hat{X}}_{V\hat{X}} : T\hat{X} \cong TS \times TX_b &\to
V\hat{X} \cong S\times TX_b \\
(\zeta_z,v_x) &\mapsto (z,v_x) + (z,X_{H_b(\zeta_z)}(x))\text{.}
\end{align*}
\end{construction}

\begin{defn}
For $H\in \mathcal{H}(\tilde{X})$ and $b\in M$ as above, $X_{H_b} : TS \to \mathfrak{X}(X_b)$ from the previous construction is called the \emph{Hamiltonian vector field} on $S$ associated to $H$ and
\[
X_{H_b}^{0,1} \definedas \frac{1}{2}(X_{H_b} + J_b\circ X_{H_b}\circ j_b)
\]
is its complex antilinear part.
\end{defn}

\begin{defn}\label{Definition_J_H}
Let $J \in \mathcal{J}_\omega(X)$ and let $H\in \mathcal{H}(\tilde{X})$, $b\in M$. Using the notation from the previous construction, the \emph{almost complex structure $\hat{J}^{H_b}$ on $\hat{X}$ defined by $J$ and $H$} is given by $\hat{J}^{H_b}|_{V\hat{X}} = J_b$, using the canonical identification $V\hat{X} \cong S\times V_b X$ and $\hat{J}^{H_b}|_{H\hat{X}} = \hat{\iota}^\ast j$ \wrt the decomposition $T\hat{X} \cong V\hat{X} \oplus H\hat{X}$ defined by the connection associated to $H$.
\end{defn}

\begin{remark}\label{Remark_tilde_J_H}
For $(w,v)\in T\hat{X} \cong TS\times TX_b$,
\[
\hat{J}^{H_b}(w,v) = (j w, J_bv + 2J_bX_{H_b(w)}^{0,1})\text{.}
\]
\end{remark}

The main existence result for Hamiltonian perturbations:

\begin{lemma}\label{Lemma_Existence_of_perturbations}
Let $(S, j, r_\ast, \nu), b\in M, \hat{\iota} : S \to \Sigma_b$ be a
desingularisation of $\Sigma_b \subseteq \Sigma$.
Given any $z \in S \setminus \left( \bigcup_{i=1}^n r_i \cup \bigcup_{i=1}^d \{n^1_i, n^2_i\} \right)$, where $\nu = \{\{n^1_1, n^2_1\}, \dots, \{n^1_d, n^2_d\}\}$, any $x \in \hat{\iota}^\ast \tilde{X}$ with $\hat{\iota}^\ast \tilde{\kappa}(x) = z$, any $\eta\in \Hom(T_zS, V_{x}\hat{\iota}^\ast\tilde{X})$ and any neighbourhood $U$ of $x$ in $\hat{\iota}^\ast \tilde{X}$, there exists an $H\in \mathcal{H}(\tilde{X})$ \st $\hat{\iota}^\ast H\in \mathcal{H}(\hat{\iota}^\ast \tilde{X})$ has support in $U$ and satisfies $(X_{\hat{\iota}^\ast H})_{x} = \eta$.
\end{lemma}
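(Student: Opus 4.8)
The statement is local near $x$, so the plan is to reduce it to a linear-algebra problem in adapted coordinates and then solve that problem by an explicit bump perturbation. First I would set up the coordinates. Since $z$ is neither a marked nor a nodal point of $S$, the immersion $\hat{\iota}$ is an embedding near $z$ and $\hat{\iota}(z)$ is a regular point of $\pi : \Sigma \to M$; hence there are holomorphic coordinates around $\hat{\iota}(z)$ in $\Sigma$ in which $\pi$ is a linear projection and $\Sigma_b$ is a coordinate disc, identified via $\hat{\iota}$ with a neighbourhood of $z$ in $S$. Trivialising $\hat{\iota}^\ast\tilde{X}$ near $x$ and choosing coordinates $(y_1,\dots,y_{2m})$, $2m = \dim_\R X_b$, on $X_b$ centred at $x$, one obtains coordinates $(w,y)$, $w = s + \mathbf{i}t$, on $\tilde{X}$ near the image of $x$ in which any section of $\tilde{\kappa}^\ast T^\ast\Sigma$ reads $H = a\,\d s + c\,\d t$ for smooth real-valued functions $a,c$ of $(w,y)$.

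Next I would unwind Construction \ref{Construction_Hamiltonian_connection}: for $\zeta = \alpha\,\partial_s + \beta\,\partial_t \in T_zS$ the Hamiltonian $H_b(\zeta) \in C^\infty(X_b)$ equals $y \mapsto \alpha\,a(0,y) + \beta\,c(0,y)$ near $x$, so $(X_{\hat{\iota}^\ast H})_x(\zeta) = X_{H_b(\zeta)}(x)$ is the vector $\omega_b$-dual (with the sign convention of that construction) to $-\bigl(\alpha\,\d_y a(0,0) + \beta\,\d_y c(0,0)\bigr) \in T_x^\ast X_b$; in particular it is a linear function of the two fibre-derivatives $\d_y a(0,0)$, $\d_y c(0,0)$ alone. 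Hence it suffices to produce an $H$ realising two prescribed covectors at $x$: given $\eta$, put $v_1 \definedas \eta(\partial_s)$, $v_2 \definedas \eta(\partial_t)$, and using nondegeneracy of $\omega_b$ at $x$ let $\xi_k \definedas -\omega_b(v_k,\cdot) = \sum_j\gamma_{kj}\,\d y_j \in T_x^\ast X_b$ for $k = 1,2$; I then need $\d_y a(0,0) = \xi_1$ and $\d_y c(0,0) = \xi_2$.

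For this I would take a cut-off $\chi$ on the coordinate domain with $\chi \equiv 1$ near $0$ and support inside the coordinate image of $U$ (possible after first shrinking the patch into $U$, as $U \ni x$ and the image of $x$ lies off $\tilde{C}$), set
\[
a \definedas \chi\cdot\sum_j\gamma_{1j}y_j\text{,}\qquad c \definedas \chi\cdot\sum_j\gamma_{2j}y_j\text{,}
\]
and extend $H \definedas a\,\d s + c\,\d t$ by zero to a smooth global section of $\tilde{\kappa}^\ast T^\ast\Sigma$. By construction $\supp H \subseteq \tilde{X}\setminus\tilde{C}$ and $\supp\hat{\iota}^\ast H \subseteq U$; and since $\chi \equiv 1$ while $\sum_j\gamma_{kj}y_j$ vanishes to first order at $0$, one gets $\d_y a(0,0) = \xi_1$, $\d_y c(0,0) = \xi_2$, whence $(X_{\hat{\iota}^\ast H})_x = \eta$ by the formula above and linearity. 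Near $z$, $\hat{\iota}$ is a biholomorphism onto its image, so $\hat{\iota}^\ast H$ is the same model in the same coordinates and lies in $\mathcal{H}(\hat{\iota}^\ast\tilde{X})$ as soon as $H \in \mathcal{H}(\tilde{X})$.

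The main obstacle, and the only genuinely delicate point, is to arrange $H \in \mathcal{H}(\tilde{X}) = \mathcal{H}_{\varepsilon,\delta}(\tilde{X})$, i.e. $H \in \Gamma^\varepsilon_0(\tilde{\kappa}^\ast T^\ast\Sigma)$ with $\sum_i\varepsilon_i\|H\|_{C^i} < \delta$. Membership in $\Gamma^\varepsilon$ is exactly what the choice of $\varepsilon$ as in \cite{MR948771}, Lemma 5.1 provides: for a model section of this fixed shape the norms $\|H\|_{C^i}$ grow in a controlled way, so $\varepsilon$ is chosen decaying fast enough that the series converges, and $H$ — smooth with support away from $\tilde{C}$ — then lies in $\Gamma^\varepsilon_0$. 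Bringing the sum below $\delta$ is subtler, since $\|H\|_{C^1}$ is bounded below by a geometric constant times $\|\eta\|$, so no shrinking of the cut-off can force the $C^\varepsilon$-norm below $\delta$ once $\|\eta\|$ is large; the construction therefore really gives $H \in \mathcal{H}(\tilde{X})$ for every $\eta$ of norm below a threshold depending on $\delta$, $\varepsilon$ and the local geometry near $x$, which is the form in which the result is used downstream — in the Sard--Smale arguments one only needs the linear map $H \mapsto (X_{\hat{\iota}^\ast H})_x$ to be surjective onto $\Hom(T_zS, V_x\hat{\iota}^\ast\tilde{X})$, and that follows from surjectivity onto a neighbourhood of $0$.
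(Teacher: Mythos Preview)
Your construction is essentially identical to the paper's: local coordinates at a regular point, $\omega$-dualise $\eta(\partial_s),\eta(\partial_t)$ to covectors, and take $H$ to be a cut-off times the corresponding linear function in the fibre coordinates. The paper uses two separate cut-offs $\delta_\Sigma$ on the base and $\delta_X$ on the fibre rather than a single product cut-off, but this is cosmetic.

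Where you go beyond the paper is in the last paragraph: you correctly observe that the literal statement of the lemma is too strong, since $\mathcal{H}(\tilde{X})$ is the open $\delta$-ball and $\|H\|_{C^1}$ is bounded below in terms of $\|\eta\|$, so for large $\eta$ no $H$ in the ball can realise it. The paper's proof simply ignores this. Your resolution is the right one: what the downstream transversality arguments (Lemma~\ref{Lemma_Main_transversality_result}) actually use is surjectivity of the linearised map from the \emph{tangent space} $T_H\mathcal{H}(\tilde{X}) = \Gamma^\varepsilon_0(\tilde{\kappa}^\ast T^\ast\Sigma)$ onto $\Hom(T_zS,V_x\hat{\iota}^\ast\tilde{X})$, and for that it suffices to hit a neighbourhood of $0$ and invoke linearity. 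So your proof is both correct and slightly more careful than the original on this point.
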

\begin{proof}
Choose a coordinate neighbourhood $V\subseteq M$ of $B$ and a symplectic trivialisation $X|_V \cong V\times X_0$ of $X$. Because $z$ does not coincide with any of the special points, there exists a coordinate neighbourhood $\tilde{V} \subseteq \Sigma$ that is mapped by $\pi$ onto $V$ and \st there exist coordinates $(t_0, \dots, t_k)\in \C\times \C^k$, $k \definedas \dim_\C(M)$ on $\tilde{V}$ and coordinates on $V$ \st $\pi|_{\tilde{V}} : \tilde{V} \to V$ in these coordinates is the map $(t^0, t^1, \dots, t^k) \mapsto (t^1, \dots, t^k)$ and $z$ corresponds to the point $(0, \dots, 0)$. Furthermore, let $x' \definedas \tilde{\iota}(x) \in \tilde{X}$, where $\tilde{\iota} : \hat{\iota}^\ast\tilde{X} \to \tilde{X}$ is the canonical map covering $\hat{\iota}$. Then there exists a neighbourhood of $x'$ in $\tilde{X}|_{\tilde{V}}$ of the form $\tilde{V}\times W$, where $W \subseteq X_0$ is a coordinate neighbourhood with coordinates $(x^1, \dots, x^l) \in \R^l$, $l \definedas \dim_\R(X_0)$, mapping $x'$ to zero. One can assume that $\tilde{V}\times W \cap \tilde{\kappa}\inv(\pi\inv(b)) \subseteq \hat{\iota}(U)$, so $\hat{\iota}\inv(\tilde{V}\times W) \subseteq U$.
Choose two smooth cutoff functions $\delta_X : \R^{\dim X} \to [0,1]$ and $\delta_\Sigma : \C\times \C^{k} \to [0,1]$ which are identically $1$ in
a neighbourhood of $0$ and have compact support inside the neighbourhoods of $0$ corresponding to $W$ and $\tilde{V}$, respectively. 
Let $\frac{\partial}{\partial t^i_j}$, $i=0, \dots, k$, $j=1,2$, be the coordinate vector fields for the real coordinates associated to the complex coordinates $t^i$.
Then $t^0$ defines a complex coordinate in a neighbourhood of $z$ in $S$ and one can evaluate $\omega(\eta(\frac{\partial}{\partial t^0_j}), \cdot) = \sum_m \lambda_{j,m} \d x^m$ for some $\lambda_{j,m}$ and define $H\in
\mathcal{H}(\tilde{X})$ as the $1$-form that vanishes identically outside $\tilde{V}\times W$ and in the above coordinates maps the $\frac{\partial}{\partial t^i_j}$ for $i > 0$ to zero and maps the $\frac{\partial}{\partial t^0_j}_{(t^0,t^1,\dots,t^k)}$ to the function that vanishes identically outside $W$ and maps
\[
(x^1,\dots,x^l) \mapsto \delta_\Sigma(t^0,t^1,\dots,t^k)
\delta_X(x^1,\dots,x^l) \sum_m \lambda_{j,m} x^m\text{.}
\]
\end{proof}

For such Hamiltonian perturbations $H \in \mathcal{H}(\tilde{X})$ and points $b \in M$, one can define the moduli spaces of holomorphic curves in the family $\Sigma$ with values in $X$. These are the main objects to be studied in this thesis:
\begin{defn}
Let $H \in \mathcal{H}(\tilde{X})$, let $b\in M$ and let $(S, j, r_\ast, \nu), b\in M, \hat{\iota} : S \to \Sigma_b$ be a desingularisation, $\hat{\iota}^\ast \tilde{X} \definedas \hat{X}$. Then
\begin{align*}
\mathcal{M}_b(\tilde{X}, A, J, H) \definedas \{ u : \Sigma_b \to \tilde{X} \;|\; & \tilde{\kappa}\circ u = \id_{\Sigma_b}, [\pr_2\circ u] = A_b \in H_2(X_b; \Z), \\
& \hat{\iota}^\ast u : S \to \hat{X} \text{ is $j$-$\hat{J}^{H_b}$-holomorphic}\}\text{,}
\end{align*}
which is independent of the choice of desingularisation and where $\pr_2 : \tilde{X}|_{\Sigma_b} \cong \Sigma_b\times X_b \to X_b$ is the projection. Hence
\begin{align*}
\mathcal{M}(\tilde{X}, A, J, \mathcal{H}(\tilde{X})) \definedas \coprod_{\substack{b\in M \\ H\in \mathcal{H}(\tilde{X})}} \mathcal{M}_b(\tilde{X}, A, J, H)
\end{align*}
is well-defined and comes with two projections
\begin{align*}
\pi^{\mathcal{M}}_M : \mathcal{M}(\tilde{X}, A, J, \mathcal{H}(\tilde{X})) &\to M
\intertext{and}
\pi^{\mathcal{M}}_{\mathcal{H}} : \mathcal{M}(\tilde{X}, A, J, \mathcal{H}(\tilde{X})) &\to \mathcal{H}(\tilde{X})\text{.}
\end{align*}
\end{defn}

The remainder of this chapter consists of defining (Banach) manifold structures over certain subsets of this space (although not on all of $\mathcal{M}(\tilde{X}, A, J, \mathcal{H}(\tilde{X}))$) in such a way that it reflects the stratified structure of $M$ by the stratification by signature (where well-defined) and to define a topology on $\mathcal{M}(\tilde{X}, A, J, \mathcal{H}(\tilde{X}))$ compatible with the manifold topologies on these parts.

\subsection{The case of a fixed Riemann surface}

In this first subsection, the case of a fixed Riemann surface and a fixed trivial symplectic fibre bundle over this surface, equipped with (fixed) almost complex and Hamiltonian structures, is treated.
First, the respective Fredholm problem is set up, \ie a Banach space bundle $\mathcal{E} \to \mathcal{B}$ and a Cauchy-Riemann operator $\dbar$ as a section of this bundle are defined.
Then the linearisation of this Cauchy-Riemann operator is calculated and using this, first, it is shown that this operator is Fredholm of the expected index (Corollary \ref{Corollary_Nonlinear_Riemann_Roch}).
Then a condition is derived for when the linearisation of $\dbar$ is complex linear (Corollary \ref{Corollary_Ddbar_complex_linear}), which will mainly be needed in the last third of this text. Finally, the well-known elliptic regularity result will be derived, namely that the elements in the solution set $\dbar\inv(0)$ of the Fredholm problem actually consist of smooth sections.
Most of these are rather well-known results, but first of all, they are all crucial for the later discussion, and second, using the results from the previous chapter and assuming the standard results for linear Cauchy-Riemann operators, the proofs are actually rather short.
Most of the proofs here actually follow the same scheme: By expressing everything in a chart for $\mathcal{B}$ and a trivialisation for $\mathcal{E}$, the problem is reduced to a known result about linear Cauchy-Riemann operators.

\begin{construction}
Let, for now, $(X, \omega)$ be a fixed closed symplectic manifold and let $A \in H_2(X)$.
Let $(S,j)$ be a smooth Riemann surface of Euler characteristic $\hat{\chi}$ equipped with a hermitian metric $h$, let $J\in \mathcal{J}_\omega(X)$ and let $H\in \mathcal{H}(\hat{X})$, where $\hat{X} \definedas S\times X$.
Then there are the connection defined by $H$ as in Construction \ref{Construction_Hamiltonian_connection}, together with $h$ and the metric on $\hat{X}$ defined via the connection by the metric $g^J \definedas \omega(\cdot, J\cdot)$ on the fibres of $\hat{X}$ and the pullback of $h$ via the projection on the horizontal tangent bundle. These turn $\pr_1 : \hat{X} \to S$ into a Riemannian submersion.
The covariant derivative on vertical vector fields will be denoted by $\nabla^H$.
Now over $\hat{X}$ there are the two vector bundles $\Hom(TS, V\hat{X})$ and its subbundle of complex antilinear morphisms
\[
\overline{\Hom}_{(j,J)}(TS, V\hat{X}) \definedas
\{\eta \in \Hom(TS, V\hat{X}) \;|\; \eta\circ j = -J\circ \eta\}\text{.}
\]
Both of these inherit a metric from the metrics $h$ and $g^J$ on $TS$ and $V\hat{X}$, respectively.
$\Hom(TS, V\hat{X})$ also inherits a connection from the connections on $TS$ and $V\hat{X}$.
But in general, this connection does not restrict to a well defined connection on $\overline{\Hom}_{(j,J)}$, since this subbundle is not invariant under parallel transport.
The problem here being that the Levi-Civita connection on $X$ (coming from $g^J$) is not hermitian (the metric $h$ on $S$ is automatically K{\"a}hler, $S$ being twodimensional). This can be solved by replacing the Levi-Civita connection on $X$ by the hermitian connection $\tilde{\nabla}$ defined by $g^J$ and $J$. It is shown in \cite{MR2045629}, Appendix C.7, that
\[
\tilde{\nabla}_XY = \nabla_XY - \frac{1}{2}J(\nabla_XJ)Y\text{.}
\]
$\tilde{\nabla}$ preserves $J$ and the metric $g^J$, but it is not torsion free, its torsion being given by
\[
T^{\tilde{\nabla}}(X,Y) = -\frac{1}{4}N_J(X,Y)\text{,}
\]
where $N_J$ denotes the Nijenhuis tensor of $J$.
Also, the map
\begin{align*}
\pi^{\Hom}_{\overline{\Hom}_{(j,J)}} : \Hom(TS, V\hat{X}) &\to
\overline{\Hom}_{(j,J)} (TS, V\hat{X}) \\
\eta &\mapsto \frac{1}{2}(\eta + J\circ \eta\circ j)
\end{align*}
defines a smooth bundle morphism. \\
Using these structures, one can make the following definitions: \\
Fix, once and for all, a real number $p > 2$. Furthermore, let $k\in \N$.
\[
\mathcal{B}^{k,p}(\hat{X},A,J,H) \definedas
\{u\in L^{k,p}(\hat{X},\pr_1,g^J) \;|\; [\pr_2\circ u] =
A\}\text{,}
\]
where $L^{k,p}(\hat{X},\pr_2, g^J)$ is the Sobolev space of sections of $\hat{X}$.
This is a Banach manifold, since it is a union of connected components, hence an open subset, of $L^{k,p}(\hat{X},\pr_1,g^J)$. For a continuous path in this space via the Sobolev embedding theorem defines a continuous path of continuous functions, hence two sections in the same connected component define the same homology class.
Proceeding,
\begin{align*}
\mathcal{E}^{k-1,p}(\hat{X},A,J,H) \definedas
\{&(\eta, u)\in L^{k-1,p}(\overline{\Hom}_{(j,J)}(TS,V\hat{X}), h^\ast\otimes
g^J, \nabla^S\otimes \nabla^H, \\
& \hat{X}, \pr_1, g^J) \;|\; u\in \mathcal{B}^{k,p}(\hat{X},
A,J,H)\}\text{,}
\end{align*}
which, as a restriction of the Banach space bundle $L^{k-1,p}(\overline{\Hom}_{(j,J)}(TS,V\hat{X}), h^\ast\otimes g^J, \nabla^S\otimes \nabla^H, \hat{X}, \pr_1, g^J)$ to an open subset, is a Banach space bundle over $\mathcal{B}^{k,p}(\hat{X}, A,J,H)$.
To define the Cauchy-Riemann operator, note that just the same way, there also is the Banach space bundle
\begin{align*}
\mathcal{F}^{k-1,p}(\hat{X},A,J,H) \definedas
\{&(\eta, u)\in L^{k-1,p}(\Hom(TS,V\hat{X}), h^\ast\otimes
g^J, \nabla^S\otimes \nabla^H, \\
& \hat{X}, \pr_1, g^J) \;|\; u\in \mathcal{B}^{k,p}(\hat{X},
A,J,H)\}
\end{align*}
over $\mathcal{B}^{k,p}(\hat{X},A,J,H)$ which comes with the section $D^{\mathrm{v}} : \mathcal{B}^{k,p}(\hat{X},A,J,H) \to \mathcal{F}^{k-1,p}(\hat{X},A,J,H)$.
$D^{\mathrm{v}}$ here denotes the vertical differential of a section, \ie for a section $u : S \to \hat{X}$, $D^{\mathrm{v}} u \definedas \pr^{T\hat{X}}_{V\hat{X}}\circ Du$, where $\pr^{T\hat{X}}_{V\hat{X}} : T\hat{X} \to V\hat{X}$ denotes the projection defined by the Hamiltonian connection associated to $H$.
Additionally, the bundle morphism $\pi^{\Hom}_{\overline{\Hom}_{(j,J)}}$ from above induces a morphism of Banach space bundles from $\mathcal{F}^{k-1,p}(\hat{X},A,J,H)$ to $\mathcal{E}^{k-1,p}(\hat{X}, A,J,H)$, hence the composition of the section $D^{\mathrm{v}}$ with this morphism defines a section of $\mathcal{E}^{k-1,p}(\hat{X},A,J,H)$. \\
Finally, note that $J$ induces almost complex structures on both $V\hat{X}$ and $\overline{\Hom}_{(j,J)}(TS, V\hat{X})$, which turn both $T\mathcal{B}^{k,p}(\hat{X},A,J,H)$ and $\mathcal{E}^{k-1,p}(\hat{X}, A,J,H)$ into complex Banach space bundles over $\mathcal{B}^{k,p}(\hat{X},A,J,H)$.
\end{construction}
\begin{defn}
The \emph{(nonlinear) Cauchy-Riemann operator} on $\hat{X}$ is the section
\begin{align*}
\dbar^{J,H}_S : \mathcal{B}^{k,p}(\hat{X},A,J,H) &\to
\mathcal{E}^{k-1,p}(\hat{X}, A,J,H) \\
u &\mapsto \left(\frac{1}{2}(D^{\mathrm{v}}u + J\circ D^{\mathrm{v}}u\circ
j), u\right)
\end{align*}
of the Banach space bundle
\[
\mathcal{E}^{k-1,p}(\hat{X}, A, J, H) \to \mathcal{B}^{k,p}(\hat{X}, A, J, H)\text{.}
\]
\end{defn}

\begin{lemma}\label{Lemma_Ddbar}
Let $u\in \Gamma^k(\hat{X},\pr_1,g^J)\cap \mathcal{B}^{k,p}(\hat{X}, A, J, H)$.
Then \wrt the chart for $\mathcal{B}^{k,p}(\hat{X},A,J,H)$ and the trivialisation of $\mathcal{E}^{k-1,p}(\hat{X}, A, J, H)$ around $u$, the linearisation of $\dbar^{J,H}_S$ at $u$ is given by (for $Z\in TS$)
\begin{align*}
(D\dbar^{J,H}_S)_u : T\mathcal{B}^{k,p}(\hat{X}, A, J, H)
&\to \mathcal{E}^{k-1,p}(\hat{X}, A, J, H)_u \\
((D\dbar^{J,H}_S)_u)\eta(Z) &= \nabla^{0,1}_Z\eta - \frac{1}{2}J\left(
(\nabla_\eta J) \partial u(Z) + \pi^{T\hat{X}}_{V\hat{X}}
(\hat{\nabla}^H_\eta \hat{J}^H) \tilde{Z}\right) \\
&= \nabla^{0,1}_Z\eta - \pi^{T\hat{X}}_{V\hat{X}}\left(
\frac{1}{2}\hat{J}^H \left( \hat{\nabla}^H_\eta \hat{J}^H\right)
(\partial u(Z) + \tilde{Z})\right) \\
&= \nabla^{0,1}_Z\eta - K_{\hat{J}^H}(\eta, \partial u(Z) + \tilde{Z}) \;- \\
&\qquad\quad\;\;\, -\; \frac{1}{8}\pi^{T\hat{X}}_{V\hat{X}} N_{\hat{J}^H}(\eta, \partial u(Z) + \tilde{Z})\text{,}
\intertext{where}
\nabla^{0,1}_Z\eta &\definedas \frac{1}{2}(\nabla_Z \eta +
J\nabla_{jZ} \eta)\text{,} \\
\partial u(Z) &\definedas \frac{1}{2}(D^{\mathrm{v}} u(Z) -
JD^{\mathrm{v}} u(jZ))\text{,}
\end{align*}
$K_{\hat{J}^H}$ is the symmetric part of the bundle morphism
\[
T\hat{X}\otimes T\hat{X} \to V\hat{X}, \quad (\eta, \xi) \mapsto \pi^{T\hat{X}}_{V\hat{X}}\left(\frac{1}{2}\hat{J}^H \left( \hat{\nabla}^H_\eta \hat{J}^H\right)\xi\right)
\]
and where $\tilde{Z}$ denotes the horizontal lift of $Z\in TS$ to
$\hat{X}$, $\hat{\nabla}^H$ denotes the Levi-Civita connection
on $\hat{X}$ and $\hat{J}^H$ denotes the almost complex
structure on $\hat{X}$ defined by $J$, $j$ and the connection
given by $H$ as in Definition \ref{Definition_J_H}.
\end{lemma}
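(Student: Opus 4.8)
The plan is to prove this by reducing everything to a local computation in a chart for the Banach manifold $\mathcal{B}^{k,p}(\hat{X},A,J,H)$ around a smooth section $u$, together with a trivialisation of the Banach space bundle $\mathcal{E}^{k-1,p}$, exactly along the lines announced in the introduction to this subsection. Since $\dbar^{J,H}_S$ is built from the vertical differential $D^{\mathrm{v}}$ followed by the fibrewise projection $\pi^{\Hom}_{\overline{\Hom}_{(j,J)}}$, and since that projection is a smooth \emph{linear} bundle map, its derivative contributes in two pieces by the Leibniz rule: one piece where the derivative hits $D^{\mathrm{v}}u$, and one piece where it hits the (point-dependent) complex structures entering the definition of the projection and the identification of nearby fibres. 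The first piece produces the term $\nabla^{0,1}_Z\eta$ and the second produces the zeroth-order correction terms.

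**Key steps in order.** First I would fix a smooth $u$, pick geodesic normal coordinates (with respect to the hermitian connection $\tilde{\nabla}$, not Levi-Civita) on the fibres of $\hat{X}$ around the image of $u$, and use these to write the chart $\xi \mapsto \exp^{\tilde\nabla}_u(\xi)$ for $\mathcal{B}^{k,p}$ and the parallel-transport trivialisation for $\mathcal{E}^{k-1,p}$. Second, I would write $\dbar^{J,H}_S(\exp_u\xi)$ in this trivialisation and differentiate at $\xi=0$ in direction $\eta$. The $\nabla^{0,1}$ term falls out of differentiating $D^{\mathrm{v}}$ itself (this is the standard linear Cauchy–Riemann part, once one checks that using $\tilde\nabla$ rather than $\nabla$ is what makes $\nabla^{0,1}$ land in $\overline{\Hom}_{(j,J)}$). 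Third — and this is the computational heart — I would differentiate the dependence of the almost complex structure on the base point: the term $-\tfrac12 J(\nabla_\eta J)\partial u(Z)$ comes from varying $J$ along the fibre, and the term $-\tfrac12\pi^{T\hat X}_{V\hat X}(\hat\nabla^H_\eta \hat J^H)\tilde Z$ comes from the horizontal part of $\hat J^H$ varying, which is genuinely present here (unlike the closed-surface-with-fixed-$J$ case) because of the Hamiltonian connection. Fourth, I would combine these into the single expression $-\tfrac12\pi^{T\hat X}_{V\hat X}\bigl(\hat J^H(\hat\nabla^H_\eta\hat J^H)(\partial u(Z)+\tilde Z)\bigr)$, using that $\hat J^H$ restricted to the vertical bundle is $J$ and that $\partial u(Z)+\tilde Z$ is precisely the full tangent vector $Du(Z)$ decomposed into vertical-holomorphic-part plus horizontal lift. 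Finally, I would split the bilinear map $(\eta,\xi)\mapsto \pi^{T\hat X}_{V\hat X}(\tfrac12\hat J^H(\hat\nabla^H_\eta\hat J^H)\xi)$ into its symmetric part $K_{\hat J^H}$ and antisymmetric part, and identify the antisymmetric part with $\tfrac18\pi^{T\hat X}_{V\hat X}N_{\hat J^H}$ using the standard identity $N_J(X,Y) = $ (antisymmetrisation of $J(\nabla_X J)Y$ type terms), cf.\ \cite{MR2045629}, Appendix C, together with $T^{\tilde\nabla} = -\tfrac14 N_{\hat J^H}$ recorded in the construction above.

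**Main obstacle.** The delicate point is the bookkeeping of \emph{which} connection is used where: the chart uses $\tilde\nabla$ on the fibres (hermitian), but $\hat\nabla^H$ on the total space $\hat X$ is the Levi-Civita connection of the submersion metric, and the relation $\tilde\nabla_XY = \nabla_XY - \tfrac12 J(\nabla_X J)Y$ must be applied carefully to convert between them so that the torsion/Nijenhuis terms come out with the exact constants $\tfrac18$ as claimed. A secondary subtlety is verifying that the linearisation genuinely lands in the fibre $\mathcal{E}^{k-1,p}_u$, i.e.\ that the right-hand side is complex antilinear in $Z$ — this is automatic for $\nabla^{0,1}_Z\eta$ by construction, and for the zeroth-order terms it follows because they are evaluated on $\partial u(Z)+\tilde Z$ and one checks antilinearity in $Z$ using $\hat J^H\circ(\text{horizontal lift of }jZ) = \text{horizontal lift of }Z$ up to the antiholomorphic correction already absorbed into $\partial u$. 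Everything else is routine once the coordinate expressions are written down; I do not expect any conceptual difficulty beyond the constant-chasing, since the corresponding fixed-domain statement is standard and the family/Hamiltonian features only add the horizontal terms, which the formula already anticipates via the $\tilde Z$ summand.
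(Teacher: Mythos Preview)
The paper does not actually give a proof here: it simply cites the author's thesis (\cite{ediss15878}, Lemma~II.17). Your outline is the natural computation one would expect to find there --- linearise in a chart given by fibrewise exponentiation, separate the first-order piece $\nabla^{0,1}$ from the zeroth-order piece coming from the variation of $\hat J^H$ along $\eta$, then rewrite the latter using the standard identity relating the antisymmetrisation of $\hat J^H(\hat\nabla^H_\cdot\hat J^H)\cdot$ to the Nijenhuis tensor (the identity invoked later in the paper as ``line (C.7.5) of Lemma~C.7.1'' in \cite{MR2045629}). So your plan is essentially correct and almost certainly matches the deferred argument.

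One small correction to your Step~4: it is \emph{not} true in general that $\partial u(Z)+\tilde Z = Du(Z)$. One has $Du(Z) = D^{\mathrm v}u(Z)+\tilde Z = \dbar^{J,H}_S u(Z) + \partial u(Z) + \tilde Z$, so the equality you claim holds only at a holomorphic $u$ (this is exactly how the paper uses it in the proof of Corollary~\ref{Corollary_Ddbar_complex_linear}). For the present lemma $u$ is arbitrary, and the point is rather that when you differentiate $\tfrac12(D^{\mathrm v}u + J\circ D^{\mathrm v}u\circ j)$ the derivative of $J$ acts only on the $J$-\emph{linear} part $\partial u$ (because the $J$-antilinear part $\dbar u$ already sits in the target fibre and is killed by the projection/trivialisation), while the extra horizontal term $\tilde Z$ enters through the Hamiltonian connection hidden in $D^{\mathrm v}$. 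So the combination $\partial u(Z)+\tilde Z$ arises naturally from the computation, not from being equal to $Du(Z)$. This does not affect the validity of your approach, only the justification you gave for that step.
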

\begin{proof}
See \cite{ediss15878}, Lemma II.17, p.~65.
\end{proof}

This result seems to differ by the term involving
$\tilde{Z}$ from the corresponding formula in \cite{MR2045629},
Section 8.3, p.~257\,f., esp.~Remark 8.3.8. Although that is not a
real argument for why the formula above is correct, Corollary
\ref{Corollary_Ddbar_complex_linear} and Lemma
\ref{Lemma_Holomorphic_section_is_holomorphic_map} at least show that
it produces the consequences one (or at least the author) would hope
for.

\begin{corollary}\label{Corollary_Nonlinear_Riemann_Roch}
\[
\dbar^{J,H}_S : \mathcal{B}^{k,p}(\hat{X}, A, J, H) \to
\mathcal{E}^{k-1,p}(\hat{X}, A, J, H)
\]
is a Fredholm operator of index
\[
\dim_\C(X)\hat{\chi} + 2c_1(A)\text{.}
\]
\end{corollary}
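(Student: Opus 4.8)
The plan is to reduce the statement to the standard linear Riemann-Roch theorem for Cauchy-Riemann operators on complex vector bundles over a closed Riemann surface, exactly in the spirit of the remark preceding the corollary that ``by expressing everything in a chart for $\mathcal{B}$ and a trivialisation for $\mathcal{E}$, the problem is reduced to a known result about linear Cauchy-Riemann operators.'' First I would fix $u \in \dbar^{J,H}_S{}\inv(0)$ (if the solution set is empty there is nothing to prove, but the Fredholm property is a statement about the section and should be checked at every point of $\mathcal{B}^{k,p}$; one works at an arbitrary $u$, and by elliptic regularity it suffices to treat smooth $u$ since smooth sections are $L^{k,p}$-dense and the Fredholm index is locally constant). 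Pulling back via the desingularisation is not needed here since $S$ is already smooth; so one simply works on $\hat X = S \times X$ and considers the linearisation $(D\dbar^{J,H}_S)_u$ computed in Lemma \ref{Lemma_Ddbar}.

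The key observation is that Lemma \ref{Lemma_Ddbar} exhibits $(D\dbar^{J,H}_S)_u$ in the form $\nabla^{0,1}_Z \eta + (\text{zeroth-order terms in }\eta)$, i.e. as a genuine linear Cauchy-Riemann operator on the complex vector bundle $E \definedas u^\ast V\hat X$ over $(S,j)$, perturbed by a bundle endomorphism. Concretely, $(D\dbar^{J,H}_S)_u$ acts on $L^{k,p}(S;E) \to L^{k-1,p}(S;\overline{\Hom}_{(j,J)}(TS,E))$, and the principal part $\eta \mapsto \nabla^{0,1}\eta = \tfrac12(\nabla\eta + J\nabla_{j\cdot}\eta)$ is precisely of the type covered by the standard theory (e.g. \cite{MR2045629}, Appendix C, Theorem C.1.10, or Riemann-Roch as in Theorem C.1.10 there): such an operator is Fredholm between these Sobolev completions, and its real index equals $2(c_1(E)[S] + \tfrac{\hat\chi}{2}\operatorname{rank}_\C E) = \dim_\C(X)\,\hat\chi + 2c_1(E)[S]$. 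Here $\operatorname{rank}_\C E = \dim_\C(X)$ and $c_1(E)[S] = c_1(u^\ast V\hat X)[S] = \langle c_1(TX), (\pr_2\circ u)_\ast[S]\rangle = c_1(A)$, using that the homology class of $\pr_2\circ u$ is $A$ by definition of $\mathcal{B}^{k,p}(\hat X, A, J, H)$ and that the horizontal part of $T\hat X$ contributes trivially to $c_1$ along a section. The three displayed terms $K_{\hat J^H}$, the Nijenhuis term, and the $(\nabla J)\partial u$ term are all of order zero in $\eta$, hence define a compact perturbation $L^{k,p} \to L^{k-1,p}$ (compactness of the inclusion $L^{k,p}\hookrightarrow L^{k-1,p}$ on the compact surface $S$), so they do not change Fredholmness or index. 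This gives the claimed index $\dim_\C(X)\hat\chi + 2c_1(A)$.

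The main obstacle — really the only non-formal point — is justifying that the chart-and-trivialisation picture genuinely reduces the nonlinear operator to a linear CR operator of the stated kind, and in particular that the coefficient of the principal part is exactly the complex-antilinear Cauchy-Riemann symbol with respect to the correct complex structures $(j,J)$; this is exactly the content of Lemma \ref{Lemma_Ddbar}, whose formula shows the leading term is $\nabla^{0,1}_Z\eta$ with no first-order correction, so once that lemma is granted the reduction is clean. A secondary, routine point is checking that the relevant Sobolev completions are the right Banach spaces (this uses $p > 2$ so that $L^{k,p} \hookrightarrow C^0$ and the manifold-of-maps structure on $\mathcal{B}^{k,p}$ makes sense) and that the index is independent of $k$, which follows from elliptic regularity and the standard argument that $\ker$ and $\coker$ consist of smooth sections. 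I would therefore structure the written proof as: (1) identify the linearisation via Lemma \ref{Lemma_Ddbar} as a linear CR operator on $u^\ast V\hat X$ plus a zeroth-order term; (2) invoke the linear Riemann-Roch/Fredholm theorem for such operators; (3) compute $c_1(u^\ast V\hat X)[S] = c_1(A)$ and $\operatorname{rank}_\C = \dim_\C X$ to read off the index; (4) note $k$-independence by elliptic regularity.
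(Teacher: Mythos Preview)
Your proposal is correct and matches the paper's intended approach exactly: the corollary is stated without proof because it follows immediately from Lemma~\ref{Lemma_Ddbar}, which exhibits the linearisation as a real linear Cauchy--Riemann operator on $u^\ast V\hat X$ plus zeroth-order terms, together with the standard Riemann--Roch theorem for such operators (as in \cite{MR2045629}, Appendix~C). The paper even signposts this scheme explicitly in the introductory paragraph of the subsection, so there is nothing to add.
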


\begin{corollary}\label{Corollary_Ddbar_complex_linear}
Let $u\in \mathcal{B}^{k,p}(\hat{X}, A, J, H)$, $\eta \in
T_u\mathcal{B}^{k,p}(\hat{X}, A, J, H)$.  If $\dbar^{J,H}_S u = 0$, then
\[
(((D\dbar^{J,H}_S)_u)(J\eta) - J((D\dbar^{J,H}_S)_u)\eta)(Z) = \pi^{T\hat{X}}_{V\hat{X}}
N_{\hat{J}^H}(\eta, Du(Z))\text{,}
\]
where $Du : TS\to T\hat{X}$ is the usual differential. In particular,
if $N_{\hat{J}^H}(\eta, v) = 0$ for all $\eta\in
V\hat{X}|_{\im u}$ and $v\in \im Du$, then 
\[
(D\dbar^{J,H}_S)_u : T_u\mathcal{B}^{k,p}(\hat{X},A,J,H) \to
\mathcal{E}^{k-1,p}(\hat{X},A,J,H)_u
\]
is a complex linear operator.
\end{corollary}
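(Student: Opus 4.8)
The plan is to derive the displayed identity by a pointwise, tensorial computation starting from the formula for the linearisation in Lemma~\ref{Lemma_Ddbar}, and then to read off the ``in particular'' as an immediate consequence. First I would observe that, since $\dbar^{J,H}_S u = 0$, the elliptic regularity statement alluded to above makes $u$ smooth, so Lemma~\ref{Lemma_Ddbar} is applicable; moreover $\dbar^{J,H}_S u = 0$ forces $D^{\mathrm{v}}u = \partial u$, hence $\partial u(Z) + \tilde{Z} = Du(Z)$ (the full differential of the section $u$ decomposes into its vertical part $D^{\mathrm{v}}u(Z)$ and the horizontal lift $\tilde{Z}$), and also $\hat{J}^H\circ Du = Du\circ j$. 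Substituting this into the second expression of Lemma~\ref{Lemma_Ddbar} yields, for such $u$,
\[
((D\dbar^{J,H}_S)_u)\eta(Z) = \nabla^{0,1}_Z\eta - \frac{1}{2}\,\pi^{T\hat{X}}_{V\hat{X}}\bigl(\hat{J}^H(\hat{\nabla}^H_\eta\hat{J}^H)Du(Z)\bigr).
\]

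Next I would compute the defect from complex linearity $C(\eta)(Z) \definedas ((D\dbar^{J,H}_S)_u)(J\eta)(Z) - J\,((D\dbar^{J,H}_S)_u)\eta(Z)$ term by term. The first-order term contributes nothing: the connection entering $\nabla^{0,1}$ is the one induced on the complex-antilinear subbundle $\overline{\Hom}_{(j,J)}(TS,V\hat{X})$ by the \emph{hermitian} connection $\tilde{\nabla}$ — this is precisely why $\tilde{\nabla}$ was used in place of the Levi-Civita connection — so it preserves $J$ and $\nabla^{0,1}_Z(J\eta) = J\,\nabla^{0,1}_Z\eta$. For the zeroth-order term I would use that $\pi^{T\hat{X}}_{V\hat{X}}$ commutes with $\hat{J}^H$ (both $V\hat{X}$ and $H\hat{X}$ are $\hat{J}^H$-invariant), that $J$ acts as $\hat{J}^H$ on $V\hat{X}$, and the identity $\hat{J}^H(\hat{\nabla}^H_w\hat{J}^H) = -(\hat{\nabla}^H_w\hat{J}^H)\hat{J}^H$ obtained by differentiating $(\hat{J}^H)^2 = -\id$; this reduces the defect to
\[
C(\eta)(Z) = -\frac{1}{2}\,\pi^{T\hat{X}}_{V\hat{X}}\bigl(\hat{J}^H(\hat{\nabla}^H_{J\eta}\hat{J}^H)Du(Z) + (\hat{\nabla}^H_\eta\hat{J}^H)Du(Z)\bigr).
\]

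The remaining, and main, step is to recognise this as $\pi^{T\hat{X}}_{V\hat{X}}\,N_{\hat{J}^H}(\eta, Du(Z))$. For this I would invoke the standard expression of the Nijenhuis tensor in terms of an arbitrary torsion-free connection — applicable since $\hat{\nabla}^H$ is the Levi-Civita connection of $\hat{X}$ —
\[
N_{\hat{J}^H}(\xi_1,\xi_2) = (\hat{\nabla}^H_{\hat{J}^H\xi_1}\hat{J}^H)\xi_2 - (\hat{\nabla}^H_{\hat{J}^H\xi_2}\hat{J}^H)\xi_1 - \hat{J}^H(\hat{\nabla}^H_{\xi_1}\hat{J}^H)\xi_2 + \hat{J}^H(\hat{\nabla}^H_{\xi_2}\hat{J}^H)\xi_1,
\]
with $\xi_1 = \eta$, $\xi_2 = Du(Z)$, and then use $\hat{J}^H Du(Z) = Du(jZ)$ together with torsion-freeness once more (to trade derivatives of $\hat{J}^H$ along the curve directions for derivatives along $\eta$) to match the two sides after applying $\pi^{T\hat{X}}_{V\hat{X}}$. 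This is the same mechanism by which, for a fixed target and without Hamiltonian perturbation, $\dbar_J$ is complex linear modulo the Nijenhuis tensor (cf.~\cite{MR2045629}). I expect the one delicate point to be the bookkeeping in this last step: obtaining \emph{exactly} $\pi^{T\hat{X}}_{V\hat{X}}\,N_{\hat{J}^H}(\eta, Du(Z))$, with no stray constant or extra factor of $\hat{J}^H$, rather than merely a universal nonzero multiple of it. Once the identity is established the ``in particular'' is immediate: $(D\dbar^{J,H}_S)_u$ is complex linear iff $C(\eta)(Z) = 0$ for all $\eta$ and $Z$, and the hypothesis says precisely that $N_{\hat{J}^H}(\eta, v) = 0$ for all $\eta \in V\hat{X}|_{\im u}$ and $v \in \im Du$, which covers every $\eta(z) \in V\hat{X}|_{\im u}$ and every $Du(Z) \in \im Du$.
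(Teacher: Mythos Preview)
There is a genuine gap. Your claim that the first-order term contributes nothing, i.e.\ that $\nabla^{0,1}_Z(J\eta) = J\,\nabla^{0,1}_Z\eta$, rests on a misidentification of the connection appearing in Lemma~\ref{Lemma_Ddbar}. The $\nabla$ in $\nabla^{0,1}_Z\eta = \tfrac{1}{2}(\nabla_Z\eta + J\nabla_{jZ}\eta)$ is \emph{not} the hermitian connection $\tilde{\nabla}$; it is the Levi--Civita connection $\hat{\nabla}^H$ projected to the vertical bundle, $\nabla_Z\eta = \pi^{T\hat{X}}_{V\hat{X}}\hat{\nabla}^H_{Du(Z)}\eta$ (the paper states this explicitly in the proof). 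The passage about $\tilde{\nabla}$ in the preceding construction only concerns the well-definedness of the Banach bundle $\mathcal{E}^{k-1,p}$, not the formula for the linearisation. Since the Levi--Civita connection does not commute with $J$, the first-order defect $\nabla^{0,1}_Z(J\eta) - J\nabla^{0,1}_Z\eta$ is nonzero; using $Du\circ j = \hat{J}^H\circ Du$ and the identity $\hat{J}^H(\hat{\nabla}^H_{\hat{J}^H v}\hat{J}^H) = \hat{\nabla}^H_v\hat{J}^H$ one finds it equals $\pi^{T\hat{X}}_{V\hat{X}}\bigl((\hat{\nabla}^H_{Du(Z)}\hat{J}^H)\eta\bigr)$.

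This missing term is precisely what is needed. Your zeroth-order contribution simplifies (by the same identity) to $-\pi^{T\hat{X}}_{V\hat{X}}\bigl((\hat{\nabla}^H_\eta\hat{J}^H)Du(Z)\bigr)$, and together the two give
\[
C(\eta)(Z) = \pi^{T\hat{X}}_{V\hat{X}}\bigl((\hat{\nabla}^H_{Du(Z)}\hat{J}^H)\eta - (\hat{\nabla}^H_\eta\hat{J}^H)Du(Z)\bigr),
\]
which is exactly $\pi^{T\hat{X}}_{V\hat{X}}N_{\hat{J}^H}(\eta, Du(Z))$ by the second formula in (C.7.5) of \cite{MR2045629}. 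With only your zeroth-order piece the matching in your ``main step'' cannot succeed: the four-term expression you quote for $N_{\hat{J}^H}$ has a part antisymmetric under $\xi_1\leftrightarrow\xi_2$, and the expression $-(\hat{\nabla}^H_\eta\hat{J}^H)Du(Z)$ alone cannot produce it no matter how you manipulate torsion-freeness or $\hat{J}^H Du = Du\,j$. So the ``delicate bookkeeping'' you anticipate in the last step is in fact unresolvable without restoring the first-order contribution. Once you correct this, the rest of your outline (including the density argument for general $\eta$, which you should also mention) matches the paper's proof.
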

\begin{proof}
First, assume that $\eta$ is of class $C^k$. Then by definition,
$\nabla_Z\eta = \pi^{T\hat{X}}_{V\hat{X}} \hat{\nabla}^H_{Du(Z)}\eta$
(in case $k=1$, the right hand side of this formula does not make any literal sense for sections of class $L^{k,p}$, whereas the left hand side does by definition of the $L^{k,p}$-spaces), where one considers $\eta$ as a vertical vector
field on $\hat{X}$ on the image of $u$. Furthermore, because $\eta$ is a
vertical vector field, $J\eta = \hat{J}^H\eta$ and
$\pi^{T\hat{X}}_{V\hat{X}}\hat{J}^H = J$. Also, by definition of
$\dbar^{J,H}_S u$ and $\partial u$, $Du(Z) = \dbar^{J,H}_S u(Z) + \partial u(Z) +
\tilde{Z}$, in particular $Du(Z) = \partial u(Z) + \tilde{Z}$ if $\dbar^{J,H}_S u =
0$. With this, by the second formula for $(D\dbar^{J,H}_S)_u$ from Lemma
\ref{Lemma_Ddbar},
\begin{align*}
(((D\dbar^{J,H}_S)_u)(J\eta) - J((D\dbar^{J,H}_S)_u)\eta)(Z) &= \pi^{T\hat{X}}_{V\hat{X}}
(((D\dbar^{J,H}_S)_u)(\hat{J}^H\eta) - \hat{J}^H((D\dbar^{J,H}_S)_u)\eta)(Z) \\
&= \pi^{T\hat{X}}_{V\hat{X}}\biggl(\hat{\nabla}^H_{Du(Z)} (\hat{J}^H
\eta) - \frac{1}{2} \hat{J}^H\left(\hat{\nabla}^H_{\hat{J}^H\eta}
\hat{J}^H\right)Du(Z) \;- \\
&\quad\; -\; \hat{J}^H\hat{\nabla}^H_{Du(Z)}\eta - \frac{1}{2}
\left(\hat{\nabla}^H_{\eta}\hat{J}^H\right)Du(Z)\biggr) \\
&= \pi^{T\hat{X}}_{V\hat{X}}\left( \left(\hat{\nabla}^H_{Du(Z)}
\hat{J}^H\right)\eta - \left(\hat{\nabla}^H_\eta \hat{J}^H\right)
Du(Z)\right)
\end{align*}
by the Leibniz rule and the left formula in line (C.7.5) of Lemma C.7.1,
p.~566, in \cite{MR2045629}. The claim for $\eta$ of class
$C^k$ now follows from the right formula in line (C.7.5) of
Lemma C.7.1, p.~566, in \cite{MR2045629}. \\
The general case ($\eta$ of class $L^{k,p}$) then follows by the
standard density argument.
\end{proof}

The following lemma should motivate the appearance of the almost
complex structure $\hat{J}^H$ in the lemma and corollary above.
\begin{lemma}\label{Lemma_Holomorphic_section_is_holomorphic_map}
In the notation of the above construction, for a section $u \in
\Gamma^k(\hat{X},h,g^J,\nabla^H)\cap \mathcal{B}^{k,p}(\hat{X},A,J,H)$,
$\pi^{T\hat{X}}_{H\hat{X}}\circ
\dbar^{\hat{J}^{H}}_S u = 0$ and $\pi^{T\hat{X}}_{V\hat{X}}\circ
\dbar^{\hat{J}^{H}}_S u = \dbar^{J,H}_S u$, where 
$\hat{J}^H$ is the almost complex structure on $\hat{X}$ as in
Construction \ref{Construction_Hamiltonian_connection} and
$\dbar^{\hat{J}^{H}}_S$ is the standard Cauchy-Riemann
operator on functions between the almost complex manifolds $S$ and
$\hat{X}$. In particular, $u$ satisfies $\dbar^{J,H}_S u = 0$ \iff $u
: S \to \hat{X}$ is a $(j,\hat{J}^{H})$-holomorphic map.
\end{lemma}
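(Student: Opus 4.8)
The plan is to decompose the standard Cauchy--Riemann operator $\dbar^{\hat J^H}_S u(Z)=\tfrac12\bigl(Du(Z)+\hat J^H Du(jZ)\bigr)$ according to the splitting $T\hat X=V\hat X\oplus H\hat X$ determined by the Hamiltonian connection associated to $H$, and to read off the two claimed identities using only that $u$ is a section of $\pr_1$ and that $\hat J^H$ respects this splitting. No genuine analysis is involved; the regularity hypothesis $u\in\Gamma^k(\hat X,h,g^J,\nabla^H)$ merely guarantees that $Du$, and hence all expressions below, are defined pointwise.

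First I would record two elementary observations. Since $\pr_1\circ u=\id_S$, we have $\pr_{1\ast}\circ Du=\id_{TS}$; as $\pr_{1\ast}$ restricts to an isomorphism $H\hat X\to TS$ whose inverse is the horizontal lift $Z\mapsto\tilde Z$, this forces $\pi^{T\hat X}_{H\hat X}\circ Du(Z)=\tilde Z$ for every $Z\in TS$, while by the very definition of the vertical differential $\pi^{T\hat X}_{V\hat X}\circ Du=D^{\mathrm v}u$. Second, by Definition~\ref{Definition_J_H} the almost complex structure $\hat J^H$ preserves both $V\hat X$ and $H\hat X$, acting as $J$ on the former and as the horizontal lift of $j$ on the latter; in particular $\pi^{T\hat X}_{H\hat X}$ and $\pi^{T\hat X}_{V\hat X}$ both commute with $\hat J^H$, and $\hat J^H\tilde Z=\widetilde{jZ}$.

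Applying $\pi^{T\hat X}_{H\hat X}$ to $\dbar^{\hat J^H}_S u(Z)$ and using these facts gives $\tfrac12\bigl(\tilde Z+\hat J^H\widetilde{jZ}\bigr)=\tfrac12\bigl(\tilde Z+\widetilde{j^2Z}\bigr)=\tfrac12(\tilde Z-\tilde Z)=0$, which is the first identity $\pi^{T\hat X}_{H\hat X}\circ\dbar^{\hat J^H}_S u=0$. Applying $\pi^{T\hat X}_{V\hat X}$ instead gives $\tfrac12\bigl(D^{\mathrm v}u(Z)+J\,D^{\mathrm v}u(jZ)\bigr)$, which is exactly $\dbar^{J,H}_S u(Z)$ by the definition of $\dbar^{J,H}_S$; this is the second identity $\pi^{T\hat X}_{V\hat X}\circ\dbar^{\hat J^H}_S u=\dbar^{J,H}_S u$. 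Finally, since $\dbar^{\hat J^H}_S u$ is the sum of its horizontal and vertical components and the horizontal one always vanishes, $\dbar^{\hat J^H}_S u=0$ if and only if $\dbar^{J,H}_S u=0$, i.e.\ if and only if $u:S\to\hat X$ is $(j,\hat J^H)$-holomorphic.

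I do not expect any real obstacle. The only points requiring care are the bookkeeping of the canonical identifications $V\hat X\cong S\times V_bX$ and $H\hat X\cong\pr_1^\ast TS$ underlying Construction~\ref{Construction_Hamiltonian_connection} and Definition~\ref{Definition_J_H}, and being explicit that the argument is purely algebraic along the image of $u$, so that no density/approximation argument (as needed in Corollary~\ref{Corollary_Ddbar_complex_linear}) is required here. Everything else reduces to feeding the formal identity $\hat J^H=J\oplus(\text{lift of }j)$ through the definition of $\dbar$.
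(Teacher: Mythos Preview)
Your proof is correct and follows essentially the same approach as the paper: decompose $\dbar^{\hat J^H}_S u$ along the splitting $T\hat X=V\hat X\oplus H\hat X$, use that $u$ is a section so $\pi^{T\hat X}_{H\hat X}\circ Du(Z)=\tilde Z$, and that $\hat J^H$ acts as $J$ vertically and as the lift of $j$ horizontally. The paper presents it as one string of equalities rather than computing the two projections separately, but the content is identical.
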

\begin{proof}
By definition of $\hat{J}^{H}$,
\begin{align*}
\dbar^{\hat{J}^{H}}_S u &= \frac{1}{2}(Du + \hat{J}^{H}\circ
Du\circ j) \\
&= \frac{1}{2}(\pi^{T\hat{X}}_{V\hat{X}}\circ Du + J\circ
\pi^{T\hat{X}}_{V\hat{X}} \circ Du\circ j \;+ \\
&\quad\; +\; \pi^{T\hat{X}}_{H\hat{X}}\circ Du +
(\pi_\ast|_{H\hat{X}})\inv \circ j\circ \pi_\ast \circ
(\pi^{T\hat{X}}_{H\hat{X}})_\ast\circ Du\circ j) \\
&= \dbar^{J,H}_S u + \frac{1}{2}((\pi_\ast|_{H\hat{X}})\inv +
(\pi_\ast|_{H\hat{X}})\inv \circ j\circ j)
\intertext{because by definition of a connection $\pi_\ast
  \circ(\pi^{T\hat{X}}_{H\hat{X}})_\ast = \pi_\ast$ and
  $\pi_\ast\circ Du = \id$ as well as
  $\pi^{T\hat{X}}_{H\hat{X}}\circ Du =
  (\pi_\ast|_{H\hat{X}})\inv$}
&= \dbar^{J,H}_S u + 0\text{.}
\end{align*}
\end{proof}

\begin{lemma}\label{Lemma_elliptic_regularity_I}
Let $v\in \mathcal{B}^{k,p}(\hat{X}, A, J, H)$ with $\dbar^{J,H}_S v = 0$. Then
$v$ is smooth, \ie $v\in \Gamma(\hat{X}, \pr_1, g^J)$.
\end{lemma}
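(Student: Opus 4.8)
The statement is the standard elliptic regularity (``bootstrapping'') result for solutions of a first-order elliptic system, and by Lemma \ref{Lemma_Holomorphic_section_is_holomorphic_map} the equation $\dbar^{J,H}_S v = 0$ is equivalent to saying that $v : S \to \hat{X}$ is a $(j,\hat{J}^H)$-holomorphic map, where $\hat{J}^H$ is a \emph{smooth} almost complex structure on $\hat{X}$ (it is built from $j$, $J$ and the connection associated to $H$, all of which are smooth — in particular $H \in \Gamma^\varepsilon(\tilde{\kappa}^\ast T^\ast\Sigma) \subseteq C^\infty$). The plan is therefore to reduce, by working in charts, to the regularity theory for the standard Cauchy-Riemann operator and to iterate.

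First I would fix $z_0 \in S$, a holomorphic coordinate identifying a neighbourhood of $z_0$ with the disc $\D$, and a trivialisation $\hat{X} \cong S\times X$ together with coordinates on $X$ near $v(z_0)$, so that $v$ is represented near $z_0$ by a map $w : \D \to \R^l$ of class $L^{k,p}$. By Remark \ref{Remark_tilde_J_H}, in these coordinates the equation $\dbar^{J,H}_S v = 0$ takes the form $\dbar w + \tfrac12\bigl(J_0(w) - \mathbf{i}\bigr)\,\dbar w' = G(\cdot,w)$, or more simply $\dbar w = F(\cdot, w, Dw)$, where $\dbar$ is the standard Cauchy-Riemann operator, $J_0$ is the coordinate expression of $J$, $G$ collects the contribution of the complex antilinear Hamiltonian term $X^{0,1}_{H_b}$, and $F$ is smooth in all its arguments with $F$ affine of small coefficient in $Dw$ on a small enough disc (after arranging $J_0(w(z_0)) = \mathbf{i}$). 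Now invoke the standard bootstrapping for $\dbar$ (see \cite{MR2045629}, Appendix B): since $p > 2 = \dim_\R(S)$ and $k \geq 1$, one has $L^{k,p} \hookrightarrow C^0$, and composition with a smooth function preserves $L^{m,p}$-regularity in this range; hence, starting from $w \in L^{k,p}$, if $w \in L^{m,p}_{\loc}$ for some $m \geq k$ then the right-hand side lies in $L^{m,p}_{\loc}$ and the elliptic estimate for the standard operator yields $w \in L^{m+1,p}_{\loc}$. Iterating over all $m$ gives $w \in \bigcap_m L^{m,p}_{\loc}$, hence $w \in C^\infty$ by Sobolev embedding; since $z_0$ was arbitrary, $v \in \Gamma(\hat{X},\pr_1,g^J)$.

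The only point requiring (entirely routine) care is the composition/multiplication step, namely that the nonlinear right-hand side $F(\cdot,w,Dw)$ does not destroy Sobolev regularity when fed a $w \in L^{m,p}_{\loc}$; this works precisely because $p > 2$, so $L^{1,p}$ already embeds into $C^0$ and the relevant Sobolev spaces behave well under pointwise products and composition with smooth maps. No difficulty arises from the Hamiltonian perturbation: although $H$ is taken from Floer's $C^\varepsilon$-space, any such $H$ is in particular $C^\infty$, so all coefficient functions occurring above — and hence $\hat{J}^H$ — are smooth, and the argument is literally the classical one for pseudoholomorphic maps.
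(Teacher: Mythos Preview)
The paper does not give a self-contained proof here: it simply refers to Lemma II.19 in the author's thesis \cite{ediss15878}. Your approach --- reduce via Lemma \ref{Lemma_Holomorphic_section_is_holomorphic_map} to the statement that $v$ is a $(j,\hat{J}^H)$-holomorphic map for a smooth almost complex structure, localise in coordinates, and invoke the bootstrapping in \cite{MR2045629}, Appendix B --- is the standard one and is correct in outline; it is also consistent with the paper's stated philosophy for this subsection (``express everything in a chart and reduce to a known result about linear Cauchy-Riemann operators'').

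One imprecision worth flagging: your iteration step as written does not actually gain a derivative. You claim that if $w \in L^{m,p}_{\loc}$ then $F(\cdot,w,Dw) \in L^{m,p}_{\loc}$, but since $F$ is affine in $Dw$ and $Dw \in L^{m-1,p}_{\loc}$, the right-hand side is a priori only in $L^{m-1,p}_{\loc}$, and the elliptic estimate for $\dbar$ then only returns $w \in L^{m,p}_{\loc}$. The mechanism that makes the bootstrapping work is more subtle: once $w \in W^{1,p} \hookrightarrow C^0$ (here $p>2$), one regards $z \mapsto J_0(w(z))$ as a $W^{m,p}$ coefficient and applies the \emph{linear} regularity result (Proposition B.4.2 in \cite{MR2045629}, whose proof uses difference quotients or, equivalently, the smallness of $J_0(w)-\mathbf{i}$ that you correctly identify to absorb the quasilinear term). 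That result gives $w \in W^{m+1,p}_{\loc}$, and now one can iterate. Since you cite exactly this appendix, the gap is cosmetic rather than substantive, but the sentence about the right-hand side should be rephrased accordingly.
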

\begin{proof}
See \cite{ediss15878}, Lemma II.19, p.~69.
\end{proof}

\subsection{The case of a smooth family of Riemann surfaces}

\begin{construction}
In the course of this construction, it will very soon be necessary to work with universal moduli spaces, in particular to fix some space of perturbations. Hence it is easier to start out with two families, namely a nodal family over which the perturbations are defined and a smooth desingularisation of this family over some locally closed submanifold.
So let $(\pi : \Sigma \to M, R)$ be a nodal family of Riemann surfaces of Euler characteristic $\chi$ with $n$ markings and let $(\rho : S \to B, \hat{R}, N, \iota, \hat{\iota})$ be a desingularisation of $\Sigma$ over $B$ as in Definition \ref{Definition_Desingularisation}.
Also, fix a metric $h$ on $S$ that induces a hermitian metric $h_b$ on every fibre $S_b \definedas \rho^{-1}(b)$ over a point $b\in B$.
As stated in the beginning of this section, let $(\kappa : X \to M, \omega)$ be a family of symplectic manifolds with fibres symplectomorphic to a closed symplectic manifold $(X_0, \omega_0)$.
Define $\tilde{\kappa} : \tilde{X} \to \Sigma$ as the pullback of $\kappa : X \to M$ to $\Sigma$ via $\pi$ and as before, let $A$ be a locally trivial family of $2^\text{nd}$ homology classes in the fibres of $X$.
Assume that $M$ is connected, hence there is a well-defined first Chern class $c_1(A) \definedas c_1^{TX_b}(A_b)$ for any $b\in M$. \\
Let finally $H\in \mathcal{H}(\tilde{X})$ be a Hamiltonian connection on $\tilde{X}$.
Using $\iota : B\to M$ and $\hat{\iota} : S\to \Sigma$, one can pull all these structures back to $B$ and $S$, \ie $\hat{\rho} : \hat{X} \definedas \hat{\iota}^\ast \tilde{X} = S\times X \to S$ is again a symplectic fibre bundle on which $\iota^\ast J$ and $\hat{\iota}^\ast H$ define almost complex and Hamiltonian structures, respectively.
For simplicity and by abuse of notation, $\iota^\ast J$ and $\hat{\iota}^\ast H$ will be denoted by $J$ and $H$, respectively.
For $b\in B$, denote by $J_b$, $g^J_b$ and $H_b$ the pullbacks of $J$, $g^J$ and $H$ to $\hat{X}_b \definedas \hat{X}|_{S_b}$, considered as a symplectic fibration over $S_b$ via the restriction $\hat{\rho}_b$ of $\hat{\rho}$.
Also denote by $j_b$ the complex structure on the Riemann surface $S_b$.
Denote
\begin{align*}
\mathcal{B}^{k,p}_b(\hat{X}, A, J, H) &\definedas \mathcal{B}^{k,p}(
\hat{X}_b, A, J_b, H_b)
\intertext{and}
\mathcal{E}^{k-1,p}_b(\hat{X}, A, J, H) &\definedas \mathcal{E}^{k-1,p}(
\hat{X}_b, A, J_b, H_b)\text{.}
\end{align*}
The reason for the additional superscript $H$ in comparison to the
previous notation will become clearer a little bit later. \\
With this define
\begin{align*}
\mathcal{B}^{k,p}(\hat{X}, A, J, H) &\definedas \coprod_{b\in B}
\mathcal{B}^{k,p}_b(\hat{X}, A, J, H), \\
\mathcal{E}^{k-1,p}(\hat{X}, A, J, H) &\definedas \coprod_{b\in B}
\mathcal{E}^{k-1,p}_b(\hat{X}, A, J, H)
\end{align*}
Furthermore, one can define a section (set-theoretically, at this point) by
\[
\dbar^{J,H} \definedas \coprod_{b\in B} \dbar^{J_b,H_b}_{S_b} :
\mathcal{B}^{k,p}(\hat{X}, A, J, H)\to \mathcal{E}^{k-1,p}(\hat{X}, A,
J, H)\text{.}
\]
\begin{defn}
The \emph{moduli space of $(J,H)$-holomorphic curves} in the family $S$ and representing the homology class $A$ is defined as the subset
\[
\mathcal{M}(\hat{X}, A, J, H) \definedas \left(\dbar^{J,H}\right)\inv(0)
\]
of $\mathcal{B}^{k,p}(\hat{X}, A, J, H)$ for any $k\in \N$, $p > 1$ with $kp > 2$, where $0$ denotes the image of the zero section in $\mathcal{E}^{k-1,p}(\hat{X}, A, J, H)$. This is well-defined by Lemma \ref{Lemma_elliptic_regularity_I}.
\end{defn}
The goal now is to equip this set with a manifold structure.
Following the usual course of action, to achieve this one wants to turn $\mathcal{E}^{k-1,p}(\hat{X}, A, J, H) \to \mathcal{B}^{k,p}(\hat{X}, A, J, H)$ into a Banach space bundle and $\dbar^{J,H}$ into a Fredholm section.
The straightforward way to attempt to define charts on $\mathcal{B}^{k,p}(\hat{X}, A, J, H)$ would be, for a point $a\in B$, to pick an open neighbourhood $U\subseteq B$ of $a$ and a smooth trivialisation
\[
\phi_a :  U\times S_a \overset{\cong}{\longrightarrow} S|_{U} \subseteq S\text{,}
\]
inducing maps
\[
\phi_{ab} : S_a \to S_b, \quad z\mapsto \phi_a(b, z)
\]
for $b\in U$. Defining
\[
\mathcal{B}^{k,p}_U(\hat{X}, A, J, H) \definedas \coprod_{b\in U}
\mathcal{B}^{k,p}_b(\hat{X}, A, J, H)\text{,}
\]
there is a bijection
\begin{align*}
\overline{\phi}_a : \mathcal{B}^{k,p}_U(\hat{X}, A, J, H) &\to U\times
\mathcal{B}^{k,p}_a(\hat{X}, A, J, H) \\
\mathcal{B}^{k,p}_b(\hat{X}, A, J, H)\ni u &\mapsto (b, \phi_{ab}^\ast u)
\text{,}
\end{align*}
where for $z\in S_b$, if $u(z) = (z, \overline{u}(z)) \in S_b\times X$, then for $w\in S_a$, $\phi_{ab}^\ast u(w) = (w, \overline{u}(\phi_{ab}(w)))$.
This map is well-defined, because first of all, it is clear that for $u\in \mathcal{B}^{k,p}(\hat{X}, A, J, H)$, $\phi_{ab}^\ast u\in \mathcal{B}^{k,p}(\hat{X}|_{(S_a, (\phi_{a}^\ast j)_b)}, A, (\phi_{a}^\ast J)_b, (\phi_{a}^\ast H)_b)$, where $\hat{X}|_{(S_a, (\phi_{a}^\ast j)_b)}$ denotes $\hat{X}_a$, but with the base space $S_a$ now equipped with the complex structure $(\phi_{a}^\ast j)_b$ instead of $j_a$.
But as Banach manifolds,
\[
\mathcal{B}^{k,p}(\hat{X}|_{(S_a, (\phi_{a}^\ast j)_b)},
A, (\phi_{a}^\ast J)_b, (\phi_{a}^\ast H)_b) = \mathcal{B}^{k,p}(
\hat{X}_a, A, J_a, H_a)\text{,}
\]
in the sense of a literal equality of sets as well as of equivalence classes of Banach manifold atlases. This raises the question why then to use the notation $\mathcal{B}^{k,p}(\hat{X}_a, A, J_a, H_a)$, and analogously $\mathcal{E}^{k-1,p}(\hat{X}_a, A, J_a, H_a)$, instead of the much shorter $\mathcal{B}^{k,p}(\hat{X}_a, A)$ and $\mathcal{E}^{k-1,p}(\hat{X}_a, A, J_a)$ (in the latter case $J_a$ is actually part of the definition). The reason is mainly due to the next construction where a copy of $\mathcal{B}^{k,p}(\hat{X}_a, A, J, H)$ appears for every $H\in \mathcal{H}(\tilde{X})$, which would then necessitate notation such as $\{H\}\times \mathcal{B}^{k,p}(\hat{X}_a, A)$. Also this notation serves as a reminder that every $\mathcal{B}^{k,p}(\hat{X}_a, A, J_a, H_a)$ comes with a distinguished atlas. \\
Now for another such chart given by an open subset $V\subseteq B$, trivialisation $\psi_c : V\times S_c \cong S|_V$ and corresponding trivialisation
\begin{align*}
\overline{\psi}_c : \mathcal{B}^{k,p}_V(\hat{X}, A, J, H) &\to V\times
\mathcal{B}^{k,p}_c(\hat{X}, A, J, H) \\
\mathcal{B}^{k,p}_b(\hat{X}, A, J, H)\ni u &\mapsto (b, \psi_{cb}^\ast u)
\text{,}
\end{align*}
the transition functions would be given by
\begin{align*}
(U\cap V) \times \mathcal{B}^{k,p}_c(\hat{X}, A, J, H) &\to
(U\cap V) \times \mathcal{B}^{k,p}_a(\hat{X}, A, J, H) \\
(b, u) &\mapsto (b, \phi_{ab}^\ast(\psi_{cb\ast}u))\text{,}
\end{align*}
where if $u(w) = (w, \overline{u}(w))\in \hat{X}_c$ for $w\in S_c$, then for $z\in S_a$, $\phi_{ab}^\ast(\psi_{cb\ast}u)(z) = (z, \overline{u}(\psi_{bc}\inv\phi_{ab}(z)))$.
In other words, there is a map $U\cap V \to \Diff(S_c, S_a)$, $b\mapsto \psi_{bc}\inv\circ \phi_{ab}$ and the transition functions are given in terms of the action of this map.\label{Page_No_universal_CR_operator}
But as is explained \eg in \cite{Wehrheim_Poly_Slides_1} or \cite{Wehrheim_Poly_Slides_2}, the induced action of the diffeomorphism group on Sobolev spaces simply is not smooth.
So the charts that have just been defined do \emph{not} patch together to give an atlas and it does not even make sense to ask whether or not $\dbar^{J,H}$ defines a smooth (Fredholm) section.
At this point one has to make a decision on how to proceed.
The more definitive way would be to use the sc-manifold/polyfold framework of Hofer, Wysocki and Zehnder, for an introduction see \eg the introduction by the inventors themselves \cite{MR2644764}, the slides cited above, or \cite{1210.6670}. \\
Here, I will take a slightly different route.
Namely remember that it is actually the spaces of holomorphic curves one is interested in, \ie the zero set of  $\dbar^{J,H}$ and one should actually look at the restriction of the transition functions above to this set.
By this what is meant is the following:
In analogy to $\mathcal{B}^{k,p}_U(\hat{X}, A, J, H)$, define
\[
\mathcal{E}^{k-1,p}_U(\hat{X}, A, J, H) \definedas \coprod_{b\in U}
\mathcal{E}^{k-1,p}_b(\hat{X}, A, J, H)
\]
and
\[
\dbar^{J,H}_U \definedas \dbar^{J,H}|_{\mathcal{B}^{k,p}_U(\hat{X},
A, J, H)} : \mathcal{B}^{k,p}_U(\hat{X}, A, J, H) \to
\mathcal{E}^{k-1,p}_U(\hat{X}, A, J, H)\text{.}
\]
Consequently,
\[
\mathcal{M}_U(\hat{X}, A, J, H) \definedas \left(\dbar^{J,H}_U\right)\inv(0) = \mathcal{M}(\hat{X}, A, J, H) \cap \mathcal{B}^{k,p}_U(\hat{X}, A, J, H)\text{.}
\]
$\mathcal{B}^{k,p}_U(\hat{X}, A, J, H)$ can be turned into a Banach manifold, giving it the product manifold structure of $U\times \mathcal{B}^{k,p}_a(\hat{X}, A, J, H)$ via the bijection above.
This structure then obviously depends on a choice of trivialisation $\phi_a$ of $S|_U$ and will be denoted by $\mathcal{B}^{k,p}_{U,\phi_a}(\hat{X}, A, J, H)$.
Analogously, $\mathcal{E}_U^{k-1,p}(\hat{X}, A, J, H)$ can be given a smooth Banach space bundle structure over $\mathcal{B}_{U,\phi_a}^{k,p}(\hat{X}, A, J, H)$ by identifying it with $U\times \mathcal{E}_a^{k-1,p}(\hat{X}, A, J, H)$ via the map
\begin{align*}
\hat{\phi}_a : \mathcal{E}_U^{k-1,p}(\hat{X}, A, J, H) &\to U\times \mathcal{E}_a^{k-1,p}(\hat{X}, A, J, H) \\
\mathcal{E}_b^{k-1,p}(\hat{X}, A, J, H)\ni (\eta, u) &\mapsto (b, (\pi^{\overline{\Hom}_{((\phi_{a}^\ast j)_b, (\phi_{a}^\ast J)_b)}}_{\overline{\Hom}_{(j_a, J_a)}} \phi_{ab}^\ast \eta, \phi_{ab}^\ast u))\text{.}
\end{align*}
For this to be well-defined it is assumed that $U$ is small enough \st
\[
\pi^{\overline{\Hom}_{((\phi_{a}^\ast j)_b, (\phi_{a}^\ast J)_b)}}_{\overline{\Hom}_{(j_a, J_a)}} : \overline{\Hom}_{((\phi_{a}^\ast j)_b, (\phi_{a}^\ast J)_b)}(TS_a, V\hat{X}_a) \to \overline{\Hom}_{(j_a, J_a)}(TS_a, V\hat{X}_a)
\]
is an isomorphism for all $b\in U$. Again, $\mathcal{E}_U^{k-1,p}(\hat{X}, A, J, H)$ equipped with this smooth structure will be denoted $\mathcal{E}_{U, \phi_a}^{k-1,p}(\hat{X}, A, J, H)$. \\
Finally, defining
\begin{align*}
\dbar^{J,H}_{U,\phi_a} : U\times \mathcal{B}_a^{k,p}(\hat{X}, A, J, H) &\to U\times \mathcal{E}_a^{k-1,p}(\hat{X}, A, J, H) \\
(b, u) &\mapsto (b, \pi^{\overline{\Hom}_{((\phi_{a}^\ast j)_b, (\phi_{a}^\ast J)_b)}}_{\overline{\Hom}_{(j_a, J_a)}} \dbar_{(S_a, (\phi_{a}^\ast j)_b)}^{(\phi_{a}^\ast J)_b, (\phi_{a}^\ast H)_b} u)\text{,}
\end{align*}
all of the above fit into a commutative diagram
\begin{equation}\label{Diagram_E_U_B_U_Dbar_U}
\xymatrix{
\mathcal{E}^{k-1,p}_{U,\phi_a}(\hat{X}, A, J, H) \ar[d] \ar[r]^-{\hat{\phi}_a} & U\times \mathcal{E}^{k-1,p}_a(\hat{X}, A, J, H) \ar[d] \\
\mathcal{B}^{k,p}_{U,\phi_a}(\hat{X}, A, J, H) \ar[r]^-{\overline{\phi}_a} \ar@/^1pc/[u]^-{\dbar^{J,H}_U} & U\times \mathcal{B}^{k,p}_a(\hat{X}, A, J, H) \ar@/_1pc/[u]_-{\dbar^{J,H}_{U,\phi_a}}
}
\end{equation}
\end{construction}

With this setup, $\dbar^{J,H}_U$ is a parametrised version of a Cauchy-Riemann operator, which hence is a Fredholm operator itself and the Fredholm index can be computed fairly easily.
$c_1(A)$ here is the first Chern number as in the beginning of this subsection.
\begin{lemma}
In the notation of the previous construction,
\[
\dbar^{J,H}_U :
\mathcal{B}^{k,p}_{U,\phi_a}(\hat{X}, A, J, H) \to \mathcal{E}^{k-1,p}_{U,\phi_a}(\hat{X}, A, J, H)
\]
is a Fredholm section of index
\[
\ind(\dbar^{J,H}_U) = \dim_\C(X)\hat{\chi} + 2c_1(A) + \dim_\R(U)\text{.}
\]
\end{lemma}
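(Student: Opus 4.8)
The plan is to transport the assertion, via the chart $\overline{\phi}_a$ and the trivialisation $\hat{\phi}_a$ of the preceding construction, into the model in Diagram \eqref{Diagram_E_U_B_U_Dbar_U}, and there to reduce it to a pointwise Fredholm statement to which Corollary \ref{Corollary_Nonlinear_Riemann_Roch} applies. Under $\overline{\phi}_a$ and $\hat{\phi}_a$ the section $\dbar^{J,H}_U$ is conjugate to $\dbar^{J,H}_{U,\phi_a}\colon U\times\mathcal{B}^{k,p}_a(\hat{X}, A, J, H)\to U\times\mathcal{E}^{k-1,p}_a(\hat{X}, A, J, H)$, an honest section of a Banach space bundle over the product Banach manifold $U\times\mathcal{B}^{k,p}_a(\hat{X}, A, J, H)$, the fibre over $(b,v)$ being a copy of the fibre of $\mathcal{E}^{k-1,p}_a(\hat{X}, A, J, H)$ at $v$.

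First I would check that $\dbar^{J,H}_{U,\phi_a}$ is a smooth section. This is the point where one must be slightly careful, but also where the construction was set up precisely so as to avoid the non-smoothness of the $\Diff(S_a)$-action: inside this single chart the variable $v$ lives on the \emph{fixed} surface $S_a$ with the \emph{fixed} fibre $X$, and the only $b$-dependent data are the tensor fields $(\phi_a^\ast j)_b$, $(\phi_a^\ast J)_b$, $(\phi_a^\ast H)_b$ together with the bundle morphism onto $\overline{\Hom}_{(j_a,J_a)}(TS_a,V\hat{X}_a)$, all of which depend smoothly on the finite-dimensional parameter $b$. Since $\dbar^{(\phi_a^\ast J)_b,(\phi_a^\ast H)_b}_{(S_a,(\phi_a^\ast j)_b)}$ is, up to that morphism, a first-order differential operator in $v$ whose coefficients are built algebraically from these tensor fields, the smoothness of $\dbar^{J,H}_{U,\phi_a}$ follows from the fixed-surface case (\cf the construction preceding Lemma \ref{Lemma_Ddbar}) together with smooth dependence on a finite-dimensional parameter; the failure of the Sobolev spaces to carry a smooth $\Diff(S_a)$-action only enters the transition maps between different charts, which is not our concern here.

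Next I would compute the vertical differential of $\dbar^{J,H}_{U,\phi_a}$ at a zero $(b_0,v_0)$, i.e.\ at a point with $v_0\in\mathcal{M}_{b_0}(\hat{X}, A, J, H)$. Because the $U$-component of $\dbar^{J,H}_{U,\phi_a}$ is the identity on $U$, this vertical differential is the bounded operator
\begin{align*}
T_{b_0}U\oplus T_{v_0}\mathcal{B}^{k,p}_a(\hat{X}, A, J, H)&\longrightarrow \mathcal{E}^{k-1,p}_a(\hat{X}, A, J, H)\big|_{v_0}\text{,} \\
(w,\xi)&\longmapsto P_{(b_0,v_0)}w + (D\dbar^{J_{b_0},H_{b_0}}_{S_{b_0}})_{v_0}\xi\text{,}
\end{align*}
where $P_{(b_0,v_0)}$ is the partial derivative of the section in the parameter direction, a bounded (hence finite-rank, $T_{b_0}U$ being finite-dimensional) operator, and $(D\dbar^{J_{b_0},H_{b_0}}_{S_{b_0}})_{v_0}$ is the linearisation of the Cauchy--Riemann operator on the fixed fibre, given explicitly by Lemma \ref{Lemma_Ddbar}.

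Finally, by Corollary \ref{Corollary_Nonlinear_Riemann_Roch} the operator $(D\dbar^{J_{b_0},H_{b_0}}_{S_{b_0}})_{v_0}$ is Fredholm of index $\dim_\C(X)\hat{\chi} + 2c_1(A)$. Adding to a Fredholm operator a bounded operator out of an auxiliary space of real dimension $m$ again yields a Fredholm operator, and a direct kernel--cokernel count shows that the index is raised by exactly $m$; here $m=\dim_\R T_{b_0}U=\dim_\R U$. Hence the vertical differential of $\dbar^{J,H}_U$ is Fredholm of index $\dim_\C(X)\hat{\chi} + 2c_1(A) + \dim_\R(U)$ at every point of its zero set, and as this index is locally constant it is the Fredholm index of the section, which is the claim. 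I expect the only genuinely delicate step to be the smoothness assertion of the second paragraph --- verifying that all the $b$-dependence is of the harmless, zeroth-order kind; the index bookkeeping is elementary linear algebra once Corollary \ref{Corollary_Nonlinear_Riemann_Roch} is in hand.
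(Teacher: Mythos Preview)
Your argument is correct and is essentially the same as the paper's: both transport to the model $\dbar^{J,H}_{U,\phi_a}$ via Diagram \eqref{Diagram_E_U_B_U_Dbar_U}, observe that the linearisation splits as a fibrewise Cauchy--Riemann operator (Fredholm of index $\dim_\C(X)\hat{\chi}+2c_1(A)$ by Corollary \ref{Corollary_Nonlinear_Riemann_Roch}) plus a finite-rank contribution from the $U$-direction, and conclude by the standard ``Fredholm plus compact'' fact. The paper merely isolates this last step as a separate functional-analytic claim before applying it, whereas you do it inline; and one small point: your kernel--cokernel count works at every point, not only at zeros, so there is no need to first restrict to the zero set and then invoke local constancy of the index.
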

\begin{proof}
(Sketch only) The result will follow from the following functional
analytic claim:
\begin{claim}
Let $X, Y$ be Banach spaces, let $V\subseteq X$ and $U\subseteq \R^n$
be open subsets and let $F : V\times U \to Y$ be a continuously
differentiable map with the property that for every $b\in U$, the map
$F(\cdot, b) : V \to Y$ is a (nonlinear) Fredholm map of index
$d$. Then $F$ is Fredholm of index $d + n$.
\end{claim}
\begin{proof}
Let $(u,b) \in V\times U$. Denote by $D_1F_{(u,b)}$ and $D_2F_{(u,b)}$
the (partial) derivatives of $F$ at $(u,b)$ in the direction of $V$
and $U$, respectively. By assumption, $D_1F_{(u,b)} : X\to Y$ is a
Fredholm operator of index $d$. It follows that $D_1F_{(u,b)}\circ
\pr_1 : X\times \R^n \to Y$ is a Fredholm operator of index $d+n$
(it clearly has the same image as $D_1F_{(u,b)}$ and its kernel is
$\ker(D_1F_{(u,b)}) \times \R^n$. The operator $D_2F_{(u,b)}\circ
\pr_2 : X\times \R^n \to Y$ is compact, for the image of the unit ball
in $X\times \R^n$ is just the image of the (compact) unit ball in
$\R^n$, hence compact. Hence $DF_{(u,b)} = D_1F_{(u,b)}\circ \pr_1 +
D_2F_{(u,b)}\circ \pr_2$ is the sum of a Fredholm operator of index
$d+n$ and a compact operator, hence a Fredholm operator of index $d+n$
by a standard result about Fredholm operators.
\end{proof}
To apply this claim, around a point $a\in B$, consider diagram \ref{Diagram_E_U_B_U_Dbar_U} and the definition of $\dbar^{J,H}_{U,\phi_a}$ from the previous construction above.
In that definition, every
\begin{multline*}
\dbar^{(\phi_{a}^\ast J)_b, (\phi_{a}^\ast H)_b}_{(S_a, (\phi_{a}^\ast j)_b)} : \mathcal{B}^{k,p}_a(\hat{X}, A, J, H) = \mathcal{B}^{k,p}(\hat{X}_a, A, J_a, H_a) \to \\ \mathcal{E}^{k-1,p}(\hat{X}|_{(S_a, (\phi_{a}^\ast j)_b)}, A, (\phi_{a}^\ast J)_b, (\phi_{a}^\ast H)_b)
\end{multline*}
is a Fredholm operator of index $d = \dim_\C(X)\chi + 2c_1(A)$ by Corollary \ref{Corollary_Nonlinear_Riemann_Roch}.
Composing with the bundle isomorphism $\mathcal{E}^{k-1,p}(\hat{X}|_{(S_a, (\phi_{a}^\ast j)_b)}, A, (\phi_{a}^\ast J)_b, (\phi_{a}^\ast H)_b) \to \mathcal{E}^{k-1,p}_a(\hat{X}, A, J, H)$ defined by $\pi^{\overline{\Hom}_{((\phi_{a}^\ast j)_b, (\phi_{a}^\ast J)_b)}}_{\overline{\Hom}_{(j_a, J_a)}}$ does not change this.
Choosing a chart for $\mathcal{B}_a(\hat{X}, A, J, H)$ and a local trivialisation for $\mathcal{E}_a(\hat{X}, A, J, H)$ around a given point then brings one to the situation of the claim above.
\end{proof}

\begin{construction}\label{Construction_Universal_Dbar}
Using the same notation as in the previous construction, define
\begin{align*}
\mathcal{B}^{k,p}(\hat{X},A,J,\mathcal{H}(\tilde{X}))
&\definedas \coprod\limits_{H \in \mathcal{H}(\tilde{X})}
\mathcal{B}^{k,p}(\hat{X},A,J,H) \\
\mathcal{E}^{k-1,p}(\hat{X},A,J,\mathcal{H}(\tilde{X}))
&\definedas \coprod\limits_{H \in \mathcal{H}(\tilde{X})}
\mathcal{E}^{k-1,p}(\hat{X},A,J,H)
\intertext{and}
\dbar^{J,\mathcal{H}} \definedas \coprod\limits_{H \in \mathcal{H}(\tilde{X})}
\dbar^{J, H} & :
\mathcal{B}^{k,p}(\hat{X},A,J,\mathcal{H}(\tilde{X})) \to
\mathcal{E}^{k-1,p}(\hat{X},A,J,\mathcal{H}(\tilde{X}))\text{.}
\end{align*}
There are natural projections
\begin{align*}
\pi^{\mathcal{B}}_{\mathcal{H}} :
\mathcal{B}^{k,p}(\hat{X},A,J,\mathcal{H}(\tilde{X})) &\to
\mathcal{H}(\tilde{X})
\intertext{and}
\pi^{\mathcal{E}}_{\mathcal{H}} :
\mathcal{E}^{k-1,p}(\hat{X},A,J,\mathcal{H}(\tilde{X})) &\to
\mathcal{H}(\tilde{X})\text{.}
\end{align*}
\begin{defn}
\[
\mathcal{M}(\hat{X},A,J,\mathcal{H}(\tilde{X})) \definedas
\left(\dbar^{J,\mathcal{H}}\right)^{-1}(0)\text{.}
\]
\end{defn}

Again, given an open neighbourhood $U\subseteq B$ of $a\in B$ and a smooth trivialisation $\phi_a : U\times S_a \cong S|_U$, define
\begin{align*}
\mathcal{B}^{k,p}_U(\hat{X},A,J,\mathcal{H}(\tilde{X}))
&\definedas \coprod\limits_{H \in \mathcal{H}(\tilde{X})}
\mathcal{B}^{k,p}_U(\hat{X},A,J,H) \\
\mathcal{E}^{k-1,p}_U(\hat{X},A,J,\mathcal{H}(\tilde{X}))
&\definedas \coprod\limits_{H \in \mathcal{H}(\tilde{X})}
\mathcal{E}^{k-1,p}_U(\hat{X},A,J,H)
\intertext{and}
\dbar^{J,\mathcal{H}}_U \definedas \coprod\limits_{H \in \mathcal{H}(\tilde{X})}
\dbar^{J, H}_U & :
\mathcal{B}^{k,p}_U(\hat{X},A,J,\mathcal{H}(\tilde{X})) \to
\mathcal{E}^{k-1,p}_U(\hat{X},A,J,\mathcal{H}(\tilde{X})) \\
\mathcal{M}_U(\hat{X},A,J,\mathcal{H}(\tilde{X})) &\definedas \mathcal{M}(\hat{X},A,J,\mathcal{H}(\tilde{X})) \cap \mathcal{B}^{k,p}_U(\hat{X},A,J,\mathcal{H}(\tilde{X}))
\end{align*}
as sets. Denote by $\mathcal{B}^{k,p}_{U,\phi_a}(\hat{X}, A, J, \mathcal{H}(\tilde{X}))$ the set $\mathcal{B}^{k,p}_U(\hat{X}, A, J, \mathcal{H}(\tilde{X}))$ equipped with the product Banach manifold structure of $\mathcal{H}(\tilde{X}) \times \mathcal{B}^{k,p}_{U,\phi_a}(\hat{X}, A, J, H)$ for any fixed chosen $H\in \mathcal{H}(\tilde{X})$, again identifying all the $\mathcal{B}^{k,p}_U(\hat{X}, A, J, H)$ for different $H$ by the set theoretic identity.
$\mathcal{E}^{k-1,p}_{U,\phi_a}(\hat{X}, A, J, \mathcal{H}(\tilde{X}))$ is defined as a Banach manifold in the same way. In the trivialisations of these spaces defining their smooth structures, $\dbar^{J,\mathcal{H}}_U$ is given by
\begin{multline*}
\dbar^{J,\mathcal{H}}_{U,\phi_a} \definedas \coprod_{H\in \mathcal{H}(\tilde{X})} \dbar^{J,H}_{U,\phi_a} : \\
U\times \mathcal{B}^{k,p}_a(\hat{X},A,J,H) \times \mathcal{H}(\tilde{X}) \to U\times \mathcal{E}_a^{k-1,p}(\hat{X},A,J,H)\times \mathcal{H}(\tilde{X})\text{.}
\end{multline*}
\end{construction}

\begin{lemma}
In the notation of the above construction,
\[
\dbar^{J,\mathcal{H}}_U : \mathcal{B}^{k,p}_{U,\phi_a}(\hat{X}, A, J, \mathcal{H}(\tilde{X})) \to \mathcal{E}^{k-1,p}_{U,\phi_a}(\hat{X}, A, J, \mathcal{H}(\tilde{X}))
\]
is smooth. \\
Given $(b,u,H)\in U\times \mathcal{B}_a^{k,p}(\hat{X}, A, J, H)\times \mathcal{H}(\tilde{X})$ with $u\in \Gamma^k(\hat{X}|_{S_a})$, \wrt the charts on $\mathcal{B}^{k,p}_{U,\phi_a}(\hat{X}, A, J, \mathcal{H}(\tilde{X}))$ from the previous construction and the standard chart for $\mathcal{B}_a^{k,p}(\hat{X}, A, J, H)$ around $u$, the linearisation of $\dbar^{J,\mathcal{H}}_U$ at $\phi_{ba}^\ast u\in \mathcal{B}^{k,p}_b(\hat{X}, A, J, H)$ in the direction $(e, \xi, h)$, where $e\in T_b U$, $\xi\in T_u \mathcal{B}^{k,p}_a(\hat{X}, A, J, H)$ and $h$ is a $C^\varepsilon$-section of $\pr_1^\ast T^\ast \Sigma$, is given by
\begin{multline*}
\left(D\dbar^{J,\mathcal{H}}_{U,\phi_a}\right)_{(b,u,H)}(e, \xi, h) = \\
\pi^{\overline{\Hom}_{((\phi_{a}^\ast j)_b, (\phi_{a}^\ast J)_b)}}_{\overline{\Hom}_{(j_a, J_a)}}\left(\left(D\dbar^{(\phi_{a}^\ast J)_b, (\phi_{a}^\ast H)_b}_{(S_a, (\phi_{a}^\ast j)_b)}\right)_u \xi + K_{(b,u,H)}(e) + (\phi_{a}^\ast X^{0,1}_h)_b \right)\text{,}
\end{multline*}
with
\begin{align*}
K_{(b,u,H)}(e) &\definedas \frac{1}{2} D_b(\phi_{a}^\ast J)(e)\circ D^\mathrm{v}u \circ (\phi_{a}^\ast j)_b \;+ \\
&\quad\; +\;\frac{1}{2}\left(X_{D_b(\phi_{a}^\ast H)(e)} + (\phi_{a}^\ast J)_b\circ X_{D_b(\phi_{a}^\ast H)(e)}\circ (\phi_{a}^\ast j)_b\right) \;+ \\
&\quad\; +\; \frac{1}{2} (\phi_{a}^\ast J)_b\circ D^\mathrm{v}u \circ \left(D_b(\phi_{a}^\ast j)(e)\right)\text{,}
\end{align*}
where $D^\mathrm{v}u$ denotes the vertical derivative of $u$ \wrt the connection on $S_a\times X$ defined by $(\phi_{a}^\ast H)_b$ and $b\mapsto (\phi_{a}^\ast J)_b$, $b\mapsto (\phi_{a}^\ast H)_b$ and $b\mapsto (\phi_{a}^\ast j)_b$ are regarded as maps from $U$ to the space of $\omega$-compatible almost complex structures on $X$, the space of Hamiltonian structures on $S_a\times X$ and the space of complex structures on $S_a$, respectively.
\end{lemma}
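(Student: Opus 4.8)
The plan is to carry out the whole computation inside the product charts $\overline\phi_a$, $\hat\phi_a$ of Construction~\ref{Construction_Universal_Dbar}; this is exactly the device that circumvents the non-smoothness of the diffeomorphism-group action pointed out on page~\pageref{Page_No_universal_CR_operator}, since in these charts the domain Riemann surface is frozen to $S_a$ and the only remaining dependence on $b\in U$ is through the \emph{smooth} parameter maps $b\mapsto(\phi_a^\ast j)_b$, $b\mapsto(\phi_a^\ast J)_b$, $b\mapsto(\phi_a^\ast H)_b$. By the definition of $\dbar^{J,\mathcal H}_{U,\phi_a}$ recalled in that construction, the map to be studied is
\begin{multline*}
(b,v,H)\ \longmapsto\ \\
\Bigl(b,\ \pi^{\overline{\Hom}_{((\phi_{a}^\ast j)_b,(\phi_{a}^\ast J)_b)}}_{\overline{\Hom}_{(j_a,J_a)}}\bigl(\dbar^{(\phi_{a}^\ast J)_b,(\phi_{a}^\ast H)_b}_{(S_a,(\phi_{a}^\ast j)_b)}v\bigr),\ H\Bigr)\text{,}
\end{multline*}
and one records at the outset that $H$ enters a Cauchy-Riemann operator only through the antilinear Hamiltonian term $X_H^{0,1}$ (\cf Remark~\ref{Remark_tilde_J_H}), so the dependence on $H\in\mathcal H(\tilde X)$ is affine with linear part $h\mapsto X_h^{0,1}$.

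For smoothness I would invoke the standard fact that a Cauchy-Riemann operator on a fixed Riemann surface depends smoothly on the almost complex structure on the target and on the complex structure on the domain --- in local charts its nonlinearity is a bundle map applied to a $1$-jet of the section, and $L^{k,p}\hookrightarrow C^0$ since $kp>2$, so this is the usual argument behind the smoothness of $\dbar$ as a section --- applied here to the composition of $b\mapsto((\phi_a^\ast j)_b,(\phi_a^\ast J)_b,(\phi_a^\ast H)_b)$ with this smooth dependence. Together with the affine (hence smooth) dependence on $H$ and with the fact that $U$ is finite-dimensional --- so that joint smoothness follows from smoothness in the separate slots plus joint continuous differentiability, exactly as in the functional-analytic Claim used in the proof of the Fredholm-index lemma for $\dbar^{J,H}_U$ above --- this yields smoothness of the displayed map; one also uses that $\pi^{\overline{\Hom}_{((\phi_{a}^\ast j)_b,(\phi_{a}^\ast J)_b)}}_{\overline{\Hom}_{(j_a,J_a)}}$ is an algebraic expression in $(\phi_a^\ast j)_b$, $(\phi_a^\ast J)_b$, hence smooth in $b$, and an isomorphism on all of $U$ by the hypothesis imposed when $\hat\phi_a$ was defined.

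For the linearisation at $(b,u,H)$ with $u$ smooth, I would differentiate componentwise: the $U$- and $\mathcal H(\tilde X)$-slots just return $e$ and $h$, and in the Cauchy-Riemann slot the chain rule gives three pieces. Varying $v$ in the direction $\xi$ with $(b,H)$ fixed produces $(D\dbar^{(\phi_{a}^\ast J)_b,(\phi_{a}^\ast H)_b}_{(S_a,(\phi_{a}^\ast j)_b)})_u\xi$ by Lemma~\ref{Lemma_Ddbar}. Varying $H$ in the direction $h$ with $(b,v)$ fixed produces $(\phi_a^\ast X_h^{0,1})_b$, by the affine dependence just mentioned. Varying $b$ in the direction $e$ with $(v,H)$ fixed is handled by differentiating $\dbar^{J',H'}_{(S_a,j')}u=\frac{1}{2}\bigl(D^{\mathrm{v}}u+J'\,D^{\mathrm{v}}u\circ j'\bigr)$ through the three parameter maps: the $J'$-variation gives $\frac{1}{2}D_b(\phi_a^\ast J)(e)\circ D^{\mathrm{v}}u\circ(\phi_a^\ast j)_b$; the $H'$-variation changes the vertical projection defining $D^{\mathrm{v}}u$ by the Hamiltonian vector field of $D_b(\phi_a^\ast H)(e)$, contributing $\frac{1}{2}\bigl(X_{D_b(\phi_a^\ast H)(e)}+(\phi_a^\ast J)_b\circ X_{D_b(\phi_a^\ast H)(e)}\circ(\phi_a^\ast j)_b\bigr)$; and the $j'$-variation gives $\frac{1}{2}(\phi_a^\ast J)_b\circ D^{\mathrm{v}}u\circ D_b(\phi_a^\ast j)(e)$. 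These three summands constitute $K_{(b,u,H)}(e)$, and applying the linear bundle map $\pi^{\overline{\Hom}_{((\phi_{a}^\ast j)_b,(\phi_{a}^\ast J)_b)}}_{\overline{\Hom}_{(j_a,J_a)}}$ to the sum of the three pieces gives the asserted formula. (Differentiating this outer projection itself contributes one more, purely zeroth-order, term supported on $\dbar^{(\phi_a^\ast J)_b,(\phi_a^\ast H)_b}_{(S_a,(\phi_a^\ast j)_b)}u$, which vanishes along the zero set of $\dbar^{J,\mathcal H}_U$ --- the only place the formula is invoked --- and so does not appear.)

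The step I expect to be the genuine obstacle is the $b$-variation: one must check that the only $b$-dependence of $D^{\mathrm{v}}u$ is through the connection term coming from $(\phi_a^\ast H)_b$ (the honest differential $Du$ on the fixed surface $S_a$ does not move with $b$), and one must keep careful track of the several pullback-and-project maps $\pi^{\overline{\Hom}}$ so that every term lands in the correct fibre of $\mathcal E^{k-1,p}_a(\hat X,A,J,H)$. Everything else is a routine application of Lemma~\ref{Lemma_Ddbar} together with the standard smooth-dependence theory for Cauchy-Riemann operators over a finite-dimensional family of geometric data.
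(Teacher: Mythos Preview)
Your approach is correct and is precisely the natural direct computation in the product trivialisation; the paper itself states this lemma without proof, so there is nothing to compare against beyond confirming that your argument works.

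One small correction to your final parenthetical: the outer projection $\pi^{\overline{\Hom}_{((\phi_a^\ast j)_b,(\phi_a^\ast J)_b)}}_{\overline{\Hom}_{(j_a,J_a)}}$, viewed as the linear map $\eta\mapsto\frac12(\eta+J_a\circ\eta\circ j_a)$ on the ambient $\Hom(TS_a,V\hat X_a)$, does not depend on $b$ at all --- only its \emph{domain} subspace $\overline{\Hom}_{((\phi_a^\ast j)_b,(\phi_a^\ast J)_b)}$ does. Hence there is no extra zeroth-order term from differentiating the projection, and the stated formula holds for arbitrary $(b,u,H)$, not merely along the zero set. Your hedge that ``the formula is only invoked there'' is therefore unnecessary, and without this observation your argument would leave a genuine gap for the lemma as stated (which does not assume $\dbar u=0$). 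Once you note that the projection is $b$-independent, everything else --- the affine dependence on $H$ giving $X_h^{0,1}$, the three product-rule terms in $K_{(b,u,H)}(e)$ coming from $\partial_b(\phi_a^\ast J)$, $\partial_b(\phi_a^\ast H)$, $\partial_b(\phi_a^\ast j)$, and the $\xi$-variation from Lemma~\ref{Lemma_Ddbar} --- is exactly right.
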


\begin{remark}\label{Remark_Ddbar_universal}
For the moment the only two important things about the map $K_{(b,u,H)}$ above are that it defines a compact operator, for it factors through the finite dimensional space $T_bU$ and that its image consists of $C^{r-1}$-sections if $u$ is of class $C^r$.
\end{remark}

\begin{lemma}\label{Lemma_Main_transversality_result}
In the same situation as in the previous lemma, let $V\subseteq \phi_a^\ast \hat{X}$ be an open subset and let $W\subseteq u\inv(V)$ be an open subset that intersects every connected component of $\{b\}\times S_a$ nontrivially. Let $\mathcal{K} \subseteq T_H\mathcal{H}(\tilde{X})$ be the closure of the span of those Hamiltonian perturbations that have support in $\pr_1\inv(W)\cap V$ (as sections of $\pr_1^\ast T^\ast \Sigma$). Let furthermore $z_i \in S_a$, $i = 1,\dots, r$ be a collection of points on $S_a$. Then the following maps are surjective:
\begin{enumerate}[a)]
  \item\label{a} The restriction of $\left(D\dbar^{J,\mathcal{H}}_{U,\phi_a}\right)_{(b,u,H)}$ to $\{0\}\times \{\xi \in T_U\mathcal{B}^{k,p}_a(\hat{X}, A, J, H) \;|\; \xi(z_i) = 0 \;\forall\, i=1,\dots, r\}\times \mathcal{K}$.
  \item\label{b} The map 
    \begin{multline*}
      \left(D\dbar^{J,\mathcal{H}}_{U,\phi_a}\right)_{(b,u,H)} \times \ev_{1\ast}\times \cdots\times \ev_{r\ast} \times \left(\pi^\mathcal{B}_U\right)_\ast : \\
      T_bU\times T_u\mathcal{B}^{k,p}_a(\hat{X}, A, J, H)\times \mathcal{K} \to \\
      \mathcal{E}^{k-1,p}_a(\hat{X}, A, J, H)\times (u^\ast V\hat{X})_{z_1}\times \cdots\times (u^\ast V\hat{X})_{z_r}\times T_bU \\
      (e,\xi,h) \mapsto \left(\left(D\dbar^{J,\mathcal{H}}_{U,\phi_a}\right)_{(b,u,H)}(e,\xi,h), \xi(z_1), \dots, \xi(z_r), e\right)
    \end{multline*}
\end{enumerate}
\end{lemma}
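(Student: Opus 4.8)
\emph{Strategy.} The plan is to deduce b) from a), and to prove a) by the standard Hahn--Banach plus unique continuation argument adapted to perturbations of Hamiltonian type. First, since $U$ was chosen small enough that $\pi^{\overline{\Hom}_{((\phi_a^\ast j)_b,(\phi_a^\ast J)_b)}}_{\overline{\Hom}_{(j_a,J_a)}}$ is an isomorphism, it may be dropped throughout, so a) amounts to surjectivity onto $\mathcal{E}^{k-1,p}_a$ of the operator $L\colon (\xi',h)\mapsto (D\dbar_u)\xi' + (\phi_a^\ast X^{0,1}_h)_b$ on $\{\xi' : \xi'(z_i)=0\}\times\mathcal{K}$, where $D\dbar_u$ abbreviates the linearisation $(D\dbar^{(\phi_a^\ast J)_b,(\phi_a^\ast H)_b}_{(S_a,(\phi_a^\ast j)_b)})_u$ of Lemma \ref{Lemma_Ddbar}. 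To get b) from a): for a prescribed target $(\beta,v_1,\dots,v_r,e_0)$ take $e\definedas e_0$, and since $kp>2$ so that evaluation at the finitely many points $z_1,\dots,z_r$ is onto $\prod_i(u^\ast V\hat X)_{z_i}$, pick $\xi_0$ with $\xi_0(z_i)=v_i$; subtracting $K_{(b,u,H)}(e_0)+(D\dbar_u)\xi_0$ from $\beta$ reduces b) exactly to the surjectivity statement a).

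\emph{Closedness of $\im L$.} The restriction of the Fredholm operator $D\dbar_u$ (Corollary \ref{Corollary_Nonlinear_Riemann_Roch}) to the closed, finite-codimension subspace $\{\xi'(z_i)=0\}$ is again Fredholm — equivalently, $(D\dbar_u,\ev_{z_1},\dots,\ev_{z_r})$ is a compact perturbation of $D\dbar_u\oplus 0$, hence Fredholm — so its image is closed of finite codimension in $\mathcal{E}^{k-1,p}_a$. Adjoining the contribution from the $h$-direction only enlarges the image, and a closed subspace of finite codimension stays closed after adding any linear subspace (project onto a finite-dimensional complement). Hence $\im L$ is closed, and it remains to show it is dense.

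\emph{Density of $\im L$.} By the usual reduction one may take $k=1$, so the target is $L^p$ with dual $L^q$, $\tfrac1p+\tfrac1q=1$. If $\im L$ were not dense, Hahn--Banach gives $0\neq\eta\in L^q$ annihilating it; then $\eta$ weakly solves the formal adjoint equation $(D\dbar_u)^\ast\eta=0$ away from $\{z_1,\dots,z_r\}$, a first-order elliptic system, so $\eta$ is smooth there by elliptic regularity. Shrink $W$ inside each connected component of $\{b\}\times S_a$ so that it avoids the $z_i$ and all special points — it still meets every component. By Lemma \ref{Lemma_Existence_of_perturbations}, for every $z\in W$, every complex antilinear $\zeta\in\overline{\Hom}_{((\phi_a^\ast j)_b,(\phi_a^\ast J)_b)}(T_zS_a,(V\hat X)_{u(z)})$, and every neighbourhood of $u(z)$, there is a Hamiltonian perturbation in the span defining $\mathcal{K}$ whose associated $X^{0,1}$ is supported in that neighbourhood and equals $\zeta$ at $z$ (using $\tfrac12(\zeta+J\zeta j)=\zeta$ for antilinear $\zeta$, so the prescribed antilinear value is realised by prescribing $X_H$ itself). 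Pairing $\eta$ against such perturbations concentrated near $z$ with an approximate-identity profile times $\eta(z)$ forces $|\eta(z)|^2=0$, so $\eta$ vanishes on $W$. Since $W$ meets every connected component of $S_a$ and $\eta$ is a smooth solution of the elliptic system $(D\dbar_u)^\ast\eta=0$ on the complement of $\{z_i\}$ in each (connected) component, Aronszajn's unique continuation theorem forces $\eta\equiv 0$ on that complement, hence $\eta=0$ in $L^q$, contradicting $\eta\neq0$. For general $k$ the cokernel is unchanged, so this proves a), and with it b).

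\emph{Main obstacle.} The only step that is not bookkeeping is the density argument, and its two load-bearing inputs are: Lemma \ref{Lemma_Existence_of_perturbations}, which guarantees that the perturbations in $\mathcal{K}$ realise \emph{arbitrary} complex antilinear homomorphisms into $V\hat X$ at points of $W$ with arbitrarily small support; and the hypothesis that $W$ meets every connected component of $\{b\}\times S_a$, which is exactly what lets unique continuation propagate the vanishing of the cokernel representative from $W$ to all of $S_a$. (Compatibility of the whole picture with the almost complex structure $\hat J^H$, which makes the linearisation calculation of Lemma \ref{Lemma_Ddbar} usable here, is already established in Corollary \ref{Corollary_Ddbar_complex_linear}.)
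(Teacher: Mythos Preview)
Your proof is correct and follows precisely the approach the paper indicates: the paper's own proof simply states that \ref{b}) follows from \ref{a}) and that \ref{a}) is the standard Hahn--Banach plus unique continuation argument from \cite{MR2045629} (Propositions 3.2.1, 3.4.2, 6.2.7, Theorem 8.3.1) and \cite{MR2399678}, Lemma 4.1, without spelling out details. You have supplied exactly those details, including the reduction of \ref{b}) to \ref{a}) via surjectivity of pointwise evaluation, the closedness of the image via the Fredholm property of $D\dbar_u$ restricted to a finite-codimension subspace, and the density step using Lemma \ref{Lemma_Existence_of_perturbations} together with Aronszajn unique continuation propagated from $W$ across each connected component.
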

\begin{proof}
\ref{b}) follows immediately from \ref{a}) and the proof of \ref{a}) follows exactly the same line of argument that appears several times in \cite{MR2045629}, \eg Proposition 3.2.1, Proposition 3.4.2, Proposition 6.2.7, or the most closely related Theorem 8.3.1, or in \cite{MR2399678} Lemma 4.1.
\end{proof}

\begin{defn}
For a closed affine subspace $\mathcal{K} \subseteq \mathcal{H}(\tilde{X})$, meaning the intersection of a closed affine subspace of the space of $C^\varepsilon$-sections of $\pr_1^\ast T^\ast \Sigma$ with $\mathcal{H}(\tilde{X})$, see Definition \ref{Definition_Hamiltonian_perturbation}, define
\begin{align*}
\mathcal{M}(\hat{X}, A, J, \mathcal{K})
&\definedas \left(\pi^{\mathcal{M}}_{\mathcal{H}}\right)\inv
(\mathcal{K}) \\
&\subseteq \mathcal{M}(\hat{X}, A, J, \mathcal{H}(\tilde{X}))\text{,}
\end{align*}
where $\pi^{\mathcal{M}}_{\mathcal{H}} \definedas \pi^{\mathcal{B}}_{\mathcal{H}}|_{\mathcal{M}(\hat{X}, A, J, \mathcal{H}(\tilde{X}))}$ and analogously $\mathcal{M}_b(\hat{X}, A, J, \mathcal{K})$ for $b\in B$ and $\mathcal{M}_U(\hat{X}, A, J, \mathcal{K})$ for $U\subseteq B$ open. \\
Furthermore, given any open subset $V\subseteq \tilde{X}$, define
\[
\mathcal{H}^V(\tilde{X})
\]
to be the closure of the set of those $H \in \mathcal{H}(\tilde{X})$ that have support in $V$ and
\begin{align*}
\mathcal{M}^V(\hat{X}, A, J, \mathcal{K}) \definedas \{ u\in \mathcal{M}_b(\hat{X}, A, J, \mathcal{K}) \;|\; & u(S_{b,i}) \cap \tilde{\iota}\inv(V) \neq \emptyset \text{ for every} \\
& \text{connected component $S_{b,i}$ of $S_b$} \}\text{,}
\end{align*}
where $\tilde{\iota} : \hat{X} \to \tilde{X}$ is the canonical map and analogously $\mathcal{M}^V_b(\hat{X}, A, J, \mathcal{K})$ and $\mathcal{M}^V_U(\hat{X}, A, J, \mathcal{K})$.
\end{defn}

\begin{lemma}\label{Lemma_Ddbar_universal}
In the notation of the above construction,
\[
\dbar^{J,\mathcal{H}}_U : \mathcal{B}^{k,p}_{U,\phi_a}(\hat{X}, A, J, \mathcal{H}(\tilde{X})) \to \mathcal{E}^{k-1,p}_{U,\phi_a}(\hat{X}, A, J, \mathcal{H}(\tilde{X}))
\]
is split transverse to the zero section and $\mathcal{M}_{U}(\hat{X},A,J,\mathcal{H}(\tilde{X}))$ is a split Banach submanifold of $\mathcal{B}^{k,p}_{U,\phi_a}(\hat{X},A,J,\mathcal{H}(\tilde{X}))$. \\
Furthermore, with respect to this Banach manifold structure, $\pi^{\mathcal{M}}_{\mathcal{H}} : \mathcal{M}_U(\hat{X},A,J,\mathcal{H}(\tilde{X})) \to \mathcal{H}(\tilde{X})$ is a Fredholm map of index
\[
\ind(\pi^{\mathcal{M}}_{\mathcal{H}}) = \dim_\C(X)\hat{\chi} + 2c_1(A)
+ \dim_\R(U)\text{.}
\]
Given an open subset $V \subset \tilde{X}$, for any $H\in\mathcal{H}(\tilde{X})$,
\[
\mathcal{M}^V_U(\hat{X}, A, J, H + \mathcal{H}^V(\tilde{X}))
\]
inherits a Banach manifold structure from $\mathcal{M}_U(\hat{X}, A, J, \mathcal{H}(\tilde{X}))$ \st the projection onto $H + \mathcal{H}^V(\tilde{X})$ is a Fredholm map of the same index as before.
\end{lemma}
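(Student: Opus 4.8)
The plan is to run the standard universal--moduli--space argument: the section $\dbar^{J,\mathcal{H}}_U$ is smooth by the preceding lemma, and everything else is a consequence of the Fredholm property of $\dbar^{J,H}_U$ established above together with the surjectivity statement of Lemma \ref{Lemma_Main_transversality_result}. I would fix a zero $w=(b,u,H)$ of $\dbar^{J,\mathcal{H}}_{U,\phi_a}$, i.e. a point of $\mathcal{M}_U(\hat{X},A,J,\mathcal{H}(\tilde{X}))$ read in the chart. By Lemma \ref{Lemma_elliptic_regularity_I} the section $u$ is smooth, so the linearisation formula of the preceding lemma and Lemma \ref{Lemma_Main_transversality_result} are available at $w$. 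On each connected component of $\{b\}\times S_a$ pick a point disjoint from the markings and the nodes, let $W$ be a small open neighbourhood of all of these points, and let $V\subseteq\phi_a^\ast\hat{X}$ be an open set with $u(W)\subseteq V$; then item a) of Lemma \ref{Lemma_Main_transversality_result} (with no point conditions, $r=0$) shows that $\left(D\dbar^{J,\mathcal{H}}_{U,\phi_a}\right)_{w}$ is already surjective on $\{0\}\times T_u\mathcal{B}^{k,p}_a(\hat{X},A,J,H)\times\mathcal{K}$, where $\mathcal{K}\subseteq T_H\mathcal{H}(\tilde{X})$ is the space of perturbations supported in $\pr_1\inv(W)\cap V$; a fortiori the full linearisation at $w$ is onto.

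Next I would verify that the kernel is complemented, so that the solution set is a \emph{split} submanifold. For this one uses the elementary fact that if $D\colon E_1\oplus E_2\to F$ is a bounded surjective operator whose restriction $D_1\definedas D|_{E_1}$ is Fredholm, then $\ker D$ is complemented in $E_1\oplus E_2$: choose a finite-dimensional $C\subseteq E_2$ with $D(C)$ a complement of $\im D_1$, note $D|_{E_1\oplus C}$ is Fredholm and surjective, split off a right inverse together with a complement of its finite-dimensional kernel, and assemble from these an explicit bounded projection onto $\ker D$. In our situation $E_1$ is the $(e,\xi)$-direction, $E_2=T_H\mathcal{H}(\tilde{X})$, and $D_1$ is (up to the bundle isomorphism occurring in the definition of the chart) the linearisation of the parametrised Cauchy--Riemann operator $\dbar^{J,H}_U$, which is Fredholm of index $\dim_\C(X)\hat{\chi}+2c_1(A)+\dim_\R(U)$ by the index computation proved above. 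The implicit function theorem then shows that $\dbar^{J,\mathcal{H}}_U$ is split transverse to the zero section and that $\mathcal{M}_U(\hat{X},A,J,\mathcal{H}(\tilde{X}))$ is, near $w$, a split Banach submanifold of $\mathcal{B}^{k,p}_{U,\phi_a}(\hat{X},A,J,\mathcal{H}(\tilde{X}))$ with tangent space $\ker\left(D\dbar^{J,\mathcal{H}}_{U,\phi_a}\right)_{w}$; that the charts coming from different trivialisations $\phi_a$ are smoothly compatible is exactly smoothness of $\dbar^{J,\mathcal{H}}_U$ from the preceding lemma.

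For the index of $\pi^{\mathcal{M}}_{\mathcal{H}}$, observe that in the chart its differential at $w$ is the restriction to $\ker\left(D\dbar^{J,\mathcal{H}}_{U,\phi_a}\right)_{w}$ of the projection $(e,\xi,h)\mapsto h$. Its kernel consists of the $(e,\xi,0)$ with $(e,\xi)\in\ker D_1$, hence has dimension $\dim\ker D_1$; its image is $D_2\inv(\im D_1)$, where $D_2$ denotes the $h$-direction of $\left(D\dbar^{J,\mathcal{H}}_{U,\phi_a}\right)_{w}$, and this is the kernel of the continuous composite $T_H\mathcal{H}(\tilde{X})\xrightarrow{D_2}\mathcal{E}^{k-1,p}_a\to\coker D_1$, hence closed; since the full linearisation is onto, that composite is onto, so the image of $d\pi^{\mathcal{M}}_{\mathcal{H}}$ has codimension exactly $\dim\coker D_1$. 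Therefore $\pi^{\mathcal{M}}_{\mathcal{H}}\colon\mathcal{M}_U(\hat{X},A,J,\mathcal{H}(\tilde{X}))\to\mathcal{H}(\tilde{X})$ is Fredholm of index $\dim\ker D_1-\dim\coker D_1=\ind\dbar^{J,H}_U=\dim_\C(X)\hat{\chi}+2c_1(A)+\dim_\R(U)$.

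Finally, for the variant over $H+\mathcal{H}^V(\tilde{X})$ I would repeat the same argument verbatim at a zero $w$ lying in $\mathcal{M}^V_U$: by definition of $\mathcal{M}^V_U$ the open set $W\subseteq u\inv(V)$ can be taken to meet every connected component of $\{b\}\times S_a$, so item a) of Lemma \ref{Lemma_Main_transversality_result} now applies with $\mathcal{K}$ a subspace of the tangent space to $\mathcal{H}^V(\tilde{X})$ at $H$, and the restriction of $\left(D\dbar^{J,\mathcal{H}}_{U,\phi_a}\right)_{w}$ to $\{0\}\times T_u\mathcal{B}^{k,p}_a\times T_H\mathcal{H}^V(\tilde{X})$ is surjective. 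The condition defining $\mathcal{M}^V_U$ is $C^0$-open, hence open in $\mathcal{M}_U$, so one again obtains a split Banach submanifold of $\mathcal{M}_U(\hat{X},A,J,\mathcal{H}(\tilde{X}))$ on which the same kernel--cokernel bookkeeping gives that the projection to $H+\mathcal{H}^V(\tilde{X})$ is Fredholm of index $\dim_\C(X)\hat{\chi}+2c_1(A)+\dim_\R(U)$. No step here is deep, the substance being contained in the two cited lemmas; the points that need genuine care are the functional-analytic complementation argument that makes the zero set genuinely \emph{split} rather than merely a topological submanifold, and the bookkeeping identifying $\coker d\pi^{\mathcal{M}}_{\mathcal{H}}$ with $\coker\dbar^{J,H}_U$, which is exactly what makes the index independent of the infinite-dimensional perturbation directions.
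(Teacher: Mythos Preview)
Your argument is correct and follows the same route as the paper, which simply cites Lemma~A.3.6 in \cite{MR2045629} together with the preceding Lemma~\ref{Lemma_Main_transversality_result} and the implicit function theorem. What you have written is essentially an unpacking of that cited lemma: the complementation of the kernel and the kernel--cokernel bookkeeping identifying $\ind(\pi^{\mathcal{M}}_{\mathcal{H}})$ with $\ind(\dbar^{J,H}_U)$ are precisely its content. One minor remark: your aside about compatibility of charts from different trivialisations $\phi_a$ is not part of this lemma (that is handled afterwards in Corollary~\ref{Corollary_Smooth_structure_on_M}); here $\phi_a$ is fixed throughout.
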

\begin{proof}
Lemma A.3.6 in \cite{MR2045629}, and the previous Lemma together with the implicit function theorem.
\end{proof}

The set $\mathcal{M}_U(\hat{X},A,J,\mathcal{H}(\tilde{X}))$ equipped with the Banach manifold structure from the previous lemma, which a priori does depend on $k$, $p$ and $\phi_a$, will be denoted by
\[
\mathcal{M}^{k,p}_{U,\phi_a}(\hat{X},A,J,\mathcal{H}(\tilde{X}))\text{.}
\]
The goal now is to show that the Banach manifold structure on $\mathcal{M}^{k,p}_{U,\phi_a}(\hat{X}, A, J, \mathcal{H}(\tilde{X}))$ does not depend on the choice of $k\geq 1$ and $p > 1$ with $kp > 2$ nor on $\phi_a : U\times S_a \cong S|_U$. Hence writing $\mathcal{M}_U(\hat{X}, A, J, \mathcal{H}(\tilde{X}))$ makes sense, and consequently given any trivialisation $(U_i, \phi_{a_i})_{i\in I}$ of $S$, the Banach manifolds $\mathcal{M}_U(\hat{X}, A, J, \mathcal{H}(\tilde{X}))$ patch together to a Banach manifold structure on $\mathcal{M}(\hat{X}, A, J, \mathcal{H}(\tilde{X}))$. To sum the argument up in two words: Elliptic regularity.

\begin{lemma}\label{Lemma_elliptic_regularity_II}
Let $k,\ell\in \N$, $1 < p,q < \infty$ with $kp, \ell q > 2$ and assume that $k > \ell$ and $k - \frac{2}{p} > \ell - \frac{2}{q}$.
Then the inclusion $\mathcal{B}^{k,p}_{U,\phi_a}(\hat{X},A,J,\mathcal{H}(\tilde{X})) \to \mathcal{B}^{\ell,q}_{U,\phi_a}(\hat{X},A,J,\mathcal{H}(\tilde{X}))$ defined via the Sobolev embedding theorem induces a diffeomorphism $\mathcal{M}^{k,p}_{U,\phi_a}(\hat{X},A,J,\mathcal{H}(\tilde{X})) \cong \mathcal{M}^{\ell,q}_{U,\phi_a}(\hat{X},A,J,\mathcal{H}(\tilde{X}))$.
\end{lemma}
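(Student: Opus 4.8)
The plan is to run the standard elliptic bootstrap together with a comparison of implicit-function-theorem charts, i.e.\ to show (a) the two moduli spaces coincide as sets, and (b) the identity between them is smooth with smooth inverse. First I would check that under the stated hypotheses (the inequalities $k>\ell$ and $k-\frac2p>\ell-\frac2q$ give the continuous Sobolev embedding $L^{k,p}\hookrightarrow L^{\ell,q}$, and $kp,\ell q>2$ make both manifold structures of Lemma \ref{Lemma_Ddbar_universal} available) the inclusion induces a smooth injective map of Banach manifolds $\mathcal{B}^{k,p}_{U,\phi_a}(\hat{X},A,J,\mathcal{H}(\tilde{X})) \hookrightarrow \mathcal{B}^{\ell,q}_{U,\phi_a}(\hat{X},A,J,\mathcal{H}(\tilde{X}))$ which commutes with the trivialising maps $\overline{\phi}_a$, $\hat{\phi}_a$ and intertwines $\dbar^{J,\mathcal{H}}_U$ with itself; hence it restricts to a continuous injection $\mathcal{M}^{k,p}_{U,\phi_a}(\hat{X},A,J,\mathcal{H}(\tilde{X})) \hookrightarrow \mathcal{M}^{\ell,q}_{U,\phi_a}(\hat{X},A,J,\mathcal{H}(\tilde{X}))$.

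Surjectivity on underlying sets is exactly elliptic regularity. For fixed $b\in U$ and $H\in \mathcal{H}(\tilde{X})$, an element of $\mathcal{M}^{\ell,q}_{U,\phi_a}$ over $(b,H)$ solves $\dbar^{J_b,H_b}_{S_b}v=0$, the nonlinear Cauchy--Riemann equation on the compact Riemann surface $S_b$ for the almost complex structure $\hat{J}^{H_b}$; by Lemma \ref{Lemma_elliptic_regularity_I} such a $v$ is smooth. Since the $H$-parameter of the universal moduli space varies in a fixed Banach space of $C^\varepsilon$-sections and the $b$-parameter in the finite-dimensional $U$, the bootstrap is genuinely only in the $S_b$- and $X$-directions and goes through for the parametrised $\dbar^{J,\mathcal{H}}_U$ verbatim. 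Thus every element of $\mathcal{M}^{\ell,q}_{U,\phi_a}$ already lies in $\mathcal{B}^{k,p}_{U,\phi_a}$, so the inclusion is a bijection of sets.

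It remains to see the inverse is smooth. Near a point $w$ of the moduli space, which by the previous step is smooth, Lemma \ref{Lemma_Ddbar_universal} and the implicit function theorem identify a neighbourhood of $w$ in $\mathcal{M}^{k,p}_{U,\phi_a}$ with a neighbourhood of $0$ in $\ker\bigl(D\dbar^{J,\mathcal{H}}_{U,\phi_a}\bigr)_w$ computed in the $L^{k,p}$-completion, and likewise for $\mathcal{M}^{\ell,q}_{U,\phi_a}$ with the $L^{\ell,q}$-completion. By Lemma \ref{Lemma_Ddbar} and Remark \ref{Remark_Ddbar_universal} this linearisation is a standard linear Cauchy--Riemann operator up to compact and finite-rank zeroth-order terms, so linear elliptic regularity forces the two kernels to coincide and to consist of smooth sections; being finite-dimensional, they carry one and the same topology. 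I would then arrange the two implicit-function-theorem charts so that they use a common local slice (a fixed complement of this kernel, intersected with the respective completion, together with right inverses of $D\dbar$ that restrict to each other on smooth sections), so that the transition between the two moduli-space structures at $w$ is literally the identity on a neighbourhood of $0$ in this fixed finite-dimensional space. Hence the inclusion is a local diffeomorphism, and being bijective, a diffeomorphism. This is the parametrised analogue of Theorem 3.1.5 in \cite{MR2045629}, which one may also invoke directly.

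The main obstacle is bookkeeping rather than substance: making the chart comparison clean, i.e.\ verifying that the right inverse of $D\dbar^{J,\mathcal{H}}_{U,\phi_a}$ and the local slice can be chosen simultaneously for the $(k,p)$- and $(\ell,q)$-structures, and that the elliptic estimate upgrading an $L^{\ell,q}$-solution of the nonlinear equation to $L^{k,p}$ is uniform in the kernel parameter and in $(b,H)$ on compact sets, so that the inverse is smooth and not merely continuous. Equivalently, one can phrase the whole argument by saying that $\dbar^{J,\mathcal{H}}_U$ is, in the charts of Construction \ref{Construction_Universal_Dbar}, a ``regularising'' Fredholm section whose zero set is independent of the Sobolev completion and inherits the same smooth structure; checking this hypothesis is the technical heart, and it reduces entirely to the scalar elliptic estimates already used for Lemmas \ref{Lemma_elliptic_regularity_I} and \ref{Lemma_Ddbar_universal}.
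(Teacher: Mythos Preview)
Your proposal is correct and follows essentially the same route as the paper: set-theoretic equality via Lemma \ref{Lemma_elliptic_regularity_I}, then comparison of the implicit-function-theorem charts (using that the linearised kernels coincide by linear elliptic regularity) to see that the identity is a diffeomorphism. The paper's proof is only a two-line sketch pointing to exactly this argument (with references to \cite{ediss15878} and \cite{1208.1340} for details), so your write-up is in fact a more explicit version of the same proof.
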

\begin{proof}
By Lemma \ref{Lemma_elliptic_regularity_I}, one has the set-theoretic identity $\mathcal{M}^{k,p}_{U,\phi_a}(\hat{X},A,J,\mathcal{H}(\tilde{X})) = \mathcal{M}^{\ell,q}_{U,\phi_a}(\hat{X},A,J,\mathcal{H}(\tilde{X}))$. \\
Now go through the proof of the implicit function theorem, by which the smooth structures on the above spaces are defined, to show that this identity mapping is also a diffeomorphism. \\
For more details on this see either the proof of Lemma II.24, p.~78, in \cite{ediss15878}, or also Section 3.1, esp.~Lemma 3.1.4, in \cite{1208.1340}.
\end{proof}

\begin{lemma}
Using the notation of Construction \ref{Construction_Universal_Dbar}, let $\phi_a, \psi_a : U\times S_a \overset{\cong}{\longrightarrow} S|_U$ be two smooth trivialisations and let $r\in\N$.
Then the set-theoretic inclusion
\[
\mathcal{B}^{k+r,p}_{U,\phi_a}(\hat{X}, A, J, \mathcal{H}(\tilde{X})) \hookrightarrow \mathcal{B}^{k,p}_{U,\psi_a}(\hat{X}, A, J, \mathcal{H}(\tilde{X}))
\]
is a map of class $C^{r-1}$.
\end{lemma}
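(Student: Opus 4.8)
The plan is to unwind the chart descriptions of Construction \ref{Construction_Universal_Dbar} and reduce the assertion to the standard (non-)regularity of the composition operator on Sobolev spaces. First I would describe the set-theoretic inclusion in these charts. A point of $\mathcal{B}^{k,p}_U(\hat{X},A,J,\mathcal{H}(\tilde{X}))$ is a pair $(H,u)$ with $H\in\mathcal{H}(\tilde{X})$ and $u$ a section of $\hat{X}$ over some fibre $S_b$, $b\in U$, of Sobolev class at least $W^{k,p}$; the inclusion of the statement leaves $(H,u)$ unchanged, a $W^{k+r,p}$-section of a bundle over the closed surface $S_b$ being in particular of class $W^{k,p}$. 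Fixing, as in Construction \ref{Construction_Universal_Dbar}, an auxiliary $H_0$ to identify all the $\mathcal{B}^{\bullet,p}_U(\hat{X},A,J,H)$ set-theoretically, in the trivialisation charts $\overline{\phi}_a$ resp.\ $\overline{\psi}_a$ associated with $\phi_a$ resp.\ $\psi_a$ the point $(H,u)$ with $u$ over $S_b$ acquires coordinates $(H,b,\phi_{ab}^\ast u)$ resp.\ $(H,b,\psi_{ab}^\ast u)$, where $\phi_{ab},\psi_{ab}\colon S_a\to S_b$ are $z\mapsto\phi_a(b,z)$, $z\mapsto\psi_a(b,z)$. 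Writing $v\definedas\phi_{ab}^\ast u$ one has $\psi_{ab}^\ast u=v\circ g_b$ with $g_b\definedas\phi_{ab}\inv\circ\psi_{ab}\in\Diff^\infty(S_a)$, so in these coordinates the inclusion is the map $(H,b,v)\mapsto(H,b,v\circ g_b)$ from $\mathcal{H}(\tilde{X})\times U\times\mathcal{B}^{k+r,p}_a(\hat{X},A,J,H_0)$ to $\mathcal{H}(\tilde{X})\times U\times\mathcal{B}^{k,p}_a(\hat{X},A,J,H_0)$. Since $\phi_a$ and $\psi_a$ are smooth trivialisations of $S|_U$, $b\mapsto g_b$ is a smooth map $U\to\Diff^\infty(S_a)$; as the first two factors are mapped by the identity and the third component does not depend on $H$ (and any trivialisation of $X$ over $U$ implicit in the charts enters only through postcomposition with a smooth family of fibrewise diffeomorphisms, which acts smoothly on the Sobolev spaces and so does not affect the regularity count), it suffices to show that $(b,v)\mapsto v\circ g_b$ is of class $C^{r-1}$ as a map $U\times\mathcal{B}^{k+r,p}_a\to\mathcal{B}^{k,p}_a$. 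These being open subsets of $W^{k+r,p}(S_a,X_{\iota(a)})$ and $W^{k,p}(S_a,X_{\iota(a)})$ (with $X_{\iota(a)}$ the fibre of $X$ over $\iota(a)$), this is purely a statement about the composition operator on Sobolev spaces of maps between closed manifolds.

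Next I would invoke the composition lemma. Choosing a finite atlas of $X_{\iota(a)}$ and subordinate cut-offs reduces the question to $\R^q$-valued Sobolev functions on $S_a$, precomposition by $g_b$ commuting with this reduction and the patching being a bounded linear, $b$-independent operation. For the composition operator on $\R^q$-valued Sobolev functions on a closed surface it is a standard fact, carried out with the techniques of \cite{ediss15878} and of Section~3.1 of \cite{1208.1340}, that for a smooth family $(g_b)_{b\in U}$ of diffeomorphisms and $\ell p>2$ the map
\[
(b,v)\longmapsto v\circ g_b,\qquad U\times W^{\ell+r,p}(S_a,\R^q)\to W^{\ell,p}(S_a,\R^q),
\]
is of class $C^{r-1}$: it is affine, hence smooth, in $v$ for fixed $b$, whereas each differentiation in $b$ produces a term of the schematic form $(\d v\circ\xi)\circ g_b$ with $\xi$ a smooth, $b$-dependent vector field, and thus consumes exactly one Sobolev derivative of $v$; having $r$ derivatives of $v$ to spare relative to the target order $\ell$ therefore yields a $C^{r-1}$ map, the Taylor remainders arising in the differentiation being controlled by the Sobolev multiplication theorem with one factor kept in $C^{\ell}$. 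Taking $\ell\definedas k$ gives exactly the claimed $C^{r-1}$-regularity, which by the first step proves the lemma.

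The hard part — and the reason the inclusion is merely $C^{r-1}$ rather than, say, a diffeomorphism — is precisely this loss of a derivative: right translation by $g_b$ is bounded linear $W^{\ell,p}\to W^{\ell,p}$ for each single $b$, but $b\mapsto(v\mapsto v\circ g_b)$ is not even Lipschitz, let alone $C^1$, as a map $U\to L(W^{\ell,p},W^{\ell,p})$; it becomes Lipschitz, then $C^1$, only after lowering the target one Sobolev level, and each further differentiation in $b$ costs another. The reduction carried out in the first step is purely formal, so the substance of the proof lies in this bookkeeping of the derivative loss and the attendant remainder estimates — the same phenomenon that, as discussed on page~\pageref{Page_No_universal_CR_operator}, already prevented the existence of a single global Banach chart and that now forces one to compare the manifold structures attached to different trivialisations one order of differentiability at a time.
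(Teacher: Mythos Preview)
Your proposal is correct and follows essentially the same route as the paper: both unwind the charts to express the inclusion as $(b,v)\mapsto v\circ g_b$ for a smooth family of diffeomorphisms $g_b$ of $S_a$, and then invoke the standard loss-of-derivative bookkeeping for the composition operator on Sobolev spaces to obtain $C^{r-1}$-regularity. The paper carries out the local chart computation with exponential maps in slightly more detail (interpreting the coordinate expression via the pullback bundles $\rho^\ast\hat{X}$ and $\pr_2^\ast\hat{X}$) and attributes the final loss of one derivative to the density argument, whereas you phrase it as each $b$-differentiation consuming one Sobolev level; these are two descriptions of the same mechanism.
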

\begin{proof}
Let $\rho \definedas \pr_2\circ \psi_a\inv\circ \phi_a : U\times S_a \to S_a$.
In other words, $\rho$ is a family $\rho_b : S_a \to S_a$, $b\in U$, of diffeomorphisms of $S_a$.
Fix any $H\in\mathcal{H}(\tilde{X})$.
Then in the trivialisations $\mathcal{B}^{k+r,p}_{U,\phi_a}(\hat{X}, A, J, \mathcal{H}(\tilde{X})) \cong U\times \mathcal{B}_a^{k+r,p}(\hat{X}, A, J, H)\times \mathcal{H}(\tilde{X})$ and $\mathcal{B}^{k,p}_{U,\psi_a}(\hat{X}, A, J, \mathcal{H}(\tilde{X})) \cong U\times \mathcal{B}_a^{k,p}(\hat{X}, A, J, H)\times \mathcal{H}(\tilde{X})$ defining their smooth structures, the coordinate expression for the inclusion is the map
\begin{align*}
U\times \mathcal{B}_a^{k+r,p}(\hat{X}, A, J, H)\times \mathcal{H}(\tilde{X}) &\to U\times \mathcal{B}_a^{k,p}(\hat{X}, A, J, H)\times \mathcal{H}(\tilde{X}) \\
(b, u, H) &\mapsto (b, u\circ \rho_b, H)\text{.}
\end{align*}
The only question about differentiability of this map arises from the middle component, the map
\begin{align*}
\Psi : U\times \mathcal{B}_a^{k+r,p}(\hat{X}, A, J, H) &\to \mathcal{B}_a^{k,p}(\hat{X}, A, J, H) \\
(b, u) &\mapsto u\circ \rho_b\text{.}
\end{align*}
Fix a point $(b, u) \in U\times \mathcal{B}_a^{k+r,p}(\hat{X}, A, J, H)$ with $u$ of class $C^{k+r}$.
We want to express $\Psi$ in coordinates around $(b,u)$ and $\Psi(b,u) = u\circ \rho_b$.
First, assume that $U$ is an open subset of some $\R^d$.
Then the coordinate expression $\tilde{\Psi} : U\times L^{k+r,p}(u^\ast V\hat{X}_a) \to L^{k,p}((u\circ \rho_b)^\ast V\hat{X}_a)$ of $\Psi$ is given by the string of maps
\[
(b', \xi) \mapsto (b',\exp_u^\perp(\xi)) \mapsto \exp_u^\perp(\xi)\circ \rho_{b'} \mapsto (\exp^\perp_{u\circ \rho_b})\inv(\exp_u^\perp(\xi)\circ \rho_{b'})\text{.}
\]
For simplicity from now on I will drop the subscript $a$ on $S_a$ and consequently $\hat{X}_a$ and denote by $S$ the Riemann surface $S_a$ and by $\hat{X}$ the trivial fibre bundle $S\times X$ over $S$ with fibres $\hat{X}_z \cong X$ at the points $z\in S$. Then the above formula can be evaluated at some point $z\in S$ and the definition of $\exp^\perp$ for the fibre bundle $\hat{X}$ can be inserted to give
\[
\tilde{\Psi}(b', \xi)(z) = \bigl(\exp^{\hat{X}_z}_{u\circ \rho_b(z)}\bigr)\inv \left(\exp^{\hat{X}_{\rho_{b'}(z)}}_{u\circ \rho_{b'}(z)}(\xi\circ \rho_{b'}(z))\right)\text{.}
\]
First, note that the right hand side is well-defined for $\|\xi\|_{L^{1,p}(u^\ast V\hat{X})}$ small enough, independent of $b'$, because by compactness, $\sup \{\inj(\hat{X}_z) \;|\; z\in S\}$ is finite and an $L^{1,p}$-bound on $\xi$ implies a pointwise bound by the Sobolev embedding theorem.
Second, this can be written as
\begin{multline*}
\bigl(\exp^{\hat{X}_z}_{u\circ \rho_b(z)}\bigr)\inv \left(\exp^{\hat{X}_{\rho_{b'}(z)}}_{u\circ \rho_{b'}(z)}(\xi\circ \rho_{b'}(z))\right) = \\
\underbrace{\left(\bigl(\exp^{\hat{X}_z}_{u\circ \rho_b(z)}\bigr)\inv\circ \exp^{\hat{X}_{\rho_{b'}(z)}}_{u\circ \rho_{b}(z)}\right)}_{\text{($\ast$)}}\circ \underbrace{\left(\bigl(\exp^{\hat{X}_{\rho_{b'}(z)}}_{u\circ \rho_{b}(z)}\bigr)\inv \circ \exp^{\hat{X}_{\rho_{b'}(z)}}_{u\circ \rho_{b'}(z)}\right)}_{\text{($\ast\ast$)}}(\xi\circ \rho_{b'}(z))\text{,}
\end{multline*}
which can be interpreted as follows: Over $U\times S$, consider the two fibre bundles $\rho^\ast \hat{X}$ and $\pr_2^\ast \hat{X}$, where $\pr_2 : U\times S \to S$ is the projection.
Both of these bundles are canonically identified with the trivial one, but carry two different structures of Riemannian submersion.
Furthermore $u$ is a section of $\hat{X}$, and so is $u\circ \rho_b$.
Hence $\rho^\ast u$ and $\pr_2^\ast (u\circ \rho_b)$ are sections of $\rho^\ast \hat{X}$ and $\pr_2^\ast \hat{X}$, respectively, and $\rho^\ast \xi$ is a section of $V\rho^\ast \hat{X} = \rho^\ast V\hat{X}$ (along $\rho^\ast u$).
Then the first term ($\ast$) above is the coordinate expression for the identification $L^{k+r,p}(\pr_2^\ast \hat{X}) \cong L^{k+r,p}(\rho^\ast \hat{X})$ induced by the canonical identification of $\pr_2^\ast \hat{X} \cong \rho^\ast \hat{X}$ in charts around the section $\pr_2^\ast (u\circ \rho_b)$, whereas the second one ($\ast\ast$) is the usual coordinate transformation on $L^{k,p}(\rho^\ast \hat{X})$ from the chart around $\rho^\ast u$ to the chart around $\pr_2^\ast (u\circ \rho_b)$.
So the above map $\tilde{\Psi}$ can be interpreted as mapping $\xi$ to $\rho^\ast \xi\in L^{k+r,p}((\rho^\ast u)^\ast V\rho^\ast \hat{X})$, then applying the two coordinate transformations above and finally restricting to the slice $\{b\}\times \hat{X} \subseteq \pr_2^\ast \hat{X}$.
A derivative of $\tilde{\Psi}(b',\xi)$ in the first variable $b'$ then corresponds to a covariant derivative of $\rho^\ast \xi$ in a direction tangent to the first factor of $U\times S$.
The maps ($\ast$) and ($\ast\ast$) have bounded derivatives of all orders after restricting to $V\times S$, where $V \subseteq U$ is a precompact open subset of $U$. \\
Now $\nabla^s \rho^\ast\xi$ can be expressed (by the Leibniz rule, basically) as a linear combination of $\xi, \dots, \nabla^s\xi$ with coefficients depending on the $s$-jet of $\rho$.
Again after restricting to a precompact subset $V\subseteq U$, these coefficients can be bounded.
Combining the above, at least for $\xi\in\Gamma^s(u^\ast V\hat{X})$ and $b'\in V$, via these pointwise estimates one can estimate $\|(D^s\tilde{\Psi})(b',\xi)\|_{L^{k,p}} \leq \sum_{j=0}^s c_j \|\nabla^j\xi\|_{L^{k,p}} \leq c \|\xi\|_{L^{k+s,p}}$ for some constants $c_j,c$.
Applying the usual density argument, which causes the loss of one derivative (hence it says $C^{r-1}$, not $C^r$, in the statement), shows the lemma.
\end{proof}

\begin{corollary}\label{Corollary_Smooth_structure_on_M}
The set-theoretic identity defines a diffeomorphism
\[
\mathcal{M}_{U,\phi_a}(\hat{X}, A, J, \mathcal{H}(\tilde{X})) \overset{\cong}{\to} \mathcal{M}_{U,\psi_a}(\hat{X}, A, J, \mathcal{H}(\tilde{X}))\text{.}
\]
In particular, any choice of covering $(U_i)_{i\in I}$ of the base $B$ of $S$ and trivialisations $(\phi_i : U_i\times S_{a_i} \overset{\cong}{\longrightarrow} S|_{U_i})_{i\in I}$ defines a cocycle for a Banach manifold structure on $\mathcal{M}(\hat{X}, A, J, \mathcal{H}(\tilde{X}))$ independent of these choices. \\
If $\mathcal{C}$ is any other Banach manifold and $f : \mathcal{B}^{k_0,p}(\hat{X}, A, J, \mathcal{H}(\tilde{X})) \to \mathcal{C}$, for some $k_0\in\N$ and $p > 1$ with $k_0p > 2$, a map with the property that there exists an $r\in\Z$, $r \leq k_0$ \st $f|_{\mathcal{B}^{k,p}(\hat{X}, A, J, \mathcal{H}(\tilde{X}))} : \mathcal{B}^{k,p}(\hat{X}, A, J, \mathcal{H}(\tilde{X})) \to \mathcal{C}$ is of class $C^{k-r}$ for every $k\geq k_0$, then $f|_{\mathcal{M}(\hat{X}, A, J, \mathcal{H}(\tilde{X}))} : \mathcal{M}(\hat{X}, A, J, \mathcal{H}(\tilde{X})) \to \mathcal{C}$ is smooth. \\
With respect to this Banach manifold structure,
\[
\pi^{\mathcal{M}}_{\mathcal{H}} : \mathcal{M}(\hat{X}, A, J, \mathcal{H}(\tilde{X})) \to \mathcal{H}(\tilde{X})
\]
is a Fredholm map of index
\[
\ind(\pi^{\mathcal{M}}_{\mathcal{H}}) = \dim_\C(X)\hat{\chi} + 2c_1(A) + \dim_\R(B)\text{.}
\]
Given an open subset $V\subseteq \tilde{X}$ and any $H\in \mathcal{H}(\tilde{X})$, the same holds for $\mathcal{M}^V(\hat{X}, A, J, H + \mathcal{H}^V(\tilde{X}))$ and the projection onto $H + \mathcal{H}^V(\tilde{X})$.
\end{corollary}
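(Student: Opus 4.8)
The plan is to obtain the global structure by patching the local Banach manifold charts furnished by Lemma~\ref{Lemma_Ddbar_universal} and then reading off every global property pointwise, using the two invariance statements (Lemma~\ref{Lemma_elliptic_regularity_II} and the lemma immediately preceding the corollary) to compare charts.

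\textbf{Independence of the trivialisation.} Fix an open $U\subseteq B$ and two trivialisations $\phi_a,\psi_a:U\times S_a\overset{\cong}{\to}S|_U$, and fix $s\in\N$; put $r\definedas s+1$. The preceding lemma says that the set-theoretic inclusion $\mathcal{B}^{k+r,p}_{U,\phi_a}(\hat{X},A,J,\mathcal{H}(\tilde{X}))\hookrightarrow\mathcal{B}^{k,p}_{U,\psi_a}(\hat{X},A,J,\mathcal{H}(\tilde{X}))$ is of class $C^{r-1}=C^{s}$. Now $\mathcal{M}_U(\hat{X},A,J,\mathcal{H}(\tilde{X}))$ is the same subset of either space (the $(J,H)$-holomorphic curves), all of whose elements are smooth by Lemma~\ref{Lemma_elliptic_regularity_I}, and by Lemma~\ref{Lemma_Ddbar_universal} it is a split Banach submanifold of both $\mathcal{B}^{k+r,p}_{U,\phi_a}$ and $\mathcal{B}^{k,p}_{U,\psi_a}$. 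Restricting the $C^{s}$ inclusion to this submanifold and corestricting it to the split submanifold $\mathcal{M}^{k,p}_{U,\psi_a}$ (legitimate precisely because the latter is split) gives a $C^{s}$ map $\mathcal{M}^{k+r,p}_{U,\phi_a}\to\mathcal{M}^{k,p}_{U,\psi_a}$. By Lemma~\ref{Lemma_elliptic_regularity_II} source and target are canonically diffeomorphic to $\mathcal{M}_{U,\phi_a}$ and $\mathcal{M}_{U,\psi_a}$, so the set-theoretic identity $\mathcal{M}_{U,\phi_a}\to\mathcal{M}_{U,\psi_a}$ is $C^{s}$; letting $s\to\infty$ it is smooth, and exchanging the roles of $\phi_a$ and $\psi_a$ shows its inverse is smooth as well.

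\textbf{Global atlas, the map $f$, and the Fredholm property.} Given a covering $(U_i)_{i\in I}$ of $B$ with trivialisations $(\phi_i)$, the transition maps between the charts $\mathcal{M}_{U_i\cap U_j,\phi_i}$ and $\mathcal{M}_{U_i\cap U_j,\phi_j}$ are exactly the diffeomorphisms just produced (applied with $U=U_i\cap U_j$), so the $\mathcal{M}_{U_i,\phi_i}$ glue to a Banach manifold; comparing two such choices over a common refinement, the same diffeomorphisms show the resulting smooth structure is independent of the covering and the trivialisations. For the map $f$: realising the smooth structure on $\mathcal{M}(\hat{X},A,J,\mathcal{H}(\tilde{X}))$ by $\mathcal{M}^{k,p}(\hat{X},A,J,\mathcal{H}(\tilde{X}))$ with $k$ arbitrarily large (Lemma~\ref{Lemma_elliptic_regularity_II}), $\mathcal{M}^{k,p}$ is a submanifold of $\mathcal{B}^{k,p}$, so $f|_{\mathcal{M}}$ is the restriction of the $C^{k-r}$ map $f|_{\mathcal{B}^{k,p}}$ and hence of class $C^{k-r}$; as $k\to\infty$ this gives smoothness. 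Finally, being a Fredholm map of a given index is a local property, and over each chart $\mathcal{M}_{U,\phi_a}$ Lemma~\ref{Lemma_Ddbar_universal} gives that $\pi^{\mathcal{M}}_{\mathcal{H}}$ is Fredholm of index $\dim_\C(X)\hat{\chi}+2c_1(A)+\dim_\R(U)=\dim_\C(X)\hat{\chi}+2c_1(A)+\dim_\R(B)$, since $U$ is open in $B$; hence the same holds globally. The statement for $\mathcal{M}^V(\hat{X},A,J,H+\mathcal{H}^V(\tilde{X}))$ follows by the identical argument, noting that it is an open subset of the moduli space over the closed affine subspace $H+\mathcal{H}^V(\tilde{X})\subseteq\mathcal{H}(\tilde{X})$, so that Lemmas~\ref{Lemma_elliptic_regularity_I}, \ref{Lemma_elliptic_regularity_II} and \ref{Lemma_Ddbar_universal} as well as the preceding lemma all restrict to it.

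\textbf{Expected main obstacle.} There is essentially no analytic difficulty left at this stage; the corollary is a formal consequence of the lemmas above. The one point that requires care is the Sobolev-index bookkeeping: in order to feed the preceding lemma one must take the domain at level $(k+r,p)$ and the codomain at level $(k,p)$, and then invoke Lemma~\ref{Lemma_elliptic_regularity_II} to suppress the difference between these two smooth structures; and in order to corestrict the $C^{s}$ map to $\mathcal{M}^{k,p}_{U,\psi_a}$ one needs that this be a \emph{split} submanifold of $\mathcal{B}^{k,p}_{U,\psi_a}$, which is exactly what Lemma~\ref{Lemma_Ddbar_universal} provides.
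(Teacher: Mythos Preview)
Your proposal is correct and follows exactly the approach intended by the paper, whose own proof is the single line ``Immediate from the preceding three lemmas.'' You have faithfully unpacked what that line means: use the preceding lemma to get a $C^{r-1}$ inclusion at shifted Sobolev indices, restrict and corestrict to the split moduli submanifolds via Lemma~\ref{Lemma_Ddbar_universal}, and then collapse the Sobolev indices via Lemma~\ref{Lemma_elliptic_regularity_II}; the remaining assertions (global atlas, smoothness of $f$, Fredholm index, and the $V$-variant) are then read off locally.
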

\begin{proof}
Immediate from the preceding three lemmas.
\end{proof}

\subsection{Evaluation maps and nodal families}\label{Subsection_Evaluation_maps}

Of interest are two kinds of evaluation maps: Evaluation at the marked points $\hat{R}^i$ and at the points corresponding to the nodes of $\Sigma|_B$.
While the former can be defined as maps on $\mathcal{M}(\hat{X}, A, J, \mathcal{H}(\tilde{X}))$, the latter can not. For the nodes only form a discrete subbundle of $\Sigma|_B$ or their desingularisations one of $S$.
The evaluations at these points are of importance since in the desingularisation $S$ of $\Sigma|_B$ all the nodes are resolved to pairs of points and hence the space $\mathcal{M}(\hat{X}, A, J, \mathcal{H}(\tilde{X}))$ contains ``too many'' curves in the sense that one is only interested in those which map each pair of points corresponding to a node to a single point. For only on this set does there exist an inclusion into $\mathcal{M}(\tilde{X}, A, J, \mathcal{H}(\tilde{X}))$.
But one can still choose a covering $(U_i)_{i\in I}$ and trivialisations $(\phi_i : U_i\times S_{a_i} \to S|_{U_i})_{i\in I}$ with the property that $\phi_i$ trivialises $N|_{U_i}$ as well, \ie after choosing some numbering $N^{i,1}_j(a_i), N^{i,2}_j(a_i)$, $j=1, \dots, d$, of $N_{a_i}$ \st $N^{i,1}_j(a_i)$ and $N^{i,2}_j(a_i)$ correspond to the same node and defining $N^{i,1}_j(b) \definedas \phi_i(b,N^{i,1}_1(a_i))$, $N^{i,2}_j(b) \definedas \phi_i(b,N^{i,2}_j(a_i))$, for $b\in U_i$, $j = 1, \dots, d$, one has $N_b = \{ N^{i,1}_j(b), N^{i,2}_j(b) \;|\; j = 1, \dots, d\}$. $N^{i,1}_j(b)$ and $N^{i,2}_j(b)$ here naturally are always supposed to correspond to the same node.
This allows the definition of evaluation maps (as always, $kp > 2$)
\begin{align*}
\ev^{N^{i,1},N^{i,2}} : \mathcal{B}^{k,p}_{U_i}(\hat{X}, A, J, \mathcal{H}(\tilde{X})) &\to (X\oplus X)^{\oplus d} \\
\mathcal{B}^{k,p}_b(\hat{X}, A, J, H) \ni u &\mapsto ((\pr_2(u(N^{i,1}_1(b))), \pr_2(u(N^{i,1}_1(b)))), \dots, \\
&\qquad (\pr_2(u(N^{i,1}_d(b))), \pr_2(u(N^{i,1}_d(b)))))\text{.}
\end{align*}
In contrast, the marked points $\hat{R}_1, \dots, \hat{R}_n : B\to S$ allow the definition of a globally defined evaluation map
\begin{align*}
\ev^{\hat{R}} : \mathcal{B}^{k,p}(\hat{X}, A, J, \mathcal{H}(\tilde{X})) &\to \hat{R}_1^\ast \hat{X} \oplus \cdots \oplus \hat{R}_n^\ast \hat{X} \\
\mathcal{B}^{k,p}_b(\hat{X}, A, J, H) \ni u &\mapsto (u(\hat{R}_1(b)), \dots, u(\hat{R}_n(b)))
\end{align*}
with a well-defined restriction to $\mathcal{M}(\hat{X}, A, J, \mathcal{H}(\tilde{X}))$. The target space of the above map is the fibre bundle over $B$ which is the Whitney sum of the fibre bundles $\hat{R}_i^\ast \hat{X}$. Writing $u \in \mathcal{B}^{k,p}_b(\hat{X}, A, J, H)$ in the form $z \mapsto (z, \overline{u}(z)) \in S_b\times X = \hat{X}_b$, then $\ev^{\hat{R}}(u) = (b, \overline{u}(\hat{R}_1(b)), \dots, \overline{u}(\hat{R}_n(b)))$.
As before for the $N^{i,1}_j, N^{i,2}_j$, assume that the $\phi_i$ preserve the markings in the sense that $\phi_i(b, \hat{R}_j(a_i)) = \hat{R}_j(b)$ for all $b\in U_i$, $i\in I$.
The reason for this is the following:
If $f : M\to N$ is a map between manifolds, and $\gamma : \R \to M$ is a path in $M$, then $\frac{\d}{\d t}f(\gamma(t)) = df(\dot{\gamma}(t))$ depends on the first derivative of $f$, and correspondingly for the higher derivatives.
If $f$ is of some Sobolev class, then this is only well defined by the Sobolev embedding theorem as long as $f$ has enough weak derivatives.
This problem is circumvented here, because with the choices of $\phi_i$ above, the markings and nodal points under the $\phi_i$ correspond to constant points on the $S_{a_i}$.
Hence the restriction $\ev^{\hat{R}} : \mathcal{B}^{k,p}_{U_i,\phi_i}(\hat{X}, A, J, \mathcal{H}(\tilde{X})) \to \hat{R}_1^\ast \hat{X} \oplus \cdots \oplus \hat{R}_n^\ast \hat{X}|_{U_i}$ is actually smooth \wrt to the smooth structure defined via $\phi_i$ and hence by the previous corollary
\[
\ev^{\hat{R}} : \mathcal{M}(\hat{X}, A, J, \mathcal{H}(\tilde{X})) \to \hat{R}_1^\ast \hat{X} \oplus \cdots \oplus \hat{R}_n^\ast \hat{X}
\]
is a smooth map.
Analogously, all the restrictions
\[
\ev^{N^{i,1},N^{i,2}} : \mathcal{M}_{U_i}(\hat{X}, A, J, \mathcal{H}(\tilde{X})) \to (X\oplus X)^{\oplus d}
\]
are smooth maps. \\
Letting $\Delta \definedas \{(x,x)\in X\oplus X \;|\; x\in X\}$, the space of holomorphic curves, at least over one of the $U_i$, is the preimage $\left(\ev^{N^{i,1},N^{i,2}}\right)\inv(\Delta^d)$, which is the space of those curves mapping each pair of points in a desingularisation corresponding to a node to a single point.
Furthermore, $\left(\ev^{N^{i,1},N^{i,2}}\right)\inv(\Delta^d)$ is independent of the choice of the $N^{i,1}_j$ and $N^{i,2}_j$, since any compatible reordering (in $j$ or switching $N^{i,1}_j$ and $N^{i,2}_j$ for a fixed $j$) leaves the set $\Delta^d$ invariant.
Hence there are well-defined sets
\[
\mathcal{M}_{U_i}(\tilde{X}|_B, A, J, \mathcal{H}(\tilde{X})) \definedas \left(\ev^{N^{i,1},N^{i,2}}\right)\inv(\Delta^d)
\]
which patch together to a well-defined set
\[
\mathcal{M}(\tilde{X}|_B, A, J, \mathcal{H}(\tilde{X})) \definedas \bigcup_{i\in I} \mathcal{M}_{U_i}(\tilde{X}|_B, A, J, \mathcal{H}(\tilde{X}))
\]
in the sense that for any $i,j\in I$ and for any $b\in U_i\cap U_j$, the sets of those points in $\mathcal{M}_{U_i}(\tilde{X}|_B, A, J, \mathcal{H}(\tilde{X}))$ and $\mathcal{M}_{U_j}(\tilde{X}|_B, A, J, \mathcal{H}(\tilde{X}))$ lying over $b$ coincide and furthermore, $\mathcal{M}(\tilde{X}|_B, A, J, \mathcal{H}(\tilde{X}))$ is independent of choices.
Given $V\subseteq \tilde{X}$ and $H\in \mathcal{H}(\tilde{X})$, there are analogously defined sets
\[
\mathcal{M}^V_{U_i}(\tilde{X}|_B, A, J, H + \mathcal{H}^V(\tilde{X})) \text{ and } \mathcal{M}^V(\tilde{X}|_B, A, J, H + \mathcal{H}^V(\tilde{X}))\text{.}
\]
Also, one can restrict $\ev^{\hat{R}}$ to the above subsets.
At this point it also makes sense to introduce what is mainly a change in notation.
Namely remember that $\hat{X}$ was the pullback of $\tilde{X}$ under the desingularisation $\hat{\iota} : S \to \Sigma$ of the restriction of the nodal family $\Sigma$ to the subset $\iota : B \to M$, where $M$ was the base of the family $\Sigma$.
Also, the markings $\hat{R}$ of $S$ were the pullbacks of the markings $R$ of $\Sigma$.
So one can canonically identify $\hat{R}_1^\ast \hat{X} \oplus \cdots \oplus \hat{R}_n^\ast \hat{X} \cong R_1^\ast \tilde{X} \oplus \cdots \oplus R_n^\ast \tilde{X}|_B$.
Note that because $\tilde{X}$ was defined to be the pullback $\pi^\ast X$ and because the $R_i$ are sections, every $R_i^\ast \tilde{X}$ is canonically identified with $X$.
But to distinguish the factors, the above notation is kept.
Using this, write
\[
\ev^{R} \definedas \ev^{\hat{R}}|_{\mathcal{M}(\tilde{X}|_B, A, J, \mathcal{H}(\tilde{X}))} : \mathcal{M}(\tilde{X}|_B, A, J, \mathcal{H}(\tilde{X})) \to R_1^\ast \tilde{X} \oplus \cdots \oplus R_n^\ast \tilde{X}|_B
\]

\begin{lemma}\label{Lemma_The_universal_moduli_space}
For any choice of $U_i$, $\phi_i$ and $N^{i,1}, N^{i,2}$ as above, the maps
\[
\ev^{N^{i,1}, N^{i,2}}\times \ev^{\hat{R}} : \mathcal{M}_{U_i}(\hat{X}, A, J, \mathcal{H}(\tilde{X})) \to (X^2)^d\times \hat{R}_1^\ast \hat{X} \oplus \cdots \oplus \hat{R}_n^\ast \hat{X}
\]
are submersions. \\
The sets $\mathcal{M}_{U_i}(\tilde{X}|_B, A, J, \mathcal{H}(\tilde{X}))$ are split submanifolds of $\mathcal{M}_{U_i}(\hat{X}, A, J, \mathcal{H}(\tilde{X}))$ that define a cocycle that equips $\mathcal{M}(\tilde{X}|_B, A, J, \mathcal{H}(\tilde{X}))$ with the structure of a split Banach submanifold of $\mathcal{M}(\hat{X},A,J,\mathcal{H}(\tilde{X}))$ of codimension $\dim_\R(X)\, d = \dim_\C(X)\,2d$. \\
Furthermore,
\[
\ev^R : \mathcal{M}(\tilde{X}|_B, A, J, \mathcal{H}(\tilde{X})) \to R_1^\ast \tilde{X}\oplus \cdots \oplus R_n^\ast \tilde{X}|_B
\]
is a submersion and in particular so are
\[
\pi^\mathcal{M}_B : \mathcal{M}(\tilde{X}|_B, A, J, \mathcal{H}(\tilde{X})) \to B\text{,}
\]
the composition of $\ev^R$ with the projection $R_1^\ast \tilde{X}\oplus\cdots\oplus R_n^\ast \tilde{X}|_B \to B$, and every
\[
\ev^R_i : \mathcal{M}(\tilde{X}|_B, A, J, \mathcal{H}(\tilde{X})) \to R_i^\ast \tilde{X}|_B
\]
for $1 \leq i \leq n$, the composition of $\ev^R$ with the projection $R_1^\ast \tilde{X}\oplus \cdots\oplus R_n^\ast\tilde{X}|_B \to R_i^\ast \tilde{X}|_B$. \\
Finally,
\[
\pi^{\mathcal{M}}_{\mathcal{H}} : \mathcal{M}(\tilde{X}|_B,A,J,\mathcal{H}(\tilde{X})) \to \mathcal{H}(\tilde{X})
\]
is a Fredholm map of index
\[
\ind(\pi^{\mathcal{M}}_{\mathcal{H}}) = \dim_\C(X)\chi +
2c_1(A) + \dim_\R(B)\text{.}
\]
Given an open subset $V \subseteq \tilde{X}$ and any $H\in \mathcal{H}(\tilde{X})$, the same statements hold with $\mathcal{M}_{U_i}(\hat{X}, A, J, \mathcal{H}(\tilde{X}))$ replaced by $\mathcal{M}_{U_i}^V(\hat{X}, A, J, H + \mathcal{H}^V(\tilde{X}))$, $\mathcal{M}(\tilde{X}|_B, A, J, \mathcal{H}(\tilde{X}))$ replaced by $\mathcal{M}^V(\tilde{X}|_B, A, J, H + \mathcal{H}^V(\tilde{X}))$ and $\mathcal{H}(\tilde{X})$ replaced by $H + \mathcal{H}^V(\tilde{X})$.
\end{lemma}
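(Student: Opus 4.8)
The plan is to deduce everything from three inputs: the universal transversality statement Lemma~\ref{Lemma_Main_transversality_result}, the Banach manifold and Fredholm properties of the ambient space from Lemma~\ref{Lemma_Ddbar_universal} and Corollary~\ref{Corollary_Smooth_structure_on_M}, and two elementary facts of Banach manifold theory: the preimage of a closed finite-codimensional submanifold under a submersion between Banach manifolds is again a submanifold of the same codimension, split if the ambient manifold is split; and the restriction of a Fredholm map to a submanifold of finite codimension $c$ is Fredholm of index lowered by $c$.

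First I would show that $\ev^{N^{i,1},N^{i,2}}\times \ev^{\hat R}$ is a submersion on $\mathcal{M}_{U_i}(\hat X, A, J, \mathcal{H}(\tilde X))$. One applies the surjectivity statement of Lemma~\ref{Lemma_Main_transversality_result} with $r = 2d + n$ and with the points $z_1,\dots,z_r$ taken to be the (constant, under $\phi_i$) points on $S_{a_i}$ corresponding to the nodal points $N^{i,1}_j, N^{i,2}_j$, $j=1,\dots,d$, and to the marked points $\hat R_1,\dots,\hat R_n$. At a point $u$ of the moduli space, which is smooth by Lemma~\ref{Lemma_elliptic_regularity_I}, the tangent space is $\ker (D\dbar^{J,\mathcal{H}}_{U,\phi_a})_{(b,u,H)}$, and reading the surjective combined map of Lemma~\ref{Lemma_Main_transversality_result} on the $\{0\}$-component of its $\mathcal{E}^{k-1,p}_a$-factor shows exactly that $\ev^{N^{i,1},N^{i,2}}\times \ev^{\hat R}$ (which, through its bundle target, also records $\pi^{\mathcal{M}}_B$) has surjective differential on that kernel. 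Since $\Delta\subseteq X\times X$ is a closed submanifold of codimension $\dim_\R X$, the diagonal $\Delta^d$ has codimension $d\dim_\R X$, so $\mathcal{M}_{U_i}(\tilde X|_B, A, J,\mathcal{H}(\tilde X)) = (\ev^{N^{i,1},N^{i,2}})\inv(\Delta^d)$ is a split submanifold of $\mathcal{M}_{U_i}(\hat X, A, J,\mathcal{H}(\tilde X))$ of codimension $d\dim_\R X = 2d\dim_\C X$; splitness holds because the kernel of the surjective differential of $\ev^{N^{i,1},N^{i,2}}$ is a closed finite-codimensional, hence complemented, subspace of the split space $T_u\mathcal{M}_{U_i}(\hat X, A, J,\mathcal{H}(\tilde X))$. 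These local pieces are independent of the numbering of the $N^{i,1}_j, N^{i,2}_j$ (reordering the nodes or switching the two points of a node preserves $\Delta^d$), and on overlaps they carry the smooth structures of Corollary~\ref{Corollary_Smooth_structure_on_M}, so they glue to a split Banach submanifold structure on $\mathcal{M}(\tilde X|_B, A, J,\mathcal{H}(\tilde X))\subseteq\mathcal{M}(\hat X, A, J,\mathcal{H}(\tilde X))$.

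Next I would establish submersivity of $\ev^R$. For $u\in\mathcal{M}(\tilde X|_B, A, J,\mathcal{H}(\tilde X))$ the tangent space consists of those $\xi\in T_u\mathcal{M}(\hat X,\ldots)$ with $\xi(N^{i,1}_j)=\xi(N^{i,2}_j)$ at all nodes. Applying Lemma~\ref{Lemma_Main_transversality_result} once more with the same $2d+n$ points, one may prescribe $\xi$ at the marked points arbitrarily and $(\pi^{\mathcal{B}}_U)_\ast\xi\in T_bU$ arbitrarily, while simultaneously forcing $\xi$ to take any chosen common value on each pair $\{N^{i,1}_j, N^{i,2}_j\}$, in particular one placing $\xi$ in $T_u\mathcal{M}(\tilde X|_B,\ldots)$; hence the matching constraint costs nothing on the relevant image, and $\ev^R$ is a submersion on $\mathcal{M}(\tilde X|_B, A, J,\mathcal{H}(\tilde X))$ — its differential is onto the base directions $T_bB$ and onto the fibre directions of $R_1^\ast\tilde X\oplus\cdots\oplus R_n^\ast\tilde X|_B$. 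Composing with the bundle projections to $B$ and to each $R_i^\ast\tilde X|_B$ then gives that $\pi^{\mathcal{M}}_B$ and every $\ev^R_i$ are submersions. For the Fredholm claim, Corollary~\ref{Corollary_Smooth_structure_on_M} says $\pi^{\mathcal{M}}_{\mathcal{H}}$ on $\mathcal{M}(\hat X, A, J,\mathcal{H}(\tilde X))$ is Fredholm of index $\dim_\C(X)\hat\chi + 2c_1(A)+\dim_\R(B)$, with $\hat\chi$ the Euler characteristic of the desingularised surface; restriction to the codimension-$2d\dim_\C X$ submanifold lowers the index by $2d\dim_\C X$, and since $\hat\chi = \chi + 2d$ (the desingularisation of a $d$-nodal surface of Euler characteristic $\chi$ has Euler characteristic $\chi+2d$), this yields precisely $\ind(\pi^{\mathcal{M}}_{\mathcal{H}}) = \dim_\C(X)\chi + 2c_1(A)+\dim_\R(B)$. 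The $V$-decorated statements follow by the same arguments, using the surjectivity onto $\mathcal{K}=\mathcal{H}^V(\tilde X)$ in Lemma~\ref{Lemma_Main_transversality_result} — legitimate because a curve in $\mathcal{M}^V$ meets $\tilde\iota\inv(V)$ on every connected component, so a suitable $W$ exists — together with the $V$-versions of Lemma~\ref{Lemma_Ddbar_universal} and Corollary~\ref{Corollary_Smooth_structure_on_M}.

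The step I expect to be the main obstacle is the submersivity of $\ev^R$ and $\pi^{\mathcal{M}}_B$ on the \emph{smaller} space $\mathcal{M}(\tilde X|_B,\ldots)$ rather than on the ambient $\mathcal{M}(\hat X,\ldots)$: one has to feed the $2d$ matching points and the $n$ marked points into the transversality lemma simultaneously and verify that the matching constraint can be satisfied while the marked-point values and the base direction remain free, so that it really does not shrink the relevant image; the accompanying bookkeeping ($\hat\chi$ versus $\chi$, and the codimension $2d\dim_\C X$) must be kept consistent with this.
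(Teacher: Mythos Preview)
Your proposal is correct and follows essentially the same approach as the paper, which gives only the one-line proof ``Lemma~\ref{Lemma_Main_transversality_result}, the implicit function theorem and an easy index calculation.'' You have simply unpacked these three ingredients explicitly: the transversality lemma applied at the $2d+n$ special points gives the submersion statements, the implicit function theorem turns the diagonal preimage into a split submanifold of the right codimension, and your relation $\hat\chi = \chi + 2d$ together with the codimension $2d\dim_\C X$ is exactly the ``easy index calculation'' the paper alludes to.
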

\begin{proof}
Lemma \ref{Lemma_Main_transversality_result}, the implicit function theorem and an easy index calculation.
\end{proof}

\begin{corollary}
For generic $H \in \mathcal{H}(\tilde{X})$, $\mathcal{M}(\tilde{X}|_B, A, J, H)$ is a manifold of dimension
\[
\dim \mathcal{M}(\tilde{X}|_B, A, J, H) = \dim_\C(X)\chi + 2c_1(A) +
\dim_\R(B)\text{.}
\]
\end{corollary}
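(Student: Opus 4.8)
The plan is to read this off from the universal construction via the Sard--Smale theorem. By Lemma \ref{Lemma_The_universal_moduli_space} (see also Corollary \ref{Corollary_Smooth_structure_on_M}), $\mathcal{M}(\tilde{X}|_B, A, J, \mathcal{H}(\tilde{X}))$ is a smooth Banach manifold and
\[
\pi^{\mathcal{M}}_{\mathcal{H}} : \mathcal{M}(\tilde{X}|_B, A, J, \mathcal{H}(\tilde{X})) \to \mathcal{H}(\tilde{X})
\]
is a smooth Fredholm map of index $d \definedas \dim_\C(X)\chi + 2c_1(A) + \dim_\R(B)$ (for $B$ connected, which we may assume, arguing over the components otherwise). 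Since $\mathcal{H}(\tilde{X})$ is an open ball in a separable Banach space, it is a Baire space, so it suffices to exhibit a residual (hence dense) subset $\mathcal{H}^{\mathrm{reg}}(\tilde{X}) \subseteq \mathcal{H}(\tilde{X})$ of regular values of $\pi^{\mathcal{M}}_{\mathcal{H}}$: for $H \in \mathcal{H}^{\mathrm{reg}}(\tilde{X})$ the implicit function theorem then identifies $\mathcal{M}(\tilde{X}|_B, A, J, H) = (\pi^{\mathcal{M}}_{\mathcal{H}})\inv(H)$ with a smooth manifold of dimension $\ind(\pi^{\mathcal{M}}_{\mathcal{H}}) = d$, and ``generic $H$'' is to be read as ``$H \in \mathcal{H}^{\mathrm{reg}}(\tilde{X})$''.

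To invoke the Sard--Smale theorem (\cite{MR2045629}, Theorem A.5.1) I would first record that its hypotheses are met. The required differentiability is present because, in each chart $\mathcal{B}^{k,p}_{U,\phi_a}(\hat{X}, A, J, \mathcal{H}(\tilde{X}))$, the operator $\dbar^{J,\mathcal{H}}_U$ is a smooth section of a smooth Banach space bundle, so the restriction of $\pi^{\mathcal{M}}_{\mathcal{H}}$ to each $\mathcal{M}_{U_i}(\tilde{X}|_B, A, J, \mathcal{H}(\tilde{X}))$ (with its $\phi_i$-structure) is smooth, and by Corollary \ref{Corollary_Smooth_structure_on_M} together with Lemma \ref{Lemma_elliptic_regularity_II} these structures glue to the global one independently of $k$, $p$ and the $\phi_a$; hence $\pi^{\mathcal{M}}_{\mathcal{H}}$ is a globally smooth Fredholm map. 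For the domain to be second countable I would fix, once and for all, a countable open cover $(U_i)_{i\in I}$ of $B$ together with trivialisations $\phi_i : U_i\times S_{a_i}\cong S|_{U_i}$ as in Corollary \ref{Corollary_Smooth_structure_on_M} — possible since $B$, being a manifold, is second countable — so that each $\mathcal{M}_{U_i}(\tilde{X}|_B, A, J, \mathcal{H}(\tilde{X}))$ is a second countable smooth Banach manifold.

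Then I would apply the Sard--Smale theorem to $\pi^{\mathcal{M}}_{\mathcal{H}}$ restricted to each $\mathcal{M}_{U_i}(\tilde{X}|_B, A, J, \mathcal{H}(\tilde{X}))$, obtaining residual subsets $\mathcal{H}^{\mathrm{reg}}_i(\tilde{X})\subseteq \mathcal{H}(\tilde{X})$ of regular values of these restrictions, and set $\mathcal{H}^{\mathrm{reg}}(\tilde{X}) \definedas \bigcap_{i\in I}\mathcal{H}^{\mathrm{reg}}_i(\tilde{X})$, which is still residual as a countable intersection of residual sets. Since split surjectivity of $(D\pi^{\mathcal{M}}_{\mathcal{H}})_u$ is a condition on the single point $u$ and every $u$ lies in some $\mathcal{M}_{U_i}$, any $H \in \mathcal{H}^{\mathrm{reg}}(\tilde{X})$ is a regular value of $\pi^{\mathcal{M}}_{\mathcal{H}}$ on all of $\mathcal{M}(\tilde{X}|_B, A, J, \mathcal{H}(\tilde{X}))$, and the implicit function theorem yields the asserted manifold structure and dimension formula.

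I do not expect a serious obstacle here; the work is entirely in verifying the Sard--Smale hypotheses carefully. The two points to watch are the second countability of the domain — dealt with by passing to a countable trivialising cover of $B$ — and the passage from ``regular value over each $U_i$'' to ``regular value globally'', which is immediate once one notes that regularity is a local (pointwise) condition on the total space; the differentiability class is a non-issue here since $\pi^{\mathcal{M}}_{\mathcal{H}}$ is actually smooth, so the index bound $q \geq \max\{1, d+1\}$ in the Sard--Smale theorem imposes no real restriction.
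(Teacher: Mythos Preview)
Your proposal is correct and follows exactly the same approach as the paper, which simply cites ``Sard-Smale and Lemma \ref{Lemma_The_universal_moduli_space}''. You have merely filled in the standard details (second countability via a countable cover, intersecting residual sets, pointwise nature of regularity) that the paper leaves implicit.
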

\begin{proof}
Sard-Smale and Lemma \ref{Lemma_The_universal_moduli_space}.
\end{proof}

\section{Bubbling and the Gromov compactification}\label{Section_Compactification}

So far, a topology has only been defined on $\mathcal{M}(\tilde{X}|_B, A, J, \mathcal{H}(\tilde{X}))$, where $B\subseteq M$ is a locally closed submanifold over which there exists a desingularisation of $\Sigma$.
But even if $M$ has a well-defined stratification by signature, this does not define a well-behaved topology on all of $\mathcal{M}(\tilde{X}, A, J, \mathcal{H}(\tilde{X}))$.
Well-behaved here is to mean at least that the maps $\pi^\mathcal{M}_M : \mathcal{M}(\tilde{X}, A, J, \mathcal{H}(\tilde{X})) \to M$ and $\pi^\mathcal{M}_\mathcal{H} : \mathcal{M}(\tilde{X}, A, J, \mathcal{H}(\tilde{X})) \to \mathcal{H}(\tilde{X})$ are to be continuous. \\
Furthermore, to be able to apply the compactness results from \cite{MR1451624} and \cite{MR2026549}, this topology has to be chosen to be compatible in a sense to that of Deligne-Mumford convergence.
The relevant construction here can be found in the proof of Theorem 13.6 in \cite{MR2262197} (the direction (ii) $\Rightarrow$ (i)). The implication of this theorem can be stated as saying that the map $M \to \overline{M}_{g,n}$ from the base space of a marked nodal family of Riemann surfaces of type $(g,n)$ to the Deligne-Mumford space equipped with the topology of Deligne-Mumford convergence (as defined \eg in \cite{MR1451624} or \cite{MR2026549}) is continuous.
The topology on $\mathcal{M}(\tilde{X}, A, J, \mathcal{H}(\tilde{X}))$ will be described in terms of convergence of sequences as in Section 5.6 of \cite{MR2045629}.
To do so the following result from \cite{MR2262197} will be used, where still $(\pi : \Sigma \to M, R_\ast)$ is an arbitrary family of marked nodal Riemann surfaces of type $(g,n)$:
\begin{construction}\label{Construction_Deligne_Mumford_convergence}
Let $b\in M$. Then there exists a neighbourhood $U\subseteq M$ of $b$ with the following properties: Let $n_1, \dots, n_d \in \Sigma_b$ be the nodal points on $\Sigma_b$. For $i = 1, \dots ,d$ there are pairwise disjoint neighbourhoods $N_i \subseteq \Sigma$ of the $n_i$ with $\pi(N_i) = U$ and \st $R_j\cap N_i = \emptyset \;\forall\, j=1, \dots n, i = 1, \dots, d$ and holomorphic maps
\[
(x_i, y_i) : N_i \to \D^2, \quad  z_i : U \to \D, \quad t_i : U \to \D^{\dim_\C(M) - 1}
\]
\st
\begin{align*}
(z_i,t_i) : U &\to \D^{\dim_\C(M)} \\
(x_i,y_i, t_i\circ\pi|_{N_i}) : N_i &\to \D^{\dim_\C(\Sigma)}
\end{align*}
are holomorphic coordinate systems with
\[
(x_i,y_i)(n_i) = (0,0) \quad\text{and}\quad x_iy_i = z_i\circ \pi|_{N_i}\text{.}
\]
Denote for $b'\in U$ and $i = 1, \dots, d$
\begin{align*}
\Gamma_i(b') &\definedas \{z \in \Sigma_{b'} \cap N_i \;|\; |x_i(z)| = |y_i(z)| = \sqrt{|z_i(b')|}\}
\intertext{and}
\Gamma(b') &\definedas \bigcup_{i=1}^d \Gamma_i(b'), \quad \Gamma \definedas \bigcup_{b'\in U} \Gamma(b')\text{.}
\end{align*}
Then each $\Gamma(b')$ is a disjoint union of nodal points (one for each $i$ with $z_i(b') = 0$) and pairwise disjoint embedded circles (one for each $i$ with $z_i(b') \neq 0$) disjoint from all the nodal and marked points.
Especially $\Gamma_i(b) = n_i$ and hence $\Gamma(b) = \{n_1, \dots, n_d\}$.
Also, for every $b' \in U$ there exists a continuous map
\[
\psi_{b'} : \Sigma_{b'} \to \Sigma_b
\]
with the following properties:
\begin{itemize}
  \item $\psi_{b'}(\Gamma_i(b')) = n_i$ for all $i = 1, \dots, d$.
  \item $\psi_{b'}|_{\Sigma_{b'}\setminus \Gamma(b')} : \Sigma_{b'}\setminus \Gamma(b') \to \Sigma_b\setminus\{n_1, \dots, n_d\}$ is a diffeomorphism.
  \item The map
\begin{align*}
\psi : \Sigma|_U\setminus \Gamma &\to U\times (\Sigma_b\setminus \{n_1 \dots, n_d\}) \\
z &\mapsto (\pi(z), \psi_{\pi(z)}(z))
\end{align*}
is a diffeomorphism.
\end{itemize}
These maps have the property that if $(b_i)_{i\in \N} \subseteq U$ is a sequence converging to $b$, then the sequence $(j_i)_{i\in\N}$ of complex structures on $\Sigma_b \setminus \{n_1, \dots, n_d\}$ defined by $j_i \definedas \psi_{b_i, \ast} j_{b_i}$, where $j_{b_i}$ denotes the complex structure on $\Sigma_{b_i}\setminus \Gamma(b')$, converges in the $C^\infty$-topology to the restriction of $j_b$ to $\Sigma_b \setminus \{n_1, \dots, n_d\}$.
\end{construction}

With this one can define sequential convergence in $\mathcal{M}(\tilde{X}, A, J, \mathcal{H}(\tilde{X}))$.
\begin{defn}
Let $(u_i)_{i\in \N} \subseteq \mathcal{M}(\tilde{X}, A, J, \mathcal{H}(\tilde{X}))$ be a sequence.
Then $u_i$ converges to $u\in \mathcal{M}(\tilde{X}, A, J, \mathcal{H}(\tilde{X}))$ \iff the following hold:
\begin{itemize}
  \item Let $b_i \definedas \pi^{\mathcal{M}}_M(u_i)$ and $b \definedas \pi^\mathcal{M}_M(u)$. Then $b_i \overset{i\to \infty}{\longrightarrow} b$ in $M$.
  \item Let $H_i \definedas \pi^\mathcal{M}_\mathcal{H}(u_i)$ and $H \definedas \pi^\mathcal{M}_\mathcal{H}(u)$. Then $H_i \overset{i\to\infty}{\longrightarrow} H$ in $\mathcal{H}(\tilde{X})$.
  \item In the notation of Construction \ref{Construction_Deligne_Mumford_convergence}, for $b'\in U$, let $\phi_{b'} \definedas (\psi_{b'}|_{\Sigma_{b'}\setminus \Gamma(b')})\inv : \Sigma_b\setminus \{n_1, \dots, n_d\} \to \Sigma_{b'}$. Let $N\in \N$ be \st $b_i \in U$ for all $i\geq N$. Then $u_i\circ \phi_{b_i} : \Sigma_b \setminus \{n_1, \dots, n_d\} \to \tilde{X}$, for $i \geq N$, converges uniformly to $u|_{\Sigma_b \setminus \{n_1, \dots, n_d\}}$.
\end{itemize}
\end{defn}

Due to bubbling, see \cite{MR1451624} Section V.3, even for $M$ compact, the moduli space $\mathcal{M}(\tilde{X}, A, J, \mathcal{H}(\tilde{X}))$ will not be compact.
To remedy this situation and still get a compact moduli space, the Gromov compactification of $\mathcal{M}(\tilde{X}, A, J, \mathcal{H}(\tilde{X}))$ has to be introduced.
To describe this space, first of all assume that the marked nodal family of Riemann surfaces $(\pi : \Sigma \to M, R_\ast)$ is regular.
For $\ell \geq 0$, let $(\pi^\ell : \Sigma^\ell \to M^\ell, R^\ell_\ast, T^\ell_\ast)$, $\Sigma^\ell = M^{\ell+1}$, and $\hat{\pi}^{\ell-1} : \Sigma^\ell \to \Sigma^{\ell-1}$, $\hat{\pi}^\ell_k : \Sigma^\ell \to \Sigma^k$, $\pi^\ell_k : M^\ell \to M^k$ be the marked nodal families and maps from Lemma \ref{Lemma_Forget_marked_point} and Proposition \ref{Proposition_Sequence_of_branched_coverings}.
Also, for all $\ell \geq 1$, let $\sigma^\ell$ and $\hat{\sigma}^\ell$ be the actions of $\mathcal{S}_\ell$, by reordering the last $\ell$ marked points, on $M^\ell$ and $\Sigma^\ell$, from Proposition \ref{Proposition_Reordering_marked_points}. \\
Finally, for $\ell \geq 0$, let $\tilde{X}^\ell \definedas (\hat{\pi}^\ell_0)^\ast \tilde{X}$.

Then the following is proved in the monograph \cite{MR1451624}, Chapter V (esp.~Theorem 1.2, Theorem 3.3, Proposition 1.1 and the proofs of these results) as well as, in a generalised version, in \cite{MR2026549}.

\begin{proposition}\label{Proposition_Gromov_compactness_detailled}
Let $(u_i)_{i\in \N}$ be a sequence in $\mathcal{M}(\tilde{X}, A, J,
\mathcal{H}(\tilde{X}))$, $b_i \definedas \pi^\mathcal{M}_M(u_i)$, $H_i \definedas \pi^\mathcal{M}_\mathcal{H}(u_i)$, \st $b_i \underset{i\to \infty}{\longrightarrow} b$ for some $b\in M$ and $H_i \underset{i\to \infty}{\longrightarrow} H$ for some $H \in \mathcal{H}(\tilde{X})$.
Then there exist the following:
\begin{itemize}
  \item an integer $\ell \in \N_0$,
  \item a subsequence $(u_{i_j})_{j\in \N}$ of
    $(u_i)_{i\in\N}$,
  \item $\hat{b}_{i_j} \in \overset{\circ}{M}{}^\ell$ with $\pi^\ell_0(\hat{b}_{i_j}) = b_{i_j}$
  \item and an element $\hat{u} \in
    \mathcal{M}_{\hat{b}}(\tilde{X}^\ell, A, J,(\hat{\pi}^\ell_0)^\ast H)$,
\end{itemize}
for some $\hat{b} \in M^\ell$ with $\pi^\ell_0(\hat{b}) = b$ and
\[
(\hat{\pi}^\ell_0)^\ast_{\hat{b}_{i_j}} u_{i_j} \underset{j\to \infty}{\longrightarrow} \hat{u}\text{.}
\]
Furthermore, the $\hat{b}_{i_j}$, $\hat{b}$ and $\hat{u}$ can be chosen \st the following holds: Let $\Sigma^\ell_{i,\hat{b}}$ be a component of
$\Sigma^\ell_{\hat{b}}$ on which $\hat{\pi}^\ell_0$ is not a
homeomorphism, \ie either $\hat{\pi}^\ell_0(\Sigma^\ell_{i,\hat{b}}) =
\{n_i\}$, for some node $n_i \in \Sigma_b$ or $\hat{\pi}^\ell_0(\Sigma^\ell_{i,\hat{b}}) =
\{R_j(b)\}$ for some $j = 1, \dots, n$. Then
$\hat{u}|_{\Sigma^\ell_{i,\hat{b}}}$ has nonvanishing vertical
homology class and hence defines a nonconstant $J$-holomorphic sphere
in $\tilde{X}_{n_i}$ or $\tilde{X}_{R_j(b)}$.
\end{proposition}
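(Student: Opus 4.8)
This is Gromov compactness in a family, and the plan is to deduce it from \cite{MR1451624}, Chapter V, and \cite{MR2026549}. After passing to a subsequence we may assume that all $b_i$ lie in a single stratum of the stratification by signature of $M$, and (restricting to that stratum and passing to its desingularisation as in Section \ref{Section_III.1}) that this stratum is the top one, so the domains $\Sigma_{b_i}$ are smooth; we also fix, using Construction \ref{Construction_Deligne_Mumford_convergence}, a neighbourhood $U\ni b$ with the maps $\psi_{b'}:\Sigma_{b'}\to\Sigma_b$ and gluing circles $\Gamma(b')$. Setting $\phi_{b_i}\definedas(\psi_{b_i}|_{\Sigma_{b_i}\setminus\Gamma(b_i)})^{-1}$ identifies $\Sigma_b$ minus its nodes with $\Sigma_{b_i}$ minus $\Gamma(b_i)$, and the pulled-back complex structures $\psi_{b_i,\ast}j_{b_i}$ converge in $C^\infty_{\mathrm{loc}}$ to $j_b$ there. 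Under these identifications the curves $v_i\definedas u_i\circ\phi_{b_i}$ become $\hat J^{H_i}$-holomorphic maps from a fixed punctured surface with convergent complex and (since $H_i\to H$) Hamiltonian data. Because $[\omega]\in H^2(X;\Z)$ and $\|H_i\|_{C^0}<\delta$ is uniformly bounded, the geometric energy of the $v_i$ is uniformly bounded (\cite{MR2045629}, Exercise 8.1.3), so Gromov--Floer compactness is available.

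Next I would extract the bubble-tree limit: by \cite{MR1451624}, Chapter V (or \cite{MR2026549}), a further subsequence converges, after rescaling near the finitely many points of energy concentration, to a stable map whose principal components desingularise $\Sigma_b$ — carrying the $n$ markings and the extension over the nodes of $u|_{\Sigma_b\setminus\{\text{nodes}\}}$ — together with finitely many trees of $J$-holomorphic spheres attached at regular points, at the markings $R_j(b)$, or at the nodes $n_i$. The task is then to package this domain as a fibre over $M^\ell$: let $\ell$ be the number of sphere components, put one extra marking $T^\ell_k$ on each of them, and use Lemma \ref{Lemma_Forget_marked_point} and Proposition \ref{Proposition_Sequence_of_branched_coverings} to pick the point $\hat b\in M^\ell$ over $b$ whose fibre $\Sigma^\ell_{\hat b}$ has exactly this nodal curve as a desingularisation. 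Here one must check that $\Sigma^\ell_{\hat b}$ is stable of type $(g,n+\ell)$, that $\hat\pi^\ell_0$ recovers $\Sigma_b$, and — the crucial point — that after collapsing a minimal number of ghost bridges in the sphere trees the components contracted by $\hat\pi^\ell_0$ are precisely the genuine bubbles. The limit $\hat u$ on $\Sigma^\ell_{\hat b}$ is then $\hat J^{(\hat\pi^\ell_0)^\ast H}$-holomorphic, consistently with the fact that $(\hat\pi^\ell_0)^\ast H$ vanishes on the contracted components, where $\hat u$ is honestly $J$-holomorphic.

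To realise the limit over $\overset{\circ}{M}{}^\ell$ and close the argument, on each smooth domain $\Sigma_{b_{i_j}}$ I would place $\ell$ pairwise distinct marked points at the concentration points of $\nabla v_{i_j}$, obtaining $\hat b_{i_j}\in\overset{\circ}{M}{}^\ell$ with $\pi^\ell_0(\hat b_{i_j})=b_{i_j}$; by Deligne--Mumford convergence $\hat b_{i_j}\to\hat b$, and since $\hat\pi^\ell_0$ is an isomorphism on every fibre over $\overset{\circ}{M}{}^\ell$, $(\hat\pi^\ell_0)^\ast_{\hat b_{i_j}}u_{i_j}$ is simply $u_{i_j}$ read on $\Sigma^\ell_{\hat b_{i_j}}$ and hence converges to $\hat u$. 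Finally, every component $\Sigma^\ell_{i,\hat b}$ collapsed by $\hat\pi^\ell_0$ is one of the sphere bubbles: it carries positive $\omega$-area, so by integrality of $[\omega]$ the vertical homology class of $\hat u|_{\Sigma^\ell_{i,\hat b}}$ is nonzero in $H_2(X_{n_i};\Z)$ or $H_2(X_{R_j(b)};\Z)$, and being $J$-holomorphic it is the asserted nonconstant $J$-holomorphic sphere.

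The hard part is the bookkeeping in the second step: fitting the bubble-tree domain into the branched cover $M^\ell$ of $\overline M_{g,n+\ell}$ with \emph{exactly} the nonconstant bubbles as the components collapsed by $\hat\pi^\ell_0$ — i.e.\ adding just enough markings to stabilise without over-stabilising, and removing ghost bridges — while simultaneously arranging $\hat b_{i_j}$ in the smooth stratum $\overset{\circ}{M}{}^\ell$. Everything else is the standard family version of Gromov compactness applied to the perturbed equation.
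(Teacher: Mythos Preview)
The paper does not give its own proof of this proposition; immediately before the statement it simply records that the result is proved in \cite{MR1451624}, Chapter~V (especially Theorem~1.2, Theorem~3.3, Proposition~1.1 and their proofs) and, in a generalised version, in \cite{MR2026549}. Your sketch is therefore more detailed than the paper's own treatment, and its overall strategy --- pass to a single stratum, pull back to a fixed punctured surface via Construction~\ref{Construction_Deligne_Mumford_convergence}, apply Gromov compactness with the uniform energy bound coming from $\|H_i\|<\delta$, and then realise the bubble-tree limit as a fibre in the tower $M^\ell$ --- is the correct unpacking of those references in the present framework.

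One concrete correction: your prescription ``let $\ell$ be the number of sphere components and put one extra marking on each'' does not in general yield a stable curve. A leaf bubble in an iterated bubble chain carries only one node, so one marking leaves it with two special points. The right $\ell$ is the minimal number of markings needed to stabilise the limit domain, distributed so that every sphere acquires at least three special points; forgetting these and stabilising then collapses exactly the bubble components, which is what the last clause of the proposition requires. You correctly flag this bookkeeping as ``the hard part'', and it is indeed the only place needing care beyond the cited references. A minor aside: your two appeals to integrality of $[\omega]$ are unnecessary here --- the energy bound uses only that $A$ is fixed and $\|H_i\|<\delta$, and positive $\omega$-area already forces nonzero homology class.
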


\begin{defn}\label{Definition_Equivalence_Relation_Gromov_Compactness}
Let 
\begin{align*}
(\hat{\pi}^\ell_0)^\ast \mathcal{H}(\tilde{X}) &\definedas
\{(\hat{\pi}^\ell_0)^\ast H \;|\; H\in \mathcal{H}(\tilde{X})\}
\\
&\subseteq \mathcal{H}(\tilde{X}^\ell)
\intertext{and}
\mathcal{M}(\tilde{X}^\ell, A, J, 
(\hat{\pi}^\ell_0)^\ast \mathcal{H}(\tilde{X})) &\definedas
(\pi^{\mathcal{M}}_{\mathcal{H}})\inv((\hat{\pi}^\ell_0)^\ast
\mathcal{H}(\tilde{X})) \\
&\subseteq
\mathcal{M}(\tilde{X}^\ell, A, J,
\mathcal{H}(\tilde{X}^\ell))\text{.}
\end{align*}
Then for any $\ell,\tilde{\ell} \in \N_0$ with $\ell \leq \tilde{\ell}$ there is a canonical map
\[
(\hat{\pi}^{\tilde{\ell}}_\ell)^\ast : (\pi^{\tilde{\ell}}_\ell)^\ast \mathcal{M}(\tilde{X}^\ell, A, J, (\hat{\pi}^\ell_0)^\ast \mathcal{H}(\tilde{X})) \to \mathcal{M}(\tilde{X}^{\tilde{\ell}}, A, J, (\hat{\pi}^{\tilde{\ell}}_0)^\ast \mathcal{H}(\tilde{X}))
\]
of topological spaces, where
\[
(\pi^{\tilde{\ell}}_\ell)^\ast \mathcal{M}(\tilde{X}^\ell, A, J, (\hat{\pi}^\ell_0)^\ast \mathcal{H}(\tilde{X})) = \mathcal{M}(\tilde{X}^\ell, A, J, (\hat{\pi}^\ell_0)^\ast \mathcal{H}(\tilde{X})) \times_{\pi^\mathcal{M}_M, M^\ell, \pi^{\tilde{\ell}}_\ell} M^{\tilde{\ell}}
\]
is the fibred product of topological spaces. \\
Furthermore, for all $\ell \geq 1$, the actions $\sigma^\ell$ and $\hat{\sigma}^\ell$ of $\mathcal{S}_\ell$ on $M^\ell$ and $\Sigma^\ell$, respectively, induce actions
\[
\tilde{\sigma}^\ell : \mathcal{S}_\ell \times \mathcal{M}(\tilde{X}^\ell, A, J, (\hat{\pi}^\ell_0)^\ast\mathcal{H}(\tilde{X})) \to \mathcal{M}(\tilde{X}^\ell, A, J, (\hat{\pi}^\ell_0)^\ast\mathcal{H}(\tilde{X}))\text{,}
\]
compatible via $\pi^\mathcal{M}_M$ with the actions $\sigma^\ell$ in the obvious way. \\
Together, the spaces $\mathcal{M}(\tilde{X}^\ell, A, J, 
(\hat{\pi}^\ell_0)^\ast \mathcal{H}(\tilde{X}))$, maps $(\hat{\pi}^{\tilde{\ell}}_\ell)^\ast$ and actions $\tilde{\sigma}^\ell$ form a system of topological spaces, whose colimit is called the \emph{Gromov compactification of $\mathcal{M}(\tilde{X}, A, J, \mathcal{H}(\tilde{X}))$} and denoted by
\[
\overline{\mathcal{M}}(\tilde{X}, A, J, \mathcal{H}(\tilde{X}))\text{.}
\]
It is equipped with canonical maps
\begin{align*}
\pi^{\overline{\mathcal{M}}}_M : \overline{\mathcal{M}}(\tilde{X}, A, J, \mathcal{H}(\tilde{X})) &\to M
\intertext{and}
\pi^{\overline{\mathcal{M}}}_{\mathcal{H}} : \overline{\mathcal{M}}(\tilde{X}, A, J, \mathcal{H}(\tilde{X})) &\to \mathcal{H}(\tilde{X})\text{.}
\end{align*}
\end{defn}
\begin{remark}
For $u\in \mathcal{M}(\tilde{X}^\ell, A, J,
(\hat{\pi}^\ell_0)^\ast\mathcal{H}(\tilde{X}))$ with $\pi^\mathcal{M}_M(u) = b$, $\pi^\mathcal{M}_\mathcal{H}(u) = H$ and $\hat{b}\in M^{\tilde{\ell}}$ \st $b = \pi^{\tilde{\ell}}_\ell(\hat{b})$,
\[
(\hat{\pi}^{\tilde{\ell}}_\ell)^\ast u \in \mathcal{M}_{\hat{b}}(\tilde{X}^{\tilde{\ell}}, A, J,
(\hat{\pi}^{\tilde{\ell}}_\ell)^\ast H) \subseteq 
\mathcal{M}(\tilde{X}^{\tilde{\ell}}, A, J,
(\hat{\pi}^{\tilde{\ell}}_0)^\ast \mathcal{H}(\tilde{X}))\text{.}
\]
For $\tilde{\ell} = \ell+1$ this is clear, for on every component of $\Sigma^{\tilde{\ell}}_{\hat{b}}$, $\hat{\pi}^{\tilde{\ell}}_\ell$ is either a diffeomorphism or a constant map onto a point.
On each component on which $\hat{\pi}^{\tilde{\ell}}_\ell$ is constant (which then is diffeomorphic to a sphere), $(\hat{\pi}^{\tilde{\ell}}_\ell)^\ast H$ vanishes, so the restriction of a section $u \in \mathcal{M}(\tilde{X}^{\tilde{\ell}}, A, J, (\hat{\pi}^{\tilde{\ell}}_\ell)^\ast H)$ to such a component is just given by a $J$-holomorphic map to $X_b$.
In particular, the constant map corresponding to the restriction of $(\hat{\pi}^{\tilde{\ell}}_\ell)^\ast u$ to such a component is holomorphic.
For $\ell \geq 1$, the claim follows by induction.
\end{remark}
\begin{remark}
Note that in the above definition, $(\hat{\pi}^\ell_0)^\ast :
\mathcal{H}(\tilde{X}) \to
\mathcal{H}(\tilde{X}^\ell)$ is an injection.
\end{remark}
\begin{remark}\label{Remark_Gromov_compactification_explicit}
The colimit over the above system of topological spaces is the quotient space
\[
\coprod_{\ell \geq 0} \mathcal{M}(\tilde{X}^\ell, A, J, (\hat{\pi}^\ell_0)^\ast \mathcal{H}(\tilde{X}))/_\sim\text{,}
\]
where
\[
\mathcal{M}(\tilde{X}^{\ell'}, A, J, 
(\hat{\pi}^{\ell'}_0)^\ast \mathcal{H}(\tilde{X}))\ni u' \sim u'' \in \mathcal{M}(\tilde{X}^{\ell''}, A, J, 
(\hat{\pi}^{\ell''}_0)^\ast \mathcal{H}(\tilde{X}))
\]
\iff there exists an $\tilde{\ell} \geq \ell',\ell''$, an $g\in \mathcal{S}_{\tilde{\ell}}$ and $b\in M^{\tilde{\ell}}$ \st $\pi^{\tilde{\ell}}_{\ell'}(b) = \pi^\mathcal{M}_M(u')$, $\pi^{\tilde{\ell}}_{\ell''}(\sigma^{\tilde{\ell}}_{g\inv}(b)) = \pi^\mathcal{M}_M(u'')$ and
\[
(\hat{\pi}^{\tilde{\ell}}_{\ell'})_b^\ast u' = \tilde{\sigma}^{\tilde{\ell}}_g\left((\hat{\pi}^{\tilde{\ell}}_{\ell''})_{\sigma^{\tilde{\ell}}_{g\inv}(b)}^\ast u''\right) \in \mathcal{M}((\tilde{X}^{\tilde{\ell}}, A, J, 
(\hat{\pi}^{\tilde{\ell}}_0)^\ast \mathcal{H}(\tilde{X}))\text{.}
\]
In particular, this equivalence relation has the following property: If $u \in \mathcal{M}(\tilde{X}^\ell, A, J, (\hat{\pi}^\ell_0)^\ast \mathcal{H}(\tilde{X}))$ is constant on a ghost component (as in Definition \ref{Definition_ghost_component}) of its underlying nodal Riemann surface, then there exists a $k < \ell$ and a $u' \in \mathcal{M}(\tilde{X}^k, A, J, (\hat{\pi}^k_0)^\ast \mathcal{H}(\tilde{X}))$, \st $u$ and $u'$ define the same point in $\overline{\mathcal{M}}(\tilde{X}, A, J, \mathcal{H}(\tilde{X}))$.
\end{remark}
\begin{remark}\label{Remark_M_included_in_Mbar}
Also, directly from the definition, there is a canonical injection
\[
\mathcal{M}(\tilde{X}, A, J, \mathcal{H}(\tilde{X})) \hookrightarrow \overline{\mathcal{M}}(\tilde{X}, A, J, \mathcal{H}(\tilde{X}))\text{.}
\]
\end{remark}

\begin{corollary}
\[
\pi^{\overline{\mathcal{M}}}_M \times \pi^{\overline{\mathcal{M}}}_{\mathcal{H}} : \overline{\mathcal{M}}(\tilde{X}, A, J, \mathcal{H}(\tilde{X})) \to M \times \mathcal{H}(\tilde{X})
\]
is a proper map and $\overline{\mathcal{M}}(\tilde{X}, A, J, \mathcal{H}(\tilde{X}))$ is a Hausdorff topological space. In particular, if $M$ is compact, then for any $H\in \mathcal{H}(\tilde{X})$,
\[
\overline{\mathcal{M}}(\tilde{X}, A, J, H) \definedas (\pi^{\overline{\mathcal{M}}}_M \times \pi^{\overline{\mathcal{M}}}_{\mathcal{H}})\inv (M\times \{H\})
\]
is a compact Hausdorff topological space.
\end{corollary}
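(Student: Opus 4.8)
The plan is to prove both assertions by sequential arguments whose essential input is the Gromov-type compactness statement, Proposition \ref{Proposition_Gromov_compactness_detailled}, together with the explicit description of the colimit and its defining equivalence relation in Definition \ref{Definition_Equivalence_Relation_Gromov_Compactness} and Remark \ref{Remark_Gromov_compactification_explicit}. Recall that the topology on each $\mathcal{M}(\tilde{X}^\ell, A, J, (\hat{\pi}^\ell_0)^\ast\mathcal{H}(\tilde{X}))$ is the (metrizable) Gromov topology and that $\overline{\mathcal{M}}(\tilde{X}, A, J, \mathcal{H}(\tilde{X}))$ carries the colimit topology, so all the maps $\mathcal{M}(\tilde{X}^\ell, A, J, (\hat{\pi}^\ell_0)^\ast\mathcal{H}(\tilde{X})) \to \overline{\mathcal{M}}(\tilde{X}, A, J, \mathcal{H}(\tilde{X}))$ are continuous. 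A preliminary observation that I would record first: there is an integer $L$, depending only on $g$, $n$, $A$, $J$, $\delta$ and a precompact subset of $M$ over which one works, such that every point of $\overline{\mathcal{M}}(\tilde{X}, A, J, \mathcal{H}(\tilde{X}))$ over that subset has a representative at some level $\ell \leq L$. Indeed, passing to a representative $u$ of minimal level, Remark \ref{Remark_Gromov_compactification_explicit} forces $u$ to be non-constant on every ghost component, so each component of its domain on which $\hat{\pi}^\ell_0$ is not injective carries a non-constant $J$-holomorphic sphere in a fibre of $X$; such a sphere has energy at least some $\hbar > 0$ (uniform over a compact set of fibres, by the monotonicity lemma, \cf \cite{MR948771}), while the total energy of $u$ is bounded by $\omega(A)$ plus a term controlled by $\delta$. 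This bounds the number of components of the domain of $u$, hence $\ell$.

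\emph{Properness.} Let $K \subseteq M \times \mathcal{H}(\tilde{X})$ be compact and let $([u_i])_{i\in\N}$ be a sequence in $(\pi^{\overline{\mathcal{M}}}_M \times \pi^{\overline{\mathcal{M}}}_{\mathcal{H}})\inv(K)$, with image $(b_i, H_i) \in K$; passing to a subsequence, assume $(b_i, H_i) \to (b, H) \in K$. By the level bound, after a further subsequence each $[u_i]$ has a representative $u_i \in \mathcal{M}(\tilde{X}^{\ell_0}, A, J, (\hat{\pi}^{\ell_0}_0)^\ast\mathcal{H}(\tilde{X}))$ at a fixed level $\ell_0 \leq L$, over some $\hat{b}_i \in M^{\ell_0}$ with $\pi^{\ell_0}_0(\hat{b}_i) = b_i$ and $\pi^{\mathcal{M}}_{\mathcal{H}}(u_i) = (\hat{\pi}^{\ell_0}_0)^\ast H_i$. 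Since $\pi^{\ell_0}_0 : M^{\ell_0} \to M$ is proper (it is a composition of the proper maps $\pi^k$), a further subsequence of the $\hat{b}_i$ converges to some $\hat{b} \in M^{\ell_0}$ over $b$. Now $(\pi^{\ell_0} : \Sigma^{\ell_0} \to M^{\ell_0}, R^{\ell_0}_\ast, T^{\ell_0}_\ast)$ is itself a regular marked nodal family of type $(g, n+\ell_0)$ (Proposition \ref{Proposition_Sequence_of_branched_coverings}), so Proposition \ref{Proposition_Gromov_compactness_detailled} applies to it and to $(u_i)$: after a subsequence there are an integer $\ell \geq 0$, a point $\hat{c} \in M^{\ell_0+\ell}$ over $\hat{b}$, lifts $\hat{c}_i \in \overset{\circ}{M}{}^{\ell_0+\ell}$ of $\hat{b}_i$, and a limit $\hat{u} \in \mathcal{M}_{\hat{c}}(\tilde{X}^{\ell_0+\ell}, A, J, (\hat{\pi}^{\ell_0+\ell}_0)^\ast H)$ with $(\hat{\pi}^{\ell_0+\ell}_{\ell_0})^\ast_{\hat{c}_i} u_i \to \hat{u}$ in $\mathcal{M}(\tilde{X}^{\ell_0+\ell}, A, J, (\hat{\pi}^{\ell_0+\ell}_0)^\ast\mathcal{H}(\tilde{X}))$. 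Applying Definition \ref{Definition_Equivalence_Relation_Gromov_Compactness} with the common level $\ell_0+\ell$ and $g = \id$ shows $(\hat{\pi}^{\ell_0+\ell}_{\ell_0})^\ast_{\hat{c}_i} u_i \sim u_i$, so $[u_i] \to [\hat{u}]$ in $\overline{\mathcal{M}}(\tilde{X}, A, J, \mathcal{H}(\tilde{X}))$ by continuity of the maps into the colimit. Thus $(\pi^{\overline{\mathcal{M}}}_M \times \pi^{\overline{\mathcal{M}}}_{\mathcal{H}})\inv(K)$ is sequentially compact; since it is first countable by the local metrizability established below, it is compact.

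\emph{Hausdorffness and conclusion.} Because $\pi^{\overline{\mathcal{M}}}_M \times \pi^{\overline{\mathcal{M}}}_{\mathcal{H}}$ is continuous and $M \times \mathcal{H}(\tilde{X})$ is Hausdorff, points of $\overline{\mathcal{M}}(\tilde{X}, A, J, \mathcal{H}(\tilde{X}))$ with distinct images are separated by preimages of disjoint open sets; so it suffices to separate $[u] \neq [u']$ over a common $(b, H)$. Choose an open $W \ni (b, H)$ with compact closure; by the level bound, $(\pi^{\overline{\mathcal{M}}}_M \times \pi^{\overline{\mathcal{M}}}_{\mathcal{H}})\inv(W)$ is obtained from finitely many (namely $\ell \leq L$) metrizable Gromov-topology spaces and Deligne--Mumford spaces (Hausdorff by \cite{MR2262197}) by finitely many finite quotients (the $\mathcal{S}_\ell$-actions) and gluings along the images of the closed inclusions $(\hat{\pi}^{\tilde{\ell}}_\ell)^\ast$, hence is itself metrizable, in particular Hausdorff; as $W$ is open this set is open in $\overline{\mathcal{M}}(\tilde{X}, A, J, \mathcal{H}(\tilde{X}))$, and one separates $[u], [u']$ inside it. Finally, if $M$ is compact then $M \times \{H\}$ is compact in $M \times \mathcal{H}(\tilde{X})$, so $\overline{\mathcal{M}}(\tilde{X}, A, J, H) = (\pi^{\overline{\mathcal{M}}}_M \times \pi^{\overline{\mathcal{M}}}_{\mathcal{H}})\inv(M \times \{H\})$ is compact by properness and Hausdorff as a subspace.

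\emph{Main obstacle.} The delicate point is the interaction between the level-stratified structure of the colimit and the fact that Proposition \ref{Proposition_Gromov_compactness_detailled} is stated at level $0$ over the originally fixed family: one must establish the uniform level bound $L$ (via the energy/topology estimates for stable perturbed holomorphic curves sketched above), check that Proposition \ref{Proposition_Gromov_compactness_detailled} indeed applies verbatim to the shifted families $\Sigma^{\ell_0} \to M^{\ell_0}$, and then carefully identify the abstract Gromov limit with the limit of the $[u_i]$ in the colimit topology by unwinding the equivalence relation of Definition \ref{Definition_Equivalence_Relation_Gromov_Compactness}. Everything else — properness of the $\pi^k$, Hausdorffness, and the final compactness conclusion — is then routine bookkeeping.
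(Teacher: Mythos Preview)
The paper states this corollary without proof; the level bound you derive as a preliminary observation is exactly the content of the lemma immediately following the corollary (there stated only for $M$ compact). Your properness argument --- bound the level, lift to a common level $\ell_0$, use properness of $\pi^{\ell_0}_0$, apply Proposition \ref{Proposition_Gromov_compactness_detailled} to the shifted family $\Sigma^{\ell_0}\to M^{\ell_0}$, and identify the resulting limit in the colimit via Definition \ref{Definition_Equivalence_Relation_Gromov_Compactness} --- is the right one and matches what the paper implicitly has in mind.

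There is, however, a genuine gap: both your passage from sequential compactness to compactness and your Hausdorffness argument rest on the assertion that the finite-level portion of the colimit over $W$ is metrizable, which you state but do not prove. Colimits of metrizable spaces along a diagram of the present shape (the maps $(\hat\pi^{\tilde\ell}_\ell)^\ast$ together with the $\mathcal{S}_\ell$-actions) are not automatically metrizable or even first-countable, so this step needs an independent argument. What is actually required --- and what the paper is tacitly invoking --- is the uniqueness-of-Gromov-limits statement for perturbed stable maps (the analogue of \cite{MR2045629}, Theorem 5.6.6): two representatives arising as limits of the same sequence, in the sense of Proposition \ref{Proposition_Gromov_compactness_detailled}, are equivalent under the relation of Remark \ref{Remark_Gromov_compactification_explicit}. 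This yields Hausdorffness directly and, combined with your sequential-compactness argument, shows that the colimit topology over $K$ agrees with the (compact Hausdorff metrizable) Gromov topology there, from which genuine compactness follows.
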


\begin{lemma}
If $M$ is compact, then there exists an $\ell \in \N_0$ \st the canonical map $\mathcal{M}(\tilde{X}^\ell, A, J, (\hat{\pi}^\ell_0)^\ast \mathcal{H}(\tilde{X})) \to \overline{\mathcal{M}}(\tilde{X}, A, J, \mathcal{H}(\tilde{X}))$ is surjective.
\end{lemma}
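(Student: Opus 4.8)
By Definition \ref{Definition_Equivalence_Relation_Gromov_Compactness} (see Remark \ref{Remark_Gromov_compactification_explicit}) the space $\overline{\mathcal{M}}(\tilde{X}, A, J, \mathcal{H}(\tilde{X}))$ is a quotient of $\coprod_{\ell\geq 0}\mathcal{M}(\tilde{X}^\ell, A, J, (\hat{\pi}^\ell_0)^\ast\mathcal{H}(\tilde{X}))$, so every point $p$ is represented by some section $u$ at some level $\ell$. The plan is to produce, for each such $p$, an equivalent representative at a level bounded by a constant $\ell_0$ depending only on $(X_0,\omega_0)$, $\omega_0(A_0)$, $J$ and the radius $\delta$ from Definition \ref{Definition_Hamiltonian_perturbation}; once this is shown, pulling that representative up to level $\ell_0$ — which is possible since $\pi^{\ell_0}_\ell\colon M^{\ell_0}\to M^\ell$ is onto (it is an iterate of the bundle projections $M^{j+1}=\Sigma^j\to M^j$) and pullback along $\hat{\pi}^{\ell_0}_\ell$ sends a section to an equivalent one (the Remark after Definition \ref{Definition_Equivalence_Relation_Gromov_Compactness}) — proves that the canonical map from level $\ell_0$ is surjective. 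So it suffices to prove the uniform bound.

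\textbf{The reduction procedure.} Given a representative $u\in\mathcal{M}(\tilde{X}^\ell,A,J,(\hat{\pi}^\ell_0)^\ast H)$ of $p$, I would alternate two moves, each producing an equivalent representative at a strictly smaller level (so the process terminates). Move (i): if, after reordering it to be the last marked point via $\hat{\sigma}^\ell$, one of the extra marked points can be forgotten with its component remaining stable, then the associated forgetful map of fibres involves no stabilisation, hence is a biholomorphism; along it $u$ descends to a $(\hat{\pi}^{\ell-1}_0)^\ast H$-holomorphic section at level $\ell-1$ of which $u$ is the pullback, so that section represents $p$ as well. Move (ii): if $u$ is constant on a ghost component (Definition \ref{Definition_ghost_component}), pass to an equivalent representative at a smaller level using Remark \ref{Remark_Gromov_compactification_explicit}. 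When the procedure stops, with representative $u^\ast$ at level $\ell^\ast$, neither move applies, and I would check that this forces the combinatorics of the domain: by failure of (i), every component carrying an extra marked point is a sphere with exactly three special points (a genus $\geq 2$ component carrying an extra point would be reducible, and so would a torus component, the only exception — a torus whose unique special point is that marked point — being ruled out because it would force the base type to be $(1,0)$, contradicting $2g-2+n>0$); such a sphere is unstable after forgetting all extra points, i.e. it is a ghost component, and conversely every ghost component carries at least one extra marked point. Hence all extra marked points lie on ghost spheres with three special points each, so $\ell^\ast\leq 3N$ with $N$ the number of ghost components; and by failure of (ii), $u^\ast$ is non-constant on each ghost component.

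\textbf{Bounding $N$ by energy.} Since $(\hat{\pi}^{\ell^\ast}_0)^\ast H$ vanishes on ghost components, the restriction of $u^\ast$ to each ghost component is a genuine non-constant $J$-holomorphic sphere in the fibre of $\tilde{X}$ over the point to which that component collapses; by Gromov's monotonicity lemma and compactness of $M$ its energy is at least a fixed constant $\hbar=\hbar(X_0,\omega_0,J)>0$. On the other hand the total energy of $u^\ast$ is bounded by a constant $E_{\max}$ \emph{independent of $\ell^\ast$}: by the energy identity for Hamiltonian-perturbed holomorphic sections it equals $\omega_0(A_0)$ plus a curvature term of the perturbation, and because $(\hat{\pi}^{\ell^\ast}_0)^\ast H$ is pulled back from $H$ and vanishes on ghost components, this term is just the integral of the curvature of $H$ over $\Sigma_{\pi^{\ell^\ast}_0(\pi^\mathcal{M}_M(u^\ast))}$, hence at most $\delta^2\sup_{b\in M}\operatorname{Area}(\Sigma_b)<\infty$. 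Summing over ghost components gives $N\hbar\leq E_{\max}$, so $\ell^\ast\leq 3E_{\max}/\hbar$, and one takes $\ell_0\definedas\lceil 3E_{\max}/\hbar\rceil$.

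\textbf{Main obstacle.} I expect the delicate point to be making the energy bound $E_{\max}$ genuinely uniform in $\ell$ — this is exactly where the vanishing of the Hamiltonian perturbations on ghost components is used — together with verifying that the two reduction moves really remove \emph{all} inessential marked points (both those in free position and those carried by ghost components on which the section is constant), so that at termination the domain is controlled purely by the bubbling, which is then controlled by energy. The colimit identifications underlying the two moves, though conceptually clear from Lemma \ref{Lemma_Forget_marked_point}, Proposition \ref{Proposition_Sequence_of_branched_coverings} and Remark \ref{Remark_Gromov_compactification_explicit}, also need to be spelled out with some care.
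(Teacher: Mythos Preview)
Your proof is correct and follows essentially the same approach as the paper: a uniform vertical-energy bound independent of $\ell$ (the paper cites Lemma 8.2.9 in \cite{MR2045629} for this, where you invoke the energy identity directly), followed by the Gromov--Schwarz/monotonicity lower bound on nonconstant bubbles, and then the equivalence relation to reduce the level. The paper compresses your entire reduction procedure (moves (i) and (ii) and the combinatorial termination analysis) into the single phrase ``by definition of the equivalence relation,'' so your version is a careful elaboration of what the paper only sketches.
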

\begin{proof}
By the definition of $\mathcal{M}(\tilde{X}^\ell, A, J, (\hat{\pi}^\ell_0)^\ast \mathcal{H}(\tilde{X}))$ and the definition of $\mathcal{H}(\tilde{X})$, there exists a universal bound on the vertical energy of every element of $\mathcal{M}(\tilde{X}^\ell, A, J, (\hat{\pi}^\ell_0)^\ast \mathcal{H}(\tilde{X}))$ independent of $\ell$, by Lemma 8.2.9 in \cite{MR2045629}, where the vertical energy is defined as in Section 8.2 in \cite{MR2045629}, p.~249.
By the usual Gromov-Schwarz and Monotonicity lemmas, this implies a universal bound on the number of components on which an element of $\mathcal{M}(\tilde{X}^\ell, A, J, (\hat{\pi}^\ell_0)^\ast \mathcal{H}(\tilde{X}))$ can be nonconstant, which by definition of the equivalence relation in the definition of $\overline{\mathcal{M}}(\tilde{X}, A, J, \mathcal{H}(\tilde{X}))$ implies the lemma.
\end{proof}

Furthermore, for $\ell \leq \tilde{\ell}$ denote by $M^{\tilde{\ell},\ell}$ the set
\[
M^{\tilde{\ell},\ell} \definedas \{ b\in M^{\tilde{\ell}} \;|\; \hat{\pi}^{\tilde{\ell}}_{\ell,b} : \Sigma^{\tilde{\ell}}_{b} \to \Sigma^\ell_{\pi^{\tilde{\ell}}_{0}(b)} \text{ is a homeomorphism}\}
\]
and by $\Sigma^{\tilde{\ell},\ell} \definedas \Sigma^{\tilde{\ell}}|_{M^{\tilde{\ell},\ell}}$.

In addition, let $\pi^{\tilde{\ell},\ell} \definedas \pi^{\tilde{\ell}}|_{M^{\tilde{\ell},\ell}} : M^{\tilde{\ell},\ell} \to M^\ell$ and $\hat{\pi}^{\tilde{\ell},\ell} \definedas \hat{\pi}^{\tilde{\ell}}_\ell|_{\Sigma^{\tilde{\ell},\ell}} : \Sigma^{\tilde{\ell},\ell} \to \Sigma^\ell$.

\begin{lemma}
$\pi^{\tilde{\ell},\ell} : M^{\tilde{\ell},\ell} \to M^\ell$ is a surjective submersion of complex fibre dimension $\tilde{\ell} - \ell$ and $\Sigma^{\tilde{\ell},\ell} \cong
(\pi^{\tilde{\ell},\ell})^\ast \Sigma^\ell$ via $\hat{\pi}^{\tilde{\ell},\ell}$. \\
Furthermore,
\[
\mathcal{M}((\hat{\pi}^{\tilde{\ell},\ell})^\ast \tilde{X}^\ell, A, J, (\hat{\pi}^{\tilde{\ell},\ell}_\ell)^\ast (\hat{\pi}^\ell_0)^\ast \mathcal{H}(\tilde{X})) \cong (\pi^{\tilde{\ell},\ell})^\ast \mathcal{M}(\tilde{X}^\ell, A, J, (\hat{\pi}^\ell_0)^\ast \mathcal{H}(\tilde{X}))
\]
via $(\hat{\pi}^{\tilde{\ell},\ell})^\ast$.
\end{lemma}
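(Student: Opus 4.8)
The strategy is to reduce everything to the single step $\tilde\ell=\ell+1$ and then induct on $\tilde\ell-\ell$, the one genuinely geometric ingredient being a concrete description of the locus $M^{\tilde\ell,\ell}$. For the single step, first I would identify $M^{\ell+1,\ell}\subseteq M^{\ell+1}=\Sigma^\ell$ with the complement in $\Sigma^\ell$ of the nodal locus $C\subseteq\Sigma^\ell$ and of all the marking sections $R^\ell_1,\dots,R^\ell_n,T^\ell_1,\dots,T^\ell_\ell$. Indeed, by the identity $\hat\pi^{\ell}\circ T^{\ell+1}_{\ell+1}=\id$ from Proposition~\ref{Proposition_Sequence_of_branched_coverings} (with indices shifted), the added marking $T^{\ell+1}_{\ell+1}(b)\in\Sigma^{\ell+1}_b$ of the fibre over $b\in M^{\ell+1}=\Sigma^\ell$ lies over the point of $\Sigma^\ell_{\pi^\ell(b)}$ which is $b$ itself; and by the description of ``forgetting the last marked point and stabilising'' in~\ref{Forgetting_last_marked_point} (together with Proposition~\ref{Proposition_Sequence_of_branched_coverings}), the stabilising map $\hat\pi^\ell_{\ell,b}$ creates and then contracts a genus-zero three–special–point bubble precisely when $b$, viewed in $\Sigma^\ell_{\pi^\ell(b)}$, is a node or one of the markings, and is a biholomorphism of fibres otherwise. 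Consequently $M^{\ell+1,\ell}$ is $\Sigma^\ell$ minus a closed union of submanifolds and sections, hence open (and is the union of top strata of the stratification of $\Sigma^\ell$ described in Lemma~\ref{Lemma_Forget_marked_point}); $\pi^{\ell+1,\ell}=\pi^\ell_\ell|_{M^{\ell+1,\ell}}$ is the restriction of the universal–curve projection $\pi^\ell:\Sigma^\ell\to M^\ell$ to the complement of its non-submersive locus $C$, hence a holomorphic submersion of complex fibre dimension $1$; and it is surjective because every component of every fibre contains non-special points. Finally $(\tilde\pi,\hat\pi^{\ell}):\Sigma^{\ell+1,\ell}\to(\pi^{\ell+1,\ell})^\ast\Sigma^\ell$ — the fibred product being a complex manifold since $\pi^{\ell+1,\ell}$ is a submersion — is holomorphic, of the right dimension, and fibrewise a biholomorphism by the above, hence a biholomorphism.

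Writing $m=\tilde\ell-\ell$, I would then prove by induction on $m$, for all $\ell$ simultaneously, that $\pi^{\ell+m,\ell}$ is a surjective holomorphic submersion of complex fibre dimension $m$ and that $\hat\pi^{\ell+m,\ell}$ induces an isomorphism $\Sigma^{\ell+m,\ell}\cong(\pi^{\ell+m,\ell})^\ast\Sigma^\ell$. The inductive step uses the factorisations $\hat\pi^{\ell+m+1}_\ell=\hat\pi^{\ell+1}_\ell\circ\hat\pi^{\ell+m+1}_{\ell+1}$ and $\pi^{\ell+m+1}_\ell=\pi^{\ell+1}_\ell\circ\pi^{\ell+m+1}_{\ell+1}$, together with the elementary fact that a composite of stabilising maps of nodal surfaces is a homeomorphism on a fibre if and only if each factor is; this yields $M^{\ell+m+1,\ell}=(\pi^{\ell+m+1}_{\ell+1})^{-1}(M^{\ell+1,\ell})\cap M^{\ell+m+1,\ell+1}$, so that $\pi^{\ell+m+1,\ell}$ factors as the restriction of $\pi^{\ell+m+1,\ell+1}$ over the open subset $M^{\ell+1,\ell}$ (a surjective submersion of fibre dimension $m$ by the inductive hypothesis with base parameter $\ell+1$) followed by $\pi^{\ell+1,\ell}$ (the base case), hence is a surjective submersion of fibre dimension $m+1$. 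The corresponding curve isomorphism follows by restricting the inductive-hypothesis isomorphism $\Sigma^{\ell+m+1,\ell+1}\cong(\pi^{\ell+m+1,\ell+1})^\ast\Sigma^{\ell+1}$ over $M^{\ell+1,\ell}$, substituting $\Sigma^{\ell+1}|_{M^{\ell+1,\ell}}=\Sigma^{\ell+1,\ell}\cong(\pi^{\ell+1,\ell})^\ast\Sigma^\ell$, and using functoriality of the pullback.

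The statement about moduli spaces is then formal. Under $\Sigma^{\tilde\ell,\ell}\cong(\pi^{\tilde\ell,\ell})^\ast\Sigma^\ell$ the map $\hat\pi^{\tilde\ell,\ell}$ corresponds to the projection of the fibred product onto $\Sigma^\ell$, so it restricts on each fibre of $\tilde\pi:\Sigma^{\tilde\ell,\ell}\to M^{\tilde\ell,\ell}$ to a biholomorphism $\hat\pi^{\tilde\ell,\ell}_b:\Sigma^{\tilde\ell,\ell}_b\to\Sigma^\ell_{\pi^{\tilde\ell,\ell}(b)}$ of (possibly nodal) Riemann surfaces sending nodes to nodes, components to components, and the $R^\ell_\ast,T^\ell_\ast$-markings to the $R^\ell_\ast,T^\ell_\ast$-markings, while $(\hat\pi^{\tilde\ell,\ell})^\ast\tilde X^\ell=(\hat\pi^{\tilde\ell}_0|_{\Sigma^{\tilde\ell,\ell}})^\ast\tilde X$ agrees fibrewise with $\tilde X^\ell$ over the corresponding $\Sigma^\ell$-fibre. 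Since the almost complex structure $\hat J^H$ entering the definition of the moduli space is built only from the fixed $J$ on $X$, the complex structure of the domain and the Hamiltonian perturbation, pulling back along $\hat\pi^{\tilde\ell,\ell}_b$ carries a $(\hat\pi^\ell_0)^\ast H$-holomorphic section over $\Sigma^\ell_{\pi^{\tilde\ell,\ell}(b)}$ representing $A$ and satisfying the marked-point constraints to a $(\hat\pi^{\tilde\ell,\ell})^\ast(\hat\pi^\ell_0)^\ast H$-holomorphic such section over $\Sigma^{\tilde\ell,\ell}_b$, and conversely (here one uses that $H\in(\hat\pi^{\tilde\ell,\ell})^\ast(\hat\pi^\ell_0)^\ast\mathcal H(\tilde X)$ iff $H=(\hat\pi^{\tilde\ell,\ell})^\ast H'$ for some $H'\in(\hat\pi^\ell_0)^\ast\mathcal H(\tilde X)$). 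This pair of mutually inverse assignments is compatible with the projections to the base and to $\mathcal H(\tilde X)$, which is exactly the assertion that $(\hat\pi^{\tilde\ell,\ell})^\ast$ identifies $\mathcal M((\hat\pi^{\tilde\ell,\ell})^\ast\tilde X^\ell,A,J,(\hat\pi^{\tilde\ell,\ell})^\ast(\hat\pi^\ell_0)^\ast\mathcal H(\tilde X))$ with $(\pi^{\tilde\ell,\ell})^\ast\mathcal M(\tilde X^\ell,A,J,(\hat\pi^\ell_0)^\ast\mathcal H(\tilde X))$; and the bijection is a homeomorphism because the defining convergence (Deligne--Mumford convergence of domains and uniform convergence of maps, Construction~\ref{Construction_Deligne_Mumford_convergence}, Definition~\ref{Definition_Equivalence_Relation_Gromov_Compactness}) is preserved under the holomorphically varying family of fibrewise biholomorphisms $\hat\pi^{\tilde\ell,\ell}_b$.

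The only step carrying real content is the first one: establishing that $M^{\tilde\ell,\ell}$ is open, that $\Sigma^{\tilde\ell,\ell}$ is a complex submanifold, and — crucially — that $\hat\pi^{\tilde\ell,\ell}$ is fibrewise a \emph{biholomorphism}, so that the square $\Sigma^{\tilde\ell,\ell}\to\Sigma^\ell$, $M^{\tilde\ell,\ell}\to M^\ell$ is a genuine holomorphic pullback square. This in turn is essentially a recombination of Lemma~\ref{Lemma_Forget_marked_point}, the discussion in~\ref{Forgetting_last_marked_point}, and Proposition~\ref{Proposition_Sequence_of_branched_coverings}; everything after it is bookkeeping with fibred products and the functoriality of the moduli-space construction under fibrewise-biholomorphic base change.
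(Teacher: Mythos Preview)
Your proof is correct and follows essentially the same approach as the paper: reduce by induction to the single step $\tilde\ell=\ell+1$, identify $M^{\ell+1,\ell}\subseteq M^{\ell+1}=\Sigma^\ell$ with the complement of the nodes and markings via Lemma~\ref{Lemma_Forget_marked_point}, observe that over this locus $\Sigma^{\ell+1}$ is by construction the pullback of $\Sigma^\ell$ along $\pi^\ell$, and note that the moduli-space statement is then formal. Your write-up is simply a considerably more detailed version of the paper's four-line sketch, spelling out the induction and the formal moduli-space bijection that the paper dismisses as ``follows directly from the definitions''.
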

\begin{proof}
This again follows by induction from the case $\tilde{\ell} = \ell+1$.
But in this case $M^{\tilde{\ell}} = M^{\ell+1} = \Sigma^\ell$ and $M^{\tilde{\ell},\ell}$ by Lemma \ref{Lemma_Forget_marked_point} is the complement of the nodes and markings in $\Sigma^\ell$.
The restriction $\Sigma^{\tilde{\ell},\ell}$ of $\Sigma^{\tilde{\ell}}$ to this subset, from the proof of Lemma \ref{Lemma_Forget_marked_point}, is by definition the pullback of $\Sigma^\ell$ via $\pi^\ell$ and the restriction of $\hat{\pi}^{\tilde{\ell}}_\ell$ is by definition the canonical map covering $\pi^\ell$. \\
The second claim follows directly from the definitions.
\end{proof}

To make sense of the following remark, remember that by definition $\mathcal{M}((\hat{\pi}^\ell_0)\tilde{X}, A, J, \mathcal{K})$, for any subset $\mathcal{K} \subseteq \mathcal{H}((\hat{\pi}^\ell_0)\tilde{X})$, is a disjoint union of subsets that are mapped to the strata of $M^\ell$ in the stratification by signature under $\pi^\mathcal{M}_M$.
By abuse of language I will call these subsets strata, even though in general it is not claimed that they are (Banach) manifolds or form any kind of reasonable stratification.
Also, by the codimension of such a subset I will mean the codimension of the corresponding stratum in $M^\ell$.

The \emph{transversality problem} now can be formulated as follows: \\
Does there exist a (generic subset of) $H\in \mathcal{H}(\tilde{X})$ \st for every $\ell \geq 0$ (and for all generic $H$), $\mathcal{M}(\tilde{X}^\ell, A, J, (\hat{\pi}^{\ell}_0)^\ast H)$ is stratified by smooth manifolds as in the previous section, induced from the stratification by signature on $M^\ell$. And in such a way that $\overline{\mathcal{M}}(\tilde{X}, A, J, H)$ has a stratification by smooth manifolds, induced by the canonical maps $\mathcal{M}(\tilde{X}^\ell, A, J, (\hat{\pi}^{\ell}_0)^\ast H) \to \overline{\mathcal{M}}(\tilde{X}, A, J, H)$. So that the stratification in particular coincides with the one from before on $\mathcal{M}(\tilde{X}, A, J, H)$ under the inclusion from Remark \ref{Remark_M_included_in_Mbar}? Furthermore, there should be a top-dimensional stratum which coincides with the top-dimensional stratum in $\mathcal{M}(\tilde{X}, A, J, H)$, corresponding to the smooth curves, and the codimension of every other stratum should coincide with the codimension of the stratum in $\mathcal{M}(\tilde{X}^\ell, A, J, (\hat{\pi}^{\ell}_0)^\ast H)$ from which it arises.

In general, it is known that the answer to this question is no, for all the Hamiltonian perturbations of the form $(\hat{\pi}^\ell_0)^\ast H$ vanish on ghost components, so the Banach space $(\hat{\pi}^\ell_0)^\ast \mathcal{H}(\tilde{X})$ is ``too small'' to achieve the transversality results in Lemma \ref{Lemma_Main_transversality_result}. \\
One hence is faced with two conflicting aims: On the one hand one would like to enlarge the spaces of perturbations in the construction of the universal moduli spaces from $(\hat{\pi}^\ell_0)^\ast \mathcal{H}(\tilde{X}) \cong \mathcal{H}(\tilde{X})$ to $\mathcal{H}((\hat{\pi}^\ell_0)^\ast\tilde{X})$ to achieve transversality, on the other hand one needs to restrict to perturbations coming from $\mathcal{H}(\tilde{X})$ so that the equivalence relation is preserved and the conditions on the dimensions of the strata of the stratification one wants to construct have any chance of holding true.

The solution to this problem, first applied in the genus $0$ case in \cite{MR2399678} and which will be extended to the present situation in the rest of this text, can now roughly be described as follows (all these notions will be made precise later on): \\
For every $\ell \geq 0$ there exists a subset $\mathcal{K}^\ell \subseteq \mathcal{H}(\tilde{X}^\ell)$ \st $(\hat{\pi}^{\tilde{\ell}}_\ell)^\ast \mathcal{K}^\ell \subseteq \mathcal{K}^{\tilde{\ell}}$. \\
There also exists an $\ell \in \N_0$ and for every $\tilde{\ell}\geq \ell$ a subset $\mathcal{N}^{\tilde{\ell}}(\mathcal{K}^{\tilde{\ell}}) \subseteq \mathcal{M}(\tilde{X}^{\tilde{\ell}}, A, J, \mathcal{K}^{\tilde{\ell}})$ with $\pi^\mathcal{M}_M(\mathcal{N}^{\tilde{\ell}}(\mathcal{K}^{\tilde{\ell}})) \subseteq \overset{\circ}{M}{}^{\tilde{\ell}}$ (the part corresponding to smooth curves, as in Section \ref{Section_III.1}) \st the closure of $\mathcal{N}^{\tilde{\ell}}(\mathcal{K}^{\tilde{\ell}})$ in $\overline{\mathcal{M}}(\tilde{X}^{\tilde{\ell}}, A, J, \mathcal{K}^{\tilde{\ell}})$ lies in $\mathcal{M}(\tilde{X}^{\tilde{\ell}}, A, J, \mathcal{K}^{\tilde{\ell}})$. \\
Since $\overset{\circ}{M}{}^{\tilde{\ell}} \subseteq M^{\tilde{\ell},\ell}$ for all $\ell \leq \tilde{\ell}$, for every $H \in \mathcal{K}^0$ there is a well-defined map $(\hat{\pi}^{\tilde{\ell}}_0)_\ast : \mathcal{N}^{\tilde{\ell}}((\hat{\pi}^{\tilde{\ell}}_0)^\ast H) \to \mathcal{M}(\tilde{X}, A, J, H)$ (the left-hand side is defined in the obvious way) given by $u \mapsto ((\hat{\pi}^{\tilde{\ell}}_{0,b})^{-1})^\ast u$, where $\pi^\mathcal{M}_M(u) = b$. \\
Then for generic $H \in \mathcal{K}^0$ the above will be \st $\mathcal{N}^\ell((\hat{\pi}^\ell_0)^\ast H)$ is invariant under the $\mathcal{S}_\ell$-action and the map $(\hat{\pi}^\ell_0)_\ast$ is an $\ell!$-sheeted covering on the complement of a subset of codimension at least $2$ (see Lemma \ref{Lemma_ell_factorial_sheeted_covering}).

Roughly speaking, the $\mathcal{N}^{\tilde{\ell}}(\mathcal{K}^{\tilde{\ell}})$ will be defined as spaces of holomorphic sections that map the first $\ell$ additional marked points to a subbundle $\tilde{Y} \subseteq \tilde{X}$ with real codimension $2$ fibres and the sets $\mathcal{K}^{\tilde{\ell}}$ will be spaces of Hamiltonian perturbations satisfying a set of compatibility conditions with this subbundle.
Making these notions precise and showing the properties above will be pretty much the rest of this work.

\section{Hypersurfaces and tangency}\label{Section_Hypersurfaces}

Throughout this section, let $(\pi : \Sigma \to M, R)$ be a stable marked nodal family Riemann surfaces of type $(g,n)$ and denote their Euler characteristic by $\chi$.
Furthermore, let $(\kappa : X \to M, \omega)$ be a family of symplectic manifolds together with a family $(\kappa|_Y : Y \to M, \omega|_Y)$ of symplectic hypersurfaces in $X$.
Define $\tilde{\kappa} : \tilde{X} \to \Sigma$ as the pullback of $\kappa : X \to M$ to $\Sigma$ via $\pi$ and likewise for $\tilde{Y}$.
As before, $\mathcal{J}_\omega(X)$ is the set of $\omega$-compatible vertical almost complex structures on $X$, \ie bundle morphisms $J \in \End(VX)$ with $J^2 = -\id$ and \st $\omega(\cdot, J\cdot)$ defines a metric on $VX$.
In other words, for any $b\in M$, $J_b$ is a compatible almost complex structure on the symplectic manifold $(X_b, \omega_b)$.

To define the sets $\mathcal{K}^\ell$ from the previous subsection, almost complex structures and Hamiltonian perturbations compatible with the family of symplectic hypersurface $Y$ in the sense of \cite{MR1954264}, Definition 3.2 are needed.

\begin{defn}\label{Definition_J_ni}
The set of \emph{$Y$-compatible vertical almost complex structures} on $X$ is defined as
\[
  \mathcal{J}_\omega(X,Y) \definedas \{J\in \mathcal{J}_\omega(X) \;|\;
  J(VY) = VY\}\text{.}
\]
The set of \emph{normally integrable} $Y$-compatible almost complex structures on $X$ is defined as
\[
  \mathcal{J}_{\omega, \mathrm{ni}}(X,Y) \definedas
  \{J\in \mathcal{J}_\omega(X,Y) \;|\;
  \pi^{VX}_{VY^{\perp_\omega}}N_J(v,\xi) = 0 \;\forall v\in V_yY, \xi
  \in V_yY^{\perp_\omega}, y \in Y\}\text{,}
\]
where $N_J$ denotes the Nijenhuis tensor of $J$, $V_yY^{\perp_\omega}
\subseteq V_yX$ denotes the symplectic orthogonal complement and
$\pi^{VX}_{VY^{\perp_\omega}} : VX \to VY^{\perp_\omega}$ denotes the
projection along $VY$.
\end{defn}

One considers $\mathcal{J}_\omega(X,Y)$ and $\mathcal{J}_{\omega,\mathrm{ni}}(X,Y)$ as subsets of $\mathcal{J}_\omega(\tilde{X},\tilde{Y})$ and $\mathcal{J}_{\omega,\mathrm{ni}}(\tilde{X},\tilde{Y})$, respectively, via pullback.

The proof of Theorem A.2 in \cite{MR1954264} shows:
\begin{lemma}\label{Lemma_J_ni_nonempty_connected}
$\mathcal{J}_{\omega, \mathrm{ni}}(X,Y)$ is nonempty and path-connected.
\end{lemma}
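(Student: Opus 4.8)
The plan is to reduce the statement to the corresponding assertion in \cite{MR1954264} fibrewise and then handle the family aspect by a standard partition-of-unity argument. First I would recall the pointwise/fibrewise picture: for a single closed symplectic manifold $(X_0,\omega_0)$ with a closed symplectic hypersurface $Y_0$, the proof of Theorem A.2 in \cite{MR1954264} constructs a nonempty, path-connected set of normally integrable $Y_0$-compatible almost complex structures. The key local input there is that near $Y_0$ one can choose $J$ to be integrable in the normal directions, using a tubular neighbourhood identification of a neighbourhood of $Y_0$ with a neighbourhood of the zero section of the symplectic normal bundle $NY_0 \to Y_0$ and picking $J$ to agree there with a bundle complex structure; away from $Y_0$ there is no constraint beyond $\omega$-compatibility, and $\mathcal{J}_\omega(X_0)$ is itself nonempty and contractible. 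Path-connectedness follows because any two such choices can be joined by first interpolating away from $Y_0$ (using convexity/contractibility of $\mathcal{J}_\omega$) and then, near $Y_0$, interpolating through bundle complex structures on $NY_0$, which again form a contractible space.

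Next I would globalise over the base $M$. Since $(\kappa : X \to M,\omega)$ is a family of symplectic manifolds with a subfamily $(\kappa|_Y : Y \to M, \omega|_Y)$ of symplectic hypersurfaces, I would choose a locally finite open cover $(U_i)_{i\in I}$ of $M$ together with trivialisations $X|_{U_i} \cong U_i \times X_0$ carrying $Y|_{U_i}$ to $U_i \times Y_0$ and $\omega$ to a family of symplectic forms on $X_0$ (after shrinking, symplectomorphic to $\omega_0$, so one may even assume $\omega$ constant on each chart up to a controlled isotopy). On each $U_i$ the fibrewise construction above, applied with parameters, yields $J_i \in \mathcal{J}_{\omega,\mathrm{ni}}(X|_{U_i}, Y|_{U_i})$: the normal-integrability condition $\pi^{VX}_{VY^{\perp_\omega}}N_J(v,\xi)=0$ is fibrewise in nature (it only involves $N_{J_b}$ on $V_bX$ for each $b$), so it suffices to have it hold in each fibre, and the construction can be carried out continuously — indeed smoothly — in the base parameter. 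Then pick a partition of unity $(\rho_i)$ subordinate to $(U_i)$. Because $\mathcal{J}_\omega(X,Y)$ is the space of sections of a bundle over $X$ whose fibres are the (non-linear but contractible, in fact geodesically convex in a suitable metric) spaces of $\omega$-compatible complex structures preserving $VY$, one can form a ``convex combination'' $J \definedas \sum_i \rho_i J_i$ in the usual sense (e.g. via the retraction used in \cite{MR2045629}, or by the standard interpolation of compatible complex structures), obtaining $J \in \mathcal{J}_\omega(X,Y)$. One must then check $J$ is still normally integrable; I would argue this by noting that the subspace of $\mathcal{J}_\omega(X_b,Y_b)$ cut out by the normal-integrability condition is preserved under the interpolation operation used for the convex combination — this is the one genuinely non-formal point and I return to it below.

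For path-connectedness of $\mathcal{J}_{\omega,\mathrm{ni}}(X,Y)$, given two such global structures $J^0, J^1$, I would produce the path again by the partition-of-unity/interpolation scheme: on each chart $U_i$, fibrewise path-connectedness (with parameters) of $\mathcal{J}_{\omega,\mathrm{ni}}(X_0,Y_0)$ from \cite{MR1954264} gives paths $J^t_i$ joining the local pictures, and the same convex-combination construction glues these to a global path $J^t \in \mathcal{J}_{\omega,\mathrm{ni}}(X,Y)$, provided once more that the interpolation stays inside the normally-integrable locus.

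\textbf{Main obstacle.} The only substantive issue — beyond bookkeeping — is verifying that the gluing operation (convex combination / geodesic interpolation of compatible complex structures, fibrewise) preserves the normal-integrability condition of Definition \ref{Definition_J_ni}. This condition is a first-order (Nijenhuis) condition, not obviously convex, so a naive affine combination in $\End(VX)$ need not work and even the standard retraction onto $\mathcal{J}_\omega$ needs care. I expect the resolution to be exactly as in \cite{MR1954264}: near $Y$ one does not interpolate arbitrary $J$'s but only structures that are split, i.e. of the form (bundle complex structure on $NY$) $\oplus$ ($Y$-tangential part), and the space of such split structures is contractible and closed under the relevant interpolation, automatically giving $\pi^{VX}_{VY^{\perp_\omega}}N_J(v,\xi)=0$ for $v$ tangent and $\xi$ normal; away from $Y$ the condition is vacuous. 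Concretely I would arrange every $J_i$ (and every path $J^t_i$) to be split in a fixed tubular neighbourhood of $Y$ independent of $i$ and $t$, so that all interpolations take place within the split locus there and within plain $\mathcal{J}_\omega$ elsewhere. Granting that, the rest is the routine partition-of-unity argument sketched above, and the lemma follows.
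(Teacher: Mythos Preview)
Your identification of the ``main obstacle'' is exactly right: normal integrability is a first-order condition on $J$, so naive interpolation has no reason to preserve it. The gap is in your proposed resolution. The assertion that ``split'' structures --- (bundle complex structure on $NY$) $\oplus$ ($Y$-tangential part) --- automatically satisfy $\pi^{VX}_{VY^{\perp_\omega}}N_J(v,\xi)=0$ is not justified as stated. Writing things out in adapted coordinates, the normal component of $N_J(v,\xi)$ for such a split $J$ contains terms of the form $J^N(L_v J^N)\xi - (L_{J^Y v}J^N)\xi$, which only vanish if the bundle complex structure $J^N$ is suitably parallel; this is an extra choice (a Hermitian connection on $NY$) that you have not fixed, and once you do fix it you would still need to verify that your partition-of-unity interpolation respects that entire package. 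In particular your characterisation of what \cite{MR1954264} does is not accurate: the proof of Theorem A.2 there does \emph{not} restrict to a subspace of split structures.

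What that proof actually provides --- and what the paper is invoking --- is an explicit global correction. Given any $J\in\mathcal{J}_\omega(X,Y)$, one builds an endomorphism $K = K(J)\in\End(VX)$, supported near $Y$, out of the normal component of $N_J$ (concretely $K = -\tfrac12(L+L^t)$ with $L_\xi(v) = -J\,\pi^{VX}_{VY^{\perp_\omega}}N_J(\xi,v)$ for $\xi$ normal, $v$ tangent), and checks that $r(J)\definedas e^{JK}J$ lies in $\mathcal{J}_{\omega,\mathrm{ni}}(X,Y)$. Since $K(J)=0$ precisely when $J$ is already normally integrable, $r$ is a retraction of the contractible space $\mathcal{J}_\omega(X,Y)$ onto $\mathcal{J}_{\omega,\mathrm{ni}}(X,Y)$; nonemptiness and path-connectedness follow immediately. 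The point you are missing is that this formula is pointwise in $J$ and its $1$-jet, hence works globally on $X$ and fibrewise over $M$ without any partition-of-unity gluing. Your localise-then-glue scheme is therefore unnecessary, and the obstacle you correctly identified simply does not arise in the actual argument.
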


Now remember that if for $b\in M$, $S_b$ is a smooth Riemann surface and $\iota_b : S_b \to \Sigma_b \subseteq \Sigma$ a desingularisation of the fibre of $\Sigma$ over $b$, then $\iota_b^\ast \tilde{X} = (\pi\circ \iota_b)^\ast X$ is a trivial bundle, for $\pi\circ \iota_b$ is the constant map to $b$.
Likewise for the subbundle $Y\subseteq X$.
Making the identification with the trivial bundle, $\hat{X}_b \definedas S_b\times X_b$ and $\hat{Y}_b \definedas S_b\times Y_b$, one can pull back any $H\in \mathcal{H}(\tilde{X})$ to $H_b \in \mathcal{H}(\hat{X}_b)$. Given such $H$ and any $J\in \mathcal{J}_\omega(X)$, which induces a vertical almost complex structure on every $\hat{X}_b$, one hence gets an almost complex structure $\hat{J}^H_b$ on $\hat{X}_b$ as in Definition \ref{Definition_J_H}.

\begin{defn}\label{Definition_Y_compatible_Hamiltonian_perturbation}
Let $H\in \mathcal{H}(\tilde{X})$. $H$ is called a \emph{$Y$-compatible Hamiltonian perturbation}, $H\in \mathcal{H}(\tilde{X},\tilde{Y})$, if for every $b\in M$ and every desingularisation $\iota_b : S_b \to \Sigma_b\subseteq \Sigma$, $\hat{Y}_b \subseteq \hat{X}_b$ is $H_b$-parallel, \ie $\im X_{H_b(\zeta)}|_{\hat{Y}_b} \subseteq V\hat{Y}_b\;\forall\, \zeta\in TS_b$. \\
Given $J\in \mathcal{J}_{\omega, \mathrm{ni}}(X,Y)$, if furthermore for every $b\in M$ and every desingularisation $\iota_b : S_b \to \Sigma_b\subseteq \Sigma$,
\[
\pi^{V\hat{X}_b}_{V\hat{Y}_b^{\perp_\omega}} N_{\hat{J}^H_b}(\hat{v},
\hat{\xi}) = 0 \;\forall\, \hat{v}\in V_{\hat{y}}\hat{Y}_b, \hat{\xi} \in V_{\hat{y}}\hat{Y}^{\perp_\omega}, \hat{y}\in \hat{Y}_b\text{,}
\]
where $V_{\hat{y}}\hat{Y}^{\perp_\omega} \definedas \{0\}\times T Y_b^{\perp_\omega}$, then $H$ is called a \emph{$J$-compatible normally integrable Hamiltonian perturbation}, $H\in \mathcal{H}_{\mathrm{ni}}(\tilde{X}, \tilde{Y}, J)$. \\
This space has the two subspaces
\begin{align*}
\mathcal{H}^0_{\mathrm{ni}}(\tilde{X}, \tilde{Y}, J) &\definedas \{ H\in \mathcal{H}_{\mathrm{ni}}(\tilde{X}, \tilde{Y}, J) \;|\; H|_{\tilde{Y}} = 0\} \\
\mathcal{H}^{00}(\tilde{X}, \tilde{Y}) &\definedas \closure \left(\{H\in \mathcal{H}(\tilde{X}) \;|\; \supp(H)\subseteq \tilde{X} \setminus\tilde{Y} \text{ compact}\}\right)\text{,}
\end{align*}
where $\closure$ denotes the closure in $\mathcal{H}(\tilde{X})$.
\end{defn}

In the course of the ensuing construction, Hamiltonian perturbations will be chosen with increasing specialisation in the form $H + H^0 + H^{00}$, starting with some $H \in \mathcal{H}(\tilde{X}, \tilde{Y}, J)$ (which actually lies in some other yet to be defined subspace of $\mathcal{H}(\tilde{X}, \tilde{Y}, J)$) and then modifying it to $H + H^0$ for $H^0 \in \mathcal{H}_{\mathrm{ni}}^0(\tilde{X}, \tilde{Y}, J)$ and subsequently to $H + H^0 + H^{00}$ for some $H^{00}\in \mathcal{H}^{00}(\tilde{X}, \tilde{Y})$.

\begin{remark}
If one endows, for $b\in M$, $\hat{X}_b$ with a symplectic form
$\hat{\omega}_b$ that is of the form $\pr_1^\ast\sigma +
\pr_2^\ast\omega_b$ for a symplectic form $\sigma_b$ on $S_b$, then
$\{0\}\times T_yY_b^{\perp_\omega} = T_{\hat{y}} \hat{Y}_b^{\perp_{\hat{\omega}_b}}$ for $\hat{y} = (z,y)\in \hat{Y}_b$, so
the definition of $\mathcal{H}_{\mathrm{ni}}(\tilde{X}, \tilde{Y}, J)$
is in complete analogy to that of $\mathcal{J}_{\omega, \mathrm{ni}}(X,Y)$.
\end{remark}

\begin{lemma}\label{Lemma_H_X_Y}
\[
\mathcal{H}(\tilde{X},\tilde{Y}) = \{ H\in\mathcal{H}(\tilde{X}) \;|\; \d(H(\zeta))(v) = 0
\;\forall\, \zeta\in T\Sigma, v\in TY^{\perp_\omega} \}\text{.}
\]
defines a closed linear subspace of $\mathcal{H}(\tilde{X})$. \\
Given any $J\in \mathcal{J}_\omega(\tilde{X})$, for any $b\in M$ and every desingularisation $\iota_b : S_b \to \Sigma_b\subseteq \Sigma$, $\hat{Y}_b \subseteq \hat{X}_b$ is a $\hat{J}^H_b$-complex hypersurface.
\end{lemma}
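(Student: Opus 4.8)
The statement has two parts: first, the explicit description of $\mathcal{H}(\tilde{X},\tilde{Y})$ as a closed linear subspace of $\mathcal{H}(\tilde{X})$; second, that for any $\omega$-compatible $J$ and any desingularisation, $\hat{Y}_b$ is a $\hat{J}^H_b$-complex submanifold of $\hat{X}_b$.

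The plan is as follows. For the first part, I would unwind the definition of $Y$-compatibility. By Definition \ref{Definition_Y_compatible_Hamiltonian_perturbation}, $H\in\mathcal{H}(\tilde{X},\tilde{Y})$ iff for every $b$ and every desingularisation $\iota_b:S_b\to\Sigma_b$ one has $\im X_{H_b(\zeta)}|_{\hat{Y}_b}\subseteq V\hat{Y}_b$ for all $\zeta\in TS_b$. Since $H_b$ is obtained from $H$ by pullback via $\tilde\iota_b$, and since every point of $\Sigma$ away from the nodes and markings lies in the image of some desingularisation (and the condition is vacuous on the discrete set $C$ because $H$ vanishes near $\tilde C$ by Definition \ref{Definition_Hamiltonian_perturbation}), this is equivalent to the condition $\im X_{H_{\tilde\kappa(y)}(\zeta)}(y)\in V_y\tilde Y$ for all $y\in\tilde Y$ and $\zeta\in T_{\tilde\kappa(y)}\Sigma$. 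Now I would use the standard fact that for a function $f$ on a symplectic manifold $(V_bX,\omega_b)$ and a submanifold $Y_b$, the Hamiltonian vector field $X_f$ is tangent to $Y_b$ along $Y_b$ iff $\d f$ annihilates the symplectic orthogonal $T Y_b^{\perp_\omega}$ along $Y_b$: indeed $\omega_b(X_f,\cdot)=\d f$, so $X_f(y)\in T_yY_b$ iff $\omega_b(X_f(y),v)=0$ for all $v\in T_yY_b^{\perp_\omega}$, i.e. $\d f(v)=0$. Applying this fibrewise with $f=H(\zeta)$ (a function on the fibre $X_{\kappa(\cdot)}$ depending on $\zeta\in T\Sigma$) gives exactly the stated description
\[
\mathcal{H}(\tilde{X},\tilde{Y})=\{H\in\mathcal{H}(\tilde{X})\;|\;\d(H(\zeta))(v)=0\;\forall\,\zeta\in T\Sigma,\,v\in TY^{\perp_\omega}\}\text{.}
\]
That this is a linear subspace is then immediate since $H\mapsto \d(H(\zeta))(v)$ is linear in $H$; closedness follows because the defining conditions are continuous linear constraints — the $C^1$-seminorm controls $\d H$, so the constraint set is closed in the $C^\varepsilon$-topology — hence the subspace is the intersection of $\mathcal{H}(\tilde X)$ with a closed linear subspace of $\Gamma^\varepsilon_0(\tilde\kappa^\ast T^\ast\Sigma)$.

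For the second part, fix $b$, a desingularisation $\iota_b:S_b\to\Sigma_b$, and $J\in\mathcal{J}_\omega(\tilde X)$. By Definition \ref{Definition_J_H}, $\hat{J}^{H_b}$ preserves the splitting $T\hat{X}_b\cong V\hat{X}_b\oplus H\hat{X}_b$ coming from the Hamiltonian connection, acting as $J_b$ on the vertical part and as (the lift of) $j_b$ on the horizontal part. I would show $\hat{J}^{H_b}(T\hat{Y}_b)=T\hat{Y}_b$ along $\hat Y_b$ by checking it on each summand. On the vertical part: $V\hat{Y}_b\cong S_b\times V_bY$ and $J\in\mathcal{J}_\omega(X)$ need not a priori preserve $VY$ — but wait, here we only assumed $J\in\mathcal{J}_\omega(\tilde X)$, not $J\in\mathcal{J}_\omega(X,Y)$. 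So I need to re-examine: actually Remark \ref{Remark_tilde_J_H} gives $\hat{J}^{H_b}(w,v)=(jw, J_bv+2J_bX_{H_b(w)}^{0,1})$. For this to preserve $T\hat Y_b$ one needs $J_b(V Y_b)\subseteq VY_b$ as well as the tangency of the Hamiltonian correction term. Hence the correct reading must be that the statement presupposes $J\in\mathcal{J}_\omega(\tilde X,\tilde Y)$ (tangency to $\tilde Y$), or that "$\hat J^H_b$-complex hypersurface" is used in the weak sense compatible with what $Y$-compatibility of $H$ alone guarantees. The main obstacle is precisely pinning down this hypothesis correctly. Assuming $J\in\mathcal J_\omega(X,Y)$ (so $J_b V Y_b\subseteq V Y_b$), the argument closes: the vertical tangent space $V\hat Y_b$ is $J_b$-invariant; and for $w\in T\hat Y_b$ the correction $X_{H_b(w)}^{0,1}$ lies in $V\hat Y_b$ because $H\in\mathcal{H}(\tilde X,\tilde Y)$ forces $X_{H_b(w)}|_{\hat Y_b}\in V\hat Y_b$ and $J_b,j_b$ preserve $V\hat Y_b$ and $T\hat Y_b$ respectively, so $X^{0,1}_{H_b(w)}=\tfrac12(X_{H_b(w)}+J_bX_{H_b(w)}j_b)$ is tangent to $\hat Y_b$; finally the horizontal lift of $T(\iota_b^{-1}\hat Y_b\cap S_b)$ is $\hat j$-invariant since $S_b$ is a curve. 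Combining, $\hat{J}^{H_b}$ maps $T\hat Y_b$ into $T\hat Y_b$, and since $\hat{J}^{H_b}$ is an almost complex structure this inclusion is an equality, so $\hat Y_b$ is an almost-complex (hence, being of real codimension $2$ in a surface-times-manifold, a "complex hypersurface") submanifold. I would then remark that the complex structure $\hat{J}^{H_b}|_{T\hat Y_b}$ agrees with the one induced by the $\hat J^{H|_{\tilde Y}}$-construction on $\hat Y_b$ directly, which is what makes the moduli spaces of curves in $\tilde Y$ well-behaved later. The only genuinely delicate point is the bookkeeping of which almost complex structure class is assumed; everything else is a direct unravelling of the definitions in Construction \ref{Construction_Hamiltonian_connection} and Definition \ref{Definition_J_H} together with the elementary symplectic-linear-algebra fact above.
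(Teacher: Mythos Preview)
The paper states this lemma without proof, so there is no argument to compare against; your proposal is essentially the natural proof one would supply. Both parts are handled correctly: the first part is the elementary symplectic-linear-algebra fact that $X_f|_Y\in TY$ iff $\d f|_{TY^{\perp_\omega}}=0$, applied fibrewise, and closedness follows since the constraint is $C^1$-continuous and linear.

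Your observation about the second part is well taken and worth recording. As literally stated, the hypothesis $J\in\mathcal{J}_\omega(\tilde{X})$ is not enough: from Remark \ref{Remark_tilde_J_H} one computes, for $(w,v)\in T_zS_b\times T_yY_b$,
\[
\hat{J}^{H_b}(w,v)=\bigl(jw,\;J_bv+J_bX_{H_b(w)}-X_{H_b(j_bw)}\bigr),
\]
and while $X_{H_b(w)},X_{H_b(j_bw)}\in T_yY_b$ by $Y$-compatibility of $H$, the terms $J_bv$ and $J_bX_{H_b(w)}$ lie in $T_yY_b$ only if $J_b(T_yY_b)\subseteq T_yY_b$, i.e.\ $J\in\mathcal{J}_\omega(X,Y)$. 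This is almost certainly the intended hypothesis (it is the standing assumption in the surrounding section and is used explicitly in the very next lemma, Lemma \ref{Lemma_Order_of_tangency}), so the statement should be read with $J\in\mathcal{J}_\omega(\tilde X,\tilde Y)$. Under that reading your argument is complete and correct.
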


The following is the reason for the above definition, which recovers
Lemma 3.3 from \cite{MR1954264} in the present notation:

\begin{corollary}\label{Corollary_Ddbar_complex_linear_II}
Let $J\in \mathcal{J}_{\omega, \mathrm{ni}}(X, Y)$, let $H \in
\mathcal{H}_{\mathrm{ni}}(\tilde{X}, \tilde{Y}, J)$, let
$b\in M$ and $\iota_b : S_b \to \Sigma_b\subseteq \Sigma$ a desingularisation. Then for any $u \in \mathcal{M}(\hat{X}_b, A, J_b, H_b)$ with $\im(u) \subseteq \hat{Y}_b$,
\[
\pi^{V\hat{X}_b}_{V\hat{Y}_b^{\perp_\omega}}\circ \left(D\dbar^{J_b, H_b}_{S_b}\right)_u : L^{k,p}(u^\ast V\hat{Y}_b^{\perp_\omega}) \to L^{k-1,p}(\overline{\Hom}_{(j_b, J_b)}(TS_b, u^\ast V\hat{Y}_b^{\perp_\omega}))
\]
is complex linear (for any $k\in \N, p > 1$ with $kp > 2$).
\end{corollary}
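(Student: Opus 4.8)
The plan is to deduce the statement from the complex-linearity defect formula of Corollary \ref{Corollary_Ddbar_complex_linear}. First observe that the composition is of the asserted type: since $J\in\mathcal{J}_\omega(X,Y)$, the fibrewise almost complex structure $J_b$ preserves both $V\hat{Y}_b$ and (being $\omega$-compatible) its symplectic complement $V\hat{Y}_b^{\perp_\omega}$, so $\pi^{V\hat{X}_b}_{V\hat{Y}_b^{\perp_\omega}}$ is complex linear and commutes with the splitting $V\hat{X}_b=V\hat{Y}_b\oplus V\hat{Y}_b^{\perp_\omega}$; hence $\pi^{V\hat{X}_b}_{V\hat{Y}_b^{\perp_\omega}}\circ(D\dbar^{J_b,H_b}_{S_b})_u$ indeed sends $L^{k,p}(u^\ast V\hat{Y}_b^{\perp_\omega})$ into $L^{k-1,p}(\overline{\Hom}_{(j_b,J_b)}(TS_b,u^\ast V\hat{Y}_b^{\perp_\omega}))$ (and, since $\hat{Y}_b$ is a $\hat{J}^{H_b}$-complex, $H_b$-parallel submanifold by Lemma \ref{Lemma_H_X_Y}, one readily checks that $(D\dbar^{J_b,H_b}_{S_b})_u$ preserves $L^{k,p}(u^\ast V\hat{Y}_b)$, so that this composition represents the linearised normal operator). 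Applying the identity of Corollary \ref{Corollary_Ddbar_complex_linear} to $\eta\in L^{k,p}(u^\ast V\hat{Y}_b^{\perp_\omega})\subseteq L^{k,p}(u^\ast V\hat{X}_b)$ (legitimate since $\dbar^{J_b,H_b}_{S_b}u=0$), and then composing with $\pi^{V\hat{X}_b}_{V\hat{Y}_b^{\perp_\omega}}$, the claim reduces to showing
\[
\pi^{V\hat{X}_b}_{V\hat{Y}_b^{\perp_\omega}}N_{\hat{J}^{H_b}}(\eta,Du(Z))=0\qquad\text{for all }Z\in T_zS_b,\ z\in S_b,
\]
whenever $\eta$ is a (smooth, by elliptic regularity this is no loss) section of $u^\ast V\hat{Y}_b^{\perp_\omega}$.

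Since $u$ maps into $\hat{Y}_b$ one has $Du(Z)\in T\hat{Y}_b$, and since $\hat{Y}_b$ is $H_b$-parallel the connection defined by $H_b$ restricts to it; thus $Du(Z)=D^{\mathrm{v}}u(Z)+\tilde{Z}$ with $D^{\mathrm{v}}u(Z)\in V\hat{Y}_b$ and $\tilde{Z}\in H\hat{X}_b$ the horizontal lift, which in fact lies in the horizontal subbundle of $\hat{Y}_b$ (by Lemma \ref{Lemma_Holomorphic_section_is_holomorphic_map}, $\dbar^{J_b,H_b}_{S_b}u=0$ gives $Du(Z)=\partial u(Z)+\tilde{Z}$ with $\partial u(Z)\in V\hat{Y}_b$). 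By tensoriality and antisymmetry of the Nijenhuis tensor,
\[
N_{\hat{J}^{H_b}}(\eta,Du(Z))=N_{\hat{J}^{H_b}}(\eta,D^{\mathrm{v}}u(Z))+N_{\hat{J}^{H_b}}(\eta,\tilde{Z})\text{,}
\]
and the $V\hat{Y}_b^{\perp_\omega}$-component of the first summand vanishes directly by the normal-integrability clause of Definition \ref{Definition_Y_compatible_Hamiltonian_perturbation}, applied to the two vertical arguments $\eta\in V\hat{Y}_b^{\perp_\omega}$ and $D^{\mathrm{v}}u(Z)\in V\hat{Y}_b$.

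It remains to treat the horizontal contribution $N_{\hat{J}^{H_b}}(\eta,\tilde{Z})$, which I expect to be the only real obstacle. The plan is to compute it in local coordinates adapted simultaneously to the fibration $\hat{X}_b\to S_b$ and to $\hat{Y}_b\subseteq\hat{X}_b$: a holomorphic chart on $S_b$ around $z$, a trivialisation of the normal bundle in which $\hat{Y}_b$ is a coordinate slice, and $\eta$ extended as a ``constant'' normal field. Using the explicit shape of $\hat{J}^{H_b}$ from Remark \ref{Remark_tilde_J_H}, the horizontal lift is $\tilde{Z}=(Z,-X_{H_b(Z)})$, so $N_{\hat{J}^{H_b}}(\eta,\tilde{Z})$ splits into the contribution of the underlying product structure $j_b\oplus J_b$ (which has no mixed term and reduces in the fibre direction to $N_{J_b}$, again controlled by Definition \ref{Definition_J_ni}) plus commutator terms involving $j_b$, $J_b$ and the Hamiltonian vector field $X_{H_b}$. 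Here one exploits that each of $J_b$ (as $J\in\mathcal{J}_\omega(X,Y)$), the horizontal distribution, and $X_{H_b}$ (as $H\in\mathcal{H}(\tilde{X},\tilde{Y})$, i.e.\ $\hat{Y}_b$ is $H_b$-parallel) preserves $V\hat{Y}_b$, so that every term which could a priori have a nonzero $V\hat{Y}_b^{\perp_\omega}$-component is, after extending all vector fields compatibly with $\hat{Y}_b$, forced into the scope of the vertical normal-integrability condition of Definition \ref{Definition_Y_compatible_Hamiltonian_perturbation}; when convenient one may trade a horizontal argument for vertical data via the identity $N_{\hat{J}^{H_b}}(\eta,\hat{J}^{H_b}w)=\pm\hat{J}^{H_b}N_{\hat{J}^{H_b}}(\eta,w)$ together with the holomorphicity $Du\circ j_b=\hat{J}^{H_b}\circ Du$. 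The hard part is precisely this bookkeeping — tracking which brackets in the Nijenhuis tensor can leak into the normal direction and cancelling them against the two clauses of Definition \ref{Definition_Y_compatible_Hamiltonian_perturbation}, whose formulation is tailored exactly so that this closes up. Finally, the general case $\eta\in L^{k,p}(u^\ast V\hat{Y}_b^{\perp_\omega})$ follows from the smooth case by the same density argument used at the end of the proof of Corollary \ref{Corollary_Ddbar_complex_linear}.
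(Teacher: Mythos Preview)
Your approach is the paper's: the proof there is the single line ``This is a special case of Corollary~\ref{Corollary_Ddbar_complex_linear}.'' The reduction you carry out is exactly the intended one --- since $\im u\subseteq\hat{Y}_b$ one has $Du(Z)\in T\hat{Y}_b$, and Definition~\ref{Definition_Y_compatible_Hamiltonian_perturbation} together with the characterisation in Lemma~\ref{Lemma_Characterisation_H_ni} are set up precisely so that $\pi^{V\hat{X}_b}_{V\hat{Y}_b^{\perp_\omega}}N_{\hat{J}^{H_b}}(\hat v,\hat\xi)=0$ for \emph{all} $\hat v\in T\hat{Y}_b$, $\hat\xi\in V\hat{Y}_b^{\perp_\omega}$ (this is what the remark about ``complete analogy'' means; note that with the literal reading $\hat v\in V\hat{Y}_b$ the defining condition would be vacuous given $J\in\mathcal{J}_{\omega,\mathrm{ni}}(X,Y)$, which is why Lemma~\ref{Lemma_Characterisation_H_ni} yields a nontrivial constraint on $H$).

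Where you over-complicate is the horizontal term. The ``hard bookkeeping'' you anticipate is already carried out in Lemma~\ref{Lemma_Characterisation_H_ni}: writing $\tilde Z=(Z,-X_{H_b(Z)})$ and applying that formula gives
\[
N_{\hat{J}^{H_b}}(\tilde Z,(0,\eta))=(0,N_{J_b}(-X_{H_b(Z)},\eta))-(0,2(L_{J_bX_{H_b(Z)}^{0,1}}J_b)\eta)\text{;}
\]
the $TY_b^{\perp_\omega}$-component of the first summand vanishes because $X_{H_b(Z)}\in TY_b$ (as $H\in\mathcal{H}(\tilde X,\tilde Y)$) and $J\in\mathcal{J}_{\omega,\mathrm{ni}}(X,Y)$, and that of the second vanishes by the very characterisation of $\mathcal{H}_{\mathrm{ni}}(\tilde X,\tilde Y,J)$ in the same lemma. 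No new local-coordinate computation is needed; you should simply cite Lemma~\ref{Lemma_Characterisation_H_ni} rather than outline a rederivation of it.
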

\begin{proof}
This is a special case of Lemma \ref{Corollary_Ddbar_complex_linear}.
\end{proof}

The following lemma and remark recover formulas (3.3) (b) and (c) from
\cite{MR1954264}, which will be used in Lemma
\ref{Lemma_Enough_ni_ham_perturbations} below showing the
existence of ``enough'' normally integrable Hamiltonian perturbations:
\begin{lemma}\label{Lemma_Characterisation_H_ni}
Let $J\in \mathcal{J}_\omega(X)$, $H\in
\mathcal{H}(\tilde{X})$ and assume that $\tilde{X} = \Sigma \times X$ is a trivial bundle. Then \wrt the decomposition
$T\tilde{X} = T\Sigma\times TX$, for $(w,v), (0,\xi)
\in T\tilde{X}$,
\[
N_{\tilde{J}^H}((w,v),(0,\xi)) = (0, N_J(v,\xi)) - (0,
2(L_{JX_{H(w)}^{0,1}}J)\xi)\text{.}
\]
In particular, for $J\in \mathcal{J}_{\omega, \mathrm{ni}}(X,Y)$,
\begin{align*}
\mathcal{H}_{\mathrm{ni}}(\tilde{X}, \tilde{Y}, J) = \{ H\in
\mathcal{H}(\tilde{X}, \tilde{Y}) \;|\; &
\pi^{TX}_{TY^{\perp_\omega}}(L_{JX_{H(w)}^{0,1}}J)\xi = 0 \\
& \;\forall\, w\in T\Sigma, \xi\in TY^{\perp_\omega}\}\text{.}
\end{align*}
Also, for $J\in \mathcal{J}_{\omega, \mathrm{ni}}(X,Y)$,
\begin{align*}
\pi^{TX}_{TY^{\perp_\omega}}(L_{JX_{H(w)}^{0,1}}J)\xi =
\frac{1}{2}\pi^{TX}_{TY^{\perp_\omega}} \Bigl( & [X_{H(w)}, \xi] +
J[X_{H(w)}, J\xi] \;+ \\
& +\; J\bigl([X_{H(jw)}, \xi] + J[X_{H(jw)}, J\xi]\bigr)\Bigr)\text{.}
\end{align*}
\end{lemma}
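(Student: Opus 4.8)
The plan is to reduce everything to a pointwise computation of the Nijenhuis tensor of $\tilde{J}^H$ on the trivial bundle $\tilde{X} = \Sigma \times X$, using Remark \ref{Remark_tilde_J_H} for the explicit form of $\tilde{J}^H$ and the classical expansion of the Nijenhuis tensor in terms of Lie brackets. First I would fix a point and local vector fields extending the given tangent vectors: for $(w,v) \in T\tilde{X}$ I extend $w$ to a vector field on $\Sigma$ and $v$ to a vertical vector field on $X$ (pulled back to $\tilde{X}$), and similarly $(0,\xi)$ is extended to a vertical vector field. Then $\tilde{J}^H(w,v) = (jw,\, J v + 2 J X_{H(w)}^{0,1})$ and $\tilde{J}^H(0,\xi) = (0,\, J\xi)$ by Remark \ref{Remark_tilde_J_H} (note $X^{0,1}_{H(0)} = 0$). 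I would then plug these into
\[
N_{\tilde{J}^H}(Z_1, Z_2) = [\tilde{J}^H Z_1, \tilde{J}^H Z_2] - \tilde{J}^H[\tilde{J}^H Z_1, Z_2] - \tilde{J}^H[Z_1, \tilde{J}^H Z_2] - [Z_1, Z_2]\text{,}
\]
being careful that the bracket of a horizontal-type vector field with a vertical one and the bracket of two vertical ones behave as expected; since the second slot is vertical, all the $\Sigma$-components of the relevant brackets collapse, leaving only fibre directions, which is why the output has vanishing $T\Sigma$-component.

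Next I would organise the resulting fibre-component expression. Grouping the terms that do not involve $H$ reproduces exactly $N_J(v,\xi)$ (the Nijenhuis tensor of $J$ computed fibrewise, which makes sense because $J$ is a vertical almost complex structure and $v,\xi$ are vertical). The remaining terms all involve $X_{H(w)}^{0,1}$; I would recognise the combination of brackets $[X_{H(w)}^{0,1}, \xi]$, $J[X_{H(w)}^{0,1}, J\xi]$, etc., as precisely $-2(L_{J X_{H(w)}^{0,1}} J)\xi$ by the standard identity $(L_W J)\xi = [W, J\xi] - J[W,\xi]$ together with the antilinearity bookkeeping (using $J X^{0,1} = -(X^{0,1}) j$-type relations is not needed here since everything is already on the $X$-side). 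This gives the displayed formula $N_{\tilde{J}^H}((w,v),(0,\xi)) = (0, N_J(v,\xi)) - (0, 2(L_{J X_{H(w)}^{0,1}} J)\xi)$.

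The characterisation of $\mathcal{H}_{\mathrm{ni}}(\tilde{X}, \tilde{Y}, J)$ then follows by inspecting Definition \ref{Definition_Y_compatible_Hamiltonian_perturbation}: for $J \in \mathcal{J}_{\omega,\mathrm{ni}}(X,Y)$ the term $\pi^{VX}_{VY^{\perp_\omega}} N_J(v,\xi)$ already vanishes for $v \in VY$, $\xi \in VY^{\perp_\omega}$, so the normal-integrability condition on $H$ reduces to $\pi^{TX}_{TY^{\perp_\omega}}(L_{J X_{H(w)}^{0,1}} J)\xi = 0$ for all $w \in T\Sigma$ and $\xi \in TY^{\perp_\omega}$ (here I also use $H \in \mathcal{H}(\tilde{X},\tilde{Y})$ so that $Y$ is $H$-parallel, which is part of the definition). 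Finally, the last displayed formula is obtained by expanding $X_{H(w)}^{0,1} = \tfrac{1}{2}(X_{H(w)} + J X_{H(w)} \circ j) = \tfrac{1}{2}(X_{H(w)} + J X_{H(jw)})$, substituting into $(L_{\,\cdot\,} J)\xi = [\,\cdot\,, J\xi] - J[\,\cdot\,,\xi]$, using $J^2 = -\id$ and $\pi^{TX}_{TY^{\perp_\omega}} J = J \pi^{TX}_{TY^{\perp_\omega}}$ (valid since $J$ preserves $VY$ and hence $VY^{\perp_\omega}$), and collecting terms. The main obstacle I anticipate is purely bookkeeping: keeping track of which bracket terms survive the projection $\pi^{TX}_{TY^{\perp_\omega}}$ and correctly matching signs and factors of $\tfrac12$ when passing between the $X^{0,1}$-form and the $(w, jw)$-form of the expression; the conceptual content is entirely contained in the two standard identities (Nijenhuis in terms of brackets, and Lie derivative of $J$ in terms of brackets) plus the triviality of the bundle, both of which are available here.
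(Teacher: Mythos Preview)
Your approach is essentially the same as the paper's: expand the Nijenhuis tensor of $\tilde{J}^H$ via Remark \ref{Remark_tilde_J_H}, separate the $H$-independent terms (giving $N_J(v,\xi)$) from the $H$-dependent ones (giving the Lie derivative term), and then unpack $X_{H(w)}^{0,1}$. Your sign convention for $N_J$ is the opposite of the paper's, but this is harmless since both $N_{\tilde{J}^H}$ and $N_J$ flip together.

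There is one genuine gap in your derivation of the \emph{last} formula. After expanding $JX_{H(w)}^{0,1} = \tfrac{1}{2}(JX_{H(w)} - X_{H(jw)})$ and applying $(L_W J)\xi = [W,J\xi] - J[W,\xi]$, the terms involving $X_{H(jw)}$ already match the target, but the terms involving $JX_{H(w)}$ give
\[
[JX_{H(w)}, J\xi] - J[JX_{H(w)}, \xi],
\]
whereas the target has $[X_{H(w)}, \xi] + J[X_{H(w)}, J\xi]$. These are \emph{not} equal by pure algebra; their difference is exactly (up to sign) $N_J(X_{H(w)},\xi)$. The passage between them uses the hypothesis $J \in \mathcal{J}_{\omega,\mathrm{ni}}(X,Y)$ a \emph{second} time: since $H \in \mathcal{H}(\tilde{X},\tilde{Y})$ forces $X_{H(w)} \in TY$, one has $\pi^{TX}_{TY^{\perp_\omega}} N_J(X_{H(w)},\xi) = 0$, and this is what converts the $JX_{H(w)}$-brackets into $X_{H(w)}$-brackets after projection. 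Your list of ingredients ($J^2=-\id$ and $J$ commuting with the projection) is not sufficient here; you must invoke normal integrability of $J$ again, exactly as the paper does.
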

\begin{proof}
By definition of the Nijenhuis tensor and Remark \ref{Remark_tilde_J_H}
\begin{align*}
N_{\tilde{J}^H}((w,v), (0,\xi)) &= [(w,v), (0,\xi)] +
\tilde{J}^H[\tilde{J}^H(w,v), (0,\xi)] \;+ \\
&\quad\; +\; \tilde{J}^H[(w,v), \tilde{J}^H(0,\xi)] -
[\tilde{J}^H(w,v), \tilde{J}^H(0,\xi)] \\
&= (0, [v, \xi]) + \tilde{J}^H[(jw, Jv + 2JX_{H(w)}^{0,1}), (0,\xi)]
\;+ \\
&\quad\; +\; \tilde{J}^H[(w,v), (0,J\xi)] - [(jw, Jv +
2JX_{H(w)}^{0,1}), (0,J\xi)] \\
&= (0, [v,\xi]) + \tilde{J}^H(0, [Jv,\xi] + 2[JX_{H(w)}^{0,1},\xi])
\;+ \\
&\quad\; +\; \tilde{J}^H(0,[v,J\xi]) - (0, [Jv,J\xi] +
2[JX_{H(w)}^{0,1},J\xi]) \\
&= (0, [v,\xi] + J[Jv,\xi] + J[v,J\xi] - [Jv,J\xi) \;+ \\
&\quad\; +\; 2(0, J[JX_{H(w)}^{0,1},\xi] - [JX_{H(w)}^{0,1}, J\xi]) \\
&= (0, N_J(v,\xi)) - 2(0,[JX_{H(w)}^{0,1}, J\xi] - J[JX_{H(w)}^{0,1},\xi]) \\
&= (0, N_J(v,\xi)) - (0,2(L_{JX_{H(w)}^{0,1}}J)\xi)\text{,}
\end{align*}
for $[JX_{H(w)}^{0,1}, J\xi] - J[JX_{H(w)}^{0,1},\xi] =
L_{JX_{H(w)}^{0,1}}(J\xi) - JL_{JX_{H(w)}^{0,1}}\xi =
(L_{JX_{H(w)}^{0,1}}J)\xi + JL_{JX_{H(w)}^{0,1}}\xi -
JL_{JX_{H(w)}^{0,1}}\xi = (L_{JX_{H(w)}^{0,1}}J)\xi$. \\
To show the last equation, one can explicitely write out
$X_{H(w)}^{0,1}$ to get
\begin{align*}
2([JX_{H(w)}^{0,1}, J\xi] - J[JX_{H(w)}^{0,1}, \xi]) &= [JX_{H(w)},
J\xi] - J[JX_{H(w)}, \xi] \;- \\
&\quad\; -\; ([X_{H(jw)}, J\xi] - J[X_{H(jw)}, \xi])\text{.}
\end{align*}
Now using that $J\in \mathcal{J}_{\omega, \mathrm{ni}}(X,Y)$, hence
$\pi^{TX}_{TY^{\perp_\omega}}N_J(v,\xi) =
\pi^{TX}_{TY^{\perp_\omega}}([v,\xi] + J[v,J\xi] - ([Jv,J\xi] -
J[Jv,\xi])) = 0$ for $v\in TY$, $\xi \in TY^{\perp_\omega}$, with $v =
X_{H(w)}$, shows the last equation in the statement of the lemma.
\end{proof}

\begin{remark}
If $\nabla$ denotes \emph{any} torsion-free connection on $X$, then
the second part in the above formula for the Nijenhuis tensor can also
be written as
\[
2(L_{JX_{H(w)}^{0,1}}J)\xi = 2J(\nabla_\xi(JX_{H(w)}^{0,1}) +
J\nabla_{J\xi}(JX_{H(w)}^{0,1}) - J(\nabla_{JX_{H(w)}^{0,1}}
J)\xi)\text{,}
\]
which recovers formula (3.3) (c) in Definition 3.2 from
\cite{MR1954264}, although it will not be used in this form in this
text. For starting with the second to last line in the string of
equalities in the above proof, because $\nabla$ is torsion-free,
\begin{align*}
[JX_{H(w)}^{0,1}, J\xi] - J[JX_{H(w)}^{0,1}, \xi] &=
\nabla_{JX_{H(w)}^{0,1}}(J\xi) - \nabla_{J\xi}(JX_{H(w)}^{0,1}) \;- \\
&\quad\; -\; J\nabla_{JX_{H(w)}^{0,1}}\xi -
\nabla_\xi(JX_{H(w)}^{0,1}) \\
&= (\nabla_{JX_{H(w)}^{0,1}}J)\xi + J\nabla_{JX_{H(w)}^{0,1}}\xi -
\nabla_{J\xi}(JX_{H(w)}^{0,1}) \;- \\
&\quad\; -\; J\nabla_{JX_{H(w)}^{0,1}}\xi +
J\nabla_\xi(JX_{H(w)}^{0,1}) \\
&= J(\nabla_\xi(JX_{H(w)}^{0,1}) +
J\nabla_{J\xi}(JX_{H(w)}^{0,1}) - J(\nabla_{JX_{H(w)}^{0,1}}
J)\xi)\text{.}
\end{align*}
\end{remark}

\begin{lemma}\label{Lemma_Enough_ni_ham_perturbations}
There exists a continuous linear right inverse $\iota :
\mathcal{H}(\tilde{Y}) \to \mathcal{H}(\tilde{X}, \tilde{Y})$ to
the restriction map $\mathcal{H}(\tilde{X},\tilde{Y}) \to
\mathcal{H}(\tilde{Y})$, $H \mapsto (\zeta\mapsto H(\zeta)|_Y)$, \ie
the restriction map 
$\mathcal{H}(\tilde{X},\tilde{Y}) \to \mathcal{H}(\tilde{Y})$ is a
split surjection. \\
Furthermore, $\iota$ can be chosen \st $\im \iota \subseteq
\mathcal{H}_{\mathrm{ni}}(\tilde{X}, \tilde{Y}, J)$ for any $J\in
\mathcal{J}_{\omega, \mathrm{ni}}(X,Y)$.
\end{lemma}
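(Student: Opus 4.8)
The plan is to build $\iota$ as an explicit extension operator. A Hamiltonian perturbation on $\tilde{Y}$ is fibrewise a $1$-form on $\Sigma$ with values in functions on the fibres of $\tilde{Y}$, and one extends each of its ``function values'' off $\tilde{Y}$ by pulling it back along the projection of a tubular neighbourhood of $Y$ in $X$ and cutting off. Fix once and for all a $J\in\mathcal{J}_{\omega,\mathrm{ni}}(X,Y)$ (which exists by Lemma \ref{Lemma_J_ni_nonempty_connected}); the operator $\iota$ will depend on $J$. The construction is local near $\tilde{Y}$: outside a fixed tubular neighbourhood one simply extends by $0$.

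First I would fix the geometric data. Since $Y\subseteq X$ is a smooth family of symplectic hypersurfaces over $M$, $VY^{\perp_\omega}\subseteq VX$ is a smooth subbundle, complex for $J$ (as $J(VY)=VY$ forces $J(VY^{\perp_\omega})=VY^{\perp_\omega}$). By a parametrised symplectic neighbourhood theorem together with the normal integrability of $J$ one obtains a fibrewise tubular neighbourhood of $Y$ in $X$, identified with a neighbourhood of the zero section in $VY^{\perp_\omega}$ with projection $p$, such that: (a) the fibres of $p$ are tangent to $VY^{\perp_\omega}$ along $Y$; (b) $\omega$ has the normal form $p^\ast\omega_Y+\omega^{\mathrm{vert}}$ associated with a unitary connection on the Hermitian normal bundle; and (c) along $Y$, $J$ is the product of $J|_{VY}$ and the fibrewise linear complex structure, with vanishing mixed component $\pi^{VX}_{VY^{\perp_\omega}}N_J(v,\xi)=0$ for $v\in VY$, $\xi\in VY^{\perp_\omega}$. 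This is exactly the normal form behind Definition 3.2 of \cite{MR1954264}; all choices can be made smoothly over $M$, and they pull back to $\tilde{X}=\pi^\ast X$ over $\Sigma$.

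Then I would set $\iota(H)(\zeta)(x)\definedas\beta(x)\cdot H(\zeta)(p(x))$ for $x$ in the tubular neighbourhood and $\iota(H)(\zeta)(x)\definedas 0$ otherwise, where $\beta$ is a fixed cut-off, $\equiv1$ near $\tilde{Y}$ and supported in the tubular neighbourhood, and extend linearly in $H$. The easy checks are: $\iota$ is a right inverse of the restriction map, because $p|_Y=\mathrm{id}$ and $\beta|_Y=1$; $\iota$ is continuous for the $C^\varepsilon$-norm, being built from a fixed finite amount of smooth geometric data with $\varepsilon$ chosen as in \cite{MR948771}, exactly as for the corresponding extension operator in the genus-zero construction of \cite{MR2399678}, Section 3; the support of $\iota(H)$ is disjoint from $\tilde{C}$ because the thickening happens inside the fibres of $\tilde{\kappa}$; and $\iota(H)\in\mathcal{H}(\tilde{X},\tilde{Y})$, since by Lemma \ref{Lemma_H_X_Y} it suffices that $\d(\iota(H)(\zeta))(v)=0$ along $Y$ for $v\in TY^{\perp_\omega}$, which holds because near $\tilde{Y}$, where $\beta\equiv1$, the function $\iota(H)(\zeta)=p^\ast H(\zeta)$ is constant along the fibres of $p$, and those fibres are tangent to $VY^{\perp_\omega}$ along $Y$ by (a).

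The main obstacle is the normal integrability, i.e. $\iota(H)\in\mathcal{H}_{\mathrm{ni}}(\tilde{X},\tilde{Y},J)$. Reducing to the trivial-bundle case by a desingularisation and writing $f:=\iota(H)$, Lemma \ref{Lemma_Characterisation_H_ni} reduces this (since $\iota(H)$ is already $Y$-compatible) to proving, along $Y$, for every $w\in T\Sigma$, $\xi\in TY^{\perp_\omega}$,
\begin{multline*}
\pi^{TX}_{TY^{\perp_\omega}}\bigl([X_{f(w)},\xi]+J[X_{f(w)},J\xi] \\
+\;J([X_{f(jw)},\xi]+J[X_{f(jw)},J\xi])\bigr)=0\text{.}
\end{multline*}
Because $J$ preserves the splitting $TX|_Y=TY\oplus TY^{\perp_\omega}$, $\pi^{TX}_{TY^{\perp_\omega}}$ commutes with $J$ along $Y$, so it is enough to show $\pi^{TX}_{TY^{\perp_\omega}}([X_{f(w)},\eta]+J[X_{f(w)},J\eta])=0$ along $Y$ for every $\eta\in TY^{\perp_\omega}$ (and likewise for $jw$). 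Near $\tilde{Y}$ one has $f(w)=p^\ast(H(w)|_Y)$, so in the normal form (b) the Hamiltonian vector field $X_{f(w)}$ restricted to $Y$ is the unitary-connection horizontal lift of the Hamiltonian vector field of $H(w)|_Y$ on $Y$; in particular it is tangent to $Y$, and its flow preserves $Y$ and, along $Y$ to first order, both the fibration $p$ and the fibrewise complex structure (the latter by unitarity together with (c)), which along $Y$ agrees with $J$ on $VY^{\perp_\omega}$. Hence $[X_{f(w)},\eta]$ and $[X_{f(w)},J\eta]$ are vertical along $Y$ with $[X_{f(w)},J\eta]=J[X_{f(w)},\eta]$ there, so $[X_{f(w)},\eta]+J[X_{f(w)},J\eta]=(1+J^2)[X_{f(w)},\eta]=0$; the same for $jw$. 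The care required lies in arranging (a)--(c) simultaneously and in checking that only first-order information along $Y$ enters the computation — this is in substance the computation behind formulas (3.3) of \cite{MR1954264}, and is the part that actually needs to be carried out carefully.
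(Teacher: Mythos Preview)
Your construction—extension by pullback along a symplectic tubular neighbourhood projection, followed by a cutoff—is exactly the one the paper uses, and your verification that $\iota$ is a continuous right inverse landing in $\mathcal{H}(\tilde{X},\tilde{Y})$ is correct and matches the paper's check via Lemma~\ref{Lemma_H_X_Y}.

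There is, however, a genuine gap in the normal-integrability part. You explicitly fix $J$ at the outset and let $\iota$ depend on it (via the unitary connection needed for your normal form (b)--(c)), but the statement asks for a \emph{single} $\iota$ with $\im\iota\subseteq\mathcal{H}_{\mathrm{ni}}(\tilde{X},\tilde{Y},J)$ for \emph{every} $J\in\mathcal{J}_{\omega,\mathrm{ni}}(X,Y)$. Your key step, that the horizontal lift $X_{f(w)}$ preserves the fibrewise complex structure so that $[X_{f(w)},J\eta]=J[X_{f(w)},\eta]$ along $Y$, uses unitarity of the connection with respect to the Hermitian structure coming from $J$. For a different $J'\in\mathcal{J}_{\omega,\mathrm{ni}}(X,Y)$ (which induces a different compatible complex structure on the rank-two normal bundle) the same connection need not be unitary, and your identity is no longer available.

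The paper avoids this by a less conceptual but $J$-independent route. It builds $\iota$ from the symplectic data alone (Weinstein neighbourhood plus an arbitrary compatible metric for the cutoff), and then, for an \emph{arbitrary} $J\in\mathcal{J}_{\omega,\mathrm{ni}}(X,Y)$, computes directly in coordinates. In Weinstein coordinates $(y^j,x^1,x^2)$, after a normal re-trivialisation (depending on $J$, but not affecting $\iota$) making $J$ standard on $\operatorname{span}(\partial_{x^1},\partial_{x^2})$ along $Y$, the Hamiltonian vector field of the pulled-back function has the form $X_{H(w)}=\sum_j b^j(y)\,\partial_{y^j}$ with $\partial_{x^i}b^j=0$; extending $\xi$ with constant normal coefficients, the paper checks that each of the four brackets $[X_{H(w)},\xi]$, $[X_{H(w)},J\xi]$, $[X_{H(jw)},\xi]$, $[X_{H(jw)},J\xi]$ vanishes separately at $x=0$. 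This is stronger than your identity $[X_{f(w)},\eta]+J[X_{f(w)},J\eta]=0$ and, crucially, $J$ enters only through the harmless choice of local normal trivialisation, not through the construction of $\iota$. If you want to salvage your approach, drop the unitary connection from the definition of $\iota$ and argue for arbitrary $J$ that the brackets vanish along $Y$—which is, in the end, the paper's computation.
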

\begin{proof}
First, choosing a locally finite covering of $M$ over which $X$ and $Y$ are trivial and a subordinate partition of unity, one can reduce to the case that $X$ and $Y$ are trivial bundles, so assume that to be the case. \\
By the Weinstein symplectic neighbourhood theorem, Theorem 3.30,
p.~101, in \cite{MR1698616}, there exists a neighbourhood $N(Y)$ of $Y$
in $X$, symplectomorphic to an open neighbourhood $V$ of the zero section
in $TY^{\perp_\omega}$ and mapping the zero section to $Y$ via the
inclusion. $\omega$ turns $TY^{\perp_\omega}$ into a symplectic vector
bundle. Choose any $\omega$-compatible Riemannian metric $g$ on
$TY^{\perp_\omega}$ and let $\varepsilon >
0$ be so small that for all $y\in Y$, the ball of radius (\wrt
$g$) in $T_yY^{\perp_\omega}$ lies in $V$. Now choose a smooth
cutoff-function $\rho : [0,\infty) \to [0,1]$ \st $\rho(r) = 1$ for
all $0\leq r \leq \varepsilon/3$ and $\rho(r) = 0$ for all $r\geq
2\varepsilon/3$. Let $\tau : TY^{\perp_\omega} \to Y$ the
projection. Given $H_0 \in C^\infty(Y,\R)$, define $\hat{H}_0 :
TY^{\perp_\omega} \to \R$, $\hat{H}_0(v) \definedas
\rho(\|v\|)\tau^\ast H(v)$. Then $\hat{H}_0$ has compact support in $V$
and by identifying $V$ with $N(Y)$, $\hat{H}_0$ hence defines a
function $H\in C^\infty(X,\R)$. Furthermore, for $v\in TY^{\perp_\omega}$,
$\d H(v) = 0$, for again identifying $N(Y)$ with $V$, by construction
and since $\rho$ is constant in a neighbourhood of zero, $\d H(v) = \d
H_0 (\tau_\ast v) = \d H_0(0) = 0$, for $v\in TY^{\perp_\omega} = \ker
\tau_\ast$. Also, by definition, $H|_Y = H_0$. Denote the resulting
map $\eta : C^\infty(Y,\R) \to C^\infty(X,\R)$. One can now define $\iota :
\mathcal{H}(\tilde{Y}) \to \mathcal{H}(\tilde{X}, \tilde{Y})$ by
$H_0 \mapsto (\zeta \mapsto \eta(H_0(\zeta)))$. By Lemma
\ref{Lemma_H_X_Y}, this defines a right inverse to the restriction
map.
To show the second statement, let $J\in \mathcal{J}_{\omega, \mathrm{ni}}(X,
Y)$ be arbitrary. By Lemma \ref{Lemma_Characterisation_H_ni} it has to
be shown that for the $H$ just constructed
\[
\pi^{TX}_{TY^{\perp_\omega}} \Bigl( [X_{H(w)}, \xi] +
J[X_{H(w)}, J\xi] + J\bigl([X_{H(jw)}, \xi] + J[X_{H(jw)},
J\xi]\bigr)\Bigr) = 0
\]
for all $w\in T\Sigma$, $\xi\in TY^{\perp_\omega}$. I will show that
each of the four summands 
\[
[X_{H(w)}, \xi], [X_{H(w)}, J\xi], [X_{H(jw)}, \xi], [X_{H(jw)}, J\xi]
\]
vanishes separately for asuitably chosen extension of $\xi$ to a locally defined vector field.
Here and in the following it is used that $J$ leaves $TY$ and
$TY^{\perp_\omega}$ invariant and so in particular
$\pi^{TX}_{TY^{\perp_\omega}}\circ J = J\circ
\pi^{TX}_{TY^{\perp_\omega}}$. Let $\xi\in T_yY^{\perp_\omega}$, $y\in
Y$, and $w\in T\Sigma$. Choose local coordinates around $y$ in
$X$ of the form $(y^1,\dots, y^{2n-2}, x^1, x^2)$ by use of the
Weinstein symplectic neighbourhood theorem. By a smooth change
of trivialisation in the corresponding trivialisation of
$TY^{\perp_\omega}$ over this neighbourhood one can assume that $J$ is
the standard complex structure along $Y$ in the coordinates
$x^1$ and $x^2$, \ie $J\frac{\partial}{\partial x^1}|_{x^1=x^2=0} =
\frac{\partial}{\partial x^2}$ and $J\frac{\partial}{\partial x^2}|_{x^1=x^2=0} =
- \frac{\partial}{\partial x^1}$. Extend $\xi =
a^1\frac{\partial}{\partial x^1} + a^2\frac{\partial}{\partial x^2}$,
with $a^1,a^2\in\R$, locally by the same formula. Then $X_{H(w)}$ can
be written in these coordinates as $X_{H(w)} = \sum_j
b^j\frac{\partial}{\partial y^j}$ with $\frac{\partial}{\partial
  x^i}b^j = 0$ by construction of $H$. Then
\[
[X_{H(w)},\xi]|_{x^1=x^2=0} = \sum_i \sum_j
b^j\frac{\partial}{\partial y^j}|_{x^1=x^2=0} a^i \frac{\partial}{\partial x^i} -
\sum_j \sum_i a^i \frac{\partial}{\partial x^i}|_{x^1=x^2=0} b^j
\frac{\partial}{\partial y^j} = 0\text{.}
\]
Similarly,
\begin{align*}
[X_{H(w)},J\xi]|_{x^1=x^2=0} &= \sum_j
b^j\frac{\partial}{\partial y^j}|_{x^1=x^2=0} a^1
\frac{\partial}{\partial x^2} - \sum_j
b^j\frac{\partial}{\partial y^j}|_{x^1=x^2=0} a^2
\frac{\partial}{\partial x^1} \;- \\
&\quad\; \;-
\sum_j (a^1 \frac{\partial}{\partial x^2} - a^2
\frac{\partial}{\partial x^1}) |_{x^1=x^2=0} b^j
\frac{\partial}{\partial y^j} = 0\text{.}
\end{align*}
The other two cases are completely analogous.
\end{proof}

For $Y$-compatible almost complex structures and Hamiltonian perturbations one can now define the sets $\mathcal{N}^\ell$ from the previous subsection.
The main observation used in the definition is the following, which for convenience subsequently is summarised from Section 7, in \cite{MR2399678}.
\begin{defn}
Let $(S,j)$ be a Riemann surface, $f : S \to X$ a differentiable
map. An \emph{isolated intersection} of $f$ with $Y$ is a point $z\in
f\inv(Y)$ \st there exists a closed disk $D \subseteq S$ around $z$ and
a closed disk $B\subseteq Y$ around $f(z)$ with $f\inv(B)\cap D =
\{z\}$. \\
Given such an isolated intersection $z\in f\inv(Y)$, the \emph{local
  intersection number} $\iota(f, Y; z)$ of $f$ with $Y$ at $z$ is
defined as follows: Assume that $f$ intersects $Y$ in $z$
transversely. Then $\iota(f, Y; z) = 1$, if the orientation on $T_{f(z)}X$
agrees with the orientation induced (via $T_{f(z)}X \cong (f_\ast T_zS)
\oplus T_{f(z)}Y$) by the orientations on $T_zS$ and
$T_{f(z)}Y$, and $\iota(f, Y; z) = -1$, otherwise. In general, choose a
differentiable perturbation $f_t : S \to X$, $t\in [0,1]$, of $f$ with
compact support in the interior of $D$ and \st $f_1|_D$ is transverse
to $B$. Then
\[
\iota(f, Y; z) \definedas \sum_{z'\in f_1\inv(B)\cap D} \iota(f_1, Y;
z')\text{.}
\]
If $S$ is compact and all intersections of $f$ with $Y$ are isolated
(in particular by compactness there are only finitely many), then the
intersection number of $f$ with $Y$ is defined as
\[
\iota(f, Y) \definedas \sum_{z\in f\inv(Y)} \iota(f, Y; z)\text{.}
\]
\end{defn}

The adaptation of Proposition 7.1, in \cite{MR2399678} to the
present situation.
\begin{lemma}\label{Lemma_Order_of_tangency}
Let $\tilde{u} \in \mathcal{M}(\tilde{X}, A, J, \mathcal{H}(\tilde{X}, \tilde{Y}))$. Define $u\definedas \pr_2\circ \tilde{u} : \Sigma_b\to X$.
Then for every component (\ie connected component of a desingularisation) $\Sigma^i_b$ of $\Sigma_b$, either $u(\Sigma^i_b) \subseteq Y$ or $(u|_{\Sigma^i_b})\inv(Y)$ is finite.
In the latter case,
\[
\iota(u|_{\Sigma^i_b}, Y) = [u|_{\Sigma^i_b}]\cdot [Y]\text{,}
\]
\ie the intersection number of $u|_{\Sigma^i_b}$ with $Y$ coincides
with the topological intersection number of the homology classes in
$X$ defined by $u|_{\Sigma^i_b}$ and $Y$. Furthermore, at each
intersection point $z\in (u|_{\Sigma^i_b})\inv(Y)$, $u$ is tangent to
$Y$ of some finite order $s \geq 0$ with
\[
\iota(u|_{\Sigma^i_b}, Y; z) = s + 1\text{.}
\]
In particular, each local intersection number $\iota(u|_{\Sigma^i_b},
Y; z)$ is positive.
\end{lemma}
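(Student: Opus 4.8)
The plan is to reduce everything to the genus-zero case treated in Proposition 7.1 of \cite{MR2399678} by working locally near an intersection point. Fix a component $\Sigma^i_b$ and let $u^i \definedas u|_{\Sigma^i_b}$. The first step is the dichotomy: either $u^i(\Sigma^i_b) \subseteq Y$ or $(u^i)\inv(Y)$ is finite. Since $\tilde u$ is $\hat J^{H_b}$-holomorphic and $H \in \mathcal{H}(\tilde X, \tilde Y)$, Lemma \ref{Lemma_H_X_Y} tells us $\hat Y_b \subseteq \hat X_b$ is a $\hat J^{H_b}$-complex hypersurface. So the desingularised section $\hat\iota_b^\ast \tilde u : S^i_b \to \hat X_b$ is a $(j,\hat J^{H_b})$-holomorphic map into an almost complex manifold containing $\hat Y_b$ as an almost complex (hence locally ``pseudo-analytic'') hypersurface. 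The classical Aronszajn-type unique continuation / intersection theory for pseudoholomorphic maps with almost complex hypersurfaces (as used in \cite{MR2399678}, Section 7, ultimately going back to McDuff and to Micallef--White) then applies verbatim: a connected holomorphic curve meeting a complex hypersurface either lies entirely in it or meets it in isolated points, and since $S^i_b$ is compact there are only finitely many.

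The second step is the identification of local intersection numbers with orders of tangency, which is local near a point $z \in (u^i)\inv(Y)$. Here I would choose nodal-free holomorphic coordinates on $S^i_b$ near $z$ and a $\hat J^{H_b}$-holomorphic chart on $\hat X_b$ near $\hat\iota_b(z)$ in which $\hat Y_b = \{w=0\}$ for a complex normal coordinate $w$; such charts exist because $\hat Y_b$ is a $\hat J^{H_b}$-complex submanifold. In these coordinates the normal component $w \circ \hat\iota_b^\ast \tilde u$ satisfies a perturbed Cauchy--Riemann equation of the form $\bar\partial w = A(z) w$ with a bounded zeroth-order term (this is exactly the structure exploited in the similarity-principle arguments of Section 7 of \cite{MR2399678}); by the Carleman similarity principle it has the local form $w(z) = z^s \cdot g(z)$ with $g$ continuous and $g(0) \neq 0$, where $s \geq 1$ is the vanishing order. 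This $s$ is by definition the order of tangency plus one convention used in \cite{MR2399678} — I would match conventions so that $s$ here equals ``$s+1$'' in the statement, i.e.\ tangency order $s$ corresponds to $w$ vanishing to order $s+1$. A standard winding-number / degree computation then gives $\iota(u^i, Y; z) = s+1 > 0$, and summing over the finitely many intersection points together with the homotopy invariance of the intersection number (perturb $u^i$ to be transverse to $Y$, which doesn't change the homology classes) yields $\iota(u^i, Y) = [u^i]\cdot[Y]$.

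The main obstacle, and the only place where the Hamiltonian perturbation genuinely intervenes, is verifying that near $z$ the normal component really does satisfy an equation with a zeroth-order term and no inhomogeneous term — in other words, that the hypothesis $H \in \mathcal{H}(\tilde X,\tilde Y)$ is exactly what is needed to make $\hat Y_b$ a $\hat J^{H_b}$-complex hypersurface with the curve equation behaving as in the unperturbed case. This is precisely the content of Lemma \ref{Lemma_H_X_Y} (together with Remark \ref{Remark_tilde_J_H} describing $\hat J^{H_b}$ explicitly), so in fact the ``hard part'' has already been packaged: once one knows $\hat Y_b$ is $\hat J^{H_b}$-complex, the statement is a direct transcription of Proposition 7.1 of \cite{MR2399678}, whose proof uses only the local pseudoholomorphic structure near an intersection point and makes no reference to the domain being a sphere. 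I would therefore present the proof as: invoke Lemma \ref{Lemma_H_X_Y}; note that $\hat\iota_b^\ast\tilde u$ is a $(j,\hat J^{H_b})$-holomorphic map by Lemma \ref{Lemma_Holomorphic_section_is_holomorphic_map}; and then cite Proposition 7.1 of \cite{MR2399678}, checking only that its hypotheses are met component by component.
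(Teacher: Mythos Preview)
Your proposal is correct and follows essentially the same route as the paper: use the compatibility $H\in\mathcal{H}(\tilde X,\tilde Y)$ (via Lemma~\ref{Lemma_H_X_Y}) to ensure the hypersurface is $\hat J^{H_b}$-complex, observe the section is $(j,\hat J^{H_b})$-holomorphic (Lemma~\ref{Lemma_Holomorphic_section_is_holomorphic_map}), and then apply Proposition~7.1 of \cite{MR2399678} componentwise. Your middle paragraphs unpacking the Carleman similarity principle are just the content of that proposition; the paper simply cites it directly, working with $\tilde u|_{\Sigma^i_b}$ into $(\tilde X|_{\Sigma^i_b},\tilde J^H)$ rather than the pullback to $\hat X_b$, which is the same thing.
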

\begin{proof}
$(\tilde{X}|_{\Sigma^i_b}, \tilde{J}^H)$ is a complex manifold with
$\tilde{Y}|_{\Sigma^i_b}$ as a complex submanifold by definition of
$\mathcal{H}(\tilde{X}, \tilde{Y})$. Furthermore,
$\tilde{u}|_{\Sigma^i_b} : \Sigma^i_b \to \tilde{X}|_{\Sigma^i_b}$ is
a holomorphic map. Now observe that $\tilde{u}(z) \in \tilde{Y}$ \iff
$u(z) \in Y$, and the orders of tangency coincide. Now apply
Proposition 7.1, in \cite{MR2399678} to $\tilde{u}|_{\Sigma^i_b}$.
\end{proof}

This allows for the following definition:
\begin{defn}
Let $(\ell_1, \dots, \ell_n) \in (\Z_{\geq -1})^n$ and denote $\ell'_j \definedas \min\{0,\ell_j\}$. Given any open subset $V \subseteq \tilde{X} \setminus \tilde{Y}$ and $H \in \mathcal{H}(\tilde{X}, \tilde{Y})$, define
\begin{align*}
\tilde{Y}^{\ell'_j} &\definedas \begin{cases} \tilde{X} & \ell'_j = -1 \\ \tilde{Y} & \ell'_j = 0 \end{cases}
\intertext{and note that $\mathcal{H}^V(\tilde{X}) \subseteq \mathcal{H}(\tilde{X}, \tilde{Y})$. Then}
\mathcal{M}(\tilde{X}, \tilde{Y}^{(\ell'_1, \dots, \ell'_n)}, A, J, H + \mathcal{H}^V(\tilde{X})) &\definedas \left(\ev^R\right)^{-1}\left( R_1^\ast \tilde{Y}^{\ell'_1} \oplus \cdots \oplus R_n^\ast \tilde{Y}^{\ell'_n} \right)\text{,}
\intertext{for}
\ev^R : \mathcal{M}^V(\tilde{X}, A, J, H + \mathcal{H}^V(\tilde{X})) &\to R_1^\ast \tilde{X} \oplus\cdots\oplus R_n^\ast \tilde{X}\text{.}
\end{align*}
Furthermore for any subset $B\subseteq M$,
\begin{multline*}
\mathcal{M}^V(\tilde{X}|_B, \tilde{Y}^{(\ell_1, \dots, \ell_n)}, A, J, H + \mathcal{H}^V(\tilde{X})) \definedas \\
\{ u \in \mathcal{M}^V_b(\tilde{X}, \tilde{Y}^{(\ell'_1, \dots, \ell'_n)}, A, J, H + \mathcal{H}^V(\tilde{X})) \;|\;
b \in B, \iota(u, \tilde{Y}|_{\Sigma_b}; R_j(b)\} = \ell_j\}\text{.}
\end{multline*}
\end{defn}

By the previous lemma, if $u$ is a holomorphic curve in $\tilde{X}$ \st $u$ intersects $\tilde{Y}$ at each of $\ell$ different marked points, the last $\ell$, say, $u$ is not contained completely in $\tilde{Y}$ and $[u]\cdot [Y] = \ell$, then $u$ intersects $\tilde{Y}$ transversely.
Unfortunately one cannot expect this behaviour to persevere under limits of sequences of such maps.
For example even for a fixed complex structure on the underlying curve, two of the last $\ell$ marked points could converge on the domain forming a nodal curve, built up of the original curve together with a sphere component that gets mapped to $\tilde{Y}$.
Since the restriction of $\tilde{Y}$ to every fibre $\Sigma_b$ of $\Sigma$ is trivial by definition, it makes sense to say that the sphere component is constant. In this case this map actually factors through a map from the original surface, but with the two converging marked points replaced by the point at which the sphere component is attached and which gets mapped to $\tilde{Y}$.
At this new point, the curve no longer needs to be transverse to $\tilde{Y}$, but the previous lemma states that, if the curve does not lie completely in $\tilde{Y}$, the limit map can only have tangencies of second order.
So apart from moduli spaces of curves with marked points lying on a given submanifold, a case already dealt with in Lemma \ref{Lemma_The_universal_moduli_space}, one should also construct moduli spaces of curves with tangencies to a given complex hypersurface of (at least) a given order.
The tangency of order $1$-condition is easy enough to define, if $u\in \mathcal{M}_b(\tilde{X}, A, J, H)$ with $u(R_i(b)) \in \tilde{Y}$, then $u$ is tangent to $\tilde{Y}$ at $R_i(b)$ to first order simply if $\im \left(D^\mathrm{v}u\right)_{R_i(b)} \subseteq V\tilde{Y}$.
For $J\in \mathcal{J}_\omega(X,Y)$, $V\tilde{Y}^{\perp_\omega}$ is a $J_b$-complex subspace of complex dimension $1$.
If $H\in \mathcal{H}(\tilde{X}, \tilde{Y})$, then since $\dbar^{J,H}_b u = 0$, $\pi^{V\tilde{X}}_{V\tilde{Y}^{\perp_\omega}} (D^\mathrm{v}u)_{R_i(b)}$ is a $j_b$-$J_b$-complex linear map from $V_{R_i(b)}\Sigma$ to $V_{u(R_i(b))} \tilde{Y}^{\perp_\omega}$.
Hence over the subset of elements of $\mathcal{M}(\tilde{X}, A, J, \mathcal{H}(\tilde{X}, \tilde{Y}))$ that map the $i^\text{th}$ marked point to $\tilde{Y}$ (a submanifold by Lemma \ref{Lemma_The_universal_moduli_space}), one can consider the complex line bundle with line over $u$ given by $\Hom_{(j,J)}(V_{R_i}\Sigma, V_{u(R_i)} V\tilde{Y}^{\perp_\omega})$ and the section $u \mapsto \pi^{V\tilde{X}}_{V\tilde{Y}^{\perp_\omega}} (D^\mathrm{v}u)_{R_i}$. 
In case of transversality of this section to the zero section, the moduli space of curves tangent to $\tilde{Y}$ at the $i^\text{th}$ marked point then has complex codimension one in the submanifold of those curves that map the $i^\text{th}$ marked point to $\tilde{Y}$.
Unfortunately the higher order tangency conditions do not seem to admit such an easy description as global sections of a globally defined complex vector bundle (of the ``correct'' rank) over the universal moduli space.
 \cite{MR2399678}, which allows to use the transversality result (or rather a slight variation of its proof) from \cite{MR2399678}.

\begin{construction}
Let $(\rho : S \to B, \hat{R}, \iota, \hat{\iota})$ be a desingularisation of $\Sigma$ over $B \subseteq M$ and as before denote $\hat{X} \definedas \rho^\ast \iota^\ast X = \hat{\iota}^\ast \tilde{X}$ and $\hat{Y} \definedas \rho^\ast \iota^\ast Y = \hat{\iota}^\ast \tilde{Y}$.
For $a\in B$, let $U\subseteq B$ be an open neighbourhood of $a$ \st both $X|_U$ and $Y|_U$ are trivial, and hence so are $\hat{X}|_U$ and $\hat{Y}|_U$.
Also let $\phi_a : U\times S_a \to S|_U$ be a trivialisation that preserves the marked points and nodes.
Assume that there are pairwise disjoint open neighbourhoods $D_j \subseteq S_a$ of the marked points $\hat{R}_j(a) \in S_a$, biholomorphically equivalent to the unit disk $\mathbb{D} \subseteq \C$ and disjoint from all the nodal points. These are assumed to have the property that for all $b\in U$, $\phi_{ab}|_{D_j} : S_a \supseteq D_j \to S_b$ is a biholomorphic map from $D_j$ onto a neighbourhood of $\hat{R}_j(b) \in S_b$. Let $u_0\in \mathcal{M}_a(\tilde{X}|_B, A, J, H)$ for some $H\in \mathcal{H}(\tilde{X}, \tilde{Y})$.
Fix some $i \in \{1, \dots, n\}$ and assume that $\ev^{\hat{R}}_i(u_0) \in \hat{Y}$, but that the component of $\Sigma_a$ containing $\hat{R}_i(a)$ does not get mapped completely to $\hat{Y}$ by $u_0$.
Using triviality of $X$ and $Y$ over $U$, pick a neighbourhood $W \subseteq \hat{X}$ of $\ev^{\hat{R}}_i(u_0)$ diffeomorphic to $U\times S_a\times \C^r$, where $r\definedas \dim_\C(X)$, via a diffeomorphism $\Psi$ that maps $\hat{Y}\cap W$ to $U\times S_a\times \C^{r-1}\times \{0\}$.
Also assume that this diffeomorphism covers $\phi_a$. On the right hand side then for any $H\in\mathcal{H}(\tilde{X}, \tilde{Y})$ and $b\in U$, $\{b\}\times S_a\times \C^r$ is equipped with the pullback complex structure $\overline{J}^H_b$ of $\hat{J}^H$ which turns $\{b\}\times S_a\times \C^r$ into an almost complex manifold and $\{b\}\times S_a \times \C^{r-1}\times\{0\}$ into a complex submanifold.
Remember that the topology on $\mathcal{M}_U(\tilde{X}|_B, A, J, \mathcal{H}(\tilde{X}, \tilde{Y}))$ is finer than the topology induced by that on $U\times \mathcal{B}^{k,p}_a(\hat{X}|_B, A, J, H)\times \mathcal{H}(\tilde{X}, \tilde{Y})$ (for some $k,p$ with $kp>2$) by the chart defined via $\phi_a$ from Construction \ref{Construction_Universal_Dbar}. And that the topology on $\mathcal{B}^{k,p}_a(\hat{X}|_B, A, J, H)$ in turn is finer than the $C^0$-topology.
Also, the intersection of $u_0$ with $\hat{Y}$ at $R_i(a)$ is isolated by Lemma \ref{Lemma_Order_of_tangency}.
Hence there is a neighbourhood $\mathcal{V}$ of $u_0$ in $\mathcal{M}(\tilde{X}|_B, A, J, \mathcal{H}(\tilde{X}, \tilde{Y}))$ \st $u(\phi_{ab}(D_j)) \subseteq W$ for all $u\in \mathcal{V}$, $\pi^{\mathcal{M}}_{B}(u) = b$.
With the help of the above one can now assign, for every $j = 1, \dots, n$ and to every $u \in \mathcal{V}$ with $\pi^{\mathcal{M}}_B(u) = b$ and $\pi^{\mathcal{M}}_{\mathcal{H}}(u) = H$ an ($i$ here is the standard complex structure on $D_j\cong \mathbb{D}$) $i$-$\overline{J}^H_b$-holomorphic map $D_j \to \{b\}\times \D_j\times \C^r$.
Now one is pretty much exactly in (a parametrised version of) the situation of Section 6 of \cite{MR2399678} and can follow the discussion leading up to Proposition 6.9 almost to the letter, dropping the simplicity requirement and replacing the space of perturbations of the almost complex structures by the space of Hamiltonian perturbations used in this text, esp.~in Lemma 6.6, to show the following result:
\begin{lemma}\label{Lemma_Tangency_condition}
Let $V\subseteq \tilde{X}$ be an open subset \st $V\cap \tilde{Y} = \emptyset$, let $H \in \mathcal{H}(\tilde{X}, \tilde{Y})$ and let $B\subseteq M$ be a stratum over which $\Sigma$ has a desingularisation.
Then for any $n$-tuple $(\ell_1, \dots, \ell_n) \in (\Z_{\geq -1})^n$, $\mathcal{M}^V(\tilde{X}|_B, \tilde{Y}^{(\ell_1, \dots, \ell_n)}, A, J, H + \mathcal{H}^V(\tilde{X}))$ is a Banach submanifold of $\mathcal{M}^V(\tilde{X}|_B, A, J, H + \mathcal{H}^V(\tilde{X}))$ of real codimension $2\sum_{i=1}^n (\ell_i + 1)$.
\end{lemma}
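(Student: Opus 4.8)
The plan is to follow the line of argument from Section~6 of \cite{MR2399678}, in its parametrised form and with Hamiltonian perturbations in place of perturbations of the almost complex structure, exactly as outlined in the Construction preceding the statement. The first reduction is to treat one marked point at a time. The tangency condition at $R_i$ is supported near a disk $D_i\subseteq S_a$ around $R_i(a)$, and the disks $D_1,\dots,D_n$ can be taken pairwise disjoint; moreover the condition at $R_i$ involves only the $(\ell_i+1)$-jet of the curve at $R_i$, and these jets at distinct marked points are independent data. Hence it suffices to show that each individual tangency-of-order-$\ell_i$ condition cuts out a Banach submanifold of real codimension $2(\ell_i+1)$, and that these submanifolds are mutually transverse; the latter follows because the transversality argument below can be run using Hamiltonian perturbations supported near the image of a single $D_i$ at a time, which leaves the constraints at the other marked points untouched. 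The intersection then has codimension $\sum_{i=1}^n 2(\ell_i+1)=2\sum_{i=1}^n(\ell_i+1)$, as claimed. The cases $\ell_i=-1$ (no constraint, codimension $0$) and $\ell_i=0$ (the marked point maps to $\tilde Y$, which is an isolated transverse intersection by Lemma~\ref{Lemma_Order_of_tangency}, codimension $2$) are precisely Lemma~\ref{Lemma_The_universal_moduli_space} applied to the hypersurface $\tilde Y\subseteq\tilde X$, so it remains to deal with $\ell_i\geq 1$.

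Fix $i$ with $\ell_i\geq 1$ and pass to the local model of the Construction: over a small neighbourhood $U$ of $a\in B$, using a trivialisation $\phi_a$ preserving the special points and trivialisations of $\tilde X$ and $\tilde Y$, the restriction of a curve $u$ to $D_i$ becomes, for each $b\in U$ and each $H\in\mathcal H(\tilde X,\tilde Y)$, an $i$-$\overline J^H_b$-holomorphic map $D_i\to\C^{r-1}\times\C$, with $\tilde Y$ corresponding to $\C^{r-1}\times\{0\}$. By Lemma~\ref{Lemma_Order_of_tangency} the intersection with $\tilde Y$ at $R_i$ is isolated whenever the component through $R_i$ is not mapped into $\tilde Y$, so after shrinking a neighbourhood $\mathcal V$ of a given $u_0$ one may assume every curve in $\mathcal V$ has an isolated intersection there. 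To such a curve one associates the $(\ell_i+1)$-jet at $R_i$ of the $\C$-component (the normal component) of this local holomorphic map. Since $\C^{r-1}\times\{0\}$ is $\overline J^H_b$-complex, this normal component is, modulo terms vanishing to order $\geq\ell_i+1$, a genuine holomorphic function of one complex variable, so this jet is well defined and depends smoothly on $u$; it therefore defines a smooth section $s_i$ of a complex vector bundle $E_i$ of rank $\ell_i+1$ over $\mathcal V$, and $u$ is tangent to $\tilde Y$ at $R_i$ of order $\geq\ell_i$ if and only if $s_i(u)=0$. The locus of tangency order \emph{exactly} $\ell_i$ is the complement in $s_i^{-1}(0)$ of the zero set of the analogous order-$(\ell_i+1)$ jet, hence an open subset of $s_i^{-1}(0)$, so it suffices to show $s_i^{-1}(0)$ is a submanifold of codimension $2(\ell_i+1)$.

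The core of the proof is to show that $s_i$ is transverse to the zero section of $E_i$; the implicit function theorem then yields the Banach submanifold of real codimension $\mathrm{rank}_\R E_i=2(\ell_i+1)$, and these local pieces glue. For transversality I would adapt the surjectivity argument of \cite{MR2399678}, Lemma~6.6, and the discussion leading to Proposition~6.9: because the intersection of $u$ with $\tilde Y$ at $R_i$ is isolated, there is an annulus $A\subseteq D_i$ around $R_i(a)$ whose image under $u$ avoids $\tilde Y$, and since $u$ meets $V$ on the relevant component one can arrange this image to lie in $V\setminus\tilde Y$, so Hamiltonian perturbations from $\mathcal H^V(\tilde X)\subseteq\mathcal H(\tilde X,\tilde Y)$ supported there are available. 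One then shows that the linearisation of the universal Cauchy--Riemann operator $\dbar^{J,\mathcal H}$ together with the derivative of $s_i$ is surjective at $u$: moving $u$ through the tangent space to the universal moduli space (already cut out transversally by the $V$-version of Lemma~\ref{Lemma_The_universal_moduli_space}, using precisely such annulus-supported perturbations) one can realise any prescribed normal $(\ell_i+1)$-jet modulo the image of the linearised operator, exactly as in \cite{MR2399678}. The parametrisation over $B$ and over the perturbation space $H+\mathcal H^V(\tilde X)$ only enlarges the domain of the linearisation and so cannot obstruct surjectivity, and dropping the simplicity hypothesis of \cite{MR2399678} is harmless since the whole argument is local near $R_i$.

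The main obstacle I expect is the transversality step, specifically checking that the local surjectivity argument of \cite{MR2399678}, Lemma~6.6, survives the two changes made here: replacing perturbations of $J$ by Hamiltonian perturbations, and retaining the $Y$-compatibility constraint $H\in\mathcal H(\tilde X,\tilde Y)$. One must verify that the perturbations supported in the annulus $A$ away from $\tilde Y$ genuinely lie in $\mathcal H(\tilde X,\tilde Y)\cap\mathcal H^V(\tilde X)$ and that they span enough directions to kill the relevant cokernel of the linearised operator at the marked point --- this is where the compatibility analysis of Section~\ref{Section_Hypersurfaces} and the general transversality input of Lemma~\ref{Lemma_Main_transversality_result} (in the form used for Lemma~\ref{Lemma_The_universal_moduli_space}) are needed. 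Everything else is bookkeeping: the jet bundle $E_i$ has the right rank, the tangency locus is its zero set, and the codimensions add.
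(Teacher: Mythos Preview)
Your proposal is correct and follows exactly the route the paper takes: the paper's own argument is nothing more than the Construction preceding the lemma together with the instruction to ``follow the discussion leading up to Proposition~6.9 [of \cite{MR2399678}] almost to the letter, dropping the simplicity requirement and replacing the space of perturbations of the almost complex structures by the space of Hamiltonian perturbations used in this text, esp.~in Lemma~6.6.'' You have simply unpacked that instruction.

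One small imprecision is worth flagging. You write that the annulus $A\subseteq D_i$ around $R_i(a)$ can be arranged to have image in $V\setminus\tilde Y$; but $V$ is a \emph{fixed} open set disjoint from $\tilde Y$, and there is no reason the image of a small annulus around $R_i$ (which maps near $\tilde Y$) lies in $V$. What actually makes the transversality go through is that the component through $R_i$ hits $V$ \emph{somewhere}, so $\mathcal{H}^V$-perturbations act nontrivially on $u^{-1}(V)$; a cokernel element $(\eta,\lambda)$ then satisfies the formal adjoint equation away from $R_i$ (from varying $\xi$ with support off $R_i$), vanishes on $u^{-1}(V)$ (from varying $h$), and hence vanishes on the whole component by unique continuation --- this is precisely the mechanism of Lemma~\ref{Lemma_Main_transversality_result}, which you correctly invoke. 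Once $\eta=0$, varying $\xi$ with arbitrary jet at $R_i$ kills $\lambda$. So the argument does not need perturbations supported near $R_i$; it only needs them supported somewhere on the component, and the rest is unique continuation.
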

\end{construction}

\section{Definition of the pseudocycle and outline of the main theorems}\label{Section_Definition_and_Outline}

In this final part, it is made precise in which sense the map (\ref{Equation_Fundamental_cycle}) from the introduction defines a homology class, after suitable modifications.
I.\,e.~Theorem \ref{Theorem_Main_Theorems_Summary} is given a precise formulation in the form of Theorems \ref{Theorem_Main_Theorem_1} and \ref{Theorem_Main_Theorem_2} and the proofs of these theorems are sketched. \\
To do so, the notion of a pseudocycle from \cite{MR2045629}, Section 6.5, and more generally that of rational pseudocycles and rational cobordism from \cite{MR2399678} will be used. \\
Given the following data:
\begin{enumerate}\label{Data_Pseudocycle}
  \item A closed symplectic manifold $(X, \omega)$ with integer symplectic form, $[\omega] \in H^2(X, \Z)$.
  \item $0 \neq A \in H_2(X;\Z)$ \st $\omega(A) > 0$, $E\definedas \omega(A) + 1$.
  \item\label{Data_Pseudocycle_3} A regular marked nodal family $(\pi : \Sigma \to M, R_\ast)$ of type $(g,n)$, hence Euler characteristic $\chi = 2(1-g)$, and consequently regular marked nodal families $(\pi^\ell : \Sigma^\ell \to M^\ell, R^\ell_\ast, T^\ell_\ast)$ for all $\ell \geq 0$ of Euler characteristic $\chi$ with $n + \ell$ marked points $R^\ell_1, \dots, R^\ell_n$, $T^\ell_1, \dots, T^\ell_\ell$, as in Lemma \ref{Lemma_Forget_marked_point} and Proposition \ref{Proposition_Sequence_of_branched_coverings}:
\[
\xymatrix{
\cdots\ar[r]^-{\hat{\pi}^{\ell+1}} & \Sigma^{\ell+1} \ar[r]^-{\hat{\pi}^{\ell}} \ar[d]^-{\pi^{\ell+1}} & \Sigma^\ell \ar[r]^-{\hat{\pi}^{\ell-1}} \ar[d]^-{\pi^{\ell}} & \Sigma^{\ell-1} \ar[r]^-{\hat{\pi}^{\ell-2}} \ar[d]^-{\pi^{\ell-1}} & \cdots\ar[r]^-{\hat{\pi}^1} & \Sigma^1\ar[r]^-{\hat{\pi}^{0}} \ar[d]^-{\pi^{1}} & \Sigma^0 \ar[d]^-{\pi^0} \ar@{=}[r] & \Sigma \ar[d]^-{\pi} \\
\cdots\ar[r]^-{\pi^{\ell+1}} & M^{\ell+1} \ar[r]^-{\pi^{\ell}} \ar@{=}[ru] & M^\ell \ar[r]^-{\pi^{\ell-1}} \ar@{=}[ru] & M^{\ell-1} \ar[r]^-{\pi^{\ell-2}} & \cdots \ar[r]^-{\pi^1} & M^1 \ar[r]^-{\pi^{0}} \ar@{=}[ru] & M^0 \ar@{=}[r] & M
}
\]
\begin{align*}
&\xymatrix{
\Sigma^\ell \ar[r]^-{\hat{\pi}^{\ell-1}} \ar[d]^-{\pi^\ell} & \Sigma^{\ell-1} \ar[d]_-{\pi^{\ell-1}} \\
M^\ell \ar[r]^-{\pi^{\ell-1}} \ar@/^1pc/[u]^-{R^\ell_j} & M^{\ell-1} \ar@/_1pc/[u]_-{R^{\ell-1}_j\quad{\displaystyle\forall\, j=1,\dots, n}}
}
\\
&\xymatrix{
\Sigma^\ell \ar[r]^-{\hat{\pi}^\ell} \ar[d]^-{\pi^\ell} & \Sigma^{\ell-1} \ar[d]_-{\pi^{\ell-1}} \\
M^\ell \ar[r]^-{\pi^{\ell-1}} \ar@/^1pc/[u]^-{T^\ell_j} & M^{\ell-1} \ar@/_1pc/[u]_-{T^{\ell-1}_j\quad{\displaystyle\forall\, j=1,\dots, \ell-1}}
}
\end{align*}
where $M$ is assumed to be closed, and hence so are the $M^\ell$ for all $\ell\geq 0$.
Furthermore, for every $b\in M^\ell$, putting $b' \definedas \pi^{\ell-1}(b) \in M^{\ell-1}$, the map
\begin{multline*}
\hat{\pi}^{\ell-1}_b : (\Sigma^\ell_b, R^\ell_1(b), \dots, R^\ell_n(b), T^\ell_1(b), \dots, T^\ell_{\ell-1}(b)) \to  \\
\to (\Sigma^{\ell-1}_{b'}, R^{\ell-1}_1(b'), \dots, R^{\ell-1}_n(b'), T^{\ell-1}_1(b'), \dots, T^{\ell-1}_{\ell-1}(b'))
\end{multline*}
is stabilising, \ie biholomorphic on every stable component of
\[
(\Sigma^\ell_b, R^\ell_1(b), \dots, R^\ell_n(b), T^\ell_1(b), \dots, T^\ell_{\ell-1}(b))
\]
and constant on every unstable component.
For $\ell > k$ denote the compositions
\begin{align*}
\hat{\pi}^\ell_k \definedas \hat{\pi}^{k}\circ \hat{\pi}^{k+1}\circ \cdots\circ \hat{\pi}^{\ell-1} : \Sigma^\ell &\to \Sigma^{k}
\intertext{and}
\pi^\ell_k \definedas \pi^k\circ \pi^{k+1}\circ \cdots\circ \pi^{\ell-1} : M^\ell &\to M^{k}\text{.}
\end{align*}
By the same argument as in Section \ref{Section_III.1}, assume that $M$ and hence all the $M^\ell$ are connected.
  \item Metrics $h^\ell$ on the $\Sigma^\ell$, restricting to a hermitian metric on every $\Sigma^\ell_b$, $b\in M^\ell$.
  \item Unless stated otherwise, $Y \subseteq X$ is a symplectic hypersurface, $J \in \mathcal{J}_\omega(X, Y)$ is an arbitrary $\omega$- and $Y$-compatible almost complex structure and $\ell \in \N_0$ is arbitrary as well.
\end{enumerate}

Let, as before, $\tilde{X} \definedas \Sigma\times X$, $\tilde{Y} \definedas \Sigma \times Y$, $\tilde{X}^\ell \definedas (\hat{\pi}^\ell_0)^\ast \tilde{X} \cong \Sigma^\ell\times X$, $\tilde{Y}^\ell \definedas (\hat{\pi}^\ell_0)^\ast \tilde{Y} \cong \Sigma^\ell\times Y$.
\begin{defn}\label{Definition_Fundamental_cycle}
Let $\overset{\circ}{M}{}^\ell$ be the top stratum of $M^\ell$ in the stratification by signature, corresponding to the smooth surfaces. \\
For $H\in \mathcal{H}(\tilde{X}^\ell)$, define
\begin{align*}
\overset{\circ}{\mathcal{M}}(\tilde{X}^\ell, \tilde{Y}^\ell, A, J, H) \definedas \{  u
\in \mathcal{M}(\tilde{X}^\ell|_{\overset{\circ}{M}{}^\ell}, A, J, H) \;|\; & \im(u\circ T^{\ell}_j) \subseteq \tilde{Y}^\ell,\; j=1,\dots, \ell, \\
& \im(u) \cap \tilde{X}^\ell \setminus \tilde{Y}^\ell \neq \emptyset\}
\end{align*}
and as before, for any (affine) subspace $\mathcal{K} \subseteq \mathcal{H}(\tilde{X}^\ell)$,
\[
\overset{\circ}{\mathcal{M}}(\tilde{X}^\ell, \tilde{Y}^\ell, A, J, \mathcal{K}) \definedas \bigcup_{H\in \mathcal{K}} \overset{\circ}{\mathcal{M}}(\tilde{X}^\ell, \tilde{Y}^\ell, A, J, H)\text{.}
\]
Also denote by
\[
\cl{\overset{\circ}{\mathcal{M}}(\tilde{X}^\ell, \tilde{Y}^\ell, A, J, H)}
\]
the closure in $\mathcal{M}(\tilde{X}^\ell, A, J, H)$ of $\overset{\circ}{\mathcal{M}}(\tilde{X}^\ell, \tilde{Y}^\ell, A, J, H)$. Finally, let
\[
\mathrm{gw}^\ell_{\Sigma}(X, Y, A, J, H) : \overset{\circ}{\mathcal{M}}(\tilde{X}^\ell, \tilde{Y}^\ell, A, J, H) \to M\times X^n
\]
be defined as the composition
\[
\overset{\circ}{\mathcal{M}}(\tilde{X}^\ell, \tilde{Y}^\ell, A, J, H) \xrightarrow{\ev^{R^\ell}} \bigoplus_{i=1}^n\tilde{X}^\ell \cong \Sigma^\ell\times X^n \xrightarrow{(\pi^\ell_0 \circ \pi^\ell)\times \id} M\times X^n\text{.}
\]
\end{defn}

\subsection{Statements of the main results}

For the formulation of the first main result of this article remember the spaces $\mathcal{J}_{\omega, \mathrm{ni}}(X,Y)$, $\mathcal{H}_{\mathrm{ni}}(\tilde{X}^\ell, \tilde{Y}^\ell, J)$, $\mathcal{H}^0_{\mathrm{ni}}(\tilde{X}^\ell, \tilde{Y}^\ell, J)$ and $\mathcal{H}^{00}(\tilde{X}^\ell, \tilde{Y}^\ell)$ from Definitions \ref{Definition_J_ni} and \ref{Definition_Y_compatible_Hamiltonian_perturbation} and that one can consider $\mathcal{H}(\tilde{Y})$ as a subset of $\mathcal{H}_{\mathrm{ni}}(\tilde{X}^\ell, \tilde{Y}^\ell, J)$ for any $J \in \mathcal{J}_{\omega, \mathrm{ni}}(X, Y)$ and $\ell \geq 0$ by Lemma \ref{Lemma_Enough_ni_ham_perturbations} and pullback via $\hat{\pi}^\ell_0$.

Also remember that a Donaldson pair $(Y, J_0)$ of degree $D$ is a pair consisting of an almost complex structure $J_0\in \mathcal{J}_\omega(X)$ and a hypersurface $Y\subseteq X$. $Y$ is assumed to be an, in the sense of \cite{MR2399678}, Section 8, approximately $J_0$-holomorphic, in particular symplectic, hypersurface with $\operatorname{PD}(Y) = D[\omega]$.
The necessary existence and uniqueness results for Donaldson pairs can be found in Theorem 8.1 from \cite{MR2399678} and the references quoted there.

The first main result deals with the existence of a well-defined Gromov-Witten pseudocycle depending on a choice of $\omega$-compatible almost complex structure and Donaldson hypersurface, after a generic choice of Hamiltonian perturbation, and independence of the choice of perturbation.
\clearpage
\begin{theorem}\label{Theorem_Main_Theorem_1}
Let $(X, \omega)$, $J_0$, $A$, $E$ be as above.
There exists an integer $D^\ast = D^\ast(X, \omega, J_0)$ \st for every $D\geq D^\ast$, there exists a symplectic hypersurface $Y \subseteq X$, making $(Y, J_0)$ a Donaldson pair of degree $D$ \st the following hold:
\begin{enumerate}[a)]
  \item\label{Theorem_Main_Theorem_1a} Let $\ell \definedas D\omega(A)$. Then there exist:
\begin{itemize}
  \item A nonempty subset $\mathcal{J}_{\omega, \mathrm{ni}}(X,Y, E) \subseteq \mathcal{J}_{\omega, \mathrm{ni}}(X,Y)$;
  \item given $J \in \mathcal{J}_{\omega, \mathrm{ni}}(X,Y, E)$, a generic subset $\mathcal{H}_{\mathrm{reg}}(\tilde{Y}, J) \subseteq \mathcal{H}(\tilde{Y})$;
  \item for every $J \in \mathcal{J}_{\omega, \mathrm{ni}}(X,Y, E)$ and $H^Y\in \mathcal{H}_{\mathrm{reg}}(\tilde{Y}, J)$, a generic subset \\
$\mathcal{H}^0_{\mathrm{reg}}(\tilde{X}^\ell, \tilde{Y}^\ell, J, H^Y) \subseteq \mathcal{H}^0_{\mathrm{ni}}(\tilde{X}^\ell, \tilde{Y}^\ell, J)$;
  \item for every $J \in \mathcal{J}_{\omega, \mathrm{ni}}(X,Y, E)$, $H^Y \in \mathcal{H}_{\mathrm{reg}}(\tilde{Y}, J)$ and $H^0 \in \mathcal{H}^0_{\mathrm{reg}}(\tilde{X}^\ell, \tilde{Y}^\ell, J, H^Y)$ a generic subset $\mathcal{H}^{00}_{\mathrm{reg}}(\tilde{X}^\ell, \tilde{Y}^\ell, J, H^Y + H^0) \subset \mathcal{H}^{00}(\tilde{X}^\ell, \tilde{Y}^\ell)$,
\end{itemize}
\st for $H^Y \in \mathcal{H}_{\mathrm{reg}}(\tilde{Y}, J)$, $H^0 \in \mathcal{H}^{0}_{\mathrm{reg}}(\tilde{X}^\ell, \tilde{Y}^\ell, J, H^Y)$ and $H^{00} \in \mathcal{H}^{00}_{\mathrm{reg}}(\tilde{X}^\ell, \tilde{Y}^\ell, J, H^Y + H^0)$, $H \definedas H^Y + H^0 + H^{00}$,
\[
u \in \mathcal{M}(\tilde{X}^{\ell}|_{\overset{\circ}{M}{}^{\ell}}, A, J, H) \quad \Rightarrow\quad \im(u) \cap \tilde{X}^{\ell} \setminus \tilde{Y}^{\ell} \neq \emptyset
\]
and
\[
\mathrm{gw}_{\Sigma}^\ell(X, Y, A, J, H)
\]
defines a pseudocycle of dimension
\[
\dim_\C(X)\chi + 2c_1(A) + \dim_\R(M)
\]
in $M\times X^n$ with image in $\overset{\circ}{M}\times X^n$. \\
Furthermore, if $\{B_i\}_{i\in\N}$ is a countable family of locally closed submanifolds of $\overset{\circ}{M}$, given $J, H^Y, H^0$ as above, then by replacing $\mathcal{H}^{00}_{\mathrm{reg}}(\tilde{X}^\ell, \tilde{Y}^\ell, J, H^Y + H^0)$ by another generic subset, one can assume that
\[
\mathrm{gw}^\ell_\Sigma(X, Y, A, J, H) : \overset{\circ}{\mathcal{M}}(\tilde{X}^\ell, \tilde{Y}^\ell, A, J, H) \to \overset{\circ}{M}\times X^n \subseteq M\times X^n
\]
is transverse to all the submanifolds $B_i\times X^n \subseteq \overset{\circ}{M}\times X^n$ in the sense that the codimension of the preimage of each $B_i\times X^n$ under this map in $\overset{\circ}{\mathcal{M}}(\tilde{X}^\ell, \tilde{Y}^\ell, A, J, H)$ is equal to the codimension of $B_i\times X^n$ in $\overset{\circ}{M}\times X^n$.
  \item\label{Theorem_Main_Theorem_1b} For $Y$ as above there exists a nonempty subset
\[
\mathcal{J}_{\omega, \mathrm{ni}}(X,Y, J_0, E) \subseteq \mathcal{J}_{\omega, \mathrm{ni}}(X,Y, E)\text{,}
\]
for $D$ large enough and by choice of $Y$ containing elements arbitrarily $C^0$-close to $J_0$, \st any two elements in $\mathcal{J}_{\omega, \mathrm{ni}}(X,Y, J_0, E)$ can be connected by a path in $\mathcal{J}_{\omega, \mathrm{ni}}(X,Y, E)$. \\
Furthermore, let $J_t \in \mathcal{J}_{\omega, \mathrm{ni}}(X, Y, E)$, $t\in \R$, be a family of almost complex structures \st $J_t = J_0$ for $t\leq 0$ as well as $J_t = J_1$ for $t\geq 1$.
Then, for $i =1,2$ and for any choice of $H^Y_i \in \mathcal{H}_{\mathrm{reg}}(\tilde{Y}, J_i)$, $H_i^0 \in \mathcal{H}_{\mathrm{reg}}^0(\tilde{X}^\ell, \tilde{Y}^\ell, J_i, H_i^Y)$ and $H_i^{00} \in \mathcal{H}_{\mathrm{reg}}^{00}(\tilde{X}^\ell, \tilde{Y}^\ell, J_i, H^Y_i + H^0_i)$, and setting $H_i \definedas H^Y_i + H^0_i + H^{00}_i$, the pseudocycles defined by $\mathrm{gw}_{\Sigma}^\ell(X, Y, A, J_i, H_i)$ are cobordant.
\end{enumerate}
In particular, given $(Y, J_0)$ as above, there is a well-defined cobordism class
\[
\mathrm{gw}^\ell_{\Sigma}(X, Y, A, J_0)
\] of pseudocycles in $M\times X^n$, independent of the choice of $J \in \mathcal{J}_{\omega, \mathrm{ni}}(X, Y, J_0, E)$ and Hamiltonian perturbation.
\end{theorem}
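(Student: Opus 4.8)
The plan is to obtain the final assertion as a purely formal consequence of parts \ref{Theorem_Main_Theorem_1a}) and \ref{Theorem_Main_Theorem_1b}), using only that cobordism of $d$-dimensional pseudocycles in a fixed manifold is an equivalence relation (\cite{MR2045629}, Section 6.5, and \cite{MR2399678}, Section 1). Reflexivity and symmetry being trivial, the one point to keep in mind is transitivity: given pseudocycle cobordisms $F_i\colon W_i\to M\times X^n$ between $f_{i-1}$ and $f_i$ (for $i=1,2$), the glued manifold-with-boundary $W\definedas W_1\cup_{f_1}W_2$ with the induced map $F$ is again a cobordism between $f_0$ and $f_2$, since its omega-limit set is contained in $\Omega_{F_1}\cup\Omega_{F_2}$, hence still of codimension at least two in $W$. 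Fix $(Y,J_0)$ as furnished by the theorem and $\ell\definedas D\omega(A)$; by part \ref{Theorem_Main_Theorem_1a}) every $J\in\mathcal{J}_{\omega,\mathrm{ni}}(X,Y,E)$ together with a generic triple $H=H^Y+H^0+H^{00}$ (meaning $H^Y\in\mathcal{H}_{\mathrm{reg}}(\tilde{Y},J)$, then $H^0\in\mathcal{H}^0_{\mathrm{reg}}(\tilde{X}^\ell,\tilde{Y}^\ell,J,H^Y)$, then $H^{00}\in\mathcal{H}^{00}_{\mathrm{reg}}(\tilde{X}^\ell,\tilde{Y}^\ell,J,H^Y+H^0)$) yields a pseudocycle $\mathrm{gw}^\ell_\Sigma(X,Y,A,J,H)$ in $M\times X^n$ of dimension $\dim_\C(X)\chi+2c_1(A)+\dim_\R(M)$ with image in $\overset{\circ}{M}\times X^n$; all the generic sets involved are comeager in separable (hence Baire) spaces, so in particular nonempty.

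Next I would fix $J\in\mathcal{J}_{\omega,\mathrm{ni}}(X,Y,J_0,E)$ and show that the cobordism class of $\mathrm{gw}^\ell_\Sigma(X,Y,A,J,H)$ does not depend on the admissible triple $H$. This is the instance of part \ref{Theorem_Main_Theorem_1b}) obtained by taking the constant family $J_t\equiv J$: since $J\in\mathcal{J}_{\omega,\mathrm{ni}}(X,Y,E)$ this family satisfies the hypotheses of part \ref{Theorem_Main_Theorem_1b}) (trivially $J_t=J$ for $t\le 0$ and for $t\ge 1$), and that part then provides, for any two admissible triples $H_1,H_2$, a pseudocycle cobordism between $\mathrm{gw}^\ell_\Sigma(X,Y,A,J,H_1)$ and $\mathrm{gw}^\ell_\Sigma(X,Y,A,J,H_2)$. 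Write $\mathrm{gw}^\ell_\Sigma(X,Y,A,J)$ for the resulting common cobordism class.

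Then I would let $J_1,J_2\in\mathcal{J}_{\omega,\mathrm{ni}}(X,Y,J_0,E)$ be arbitrary. By part \ref{Theorem_Main_Theorem_1b}) they are joined by a path in $\mathcal{J}_{\omega,\mathrm{ni}}(X,Y,E)$, which after reparametrising to be constant near its endpoints becomes a family $(J_t)_{t\in\R}$ of the type appearing in part \ref{Theorem_Main_Theorem_1b}); that part then yields a pseudocycle cobordism between $\mathrm{gw}^\ell_\Sigma(X,Y,A,J_1,H_1)$ and $\mathrm{gw}^\ell_\Sigma(X,Y,A,J_2,H_2)$ for any admissible endpoint triples $H_1,H_2$. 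Combining this with the previous step and transitivity of cobordism gives $\mathrm{gw}^\ell_\Sigma(X,Y,A,J_1)=\mathrm{gw}^\ell_\Sigma(X,Y,A,J_2)$; one defines $\mathrm{gw}^\ell_\Sigma(X,Y,A,J_0)$ to be this common class, and by construction it depends only on the Donaldson pair $(Y,J_0)$ (and the ambient data fixed in item \ref{Data_Pseudocycle_3}), neither on $J\in\mathcal{J}_{\omega,\mathrm{ni}}(X,Y,J_0,E)$ nor on the chosen Hamiltonian perturbations.

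The main obstacle is not in this deduction but in part \ref{Theorem_Main_Theorem_1b}) itself, which supplies the cobordisms: one must set up the parametrised moduli space $\bigcup_{t\in\R}\overset{\circ}{\mathcal{M}}(\tilde{X}^\ell,\tilde{Y}^\ell,A,J_t,H_t)$ over the path, show by a parametrised Sard--Smale argument that for a generic family $(H_t)$ it is a manifold with boundary of one dimension higher, and control its ends by the SFT-type compactness result of \cite{MR1954264} exactly as in the non-parametrised situation behind part \ref{Theorem_Main_Theorem_1a}) --- in particular re-running the positivity-of-intersection and order-of-tangency estimates of Section \ref{Section_Hypersurfaces} in families. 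Granting part \ref{Theorem_Main_Theorem_1b}), the only genuine check left in the argument above is the concatenation of pseudocycle cobordisms, i.e.\ that the glued object is again a cobordism of the correct dimension with omega-limit set of codimension $\geq 2$, which is the standard gluing-of-cobordisms argument of \cite{MR2045629}, Section 6.5.
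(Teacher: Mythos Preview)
Your deduction of the concluding ``In particular'' clause from parts \ref{Theorem_Main_Theorem_1a}) and \ref{Theorem_Main_Theorem_1b}) is correct and matches how the paper treats it --- as an immediate formal consequence requiring no separate argument. But the statement you are asked to prove is the full theorem, and its substance lies entirely in parts \ref{Theorem_Main_Theorem_1a}) and \ref{Theorem_Main_Theorem_1b}), which you simply assume. The paper's proof (Subsection \ref{Subsubsection_Proof_Main_Theorem_1}) is a five-step argument devoted precisely to establishing these: Step~1 constructs $\mathcal{J}_{\omega,\mathrm{ni}}(X,Y,E)$ and $\mathcal{J}_{\omega,\mathrm{ni}}(X,Y,J_0,E)$ and proves compactness of $\cl\overset{\circ}{\mathcal{M}}(\tilde{X}^\ell,\tilde{Y}^\ell,A,J,H)$ (Lemmas \ref{Lemma_Nice_J_exist} and \ref{Lemma_Main_compactness_result}); Step~2 applies Sard--Smale to produce the generic perturbation sets and the manifold structure of the right dimension, including the transversality to the $B_i\times X^n$; Step~3 supplies the orientation; Step~4 --- the heart of the matter, occupying Subsections \ref{Subsection_Description_of_the_closure}--\ref{Subsection_Putting_it_all_together} and summarised in Theorem \ref{Theorem_Step_4} --- shows the $\Omega$-limit set is covered by manifolds of codimension $\geq 2$, combining the SFT-type compactness of Lemma \ref{Lemma_SFT_Compactness}, the reduction to vanishing homology of Lemma \ref{Lemma_Reduction_to_vanishing_A}, and the meromorphic-section moduli spaces of Lemma \ref{Lemma_H_0_reg}; Step~5 proves \ref{Theorem_Main_Theorem_1b}) by rerunning Steps~1--4 parametrically over $M^\ell\times\R$.

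Your final paragraph gestures at Step~5 and alludes to some of the ingredients of Step~4, but supplies none of the actual work. In particular you do not construct any of the sets $\mathcal{H}_{\mathrm{reg}}(\tilde{Y},J)$, $\mathcal{H}^0_{\mathrm{reg}}$, $\mathcal{H}^{00}_{\mathrm{reg}}$; you do not verify the first displayed implication in \ref{Theorem_Main_Theorem_1a}) (that no smooth curve lands entirely in $\tilde{Y}^\ell$, which is what forces $A^Y=0$ and requires Lemma \ref{Lemma_Reduction_to_vanishing_A}); and you do not address the boundary-covering argument at all. What you have written is the easy epilogue to the theorem; the theorem proper remains unproved.
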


The second main result deals with independence of the Gromov-Witten pseudocycle of the choice of Donaldson pair $(Y, J_0)$.
\begin{theorem}\label{Theorem_Main_Theorem_2}
Let $(X, \omega)$, $A \in H_2(X; \Z)$ be as above. For $i = 0,1$ let $(Y_i, J_i)$ be Donaldson pairs of degrees $D_i \geq D^\ast$, where $D^\ast$ is as in the previous theorem. Let $\ell_i \definedas D_i\omega(A)$. Then the rational equivalence classes of pseudocycles defined by
\[
\frac{1}{\ell_i!} \mathrm{gw}^{\ell_i}_\Sigma(X, Y_i, A, J_i)
\]
coincide. Hence there is a well-defined rational cobordism class
\[
\mathrm{gw}_\Sigma(X, A)
\]
of rational pseudocycles in $M \times X^n$ with image in $\overset{\circ}{M}\times X^n$, independent of any choices.
\end{theorem}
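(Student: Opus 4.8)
The plan is to deduce Theorem \ref{Theorem_Main_Theorem_2} from Theorem \ref{Theorem_Main_Theorem_1} through a chain of rational cobordisms, by reducing the comparison of $\frac{1}{\ell_0!}\mathrm{gw}^{\ell_0}_\Sigma(X, Y_0, A, J_0)$ and $\frac{1}{\ell_1!}\mathrm{gw}^{\ell_1}_\Sigma(X, Y_1, A, J_1)$ to two elementary moves: comparing two Donaldson pairs of the \emph{same} degree, and comparing a Donaldson pair of degree $D$ with one of degree $kD$. Since $D_0$ and $D_1$ both divide a common multiple $\widehat D$ which can be taken as large as one wishes (in particular $\geq D^\ast$ and in the range where the uniqueness results of Theorem 8.1 in \cite{MR2399678} apply), one first applies the second move to pass from each $(Y_i, J_i)$ to a Donaldson pair of degree $\widehat D$, and then the first move to connect the two resulting pairs; this proves the asserted rational cobordism. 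Independence of the resulting rational cobordism class from the Donaldson pair, the $\omega$- and $Y$-compatible almost complex structure, and the Hamiltonian perturbation then follows, and its image lies in $\overset{\circ}{M}\times X^n$ because each representative produced by Theorem \ref{Theorem_Main_Theorem_1} does.

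\textbf{Equal degree.} Fix a large $D \geq D^\ast$ and Donaldson pairs $(Y_0, J_0)$, $(Y_1, J_1)$ of degree $D$. By the asymptotic uniqueness of Donaldson hypersurfaces (Theorem 8.1 in \cite{MR2399678} and the references there) one may, for $D$ large, join them by a smooth family $(Y_t, J_t)_{t\in[0,1]}$ of Donaldson pairs of degree $D$, constant near $t=0$ and near $t=1$. I would then run a one-parameter version of the construction behind Theorem \ref{Theorem_Main_Theorem_1}: work over the family $\Sigma\times[0,1]\to M\times[0,1]$, use Lemma \ref{Lemma_J_ni_nonempty_connected} and Lemma \ref{Lemma_Enough_ni_ham_perturbations} to choose $J_t$- and $\tilde Y^\ell_t$-compatible normally integrable Hamiltonian perturbations $H_t$ depending smoothly on $t$ and agreeing at the two ends with the generic perturbations of Theorem \ref{Theorem_Main_Theorem_1}, and invoke the transversality statements of Lemmas \ref{Lemma_Main_transversality_result}, \ref{Lemma_The_universal_moduli_space} and \ref{Lemma_Tangency_condition} relative to the two ends, so that the parametrised space $\bigcup_{t\in[0,1]}\overset{\circ}{\mathcal M}(\tilde X^\ell, \tilde Y^\ell_t, A, J_t, H_t)$ becomes a manifold whose boundary is the disjoint union of the two pseudocycle domains. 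Its Gromov boundary beyond these two honest endpoint strata is covered by manifolds of codimension $\geq 2$ by the same SFT-type analysis from \cite{MR1954264} used for Theorem \ref{Theorem_Main_Theorem_1}, now carried out for the family; hence $\frac{1}{\ell!}\mathrm{gw}^\ell_\Sigma(X, Y_0, A, J_0)$ and $\frac{1}{\ell!}\mathrm{gw}^\ell_\Sigma(X, Y_1, A, J_1)$ are rationally cobordant.

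\textbf{From degree $D$ to degree $kD$.} Let $(Y, J_0)$ be a Donaldson pair of degree $D$ with $Y = s^{-1}(0)$ for an approximately holomorphic section $s$ of the relevant line bundle, and $\ell = D\omega(A)$. A sufficiently small approximately holomorphic perturbation $s'$ of $s^{\otimes k}$ has transverse zero set a Donaldson hypersurface $Y' \definedas (s')^{-1}(0)$ of degree $kD$ lying in an arbitrarily thin tubular neighbourhood of $Y$ and meeting every disk transverse to $Y$ in exactly $k$ points clustered near $Y$; choose $J'\in\mathcal J_{\omega,\mathrm{ni}}(X,Y')$ close to $J_0$ and Hamiltonian perturbations $H'$ adapted to $Y'$. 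By positivity of intersection (Lemma \ref{Lemma_Order_of_tangency}), for generic data a curve counted by $\mathrm{gw}^{k\ell}_\Sigma(X, Y', A, J', H')$ meets $Y'$ transversally in $k\ell$ points organised into $\ell$ clusters of size $k$, so that its underlying curve $\Sigma_b\to X$ is precisely one of those counted by $\mathrm{gw}^{\ell}_\Sigma(X, Y, A, J, H)$ for a nearby pair $(Y, J)$, $H$. Hence the weighted pseudocycles $\frac{1}{(k\ell)!}\mathrm{gw}^{k\ell}_\Sigma(X,Y',A,J',H')$ and $\frac{1}{\ell!}\mathrm{gw}^{\ell}_\Sigma(X,Y,A,J,H)$ each attach weight $1$ to every underlying curve, and since their value at such a curve is independent of the ordering of the marked points lying on $Y'$ resp.\ on $Y$, they are rationally cobordant, the cobordism being supplied by the family interpolating $s'$ from $s^{\otimes k}$ together with the accompanying clustering/declustering of the extra marked points.

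\textbf{Main difficulty.} The combinatorics of the factors $\frac{1}{(k\ell)!}$ versus $\frac{1}{\ell!}$ is routine; the genuine work is the \emph{parametrised} compactness and boundary analysis. One must redo the content of Section \ref{Section_Compactification} together with the SFT-type description of the boundary from \cite{MR1954264} --- in particular the statement that every component mapped into the moving hypersurface $\tilde Y^\ell_t$ represents homology class $0$ and comes with a nonvanishing meromorphic section of its normal bundle whose matching conditions at the nodes keep the total order of tangency equal to $\ell$ --- for a one-parameter family of almost complex structures, Donaldson hypersurfaces and Hamiltonian perturbations, and then check that all the codimension estimates underlying Theorem \ref{Theorem_Main_Theorem_1} survive the addition of the cobordism parameter. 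In the degree-comparison move there is the further subtlety that $s'$ must be chosen uniformly enough that no curve in the compactified moduli space acquires a component trapped in the thin neighbourhood of $Y$ that is not accounted for by the cluster bookkeeping; this in turn reduces to a sufficiently quantitative form of the compactness results already needed for Theorem \ref{Theorem_Main_Theorem_1}.
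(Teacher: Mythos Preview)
Your overall strategy differs genuinely from the paper's, and the second move has a real gap.

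\textbf{What the paper does.} The paper never varies the hypersurface in a family and never compares degrees $D$ and $kD$ directly. Instead it introduces a \emph{third} hypersurface $\overline{Y}$ of some large degree $\overline{D}$, chosen to intersect $Y_0$ (and, after an isotopy, $Y_1$) $\varepsilon$-transversely (Lemma \ref{Lemma_Existence_Transverse_Hypersurface}). With $\hat{\ell}=\ell+\overline{\ell}$ one forms a moduli space of curves with the first $\ell$ extra markings on $\tilde{Y}^{\hat{\ell}}$ and the last $\overline{\ell}$ on $\tilde{\overline{Y}}{}^{\hat{\ell}}$, and an extension of Theorem \ref{Theorem_Main_Theorem_1} (Theorem \ref{Theorem_Main_Theorem_3_Extension_of_1}) shows this gives a well-defined pseudocycle $\mathrm{gw}^{\hat{\ell}}_\Sigma(X,Y,\overline{Y},A,J')$. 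By the covering observation of Lemma \ref{Lemma_ell_factorial_sheeted_covering} this double-constraint space is, over $\overset{\circ}{M}{}^{\hat{\ell}}$, an $\overline{\ell}!$-fold covering of $\overset{\circ}{\mathcal{M}}(\tilde{X}^\ell,\tilde{Y}^\ell,\ldots)$ and an $\ell!$-fold covering of $\overset{\circ}{\mathcal{M}}(\tilde{X}^{\overline{\ell}},\tilde{\overline{Y}}{}^{\overline{\ell}},\ldots)$; connecting the two pulled-back perturbations by a generic path in $\mathcal{H}(\tilde{X}^{\hat{\ell}},\tilde{Y}^{\hat{\ell}},\tilde{\overline{Y}}{}^{\hat{\ell}})$ yields the rational cobordism. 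The hypersurfaces remain fixed throughout; only $J$ and $H$ are deformed.

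\textbf{Your equal-degree move.} Running the whole boundary analysis with a \emph{moving} hypersurface $Y_t$ is not what the paper does and is substantially harder: the spaces $\mathcal{H}_{\mathrm{ni}}(\tilde{X}^\ell,\tilde{Y}^\ell_t,J_t)$, the normal bundles $(V\tilde{Y}^\ell_t)^{\perp_\omega}$, and the SFT rescaling all depend on $t$, and none of Lemmas \ref{Lemma_Reduction_to_vanishing_A}, \ref{Lemma_SFT_Compactness}, \ref{Lemma_H_0_reg} are set up in that generality. This might be made to work, but it is considerably more than Theorem \ref{Theorem_Main_Theorem_1} in families.

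\textbf{The gap in the $D\to kD$ move.} The interpolation you propose does not exist as a family of Donaldson pairs: $s^{\otimes k}$ is \emph{not} transverse to zero (its zero locus is $Y$ with multiplicity $k$), so the family $s_\tau$ from $s^{\otimes k}$ to $s'$ is not a family of smooth hypersurfaces, and there is no cobordism of moduli spaces along it. Moreover, the two sides you wish to compare live over different bases $M^\ell$ and $M^{k\ell}$, with different numbers of extra markings; the ``clustering'' heuristic does not by itself give a map of pseudocycles, let alone a cobordism. The paper sidesteps this entirely by never changing the degree of a fixed hypersurface: the common auxiliary $\overline{Y}$ lets one compare $Y_0$ and $Y_1$ of arbitrary degrees $D_0,D_1$ through the single double-constraint space, with the factorials coming from the genuine covering structure of Lemma \ref{Lemma_ell_factorial_sheeted_covering}.
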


Up to now, $(\pi : \Sigma \to M, R_\ast)$ always denoted an arbitrary regular nodal family of Riemann surfaces.
For a definition of a Gromov-Witten pseudocycle that can be attributed to the Deligne-Mumford space $\overline{M}_{g,n}$, one has to restrict to (connected, closed) orbifold branched coverings of $\overline{M}_{g,n}$ that branch over the Deligne-Mumford boundary (\cf Definition \ref{Definition_Orbifold_branched_covering}).
\begin{defn}
Let $(\pi_i : \Sigma_i \to M_i, R_{i,\ast})$, $i = 0,1,2$, be orbifold branched coverings of $\overline{M}_{g,n}$ that branch over the Deligne-Mumford boundary, with the $M_i$ closed, connected. \\
$(\pi_0 : \Sigma_0 \to M_0, R_{0, \ast})$ is said to be a \emph{refinement} of $(\pi_1 : \Sigma_1 \to M_1, R_{1,\ast})$, if there exists a morphism of marked nodal families of Riemann surfaces
\[
\xymatrix{
\Sigma_0 \ar[d]_-{\pi_0} \ar[r]^-{\Phi_1} & \Sigma_1 \ar[d]^-{\pi_1} \\
M_0 \ar[r]^-{\phi_1} & M_1\text{.}
}
\]
$(\pi_1 : \Sigma_1 \to M_1, R_{1,\ast})$ and $(\pi_2 : \Sigma_2 \to M_2, R_{2,\ast})$ are said to be \emph{equivalent} if they have a common refinement, \ie if there exists $(\pi_0 : \Sigma_0 \to M_0, R_{0,\ast})$ together with morphisms of marked nodal families of Riemann surfaces
\[
\xymatrix{
& \Sigma_0\ar[d]^-{\pi_0} \ar[rd]^-{\Phi_2}  \ar[ld]_-{\Phi_1} & \\
\Sigma_1 \ar[d]_-{\pi_1} & M_0 \ar[rd]^-{\phi_2} \ar[ld]_-{\phi_1} & \Sigma_2 \ar[d]^-{\pi_2} \\
M_1 & & M_2\text{.}
}
\]
\end{defn}
Now let, as in Section \ref{Section_III.1}, $\overset{\circ\circ}{M} \subseteq \overset{\circ}{M}$ be the top-dimensional stratum of the stratification by orbit type and and let $B_i \subseteq \overset{\circ}{M}$, $i \in \N$, be an at most countable collection of locally closed submanifolds of real codimension at least $2$, covering the complement of $\overset{\circ\circ}{M}$ in $\overset{\circ}{M}$. Also, let $|\mathcal{O}(\overset{\circ\circ}{M})|$ and $|\mathrm{Aut}(\overset{\circ\circ}{M})|$ be as at the end of Section \ref{Section_III.1}.
\clearpage
\begin{defn}
For $k \in \N_0$, denote by
\[
\mathrm{Cob}^k_\Q(M\times X^n, \overset{\circ\circ}{M}\times X^n)
\]
the $\Q$-vector space generated by rational cobordism classes of $k$-dimensional rational pseudocycles in $M\times X^n$ with image in $\overset{\circ\circ}{M}\times X^n$.
\end{defn}
\begin{defn}
Let $(X, \omega)$, $A$ be as before, let $(\pi : \Sigma \to M, R_\ast)$ be an orbifold branched covering of $\overline{M}_{g,n}$ that branches over the Deligne-Mumford boundary with $M$ closed and connected and let $Y, \ell, J, H$ satisfy all the regularity assumptions and also the transversality assumptions for the family $\{\overset{\circ\circ}{M}, B_i\}_{i\in \N}$ in \ref{Theorem_Main_Theorem_1}. Then by the previous two theorems,
\[
\frac{1}{|\mathcal{O}(\overset{\circ\circ}{M})||\mathrm{Aut}(\overset{\circ\circ}{M})|\ell!} \mathrm{gw}^\ell_\Sigma(X, Y, A, J, H)|_{\mathrm{gw}^\ell_\Sigma(\dots)\inv(\overset{\circ\circ}{M}\times X^n)}
\]
defines an element
\[
\mathrm{GW}_\Sigma(X, A) \in \mathrm{Cob}^k_\Q(M\times X^n, \overset{\circ\circ}{M}\times X^n)\text{,}
\]
for
\[
k = \dim_\C(X)\chi + 2c_1(A) - 3\chi + 2n\text{,}
\]
independent of the choice of data as above.
\end{defn}
\begin{theorem}\label{Theorem_Main_Theorem_3}
Let $(\pi_0 : \Sigma_0 \to M_0, R_{0, \ast})$, $(\pi_1 : \Sigma_1 \to M_1, R_{1, \ast})$ be orbifold branched coverings of $\overline{M}_{g,n}$ that branch over the Deligne-Mumford boundary with $M_0, M_1$ closed and connected.
\begin{enumerate}[a)]
  \item\label{Theorem_Main_Theorem_3a} If $(\pi_0 : \Sigma_0 \to M_0, R_{0, \ast})$ is a refinement of $(\pi_1 : \Sigma_1 \to M_1, R_{1,\ast})$ under a morphism
\[
\xymatrix{
\Sigma_0 \ar[d]_-{\pi_0} \ar[r]^-{\Phi} & \Sigma_1 \ar[d]^-{\pi_1} \\
M_0 \ar[r]^-{\phi} & M_1\text{,}
}
\]
then the restriction of $\phi$ to $\overset{\circ\circ}{M}_0$ is a finite covering of $\overset{\circ\circ}{M}_1$, of order $d$, say, and the map
\[
\frac{1}{d}\phi_\ast : \mathrm{Cob}^k_\Q(M_0\times X^n, \overset{\circ\circ}{M}_0\times X^n) \to \mathrm{Cob}^k_\Q(M_1\times X^n, \overset{\circ\circ}{M}_1\times X^n)
\]
defined by composition with $\phi\times \id_{X^n}$ and multiplication with $\frac{1}{d}$ is an isomorphism. Furthermore,
\[
\frac{1}{d}\phi_\ast(\mathrm{GW}_{\Sigma_0}(X, A)) = \mathrm{GW}_{\Sigma_1}(X, A)\text{.}
\]
  \item\label{Theorem_Main_Theorem_3b} If $(\pi_0 : \Sigma_0 \to M_0, R_{0, \ast})$ and $(\pi_1 : \Sigma_1 \to M_1, R_{1,\ast})$ are equivalent, then $\mathrm{Cob}^k_\Q(M_0\times X^n, \overset{\circ\circ}{M}_0\times X^n) \cong \mathrm{Cob}^k_\Q(M_1\times X^n, \overset{\circ\circ}{M}_1\times X^n)$ under an isomorphism that maps $\mathrm{GW}_{\Sigma_0}(X, A)$ to $\mathrm{GW}_{\Sigma_1}(X, A)$.
\end{enumerate}
\end{theorem}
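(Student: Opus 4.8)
The plan is to deduce part \ref{Theorem_Main_Theorem_3b} formally from part \ref{Theorem_Main_Theorem_3a}, and to prove part \ref{Theorem_Main_Theorem_3a} in three stages: first analyse $\phi$ on the strata that matter, then set up the pushforward and a transfer inverse on the cobordism groups, and finally compare the two Gromov--Witten pseudocycles by pulling all the data back along $\Phi$.

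\emph{Step 1: $\phi$ is a finite covering on the top orbit stratum.} Since $(\Phi,\phi)$ is a morphism of marked nodal families, for every $b\in M_0$ the composition of a desingularisation of $\Sigma_{0,b}$ with $\Phi$ is a desingularisation of $\Sigma_{1,\phi(b)}$, so by the uniqueness of desingularisations (Lemma 4.3 in \cite{MR2262197}) the fibres $\Sigma_{0,b}$ and $\Sigma_{1,\phi(b)}$ are isomorphic marked nodal Riemann surfaces. Hence $\phi$ is compatible with the maps $\upsilon_i:M_i\to\overline{M}_{g,n}$, preserves signatures (so $\phi(\overset{\circ}{M}_0)\subseteq\overset{\circ}{M}_1$) and preserves the isomorphism class of the fibres, hence the isotropy of the associated groupoids (so $\phi(\overset{\circ\circ}{M}_0)\subseteq\overset{\circ\circ}{M}_1$ and $|\Aut(\overset{\circ\circ}{M}_0)|=|\Aut(\overset{\circ\circ}{M}_1)|$). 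Over $M_{g,n}$ both $\overset{\circ}{M}_i$ are orbifold coverings, so over the smooth locus $\phi$ is a morphism of orbifold coverings of $M_{g,n}$ and therefore itself an orbifold covering; restricting to the top orbit stratum, which is a genuine manifold and is saturated, gives an honest covering $\overset{\circ\circ}{M}_0\to\overset{\circ\circ}{M}_1$, finite because $M_0$ is compact and of a well-defined degree $d$ because $\overset{\circ\circ}{M}_1$ is connected. Counting preimages over a generic point of $M_{g,n}$ also gives $|\mathcal{O}(\overset{\circ\circ}{M}_0)|=d\,|\mathcal{O}(\overset{\circ\circ}{M}_1)|$.

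\emph{Step 2: the map $\frac1d\phi_\ast$ and its inverse.} Composition with $\phi\times\id_{X^n}$ carries a rational pseudocycle with image in $\overset{\circ\circ}{M}_0\times X^n$ to one with image in $\overset{\circ\circ}{M}_1\times X^n$; as $\phi\times\id$ is a local diffeomorphism on these loci, the pseudocycle/limit-set axioms and rational cobordisms are preserved, so $\phi_\ast$, and hence $\frac1d\phi_\ast$, is a well-defined $\Q$-linear map. For the inverse I would use the transfer along the finite covering $\overset{\circ\circ}{M}_0\to\overset{\circ\circ}{M}_1$: given $f:W\to\overset{\circ\circ}{M}_1\times X^n$, its fibre product with $\phi\times\id_{X^n}$ is a $d$-sheeted cover $\widetilde W\to W$ carrying a pseudocycle $\widetilde f:\widetilde W\to\overset{\circ\circ}{M}_0\times X^n$, and $\phi_\ast[\widetilde f]=d[f]$, so $\frac1d\phi_\ast$ is split surjective. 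Injectivity is the delicate point: these branched coverings of $\overline{M}_{g,n}$ are Galois (they arise from finite-index normal subgroups of $\Gamma_{g,n}$), so there is a deck group $G$ of order $d$ acting on $\overset{\circ\circ}{M}_0$ over $\overset{\circ\circ}{M}_1$, and the usual transfer argument in rational homology (via Zinger's identification of rational pseudocycle cobordism with rational homology) gives $\frac1d\,\mathrm{transfer}\circ\phi_\ast=\frac1d\sum_{g\in G}g_\ast$, which equals the identity on the deck-invariant classes — in particular on the subspace containing $\mathrm{GW}_{\Sigma_0}(X,A)$. Pinning down the precise class of cobordism classes on which this gives a genuine two-sided inverse (i.e.\ the exact meaning of $\mathrm{Cob}^k_\Q(\cdot,\cdot)$ here) is the main technical hurdle of the argument.

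\emph{Step 3: comparison of the pseudocycles, and part \ref{Theorem_Main_Theorem_3b}.} Here I would exploit that the data $(X,\omega,Y,J)$ lives on $X$, not on $\Sigma$, hence is literally shared by both families; only the Hamiltonian perturbations differ. Choose $Y,J,H^Y,H^0,H^{00}$ regular for $\Sigma_1$ as in Theorem \ref{Theorem_Main_Theorem_1} (and transverse to the relevant submanifolds), and pull the perturbation $H=H^Y+H^0+H^{00}$ back along the induced morphism $\Phi^\ell:\Sigma_0^\ell\to\Sigma_1^\ell$ on the $\ell$-fold families (Lemma \ref{Lemma_Forget_marked_point}, which also shows refinements persist under adding marked points, with the same degree $d$ over the top strata). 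Over the top orbit strata $\Phi^\ell$ is a fibrewise biholomorphism, so $\overset{\circ}{\mathcal{M}}(\tilde X_0^\ell,\tilde Y_0^\ell,A,J,\Phi^{\ell\ast}H)$ is identified, via pullback of sections, with the $\phi^\ell$-fibre product of $\overset{\circ}{\mathcal{M}}(\tilde X_1^\ell,\tilde Y_1^\ell,A,J,H)$; all transversality, compactness and tangency statements are thereby preserved, so $\Phi^{\ell\ast}H$ is admissible for $\Sigma_0$. The resulting commuting square between $\mathrm{gw}^\ell_{\Sigma_0}$, $\mathrm{gw}^\ell_{\Sigma_1}$, $\phi\times\id_{X^n}$ and the $d$-sheeted covering of moduli spaces gives $(\phi\times\id_{X^n})_\ast[\mathrm{gw}^\ell_{\Sigma_0}(X,Y,A,J,\Phi^{\ell\ast}H)]=d\,[\mathrm{gw}^\ell_{\Sigma_1}(X,Y,A,J,H)]$ on cobordism classes, and inserting the normalisation constants from Step 1 yields $\frac1d\phi_\ast(\mathrm{GW}_{\Sigma_0}(X,A))=\mathrm{GW}_{\Sigma_1}(X,A)$, completing part \ref{Theorem_Main_Theorem_3a}. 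Finally, for part \ref{Theorem_Main_Theorem_3b}, apply part \ref{Theorem_Main_Theorem_3a} to the two refinement morphisms $\Phi_i:\Sigma_0\to\Sigma_i$ ($i=1,2$) of a common refinement: each $\frac1{d_i}\phi_{i\ast}$ is an isomorphism sending $\mathrm{GW}_{\Sigma_0}(X,A)$ to $\mathrm{GW}_{\Sigma_i}(X,A)$, so $\bigl(\frac1{d_2}\phi_{2\ast}\bigr)\circ\bigl(\frac1{d_1}\phi_{1\ast}\bigr)^{-1}$ is the desired isomorphism carrying $\mathrm{GW}_{\Sigma_1}(X,A)$ to $\mathrm{GW}_{\Sigma_2}(X,A)$.
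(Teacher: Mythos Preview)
Your approach is correct and matches the paper's: you deduce \ref{Theorem_Main_Theorem_3b}) from \ref{Theorem_Main_Theorem_3a}), and for \ref{Theorem_Main_Theorem_3a}) you pull back the perturbation data along the refinement morphism so that the moduli spaces themselves pull back as finite covers over the strata with regularity preserved --- exactly the mechanism the paper invokes. The paper's own proof is a two-line reference to ``the constructions in the proofs of the previous two theorems'' and does not spell out the covering analysis of $\phi$ on $\overset{\circ\circ}{M}$, the transfer argument, or the bookkeeping of normalisation constants; your Steps~1 and~3 are a faithful expansion of what the paper leaves implicit. One caveat: your injectivity argument in Step~2 assumes the covering $\overset{\circ\circ}{M}_0\to\overset{\circ\circ}{M}_1$ is Galois, which is not part of Definition~\ref{Definition_Orbifold_branched_covering} in general (it holds when both families come from normal subgroups of the mapping class group, but an arbitrary refinement need not) --- the paper does not address this point either, so you are not missing anything the paper supplies, but you should be aware that the isomorphism assertion for $\frac{1}{d}\phi_\ast$ is stated rather than proved there.
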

\begin{proof}
\ref{Theorem_Main_Theorem_3b}) follows immediately from \ref{Theorem_Main_Theorem_3a}) and \ref{Theorem_Main_Theorem_3a}) follows in a straightforward way from the constructions in the proofs of the previous two theorems, because all the moduli spaces involved in the definitions of the pseudocycles clearly pull back under finite coverings (over the strata of the stratification by signature) \st regularity is preserved.
\end{proof}

\subsection{Outline of the proofs}\label{Subsection_Outline_of_the_proofs}

The proofs of Theorems \ref{Theorem_Main_Theorem_1} and \ref{Theorem_Main_Theorem_2} are the most technically demanding, so the main outline will be given here with some of the details deferred to the next section.

\subsubsection{The proof of Theorem \ref{Theorem_Main_Theorem_1}}\label{Subsubsection_Proof_Main_Theorem_1}
\begin{enumerate}[Step 1)]
  \item\label{Step_1} The first thing to be shown is the existence of the sets $\mathcal{J}_{\omega,\mathrm{ni}}(X, Y, J_0, E) \subseteq \mathcal{J}_{\omega, \mathrm{ni}}(X, Y, E)$ \st for every $J \in \mathcal{J}_{\omega, \mathrm{ni}}(X, Y, E)$, $\cl{\overset{\circ}{\mathcal{M}}(\tilde{X}^\ell, \tilde{Y}^\ell, A, J, H)}$ is compact, or rather that the closure of $\overset{\circ}{\mathcal{M}}(\tilde{X}^\ell, \tilde{Y}^\ell, A, J, H)$ in $\overline{\mathcal{M}}(\tilde{X}^\ell, A, J, H)$ actually lies in the subspace $\mathcal{M}(\tilde{X}^\ell, A, J, H) \subseteq \overline{\mathcal{M}}(\tilde{X}^\ell, A, J, H)$.
This is a rather straightforward extension of the corresponding compactness results in \cite{MR2399678}, which is carried out in Subsection \ref{Subsection_Compactness}.
Existence of the set $\mathcal{J}_{\omega,\mathrm{ni}}(X, Y, J_0, E)$ is shown in Lemma \ref{Lemma_Nice_J_exist}, the compactness result is given by Lemma \ref{Lemma_Main_compactness_result}.
  \item\label{Step_2} Next, let $H_0 \in \mathcal{H}(\tilde{X}^\ell, \tilde{Y}^\ell)$ be arbitrary and let $\{B_i\}_{i\in \N}$ be a family as in Theorem \ref{Theorem_Main_Theorem_1}.
Because $\pi^\ell_0|_{\overset{\circ}{M}{}^\ell} : \overset{\circ}{M}{}^\ell \to \overset{\circ}{M}$ is a submersion, for every $i\in\N$, $B^\ell_i \definedas (\pi^\ell_0|_{\overset{\circ}{M}{}^\ell})\inv(B_i) \subseteq \overset{\circ}{M}{}^\ell$ is a submanifold of codimension the codimension of $B_i$ in $\overset{\circ}{M}$.
Then by Lemma \ref{Lemma_The_universal_moduli_space} and Lemma \ref{Lemma_Tangency_condition},
\[
\pi^{\mathcal{M}}_{\mathcal{H}} : \overset{\circ}{\mathcal{M}}(\tilde{X}^\ell, \tilde{Y}^\ell, A, J, H_0 + \mathcal{H}^{00}(\tilde{X}^\ell, \tilde{Y}^\ell)) \to H_0 + \mathcal{H}^{00}(\tilde{X}^\ell, \tilde{Y}^\ell)
\]
is a Fredholm map of index $\dim_\C(X)\chi + 2c_1(A) + \dim_\R(M)$ and
\[
\pi^{\mathcal{M}}_M : \overset{\circ}{\mathcal{M}}(\tilde{X}^\ell, \tilde{Y}^\ell, A, J, H_0 + \mathcal{H}^{00}(\tilde{X}^\ell, \tilde{Y}^\ell)) \to \overset{\circ}{M}{}^\ell
\]
is a submersion, so for every $i\in\N$, $(\pi^{\mathcal{M}}_M)\inv(B^\ell_i)$ is a split submanifold of $\overset{\circ}{\mathcal{M}}(\tilde{X}^\ell, \tilde{Y}^\ell, A, J, H_0 + \mathcal{H}^{00}(\tilde{X}^\ell, \tilde{Y}^\ell))$ of codimension the codimension of $B_i$ in $\overset{\circ}{M}$.
So by a simple index calculation,
\[
\pi^\mathcal{M}_{\mathcal{H}} : (\pi^{\mathcal{M}}_M)\inv(B^\ell_i) \to H_0 + \mathcal{H}^{00}(\tilde{X}^\ell, \tilde{Y}^\ell)
\]
is a Fredholm map of index $\dim_\C(X)\chi + 2c_1(A) + \dim_\R(B_i)$.
Hence, by the Sard-Smale theorem, one gets a generic subset of $H \in \mathcal{H}^{00}(\tilde{X}^\ell, \tilde{Y}^\ell)$, \st $\overset{\circ}{\mathcal{M}}(\tilde{X}^\ell, \tilde{Y}^\ell, A, J, H_0 + H)$ is a manifold of dimension $\dim_\C(X)\chi + 2c_1(A) + \dim_\R(M)$ and for every $i\in \N$ one gets a generic subset of $H \in \mathcal{H}^{00}(\tilde{X}^\ell, \tilde{Y}^\ell)$, \st $(\pi^\mathcal{M}_M)\inv(B_i^\ell) \cap (\pi^\mathcal{M}_\mathcal{H})\inv(H_0 + H)$ is a manifold of dimension $\dim_\C(X)\chi + 2c_1(A) + \dim_\R(B_i)$.
Taking the intersection of these at most countably many generic subsets produces a generic subset of $H \in \mathcal{H}^{00}(\tilde{X}^\ell, \tilde{Y}^\ell)$ \st \\
$\overset{\circ}{\mathcal{M}}(\tilde{X}^\ell, \tilde{Y}^\ell, A, J, H_0 + H)$ is a manifold of dimension $\dim_\C(X)\chi + 2c_1(A) + \dim_\R(M)$ and the $(\pi^\mathcal{M}_M)\inv(B_i^\ell) \cap (\pi^\mathcal{M}_\mathcal{H})\inv(H_0 + H)$ for $i \in \N$ are submanifolds of dimensions $\dim_\C(X)\chi + 2c_1(A) + \dim_\R(B_i)$.
  \item\label{Step_3} Furthermore, for such generic $H$ as above, $\overset{\circ}{\mathcal{M}}(\tilde{X}^\ell, \tilde{Y}^\ell, A, J, H_0 + H)$ carries a natural orientation: First, note that $\overset{\circ}{\mathcal{M}}(\tilde{X}^\ell, \tilde{Y}^\ell, A, J, H_0 + \mathcal{H}(\tilde{X}^\ell, \tilde{Y}^\ell))$ carries a natural coorientation as split submanifold of $\overset{\circ}{\mathcal{M}}(\tilde{X}^\ell, A, J, H_0 + \mathcal{H}(\tilde{X}^\ell, \tilde{Y}^\ell))$, since it is the preimage under the evaluation map at the last $\ell$ marked points of $\tilde{Y}^\ell$, which is cooriented in $\tilde{X}^\ell$. \\
Second, for the Fredholm map
\[
\pi^\mathcal{M}_\mathcal{H} : \overset{\circ}{\mathcal{M}}(\tilde{X}^\ell, A, J, \mathcal{H}(\tilde{X}^\ell, \tilde{Y}^\ell)) \to \mathcal{H}(\tilde{X}^\ell, \tilde{Y}^\ell)\text{,}
\]
at a regular point the kernel of its differential is canonically oriented, since it is identified with the kernel of the corresponding Cauchy-Riemann operator, by Lemma A.3.6 in \cite{MR2045629}.
This in turn is oriented by the usual argument as in the proof of Theorem 3.1.5, p.~50, in \cite{MR2045629}.
Hence the kernel of the restriction
\[
\pi^\mathcal{M}_\mathcal{H} : \overset{\circ}{\mathcal{M}}(\tilde{X}^\ell, \tilde{Y}^\ell A, J, \mathcal{H}(\tilde{X}^\ell, \tilde{Y}^\ell)) \to \mathcal{H}(\tilde{X}^\ell, \tilde{Y}^\ell)\text{,}
\]
at a regular point carries an induced orientation as well.
  \item\label{Step_4} At this point, what is left to complete the proof of Theorem \ref{Theorem_Main_Theorem_1}, \ref{Theorem_Main_Theorem_1a}), is to show that the $\Omega$-limit set of $\mathrm{gw}^\ell_\Sigma(X, Y, A, J, H)$ can be covered by manifolds of real dimension at least $2$ less than that of $\overset{\circ}{\mathcal{M}}(\tilde{X}^\ell, \tilde{Y}^\ell, A, J, H)$.
Because of Step \ref{Step_1}) above, it actually suffices to show that this holds for
\[
\partial{\overset{\circ}{\mathcal{M}}(\tilde{X}^\ell, \tilde{Y}^\ell, A, J, H)} \definedas \cl{\overset{\circ}{\mathcal{M}}(\tilde{X}^\ell, \tilde{Y}^\ell, A, J, H)} \setminus \overset{\circ}{\mathcal{M}}(\tilde{X}^\ell, \tilde{Y}^\ell, A, J, H)\text{.}
\]
This is the lengthiest and most technical part of the proof, summed up in Theorem \ref{Theorem_Step_4}, so (a simplified version) will be described below, the actual proof is distributed over Subsections \ref{Subsection_Description_of_the_closure}--\ref{Subsection_Putting_it_all_together}.
  \item\label{Step_5} Before that, one can give the proof of Theorem \ref{Theorem_Main_Theorem_1}, \ref{Theorem_Main_Theorem_1b}), which is fairly straightforward, given the above has been established: \\
Consider the marked nodal families $(\pi'^\ell : \Sigma'^\ell \to M'^\ell, R'^\ell)$, where $\Sigma'^\ell \definedas \Sigma^\ell \times \R$, $M'^\ell \definedas M^\ell\times \R$, $\pi'^\ell \definedas \pi^\ell\times \id_\R$, $(R'^\ell)_j \definedas (R^\ell_j \times \id_\R)$.
These spaces are stratified by taking the product of a stratum of the original space with $\R$.
Correspondingly, define $\tilde{X}'^\ell \definedas \Sigma'^\ell\times X$ and $\tilde{Y}'^\ell \definedas \Sigma'^\ell\times Y$ so that $J_\cdot$ defines an $\omega$-compatible vertical almost complex structure on $\tilde{X}'^\ell$.
But, instead of the spaces $\mathcal{H}(\tilde{Y}')$, $\mathcal{H}^0_{\mathrm{ni}}(\tilde{X}'^\ell, \tilde{Y}'^\ell, J_\cdot)$ and $\mathcal{H}^{00}(\tilde{X}'^\ell, \tilde{Y}'^\ell)$, now consider the spaces
\begin{align*}
\mathcal{H}(\tilde{Y}', H^Y_i) &\definedas \bigl\{H^Y \in \mathcal{H}(\tilde{Y}') \;|\; H^Y|_{\tilde{Y}\times \{t\}} = \begin{cases} H^Y_0 & t \leq 0 \\ H^Y_1 & t \geq 1\end{cases}\bigr\} \\
\mathcal{H}^0_{\mathrm{ni}}(\tilde{X}'^\ell, \tilde{Y}'^\ell, J, H^0_i) & \definedas \bigl\{H^0 \in \mathcal{H}^0_{\mathrm{ni}}(\tilde{X}'^\ell, \tilde{Y}'^\ell, J) \;|\; H^0|_{\tilde{X}^\ell\times \{t\}} = \begin{cases} H^0_0 & t \leq 0 \\ H^0_1 & t \geq 1\end{cases}\bigr\} \\
\mathcal{H}^{00}(\tilde{X}'^\ell, \tilde{Y}'^\ell, H^{00}_i) & \definedas \bigl\{H^{00} \in \mathcal{H}^{00}(\tilde{X}'^\ell, \tilde{Y}'^\ell) \;|\; H^{00}|_{\tilde{X}^\ell\times \{t\}} = \begin{cases} H^{00}_0 & t \leq 0 \\ H^{00}_1 & t \geq 1\end{cases}\bigr\}\text{.}
\end{align*}
These spaces of Hamiltonian perturbations are large enough for all the transversality results to hold, because for $t \leq 0$ or $t\geq 1$, transversality holds by choice of $H^Y_i$, $H^0_i$ and $H^{00}_i$ and for $0 < t < 1$ one is free in the choice of perturbation.
For the analogue of Lemma \ref{Lemma_Reduction_to_vanishing_A} to hold, one possibly has to replace $D^\ast$ by $D^\ast + 1$.
So for generic choices of perturbations in these spaces, as above, one gets strata-wise cobordisms between the moduli spaces associated to $H^Y_0, H^0_0, H^{00}_0$ and $H^Y_1, H^0_1, H^{00}_1$.
\end{enumerate}
This finishes the proof of Theorem \ref{Theorem_Main_Theorem_1}, modulo the details from Steps \ref{Step_1}) and \ref{Step_4}) which can be found in Subsections \ref{Subsection_Compactness}--\ref{Subsection_Putting_it_all_together}, esp.~Lemmas \ref{Lemma_Nice_J_exist} and \ref{Lemma_Main_compactness_result} for Step \ref{Step_1}) as well as Theorem \ref{Theorem_Step_4} for Step \ref{Step_4}).

A short description of what happens in the proof of Step \ref{Step_4} is as follows: \\
What can immediately be said about $\cl\overset{\circ}{\mathcal{M}}(\tilde{X}^\ell, \tilde{Y}^\ell, A, J, H)$ is that it is contained in 
\[
\{ u\in \mathcal{M}(\tilde{X}^\ell, A, J, H) \;|\; \im(u\circ T^\ell_j)\subseteq \tilde{Y}^\ell, j = 1, \dots, \ell\}\text{.}
\]
The problem now is that if $u \in \mathcal{M}(\tilde{X}^\ell, A, J, H)$, with $\pi^{\mathcal{M}}_M(u) = b$, lies in $\cl\overset{\circ}{\mathcal{M}}(\tilde{X}^\ell, \tilde{Y}^\ell, A, J, H)$, then it cannot be excluded that some (or all, esp.~in the case that $\Sigma_b^\ell$ is smooth) of the components of $\Sigma_b$ are mapped by $u$ into $\tilde{Y}^\ell$.
The condition that all the Hamiltonian perturbations must lie in $\mathcal{H}(\tilde{X}^\ell, \tilde{Y}^\ell)$ then prevents one from achieving transversality for the universal Cauchy-Riemann operator as well as transversality to $\tilde{Y}^\ell$ of the evaluation maps at the $T^\ell_j$.
To present these problems and their solutions in a bit more detail, assume for simplicity that $\Sigma_b^\ell = \Sigma_b^{\ell, X} \cup \Sigma_b^{\ell,Y}$ has two smooth components of Euler characteristics $\chi^X$ and $\chi^Y$, respectively, with one node $N_b \in \Sigma_b^\ell$ and that $u(\Sigma_b^{\ell,Y}) \subseteq \tilde{Y}^\ell$ as well as $u(\Sigma_b^{\ell,X}) \cap (\tilde{X}^\ell \setminus \tilde{Y}^\ell) \neq \emptyset$.
Furthermore, assume that $\ell = \ell^X + \ell^Y$, $\ell^X,\ell^Y \geq 1$, and $T^\ell_1(b), \dots, T^\ell_{\ell^X}(b) \in \Sigma_b^{\ell,X}$, $T^\ell_{\ell^X+1}(b), \dots, T^\ell_{\ell}(b) \in \Sigma_b^{\ell,Y}$.
\def\svgwidth{\textwidth}
\ifpdf
\begingroup%
  \makeatletter%
  \providecommand\color[2][]{%
    \errmessage{(Inkscape) Color is used for the text in Inkscape, but the package 'color.sty' is not loaded}%
    \renewcommand\color[2][]{}%
  }%
  \providecommand\transparent[1]{%
    \errmessage{(Inkscape) Transparency is used (non-zero) for the text in Inkscape, but the package 'transparent.sty' is not loaded}%
    \renewcommand\transparent[1]{}%
  }%
  \providecommand\rotatebox[2]{#2}%
  \ifx\svgwidth\undefined%
    \setlength{\unitlength}{3865.85bp}%
    \ifx\svgscale\undefined%
      \relax%
    \else%
      \setlength{\unitlength}{\unitlength * \real{\svgscale}}%
    \fi%
  \else%
    \setlength{\unitlength}{\svgwidth}%
  \fi%
  \global\let\svgwidth\undefined%
  \global\let\svgscale\undefined%
  \makeatother%
  \begin{picture}(1,0.5575609)%
    \put(0,0){\includegraphics[width=\unitlength]{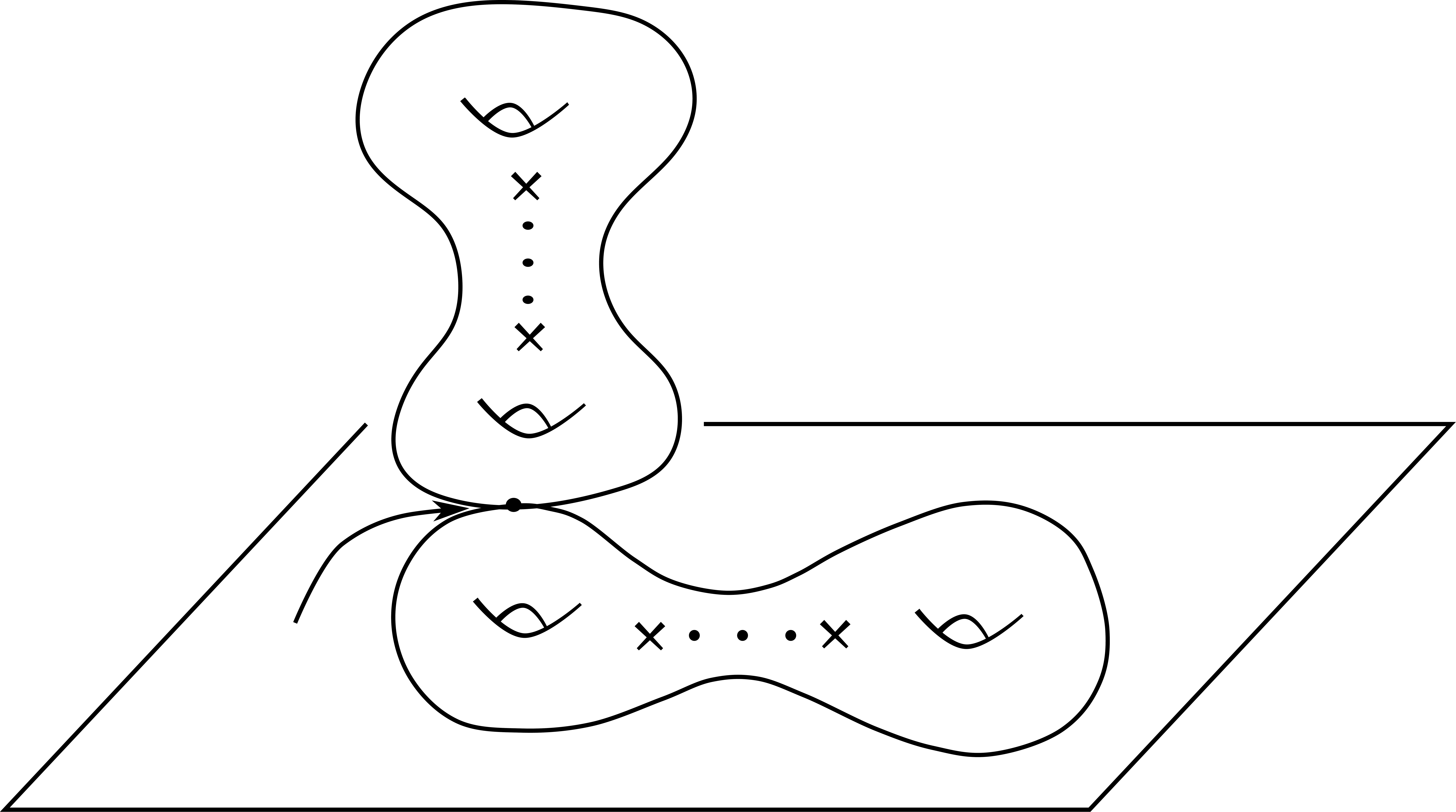}}%
    \put(0.27559037,0.49952746){\color[rgb]{0,0,0}\makebox(0,0)[lb]{\smash{$\Sigma_b^{\ell,X}$}}}%
    \put(0.65514386,0.17217616){\color[rgb]{0,0,0}\makebox(0,0)[lb]{\smash{$\Sigma_b^{\ell,Y}$}}}%
    \put(0.17553617,0.10148571){\color[rgb]{0,0,0}\makebox(0,0)[lb]{\smash{$N_b$}}}%
    \put(0.36911928,0.44515015){\color[rgb]{0,0,0}\makebox(0,0)[lb]{\smash{$T_1^\ell(b)$}}}%
    \put(0.36911928,0.29071863){\color[rgb]{0,0,0}\makebox(0,0)[lb]{\smash{$T^\ell_{\ell^X}(b)$}}}%
    \put(0.88135341,0.21024024){\color[rgb]{0,0,0}\makebox(0,0)[lb]{\smash{$Y$}}}%
    \put(0.64644353,0.41687398){\color[rgb]{0,0,0}\makebox(0,0)[lb]{\smash{$X\setminus Y$}}}%
    \put(0.38434495,0.09496037){\color[rgb]{0,0,0}\makebox(0,0)[lb]{\smash{$T^\ell_{\ell^X+1}$}}}%
    \put(0.58989114,0.09278532){\color[rgb]{0,0,0}\makebox(0,0)[lb]{\smash{$T^\ell_\ell(b)$}}}%
  \end{picture}%
\endgroup%

\else
\begingroup%
  \makeatletter%
  \providecommand\color[2][]{%
    \errmessage{(Inkscape) Color is used for the text in Inkscape, but the package 'color.sty' is not loaded}%
    \renewcommand\color[2][]{}%
  }%
  \providecommand\transparent[1]{%
    \errmessage{(Inkscape) Transparency is used (non-zero) for the text in Inkscape, but the package 'transparent.sty' is not loaded}%
    \renewcommand\transparent[1]{}%
  }%
  \providecommand\rotatebox[2]{#2}%
  \ifx\svgwidth\undefined%
    \setlength{\unitlength}{3865.85bp}%
    \ifx\svgscale\undefined%
      \relax%
    \else%
      \setlength{\unitlength}{\unitlength * \real{\svgscale}}%
    \fi%
  \else%
    \setlength{\unitlength}{\svgwidth}%
  \fi%
  \global\let\svgwidth\undefined%
  \global\let\svgscale\undefined%
  \makeatother%
  \begin{picture}(1,0.5575609)%
    \put(0,0){\includegraphics[width=\unitlength]{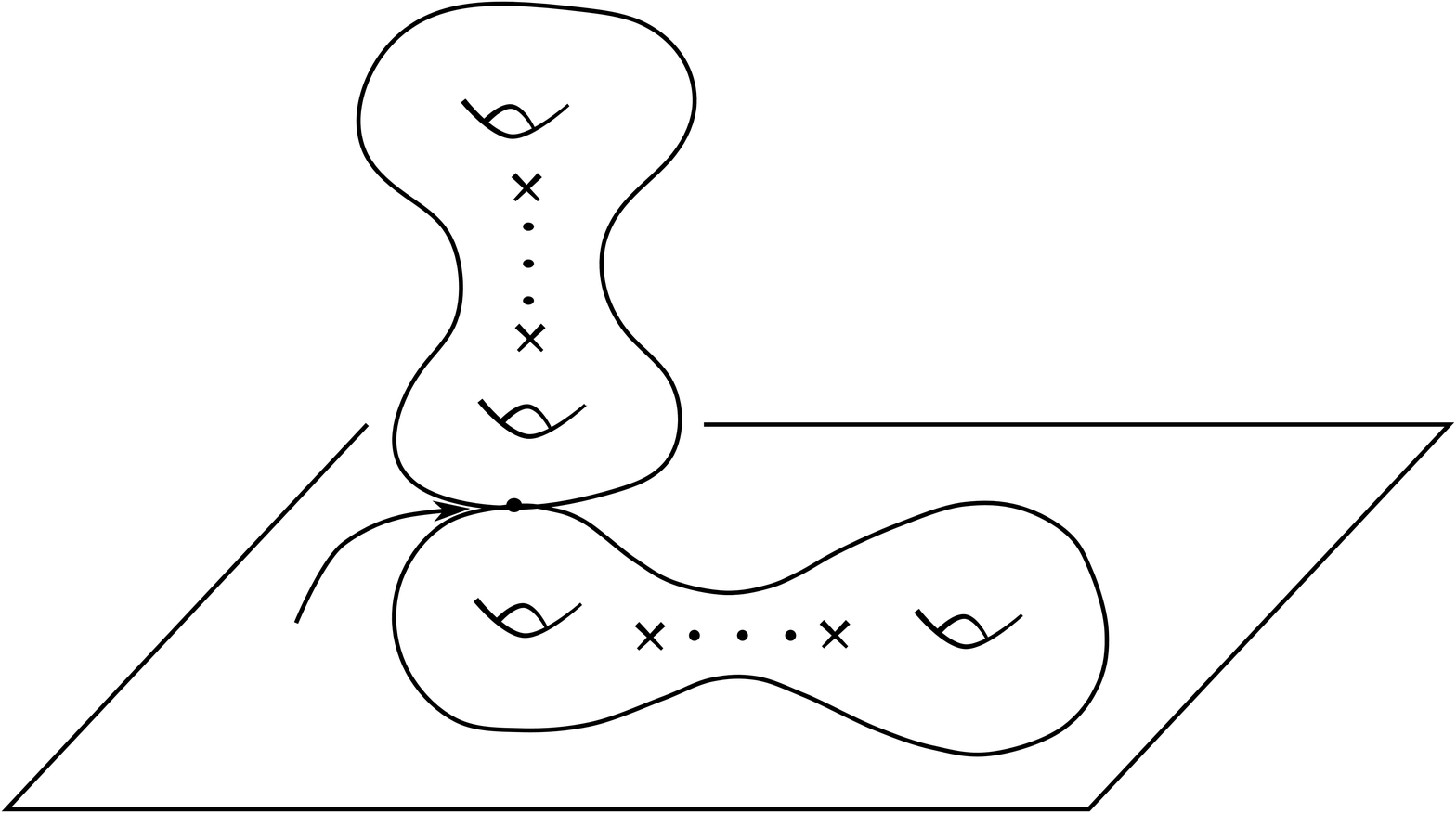}}%
    \put(0.27559037,0.49952746){\color[rgb]{0,0,0}\makebox(0,0)[lb]{\smash{$\Sigma_b^{\ell,X}$}}}%
    \put(0.65514386,0.17217616){\color[rgb]{0,0,0}\makebox(0,0)[lb]{\smash{$\Sigma_b^{\ell,Y}$}}}%
    \put(0.17553617,0.10148571){\color[rgb]{0,0,0}\makebox(0,0)[lb]{\smash{$N_b$}}}%
    \put(0.36911928,0.44515015){\color[rgb]{0,0,0}\makebox(0,0)[lb]{\smash{$T_1^\ell(b)$}}}%
    \put(0.36911928,0.29071863){\color[rgb]{0,0,0}\makebox(0,0)[lb]{\smash{$T^\ell_{\ell^X}(b)$}}}%
    \put(0.88135341,0.21024024){\color[rgb]{0,0,0}\makebox(0,0)[lb]{\smash{$Y$}}}%
    \put(0.64644353,0.41687398){\color[rgb]{0,0,0}\makebox(0,0)[lb]{\smash{$X\setminus Y$}}}%
    \put(0.38434495,0.09496037){\color[rgb]{0,0,0}\makebox(0,0)[lb]{\smash{$T^\ell_{\ell^X+1}$}}}%
    \put(0.58989114,0.09278532){\color[rgb]{0,0,0}\makebox(0,0)[lb]{\smash{$T^\ell_\ell(b)$}}}%
  \end{picture}%
\endgroup%

\fi
Let $U^\ell$ be a neighbourhood of $b$ in the stratum of the stratification by signature on $M^\ell$ that contains $b$ and \st $\Sigma^\ell_{U^\ell} \definedas \Sigma^\ell|_{U^\ell} = \Sigma^{\ell, X}_{U^\ell} \cup \Sigma^{\ell, Y}_{U^\ell}$ for a desingularisation of $\Sigma^\ell_{U^\ell}$ where the nodal points are given in the form of two sections $N^X : U^\ell \to \Sigma^{\ell,X}_{U^\ell}$ and $N^Y : U^\ell \to \Sigma^{\ell,Y}_{U^\ell}$.
Let furthermore $u^X \definedas u|_{\Sigma^{\ell,X}_b}$, $u^Y \definedas u|_{\Sigma^{\ell,Y}_b}$ and $A^X \definedas [u^X], A^Y \definedas [u^Y]$, \st $A = A^X + A^Y$.
The first problem now is that $u^Y \in \mathcal{M}(\tilde{X}^\ell|_{\Sigma^{\ell,Y}_{U^\ell}}, A^Y, J, \mathcal{H}(\tilde{X}^\ell, \tilde{Y}^\ell))$ need not be a regular point of the universal Cauchy-Riemann operator, because the space $\mathcal{H}(\tilde{X}^\ell, \tilde{Y}^\ell)$ of Hamiltonian perturbations is not large enough.
Instead, one has to consider $u^Y$ as lying in $\mathcal{M}(\tilde{Y}^\ell|_{\Sigma^{\ell,Y}_{U^\ell}}, A^Y, J, \mathcal{H}(\tilde{X}^\ell, \tilde{Y}^\ell))$. In Subsection \ref{Subsection_Vanishing_homology_class} it will be shown that, provided the degree of $Y$ is large enough, for generic $H^Y \in \mathcal{H}(\tilde{Y})$ (remember that this is considered as a subset of $\mathcal{H}(\tilde{X}^\ell, \tilde{Y}^\ell)$), $\mathcal{M}(\tilde{Y}^\ell|_{\Sigma^{\ell,Y}_{U^\ell}}, A^Y, J, H^Y)$ is empty for $A^Y \neq 0$ (by extending an argument from \cite{MR2399678}) and a manifold of dimension $\dim_\C(Y)\chi^Y + \dim_\R(U^\ell)$ for $A^Y = 0$. \\
$u^X$ on the other hand, after a generic perturbation $H^{00}\in \mathcal{H}^{00}(\tilde{X}^\ell, \tilde{Y}^\ell)$, lies in a submanifold of $\mathcal{M}(\tilde{X}^\ell|_{\Sigma^{\ell,X}_{U^\ell}}, A, J, H^Y + H^{00})$ of dimension $\dim_\C(X)\chi^X + 2c_1(A) + \dim_\R(U^\ell) - 2\ell^X$, cut out by the condition $\im(\ev^{T^\ell_{j}}) \subseteq \tilde{Y}^\ell$ for $j = 1, \dots, \ell^X$.
Taking into account the compatibility of $u^X$ and $u^Y$ at $N_b$, after another generic perturbation in $\mathcal{H}^{00}(\tilde{X}^\ell, \tilde{Y}^\ell)$, $u$ lies in a submanifold of $\mathcal{M}(\tilde{X}^\ell|_{\Sigma^{\ell}_{U^\ell}}, A, J, H^Y + H^{00})$ of dimension
\begin{align*}
& \dim_\C(X)\chi^X + 2c_1(A) + \dim_\R(U^\ell) - 2\ell^X \;+ \\
& +\; \dim_\C(Y)\chi^Y + \dim_\R(U^\ell) \underbrace{\;-\; (\dim_\C(X) + \dim_\R(U^\ell))}_{\text{from } \ev^{N^X}(u^X) = \ev^{N^Y}(u^Y)} \\
= & \dim_\C(X)\chi + 2c_1(A) + \dim_\R(U^\ell) - 2\ell^X - \chi^Y \\
= & \dim_\R(\overset{\circ}{M}(\tilde{X}^\ell, \tilde{Y}^\ell, A, J, H^Y + H^{00})) - \underbrace{\codim_\R^M(U^\ell)}_{=\; 2} + 2\ell^Y - \chi^Y\text{,}
\end{align*}
which is too large by $2\ell^Y - \chi^Y$ ($\chi^Y$ will be negative in higher genus) to serve as a covering of part of the $\Omega$-limit set that contains the image of $u$. \\
The solution here is a more precise compactness result, from \cite{MR1954264} or \cite{MR2026549}, describing the elements of $\cl\overset{\circ}{\mathcal{M}}(\tilde{X}^\ell, \tilde{Y}^\ell, A, J, H)$ in more detail.
This will be presented in the detail needed for the present purposes in Subsection \ref{Subsection_Refined_compactness_result}.
The upshot of this is the following: Restricting the Cauchy-Riemann operator to sections of $\tilde{X}^\ell|_{\Sigma^{\ell,Y}_b}$, its linearisation at $u^Y$ is a Cauchy-Riemann operator on the vector bundle $(u^Y)^\ast V\tilde{X}^\ell$, where $V\tilde{X}^\ell$ is the vertical tangent bundle of $\tilde{X}^\ell$.
Restricting that Cauchy-Riemann operator to sections of $(u^Y)^\ast (V\tilde{Y}^\ell)^{\perp_\omega}$ produces a Cauchy-Riemann operator $L_{u^Y}$ on $(u^Y)^\ast (V\tilde{Y}^\ell)^{\perp_\omega}$.
The compactness result cited above then implies that $u$ actually comes with a meromorphic section $\xi_u$ of $(u^Y)^\ast (V\tilde{Y}^\ell)^{\perp_\omega}$.
This meromorphic section has simple zeroes precisely at the $\ell^Y$ points $T^\ell_{\ell^X + 1}(b), \dots, T^\ell_{\ell}(b)$ (coming from the transverse intersections with $\tilde{Y}^\ell$ of the elements of the approximating sequence) and one pole at $N^Y_b$.
Since $[u^Y] = 0$, the first Chern class of $(u^Y)^\ast (V\tilde{Y}^\ell)^{\perp_\omega}$ vanishes and hence the orders of zeroes and poles of $\xi_u$ need to coincide, hence $\xi_u$ has a pole of order $\ell^Y$ at $N^Y_b$. \\
Another consequence of the compactness result is that $u^X$ is tangent to $\tilde{Y}^\ell$ at $N^X_b$ of the same order as the order of the pole of the meromorphic section at $N^Y_b$.
This tangency condition allows one to cut down the dimension of the moduli space containing $u$ by another term $2(\ell^Y - 1)$ (the $-1$ is due to the fact that the condition $u(N^X_b) \in \tilde{Y}^\ell$ is no longer transversely cut out and was already accounted for by the matching condition for $u^X$ and $u^Y$ at $N_b$). \\
Also, one can consider $B \definedas \mathcal{M}(\tilde{Y}^\ell|_{\Sigma^{\ell,Y}_{U^\ell}}, 0, J, H^Y)$ as the base of the family of Riemann surfaces $S \definedas (\pi^\mathcal{M}_M)^\ast \Sigma^{\ell,Y}_{U^\ell}$.
Over $S$ one has the complex line bundle $Z$ that is given, on the fibre $S_{u^Y}$ over $u^Y\in B$, by $(u^Y)^\ast (V\tilde{Y}^\ell)^{\perp_\omega}$.
For every $u^Y \in B$, $L_{u^Y}$ then defines a Cauchy-Riemann operator for sections of $Z|_{S_{u^Y}} \to S_{u^Y}$.
The meromorphic sections from above with the given behaviour for the zeroes and poles then can be described as elements of a moduli space for a Fredholm problem associated to $B, S, Z, L_\cdot$, \st the Fredholm index of each $L_{u^Y}$, restricted to sections with the given behaviour for poles and zeroes, is equal to $\chi^Y$.
Associated to this Fredholm problem is furthermore a (transversely cut out) universal moduli space, where as spaces of perturbations the spaces $H^Y + \mathcal{H}^0_{\mathrm{ni}}(\tilde{X}^\ell, \tilde{Y}^\ell, J)$ are used.
Finally, the condition of normal integrability ensures that each $L_{u^Y}$ is complex linear \wrt the standard action of $\C$ on $(V\tilde{Y}^\ell)^{\perp_\omega}$.
Hence these moduli spaces of meromorphic sections come with a free $\C^\ast$-action under which the projection to $B$ is invariant. \\
In conclusion, for generic $H^0 \in \mathcal{H}^0_{\mathrm{ni}}(\tilde{X}^\ell, \tilde{Y}^\ell, J)$, the subset of $\mathcal{M}(\tilde{Y}^\ell|_{\Sigma^{\ell,Y}_{U^\ell}}, 0, J, H^Y + H^0) = \mathcal{M}(\tilde{Y}^\ell|_{\Sigma^{\ell,Y}_{U^\ell}}, 0, J, H^Y)$ that contains those $u^Y$ that come from $\cl\overset{\circ}{\mathcal{M}}(\tilde{X}^\ell, \tilde{Y}^\ell, A, J, H^Y + H^0)$ is covered by a moduli space of meromorphic sections as above which has real dimension $\dim_\R(\mathcal{M}(\tilde{Y}^\ell|_{\Sigma^{\ell,Y}_{U^\ell}}, 0, J, H^Y + H^0)) + \chi^Y - 2$, where the $-2$ comes from dividing out the free $\C^\ast$-action. \\
Note that all of the above is completely oblivious to changes in the Hamiltonian perturbation that come from $\mathcal{H}^{00}(\tilde{X}^\ell, \tilde{Y}^\ell)$, \ie under replacing $H^Y + H^0$ by $H^Y + H^0 + H^{00}$ for $H^{00} \in \mathcal{H}^{00}(\tilde{X}^\ell, \tilde{Y}^\ell)$. \\
Taken together, these facts show that for appropriate choices of $J, H^Y, H^0$ and $H^{00}$, $u$ is actually contained in a moduli space of dimension
\begin{align*}
& \dim_\R(\overset{\circ}{M}(\tilde{X}^\ell, \tilde{Y}^\ell, A, J, H^Y + H^0 + H^{00})) - 2 + 2\ell^Y - \chi^Y  \;- \\
& -\; 2(\ell^Y - 1) \;+ \\
& +\; \chi^Y - 2 \\
= & \dim_\R(\overset{\circ}{M}(\tilde{X}^\ell, \tilde{Y}^\ell, A, J, H^Y + H^0 + H^{00})) - 2\text{,}
\end{align*}
which finally is good enough to serve as a covering of a part of the $\Omega$-limit set containing $u$.

\subsubsection{The proof of Theorem \ref{Theorem_Main_Theorem_2}}\label{Subsubsection_Proof_Main_Theorem_2}

The proof of Theorem \ref{Theorem_Main_Theorem_2} relies on the following observation, which also explains in which sense the moduli spaces $\overset{\circ}{\mathcal{M}}(\tilde{X}^\ell, \tilde{Y}^\ell, A, J, H)$ used in the definition of the rational pseudocycles $\mathrm{gw}_\Sigma(X, A)$ and $\mathrm{GW}_\Sigma(X, A)$ can be regarded as perturbations of the moduli spaces $\overset{\circ}{\mathcal{M}}(\tilde{X}, A, J, H)$. These are the moduli spaces one is originally interested in, since $\overset{\circ}{\mathcal{M}}(\tilde{X}, A, J, 0)/_\sim$ is just (canonically identified with) the moduli space $\mathcal{M}_{g,n}(X, A, J)$ of smooth holomorphic curves of genus $g$ in $X$ with $n$ marked points from the introduction, where $\sim$ denotes the equivalence relation generated by biholomorphic maps between the fibres of $\Sigma|_{\overset{\circ}{M}} \to \overset{\circ}{M}$ that respect the markings.
\begin{lemma}\label{Lemma_ell_factorial_sheeted_covering}
For $A\neq 0$ and $D$ large enough, for generic $H^Y \in \mathcal{H}(\tilde{Y}) \subseteq \mathcal{H}(\tilde{X}, \tilde{Y})$ there exists a generic subset (depending on $H^Y$) of $\mathcal{H}^{00}(\tilde{X}, \tilde{Y})$ \st for each $H^{00}$ in this subset, outside a subset of $\overset{\circ}{\mathcal{M}}(\tilde{X}, A, J, H^Y + H^{00})$ with complement of codimension at least $2$,
\begin{align*}
\overset{\circ}{\mathcal{M}}(\tilde{X}^\ell, \tilde{Y}^\ell, A, J, (\hat{\pi}^\ell_0)^\ast (H^Y + H^{00})) &\to \overset{\circ}{\mathcal{M}}(\tilde{X}, A, J, H^Y + H^{00}) \\
u &\mapsto  u\circ ((\hat{\pi}^\ell_0)^\ast_{\pi^\mathcal{M}_M(u)})\inv
\end{align*}
is an $\ell!$-sheeted covering.
\end{lemma}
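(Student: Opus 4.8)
The plan is to argue in two stages: first, that over the locus of holomorphic curves $u$ in $\overset{\circ}{\mathcal{M}}(\tilde{X}, A, J, H^Y + H^{00})$ which meet $\tilde{Y}$ transversally in exactly $\ell = D\omega(A)$ distinct points, the map in question is genuinely an $\ell!$-sheeted covering; and second, that after a generic choice of $H^{00} \in \mathcal{H}^{00}(\tilde{X}, \tilde{Y})$ the complement of this ``good'' locus has codimension at least $2$. For the first stage I would use Lemma \ref{Lemma_Order_of_tangency}: since $H^Y \in \mathcal{H}(\tilde{Y})$ and $H^{00} \in \mathcal{H}^{00}(\tilde{X}, \tilde{Y})$, we have $H^Y + H^{00} \in \mathcal{H}(\tilde{X}, \tilde{Y})$, so any $u$ with $\im(u)\cap(\tilde{X}\setminus \tilde{Y})\neq\emptyset$ satisfies $\iota(u, \tilde{Y}) = [u]\cdot[Y] = D\omega(A) = \ell$ with all local intersection numbers positive. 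Hence $u$ meets $\tilde{Y}$ in at most $\ell$ points, and in exactly $\ell$ points iff all intersections are transverse. When that happens, a preimage of $u$ under $(\hat{\pi}^\ell_0)_\ast$ is precisely a choice of an ordering of these $\ell$ intersection points together with the corresponding lift of the domain: the fibres $\Sigma^\ell_b$ over the open part $\overset{\circ}{M}{}^\ell$ are obtained from $\Sigma_{\pi^\ell_0(b)}$ by adding $\ell$ labelled marked points $T^\ell_1,\dots,T^\ell_\ell$, and the condition $\im(u\circ T^\ell_j)\subseteq\tilde{Y}^\ell$ forces these to sit exactly at the (labelled) intersection points. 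So the fibre of the map over such a $u$ is a torsor under $\mathcal{S}_\ell$ of cardinality $\ell!$, and the $\mathcal{S}_\ell$-action $\tilde{\sigma}^\ell$ from Definition \ref{Definition_Equivalence_Relation_Gromov_Compactness} permutes it freely and transitively.

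Next I would verify that this restricted map is a local homeomorphism (hence, being a finite-to-one local homeomorphism onto a manifold, a covering). Local injectivity is clear from the above description. For local surjectivity and continuity of a local inverse one uses the implicit function theorem: near a transverse intersection point $z\in u^{-1}(\tilde{Y})$ the condition of passing through $\tilde{Y}$ transversally is open and the location of the intersection point depends smoothly on $u$ in $\mathcal{M}(\tilde{X}, A, J, H^Y + H^{00})$ (this is exactly the kind of smooth dependence established for the evaluation maps in Subsection \ref{Subsection_Evaluation_maps} and Lemma \ref{Lemma_The_universal_moduli_space}); thus one can smoothly track the $\ell$ marked points and produce a local section. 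Combined with $\mathcal{S}_\ell$-equivariance and the smooth structure on $\overset{\circ}{\mathcal{M}}(\tilde{X}^\ell, \tilde{Y}^\ell, A, J, (\hat{\pi}^\ell_0)^\ast(H^Y+H^{00}))$ coming from Corollary \ref{Corollary_Smooth_structure_on_M}, Lemma \ref{Lemma_The_universal_moduli_space} and Lemma \ref{Lemma_Tangency_condition}, this gives the covering property over the good locus.

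The main obstacle — and the place where the genericity of $H^{00}$ is spent — is stage two: showing that the ``bad'' locus
\[
\mathcal{B} \definedas \{ u \in \overset{\circ}{\mathcal{M}}(\tilde{X}, A, J, H^Y + H^{00}) \;|\; u \text{ meets } \tilde{Y} \text{ in fewer than } \ell \text{ points}\}
\]
has codimension at least $2$. By Lemma \ref{Lemma_Order_of_tangency}, an element of $\mathcal{B}$ must have a tangency of order $s\geq 1$ to $\tilde{Y}$ at one of its intersection points (or more). The strategy is to stratify $\mathcal{B}$ according to the tangency profile $(\ell_1,\dots,\ell_n,\dots)$ at the marked points and at the free intersection points, and apply Lemma \ref{Lemma_Tangency_condition}: each stratum where the total ``excess tangency'' is $t = \sum (s_i)$ lies, after a generic perturbation in $\mathcal{H}^{00}(\tilde{X}, \tilde{Y})$, in a submanifold of codimension $2t$ inside $\overset{\circ}{\mathcal{M}}(\tilde{X}, A, J, H^Y + H^{00})$ — one works with auxiliary families $(\pi^\ell : \Sigma^\ell \to M^\ell, \dots)$ with extra marked points placed at the free intersection points and invokes Lemma \ref{Lemma_Tangency_condition} over each stratum of $M^\ell$, exactly as in Step \ref{Step_2}) of the outline. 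Since every element of $\mathcal{B}$ has $t\geq 1$, one gets codimension at least $2$; taking the intersection of the countably many generic subsets of $\mathcal{H}^{00}(\tilde{X}, \tilde{Y})$ produced in this way (indexed by the possible tangency profiles, which is a countable set once one fixes the bound on the number of non-constant components coming from the energy bound as in the proof of the surjectivity lemma above) yields the desired generic subset. The one subtlety to be careful about is that the intersection points being tracked are not marked points of the fixed family $\Sigma$, so one genuinely needs the formulation of Lemma \ref{Lemma_Tangency_condition} over the branched covers $\Sigma^\ell$; this is the reason $D$ must be taken large enough, so that $\ell = D\omega(A)$ is large enough to accommodate all the relevant configurations and the transversality argument of \cite{MR2399678}, Section 6, applies.
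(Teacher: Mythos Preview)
Your two-stage plan matches the paper's proof almost exactly: the paper also covers $\overset{\circ}{\mathcal{M}}(\tilde{X}, A, J, H)$ by the spaces $\mathcal{M}(\tilde{X}^{\ell'}|_{\overset{\circ}{M}{}^{\ell'}}, (\tilde{Y}^{\ell'})^{(-1,\dots,-1,t_1,\dots,t_{\ell'})}, A, J, (\hat{\pi}^{\ell'}_0)^\ast H)$ with $\sum(t_i+1)=\ell$, identifies the $\ell'=\ell$ piece with the $\ell!$-fold cover, and kills the $\ell'<\ell$ pieces by Lemma~\ref{Lemma_Tangency_condition}.

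There is, however, one genuine gap. You never explain why every $u\in \overset{\circ}{\mathcal{M}}(\tilde{X}, A, J, H^Y+H^{00})$ actually satisfies $\im(u)\cap(\tilde{X}\setminus\tilde{Y})\neq\emptyset$; you simply start from that hypothesis. But the target moduli space has no such condition built in, and if some $u$ were contained in $\tilde{Y}$ then Lemma~\ref{Lemma_Order_of_tangency} gives you nothing and your stratification by tangency profile breaks down. This is precisely where the hypotheses ``$A\neq 0$, $D$ large enough, generic $H^Y$'' enter: the paper invokes the argument of Subsection~\ref{Subsection_Vanishing_homology_class} (Lemma~\ref{Lemma_Reduction_to_vanishing_A}, via the index bound of Lemma~\ref{Lemma_Dstar}) to conclude that for such $H^Y$ no smooth curve representing $A\neq 0$ can lie entirely in $\tilde{Y}$. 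Your final paragraph misreads the role of ``$D$ large enough'': it is not needed so that $\ell$ is big enough to accommodate configurations, but so that the Fredholm index for curves in $Y$ representing a nonzero class becomes negative and the offending moduli spaces are generically empty. Once you add that sentence, your argument is complete and coincides with the paper's.
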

\begin{proof}
As will be shown in Subsection \ref{Subsection_Vanishing_homology_class}, for $A\neq 0$ and $D$ large enough, for generic $H^Y \in \mathcal{H}(\tilde{Y})$ and for any $H^{00} \in \mathcal{H}^{00}(\tilde{X}, \tilde{Y})$ one can assume that no section in $\overset{\circ}{\mathcal{M}}(\tilde{X}, A, J, H^Y + H^{00})$ lies completely in $\tilde{Y}$.
Note that over $\overset{\circ}{M}{}^\ell$, $\hat{\pi}^\ell_0$ is a fibrewise isomorphism, hence the space $(\hat{\pi}^\ell_0)^\ast(H^Y + \mathcal{H}^{00}(\tilde{X}, \tilde{Y}))$ is ``large enough'' for all the transversality results in the following to hold for generic $H \definedas H^Y + H^{00}$, in particular all the strata of $\overset{\circ}{\mathcal{M}}(\tilde{X}, A, J, H)$ can be assumed to be manifolds of the correct dimensions.
Now every element of $\overset{\circ}{\mathcal{M}}(\tilde{X}, A, J, H)$ has intersection number $\ell$ with $\tilde{Y}$, so $\overset{\circ}{\mathcal{M}}(\tilde{X}, A, J, H)$ is covered by a union of spaces
\[
\mathcal{M}(\tilde{X}^{\ell'}|_{\overset{\circ}{M}{}^{\ell'}}, (\tilde{Y}^{\ell'})^{(-1, \dots, -1, t_1, \dots, t_{\ell'})}, A, J, (\hat{\pi}^{\ell'}_0)^\ast H)
\]
for $1 \leq \ell' \leq \ell$ and $\sum_{i=1}^{\ell'}(t_i + 1) = \ell$.
For $\ell' = \ell$ this is just the space $\overset{\circ}{\mathcal{M}}(\tilde{X}^\ell, \tilde{Y}^\ell, A, J, (\hat{\pi}^\ell_0)^\ast H)$, where the fibre over a point is given by the $\ell!$ choices to label the $\ell$ intersection points with $\tilde{Y}$.
And for $\ell' < \ell$, by Lemma \ref{Lemma_Tangency_condition}, this space has dimension at least two less than $\overset{\circ}{\mathcal{M}}(\tilde{X}^\ell, \tilde{Y}^\ell, A, J, (\hat{\pi}^\ell_0)^\ast H)$.
\end{proof}

The proof of Theorem \ref{Theorem_Main_Theorem_2}, which extends the corresponding results from \cite{MR2399678}, Section 10, then goes as follows, with some of the technical results deferred to Subsection \ref{Subsection_Proof_Main_Theorem_2}:
\begin{enumerate}[Step 1)]
  \item\label{Theorem_Main_Theorem_2_Step1} Given two Donaldson pairs $(Y_i, J_i)$ of degrees $D_i$, for $i = 0,1$, by Lemma \ref{Lemma_Existence_Transverse_Hypersurface} in Subsection \ref{Subsection_Proof_Main_Theorem_2}, there exists $\overline{D} \in \N$ and a hypersurface $\overline{Y} \subseteq X$ of degree $\overline{D}$ that intersects $Y_0$ and $\phi_1(Y_1)$ $\varepsilon$-transversely, where $\phi_t$, $t\in [0,1]$, is an isotopy of $X$ through symplectomorphisms.
Furthermore, there exist $\omega$-compatible almost complex structures $\overline{J}_0$ and $\overline{J}_1$ \st $(\overline{Y}, \overline{J}_i)$ are Donaldson pairs and the two pseudocycles $\mathrm{gw}^{\overline{\ell}}_\Sigma(X, \overline{Y}, A, \overline{J}_i)$ with $\overline{\ell} \definedas \overline{D}\omega(A)$ and $i = 0,1$, are cobordant.
To see that the latter holds, one chooses, using Lemma \ref{Lemma_Existence_Transverse_Hypersurface} \ref{Donaldson_quadruples_a}), a subdivision $0 = t_0 < t_1 < \cdots < t_k = 1$ of $[0,1]$ \st there exist $J'_i \in \mathcal{J}_{\omega, \mathrm{ni}}(X, \overline{Y}, \overline{J}_{t_i}, E) \cap \mathcal{J}_{\omega, \mathrm{ni}}(X, \overline{Y}, \overline{J}_{t_{i+1}}, E)$, $i = 0, \dots, k-1$, and connects, by Lemma \ref{Lemma_Nice_J_exist}, $J'_i$ and $J'_{i+1}$ by a path in $\mathcal{J}_{\omega, \mathrm{ni}}(X, \overline{Y}, E)$.
Theorem \ref{Theorem_Main_Theorem_1} then shows equivalence of the pseudocycles associated to $(\overline{Y}, \overline{J}_0)$ and $(\overline{Y}, \overline{J}_1)$. \\
Now note that the pseudocycles associated to $(Y_1,J_1)$ and $(\phi_1(Y_1), (\phi_1)_\ast J_1)$ are equivalent. For $(\phi_1)_\ast A = A$, since $\phi_1$ is isotopic to the identity, and $\phi_1$ induces a well defined map between the corresponding moduli spaces.
Theorem \ref{Theorem_Main_Theorem_2} then follows once it has been shown that the rational pseudocycles $\frac{1}{\ell_0!}\mathrm{gw}^\ell_\Sigma(X, Y_0, A, J_0)$ and $\frac{1}{\overline{\ell}^\ast}\mathrm{gw}^{\overline{\ell}}_\Sigma(X, \overline{Y}, A, \overline{J}_0)$ are rationally cobordant as well as the rational pseudocycles $\frac{1}{\ell_1!}\mathrm{gw}^\ell_\Sigma(X, \phi_1(Y_1), A, (\phi_1)_\ast(J_1))$ and $\frac{1}{\overline{\ell}!}\mathrm{gw}^{\overline{\ell}}_\Sigma(X, \overline{Y}, A, \overline{J}_1)$.
This is clearly symmetric in $Y_0$ and $Y_1$, so it suffices to consider $Y_0$ and one can drop the subscript ${}_0$.
  \item\label{Theorem_Main_Theorem_2_Step2} The desired rational cobordism is defined in the following way: First of all define $\hat{\ell} \definedas \ell + \overline{\ell}$ and consider again the bundles $\tilde{X}^k\definedas \Sigma^k\times X$, $\tilde{Y}^k \definedas \Sigma^k\times Y$ and $\tilde{\overline{Y}}{}^k \definedas \Sigma^k\times \overline{Y}$, for $k = \ell, \overline{\ell}, \hat{\ell}$. Then for $J' \in \mathcal{J}_{\omega}(X, Y, J, E) \cap \mathcal{J}_{\omega}(X, \overline{Y}, \overline{J}, E)$ as in Lemma \ref{Lemma_Existence_Transverse_Hypersurface} \ref{Donaldson_quadruples_b}) and after restricting to Hamiltonian perturbations in $\mathcal{H}(\tilde{X}^k, \tilde{Y}^k, \tilde{\overline{Y}}{}^k) \definedas \mathcal{H}(\tilde{X}^k, \tilde{Y}^k) \cap \mathcal{H}(\tilde{X}^k, \tilde{\overline{Y}}{}^k)$ one considers moduli spaces of the form
\newlength{\StepX}
\settowidth{\StepX}{Step 1)\;}
\begin{align*}
\hspace{-\StepX}\overset{\circ}{\mathcal{M}}(\tilde{X}^{\hat{\ell}}, \tilde{Y}^{\hat{\ell}}, \tilde{\overline{Y}}{}^{\hat{\ell}}, A, J', H) \definedas \{ u \in \mathcal{M}(\tilde{X}^{\hat{\ell}}|_{\overset{\circ}{M}{}^{\hat{\ell}}}, A, J', H) \;|\; & \im(u\circ T^{\hat{\ell}}_j) \subseteq \tilde{Y}^{\hat{\ell}},\; j=1,\dots, \ell, \\
& \im(u\circ T^{\hat{\ell}}_j) \subseteq \tilde{\overline{Y}}{}^{\hat{\ell}},\; j=\ell + 1,\dots, \hat{\ell}, \\
& \im(u) \cap \tilde{X}^{\hat{\ell}} \setminus (\tilde{Y}^{\hat{\ell}} \cup \tilde{\overline{Y}}{}^{\hat{\ell}}) \neq \emptyset\}
\end{align*}
and defines
\[
\mathrm{gw}^{\hat{\ell}}_\Sigma(X, Y, \overline{Y}, A, J', H) : \overset{\circ}{\mathcal{M}}(\tilde{X}^{\hat{\ell}}, \tilde{Y}^{\hat{\ell}}, \tilde{\overline{Y}}{}^{\hat{\ell}}, A, J', H) \to M \times X^n
\]
via evaluation at the first $n$ marked points, as before.
  \item\label{Theorem_Main_Theorem_2_Step3} The following extension of Theorem \ref{Theorem_Main_Theorem_1} holds, see Theorem \ref{Theorem_Main_Theorem_3_Extension_of_1} in Subsection \ref{Subsection_Proof_Main_Theorem_2}: There are well-defined subsets of $H \in \mathcal{H}(\tilde{X}^\ell, \tilde{Y}^\ell, \tilde{\overline{Y}}{}^\ell)$, $\overline{H} \in \mathcal{H}(\tilde{X}^{\overline{\ell}}, \tilde{Y}^{\overline{\ell}}, \tilde{\overline{Y}}{}^{\overline{\ell}})$ and $\hat{H} \in \mathcal{H}(\tilde{X}^{\hat{\ell}}, \tilde{Y}^{\hat{\ell}}, \tilde{\overline{Y}}{}^{\hat{\ell}})$ \st no elements in $\mathcal{M}(\tilde{X}^\ell|_{\overset{\circ}{M}{}^\ell}, A, J', H)$, $\mathcal{M}(\tilde{X}^{\overline{\ell}}|_{\overset{\circ}{M}{}^{\overline{\ell}}}, A, J', \overline{H})$ and $\mathcal{M}(\tilde{X}^{\hat{\ell}}|_{\overset{\circ}{M}{}^{\hat{\ell}}}, A, J', \hat{H})$ have image in $\tilde{Y}^\ell\cup \tilde{\overline{Y}}{}^\ell$, $\tilde{Y}^{\overline{\ell}}\cup \tilde{\overline{Y}}{}^{\overline{\ell}}$ and $\tilde{Y}^{\hat{\ell}} \cup \tilde{\overline{Y}}{}^{\hat{\ell}}$, respectively.
Furthermore, $\mathrm{gw}^\ell(X, Y, A, J', H)$, $\mathrm{gw}^{\overline{\ell}}(X, Y, A, J', \overline{H})$ and $\mathrm{gw}^{\hat{\ell}}_\Sigma(X, Y, \overline{Y}, A, J', \hat{H})$ are well-defined pseudocycles of dimension $\dim_\C(X)\chi + 2c_1(A) + \dim_\R(M)$, independent (up to cobordism) of the choice of $H$, $\overline{H}$ and $\hat{H}$.
Hence there is a well-defined cobordism class
\[
\mathrm{gw}_\Sigma(X, Y, \overline{Y}, A, J')
\]
of rational pseudocycles given by
\[
\frac{1}{\overline{\ell}!\ell!}\mathrm{gw}^{\hat{\ell}}_\Sigma(X, Y, \overline{Y}, A, J', \hat{H})\text{.}
\]
This is the most technical part of the proof, deferred to Subsection \ref{Subsection_Proof_Main_Theorem_2}. It follows the same line of argument as the proof of Theorem \ref{Theorem_Main_Theorem_1}, just with slightly more complex notation.
  \item By the argument from the previous lemma, for appropriate $H \in \mathcal{H}(\tilde{X}^\ell, \tilde{Y}^\ell, \tilde{\overline{Y}}{}^\ell)$ and $\overline{H} \in \mathcal{H}(\tilde{X}^{\overline{\ell}}, \tilde{Y}^{\overline{\ell}}, \tilde{\overline{Y}}{}^{\overline{\ell}})$,
\begin{align*}
\overset{\circ}{\mathcal{M}}(\tilde{X}^{\hat{\ell}}, \tilde{Y}^{\hat{\ell}}, \tilde{\overline{Y}}{}^{\hat{\ell}}, A, J', (\hat{\pi}^{\hat{\ell}}_\ell)^\ast H) &\to \overset{\circ}{\mathcal{M}}(\tilde{X}^\ell, \tilde{Y}^\ell, A, J', H)
\intertext{and}
\overset{\circ}{\mathcal{M}}(\tilde{X}^{\hat{\ell}}, \tilde{Y}^{\hat{\ell}}, \tilde{\overline{Y}}{}^{\hat{\ell}}, A, J', (\hat{\pi}^{\hat{\ell}}_{\overline{\ell}})^\ast \overline{H}) &\to \overset{\circ}{\mathcal{M}}(\tilde{X}^{\overline{\ell}}, \tilde{Y}^{\overline{\ell}}, A, J', \overline{H})
\end{align*}
define $\overline{\ell}!$- and $\ell!$-sheeted coverings, respectively (outside a subset of real codimension at least $2$).
Here, one has to observe that the Hamiltonian perturbation $\hat{H} = (\hat{\pi}^{\hat{\ell}}_{\ell})^\ast H$ does not satisfy the general requirements for the definition of the pseudocycle $\mathrm{gw}^{\hat{\ell}}_\Sigma(X, Y, \overline{Y}, A, J', \hat{H})$, but only for the stratum over $\overset{\circ}{M}{}^{\hat{\ell}}$.
Instead, in this case the $\Omega$-limit set of $\mathrm{gw}^{\hat{\ell}}_\Sigma(X, Y, \overline{Y}, A, J', \hat{H})$ is contained in the $\Omega$-limit set of $\mathrm{gw}^{\ell}_\Sigma(X, Y, A, J', H)$, which one knows a priori to have real codimension at least $2$. \\
And likewise for $\hat{H} = (\hat{\pi}^{\hat{\ell}}_{\overline{\ell}})^\ast \overline{H}$. \\
Now connect $(\hat{\pi}^{\hat{\ell}}_\ell)^\ast H$ and $(\hat{\pi}^{\hat{\ell}}_{\overline{\ell}})^\ast \overline{H}$ by a path $(H_t)_{t\in [0,1]}$ in $ \mathcal{H}(\tilde{X}^{\hat{\ell}}, \tilde{Y}^{\hat{\ell}}, \tilde{\overline{Y}}{}^{\hat{\ell}})$ with $H_0 = (\hat{\pi}^{\hat{\ell}}_\ell)^\ast H$ and $H_1 = (\hat{\pi}^{\hat{\ell}}_{\overline{\ell}})^\ast \overline{H}$ that induces a cobordism \\
$\coprod_{t\in [0,1]} \overset{\circ}{\mathcal{M}}(\tilde{X}^{\hat{\ell}}, \tilde{Y}^{\hat{\ell}}, \tilde{\overline{Y}}{}^{\hat{\ell}}, A, J, H_t)$ between $\overset{\circ}{\mathcal{M}}(\tilde{X}^{\hat{\ell}}, \tilde{Y}^{\hat{\ell}}, \tilde{\overline{Y}}{}^{\hat{\ell}}, A, J, (\hat{\pi}^{\hat{\ell}}_\ell)^\ast H)$ and \\
$\overset{\circ}{\mathcal{M}}(\tilde{X}^{\hat{\ell}}, \tilde{Y}^{\hat{\ell}}, \tilde{\overline{Y}}{}^{\hat{\ell}}, A, J, (\hat{\pi}^{\hat{\ell}}_{\overline{\ell}})^\ast \overline{H})$ and is generic for $t\in (0,1)$. \\
I.\,e.~the boundary of $\coprod_{t\in [0,1]} \overset{\circ}{\mathcal{M}}(\tilde{X}^{\hat{\ell}}, \tilde{Y}^{\hat{\ell}}, \tilde{\overline{Y}}{}^{\hat{\ell}}, A, J, H_t)$ is the union of three parts.
Namely those parts for $t = 0,1$, where the evaluation maps factor through the boundary of $\overset{\circ}{\mathcal{M}}(\tilde{X}^{\hat{\ell}}, \tilde{Y}^{\hat{\ell}}, \tilde{\overline{Y}}{}^{\hat{\ell}}, A, J, (\hat{\pi}^{\hat{\ell}}_\ell)^\ast H)$ and $\overset{\circ}{\mathcal{M}}(\tilde{X}^{\hat{\ell}}, \tilde{Y}^{\hat{\ell}}, \tilde{\overline{Y}}{}^{\hat{\ell}}, A, J, (\hat{\pi}^{\hat{\ell}}_{\overline{\ell}})^\ast \overline{H})$, which can be covered by manifolds of codimension at least $2$.
And the part of the boundary for $t\in (0,1)$, which can be covered by manifolds of codimension at least $2$ by a generic choice of $(H_t)_{t\in [0,1]}$. \\
This implies that $\mathrm{gw}_\Sigma(X, Y, \overline{Y}, A, J')$, $\mathrm{gw}_\Sigma(X, Y, A, J')$ and $\mathrm{gw}_\Sigma(X, \overline{Y}, A, J')$ are rationally cobordant and finishes the proof of Theorem \ref{Theorem_Main_Theorem_2}.
\end{enumerate}
\section{Details of the proofs of the main results}

\subsection{Compactness of the closure of the moduli space}\label{Subsection_Compactness}

In this subsection, the details for Step \ref{Step_1} in the outline of the proof of Theorem \ref{Theorem_Main_Theorem_1} from Subsection \ref{Subsubsection_Proof_Main_Theorem_1} are given.

The appropriate conditions for compactness of $\cl\overset{\circ}{\mathcal{M}}(\tilde{X}^\ell, \tilde{Y}^\ell, A, J, H)$ to hold have been formulated in \cite{MR2399678}, Section 8:
\begin{defn}\label{Definition_Nice_J}
Let, for $E > 0$, $\mathcal{J}_\omega(X, Y, E) \subseteq
\mathcal{J}_\omega(X, Y)$ be the subset of almost complex structures
$J \in \mathcal{J}_\omega(X, Y)$ \st
\begin{enumerate}
  \item All $J$-holomorphic spheres of energy $\leq\, E$ contained in
    $Y$ are constant.
  \item Every nonconstant $J$-holomorphic sphere of energy $\leq E$ in
    $X$ intersects $Y$ in at least $3$ distinct points in the domain.
\end{enumerate}
Also, define
\[
\mathcal{J}_{\omega, \mathrm{ni}}(X, Y, E) \definedas \mathcal{J}_\omega(X, Y, E) \cap \mathcal{J}_{\omega, \mathrm{ni}}(X, Y)\text{.}
\]
\end{defn}

In the same reference, in Corollary 8.14, it has been shown that this condition is non-void, which needs to be adapted to include the condition of normal integrability:
\begin{lemma}\label{Lemma_Nice_J_exist}
There exists a constant $D^\ast = D^\ast(X,\omega,J_0)$ and a nonempty $C^0$-neighbourhood $\mathcal{J}_\omega(X, J_0) \subseteq \mathcal{J}_\omega(X)$ of $J_0$ \st if $D\geq D^\ast$, then
\begin{align*}
\mathcal{J}_{\omega}(X, Y, J_0, E) &\definedas \mathcal{J}_{\omega}(X, Y, E) \cap \mathcal{J}_\omega(X, J_0)
\intertext{and}
\mathcal{J}_{\omega, \mathrm{ni}}(X, Y, J_0, E) &\definedas \mathcal{J}_{\omega, \mathrm{ni}}(X, Y, E) \cap \mathcal{J}_\omega(X, J_0)
\end{align*}
are nonempty for every $E > 0$. \\
Moreover, any two elements in $\mathcal{J}_{\omega, \mathrm{ni}}(X, Y, J_0, E)$ can be connected by a path in $\mathcal{J}_{\omega, \mathrm{ni}}(X, Y, E)$.
\end{lemma}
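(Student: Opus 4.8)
The plan is to build on Corollary~8.14 of \cite{MR2399678}, which already provides the constant $D^\ast$, the $C^0$-neighbourhood $\mathcal{J}_\omega(X,J_0)$ of $J_0$, and, for every Donaldson hypersurface $Y$ of degree $D\geq D^\ast$, the fact that conditions (1) and (2) of Definition~\ref{Definition_Nice_J} hold for a generic and $C^0$-open set of $Y$-compatible almost complex structures that are $C^0$-close to $J_0$; the only new ingredient is the normal-integrability constraint. So the first step is to exhibit a single element of $\mathcal{J}_{\omega,\mathrm{ni}}(X,Y,J_0,E)$. Starting from $J_0$, I would first modify it in a neighbourhood of $Y$ to turn $Y$ into a $J$-complex hypersurface, which (since $Y$ is approximately $J_0$-holomorphic) costs only a $C^0$-small change, and then apply the construction underlying Lemma~\ref{Lemma_J_ni_nonempty_connected} (from the proof of Theorem~A.2 in \cite{MR1954264}), which makes a $Y$-compatible structure normally integrable by altering it only on a tubular neighbourhood of $Y$ and only in its $1$-jet along $Y$; localising that alteration to a sufficiently thin tubular neighbourhood while fixing the $0$-jet along $Y$ keeps the change $C^0$-small. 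The resulting $J$ lies in $\mathcal{J}_{\omega,\mathrm{ni}}(X,Y)\cap\mathcal{J}_\omega(X,J_0)$; a further $C^0$-small perturbation — of $J|_Y$ inside $Y$ to arrange (1) and of $J$ away from $Y$ to arrange (2), neither of which touches the normal $1$-jet and so neither of which destroys normal integrability — puts it into $\mathcal{J}_{\omega,\mathrm{ni}}(X,Y,E)$, by the transversality arguments of \cite{MR2399678}: for $D\geq D^\ast$ the relevant universal moduli spaces of spheres in $Y$, resp.\ of spheres in $X$ with fewer than three distinct intersection points with $Y$, have empty bad loci, because $c_1^{TY}(B)=c_1^{TX}(B)-D\omega(B)$ forces the index of non-constant spheres in $Y$ to be negative, and $D\omega(A)\geq D$ forces too high an order of tangency otherwise. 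This gives $\mathcal{J}_{\omega,\mathrm{ni}}(X,Y,J_0,E)\neq\emptyset$, and since $\mathcal{J}_{\omega,\mathrm{ni}}(X,Y,E)\subseteq\mathcal{J}_\omega(X,Y,E)$ also $\mathcal{J}_\omega(X,Y,J_0,E)\neq\emptyset$.

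For the connectedness statement, given $J_1,J_2\in\mathcal{J}_{\omega,\mathrm{ni}}(X,Y,J_0,E)$, Lemma~\ref{Lemma_J_ni_nonempty_connected} already supplies \emph{some} path $(J_t)_{t\in[0,1]}$ in $\mathcal{J}_{\omega,\mathrm{ni}}(X,Y)$ joining them, and the task is to perturb it, rel endpoints, into one lying in $\mathcal{J}_{\omega,\mathrm{ni}}(X,Y,E)$. I would run the parametric Sard--Smale argument for the two families of universal moduli spaces governing a failure of (1) (a non-constant $J_t$-holomorphic sphere in $Y$ of energy $\leq E$) and of (2) (a $J_t$-holomorphic sphere in $X$ of energy $\leq E$ with a tangency to $Y$ of order of size about $D$), deforming the path through normally integrable structures only — which, by the same observation as above, is a rich enough class since neither phenomenon is constrained by the normal $1$-jet condition. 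For $D\geq D^\ast$ both bad loci have codimension at least $2$ in the space of such paths, with the index and tangency-order estimates of \cite{MR2399678} uniform over the compact family, so a generic normally integrable perturbation of $(J_t)$ stays in $\mathcal{J}_{\omega,\mathrm{ni}}(X,Y,E)$.

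I expect the main obstacle to be the transversality bookkeeping in the connectedness part: one must check that the restricted perturbation classes (deformations of $J|_Y$ inside $Y$, and of $J$ on the complement of a neighbourhood of $Y$) genuinely surject onto the relevant cokernels while preserving normal integrability, and that the energy bound $E$ is handled with a single $D^\ast$ depending only on $(X,\omega,J_0)$, i.e.\ that Gromov compactness over the relevant $C^0$-compact set of almost complex structures controls all homology classes that can occur. Both points are morally contained in \cite{MR2399678}; the work is to carry them along with the extra $\mathcal{J}_{\omega,\mathrm{ni}}$-constraint and the thin-neighbourhood localisation.
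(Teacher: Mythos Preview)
Your approach is broadly correct and uses the same two ingredients as the paper (Corollary~8.14 of \cite{MR2399678} and Theorem~A.2 of \cite{MR1954264}), but you run them in the opposite order, and the paper's order is substantively cleaner. The paper first takes $J'\in\mathcal{J}_\omega(X,Y,E)\cap\mathcal{J}_\omega(X,J_0)$ from Corollary~8.14 (resp.\ a path $(J'_\tau)$ in $\mathcal{J}_\omega(X,Y,E)$ for connectedness), \emph{then} applies the Theorem~A.2 correction; the key point you are missing is that this correction leaves $J|_Y$ unchanged, so condition~(1) is automatically preserved, and only condition~(2) needs to be re-established --- which is done by a perturbation supported in the complement of $Y$, hence trivially preserving normal integrability. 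This ordering eliminates both of the obstacles you flag at the end of your proposal.

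By contrast, in your order (first make $J$ normally integrable, then perturb for (1) and (2)), the step ``perturb $J|_Y$ inside $Y$ to arrange (1)'' does \emph{not} leave the normal Nijenhuis component untouched as you claim: $\pi^{TX}_{TY^{\perp_\omega}}N_J(v,\xi)$ for $v\in TY$, $\xi\in TY^{\perp_\omega}$ depends on $Jv$, hence on $J|_{TY}$. You would have to re-apply the Theorem~A.2 correction after each such perturbation, which is doable but is exactly the loop the paper's ordering avoids. More seriously, in your connectedness argument the path from Lemma~\ref{Lemma_J_ni_nonempty_connected} is not a priori $C^0$-controlled, so the constant $D^\ast=D^\ast(X,\omega,J_0)$ governing the index estimates for (1) (via the bound $c_1^{TX}(B)\leq D_\ast\,\omega(B)$) may fail along it; the paper avoids this entirely because its starting path already lies in $\mathcal{J}_\omega(X,Y,E)$, so (1) holds along it without any index argument, and only (2) needs the parametric Sard--Smale step (Proposition~8.12 in \cite{MR2399678}) with perturbations supported off $Y$.
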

\begin{proof}
Let $\mathcal{J}_\omega(X, J_0)$ be the $C^0$-ball around $J_0$ in $\mathcal{J}_\omega(X)$ of radius $\theta_2$, where $\theta_2$ is as in Corollary 8.14 of \cite{MR2399678}. Then by that reference, there exists a $J' \in \mathcal{J}_\omega(X,Y, E) \cap \mathcal{J}_\omega(X, J_0)$.
Applying the procedure in the proof of Theorem A.2 in \cite{MR1954264} yields an arbitrarily $C^0$-close (to $J'$, the endomorphism $K$ in equation (A.2) in said proof can be chosen arbitrarily small in $C^0$, but not in $C^1$) $J'' \in \mathcal{J}_{\omega, \mathrm{ni}}(X,Y)$, in particular $J'' \in \mathcal{J}_\omega(X, J_0)$, with $J''|_Y = J'$.
Hence $J''$ still satisfies condition 1.~in Definition \ref{Definition_Nice_J}.
Now observe that condition (ii) of Proposition 8.11 in \cite{MR2399678} can be achieved by a perturbation $J$ of $J''$ \st $J - J''$ lies in the closure of those endomorphisms of $TX$ that have compact support in the complement of $Y$.
But such perturbations still lie in $\mathcal{J}_{\omega, \mathrm{ni}}(X, Y)$. \\
Now if $J_0, J_1\in \mathcal{J}_{\omega,\mathrm{ni}}(X,Y, J_0, E)$, then by Corollary 8.14 in op.~cit.~they can be connected by a path $(J'_\tau)_{\tau \in [0,1]}$ in $\mathcal{J}_{\omega}(X,Y, E)$.
Again applying the procedure from Theorem A.2 in \cite{MR1954264} produces a path $(J''_\tau)_{\tau \in [0,1]}$, arbitrarily close to $(J'_\tau)_{\tau \in [0,1]}$ in $C^0$-topology, that coincides with $(J'_\tau)_{\tau \in [0,1]}$ along $Y$ and satisfies $J''_0 = J'_0 = J_0$ as well as $J''_1 = J'_1 = J_1$.
So in particular $(J''_\tau)_{\tau \in [0,1]}$ still satisfies condition 1.~in Definition \ref{Definition_Nice_J}.
Now proceed as before: Condition (ii) of Proposition 8.12 in \cite{MR2399678} can be achieved by a perturbation $(J_\tau)_{\tau \in [0,1]}$ of $(J''_\tau)_{\tau \in [0,1]}$ \st $J''_\tau - J_\tau$ vanishes for $\tau = 0,1$ and for $\tau \in (0,1)$ lies in the closure of those endomorphisms of $TX$ that have compact support in the complement of $Y$.
\end{proof}

Finally, again in \cite{MR2399678}, Proposition 9.5, the necessary compactness result is given, which can easily be adapted to the present situation to show:
\begin{lemma}\label{Lemma_Main_compactness_result}
Let $J \in \mathcal{J}_\omega(X, Y, E)$ and let $\ell = [Y]\cdot A$. Then
\[
\pi^\mathcal{M}_M\times \pi^{\mathcal{M}}_{\mathcal{H}} :
\cl{\overset{\circ}{\mathcal{M}}(\tilde{X}^\ell, \tilde{Y}^\ell, A, J,
\mathcal{H}(\tilde{X}^\ell, \tilde{Y}^\ell))} \to
M^\ell \times \mathcal{H}(\tilde{X}^\ell, \tilde{Y}^\ell)
\]
is a proper map.
\end{lemma}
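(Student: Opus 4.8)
The plan is to reduce the assertion to the properness of a related map on a Gromov compactification and to the structural description of the boundary provided by the SFT compactness results cited in the excerpt, all restricted to the fibre-dimension zero case (a family of Riemann surfaces over the point of the parameter space) before reassembling over $M^\ell$ and $\mathcal{H}(\tilde X^\ell,\tilde Y^\ell)$.

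First I would unwind the claim to a sequential statement: a map between metrizable spaces is proper iff every sequence $(u_i)$ with $(\pi^\mathcal{M}_M(u_i),\pi^{\mathcal{M}}_{\mathcal{H}}(u_i))$ converging in $M^\ell\times\mathcal{H}(\tilde X^\ell,\tilde Y^\ell)$ has a subsequence converging in $\cl{\overset{\circ}{\mathcal{M}}(\tilde X^\ell,\tilde Y^\ell,A,J,\mathcal{H}(\tilde X^\ell,\tilde Y^\ell))}$. So fix $b_i\to b$ in $M^\ell$ and $H_i\to H$ in $\mathcal{H}(\tilde X^\ell,\tilde Y^\ell)$, and $u_i$ in the closure with these projections. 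By definition of the closure, each $u_i$ is a limit of sections lying in $\overset{\circ}{\mathcal{M}}$; a diagonal argument (together with the fact that the topology on $\mathcal{M}(\tilde X^\ell,A,J,\mathcal{H})$ is finer than $C^0$ and compatible with Deligne-Mumford convergence, Construction \ref{Construction_Deligne_Mumford_convergence}) lets me replace $(u_i)$ by a sequence genuinely in $\overset{\circ}{\mathcal{M}}(\tilde X^\ell,\tilde Y^\ell,A,J,H_i)$ without loss of generality, at the cost of only having to land back in the closure at the end. Now apply Proposition \ref{Proposition_Gromov_compactness_detailled} (Gromov compactness, in its neck-stretching refinement): after passing to a subsequence, there is an $\tilde\ell\geq\ell$, points $\hat b_i\in\overset{\circ}{M}{}^{\tilde\ell}$ with $\pi^{\tilde\ell}_\ell(\hat b_i)=b_i$, a limit $\hat b\in M^{\tilde\ell}$, and a limit section $\hat u\in\mathcal{M}_{\hat b}(\tilde X^{\tilde\ell},A,J,(\hat\pi^{\tilde\ell}_0)^\ast H)$ such that $(\hat\pi^{\tilde\ell}_0)^\ast_{\hat b_i}u_i\to\hat u$. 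This $\hat u$ is the candidate limit point.

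The main work is to show $\hat u$ actually defines a point of $\cl{\overset{\circ}{\mathcal{M}}(\tilde X^\ell,\tilde Y^\ell,A,J,\mathcal{H}(\tilde X^\ell,\tilde Y^\ell))}$, equivalently that no bubbling into $Y$ or stray energy forces $\hat u$ out of that set, and that (via the equivalence relation defining $\overline{\mathcal{M}}$ in Remark \ref{Remark_Gromov_compactification_explicit}) $\hat u$ is represented at some finite level. Here the hypothesis $J\in\mathcal{J}_\omega(X,Y,E)$ is exactly what is used, precisely as in \cite{MR2399678}, Proposition 9.5: the energy bound coming from $[\omega]\in H^2(X;\Z)$, $\omega(A)>0$ and the definition of $\mathcal{H}(\tilde X^\ell)$ gives a uniform bound on vertical energy (Lemma 8.2.9 in \cite{MR2045629}), hence a uniform bound on the number of nonconstant components of any bubble configuration by Gromov–Schwarz and monotonicity. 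Condition (1) in Definition \ref{Definition_Nice_J} forces every sphere bubble lying in $Y$ to be constant, so it can be contracted and does not change the level needed; condition (2) forces every nonconstant sphere in $X$ to meet $Y$ in at least three domain points, which, combined with the intersection-number bookkeeping of Lemma \ref{Lemma_Order_of_tangency} ($\iota(u|_{\Sigma^i_b},Y)=[u|_{\Sigma^i_b}]\cdot[Y]$ and positivity of local intersection numbers) and the constraint $[Y]\cdot A=\ell$, bounds how much bubbling can occur. The outcome is that $\hat u$ is equivalent, in $\overline{\mathcal{M}}(\tilde X^\ell,A,J,\mathcal{H})$, to a section over a point $\hat b'\in M^\ell$ with $\pi^\ell_0(\hat b')=b$, that this section lies in $\mathcal{M}(\tilde X^\ell,A,J,H)$ (not merely in the compactification — this is the content of Step 1/Lemma \ref{Lemma_Main_compactness_result} being about properness rather than just compactness), and that it satisfies $\im(u\circ T^\ell_j)\subseteq\tilde Y^\ell$ for all $j$ because each $u_i$ does and these are closed conditions under the relevant convergence. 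Thus the limit lies in $\cl{\overset{\circ}{\mathcal{M}}}$, which is closed.

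Finally I would assemble the pieces. The two projections $\pi^\mathcal{M}_M$ and $\pi^\mathcal{M}_\mathcal{H}$ are continuous on the closure by construction of the topology in Section \ref{Section_Compactification}, so the candidate limit point indeed projects to $(b,H)$, confirming convergence along the chosen subsequence; since this holds for an arbitrary sequence with convergent projections, $\pi^\mathcal{M}_M\times\pi^\mathcal{M}_\mathcal{H}$ restricted to $\cl{\overset{\circ}{\mathcal{M}}}$ is proper. I expect the genuine obstacle to be the bookkeeping in the previous paragraph: verifying that the ghost/$Y$-valued components produced by Gromov compactness can all be absorbed so that $\hat u$ descends to level $\ell$ and the marked-point constraints survive in the limit. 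This is where I would lean hardest on the arguments of \cite{MR2399678}, Sections 8–9 — in particular translating Proposition 9.5 there to the present family-over-$M^\ell$ setting — and where the role of $\ell=[Y]\cdot A$ together with positivity of intersections (Lemma \ref{Lemma_Order_of_tangency}) is decisive. The adaptation is essentially formal once one observes that everything takes place fibrewise over $M^\ell$ and that the compatibility of the chosen topology with Deligne–Mumford convergence (the $\Sigma^\ell$'s being regular orbifold branched coverings, Lemma \ref{Lemma_Forget_marked_point} and Proposition \ref{Proposition_Sequence_of_branched_coverings}) lets the base-point convergence $b_i\to b$ be handled by Construction \ref{Construction_Deligne_Mumford_convergence}.
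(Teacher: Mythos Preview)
Your proposal is correct and follows essentially the same route as the paper's proof: reduce to a sequential statement, invoke Proposition~\ref{Proposition_Gromov_compactness_detailled} to produce a Gromov limit, then use the two conditions in Definition~\ref{Definition_Nice_J} together with positivity of intersections (Lemma~\ref{Lemma_Order_of_tangency}) and $\ell=[Y]\cdot A$ to exclude sphere bubbles, deferring the final contradiction to Proposition~9.5 of \cite{MR2399678}. The paper's argument is slightly terser (it starts directly with a sequence in the uncompactified moduli space and names the key observation that any bubble sphere must hit $Y$ at a point not among the $T^\ell_j$), but your added diagonal argument and explicit invocation of Construction~\ref{Construction_Deligne_Mumford_convergence} only make the reduction more careful, not different in substance.
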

\begin{proof}
Let $(u_i)_{i\in \N} \subseteq \mathcal{M}(\tilde{X}^\ell,
\tilde{Y}^\ell, A, J, \mathcal{H}(\tilde{X}^\ell, \tilde{Y}^\ell))$
be a sequence \st $b_i \definedas \pi^\mathcal{M}_M(u_i) \to b\in M^\ell$ and $H_i \definedas \pi^\mathcal{M}_\mathcal{H}(u_i) \to H \in \mathcal{H}(\tilde{X}^\ell, \tilde{Y}^\ell)$. By Proposition
\ref{Proposition_Gromov_compactness_detailled}, there then exists an
$\ell'\in \N_0$ and a subsequence $(u_{i_j})_{j\in \N}$ together with a sequence $\hat{b}_{i_j}\in (M^\ell)^{\circ\ell'}$ and an element $\hat{u} \in \mathcal{M}((\hat{\pi}^{\ell'}_\ell)^\ast \tilde{X}^\ell, A, J, \mathcal{H}((\hat{\pi}^{\ell'}_\ell)^\ast \tilde{X}^\ell))$, $\hat{b} \definedas \pi^\mathcal{M}_M(\hat{u})$ \st $\pi^\mathcal{M}_\mathcal{H}(\hat{u}) = H$, $\pi^{\ell'}_\ell(\hat{b}) = b$ and
\[
(\hat{\pi}^{\ell'}_\ell)^\ast u_{i_j} \underset{j \to \infty}{\longrightarrow} \hat{u}\text{.}
\]
Furthermore, either $\hat{b} \in (M^\ell)^{\circ\ell'}$ and hence $\hat{u}$ defines an element
\[
(u, b, H)\in \cl\overset{\circ}{\mathcal{M}}(\tilde{X}^\ell, \tilde{Y}^\ell, A, J, \mathcal{H}(\tilde{X}^\ell, \tilde{Y}^\ell))\text{,}
\]
or there exists a component $(\Sigma^\ell)^{\circ\ell'}_{i,\hat{b}}$ with $\hat{\pi}^{\ell'}_\ell((\Sigma^\ell)^{\circ\ell'}_{i,\hat{b}}) = z$ for some $z\in \Sigma^\ell_b$ either a node or a marked point.
Furthermore, $\hat{u}$ defines a nonconstant $J$-holomorphic sphere in $\tilde{X}^\ell_z \cong X$.
Because $J\in \mathcal{J}_\omega(X, Y, E)$, the image of this sphere is not contained in $Y$ and intersects $Y$ in at least $3$ distinct points, at least one of which is not one of the last $\ell$ marked points $T^\ell_j(b)$.
Now proceed literally as in the proof of Proposition 9.5 in \cite{MR2399678}.
\end{proof}

\subsection{A description of the closure of the moduli space}\label{Subsection_Description_of_the_closure}

In this subsection a description of the elements of $\cl\overset{\circ}{\mathcal{M}}(\tilde{X}^\ell, \tilde{Y}^\ell, A, J, H)$ is given together with the compactness result from Step \ref{Step_4} of the outline of the proof of Theorem \ref{Theorem_Main_Theorem_1} in Subsection \ref{Subsubsection_Proof_Main_Theorem_1}.
Namely to each $u \in \cl\overset{\circ}{\mathcal{M}}(\tilde{X}^\ell, \tilde{Y}^\ell, A, J, H)$ one can associate data from the following at most countable set of choices:
\begin{enumerate}
  \item\label{Subsection_Description_of_the_closure_1} Letting $b \in M^\ell$, there is a connected open neighbourhood $U^\ell$ of $b$ in the stratum of the stratification by signature on $M^\ell$ containing $b$ \st $\Sigma^\ell_{U^\ell} \definedas \Sigma^\ell|_{U^\ell}$ has a desingularisation
\[
(\rho^\ell : \hat{S}^\ell \to U^\ell, \hat{R}^\ell, \hat{T}^\ell, \iota^\ell : U^\ell \to M^\ell, \hat{\iota}^\ell : \hat{S}^\ell \to \Sigma^\ell)
\]
\st $\rho^\ell : \hat{S}^\ell \to U^\ell$ is topologically trivial and \st there are sections $N^{\ell,1}_r, N^{\ell,2}_r : U^\ell \to \hat{S}^\ell$ parametrising the nodal points.
$M^\ell$ can be covered by at most countably many subsets $U^\ell$ of this form.
  \item\label{Subsection_Description_of_the_closure_2} For each of the above, there is a finite index set $I$ for the connected components of $\hat{S}^\ell$, \st $\hat{S}^\ell = \coprod_{i\in I} \hat{S}^\ell_i$, and also a finite number of decompositions $I = I^X \amalg I^Y$ into two disjoint subsets.
\end{enumerate}
For $i\in I$ and $b\in U^\ell$, $\hat{S}^\ell_{i,b}$ will usually be treated as a subset of $\Sigma^\ell_b$.
To $u \in \cl\overset{\circ}{\mathcal{M}}(\tilde{X}^\ell, \tilde{Y}^\ell, A, J, H)$ one then associates $U^\ell$ \st $\pi^{\mathcal{M}}_M(u) \defines b \in U^\ell$ and $I = I^X \amalg I^Y$ \st $u(\hat{S}^\ell_{i,b}) \subseteq \tilde{Y}^\ell \;\Leftrightarrow\; i \in I^Y$. \\
Furthermore, to each of the above, one can associate the following:
\begin{enumerate}\setcounter{enumi}{2}
  \item\label{Subsection_Description_of_the_closure_3} $\hat{S}^\ell = \hat{S}^{\ell,X} \amalg \hat{S}^{\ell,Y} \definedas \left(\coprod_{i\in I^X} \hat{S}^\ell_i\right) \amalg \left(\coprod_{i\in I^Y} \hat{S}^\ell_i\right)$.
  \item\label{Subsection_Description_of_the_closure_4} Denote by $\Sigma^{\ell,X}_{U^\ell}$ and $\Sigma^{\ell,Y}_{U^\ell}$ the image of $\hat{S}^{\ell,X}$ and $\hat{S}^{\ell,Y}$ under $\hat{\iota}^\ell$, respectively, so that $\Sigma^\ell_{U^\ell} = \Sigma^{\ell,X}_{U^\ell} \cup \Sigma^{\ell,Y}_{U^\ell}$.
  \item\label{Subsection_Description_of_the_closure_5} Denote by $\chi^X$ and $\chi^Y$ the Euler characteristics of the fibres of $\Sigma^{\ell,X}_{U^\ell}$ and $\Sigma^{\ell,Y}_{U^\ell}$, respectively.
  \item\label{Subsection_Description_of_the_closure_6} Let $\{1, \dots, \ell\} = K^X \amalg K^Y$ be the decomposition \st $T^\ell_j(b) \in \Sigma^{\ell,X}_{U^\ell}$ for all $j\in K^X$ and $b\in U^\ell$ and $T^\ell_j(b) \in \Sigma^{\ell,Y}_{U^\ell}$ for all $j\in K^Y$ and $b\in U^\ell$.
  \item\label{Subsection_Description_of_the_closure_7} Among the nodal points on $\hat{S}^\ell$, there is a subset of those pairs, where one of the two points corresponding to a node lies on $\hat{S}^{\ell,X}$ and the other lies on $\hat{S}^{\ell,Y}$. Denote these by $N^{\ell,XY,X}_r, N^{\ell,XY,Y}_r$, $r = 1, \dots, d$, the first one lying on $\hat{S}^{\ell, X}$, the second one on $\hat{S}^{\ell,Y}$.
  \item\label{Subsection_Description_of_the_closure_8} Denote by $N^{\ell,Y,1}_r, N^{\ell,Y,2}_r$, $r=1,\dots,d'$, the nodal points where both lie on $\hat{S}^{\ell,Y}$.
  \item\label{Subsection_Description_of_the_closure_9} Regard both $\Sigma^{\ell,X}_{U^\ell}$ and $\Sigma^{\ell,Y}_{U^\ell}$ as families of nodal Riemann surfaces with marked points $((T^\ell_j)_{j\in K^X}, (N^{\ell,XY,X}_r)_{r = 1, \dots, d})$ and $((T^\ell_j)_{j\in K^Y}, (N^{\ell,XY,Y}_r)_{r = 1, \dots, d})$, respectively.
\end{enumerate}
From now on unless specified otherwise, a set of data as in \ref{Subsection_Description_of_the_closure_1}.--\ref{Subsection_Description_of_the_closure_9}.~above is always supposed to be given. \\
Also, write $u_i \definedas u|_{\hat{S}^\ell_i}$, $u^X \definedas u|_{\Sigma^{\ell,X}_b}$ and $u^Y \definedas u|_{\Sigma^{\ell,Y}_b}$.

\clearpage
\begin{landscape}
\enlargethispage{2cm}
\def\svgwidth{24cm}
\ifpdf
\begingroup%
  \makeatletter%
  \providecommand\color[2][]{%
    \errmessage{(Inkscape) Color is used for the text in Inkscape, but the package 'color.sty' is not loaded}%
    \renewcommand\color[2][]{}%
  }%
  \providecommand\transparent[1]{%
    \errmessage{(Inkscape) Transparency is used (non-zero) for the text in Inkscape, but the package 'transparent.sty' is not loaded}%
    \renewcommand\transparent[1]{}%
  }%
  \providecommand\rotatebox[2]{#2}%
  \ifx\svgwidth\undefined%
    \setlength{\unitlength}{8146.825bp}%
    \ifx\svgscale\undefined%
      \relax%
    \else%
      \setlength{\unitlength}{\unitlength * \real{\svgscale}}%
    \fi%
  \else%
    \setlength{\unitlength}{\svgwidth}%
  \fi%
  \global\let\svgwidth\undefined%
  \global\let\svgscale\undefined%
  \makeatother%
  \begin{picture}(1,0.57631654)%
    \put(0,0){\includegraphics[width=\unitlength]{Boundary_1.pdf}}%
    \put(0.35445959,0.45980707){\color[rgb]{0,0,0}\makebox(0,0)[lb]{\smash{$N^{\ell,XY,X}_1$}}}%
    \put(0.59804215,0.46056354){\color[rgb]{0,0,0}\makebox(0,0)[lb]{\smash{$N^{\ell,XY,X}_2$}}}%
    \put(0.79018489,0.46056355){\color[rgb]{0,0,0}\makebox(0,0)[lb]{\smash{$N^{\ell,XY,X}_3$}}}%
    \put(0.30453274,0.38869913){\color[rgb]{0,0,0}\makebox(0,0)[lb]{\smash{$N^{\ell XY,Y}_1$}}}%
    \put(0.57761754,0.39399441){\color[rgb]{0,0,0}\makebox(0,0)[lb]{\smash{$N^{\ell,XY,Y}_2$}}}%
    \put(0.78035083,0.39172501){\color[rgb]{0,0,0}\makebox(0,0)[lb]{\smash{$N^{\ell,XY,Y}_3$}}}%
    \put(0.29999392,0.34255463){\color[rgb]{0,0,0}\makebox(0,0)[lb]{\smash{$N^{\ell,Y,1}_1$}}}%
    \put(0.32949616,0.33272054){\color[rgb]{0,0,0}\makebox(0,0)[lb]{\smash{$N^{\ell,Y,1}_2$}}}%
    \put(0.37034541,0.29565362){\color[rgb]{0,0,0}\makebox(0,0)[lb]{\smash{$N^{\ell,Y,2}_2$}}}%
    \put(0.23191184,0.31002648){\color[rgb]{0,0,0}\makebox(0,0)[lb]{\smash{$N^{\ell,Y,2}_1$}}}%
    \put(0.57459165,0.35087574){\color[rgb]{0,0,0}\makebox(0,0)[lb]{\smash{$N^{\ell,Y,1}_3$}}}%
    \put(0.59955506,0.3357464){\color[rgb]{0,0,0}\makebox(0,0)[lb]{\smash{$N^{\ell,Y,1}_4$}}}%
    \put(0.51104838,0.31153946){\color[rgb]{0,0,0}\makebox(0,0)[lb]{\smash{$N^{\ell,Y,2}_3$}}}%
    \put(0.61846673,0.2797678){\color[rgb]{0,0,0}\makebox(0,0)[lb]{\smash{$N^{\ell,Y,2}_4$}}}%
    \put(0.78791551,0.33423347){\color[rgb]{0,0,0}\makebox(0,0)[lb]{\smash{$N^{\ell,Y,1}_8$}}}%
    \put(0.79169785,0.28279367){\color[rgb]{0,0,0}\makebox(0,0)[lb]{\smash{$N^{\ell,Y,2}_8$}}}%
    \put(0.78186378,0.23437975){\color[rgb]{0,0,0}\makebox(0,0)[lb]{\smash{$N^{\ell,Y,1}_7$}}}%
    \put(0.72664164,0.19201755){\color[rgb]{0,0,0}\makebox(0,0)[lb]{\smash{$N^{\ell,Y,2}_7$}}}%
    \put(0.51180483,0.2358927){\color[rgb]{0,0,0}\makebox(0,0)[lb]{\smash{$N^{\ell,Y,1}_5$}}}%
    \put(0.46112152,0.19277403){\color[rgb]{0,0,0}\makebox(0,0)[lb]{\smash{$N^{\ell,Y,2}_5$}}}%
    \put(0.44826155,0.14209073){\color[rgb]{0,0,0}\makebox(0,0)[lb]{\smash{$N^{\ell,Y,1}_6$}}}%
    \put(0.39757825,0.09367678){\color[rgb]{0,0,0}\makebox(0,0)[lb]{\smash{$N^{\ell,Y,2}_6$}}}%
    \put(0.11541584,0.49233519){\color[rgb]{0,0,0}\makebox(0,0)[lb]{\smash{{\LARGE $\hat{S}^{\ell,X},\; \Sigma^{\ell,X}_{U^\ell}$}}}}%
    \put(0.26013508,0.16571793){\color[rgb]{0,0,0}\makebox(0,0)[lb]{\smash{{\LARGE $\hat{S}^{\ell,Y},\; \Sigma^{\ell,Y}_{U^\ell}$}}}}%
    \put(0.07910541,0.04601932){\color[rgb]{0,0,0}\makebox(0,0)[lb]{\smash{{\LARGE $Y$}}}}%
    \put(0.8964566,0.49566242){\color[rgb]{0,0,0}\makebox(0,0)[lb]{\smash{{\LARGE $X\setminus Y$}}}}%
    \put(0.00874881,0.36211346){\color[rgb]{0,0,0}\makebox(0,0)[lb]{\smash{{\LARGE $\circ : T^\ell_i, \; i\in K^X$}}}}%
    \put(0.00874881,0.32283436){\color[rgb]{0,0,0}\makebox(0,0)[lb]{\smash{{\LARGE $\bullet : T^\ell_j ,\; j\in K^Y$}}}}%
  \end{picture}%
\endgroup%

\else
\begingroup%
  \makeatletter%
  \providecommand\color[2][]{%
    \errmessage{(Inkscape) Color is used for the text in Inkscape, but the package 'color.sty' is not loaded}%
    \renewcommand\color[2][]{}%
  }%
  \providecommand\transparent[1]{%
    \errmessage{(Inkscape) Transparency is used (non-zero) for the text in Inkscape, but the package 'transparent.sty' is not loaded}%
    \renewcommand\transparent[1]{}%
  }%
  \providecommand\rotatebox[2]{#2}%
  \ifx\svgwidth\undefined%
    \setlength{\unitlength}{8146.825bp}%
    \ifx\svgscale\undefined%
      \relax%
    \else%
      \setlength{\unitlength}{\unitlength * \real{\svgscale}}%
    \fi%
  \else%
    \setlength{\unitlength}{\svgwidth}%
  \fi%
  \global\let\svgwidth\undefined%
  \global\let\svgscale\undefined%
  \makeatother%
  \begin{picture}(1,0.57631654)%
    \put(0,0){\includegraphics[width=\unitlength]{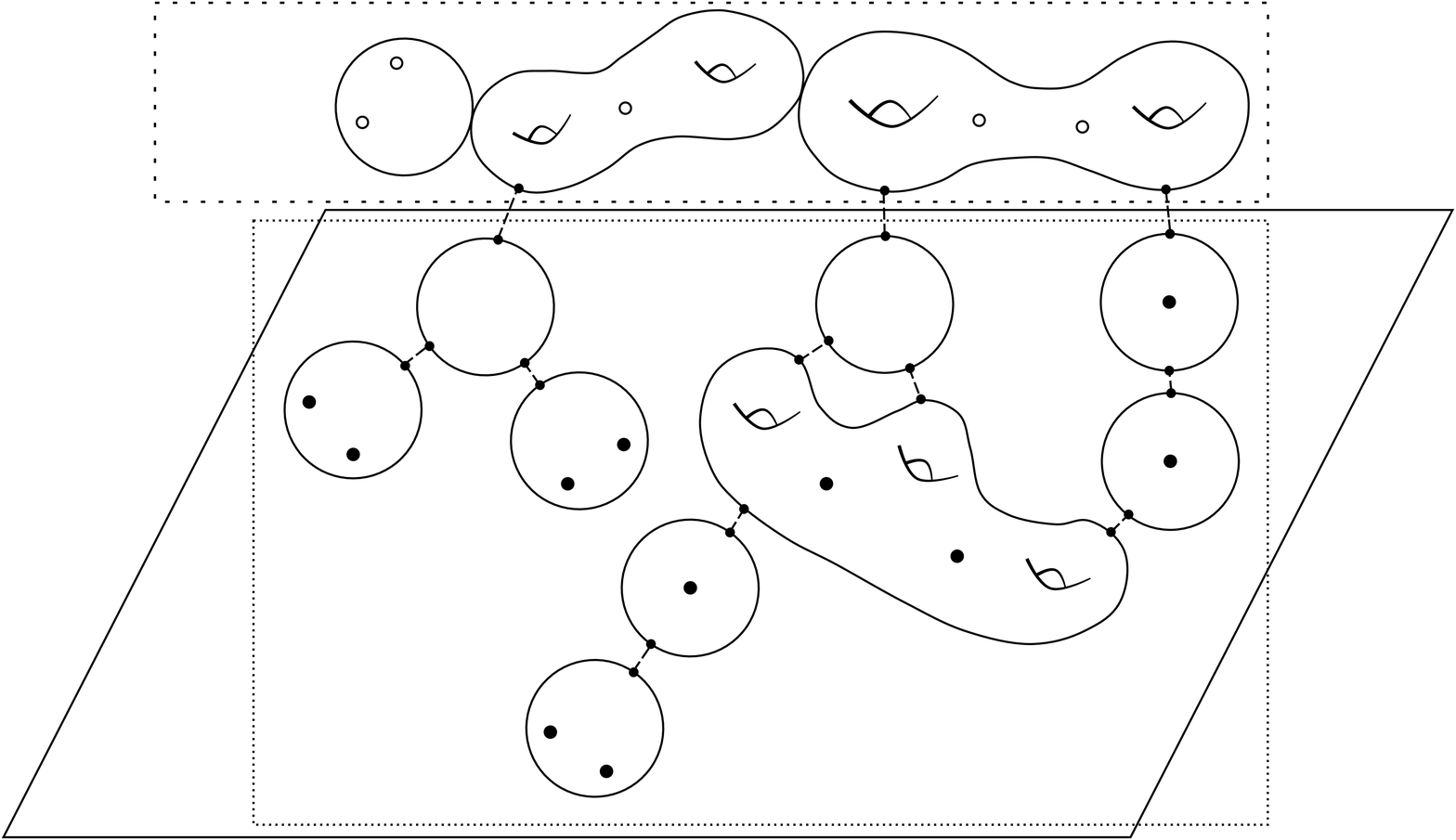}}%
    \put(0.35445959,0.45980707){\color[rgb]{0,0,0}\makebox(0,0)[lb]{\smash{$N^{\ell,XY,X}_1$}}}%
    \put(0.59804215,0.46056354){\color[rgb]{0,0,0}\makebox(0,0)[lb]{\smash{$N^{\ell,XY,X}_2$}}}%
    \put(0.79018489,0.46056355){\color[rgb]{0,0,0}\makebox(0,0)[lb]{\smash{$N^{\ell,XY,X}_3$}}}%
    \put(0.30453274,0.38869913){\color[rgb]{0,0,0}\makebox(0,0)[lb]{\smash{$N^{\ell XY,Y}_1$}}}%
    \put(0.57761754,0.39399441){\color[rgb]{0,0,0}\makebox(0,0)[lb]{\smash{$N^{\ell,XY,Y}_2$}}}%
    \put(0.78035083,0.39172501){\color[rgb]{0,0,0}\makebox(0,0)[lb]{\smash{$N^{\ell,XY,Y}_3$}}}%
    \put(0.29999392,0.34255463){\color[rgb]{0,0,0}\makebox(0,0)[lb]{\smash{$N^{\ell,Y,1}_1$}}}%
    \put(0.32949616,0.33272054){\color[rgb]{0,0,0}\makebox(0,0)[lb]{\smash{$N^{\ell,Y,1}_2$}}}%
    \put(0.37034541,0.29565362){\color[rgb]{0,0,0}\makebox(0,0)[lb]{\smash{$N^{\ell,Y,2}_2$}}}%
    \put(0.23191184,0.31002648){\color[rgb]{0,0,0}\makebox(0,0)[lb]{\smash{$N^{\ell,Y,2}_1$}}}%
    \put(0.57459165,0.35087574){\color[rgb]{0,0,0}\makebox(0,0)[lb]{\smash{$N^{\ell,Y,1}_3$}}}%
    \put(0.59955506,0.3357464){\color[rgb]{0,0,0}\makebox(0,0)[lb]{\smash{$N^{\ell,Y,1}_4$}}}%
    \put(0.51104838,0.31153946){\color[rgb]{0,0,0}\makebox(0,0)[lb]{\smash{$N^{\ell,Y,2}_3$}}}%
    \put(0.61846673,0.2797678){\color[rgb]{0,0,0}\makebox(0,0)[lb]{\smash{$N^{\ell,Y,2}_4$}}}%
    \put(0.78791551,0.33423347){\color[rgb]{0,0,0}\makebox(0,0)[lb]{\smash{$N^{\ell,Y,1}_8$}}}%
    \put(0.79169785,0.28279367){\color[rgb]{0,0,0}\makebox(0,0)[lb]{\smash{$N^{\ell,Y,2}_8$}}}%
    \put(0.78186378,0.23437975){\color[rgb]{0,0,0}\makebox(0,0)[lb]{\smash{$N^{\ell,Y,1}_7$}}}%
    \put(0.72664164,0.19201755){\color[rgb]{0,0,0}\makebox(0,0)[lb]{\smash{$N^{\ell,Y,2}_7$}}}%
    \put(0.51180483,0.2358927){\color[rgb]{0,0,0}\makebox(0,0)[lb]{\smash{$N^{\ell,Y,1}_5$}}}%
    \put(0.46112152,0.19277403){\color[rgb]{0,0,0}\makebox(0,0)[lb]{\smash{$N^{\ell,Y,2}_5$}}}%
    \put(0.44826155,0.14209073){\color[rgb]{0,0,0}\makebox(0,0)[lb]{\smash{$N^{\ell,Y,1}_6$}}}%
    \put(0.39757825,0.09367678){\color[rgb]{0,0,0}\makebox(0,0)[lb]{\smash{$N^{\ell,Y,2}_6$}}}%
    \put(0.11541584,0.49233519){\color[rgb]{0,0,0}\makebox(0,0)[lb]{\smash{{\LARGE $\hat{S}^{\ell,X},\; \Sigma^{\ell,X}_{U^\ell}$}}}}%
    \put(0.26013508,0.16571793){\color[rgb]{0,0,0}\makebox(0,0)[lb]{\smash{{\LARGE $\hat{S}^{\ell,Y},\; \Sigma^{\ell,Y}_{U^\ell}$}}}}%
    \put(0.07910541,0.04601932){\color[rgb]{0,0,0}\makebox(0,0)[lb]{\smash{{\LARGE $Y$}}}}%
    \put(0.8964566,0.49566242){\color[rgb]{0,0,0}\makebox(0,0)[lb]{\smash{{\LARGE $X\setminus Y$}}}}%
    \put(0.00874881,0.36211346){\color[rgb]{0,0,0}\makebox(0,0)[lb]{\smash{{\LARGE $\circ : T^\ell_i, \; i\in K^X$}}}}%
    \put(0.00874881,0.32283436){\color[rgb]{0,0,0}\makebox(0,0)[lb]{\smash{{\LARGE $\bullet : T^\ell_j ,\; j\in K^Y$}}}}%
  \end{picture}%
\endgroup%

\fi
\end{landscape}

\subsection{Reduction to the case of vanishing homology classes}\label{Subsection_Vanishing_homology_class}

The first goal is to show that one can choose $H$ \st every component of a section in $\cl\overset{\circ}{\mathcal{M}}(\tilde{X}^\ell, \tilde{Y}^\ell, A, J, H)$ with image contained in $\tilde{Y}^\ell$ needs to represent vanishing homology class. Assuming $A\neq 0$, this in particular implies that no section over a smooth curve in $\cl\overset{\circ}{\mathcal{M}}(\tilde{X}^\ell, \tilde{Y}^\ell, A, J, H)$ has image contained in $\tilde{Y}^\ell$.
The way this will be proved is by following the line of argument in \cite{MR2399678} leading up to Proposition 8.11.
To formulate the main result, first of all the analogue of Lemma 8.9 in \cite{MR2399678} is needed:
\begin{lemma}\label{Lemma_Dstar}
Let $\Sigma$ be a fixed smooth Riemann surface equipped with a compatible volume
form $\dvol_\Sigma$ \st $\vol_\Sigma(\Sigma) = 1$, let $(X,\omega)$ be a
closed symplectic manifold, $\hat{X} \definedas \Sigma \times X$, and let $J_0$ be an $\omega$-compatible almost complex structure.
Then there exists a constant $D_\ast = D_\ast(X,\omega,J_0)$ \st for any $J \in \mathcal{J}_\omega(X)$ and $H\in \mathcal{H}(\hat{X})$ with $\|J - J_0\|_{C^0}, \|H\|_{C^1}, R_H < \frac{1}{4}$, where $R_H : \hat{X} \to \R$ is \st $R_H\dvol_\Sigma$ is the curvature of the connection defined by $H$, the following holds: \\
If $A \in H_2(X; \Z)$ is a homology class \st there exists a $\hat{J}^H$-holomorphic section $u = (\id,u_2) : \Sigma \to \hat{X}$ with $[u_2] = A$, then
\[
\langle c_1(TW), A\rangle < D_\ast (\omega(A) + 1)\text{.}
\]
\end{lemma}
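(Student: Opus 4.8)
The plan is to mimic the proof of Lemma 8.9 in \cite{MR2399678}, now in the presence of a Hamiltonian perturbation that controls the deviation of $u$ from being a holomorphic \emph{map}. First I would fix a $\hat{J}^H$-holomorphic section $u = (\id, u_2) : \Sigma \to \hat X$ representing the pair $(1, A) \in H_2(\Sigma; \Z) \oplus H_2(X; \Z) \cong H_2(\hat X; \Z)$ (using that $\hat X = \Sigma \times X$ is a trivial bundle, so $c_1(T\hat X) = \pr_1^\ast c_1(T\Sigma) + \pr_2^\ast c_1(TX)$), and rewrite the quantity to be estimated as
\[
\langle c_1(TX), A\rangle = \langle c_1(T\hat X), [u]\rangle - \langle c_1(T\Sigma), [\Sigma]\rangle = \langle c_1(T\hat X), [u]\rangle - \chi(\Sigma)\text{.}
\]
Then I would estimate $\langle c_1(T\hat X), [u]\rangle$ by integrating the first Chern form of $(T\hat X, \hat J^H)$ pulled back under $u$, splitting this into the curvature contribution of the vertical tangent bundle $V\hat X$ (governed by the geometry of $(X,\omega,J)$ and essentially bounded by $c\,\omega(A)$ via the compatibility of $J$ with $\omega$, cf.\ the energy identity) plus the horizontal contribution from $T\Sigma$, which by Remark~\ref{Remark_tilde_J_H} and the definition of $\hat J^H$ picks up precisely the curvature term $R_H\dvol_\Sigma$ of the Hamiltonian connection. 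Here the hypotheses $\|H\|_{C^1}, R_H < \frac14$ and $\|J - J_0\|_{C^0} < \frac14$ enter: they bound the pointwise difference between the $\hat J^H$-Chern form and the product Chern form, and they force a universal energy bound on $u$ in terms of $\omega(A) + 1$ via Lemma~8.2.9 in \cite{MR2045629} (the vertical energy is $\omega(A)$ up to the small correction coming from $H$, and the $+1$ absorbs $\vol_\Sigma(\Sigma) = 1$ together with the curvature defect).

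The key steps, in order, would be: (1) reduce to the trivial bundle case and express $\langle c_1(TX), A\rangle$ via $[u]$ and $\chi(\Sigma)$ as above; (2) establish the a priori energy bound $E(u) \leq \omega(A) + C_0$ for a universal $C_0$, using that $u$ is $\hat J^H$-holomorphic and that $\|H\|_{C^1}, R_H$ are small — this is where one invokes the argument of Lemma~8.9 / Section~8.2 of \cite{MR2045629}; (3) bound the Chern number $\langle c_1(T\hat X), [u]\rangle$ in terms of $E(u)$ plus a curvature term, using that for an $\omega$-compatible $J$ (close to $J_0$), the first Chern form of $(V\hat X, J)$ is dominated by a constant multiple of $\omega$, and the horizontal part contributes at most $\int_\Sigma |R_H|\,\dvol_\Sigma < \frac14$; (4) assemble the inequalities, absorb the additive constants and $\chi(\Sigma)$ into a single $D_\ast = D_\ast(X,\omega,J_0)$, and conclude $\langle c_1(TX), A\rangle < D_\ast(\omega(A) + 1)$. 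Note the quantity $\langle c_1(TW), A\rangle$ in the statement should read $\langle c_1(TX), A\rangle$ (or $c_1(A)$ in the notation of the rest of the paper).

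The main obstacle I expect is step (3): obtaining a \emph{clean} bound on the Chern number of the $\hat J^H$-twisted tangent bundle rather than of the product complex structure. The issue is that $\hat J^H$ differs from the split structure by the term $2J X_{H(w)}^{0,1}$ (Remark~\ref{Remark_tilde_J_H}), so the curvature of the Chern connection for $\hat J^H$ is not simply the direct sum of the curvatures on $T\Sigma$ and $V\hat X$; one must control the cross terms. The smallness hypotheses $\|H\|_{C^1}, R_H < \frac14$ are exactly designed to make these cross terms harmlessly small, but verifying this requires a careful (though routine) computation with the hermitian connection $\tilde\nabla$ on $\hat X$ and the identity $T^{\tilde\nabla} = -\frac14 N_{\hat J^H}$ recorded before Lemma~\ref{Lemma_Ddbar}. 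Once that curvature comparison is in hand, the rest is bookkeeping of universal constants. The remaining subtlety is making sure $D_\ast$ depends only on $(X,\omega,J_0)$ and not on $\Sigma$; this holds because $\vol_\Sigma(\Sigma)$ is normalised to $1$ and $\chi(\Sigma)$, while unbounded, enters only with a favourable sign (it is subtracted) when $g \geq 1$, and for $g = 0$ it contributes a bounded amount $2$.
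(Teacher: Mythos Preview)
Your detour through $c_1(T\hat X)$ is both unnecessary and, as you have written it, flawed. The sign claim in your last paragraph is backwards: from
\[
\langle c_1(TX), A\rangle = \langle c_1(T\hat X), [u]\rangle - \chi(\Sigma)
\]
you are \emph{adding} $-\chi(\Sigma) = 2g-2$ to whatever upper bound you obtain for $\langle c_1(T\hat X), [u]\rangle$, so for high genus this contribution is large and positive, not ``favourable''. Your bound on $D_\ast$ would therefore depend on the genus of $\Sigma$, which is exactly what must be avoided when the lemma is later applied componentwise to nodal curves (Lemma~\ref{Lemma_Reduction_to_vanishing_A}). The subsequent worry about cross terms in the Chern connection of $\hat J^H$ is a symptom of the same detour: you are trying to control curvature of a bundle you never needed to look at.

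The paper's argument sidesteps all of this. The role of the hypothesis $R_H < \tfrac14$ is precisely that, by Exercise~8.1.3 in \cite{MR2045629}, it makes $\hat J^H$ compatible with the \emph{closed} form $\hat\omega_\kappa \definedas \pr_2^\ast\omega + \kappa\,\pr_1^\ast\dvol_\Sigma$ for $\kappa = \tfrac14$. Once $\hat J^H$ is $\hat\omega_\kappa$-compatible, one uses the elementary calibration inequality for holomorphic maps: for any closed $2$-form $\alpha$ on $X$ representing $c_1(TX)$,
\[
\langle c_1(TX), A\rangle = \int_\Sigma u^\ast(\pr_2^\ast\alpha) \le \|\pr_2^\ast\alpha\|_{\hat\omega_\kappa,\hat J^H}\int_\Sigma u^\ast\hat\omega_\kappa = \|\pr_2^\ast\alpha\|_{\hat\omega_\kappa,\hat J^H}\bigl(\omega(A)+\kappa\bigr),
\]
and the sup-norm on the right depends continuously on $(J,H)$ in $C^0\times C^1$, hence is uniformly bounded near $(J_0,0)$. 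No Chern connection, no $\chi(\Sigma)$, no cross terms. Your step~(2) energy bound is essentially this same computation with $\alpha$ replaced by $\omega$; the point is that one can run it directly with the Chern representative $\alpha$ and be done.
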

\begin{proof}
Let $J, H$ be as in the statement of the lemma and let $\kappa \definedas \frac{1}{4}$. Then by Exercise 8.1.3, p.~260, in \cite{MR2045629},
$\hat{\omega}_\kappa \definedas \pr_2^\ast\omega + \pr_1^\ast(\kappa\dvol_\Sigma)$ is a symplectic form on $\hat{X}$ \st $\hat{J}^H$ is $\hat{\omega}_\kappa$-compatible.
Now proceed as in the proof of Lemma 8.9 in \cite{MR2399678}:
Let $\alpha\in\Omega^2(X)$ be a closed $2$-form that represents $c_1(TX)$. Then
\begin{align*}
\langle c_1(TX), A\rangle &= \int_\Sigma u_2^\ast\alpha \\
&= \int_\Sigma u^\ast \pr_2^\ast\alpha \\
&\leq \|\pr_2^\ast\alpha\|_{\hat{\omega}_\kappa, \hat{J}^H} \int_\Sigma u^\ast \hat{\omega}_\kappa
\intertext{as in op.~cit., because $u$ is $\hat{J}^H$-holomorphic and $\hat{J}^H$ is $\hat{\omega}_\kappa$-compatible}
&= \|\pr_2^\ast\alpha\|_{\hat{\omega}_\kappa, \hat{J}^H} \left(\int_\Sigma
u_2^\ast\omega + \kappa\right) \\
&= \|\pr_2^\ast\alpha\|_{\tilde{\omega}_\kappa, \hat{J}^H} (\omega(A) + \kappa)\text{.}
\end{align*}
$\|\pr_2^\ast\alpha\|_{\hat{\omega}_\kappa, \hat{J}^H}$ here denotes the norm \wrt the metric defined by $\hat{\omega}_\kappa$ and $\hat{J}^H$.
The claim now follows, because $\|\pr_2^\ast\alpha \|_{\hat{\omega}_\kappa, \hat{J}^H}$ depends continuously on $\hat{J}^H$, which in turns depends continuously in $C^0$-norm on $J$ and in $C^1$-norm on $H$, and coincides with $\|\alpha\|_{\omega,J_0}$ for $J = J_0$ and $H = 0$.
\end{proof}

Using this, one can formulate the main result of this section:
\begin{lemma}\label{Lemma_Reduction_to_vanishing_A}
Given $J \in \mathcal{J}_{\omega}(X,Y,E)$, there exists an integer $D^\ast$ depending only on $g$, $n$ and $D_\ast$ \st for $D\geq D^\ast$ there exists a generic subset $\mathcal{H}_{\mathrm{reg}}(\tilde{Y}, J) \subseteq \mathcal{H}(\tilde{Y})$ with the property that for every $H \in \mathcal{H}_{\mathrm{reg}}(\tilde{Y}, J)$ and for any choice of data $U^\ell, I^X, I^Y$ as above, for $0\neq B \in H_2(Y)$,
\[
\mathcal{M}(\tilde{Y}^\ell|_{\Sigma^{\ell,Y}_{U^\ell}}, B, J, (\hat{\pi}^\ell_0)^\ast H) = \emptyset
\]
and $\mathcal{M}(\tilde{Y}^\ell|_{\Sigma^{\ell,Y}_{U^\ell}}, 0, J, (\hat{\pi}^\ell_0)^\ast H)$ is a smooth manifold diffeomorphic to
\[
(\pi^\ell_0)^\ast \left( \mathcal{M}(\tilde{Y}|_{\Sigma^Y_{U}}, 0, J, H) \amalg \coprod_{C\in \mathcal{C}_1} Y|_U\right)
\]
and hence of dimension
\[
\dim_\R\left( \mathcal{M}(\tilde{Y}^\ell|_{\Sigma^{\ell,Y}_{U^\ell}}, 0, J, (\hat{\pi}^\ell_0)^\ast H) \right) = \dim_\C(Y)\chi^Y + \dim_\R(U^\ell)\text{.}
\]
Furthermore this manifold comes with the smooth evaluation map
\[
\ev^{N^{\ell, XY, Y}} : \mathcal{M}(\tilde{Y}^\ell|_{\Sigma^{\ell,Y}_{U^\ell}}, 0, J, (\hat{\pi}^\ell_0)^\ast H) \to \bigoplus_{r=1}^d \left(N^{\ell, XY, Y}_r\right)^\ast \tilde{Y}^\ell\text{.}
\]
\end{lemma}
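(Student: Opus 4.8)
The plan is to establish Lemma~\ref{Lemma_Reduction_to_vanishing_A} by following the argument in \cite{MR2399678} leading up to Proposition~8.11, carefully carrying along the bundle language and the normally integrable setup. First I would set up the relevant moduli spaces of holomorphic curves \emph{in $\tilde{Y}^\ell$}, \ie sections $u$ of $\tilde{X}^\ell|_{\Sigma^{\ell,Y}_{U^\ell}}$ with image in $\tilde{Y}^\ell$ representing a class $B \in H_2(Y)$, and observe via Lemma~\ref{Lemma_H_X_Y} and the $Y$-compatibility that these really are holomorphic sections of the fibration $\tilde{Y}^\ell \to \Sigma^{\ell,Y}_{U^\ell}$ for the restricted almost complex structure. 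Then I would run the parametrised transversality machinery from Section~\ref{Section_Construction_smooth_structures} (\cf Lemma~\ref{Lemma_The_universal_moduli_space}, Corollary~\ref{Corollary_Smooth_structure_on_M}) for the target $Y$ in place of $X$: the relevant space of perturbations is $\mathcal{H}(\tilde{Y})$ (pulled back via $\hat{\pi}^\ell_0$), which by Lemma~\ref{Lemma_Enough_ni_ham_perturbations} injects into $\mathcal{H}_{\mathrm{ni}}(\tilde{X}^\ell, \tilde{Y}^\ell, J)$ as a split subspace, and is large enough for transversality of the universal Cauchy-Riemann operator over $\overset{\circ}{M}{}^\ell$ and over the lower strata.

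The key point for the emptiness statement is the dimension count combined with Lemma~\ref{Lemma_Dstar}: for a holomorphic section of $\tilde{Y}^\ell|_{\Sigma^{\ell,Y}_{U^\ell}}$ in class $0 \neq B$, the Fredholm index of the universal moduli space is $\dim_\C(Y)\chi^Y + 2c_1^{TY}(B) + \dim_\R(U^\ell)$, while $\mathcal{H}(\tilde{Y})$ only provides enough freedom to perturb on the non-ghost components. By Lemma~\ref{Lemma_Dstar} applied fibrewise (with $W = Y$, whose first Chern class is controlled by $D$ through the adjunction-type relation $c_1(TY) = (c_1(TX) - D[\omega])|_Y$), one gets that $c_1^{TY}(B)$ is bounded above by a constant times $\omega(B)+1$; choosing $D \geq D^\ast$ large enough relative to $g, n, D_\ast$ forces this index to be strictly negative for every $B \neq 0$ that could occur (here one uses that the energy, hence $\omega(B)$, of components of a limit in $\cl\overset{\circ}{\mathcal{M}}(\tilde{X}^\ell, \tilde{Y}^\ell, A, J, H)$ is bounded by $\omega(A)$, so only finitely many classes $B$ are relevant, as in \cite{MR2399678}). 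A negative-index transversely cut out moduli space is empty, giving $\mathcal{M}(\tilde{Y}^\ell|_{\Sigma^{\ell,Y}_{U^\ell}}, B, J, (\hat{\pi}^\ell_0)^\ast H) = \emptyset$ for generic $H \in \mathcal{H}_{\mathrm{reg}}(\tilde{Y}, J)$.

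For $B = 0$ the index is exactly $\dim_\C(Y)\chi^Y + \dim_\R(U^\ell)$, and I would identify the zero-energy sections explicitly: a section of $\tilde{Y}^\ell|_{\Sigma^{\ell,Y}_{U^\ell}}$ with vanishing vertical homology class and bounded energy must be constant on each component that is not spherical, whereas a spherical ghost component carries a constant sphere; this is exactly the case analysis for the components in $\mathcal{C}_1$ (the ghost/contracted ones). Since $\hat{\pi}^\ell_0$ restricted to $\overset{\circ}{M}{}^\ell$ (and more generally over the stratum $U^\ell$) is a fibrewise isomorphism onto the corresponding part of $\Sigma$, one gets the pullback identification with $(\pi^\ell_0)^\ast\bigl(\mathcal{M}(\tilde{Y}|_{\Sigma^Y_U}, 0, J, H) \amalg \coprod_{C\in \mathcal{C}_1} Y|_U\bigr)$, and hence the claimed diffeomorphism type and dimension. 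The evaluation map $\ev^{N^{\ell,XY,Y}}$ is smooth by the same argument as in Lemma~\ref{Lemma_The_universal_moduli_space} and Subsection~\ref{Subsection_Evaluation_maps}, since the nodal sections $N^{\ell,XY,Y}_r$ are constant points in the chosen trivialisations.

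The main obstacle I expect is not the transversality or the index bookkeeping, which are parametrised versions of statements already in \cite{MR2399678} and \cite{MR2045629}, but rather making precise that the space $\mathcal{H}(\tilde{Y})$ — and no larger space — is both (i) genuinely sufficient to achieve transversality for curves \emph{inside} $Y$, using the normal-integrability condition so that the normal component of the linearised operator decouples (Corollary~\ref{Corollary_Ddbar_complex_linear_II}), and (ii) consistent with the compatibility constraints $\mathcal{K}^\ell$ so the resulting generic set is the one feeding into Theorem~\ref{Theorem_Main_Theorem_1}. In particular care is needed that the bound from Lemma~\ref{Lemma_Dstar} is uniform over the countably many choices of data $U^\ell, I^X, I^Y$ and over the finitely many relevant classes $B$, so that a single $D^\ast$ (depending only on $g$, $n$, $D_\ast$) works and a single countable intersection of generic sets yields $\mathcal{H}_{\mathrm{reg}}(\tilde{Y}, J)$.
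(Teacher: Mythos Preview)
Your index argument for $B \neq 0$ has a genuine gap that the paper's proof explicitly confronts. The Fredholm index you write down, $\dim_\C(Y)\chi^Y + 2c_1^{TY}(B) + \dim_\R(U^\ell)$, does \emph{not} become negative simply by taking $D$ large: since $\ell = D\omega(A)$, one has $\dim_\R(U^\ell) \leq \dim_\R(M^\ell) = \dim_\R(M) + 2D\omega(A)$, which grows linearly in $D$. In the case $I^X = \emptyset$, $I^Y = I$ (so $B = A$) this growth exactly cancels the $-2D\omega(B)$ coming from $c_1^{TY}(B) = c_1^{TX}(B) - D\omega(B)$. No choice of $D^\ast$ depending only on $g$, $n$, $D_\ast$ makes your index negative as written.

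The paper's way around this is precisely the decomposition into collapsed subtrees $\mathcal{C} = \mathcal{C}_0 \amalg \mathcal{C}_1$, whose notation you invoke in your $B=0$ paragraph without using its mechanism in the $B\neq 0$ argument. The point is that $(\hat{\pi}^\ell_0)^\ast H$ vanishes identically on every ghost component, so the restriction of a holomorphic section to such a component is an honest $J$-holomorphic sphere in $Y$; since $J \in \mathcal{J}_\omega(X,Y,E)$, these spheres are constant. This forces the entire moduli problem on $\Sigma^{\ell,Y,0}_{U^\ell}$ to factor through the family $\Sigma^Y_U$ over the stratum $U \subseteq M$ (not $M^\ell$), giving $\mathcal{M}(\tilde{Y}^\ell|_{\Sigma^{\ell,Y,0}_{U^\ell}}, B, J, (\hat{\pi}^\ell_0)^\ast H) \cong (\pi^\ell_0)^\ast \mathcal{M}(\tilde{Y}|_{\Sigma^Y_U}, B, J, H)$. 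The index over $U$ is then bounded by $\dim_\C(Y)\chi^Y_0 + 2D_\ast\kappa + 2(D_\ast - D)\omega(B) + \dim_\R(M)$, where now $\dim_\R(M)$ and the bound on $\chi^Y_0$ depend only on $g,n$ and are independent of $D$ --- this is what allows a single $D^\ast$ to work. You note that $\mathcal{H}(\tilde{Y})$ only perturbs non-ghost components, but you treat this as a side remark rather than the engine of the proof: it is not a constraint that lowers your index, it is the reason the problem descends to $M$. A smaller point: for $B=0$ the sections are not constant on non-ghost components (there $H\neq 0$), so your ``must be constant on each component that is not spherical'' is incorrect; the space $\mathcal{M}(\tilde{Y}|_{\Sigma^Y_U}, 0, J, H)$ is a genuine $(\dim_\C(Y)\chi^Y_0 + \dim_\R(U))$-dimensional manifold obtained by Sard--Smale.
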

\begin{proof}
Let $u \in \cl\overset{\circ}{\mathcal{M}}(\tilde{X}^\ell, \tilde{Y}^\ell, A, J, H)$ be as in the previous subsection.
For $i \in I$ denote $\tilde{X}^\ell_i \definedas \tilde{X}^\ell|_{\hat{S}^\ell_i})$ and analogously for $\tilde{X}^\ell_i$.
Then $u$ pulls back to $u_i \in \mathcal{M}(\tilde{X}^\ell_i, A_i, J, \mathcal{H}(\tilde{X}^\ell, \tilde{Y}^\ell))$ for $i \in I^X$ and to $u_i \in \mathcal{M}(\tilde{Y}^\ell_i, A_i, J, \mathcal{H}(\tilde{X}^\ell, \tilde{Y}^\ell))$ for $i \in I^Y$, where $A_i \definedas [\pr_2(u_i)] \in H_2(X; \Z)$.

One would now like to reproduce the argument in \cite{MR2399678}, Proposition 8.11 (a), to show that for generic $H$, $\mathcal{M}(\tilde{Y}^\ell_i, A_i, J, H) = \emptyset$ for $D$ large enough.
To do so, first observe the following:
\begin{claim}
In the notation and under the assumptions on $H\in \mathcal{H}(\tilde{X}^\ell, \tilde{Y}^\ell)$ of Lemma \ref{Lemma_Dstar}, if $A_i \neq 0$, then $\omega(A_i) > 0$.
\end{claim}
To see this, let $\hat{\omega}_\kappa$ be as in the proof of Lemma \ref{Lemma_Dstar}.
Then because $u_i$ defines a $\hat{J}^H$-holomorphic map,
\begin{align*}
0 &< \int_{\hat{S}^\ell_{i,b}} u_i^\ast \hat{\omega}_\kappa \\
&= \int_{\hat{S}^\ell_{i,b}} (\pr_2\circ u_i)^\ast \omega + \kappa \\
&= \omega(A_i) + \kappa\text{.}
\end{align*}
Because $\kappa = \frac{1}{4}$ and $\omega(A_i) \in \Z$, the claim follows.

By Lemma \ref{Lemma_The_universal_moduli_space}, $\pi^\mathcal{M}_\mathcal{H} : \mathcal{M}(\tilde{Y}^\ell_i, A_i, J, \mathcal{H}(\tilde{X}^\ell, \tilde{Y}^\ell)) \to \mathcal{H}(\tilde{X}^\ell, \tilde{Y}^\ell)$ has Fredholm index given by
\begin{align*}
\ind(\pi^\mathcal{M}_\mathcal{H}) &= \dim_\C(Y)\chi(S^\ell_i) + 2c_1^{TY}(A_i) + \dim_\R(U^\ell) \\
&\leq 2\dim_\C(Y) + 2(c_1^{TX}(A_i) - D\omega(A_i)) + \dim_\R(M^\ell) \\
&\leq 2\dim_\C(Y) + 2D_\ast\kappa + 2(D_\ast - D)\omega(A_i) + \dim_\R(M^\ell)\text{,}
\end{align*}
where $D_\ast$ and $\kappa$ are as in Lemma \ref{Lemma_Dstar}.
But $\dim_\R(M^\ell) = \dim(M) + 2\ell = \dim(M) + 2[Y]\cdot A = \dim(M) + 2D\omega(A)$, choosing $\ell = [Y]\cdot A$ to satisfy Lemma \ref{Lemma_Main_compactness_result}.
So while the middle term in the above index formula decreases with increasing $D$, the last term increases just as quickly, at least for $A_i=A$. This is a case that one definitely would like to deal with in this way.
But observe that if $S^\ell_i$ is a component of genus zero (a sphere) and $H\in \mathcal{H}(\tilde{X}^\ell, \tilde{Y}^\ell)$ satisfies $(\hat{\iota}^\ell)^\ast H|_{\hat{S}^\ell_i} \equiv 0$, then any $u_i \in \mathcal{M}(\tilde{Y}^\ell_i, A_i, J, H)$ defines a $J$-holomorphic sphere in $Y$. But for $J\in \mathcal{J}(X, Y, E)$, the only such spheres are the constant ones, implying $A_i = 0$.
This allows for the following construction, which first of all requires the introduction of quite a bit of notation to signify the two parts of a curve in the family $\Sigma^\ell|_{U^\ell}$ that lie in $\tilde{Y}^\ell$ and those that intersect $\tilde{X}^\ell \setminus \tilde{Y}^\ell$:
Now fix some $b\in U^\ell$. Under $\hat{\pi}^\ell_0\circ \hat{\iota}^\ell|_{\hat{S}^{\ell,Y}_b} : \hat{S}^{\ell,Y}_b \to \Sigma_b$, a certain number of genus zero components of $\hat{S}^{\ell,Y}_b$ are mapped to points.
This happens if and only if a component contains fewer than three special points apart from the $\hat{T}^{\ell}_j$, \ie fewer than three nodal points or marked points among the $\hat{R}^{\ell}_j(b)$.
These can be grouped together into ``collapsed subtrees'' as in Section 2 in \cite{MR2399678} in the following way: Call two components of $\hat{S}^{\ell,Y}_b$ connected if there exists an $r$ \st $N^{\ell,Y,1}_r(b)$ lies on one of them, $N^{\ell,Y,2}_r(b)$ on the other.
Now take the equivalence relation this generates on the set of components of $\hat{S}^{\ell,Y}_b$ on which $\hat{\pi}^\ell_0$ is constant.
Since $U^\ell$ was assumed to be connected, this is independent of $b\in U^\ell$.
An equivalence class of this equivalence relation then corresponds to a collapsed subtree.
\begin{enumerate}
  \item Denote the set of equivalence classes from above by $\mathcal{C}$.
    This gives a decomposition $I^Y = I^Y_0 \amalg \coprod_{C\in \mathcal{C}} I^Y_C$, \st $\hat{\pi}^\ell_0\left(\coprod_{i\in I^Y_C} \hat{S}^{\ell,Y}_i\right) = \mathrm{const}$, for every $C\in \mathcal{C}$, and $\hat{\pi}^\ell_0|_{\hat{S}^\ell_{i,b}}$ is a biholomorphic map onto its image for every $i\in I^Y_0$, $b\in U^\ell$.
  \item $\mathcal{C}$ can be further decomposed into subsets $\mathcal{C}_0$ and $\mathcal{C}_1$, where every $C\in\mathcal{C}_0$ has the property that there exists at least one (and at most two) $i\in I^Y_C$ \st for every $b\in U$, $\hat{S}^{\ell,Y}_{i,b}$ is connected to $\hat{S}^{\ell,Y}_{j,b}$ for some $j\in I^Y_0$ and $\mathcal{C}_1 \definedas \mathcal{C} \setminus \mathcal{C}_0$. Let $\hat{S}^{\ell,Y,0}$ be the subfamily of $\hat{S}^{\ell,Y}$ consisting of the components in $I^Y_0\cup \coprod_{C\in\mathcal{C}_0} I^Y_C$.
  \item Denote by $\Sigma^{\ell,Y,0}_{U^\ell}$ the image of $\hat{S}^{\ell,Y,0}$ in $\Sigma^\ell_{U^\ell}$ under $\hat{\iota}^\ell$, by $\chi^Y_0$ the Euler characteristic of the fibres of $\Sigma^{\ell,Y,0}_{U^\ell}$ and denote by $U$ the open subset of the stratum of $M$ to which $U^\ell$ gets mapped under $\pi^{\ell - 1}_{-1}$.
  \item Then $\hat{\pi}^\ell_0$ is a well-defined map from $\Sigma^{\ell,Y,0}_{U^\ell}$ to a subfamily of $\Sigma_U$ (the restriction of $\Sigma$ to $U$), which will be denoted by $\Sigma_U^Y$ and has fibres of Euler characteristic $\chi^Y_0$ as well.
\end{enumerate}

\clearpage
\def\svgwidth{18cm}
\ifpdf
\hspace{-2.5cm}
\begingroup%
  \makeatletter%
  \providecommand\color[2][]{%
    \errmessage{(Inkscape) Color is used for the text in Inkscape, but the package 'color.sty' is not loaded}%
    \renewcommand\color[2][]{}%
  }%
  \providecommand\transparent[1]{%
    \errmessage{(Inkscape) Transparency is used (non-zero) for the text in Inkscape, but the package 'transparent.sty' is not loaded}%
    \renewcommand\transparent[1]{}%
  }%
  \providecommand\rotatebox[2]{#2}%
  \ifx\svgwidth\undefined%
    \setlength{\unitlength}{6704.5180598bp}%
    \ifx\svgscale\undefined%
      \relax%
    \else%
      \setlength{\unitlength}{\unitlength * \real{\svgscale}}%
    \fi%
  \else%
    \setlength{\unitlength}{\svgwidth}%
  \fi%
  \global\let\svgwidth\undefined%
  \global\let\svgscale\undefined%
  \makeatother%
  \begin{picture}(1,1.18855504)%
    \put(0,0){\includegraphics[width=\unitlength]{Boundary_2.pdf}}%
    \put(0.52281526,0.94703386){\color[rgb]{0,0,0}\makebox(0,0)[lb]{\smash{$k_1 \in I^Y_0$}}}%
    \put(0.55888821,0.79904229){\color[rgb]{0,0,0}\makebox(0,0)[lb]{\smash{$k_2 \in I^Y_0$}}}%
    \put(0.16671053,0.94333408){\color[rgb]{0,0,0}\makebox(0,0)[lb]{\smash{$i_1 \in I^Y_C$}}}%
    \put(0.10389678,0.8495137){\color[rgb]{0,0,0}\makebox(0,0)[lb]{\smash{$i_2 \in I^Y_C$}}}%
    \put(0.23246445,0.85691328){\color[rgb]{0,0,0}\makebox(0,0)[lb]{\smash{$i_3 \in I^Y_C$}}}%
    \put(0.15098641,0.77036892){\color[rgb]{0,0,0}\makebox(0,0)[lb]{\smash{$C \in \mathcal{C}_1$}}}%
    \put(0.88724453,0.89431186){\color[rgb]{0,0,0}\makebox(0,0)[lb]{\smash{$C'' \in \mathcal{C}_0$}}}%
    \put(0.79382484,0.97015755){\color[rgb]{0,0,0}\makebox(0,0)[lb]{\smash{$m_1 \in I^Y_{C''}$}}}%
    \put(0.80122441,0.85546407){\color[rgb]{0,0,0}\makebox(0,0)[lb]{\smash{$m_2 \in I^Y_{C''}$}}}%
    \put(0.43864505,0.69267335){\color[rgb]{0,0,0}\makebox(0,0)[lb]{\smash{$j_1 \in I^Y_{C'}$}}}%
    \put(0.42847063,0.61127797){\color[rgb]{0,0,0}\makebox(0,0)[lb]{\smash{$j_2 \in I^Y_{C'}$}}}%
    \put(0.42569579,0.52155808){\color[rgb]{0,0,0}\makebox(0,0)[lb]{\smash{$C' \in \mathcal{C}_0$}}}%
    \put(0.14543675,0.67139955){\color[rgb]{0,0,0}\makebox(0,0)[lb]{\smash{{\LARGE $\hat{\pi}^\ell_0$}}}}%
    \put(0.62733431,0.59185408){\color[rgb]{0,0,0}\makebox(0,0)[lb]{\smash{{\LARGE $\hat{S}^{\ell,Y,0},\; \Sigma^{\ell,Y,0}_{U^\ell}$}}}}%
    \put(0.62919109,0.37805285){\color[rgb]{0,0,0}\makebox(0,0)[lb]{\smash{{\LARGE $\hat{\pi}^\ell_0$}}}}%
    \put(0.78180053,0.21632545){\color[rgb]{0,0,0}\makebox(0,0)[lb]{\smash{{\LARGE $\Sigma^Y_U$}}}}%
  \end{picture}%
\endgroup%

\else
\hspace{-2.5cm}
\begingroup%
  \makeatletter%
  \providecommand\color[2][]{%
    \errmessage{(Inkscape) Color is used for the text in Inkscape, but the package 'color.sty' is not loaded}%
    \renewcommand\color[2][]{}%
  }%
  \providecommand\transparent[1]{%
    \errmessage{(Inkscape) Transparency is used (non-zero) for the text in Inkscape, but the package 'transparent.sty' is not loaded}%
    \renewcommand\transparent[1]{}%
  }%
  \providecommand\rotatebox[2]{#2}%
  \ifx\svgwidth\undefined%
    \setlength{\unitlength}{6704.5180598bp}%
    \ifx\svgscale\undefined%
      \relax%
    \else%
      \setlength{\unitlength}{\unitlength * \real{\svgscale}}%
    \fi%
  \else%
    \setlength{\unitlength}{\svgwidth}%
  \fi%
  \global\let\svgwidth\undefined%
  \global\let\svgscale\undefined%
  \makeatother%
  \begin{picture}(1,1.18855504)%
    \put(0,0){\includegraphics[width=\unitlength]{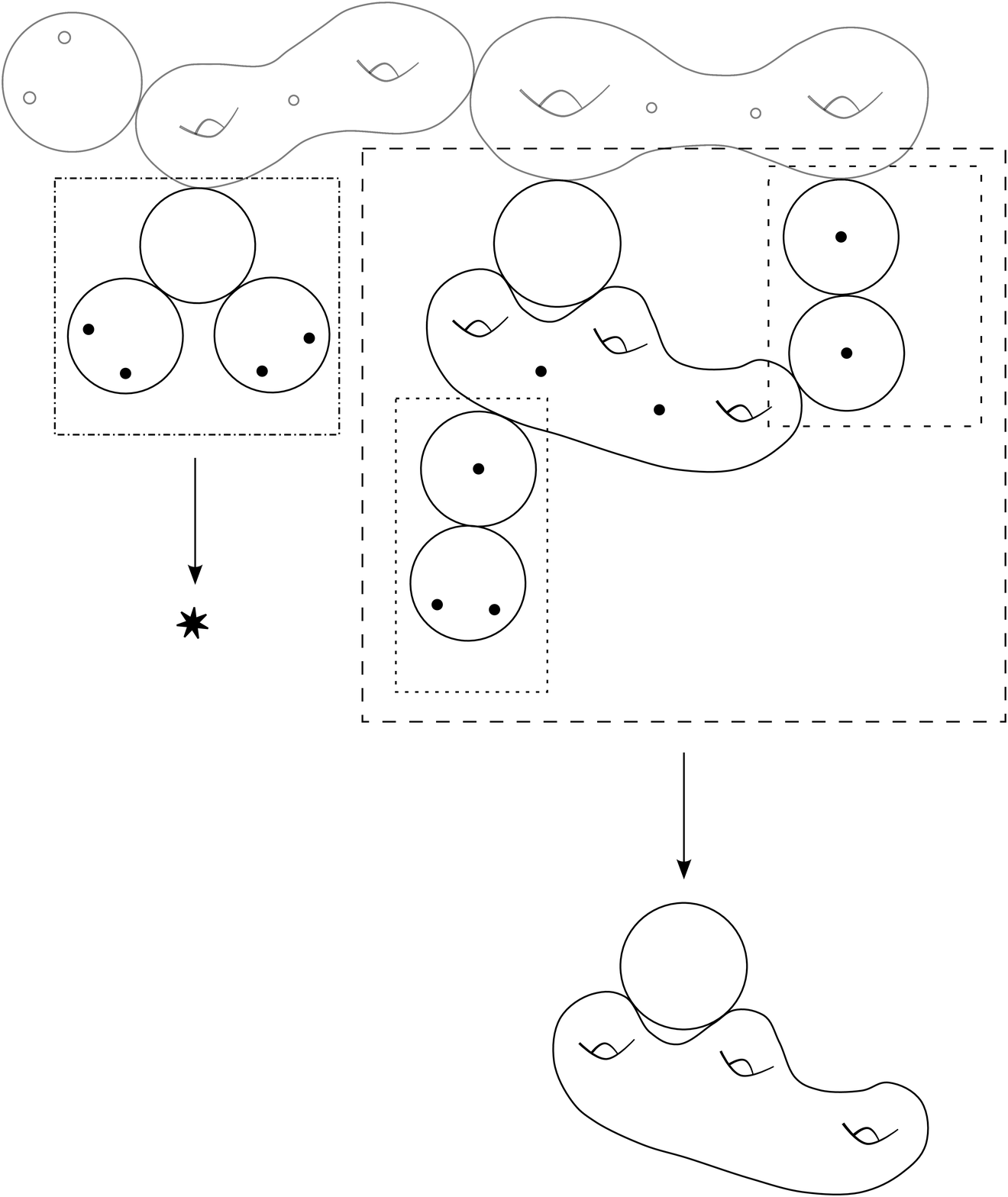}}%
    \put(0.52281526,0.94703386){\color[rgb]{0,0,0}\makebox(0,0)[lb]{\smash{$k_1 \in I^Y_0$}}}%
    \put(0.55888821,0.79904229){\color[rgb]{0,0,0}\makebox(0,0)[lb]{\smash{$k_2 \in I^Y_0$}}}%
    \put(0.16671053,0.94333408){\color[rgb]{0,0,0}\makebox(0,0)[lb]{\smash{$i_1 \in I^Y_C$}}}%
    \put(0.10389678,0.8495137){\color[rgb]{0,0,0}\makebox(0,0)[lb]{\smash{$i_2 \in I^Y_C$}}}%
    \put(0.23246445,0.85691328){\color[rgb]{0,0,0}\makebox(0,0)[lb]{\smash{$i_3 \in I^Y_C$}}}%
    \put(0.15098641,0.77036892){\color[rgb]{0,0,0}\makebox(0,0)[lb]{\smash{$C \in \mathcal{C}_1$}}}%
    \put(0.88724453,0.89431186){\color[rgb]{0,0,0}\makebox(0,0)[lb]{\smash{$C'' \in \mathcal{C}_0$}}}%
    \put(0.79382484,0.97015755){\color[rgb]{0,0,0}\makebox(0,0)[lb]{\smash{$m_1 \in I^Y_{C''}$}}}%
    \put(0.80122441,0.85546407){\color[rgb]{0,0,0}\makebox(0,0)[lb]{\smash{$m_2 \in I^Y_{C''}$}}}%
    \put(0.43864505,0.69267335){\color[rgb]{0,0,0}\makebox(0,0)[lb]{\smash{$j_1 \in I^Y_{C'}$}}}%
    \put(0.42847063,0.61127797){\color[rgb]{0,0,0}\makebox(0,0)[lb]{\smash{$j_2 \in I^Y_{C'}$}}}%
    \put(0.42569579,0.52155808){\color[rgb]{0,0,0}\makebox(0,0)[lb]{\smash{$C' \in \mathcal{C}_0$}}}%
    \put(0.14543675,0.67139955){\color[rgb]{0,0,0}\makebox(0,0)[lb]{\smash{{\LARGE $\hat{\pi}^\ell_0$}}}}%
    \put(0.62733431,0.59185408){\color[rgb]{0,0,0}\makebox(0,0)[lb]{\smash{{\LARGE $\hat{S}^{\ell,Y,0},\; \Sigma^{\ell,Y,0}_{U^\ell}$}}}}%
    \put(0.62919109,0.37805285){\color[rgb]{0,0,0}\makebox(0,0)[lb]{\smash{{\LARGE $\hat{\pi}^\ell_0$}}}}%
    \put(0.78180053,0.21632545){\color[rgb]{0,0,0}\makebox(0,0)[lb]{\smash{{\LARGE $\Sigma^Y_U$}}}}%
  \end{picture}%
\endgroup%

\fi
\clearpage

One can now for any $B\in H_2(Y)$ look at the moduli spaces
$\mathcal{M}(\tilde{Y}|_{\Sigma_U^Y}, B, J, \mathcal{H}(\tilde{Y}))$, which are equipped with the smooth structure from Lemma \ref{Lemma_The_universal_moduli_space}. The calculation from before then shows that the Fredholm index of the projection $\pi^\mathcal{M}_\mathcal{H} : \mathcal{M}(\tilde{Y}|_{\Sigma_U^Y}, B, J, \mathcal{H}(\tilde{Y})) \to \mathcal{H}(\tilde{Y})$ can be bounded from above by
\[
\dim_\C(Y)\chi^Y_0 + 2D_\ast \kappa + 2(D_\ast - D)\omega(B) + \dim_\R(M)\text{.}
\]
In particular, taking a bound for $\chi^Y_0$ depending only on $g$ and $n$, there is a constant $D_0$ only depending on $g$, $n$ and $D_\ast$ but not depending on $\ell$ \st for $D \geq D_0$ this is negative, provided that $B \neq 0$, due to integrality of $\omega$.
So from now on assume that $D \geq D_0$.
Also, due to the choices made, one has an isomorphism
\[
\mathcal{M}(\tilde{Y}^\ell|_{\Sigma^{\ell,Y,0}_{U^\ell}}, B, J, (\hat{\pi}^\ell_0)^\ast\mathcal{H}(\tilde{Y})) \cong (\pi^\ell_0)^\ast \mathcal{M}(\tilde{Y}|_{\Sigma_U^Y}, B, J, \mathcal{H}(\tilde{Y}))\text{.}
\]
This means that by the Sard-Smale theorem there is a generic subset of $\mathcal{H}(\tilde{Y})$ \st for every $H$ in this subset, if $B\neq 0$, then
\[
\mathcal{M}(\tilde{Y}|_{\Sigma^{Y}_{U}}, B, J, H) = \mathcal{M}(\tilde{Y}^\ell|_{\Sigma^{\ell,Y,0}_{U^\ell}}, B, J, (\hat{\pi}^\ell_0)^\ast H) = \emptyset
\]
and if $B = 0$, then $\mathcal{M}(\tilde{Y}^\ell|_{\Sigma^{\ell,Y,0}_{U^\ell}}, 0, J, (\hat{\pi}^\ell_0)^\ast H)$ is a smooth manifold of dimension $\dim_\C(Y)\chi_0^Y + \dim_\R(U^\ell)$ that comes with a canonical map to the manifold $\mathcal{M}(\tilde{Y}|_{\Sigma^{Y}_{U}}, 0, J, H)$ of dimension $\dim_\C(Y)\chi_0^Y + \dim_\R(U)$.
Analogously, for $C\in\mathcal{C}_1$, let $\hat{S}^{\ell,Y,C} \definedas \coprod_{i\in I^Y_C} \hat{S}^{\ell,Y}_i$ be the subfamily of $\hat{S}^{\ell,Y}$ consisting of the components in $I^Y_C$ and $\Sigma^{\ell,Y,C}_{U^\ell}$ its image in $\Sigma^\ell$ under $\hat{\iota}$. Then for any $H \in \mathcal{H}(\tilde{Y})$, for $B\neq 0$ again $\mathcal{M}(\tilde{Y}^\ell|_{\Sigma^{\ell,Y,C}_{U^\ell}}, B, J, (\hat{\pi}^\ell_0)^\ast H) = \emptyset$ and for $B = 0$,
\[
\mathcal{M}(\tilde{Y}^\ell|_{\Sigma^{\ell,Y,C}_{U^\ell}}, 0, J, (\hat{\pi}^\ell_0)^\ast H) \cong \tilde{Y}^\ell|_{U^\ell} \cong (\hat{\pi}^\ell_0)^\ast (\tilde{Y}|_U)\text{,}
\]
the isomorphism given by evaluation at any special point on $\Sigma^{\ell,Y,C}_{U^\ell}$.
Note that the Euler characteristic $\chi^Y_C$ of any fibre of $\hat{S}^{\ell,Y,C}$ is $2$.
So
\[
\dim_\R(\mathcal{M}(\tilde{Y}^\ell|_{\Sigma^{\ell,Y,C}_{U^\ell}}, 0, J, (\hat{\pi}^\ell_0)^\ast H)) = \dim_\C(Y)\chi^Y_C + \dim_\R(U^\ell)\text{.}
\]
Finally, one can take the intersection of all the generic subsets one gets via the construction above, for all the countably many choices of data as above (\ie $U^\ell$, $I^X$ and $I^Y$, $B\in H_2(Y)$, and so on), to get the generic subset $\mathcal{H}_{\mathrm{reg}}(\tilde{Y}, J) \subseteq \mathcal{H}(\tilde{Y})$. \\
This concludes the proof of Lemma \ref{Lemma_Reduction_to_vanishing_A}.
\end{proof}

\subsection{The refined compactness result and reduction of the boundary strata}\label{Subsection_Refined_compactness_result}

In the following, still the notation from Subsection \ref{Subsection_Description_of_the_closure} is in effect. Also, some $J \in \mathcal{J}_{\omega, \mathrm{ni}}(X, Y, E)$ and $H^Y \in \mathcal{H}_{\mathrm{reg}}(\tilde{Y}, J)$ usually are assumed to be given, where $H^Y$ is identified with an element of $\mathcal{H}_{\mathrm{ni}}(\tilde{X}^\ell, \tilde{Y}^\ell, J)$ by pullback via $\hat{\pi}^\ell_0$ and Lemma \ref{Lemma_Enough_ni_ham_perturbations}. \\
The refined compactness result mentioned in the previous section is of the type that, among others, has been studied in \cite{MR2026549} and in \cite{MR1954264} (which, as is stated in the introduction of \cite{MR2026549}, is a special case of the ``stretching of the neck'' construction in \cite{MR2026549}).
But in the following I will use a different transversality result from \cite{MR1954264}.
The setup of the formulation of the compactness results above is actually quite involved and will never be used in full generality in this text.
So instead of reciting the whole story, I will only describe a fairly straightforward corollary of this, which sums up the results as needed in the following.
To do so, first observe that for any $H\in \mathcal{H}_{\mathrm{ni}}(\tilde{X}^\ell, \tilde{Y}^\ell, J)$ and $u_i\in \mathcal{M}(\tilde{Y}^\ell|_{\hat{S}^{\ell,Y}_i}, 0, J, H)$, one can form the complex line bundle $u_i^\ast (V\tilde{Y}^\ell)^{\perp_\omega}$ over the Riemann surface $\hat{S}^{\ell,Y}_{i,b}$. By Corollary \ref{Corollary_Ddbar_complex_linear_II}, the operator (as usual for some $kp > 2$)
\begin{multline*}
\overline{D}^{H_b}_{u_i} \definedas \pi^{V\tilde{X}^\ell}_{(V\tilde{Y}^\ell)^{\perp_\omega}}\circ \left(D\dbar^{J_b, H_b}_{\hat{S}^{\ell,Y}_{i,b}}\right)_{u_i} : \\
L^{k,p}(u_i^\ast (V\tilde{Y}^\ell)^{\perp_\omega}) \to L^{k-1,p}(\overline{\Hom}_{(j_b, J_b)}(T\hat{S}^{\ell,Y}_{i,b}, u_i^\ast (V\tilde{Y}^{\ell})^{\perp_\omega}))
\end{multline*}
is complex linear.
By the Koszul-Malgrange integrability theorem, this means that $u_i^\ast (V\tilde{Y}^\ell)^{\perp_\omega}$ is actually (can be identified with) a holomorphic line bundle over $\hat{S}^{\ell,Y}_{i,b}$ with $\overline{D}^{H_b}_{u_i}$ as Cauchy-Riemann operator.
Since $[\pr_2(u_i)] = 0 \in H_2(Y; \Z)$, the bundle $u_i^\ast (V\tilde{Y}^\ell)^{\perp_\omega}$ has vanishing first Chern class and it follows that every meromorphic section of this bundle has the same order of poles as of zeroes.
The compactness result from \cite{MR2026549} or \cite{MR1954264} then implies the following:
\begin{lemma}\label{Lemma_SFT_Compactness}
Let $u \in \cl\overset{\circ}{\mathcal{M}}(\tilde{X}^\ell, \tilde{Y}^\ell, A, J, H)$ and given the data associated to $u$ as in Subsection \ref{Subsection_Description_of_the_closure}, assume that $I^Y \neq \emptyset$. Then
\begin{enumerate}[a)]
  \item If $u^X \definedas u|_{\Sigma^{\ell,X}_{b}}$ is tangent to $\tilde{Y}^\ell$ at $N^{\ell,XY,X}_r$ of order $t_r$ ($t_r = 0$ meaning a transverse intersection), then $\sum_{r=1}^d (t_r+1) = |K^Y|$.
  \item For each $i \in I^Y$, denoting $K^Y_i \definedas \{j \in K^Y \;|\; T^\ell_j(U^\ell) \subseteq \hat{S}^\ell_i\}$, there exists a nonvanishing meromorphic section $\xi_i$ of $u_i^\ast (V\tilde{Y}^\ell_b)^{\perp_\omega}$ with zeroes of order $1$ at the $T^\ell_j(b)$ for $j \in K^Y_i$ and with other zeroes and poles only at the nodal points on $\hat{S}^\ell_{i,b}$.
\end{enumerate}
\end{lemma}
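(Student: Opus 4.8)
The plan is to deduce Lemma \ref{Lemma_SFT_Compactness} from the SFT-type compactness theorems in \cite{MR2026549} and \cite{MR1954264} by carrying out a ``stretching of the neck'' along the hypersurface $\tilde Y^\ell$. More precisely, given $u \in \cl\overset{\circ}{\mathcal{M}}(\tilde{X}^\ell, \tilde{Y}^\ell, A, J, H)$, I would pick an approximating sequence $u_i \in \overset{\circ}{\mathcal{M}}(\tilde{X}^\ell, \tilde{Y}^\ell, A, J, H)$ with $u_i \to u$; each $u_i$ is defined on a smooth fibre $\Sigma^{\ell}_{b_i}$, maps the last $\ell$ marked points $T^\ell_j(b_i)$ into $\tilde Y^\ell$, and, by Lemma \ref{Lemma_Order_of_tangency} together with $[\pr_2\circ u_i]\cdot[Y] = D\omega(A) = \ell$, these are $\ell$ \emph{transverse} intersections with $\tilde Y^\ell$ and there are no others. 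The first step is to fix a tubular neighbourhood of $\tilde Y^\ell$ in $\tilde X^\ell$, identified via the Weinstein neighbourhood theorem (as in the proof of Lemma \ref{Lemma_Enough_ni_ham_perturbations}) with a disk bundle in the symplectic normal bundle $(V\tilde Y^\ell)^{\perp_\omega}$, and to apply the neck-stretching construction of \cite{MR2026549} (cf.\ also the discussion in \cite{MR1954264}) to the sequence $u_i$. The normal integrability of $J$ and the $J$-compatibility of $H$ (Definitions \ref{Definition_J_ni} and \ref{Definition_Y_compatible_Hamiltonian_perturbation}, and the complex-linearity statement of Corollary \ref{Corollary_Ddbar_complex_linear_II}) are exactly what is needed so that the relevant almost complex structure on the symplectisation of the contact-type boundary is cylindrical and the limit building can be analysed via holomorphic buildings.

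The second step is to read off the structure of the limit building. The compactness theorem yields a holomorphic building whose top level is (a reparametrisation of) $u$ on $\Sigma^{\ell,X}_b$, whose bottom levels live in the symplectisation $\R\times (\text{circle bundle of } (V\tilde Y^\ell)^{\perp_\omega})$ and project, after the usual identification of the symplectisation of this circle bundle, to the total space of $(V\tilde Y^\ell)^{\perp_\omega}$, and whose lowest level is $u$ on $\Sigma^{\ell,Y}_b$ equipped with a nontrivial section of $u^\ast(V\tilde Y^\ell)^{\perp_\omega}$. The asymptotic data at the punctures of the top level $u^X$ encode the orders of tangency $t_r$ of $u^X$ to $\tilde Y^\ell$ at the nodal points $N^{\ell,XY,X}_r$: the puncture asymptotic to a Reeb orbit of multiplicity $t_r+1$ corresponds, by positivity of intersection as in Lemma \ref{Lemma_Order_of_tangency} and the local model in Section~7 of \cite{MR2399678}, to a tangency of order $t_r$. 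On the lowest level, the curve components $u_i$ ($i\in I^Y$) come together with a meromorphic section $\xi_i$ of the holomorphic line bundle $u_i^\ast(V\tilde Y^\ell)^{\perp_\omega}$ (holomorphic by Koszul--Malgrange applied to $\overline D^{H_b}_{u_i}$, as in the paragraph preceding the statement): the poles of $\xi_i$ sit at the punctures matching the top-level asymptotics, i.e.\ at the nodal points $N^{\ell,XY,X}_r$ lying on $\hat S^\ell_i$, with pole order $t_r+1$; the $\ell$ transverse intersection points of the $u_i$ limit onto the marked points $T^\ell_j(b)$ for $j\in K^Y$, producing simple zeroes there; and all remaining zeroes and poles of $\xi_i$ are forced to lie at the internal nodes $N^{\ell,Y,1}_r,N^{\ell,Y,2}_r$, because at every other point the neck-stretched curves are embedded and transverse to $\tilde Y^\ell$. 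This is exactly statement b).

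For statement a), I would count degrees. Each $\xi_i$ is a meromorphic section of a line bundle of vanishing first Chern class (since $[\pr_2(u_i)]=0\in H_2(Y;\Z)$ by Lemma \ref{Lemma_Reduction_to_vanishing_A}), so for each $i\in I^Y$ the total order of zeroes of $\xi_i$ equals the total order of its poles. Summing over all $i\in I^Y$ and using the matching conditions at the internal nodes $N^{\ell,Y,1}_r = N^{\ell,Y,2}_r$ (a zero of order $k$ on one side is glued to a pole of order $k$ on the other, so these cancel in the global count), the only surviving zeroes are the $|K^Y|$ simple zeroes at the $T^\ell_j(b)$, $j\in K^Y$, and the only surviving poles are those at the $N^{\ell,XY,X}_r$ of order $t_r+1$. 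Hence $\sum_{r=1}^d (t_r+1) = |K^Y|$, which is a). I expect the main obstacle to be not the degree bookkeeping but the careful extraction of the meromorphic section from the holomorphic building: one must check that the lowest-level component is genuinely a branched cover of a point in $Y$ with a well-defined ``leading order'' normal section, that this section extends meromorphically across the punctures with exactly the claimed pole orders, and that no zeroes or poles are hidden at interior points — this is where the normal integrability hypothesis and the precise asymptotic analysis from \cite{MR2026549} and \cite{MR1954264} have to be invoked rather than reproved, and where the identification of pole order with order of tangency (the genus $0$ case of which is Proposition~7.1 in \cite{MR2399678}) does the real work.
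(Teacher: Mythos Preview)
Your proposal is essentially the same strategy as the paper's: take an approximating sequence in $\overset{\circ}{\mathcal{M}}$, apply the neck-stretching compactness of \cite{MR2026549}/\cite{MR1954264} along $\tilde Y^\ell$, read off the meromorphic normal sections from the limit building, and deduce a) from the zero/pole count using that $c_1(u_i^\ast(V\tilde Y^\ell)^{\perp_\omega})=0$. The paper is somewhat more explicit than you are about the level structure of the limiting building --- it spells out that the only non-trivial pieces in the symplectisation levels are chains of trivial cylinders $z\mapsto z^k$ in the fibres $(V_z\tilde Y^\ell)^{\perp_\omega}\setminus\{0\}\cong\C^\ast$, and organises the components into levels $C_{-k}\amalg\cdots\amalg C_{-1}$ --- and then runs the degree count level by level rather than summing over $I^Y$ directly; but this is bookkeeping, not a different argument. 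One small imprecision in your description: the holomorphic building in the paper lives entirely in $(V\tilde Y^\ell)^{\perp_\omega}\setminus\tilde Y^\ell$ and projects to $u$; the piece $u^X$ is not itself a ``top level'' of this building, rather its tangency orders at the $N^{\ell,XY,X}_r$ are matched by the pole orders of the bottom-level sections at the corresponding $P_r$. Also, at nodes within a single level the paper's sections have neither zero nor pole, so your cancellation statement at internal nodes should allow for that case too; this does not affect the count.
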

\begin{proof}
Let $(u_j)_{j\in \N} \subseteq \overset{\circ}{\mathcal{M}}(\tilde{X}^\ell, \tilde{Y}^\ell, A, J, H)$ be a sequence that converges to \\ $u \in \cl\overset{\circ}{\mathcal{M}}(\tilde{X}^\ell, \tilde{Y}^\ell, A, J, H)$.
Assume that $I^Y \neq \emptyset$.
Applying the stretching of the neck construction from \cite{MR2026549}, in the limit there exists a holomorphic building in $(V\tilde{Y}^\ell)^{\perp_\omega}\setminus \tilde{Y}^\ell$ ($\tilde{Y}^\ell$ is identified with the zero section in $(V\tilde{Y}^\ell)^{\perp_\omega}$) that projects to $u$ under the projection $(V\tilde{Y}^\ell)^{\perp_\omega} \to \tilde{Y}^\ell$.
In particular, if any of the bad compactness phenomena occur, \ie bubbling off of holomorphic spheres or planes and breaking of holomorphic cylinders, the limit curve needs to be a holomorphic sphere, plane or cylinder in a fibre $(V_z\tilde{Y}^\ell)^{\perp_\omega} \setminus \{0\}$ of $(V\tilde{Y}^\ell)^{\perp_\omega} \setminus \tilde{Y}^\ell$ for some $z \in \Sigma^\ell_b$.
This excludes the possibility of bubbling and the only holomorphic cylinders that can occur are the trivial ones that under an identification $(V_z\tilde{Y}^\ell)^{\perp_\omega} \setminus \{0\} \cong \C^\ast$ are of the form $z \mapsto z^k$ for some $k\in \Z$.
So the underlying nodal Riemannian surface of the holomorphic building is of the following form:
Given $k \geq 0$, let $(Z(k), \nu^k, r^k_\pm)$ be the marked nodal surface with $Z(k) \definedas \coprod_{i=1}^k \overline{\C}\times \{i\}$, where $\overline{\C} \definedas \C\cup \{\infty\} \cong S^2$, nodal points $\nu^k \definedas \{\{(\infty,i), (0,i+1)\} \;|\; i = 1, \dots, k-1\}$ and marked points $r^k_+ \definedas (\infty,k)$, $r^k_- \definedas (0,0)$. \\
\bigskip
\\
\def\svgwidth{\textwidth}
\ifpdf
\begingroup%
  \makeatletter%
  \providecommand\color[2][]{%
    \errmessage{(Inkscape) Color is used for the text in Inkscape, but the package 'color.sty' is not loaded}%
    \renewcommand\color[2][]{}%
  }%
  \providecommand\transparent[1]{%
    \errmessage{(Inkscape) Transparency is used (non-zero) for the text in Inkscape, but the package 'transparent.sty' is not loaded}%
    \renewcommand\transparent[1]{}%
  }%
  \providecommand\rotatebox[2]{#2}%
  \ifx\svgwidth\undefined%
    \setlength{\unitlength}{482.49188538bp}%
    \ifx\svgscale\undefined%
      \relax%
    \else%
      \setlength{\unitlength}{\unitlength * \real{\svgscale}}%
    \fi%
  \else%
    \setlength{\unitlength}{\svgwidth}%
  \fi%
  \global\let\svgwidth\undefined%
  \global\let\svgscale\undefined%
  \makeatother%
  \begin{picture}(1,0.15952865)%
    \put(0,0){\includegraphics[width=\unitlength]{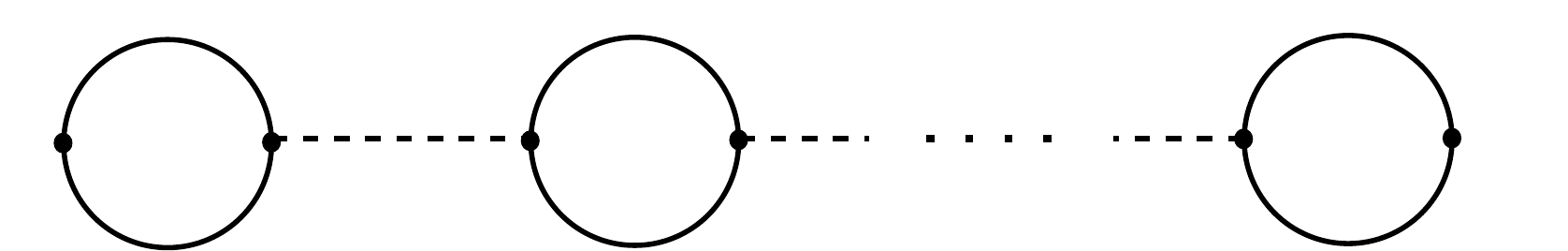}}%
    \put(0.06365653,0.14945076){\color[rgb]{0,0,0}\makebox(0,0)[lb]{\smash{$\overline{\C}\times \{1\}$}}}%
    \put(0.36210715,0.14945076){\color[rgb]{0,0,0}\makebox(0,0)[lb]{\smash{$\overline{\C}\times \{2\}$}}}%
    \put(0.81641531,0.14945076){\color[rgb]{0,0,0}\makebox(0,0)[lb]{\smash{$\overline{\C}\times \{k\}$}}}%
    \put(-0.00110105,0.06402679){\color[rgb]{0,0,0}\makebox(0,0)[lb]{\smash{$r^k_-$}}}%
    \put(0.93862227,0.06519864){\color[rgb]{0,0,0}\makebox(0,0)[lb]{\smash{$r^k_+$}}}%
    \put(0.17753517,0.04033354){\color[rgb]{0,0,0}\makebox(0,0)[lb]{\smash{$(\infty, 1)$}}}%
    \put(0.27446898,0.04033357){\color[rgb]{0,0,0}\makebox(0,0)[lb]{\smash{$(0,2)$}}}%
    \put(0.476997,0.03917961){\color[rgb]{0,0,0}\makebox(0,0)[lb]{\smash{$(\infty,2)$}}}%
    \put(0.72859941,0.03925248){\color[rgb]{0,0,0}\makebox(0,0)[lb]{\smash{$(0,k)$}}}%
  \end{picture}%
\endgroup%

\else
\begingroup%
  \makeatletter%
  \providecommand\color[2][]{%
    \errmessage{(Inkscape) Color is used for the text in Inkscape, but the package 'color.sty' is not loaded}%
    \renewcommand\color[2][]{}%
  }%
  \providecommand\transparent[1]{%
    \errmessage{(Inkscape) Transparency is used (non-zero) for the text in Inkscape, but the package 'transparent.sty' is not loaded}%
    \renewcommand\transparent[1]{}%
  }%
  \providecommand\rotatebox[2]{#2}%
  \ifx\svgwidth\undefined%
    \setlength{\unitlength}{504.2410141bp}%
    \ifx\svgscale\undefined%
      \relax%
    \else%
      \setlength{\unitlength}{\unitlength * \real{\svgscale}}%
    \fi%
  \else%
    \setlength{\unitlength}{\svgwidth}%
  \fi%
  \global\let\svgwidth\undefined%
  \global\let\svgscale\undefined%
  \makeatother%
  \begin{picture}(1,0.16534013)%
    \put(0,0){\includegraphics[width=\unitlength]{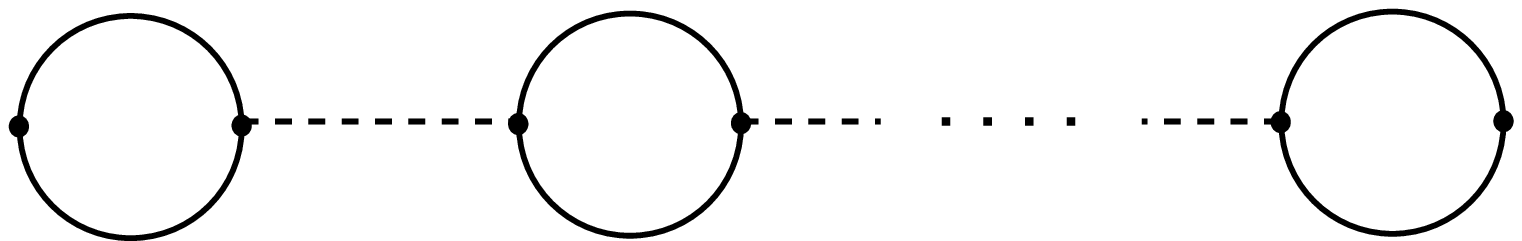}}%
    \put(0.10111538,0.15569693){\color[rgb]{0,0,0}\makebox(0,0)[lb]{\smash{$\overline{\C}\times \{1\}$}}}%
    \put(0.37082767,0.15569693){\color[rgb]{0,0,0}\makebox(0,0)[lb]{\smash{$\overline{\C}\times \{2\}$}}}%
    \put(0.79919426,0.15569693){\color[rgb]{0,0,0}\makebox(0,0)[lb]{\smash{$\overline{\C}\times \{k\}$}}}%
    \put(-0.00105356,0.06443825){\color[rgb]{0,0,0}\makebox(0,0)[lb]{\smash{$r^k_-$}}}%
    \put(0.94468791,0.06238646){\color[rgb]{0,0,0}\makebox(0,0)[lb]{\smash{$r^k_+$}}}%
    \put(0.21642833,0.03859386){\color[rgb]{0,0,0}\makebox(0,0)[lb]{\smash{$(\infty, 1)$}}}%
    \put(0.32504658,0.03859389){\color[rgb]{0,0,0}\makebox(0,0)[lb]{\smash{$(0,2)$}}}%
    \put(0.50297365,0.04066279){\color[rgb]{0,0,0}\makebox(0,0)[lb]{\smash{$(\infty,2)$}}}%
    \put(0.75641618,0.03755942){\color[rgb]{0,0,0}\makebox(0,0)[lb]{\smash{$(0,k)$}}}%
  \end{picture}%
\endgroup%

\fi

Starting with $\hat{S}^{\ell,Y}_b$,
\begin{itemize}
  \item for every $j \in K^Y$, either add components $Z(k)$ for some $k \geq 1$, add new nodal pairs $\{T^\ell_j(b), r^k_+\}$, $\nu^k$ and replace $T^\ell_j(b)$ by the new marked point $Z_j = r^k_-$ or just rename $T^\ell_j(b)$ to $Z_j$ (corresponding to $k=0$).
  \item For every $r = 1, \dots, d'$, either add components $Z(k)$ for some $k\geq 1$ and new nodal pairs $\{N^{\ell,Y,1}_r(b), r^k_-\}$, $\{N^{\ell,Y,2}_r(b), r^k_+\}$, $\nu^k$ or do nothing.
  \item For every $r = 1, \dots, d$, either add components $Z(k)$ for some $k \geq 1$, add new nodal pairs $\{N^{\ell,XY,Y}_r(b), r^k_-\}$, $\nu^k$ and replace $N^{\ell,XY,Y}_r(b)$ by the new marked point $P_r = r^k_+$ or just rename $N^{\ell,XY,Y}_r(b)$ to $P_r$ (corresponding to $k=0$).
\end{itemize}
The result of the above is a marked nodal Riemann surface $\overline{S}$ with a set of components $C$, nodal pairs $\{\{N^+_1, N^-_1\}, \dots, \{N^+_s, N^-_s\}\}$ (after renaming) and marked points $Z_1, \dots, Z_{|K^Y|}$ and $P_1, \dots, P_d$.
Let $\overline{u} : \overline{S} \to \tilde{Y}^\ell$ defined as follows: For every $i \in C$, let $\overline{u}_i \definedas \overline{u}|_{\overline{S}_i}$ be defined as $u_i$ if $\overline{S}_i$ is one of the original components $\hat{S}^{\ell,Y}_{i, b}$ and on every new component as the constant map to $u(z)$, where $z = T^\ell_j(b), N^{\ell,Y,1}_r(b), N^{\ell,XY,Y}(b)$ is the point on $\hat{S}^{\ell,Y}_b$ at which the new component is attached.

Then there exist the following:
\begin{enumerate}
  \item An integer $k\in\N$ and partitions $C = C_{-k} \amalg C_{-k+1} \amalg\cdots \amalg C_{-1}$.
  \item Up to reordering of the nodes $N^+_r, N^-_r$, \ie reordering of the index set $\{1,\dots,s\}$ and exchanging $N^+_r$ and $N^-_r$ for a fixed $r$, a partition $\{1, \dots, s\} = F_{-k} \amalg \cdots \amalg F_{-2} \amalg G{-k} \amalg \cdots \amalg G_{-1}$, $F_{-1} \definedas \{1, \dots, d\}$.
  \item For every $j = -k, \dots, -1$, $r \in F_j$, an integer $p^j_r\in \N$.
\end{enumerate}
For these, the following hold:
\begin{enumerate}[a)]
  \item For all $j = 1, \dots, |K^Y|$, $Z_j$ lies on $\overline{S}_i$ for some $i\in C_{-k}$.
  \item For all $j = 1, \dots, d$, $P_j$ lies on $\overline{S}_i$ for some $i \in C_{-1}$.
  \item For all $j = -k, \dots, -1$, $r\in G_j$, there are $i,i' \in C_j$ \st $N^+_r$ lies on $\overline{S}_i$ and $N^-_r$ lies on $\overline{S}_{i'}$.
  \item For all $j = -k, \dots, -2$, $r\in F_j$, $N^-_r$ lies on $\overline{S}_i$ for some $i\in C_j$ and $N^+_r$ lies on $\overline{S}_{i'}$ for some $i'\in C_{j+1}$.
  \item For each $i \in C$, there exists a meromorphic section $\xi_i$ of $\overline{u}_i^\ast (V\tilde{Y}^\ell)^{\perp_\omega}$ with the following properties:
      \begin{enumerate}[i)]
        \item For all $i\in C$, $\xi_i$ has simple zeroes at the points $Z_r$ for $r = 1, \dots, |K^Y|$ with $Z_r \in \overline{S}_i$.
        \item For all $j = -k, \dots, -2$, $r\in F_j$, let $i\in C_j, i' \in C_{j+1}$ be \st $N^-_r \in \overline{S}_i$, $N^+_r \in \overline{S}_{i'}$. Then $\xi_i$ has a pole of order $p_r^j$ at the point $N^-_r$ and $\xi_{i'}$ has a zero of order $p^j_r$ at the point $N^+_r$.
        \item For every $r\in D^{-1}$, let $i \in C_{-1}$ be \st $P_r \in \overline{S}_i$. Then $\xi_i$ has a pole of order $p^{-1}_r$ at the point $P_r$.
        \item Other than the above, the $\xi^i$ have no zeroes or poles.
        \item For every $r\in D^{-1}$, $u^X$ is tangent to $\tilde{Y}^\ell$ to order $p^{-1}_r-1$ at $N^{\ell,XY,X}_r$.
      \end{enumerate}
\end{enumerate}
Note that the above gives a countable number of choices: For the number and mode of attachment of the additional components, the integer $k$, the partitions $C_\ast$ and $F_\ast \amalg G_\ast$ and the orders of the zeros and poles $p^\ast_\ast$ of the $\xi_i$.
Also note that as remarked above, every $\xi_i$ has the same (total) order of zeroes as of poles, \ie at each level $j$ the total order of zeroes of the $\xi_i$, $i \in C_j$, is given by the total order of poles of $\xi_{i'}$, $i' \in C_{j-1}$, for $j
\geq -k+1$ or the number of marked points $Z_r$ for $j = -k$ and this is the same as the total order of the poles of the $\xi_i$ for $i \in C_j$. Hence the total order of the poles of the $\xi_i$ for $i \in C_{-1}$ is given by $|K^Y|$.
\end{proof}

\clearpage
\begin{landscape}
\ifpdf
\enlargethispage{8cm}
\def\svgwidth{30cm}
\addtolength{\oddsidemargin}{-2cm}
\addtolength{\evensidemargin}{-2cm}
\begingroup%
  \makeatletter%
  \providecommand\color[2][]{%
    \errmessage{(Inkscape) Color is used for the text in Inkscape, but the package 'color.sty' is not loaded}%
    \renewcommand\color[2][]{}%
  }%
  \providecommand\transparent[1]{%
    \errmessage{(Inkscape) Transparency is used (non-zero) for the text in Inkscape, but the package 'transparent.sty' is not loaded}%
    \renewcommand\transparent[1]{}%
  }%
  \providecommand\rotatebox[2]{#2}%
  \ifx\svgwidth\undefined%
    \setlength{\unitlength}{977.30082086bp}%
    \ifx\svgscale\undefined%
      \relax%
    \else%
      \setlength{\unitlength}{\unitlength * \real{\svgscale}}%
    \fi%
  \else%
    \setlength{\unitlength}{\svgwidth}%
  \fi%
  \global\let\svgwidth\undefined%
  \global\let\svgscale\undefined%
  \makeatother%
  \begin{picture}(1,0.56785587)%
    \put(0,0){\includegraphics[width=\unitlength]{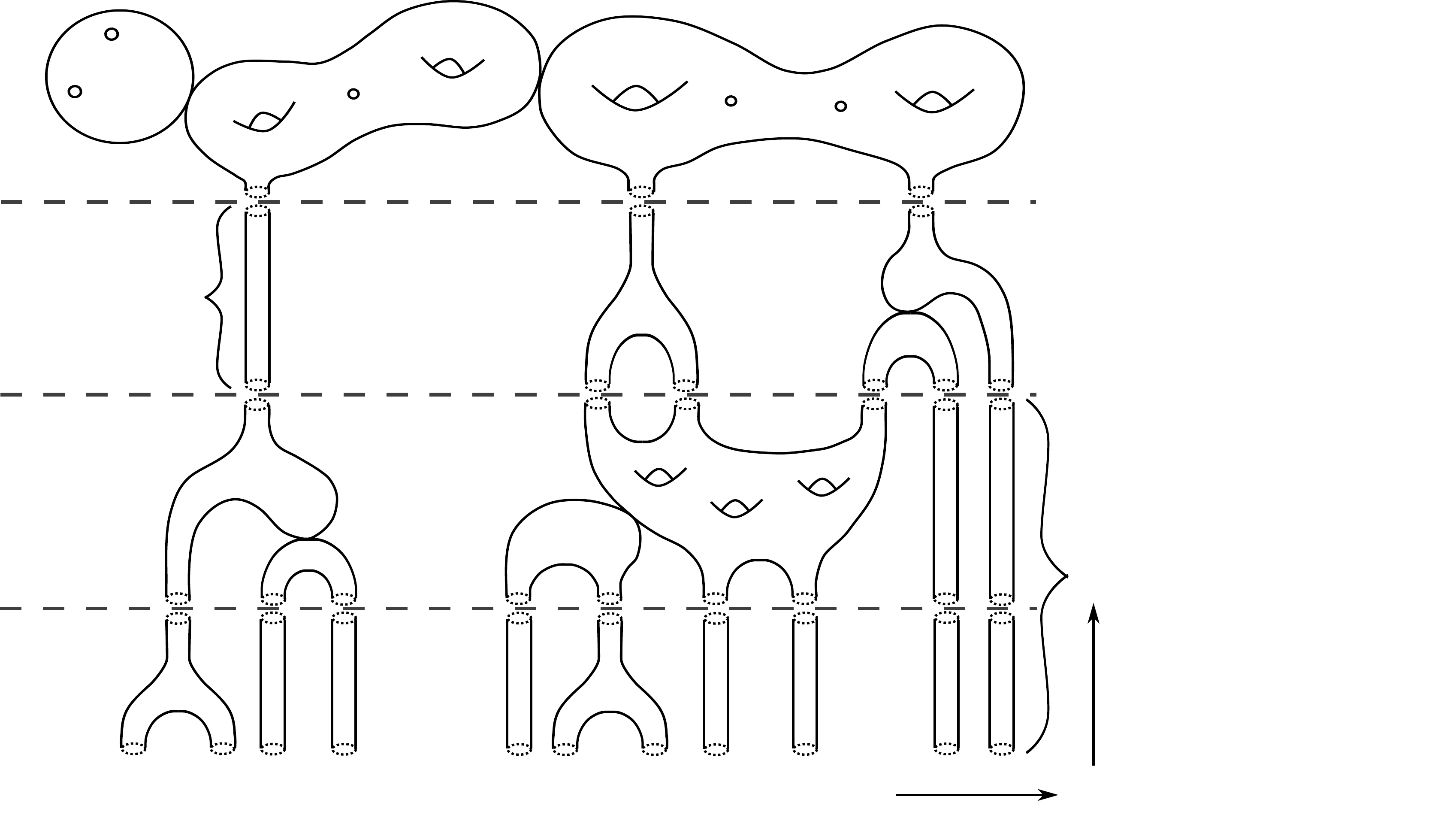}}%
    \put(0.66086681,0.00212558){\color[rgb]{0,0,0}\makebox(0,0)[lb]{\smash{$\tilde{Y}^\ell$}}}%
    \put(0.75948445,0.08860001){\color[rgb]{0,0,0}\makebox(0,0)[lb]{\smash{$(V\tilde{Y}^\ell)^{\perp_\omega}$}}}%
    \put(0.11076341,0.36081875){\color[rgb]{0,0,0}\makebox(0,0)[lb]{\smash{$Z(1)$}}}%
    \put(0.73952209,0.16872763){\color[rgb]{0,0,0}\makebox(0,0)[lb]{\smash{$Z(2)$}}}%
    \put(0.18942501,0.30650256){\color[rgb]{0,0,0}\makebox(0,0)[lb]{\smash{$N^+_r$}}}%
    \put(0.19011924,0.2815255){\color[rgb]{0,0,0}\makebox(0,0)[lb]{\smash{$N^-_r$}}}%
    \put(0.22708739,0.28071991){\color[rgb]{0,0,0}\makebox(0,0)[lb]{\smash{$r\in F_{-2}$}}}%
    \put(0.19886195,0.20974976){\color[rgb]{0,0,0}\makebox(0,0)[lb]{\smash{$N^+_{r'}$}}}%
    \put(0.19875059,0.18291285){\color[rgb]{0,0,0}\makebox(0,0)[lb]{\smash{$N^-_{r'}$}}}%
    \put(0.23466438,0.19714989){\color[rgb]{0,0,0}\makebox(0,0)[lb]{\smash{$r'\in G_{-2}$}}}%
    \put(0.08370678,0.03522956){\color[rgb]{0,0,0}\makebox(0,0)[lb]{\smash{$Z_1$}}}%
    \put(0.14629427,0.035701){\color[rgb]{0,0,0}\makebox(0,0)[lb]{\smash{$Z_2$}}}%
    \put(0.68112589,0.035701){\color[rgb]{0,0,0}\makebox(0,0)[lb]{\smash{$Z_{|K^Y|}$}}}%
    \put(0.19186779,0.41244352){\color[rgb]{0,0,0}\makebox(0,0)[lb]{\smash{$P_1$}}}%
    \put(0.45450947,0.41208344){\color[rgb]{0,0,0}\makebox(0,0)[lb]{\smash{$P_2$}}}%
    \put(0.64929201,0.41210937){\color[rgb]{0,0,0}\makebox(0,0)[lb]{\smash{$P_3$}}}%
    \put(0.00560516,0.35859875){\color[rgb]{0,0,0}\makebox(0,0)[lb]{\smash{{\large $j = -1$}}}}%
    \put(0.006188,0.22035245){\color[rgb]{0,0,0}\makebox(0,0)[lb]{\smash{{\large $j = -2$}}}}%
    \put(0.00560516,0.09468001){\color[rgb]{0,0,0}\makebox(0,0)[lb]{\smash{{\large $j = -k$}}}}%
  \end{picture}%
\endgroup%

\else
\enlargethispage{4cm}
\def\svgwidth{24cm}
\begingroup%
  \makeatletter%
  \providecommand\color[2][]{%
    \errmessage{(Inkscape) Color is used for the text in Inkscape, but the package 'color.sty' is not loaded}%
    \renewcommand\color[2][]{}%
  }%
  \providecommand\transparent[1]{%
    \errmessage{(Inkscape) Transparency is used (non-zero) for the text in Inkscape, but the package 'transparent.sty' is not loaded}%
    \renewcommand\transparent[1]{}%
  }%
  \providecommand\rotatebox[2]{#2}%
  \ifx\svgwidth\undefined%
    \setlength{\unitlength}{977.30082086bp}%
    \ifx\svgscale\undefined%
      \relax%
    \else%
      \setlength{\unitlength}{\unitlength * \real{\svgscale}}%
    \fi%
  \else%
    \setlength{\unitlength}{\svgwidth}%
  \fi%
  \global\let\svgwidth\undefined%
  \global\let\svgscale\undefined%
  \makeatother%
  \begin{picture}(1,0.56785587)%
    \put(0,0){\includegraphics[width=\unitlength]{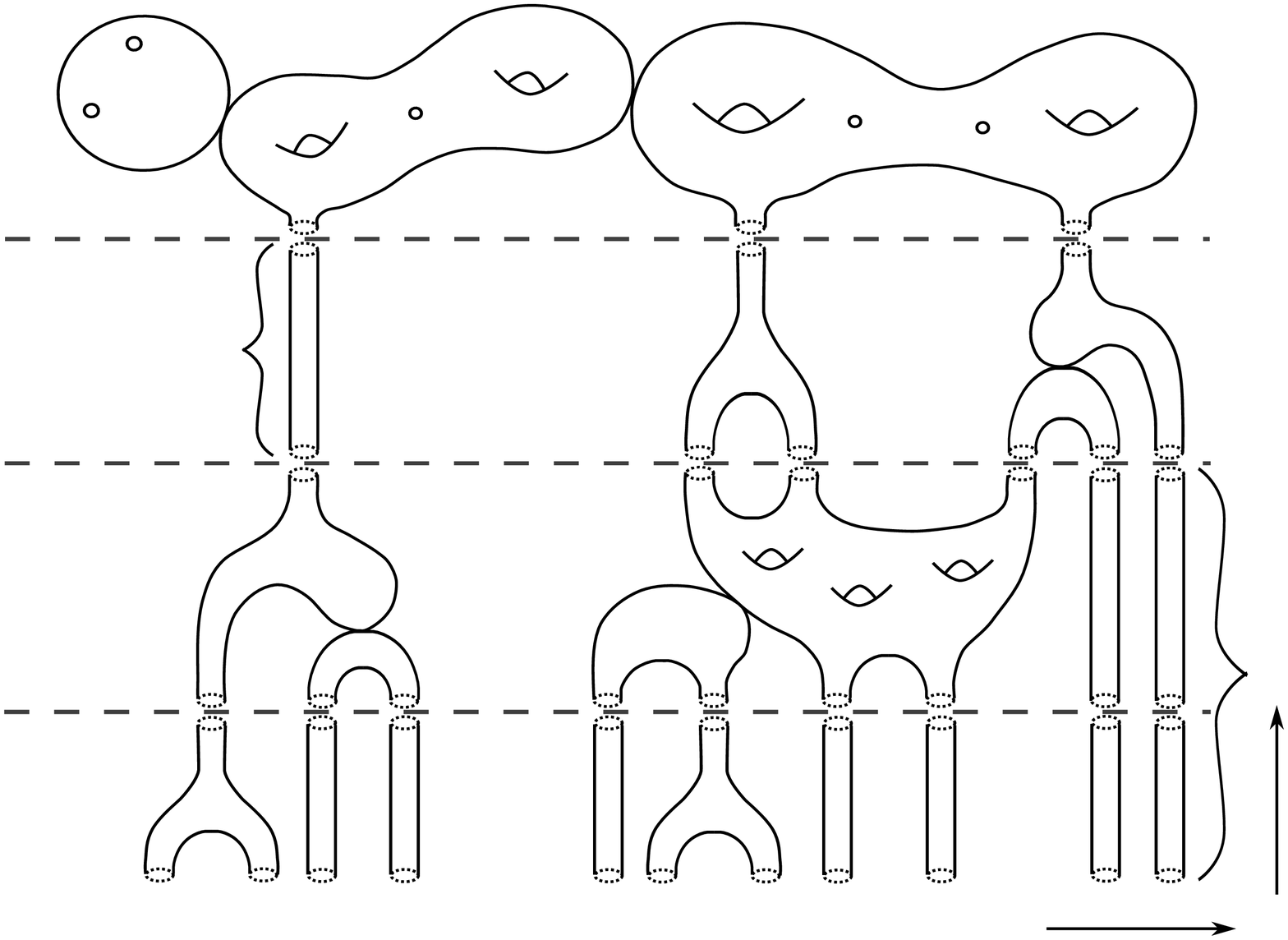}}%
    \put(0.66086681,0.00212558){\color[rgb]{0,0,0}\makebox(0,0)[lb]{\smash{$\tilde{Y}^\ell$}}}%
    \put(0.75948445,0.08860001){\color[rgb]{0,0,0}\makebox(0,0)[lb]{\smash{$(V\tilde{Y}^\ell)^{\perp_\omega}$}}}%
    \put(0.11076341,0.36081875){\color[rgb]{0,0,0}\makebox(0,0)[lb]{\smash{$Z(1)$}}}%
    \put(0.73952209,0.16872763){\color[rgb]{0,0,0}\makebox(0,0)[lb]{\smash{$Z(2)$}}}%
    \put(0.18942501,0.30650256){\color[rgb]{0,0,0}\makebox(0,0)[lb]{\smash{$N^+_r$}}}%
    \put(0.19011924,0.2815255){\color[rgb]{0,0,0}\makebox(0,0)[lb]{\smash{$N^-_r$}}}%
    \put(0.22708739,0.28071991){\color[rgb]{0,0,0}\makebox(0,0)[lb]{\smash{$r\in F_{-2}$}}}%
    \put(0.19886195,0.20974976){\color[rgb]{0,0,0}\makebox(0,0)[lb]{\smash{$N^+_{r'}$}}}%
    \put(0.19875059,0.18291285){\color[rgb]{0,0,0}\makebox(0,0)[lb]{\smash{$N^-_{r'}$}}}%
    \put(0.23466438,0.19714989){\color[rgb]{0,0,0}\makebox(0,0)[lb]{\smash{$r'\in G_{-2}$}}}%
    \put(0.08370678,0.03522956){\color[rgb]{0,0,0}\makebox(0,0)[lb]{\smash{$Z_1$}}}%
    \put(0.14629427,0.035701){\color[rgb]{0,0,0}\makebox(0,0)[lb]{\smash{$Z_2$}}}%
    \put(0.68112589,0.035701){\color[rgb]{0,0,0}\makebox(0,0)[lb]{\smash{$Z_{|K^Y|}$}}}%
    \put(0.19186779,0.41244352){\color[rgb]{0,0,0}\makebox(0,0)[lb]{\smash{$P_1$}}}%
    \put(0.45450947,0.41208344){\color[rgb]{0,0,0}\makebox(0,0)[lb]{\smash{$P_2$}}}%
    \put(0.64929201,0.41210937){\color[rgb]{0,0,0}\makebox(0,0)[lb]{\smash{$P_3$}}}%
    \put(0.00560516,0.35859875){\color[rgb]{0,0,0}\makebox(0,0)[lb]{\smash{{\large $j = -1$}}}}%
    \put(0.006188,0.22035245){\color[rgb]{0,0,0}\makebox(0,0)[lb]{\smash{{\large $j = -2$}}}}%
    \put(0.00560516,0.09468001){\color[rgb]{0,0,0}\makebox(0,0)[lb]{\smash{{\large $j = -k$}}}}%
  \end{picture}%
\endgroup%

\fi
\end{landscape}
\clearpage

For every $U^\ell$ and desingularisation $\hat{S}^\ell$, \etc, as above, $I^X$ and $I^Y$, for each $i \in I^Y$, rename the special points, \ie the $T^\ell_j$ for $j\in K^Y$, $N^{\ell,Y,s}_r$ for $r = 1, \dots, d', s = 1,2$, and $N^{\ell,XY,Y}_r$ for $r = 1, \dots, d$ that lie on $\hat{S}^{\ell,Y}_i$ to $N^i_r$ for $r = 1, \dots, k_i$ and some $k_i \in \N_0$.
Furthermore, also for each $i \in I^Y$ and $r = 1, \dots, k_i$, let $p^i_r \in \Z$ denote given orders of zeroes or poles at the $N^i_r$, $p^i_r > 0$ denoting a zero of order $p^i_r$ at $N^i_r$, $p^i_r < 0$ denoting a pole of order $-p^i_r$ at $N^i_r$ and $p^i_r = 0$ denoting that neither a zero nor a pole occurs at $N^i_r$, subject to the condition that they can appear in one of the configurations from Lemma \ref{Lemma_SFT_Compactness}.
Abbreviate such a choice of data as above by $\mathcal{D}$ (for which there are countably many choices).
\begin{defn}\label{Definition_M_D_Y}
Given $\mathcal{D}$ as above let $J \in \mathcal{J}_{\omega, \mathrm{ni}}(X, Y, E)$ and $H^Y \in \mathcal{H}_{\mathrm{reg}}(\tilde{Y}, J)$.
Define for $u\in \mathcal{M}(\tilde{Y}^\ell|_{\Sigma^{\ell,Y}_{U^\ell}}, 0, J, H^Y)$ and $H^0 \in \mathcal{H}^0_{\mathrm{ni}}(\tilde{X}^\ell, \tilde{Y}^\ell, J)$,
\begin{align*}
\mathcal{M}^{\mathcal{D}}_{Y,u}(\tilde{X}^\ell, \tilde{Y}^\ell, H^Y + H^0) \definedas \{(\xi_i)_{i\in I^Y} \;|\; & \xi_i \text{ a meromorphic section of } \\
& u_i^\ast (V\tilde{Y}^\ell)^{\perp_\omega} \text{ with zeroes/nodes} \\
& \text{at the $N^\ast_\ast$ given by the $p^\ast_\ast$}\}/_{(\C^\ast)^{I^Y}}
\end{align*}
and as usual
\begin{align*}
\mathcal{M}^{\mathcal{D}}_Y(\tilde{X}^\ell, \tilde{Y}^\ell, H^Y + H^0) &\definedas 
\coprod_{u\in \mathcal{M}(\tilde{Y}^\ell|_{\Sigma^{\ell,Y}_{U^\ell}}, 0, J, H^Y)} \mathcal{M}^{\mathcal{D}}_Y(\tilde{X}^\ell, \tilde{Y}^\ell, H^Y + H^0)
\intertext{and}
\mathcal{M}^{\mathcal{D}}_Y(\tilde{X}^\ell, \tilde{Y}^\ell, H^Y + \mathcal{H}^0_{\mathrm{ni}}(\tilde{X}^\ell, \tilde{Y}^\ell, J)) &\definedas
\coprod_{H^0\in \mathcal{H}_{\mathrm{ni}}(\tilde{X}^\ell, \tilde{Y}^\ell, J)}\mathcal{M}^{\mathcal{D}}_Y(\tilde{X}^\ell, \tilde{Y}^\ell, H^Y + H^0)\text{.}
\end{align*}
Furthermore, denote by
\begin{align*}
\pi^{\mathcal{M}^{\mathcal{D}}_Y}_{\mathcal{M}(\tilde{Y}^\ell)} : \mathcal{M}^{\mathcal{D}}_Y(\tilde{X}^\ell, \tilde{Y}^\ell, H^Y + \mathcal{H}^0_{\mathrm{ni}}(\tilde{X}^\ell, \tilde{Y}^\ell, J)) &\to \mathcal{M}(\tilde{Y}^\ell|_{\Sigma^{\ell,Y}_{U^\ell}}, 0, J, H^Y)
\intertext{and}
\pi^{\mathcal{M}^{\mathcal{D}}_Y}_{\mathcal{H}} : \mathcal{M}^{\mathcal{D}}_Y(\tilde{X}^\ell, \tilde{Y}^\ell, H^Y + \mathcal{H}^0_{\mathrm{ni}}(\tilde{X}^\ell, \tilde{Y}^\ell, J)) &\to \mathcal{H}^0_{\mathrm{ni}}(\tilde{X}^\ell, \tilde{Y}^\ell, J)
\intertext{the projections and by abuse of notation denote by}
\ev^{N^{\ell,XY,Y}} : \mathcal{M}^{\mathcal{D}}_Y(\tilde{X}^\ell, \tilde{Y}^\ell, H^Y + \mathcal{H}^0_{\mathrm{ni}}(\tilde{X}^\ell, \tilde{Y}^\ell, J)) &\to \bigoplus_{r = 1}^d (\hat{\iota}^\ell\circ N^{\ell,XY,Y}_r)^\ast \tilde{Y}^\ell
\end{align*}
the composition of $\pi^{\mathcal{M}^{\mathcal{D}}_Y}_{\mathcal{M}(\tilde{Y}^\ell)}$ with the evaluation map at the marked points $N^{\ell,XY,Y}_r$, on $\mathcal{M}(\tilde{Y}^\ell|_{\Sigma^{\ell,Y}_{U^\ell}}, 0, J, H^Y)$.
\end{defn}
The above is well-defined, because the Cauchy-Riemann operators $\overline{D}^{H_b}_{u_i}$ are complex linear for $H \in \mathcal{H}_{\mathrm{ni}}(\tilde{X}^\ell, \tilde{Y}^\ell)$, hence the spaces of meromorphic sections are invariant under fibrewise complex multiplication in $(V\tilde{Y}^\ell)^{\perp_\omega}$.
Also, note that for $H^0 \in \mathcal{H}^0(\tilde{X}^\ell, \tilde{Y}^\ell)$, $\mathcal{M}(\tilde{Y}^\ell|_{\Sigma^{\ell,Y}_{U^\ell}}, 0, J, H^Y + H^0) = \mathcal{M}(\tilde{Y}^\ell|_{\Sigma^{\ell,Y}_{U^\ell}}, 0, J, H^Y)$.

\begin{lemma}\label{Lemma_SFT_Compactness_II}
Let $u\in \cl\overset{\circ}{\mathcal{M}}(\tilde{X}^\ell, \tilde{Y}^\ell, A, J, H^Y + H^0)$ for some $J \in \mathcal{J}_{\omega, \mathrm{ni}}(X, Y, E)$, $H^Y \in \mathcal{H}(\tilde{Y},J)$ and $H^0 \in \mathcal{H}^0_{\mathrm{ni}}(\tilde{X}^\ell, \tilde{Y}^\ell, J)$. \\
Then there exists $\mathcal{D}$ as above \st the restriction $u^Y$ lies in the image of $\pi^{\mathcal{M}^{\mathcal{D}}_Y}_{\mathcal{M}(\tilde{Y}^\ell)}$ in $\mathcal{M}(\tilde{Y}^\ell|_{\Sigma^{\ell,Y}_{U^\ell}}, 0, J, H^Y + H^0)$. \\
Furthermore, $u^X$ has total order of tangency at the $N^{\ell,XY,X}_r$, $r = 1, \dots, d$, given by $|K^Y| - d$.
\end{lemma}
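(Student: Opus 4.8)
The plan is to deduce Lemma \ref{Lemma_SFT_Compactness_II} as a bookkeeping corollary of Lemma \ref{Lemma_SFT_Compactness}, keeping track of how the additional "neck" components and the labelling of zeroes and poles assemble into a datum $\mathcal{D}$ of the kind defined just before Definition \ref{Definition_M_D_Y}. First I would take a sequence $(u_j)_{j\in\N} \subseteq \overset{\circ}{\mathcal{M}}(\tilde{X}^\ell, \tilde{Y}^\ell, A, J, H^Y+H^0)$ converging to $u$, fix the associated data $U^\ell$, $\hat{S}^\ell$, $I = I^X \amalg I^Y$, etc., from Subsection \ref{Subsection_Description_of_the_closure}, and note that if $I^Y = \emptyset$ there is nothing to prove (there are no components mapped into $\tilde{Y}^\ell$, and $|K^Y| = d$ is forced since $[u^X]\cdot[Y] = \ell$ and each marked point among $T^\ell_\ast$ contributes one transverse intersection), so assume $I^Y \neq \emptyset$ and apply Lemma \ref{Lemma_SFT_Compactness}.

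Next I would unwind the output of Lemma \ref{Lemma_SFT_Compactness}: it produces a holomorphic building in $(V\tilde{Y}^\ell)^{\perp_\omega}\setminus\tilde{Y}^\ell$ projecting to $u^Y$, whose underlying nodal surface $\overline{S}$ is $\hat{S}^{\ell,Y}_b$ with chains $Z(k)$ of spheres inserted at the marked points $T^\ell_j(b)$ ($j\in K^Y$), at the nodes $N^{\ell,Y,s}_r(b)$, and at the nodes $N^{\ell,XY,Y}_r(b)$, together with a meromorphic section $\xi_i$ on each component whose zeroes and poles are governed by the partition data $C = C_{-k}\amalg\cdots\amalg C_{-1}$, $F_\ast \amalg G_\ast$ and the exponents $p^j_r$. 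For each original component index $i\in I^Y$ I would collect exactly those special points ($T^\ell_j$ with $j\in K^Y$ lying on $\hat{S}^{\ell,Y}_i$, the relevant $N^{\ell,Y,s}_r$, and the relevant $N^{\ell,XY,Y}_r$), rename them $N^i_1,\dots,N^i_{k_i}$, and read off the integer $p^i_r$ from the order of zero/pole of the relevant $\xi_i$ at that point (zeroes positive, poles negative, $0$ if neither); the compatibility at inserted neck chains guarantees that this collection of exponents is one of the configurations permitted in Lemma \ref{Lemma_SFT_Compactness}, so it defines an admissible datum $\mathcal{D}$. By construction the restriction $u^Y = u|_{\Sigma^{\ell,Y}_b}$ together with the tuple $(\xi_i)_{i\in I^Y}$, taken modulo the fibrewise $(\C^\ast)^{I^Y}$-action, is precisely an element of $\mathcal{M}^{\mathcal{D}}_{Y,u^Y}(\tilde{X}^\ell, \tilde{Y}^\ell, H^Y+H^0)$ lying over $u^Y$, which shows $u^Y$ is in the image of $\pi^{\mathcal{M}^{\mathcal{D}}_Y}_{\mathcal{M}(\tilde{Y}^\ell)}$; here one uses that $\mathcal{M}(\tilde{Y}^\ell|_{\Sigma^{\ell,Y}_{U^\ell}}, 0, J, H^Y+H^0) = \mathcal{M}(\tilde{Y}^\ell|_{\Sigma^{\ell,Y}_{U^\ell}}, 0, J, H^Y)$ for $H^0 \in \mathcal{H}^0_{\mathrm{ni}}(\tilde{X}^\ell, \tilde{Y}^\ell, J)$, as noted after Definition \ref{Definition_M_D_Y}, and Lemma \ref{Lemma_Reduction_to_vanishing_A} to know this latter space is nonempty and consists of vanishing-homology-class curves.

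For the tangency count I would argue as follows. Lemma \ref{Lemma_SFT_Compactness} a) already gives $\sum_{r=1}^d (t_r+1) = |K^Y|$ where $t_r$ is the order of tangency of $u^X$ to $\tilde{Y}^\ell$ at $N^{\ell,XY,X}_r$ (equivalently $t_r = p^{-1}_r - 1$ in the notation of part e)v)). Summing over $r = 1,\dots,d$ gives $\sum_{r=1}^d t_r = |K^Y| - d$, which is by definition the total order of tangency of $u^X$ at the $N^{\ell,XY,X}_r$. The underlying identity to double-check is that the total order of poles of the sections $\xi_i$ for $i\in C_{-1}$ equals $|K^Y|$: this follows level by level from the vanishing of $c_1(u_i^\ast(V\tilde{Y}^\ell)^{\perp_\omega})$ (so each $\xi_i$ has equal total order of zeroes and poles, using $[\pr_2(u_i)] = 0$), which forces the total pole order at level $j$ to equal the total zero order at level $j$, which equals the total pole order carried up from level $j-1$, bottoming out at the $|K^Y|$ simple zeroes at the $Z_r$. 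I expect the main obstacle to be purely organisational rather than analytical: carefully matching the neck-component bookkeeping of Lemma \ref{Lemma_SFT_Compactness} (the partitions $C_\ast$, $F_\ast\amalg G_\ast$, and which $p^j_r$ attaches to which renamed special point $N^i_r$) to the definition of the admissible data $\mathcal{D}$, and verifying that every combinatorial configuration arising from the limit is in fact among the countably many allowed ones — after which the two assertions are immediate rephrasings.
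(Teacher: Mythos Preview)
Your proposal is correct and is exactly the approach the paper takes: the paper's entire proof reads ``This is just a reformulation of Lemma \ref{Lemma_SFT_Compactness},'' and what you have written is a careful unpacking of that reformulation. One minor imprecision: your appeal to Lemma \ref{Lemma_Reduction_to_vanishing_A} is not to show that the target moduli space is nonempty, but rather to justify that $u^Y$ actually lands in the homology-class-zero moduli space (this is implicit in the standing assumption $H^Y \in \mathcal{H}_{\mathrm{reg}}(\tilde{Y}, J)$ stated at the start of the subsection).
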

\begin{proof}
This is just a reformulation of Lemma \ref{Lemma_SFT_Compactness}.
\end{proof}

\begin{lemma}\label{Lemma_H_0_reg}
Given any $H^Y\in \mathcal{H}_{\mathrm{reg}}(\tilde{Y}, J)$, there exists a generic subset $\mathcal{H}_{\mathrm{reg}}^0(\tilde{X}^\ell, \tilde{Y}^\ell, J, H^Y)$ of $\mathcal{H}^0_{\mathrm{ni}}(\tilde{X}^\ell, \tilde{Y}^\ell, J)$ \st for every $H^0 \in \mathcal{H}_{\mathrm{reg}}^0(\tilde{X}^\ell, \tilde{Y}^\ell, J, H^Y)$ and every choice of $\mathcal{D}$,
\[
\mathcal{M}^{\mathcal{D}}_Y(\tilde{X}^\ell, \tilde{Y}^\ell, J, H^Y + H^0)
\]
is a smooth manifold of dimension
\begin{align*}
\dim_\R\left(\mathcal{M}^{\mathcal{D}}_Y(\tilde{X}^\ell, \tilde{Y}^\ell, J, H^Y + H^0)\right) = \dim_\C(X)\chi^Y + \dim_\R(U^\ell) + 2d' - 2|I^Y|\text{.}
\end{align*}
\end{lemma}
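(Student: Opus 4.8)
The plan is to realise $\mathcal{M}^{\mathcal{D}}_Y(\tilde{X}^\ell, \tilde{Y}^\ell, J, H^Y + H^0)$, for each fixed choice of data $\mathcal{D}$ (and underlying $U^\ell$, $I^X$, $I^Y$, of which there are only countably many), as the quotient by a free torus action of the solution set of a parametrised Fredholm problem, and then to invoke the Sard--Smale theorem and intersect over the countably many data. Since $H^Y \in \mathcal{H}_{\mathrm{reg}}(\tilde{Y}, J)$, Lemma \ref{Lemma_Reduction_to_vanishing_A} provides that the base $B \definedas \mathcal{M}(\tilde{Y}^\ell|_{\Sigma^{\ell,Y}_{U^\ell}}, 0, J, H^Y)$ is a smooth manifold of dimension $\dim_\C(Y)\chi^Y + \dim_\R(U^\ell)$, that every component $u_i$ represents the vanishing homology class, and that $B$ is unchanged on replacing $H^Y$ by $H^Y + H^0$, as $H^0|_{\tilde{Y}^\ell} = 0$. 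Over $B$, for each $i\in I^Y$, I would form the complex line bundle $L_i$ with fibre $u_i^\ast (V\tilde{Y}^\ell)^{\perp_\omega}$ over $\hat{S}^{\ell,Y}_{i,b}$; by Corollary \ref{Corollary_Ddbar_complex_linear_II} the operator $\overline{D}^{H^Y_b + H^0_b}_{u_i}$ is complex linear, so by the Koszul--Malgrange integrability theorem $L_i$ is holomorphic with this operator as its $\dbar$-operator, and $\deg L_i = 0$ because $[\pr_2\circ u_i] = 0$.

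A meromorphic section $\xi_i$ of $L_i$ with divisor exactly $D_i \definedas \sum_r p^i_r N^i_r$ (which forces $\sum_r p^i_r = 0$, compatibly with $\deg L_i = 0$) becomes, after twisting by $\mathcal{O}(-D_i)$ and passing to the appropriate exponentially weighted Sobolev completion as in the functional-analytic framework underlying \cite{MR1954264}, an element of the kernel of a complex-linear Cauchy--Riemann operator on a line bundle of degree $\deg L_i - \deg D_i = 0$ over a curve of genus $g_i$, hence of real index $2(1-g_i)$; moreover the conditions that the divisor be exactly $D_i$ and that $\xi_i$ be nonzero are open. Thus, before quotienting, $\mathcal{M}^{\mathcal{D}}_Y$ is the zero set of a Fredholm section of index $\dim_\R B + \sum_{i\in I^Y} 2(1-g_i)$, and there is a fibrewise scalar action of $(\C^\ast)^{I^Y}$ on the tuples $(\xi_i)_{i\in I^Y}$, which is well defined precisely because each $\overline{D}^{H^Y_b+H^0_b}_{u_i}$ is complex linear — this is what normal integrability of $J$ and $H^0$ buys us.

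Next I would build the universal moduli space $\mathcal{U}^{\mathcal{D}}$ of triples $(u, (\xi_i)_{i\in I^Y}, H^0)$ over $\mathcal{H}^0_{\mathrm{ni}}(\tilde{X}^\ell, \tilde{Y}^\ell, J)$ (a Banach manifold, since the normal-integrability condition is linear in $H$ by Lemma \ref{Lemma_Characterisation_H_ni}) and show it is a smooth Banach manifold. As $B$ does not depend on $H^0$, it suffices to show that the linearisation of the defining equations in the $((\xi_i), H^0)$-directions is surjective onto the cokernel of $\overline{D}^{H^Y_b+H^0_b}_{u_i}$. This is the standard transversality argument used repeatedly in \cite{MR2045629} (e.g.\ Propositions 3.2.1, 3.4.2), in \cite{MR2399678}, Lemma 4.1, and in \cite{MR1954264}: by unique continuation for the complex-linear operator $\overline{D}^{H^Y_b+H^0_b}_{u_i}$ and the Hahn--Banach theorem, one reduces to showing that no nonzero cokernel element is $L^2$-orthogonal to all perturbations of the operator applied to $\xi_i$ coming from variations of $H^0$ within $\mathcal{H}^0_{\mathrm{ni}}$. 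Using Lemma \ref{Lemma_Characterisation_H_ni} to identify the realised first-order perturbations of $\overline{D}^{H_b}_{u_i}$ along $(V\tilde{Y}^\ell)^{\perp_\omega}$ — governed by $\pi^{VX}_{TY^{\perp_\omega}}(L_{JX^{0,1}_{H^0(w)}}J)$, which at any non-special point can be prescribed freely to first normal order while staying complex linear — together with an analogue of Lemma \ref{Lemma_Existence_of_perturbations} producing such $H^0$ supported near a point where $\xi_i \neq 0$, one concludes that the cokernel element vanishes on an open set and hence, by unique continuation for the adjoint equation, everywhere.

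Finally, the Sard--Smale theorem applied to $\mathcal{U}^{\mathcal{D}} \to \mathcal{H}^0_{\mathrm{ni}}(\tilde{X}^\ell, \tilde{Y}^\ell, J)$ yields a generic set of $H^0$ for which the pre-quotient $\mathcal{M}^{\mathcal{D}}_Y$ is a smooth manifold of dimension $\dim_\R B + \sum_{i\in I^Y} 2(1-g_i)$; since the $\xi_i$ are nonzero the $(\C^\ast)^{I^Y}$-action is free and proper, so $\mathcal{M}^{\mathcal{D}}_Y(\tilde{X}^\ell, \tilde{Y}^\ell, J, H^Y + H^0)$ is a smooth manifold of dimension $\dim_\R B + \sum_{i\in I^Y} 2(1-g_i) - 2|I^Y|$. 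Substituting $\dim_\R B = \dim_\C(Y)\chi^Y + \dim_\R(U^\ell)$ and the identity $\chi^Y = \sum_{i\in I^Y}(2-2g_i) - 2d'$ for the Euler characteristic of the nodal fibre ($d'$ internal nodes), and using $\dim_\C(Y) + 1 = \dim_\C(X)$, this equals $\dim_\C(X)\chi^Y + \dim_\R(U^\ell) + 2d' - 2|I^Y|$, as claimed. Intersecting over the countably many choices of $U^\ell$, $I^X$, $I^Y$ and $\mathcal{D}$ produces $\mathcal{H}^0_{\mathrm{reg}}(\tilde{X}^\ell, \tilde{Y}^\ell, J, H^Y)$. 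The main obstacle is the transversality step: one must check that the perturbations of $H^0$ that are allowed — simultaneously vanishing on $\tilde{Y}^\ell$, normally integrable, and hence keeping $\overline{D}^{H_b}_{u_i}$ complex linear — still span the (potentially large, for components with $g_i \geq 2$) cokernel, using only the first normal jet of $H^0$ along $\tilde{Y}^\ell$.
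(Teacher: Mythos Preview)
Your proposal is correct and follows essentially the same route as the paper: set up the moduli problem for meromorphic sections of the normal line bundle over the base $B=\mathcal{M}(\tilde{Y}^\ell|_{\Sigma^{\ell,Y}_{U^\ell}},0,J,H^Y)$, show the universal problem over $H^Y+\mathcal{H}^0_{\mathrm{ni}}$ is transverse (the paper cites the proof of Proposition~6.4 in \cite{MR1954264} and identifies, as you do, that only the $K_{\hat{J}^H}$-term varies --- hence it is the \emph{Hessian} of $H^0$ along $\tilde{Y}^\ell$, not the first normal jet, that carries the perturbation), apply Sard--Smale, divide by the free $(\C^\ast)^{I^Y}$-action, and intersect over the countably many $\mathcal{D}$. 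The only cosmetic difference is that the paper implements the Fredholm setup via punctured surfaces and exponentially weighted Sobolev spaces in the SFT framework rather than by twisting by $\mathcal{O}(-D_i)$, but the resulting index $\sum_{i\in I^Y}2(1-g_i)+\dim_\R B$ and the ensuing dimension count are identical.
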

\begin{proof}
Consider the family of Riemann surfaces $\rho : P \to B$, where the base $B$ is given by $\mathcal{M}(\tilde{Y}^\ell|_{\Sigma^{\ell,Y}_{U^\ell}}, 0, J, H^Y)$ and the family $P$ of (disconnected smooth) Riemann surfaces over $B$ is given by $(\pi^\mathcal{M}_{U^{\ell}})^\ast \hat{S}^{\ell,Y}$.
Fibrewise deleting the nodal points $N^{i}_r$, for $i\in I^Y$ and $r = 1, \dots, k_i$, where applicable, gives a family of punctured Riemann surfaces $\dot{\rho} : \dot{P} \to B$.
Over $P$ and by restriction over $\dot{P}$, there is a complex line bundle $Z \to P$, where for $u\in B$, $Z|_{P_u} = (u)^\ast (V\tilde{Y}^\ell)^{\perp_\omega}$.
The complex structure is given by the restriction of $J$ to $(V\tilde{Y}^\ell)^{\perp_\omega}$ and is compatible with the restriction of $\omega$.
By abuse of notation, both these structures will be denoted by $J$ and $\omega$ again.
This complex line bundle can hence be regarded as a symplectic fibre bundle with real $2$-dimensional fibres and deleting the zero-section also gives a symplectic fibre bundle $\dot{Z} \to \dot{P}$.
An important property of this bundle is that it comes with a free action of $(\C^\ast)^{I^Y}$ ($\C^\ast \definedas \C\setminus \{0\}$) on the fibres of $\dot{Z}$. For $u\in B$, $\pi^{\mathcal{M}}_{U^\ell}(u) = b$, the $i$-th component of $(\C^\ast)^{I^Y}$ acts fibrewise on $(u_i)^\ast(V\tilde{Y}^\ell)^{\perp_\omega}$, $u_i \definedas u|_{\hat{S}^{\ell, Y}_{i,b}}$.
The restriction of the $\xi_u^j$ as above to the components of $P_u$ then defines a section of $\dot{Z}$ over $\dot{P}_u$.
Finally, this bundle also comes with a connection, induced by the Levi-Civita connection on $V\tilde{X}^\ell$.
Next, observe that the operator $\overline{D}_{i,u}^{H_b}$ above is a complex linear Cauchy-Riemann operator in the sense of \cite{MR2045629}, Appendix C.1: Let $u\in \mathcal{M}_b(\tilde{Y}^\ell|_{\hat{S}^{\ell,Y}_i}, 0, J, (\pi_0^\ell)^\ast H)$ for any component given by $i\in I^Y$, where $H \in \mathcal{H}_{\mathrm{ni}}(\tilde{X}^\ell, \tilde{Y}^\ell, J)$ restricts to $H^Y$ along $\tilde{Y}^\ell$.
Then because $J \in \mathcal{J}_{\omega, \mathrm{ni}}(\tilde{X}^\ell, \tilde{Y}^\ell)$, $H \in \mathcal{H}_{\mathrm{ni}}(\tilde{X}^\ell, \tilde{Y}^\ell)$, by Lemma \ref{Lemma_Ddbar}, for a section $\xi$ of $u^\ast (V\tilde{Y}^\ell)^{\perp_\omega}$, $Z \in T\hat{S}^{\ell,Y}_{i,b}$,
\begin{equation}\label{Equation_CR_on_normal_bundle}
(\overline{D}_{i,u}^{H_b}\xi)(Z) = \pi^{V\tilde{X}^\ell_b}_{(V\tilde{Y}^\ell_b)^{\perp_\omega}}\left(\nabla^{0,1}_Z\xi - K_{\hat{J}^{H}}(\xi, Du(Z))\right)\text{.}
\end{equation}
These observations allow one to define a Fredholm problem whose solutions are the meromorphic sections from above.
Basically, one now goes through the steps of the previous chapter. What was the bundle $\pi : \Sigma \to B$ there now is the bundle $\rho : \dot{P} \to B$, and what was the bundle $\tilde{X} \to \Sigma$ there now is the bundle $\dot{Z} \to \dot{P}$.
One just has to be more careful in the definition of the (linear) Sobolev spaces, see \eg \cite{MR610188} and \cite{MR879560}, but as long as the definition is such that the usual embedding theorems, elliptic estimates for the Cauchy-Riemann operator, etc., hold, the details are not that important.
Also remember that $\hat{S}^\ell \to U^\ell$ was a (topologically but not holomorphically) trivial bundle and that one can choose tubular neighbourhoods of all the marked points and nodal points on which the trivialisation preserves the complex structure in the fibres of $\hat{S}^\ell$.
This allows one to use the SFT Fredholm theory from \cite{Cieliebak_Lecture_Notes_SFT_II}.
Because $\hat{S}^\ell \to U^\ell$ is holomorphically trivial in a neighbourhood of all the nodes, and hence so is $P \to B$, one can pick holomorphic coordinates defined on $[0,\infty) \times S^1$ to punctured disk neighbourhoods $D^i_r$ of the $N^i_r$ in $\dot{P}$ that are preserved under the (smooth) identification of the fibres of $\dot{P}$.
Denote the resulting maps $\sigma^i_r : B\times [0,\infty)\times S^1 \to \dot{P}$.
Also note that as was remarked above, by the Koszul-Malgrange integrability theorem (and because everything extends from a punctured disk to a disk), for every $b\in B$, over $D^i_{r,b}$ one can find a holomorphic trivialisation of $\dot{Z}|_{D^i_{r,b}}$ with fibre $E^i_{r,b} \cong \C\setminus \{0\}$, \ie $\dot{Z}|_{D^i_{r,b}} \cong D^i_{r,b}\times E^i_{r,b}$.
This gives maps $\overline{\sigma}^i_r : B\times ([0,\infty)\times S^1)\times (\C\setminus\{0\}) \to \dot{Z}$ covering the maps $\sigma^i_r$.
If $[0,\infty)\times S^1 \to \mathbb{D}\setminus\{0\}$, $(s,\theta) \mapsto e^{-(s+i\theta)}$ is the standard identification, then a zero or pole of order $p$ that is given in standard coordinates on $\mathbb{D}$ by $z \mapsto cz^p$, for some $c = e^{-(a + i \vartheta)}\in \C$, under this identification is the map $(s, \theta) \mapsto ce^{-p(s + i\theta)} = e^{-(ps + a + i(p\theta + \vartheta))}$.
Fix some $b\in B$, $l\in\N$, $q > 1$ with $lq > 2$, and a weight $\delta > 0$. Furthermore, fix a smooth function $s : \dot{P} \to (0,\infty)$ that in all the coordinates $\sigma^i_r$ from above is given by the projection onto the factor $[0,\infty)$.
Then for any metric vector bundle with connection $E \to \dot{P}_b$, one can define the weighted Sobolev space
\[
L^{l,q,\delta}(E) \definedas \{ \eta \in L^{l,q}_{\mathrm{loc}}(E) \;|\; e^{\delta s}\eta \in L^{l,q}(E) \}\text{.}
\]
With these choices, let (\cf the first definition in Section 3 of \cite{Cieliebak_Lecture_Notes_SFT_II})
\begin{align*}
\mathcal{B}_b \definedas \{\xi : \dot{P}_b \to \dot{Z}_b \;|\; & \xi \text{ a section of } \dot{Z}_b \to \dot{P}_b \text{ of class } L^{l,q}_{\mathrm{loc}} \text{ \st } \forall\, r = 1, \dots, k_io\text{,} \\
& (\pr_2\circ (\overline{\sigma}^i_{r,b})\inv \circ \xi\circ\sigma^i_{r,b})(s,\theta) : \\
& (s,\theta) \mapsto e^{-((t(s,\theta) - (p^i_r s + a^i_r)) + i(\varphi(s,\theta) - (p^i_r \theta + \vartheta^i_r)))} \\
& \in L^{l,q,\delta}([0,\infty)\times S^1, \C) \text{ for some} \\
& (a^i_r, \vartheta^i_r) \in [0,\infty) \times S^1 \text{ and for all $r$} \}\text{.}
\end{align*}
This is a Banach manifold, that around a smooth $\xi \in \mathcal{B}_b$ is modelled on the Banach space $L^{l,q,\delta}(\xi^\ast V\dot{Z}_b)$, just with the Sobolev spaces used in the previous chapter replaced by the weighted Sobolev spaces.
Analogously to the situation in the previous chapter there is then also a Banach space bundle $\mathcal{E}_b$ over $\mathcal{B}_b$, with fibre
\[
(\mathcal{E}_b)_\xi \definedas L^{l-1,q,\delta}(\overline{\Hom}_{(j_b,J_b)}(T\dot{P}_b, \xi^\ast V\dot{Z}_b))
\]
over $\xi\in \mathcal{B}_b$.
$\mathcal{E}_b \to \mathcal{B}_b$ comes with a section $\overline{\nabla}_b$, defined by the Cauchy-Riemann operator from Equation \ref{Equation_CR_on_normal_bundle} and for an appropriate choice of $\delta > 0$, this is a Fredholm operator.
By definition of $\mathcal{B}_b$, the zero set of $\overline{\nabla}_b$ is given by the meromorphic sections of $Z_b \to P_b$ that have zeros and poles at the $N^i_r$, of orders given by the numbers $p^i_r$, respectively.
Because $\overline{\nabla}_b$ is a linear Cauchy-Riemann operator on $Z_b$, its linearisation $D\overline{\nabla}_b$ in the sense of Section 3 in \cite{Cieliebak_Lecture_Notes_SFT_II} is (modulo canonical identifications) given by $\overline{\nabla}_b$ itself.
In particular the operators $S_i(t)$ in op.~cit.~vanish identically, and the paths of symplectic matrices $\Phi_i(t)$, as in the same reference, are the constant paths at the identity.
By Corollary 3.6 in \cite{Cieliebak_Lecture_Notes_SFT_II}, again for $\delta > 0$ sufficiently small, $\overline{\nabla}_b$ is a Fredholm operator of index $\chi(P_b) = \sum_{i \in I^Y}2(1-g_i)$ (not $\chi(\dot{P}_b)$), as was expected from the classical Riemann-Roch theorem from the start.
Now as in the previous chapter, remembering that $P$ and hence $\dot{P}$ were trivial, one can take the union over all $b\in B$ to get a Banach manifold $\mathcal{B}$, together with a projection to $B$, and a Banach space bundle $\mathcal{E}$ over $\mathcal{B}$, together with a Fredholm section $\overline{\nabla} : \mathcal{B} \to \mathcal{E}$ of index $\sum_{i \in I^Y}2(1-g_i) + \dim_\R(B)$.
Remembering that $\overline{\nabla}$ depends on the choice of $H \in \mathcal{H}_{\mathrm{ni}}(\tilde{X}^\ell, \tilde{Y}^\ell, J)$, whereas $B = \mathcal{M}(\tilde{Y}^\ell|_{\Sigma^{\ell,Y}_{U^\ell}}, 0, J, H^Y)$ and hence $P$ and $Z$ only depend on the restriction of $H$ to $\tilde{Y}^\ell$.
So one can look at the affine subspace $H^Y + \mathcal{H}_{\mathrm{ni}}^0(\tilde{X}^\ell, \tilde{Y}^\ell, J)$ of $\mathcal{H}_{\mathrm{ni}}(\tilde{X}^\ell, \tilde{Y}^\ell, J)$.
Making the dependence on $H$ explicit and writing $\mathcal{B}^{H}$, $\mathcal{E}^{H}$ and $\overline{\nabla}^{H}$, one can then as in the previous chapter look at the spaces
\begin{align*}
\mathcal{B}(H^Y + \mathcal{H}_{\mathrm{ni}}^0(\tilde{X}^\ell, \tilde{Y}^\ell, J)) &\definedas \coprod_{H \in H^Y + \mathcal{H}_{\mathrm{ni}}^0(\tilde{X}^\ell, \tilde{Y}^\ell, J)} \mathcal{B}^{H} \\
\mathcal{E}(H^Y + \mathcal{H}_{\mathrm{ni}}^0(\tilde{X}^\ell, \tilde{Y}^\ell, J)) &\definedas \coprod_{H \in H^Y + \mathcal{H}_{\mathrm{ni}}^0(\tilde{X}^\ell, \tilde{Y}^\ell, J)} \mathcal{E}^{H}
\intertext{and the map}
\overline{\nabla}^\mathcal{H} &\definedas \coprod_{H \in H^Y + \mathcal{H}_{\mathrm{ni}}^0(\tilde{X}^\ell, \tilde{Y}^\ell, J)} \overline{\nabla}^{H}
\end{align*}
Finally, note that because $H\in \mathcal{H}_{\mathrm{ni}}(\tilde{X}^\ell, \tilde{Y}^\ell, J)$, $\overline{\nabla}^{\mathcal{H}}$ is equivariant \wrt the free $(\C^\ast)^{I^Y}$-actions on $\mathcal{B}(H^Y + \mathcal{H}^0_{\mathrm{ni}}(\tilde{X}^\ell, \tilde{Y}^\ell, J))$ and $\mathcal{E}(H^Y + \mathcal{H}^0_{\mathrm{ni}}(\tilde{X}^\ell, \tilde{Y}^\ell, J))$ induced by the one on $\dot{P}$.
Furthermore, the projections to $B$ and $H^Y + \mathcal{H}^0_{\mathrm{ni}}(\tilde{X}^\ell, \tilde{Y}^\ell, J)$ are invariant under this action.
The proof (not the statement) of Proposition 6.4 in \cite{MR1954264} shows that $\overline{\nabla}^\mathcal{H}$ is transverse to the zero section.
One should note here that for any $H \in \mathcal{H}^0_{\mathrm{ni}}(\tilde{X}^\ell, \tilde{Y}^\ell, J)$, $\d H|_{V\tilde{X}^\ell}$ vanishes along $\tilde{Y}^\ell$, because the condition that $H$ lies in $\mathcal{H}(\tilde{X}^\ell, \tilde{Y}^\ell)$ implies that $\d H$ vanishes on $(V\tilde{Y}^\ell)^{\perp_\omega}$ and the condition that $H$ vanishes along $\tilde{Y}^\ell$ implies that $\d H$ vanishes on $V\tilde{Y}^\ell$.
From this it follows that in Formula \ref{Equation_CR_on_normal_bundle} for $\overline{\nabla}^{H}$, the term involving $\nabla^{0,1}$ is independent of $H \in H^Y + \mathcal{H}^0_{\mathrm{ni}}(\tilde{X}^\ell, \tilde{Y}^\ell, J)$ since it only depends on the restriction of $\d H$ to $V\tilde{X}^\ell|_{\tilde{Y}^\ell}$.
For transversality, the crucial term is the second one, involving $K_{\hat{J}^{H}}$, \ie the symmetric part of the morphism $\frac{1}{2}\hat{J}^{H}(\hat{\nabla}^{H} \hat{J}^{H})$. Together with the vanishing of certain components of its antisymmetric part, which is given by the Nijenhuis tensor, to satisfy normal integrability, this gives a number of conditions on the Hessian of $H$ along $\tilde{Y}^\ell$.
By the usual line of argument using Lemma A.3.6 in \cite{MR2045629}, the universal moduli space $(\overline{\nabla}^\mathcal{H})\inv(0)$ hence is a smooth Banach manifold and the projection onto $H^Y + \mathcal{H}_{\mathrm{ni}}^0(\tilde{X}^\ell, \tilde{Y}^\ell, J)$ is a Fredholm map of index $\sum_{i \in I^Y}2(1-g_i) + \dim_\R(B)$.
So by the Sard-Smale theorem, for generic $H \in H^Y + \mathcal{H}_{\mathrm{ni}}^0(\tilde{X}^\ell, \tilde{Y}^\ell, J)$, $(\overline{\nabla}^{H})\inv(0)$ is a smooth manifold of dimension $\sum_{i \in I^Y}2(1-g_i) + \dim_\R(B)$.
Also, it comes with a free $(\C^\ast)^{I^Y}$-action and projection/forgetful map (smooth for the generic $H$ above) to $\mathcal{M}(\tilde{Y}^\ell|_{\Sigma^{\ell,Y}_{U^\ell}}, 0, J, H^Y)$ invariant under this action. \\
To arrive at the dimension formula given in the lemma one now finally observes that $\chi^Y = \sum_{i \in I^Y}2(1-g_i) - 2d'$.
\end{proof}

\subsection{Finishing the proof of Theorem \ref{Theorem_Main_Theorem_1}}\label{Subsection_Putting_it_all_together}

Here, the remaining part of the proof of Theorem \ref{Theorem_Main_Theorem_1}, Step \ref{Step_4} in Subsection \ref{Subsubsection_Proof_Main_Theorem_1}, is finally proved.

Let again $\mathcal{D}$ signify a set of data as in the previous subsections. More specifically, the following:
\begin{itemize}
  \item An open subset $U^\ell$ of a stratum of the stratification by signature on $M^\ell$ and a choice of desingularisation $\hat{S}^\ell$ of $\Sigma^\ell_{U^\ell}$ as well as a decomposition $I = I^X \amalg I^Y$ of the index set for the connected components of $\hat{S}^\ell$, together with all the notation this implies, as in Subsection \ref{Subsection_Description_of_the_closure}.
  \item Numbers $p^i_r \in \Z$, $r = 1, \dots, k_i$, $i \in I^Y$ as described in the paragraph before Definition \ref{Definition_M_D_Y}, in particular numbers $t_r \in \N_0$, $r = 1, \dots, d$, as in Lemma \ref{Lemma_SFT_Compactness}.
\end{itemize}

\begin{defn}
Let $J \in \mathcal{J}_{\omega,\mathrm{ni}}(X,Y,E)$, $H^Y \in \mathcal{H}(\tilde{Y})$, $H^0 \in \mathcal{H}^0_{\mathrm{ni}}(\tilde{X}^\ell, \tilde{Y}^\ell, J)$ and $H^{00} \in \mathcal{H}^{00}(\tilde{X}^\ell, \tilde{Y}^\ell)$. Set $H \definedas H^Y + H^0 + H^{00}$. \\
Given $\mathcal{D}$ as above, for $I^Y = \emptyset$ assume that $U^\ell = M^{\ell,i}$ is one of the strata in the stratification by signature on $M^\ell$, other than the top-stratum and define
\begin{align*}
\mathcal{M}^{\mathcal{D}}(\tilde{X}^\ell, \tilde{Y}^\ell, A, J, H) \definedas \{  u
\in \mathcal{M}_b(\tilde{X}^\ell, A, J, H) \;|\; & b \in M^{\ell,i} \\
& u(T^{\ell}_j(b)) \in \tilde{Y}^\ell,\; j=1,\dots, \ell, \\
& \im(u|_{\Sigma_{b,s}}) \cap \tilde{X}^\ell \setminus \tilde{Y}^\ell \neq \emptyset \text{ for} \\
& \text{every component } \Sigma^\ell_{b,s} \text{ of } \Sigma^\ell_b\}\text{.}
\end{align*}
For $I^Y \neq \emptyset$ define
\begin{align*}
\mathcal{M}^{\mathcal{D}}(\tilde{X}^\ell, \tilde{Y}^\ell, A, J, H) \definedas \{ (u^X, \xi) \;|\; & u^X \in \mathcal{M}(\tilde{X}^\ell|_{\Sigma^{\ell,X}_{U^\ell}}, A, J, H) \\
& \xi = (\xi_i)_{i\in I^Y} \in \mathcal{M}^{\mathcal{D}}_Y(\tilde{X}^\ell, \tilde{Y}^\ell,J,H) \\
& \pi^{\mathcal{M}}_M(u^X) = \pi^{\mathcal{M}}_M\circ \pi^{\mathcal{M}^{\mathcal{D}}_Y}_{\mathcal{M}(\tilde{Y}^\ell)}(\xi) \defines b \in U^\ell \\
& u^X(T^{\ell}_j(b)) \in \tilde{Y}^\ell \;\forall\, j \in K^X \\
& u(\Sigma^{\ell,X}_{i,b}) \cap \tilde{X}^\ell \setminus \tilde{Y}^\ell \neq \emptyset \;\forall\, i\in I^X \\
& \ev^{N^{\ell,XY,X}_r}(u^X) = \ev^{N^{\ell,XY,Y}_r}(\xi) \;\forall\, r=1, \dots, d \\
& \iota(u^X, \tilde{Y}^\ell|_{\Sigma_b}; N^{\ell,XY,X}_r(b)) = t_r \;\forall\, r = 1, \dots, d\}\text{.}
\end{align*}
\end{defn}
In the above, note that for all $H^{00} \in \mathcal{H}^{00}(\tilde{X}^\ell, \tilde{Y}^\ell)$,
\[
\mathcal{M}^{\mathcal{D}}_Y(\tilde{X}^\ell, \tilde{Y}^\ell,J,H^Y + H^0 + H^{00}) = \mathcal{M}^{\mathcal{D}}_Y(\tilde{X}^\ell, \tilde{Y}^\ell,J,H^Y + H^0)\text{.}
\]
Also note that for any $\mathcal{D}$ as above there is a canonical map
\[
\mathcal{M}^{\mathcal{D}}(\tilde{X}^\ell, \tilde{Y}^\ell, A, J, H) \to \mathcal{M}(\tilde{X}^\ell, A, J, H)\text{.}
\]
The last part of the proof of Theorem \ref{Theorem_Main_Theorem_1}, Step \ref{Step_4} in Section \ref{Section_Definition_and_Outline}, then follows from the following:
\begin{theorem}\label{Theorem_Step_4}
For every $J\in \mathcal{J}_{\omega, \mathrm{ni}}(X, Y, E)$, $H^Y \in \mathcal{H}_{\mathrm{reg}}(\tilde{Y}, J)$ and $H^{0}_{\mathrm{reg}}(\tilde{X}^\ell, \tilde{Y}^\ell, J, H^Y)$ there exists a generic subset $\mathcal{H}^{00}_{\mathrm{reg}}(\tilde{X}^\ell, \tilde{Y}^\ell, J, H^Y + H^0) \subseteq \mathcal{H}^{00}(\tilde{X}^\ell, \tilde{Y}^\ell)$ \st for every $H^{00} \in \mathcal{H}^{00}_{\mathrm{reg}}(\tilde{X}^\ell, \tilde{Y}^\ell, J, H^Y + H^0)$, setting $H \definedas H^Y + H^0 + H^{00}$, the following hold:
\begin{enumerate}
  \item $\partial \overset{\circ}{\mathcal{M}}(\tilde{X}^\ell, \tilde{Y}^\ell, A, J, H)$ is covered by the images of the $\mathcal{M}^{\mathcal{D}}(\tilde{X}^\ell, \tilde{Y}^\ell, A, J, H)$ in $\mathcal{M}(\tilde{X}^\ell, A, J, H)$, where $U^\ell$ is an open subset of a stratum in the stratification by signature on $M^\ell$ other than the top-stratum.
  \item For every $\mathcal{D}$ as above, $\mathcal{M}^{\mathcal{D}}(\tilde{X}^\ell, \tilde{Y}^\ell, A, J, H)$ is a manifold of real dimension at most
\[
\dim_\C(X)\chi + 2c_1(A) + \dim_\R(M) - 2\text{,}
\]
\ie real dimension at least $2$ less than the expected dimension of $\overset{\circ}{\mathcal{M}}(\tilde{X}^\ell, \tilde{Y}^\ell, A, J, H)$.
\end{enumerate}
\end{theorem}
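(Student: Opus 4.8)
The plan is to prove the two assertions of Theorem \ref{Theorem_Step_4} in turn, with the first being essentially bookkeeping and the second being where the real work (already done in the preceding subsections) gets assembled. For part (1), I would start from the weak a priori description recalled in the outline: $\cl\overset{\circ}{\mathcal{M}}(\tilde{X}^\ell, \tilde{Y}^\ell, A, J, H)$ is contained in the set of $u \in \mathcal{M}(\tilde{X}^\ell, A, J, H)$ with $\im(u\circ T^\ell_j) \subseteq \tilde{Y}^\ell$ for $j = 1,\dots,\ell$, and by Lemma \ref{Lemma_Main_compactness_result} (Step \ref{Step_1}) this closure lies in $\mathcal{M}(\tilde{X}^\ell, A, J, H)$ rather than properly in $\overline{\mathcal{M}}$. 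Given $u \in \partial\overset{\circ}{\mathcal{M}}(\tilde{X}^\ell, \tilde{Y}^\ell, A, J, H)$, set $b \definedas \pi^\mathcal{M}_M(u)$. Since $u \notin \overset{\circ}{\mathcal{M}}$, either $b \notin \overset{\circ}{M}{}^\ell$, so $\Sigma^\ell_b$ is nodal and one picks $U^\ell$ a neighbourhood of $b$ in its stratum together with a desingularisation and a decomposition $I = I^X \amalg I^Y$ according to which components map into $\tilde{Y}^\ell$; or $b \in \overset{\circ}{M}{}^\ell$ but $\im(u) \subseteq \tilde{Y}^\ell$, which by Lemma \ref{Lemma_Reduction_to_vanishing_A} (for $D \geq D^\ast$, $A \neq 0$, generic $H^Y$) is impossible. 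In the first case, Lemma \ref{Lemma_SFT_Compactness_II} produces a choice of data $\mathcal{D}$ (the orders $p^i_r$, hence the tangency orders $t_r$) such that $u^Y$ lies in the image of $\pi^{\mathcal{M}^{\mathcal{D}}_Y}_{\mathcal{M}(\tilde{Y}^\ell)}$ and $u^X$ has the prescribed tangencies at the $N^{\ell,XY,X}_r$; matching this against the definition of $\mathcal{M}^{\mathcal{D}}(\tilde{X}^\ell, \tilde{Y}^\ell, A, J, H)$ shows $u$ is in the image of the canonical map from that space. Since there are only countably many choices of $\mathcal{D}$, this gives the covering.

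For part (2), I would fix $\mathcal{D}$ and compute the dimension of $\mathcal{M}^{\mathcal{D}}(\tilde{X}^\ell, \tilde{Y}^\ell, A, J, H)$, first defining the generic subset $\mathcal{H}^{00}_{\mathrm{reg}}$. When $I^Y = \emptyset$ and $U^\ell = M^{\ell,i}$ a non-top stratum, the transversality machinery of Lemma \ref{Lemma_The_universal_moduli_space} and Lemma \ref{Lemma_Tangency_condition} applied over $M^{\ell,i}$ (using perturbations from $\mathcal{H}^{00}(\tilde{X}^\ell, \tilde{Y}^\ell)$, which is large enough since nothing here is forced to lie in $\tilde{Y}^\ell$) shows, via Sard-Smale, that for generic $H^{00}$ this space is a manifold of dimension $\dim_\C(X)\chi + 2c_1(A) + \dim_\R(M^{\ell,i})$, and $\dim_\R(M^{\ell,i}) \leq \dim_\R(M^\ell) - 2 = \dim_\R(M) + 2\ell - 2$, which is the desired bound since $\overset{\circ}{\mathcal{M}}(\tilde{X}^\ell, \tilde{Y}^\ell, A, J, H)$ has dimension $\dim_\C(X)\chi + 2c_1(A) + \dim_\R(M)$ — wait, one must be careful: the expected dimension in the statement is phrased in terms of $\dim_\R(M)$, not $\dim_\R(M^\ell)$, and indeed over $\overset{\circ}{M}{}^\ell$ the $\ell$ marked-point conditions cut down by $2\ell$ and $\overset{\circ}{M}{}^\ell \to \overset{\circ}{M}$ contributes nothing extra because $\hat{\pi}^\ell_0$ is a fibrewise isomorphism there; so the bound $\leq \dim_\C(X)\chi + 2c_1(A) + \dim_\R(M) - 2$ follows once one checks the codimension count along the strata carefully using Proposition \ref{Proposition_Sequence_of_branched_coverings}.

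The genuinely substantial case is $I^Y \neq \emptyset$. Here $\mathcal{M}^{\mathcal{D}}(\tilde{X}^\ell, \tilde{Y}^\ell, A, J, H)$ is cut out inside a fibre product of $\mathcal{M}(\tilde{X}^\ell|_{\Sigma^{\ell,X}_{U^\ell}}, A, J, H)$ and $\mathcal{M}^{\mathcal{D}}_Y(\tilde{X}^\ell, \tilde{Y}^\ell, J, H^Y + H^0)$ by: the $|K^X|$ incidence conditions $u^X(T^\ell_j(b)) \in \tilde{Y}^\ell$, the $d$ matching conditions $\ev^{N^{\ell,XY,X}_r}(u^X) = \ev^{N^{\ell,XY,Y}_r}(\xi)$, and the $d$ tangency conditions $\iota(u^X, \tilde{Y}^\ell; N^{\ell,XY,X}_r(b)) = t_r$. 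Using Lemma \ref{Lemma_The_universal_moduli_space}, Lemma \ref{Lemma_Tangency_condition} (for $u^X$, with perturbations in $\mathcal{H}^{00}$, and noting $\sum_r(t_r+1) = |K^Y|$ from Lemma \ref{Lemma_SFT_Compactness}, so $\sum_r t_r = |K^Y| - d$), Lemma \ref{Lemma_H_0_reg} (giving $\dim_\R\mathcal{M}^{\mathcal{D}}_Y = \dim_\C(X)\chi^Y + \dim_\R(U^\ell) + 2d' - 2|I^Y|$), and transversality of the matching evaluations, one arrives at a dimension count of the shape sketched in the outline:
\begin{align*}
& \dim_\C(X)\chi^X + 2c_1(A) + \dim_\R(U^\ell) - 2|K^X| - 2(|K^Y|-d) - 2d \;+ \\
& +\; \dim_\C(X)\chi^Y + \dim_\R(U^\ell) + 2d' - 2|I^Y| \;-\; (\dim_\C(X) + \dim_\R(U^\ell))\cdot d\text{,}
\end{align*}
which after using $\chi = \chi^X + \chi^Y$, $|K^X| + |K^Y| = \ell$, and the relation between $\dim_\R(U^\ell)$, $\dim_\R(M)$, the codimension of the stratum, and the number of nodes $d$, $d'$, collapses to $\dim_\C(X)\chi + 2c_1(A) + \dim_\R(M) - 2$ (or less). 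The $-2$ has three sources in exactly the way the outline records: the codimension of the signature stratum, the excess $2|K^Y| - \chi^Y$ from the na\"ive count being corrected by the tangency conditions $-2(|K^Y|-d)$ combined with dividing out the free $(\C^\ast)^{I^Y}$-action appearing in $\mathcal{M}^{\mathcal{D}}_Y$, and the Riemann--Roch contribution $\chi^Y$. Finally, taking the intersection over the countably many $\mathcal{D}$ of the generic subsets produced at each step yields $\mathcal{H}^{00}_{\mathrm{reg}}(\tilde{X}^\ell, \tilde{Y}^\ell, J, H^Y + H^0)$.

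The main obstacle will be the bookkeeping of the dimension count in the $I^Y \neq \emptyset$ case: reconciling the three apparently independent ways $\overset{\circ}{M}{}^\ell$, $M^\ell$, $U^\ell$, $\Sigma^{\ell,X}_{U^\ell}$, $\Sigma^{\ell,Y}_{U^\ell}$ and $M$ enter, keeping track of which marked points and nodes lie on which part, and verifying that the tangency conditions are genuinely transversally cut out after the generic perturbation (so that Lemma \ref{Lemma_Tangency_condition} applies with the stated codimension and the matching conditions do not over- or under-count the condition $u^X(N^{\ell,XY,X}_r(b)) \in \tilde{Y}^\ell$, which is the subtlety flagged by the parenthetical ``$-1$'' in the outline). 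Everything analytic — Fredholm indices, transversality via Sard--Smale, the SFT compactness input — is already in place from Lemmas \ref{Lemma_SFT_Compactness}, \ref{Lemma_SFT_Compactness_II}, \ref{Lemma_H_0_reg}, \ref{Lemma_Reduction_to_vanishing_A}, \ref{Lemma_Main_compactness_result} and the constructions of Section \ref{Section_Construction_smooth_structures}; the proof is the assembly of these pieces plus the combinatorial dimension accounting.
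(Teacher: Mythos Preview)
Your approach is essentially the paper's: part (1) via Lemmas \ref{Lemma_SFT_Compactness_II} and \ref{Lemma_Reduction_to_vanishing_A}, part (2) by a Sard--Smale argument on the universal $\mathcal{M}^{\mathcal{D}}$-spaces, split into the cases $I^Y = \emptyset$ and $I^Y \neq \emptyset$, with the latter assembled from Lemma \ref{Lemma_H_0_reg}, Lemma \ref{Lemma_Tangency_condition}, and the matching conditions at the $N^{\ell,XY}_r$. The skeleton is right.

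Your explicit dimension count in the $I^Y \neq \emptyset$ case is not correct, however, and it is precisely the bookkeeping you flag as the obstacle. Two concrete problems. First, the matching-condition term $(\dim_\C(X) + \dim_\R(U^\ell))\cdot d$ is a mis-generalisation of the $d=1$ formula in the outline (which itself has $\dim_\C(X)$ where it should read $2\dim_\C(X)$): the correct codimension is $\dim_\R(U^\ell) + 2d\dim_\C(X)$, since the base-point constraint $\pi^\mathcal{M}_M(u^X) = \pi^\mathcal{M}_M(\xi)$ is imposed once, not $d$ times, while each evaluation match costs $2\dim_\C(X)$. Second, your extra $-2d$ in the first line double-counts the condition $u^X(N^{\ell,XY,X}_r) \in \tilde{Y}^\ell$, which is already absorbed into the matching $\ev^{N^{\ell,XY,X}_r}(u^X) = \ev^{N^{\ell,XY,Y}_r}(\xi)$ (the right-hand side lands in $\tilde{Y}^\ell$ automatically). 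The paper's clean organisation is to first cut out $\tilde{\mathcal{M}}^{\mathcal{D}}$ by incidence at the $T^\ell_j$, $j \in K^X$, together with the diagonal matching (total codimension $\dim_\R(U^\ell) + 2|K^X| + 2d\dim_\C(X)$), then impose the tangency orders $t_r$ (codimension $2\sum_r t_r = 2(|K^Y| - d)$), arriving at
\[
\ind \pi^{\mathcal{M}}_{\mathcal{H}} = \dim_\C(X)\chi + 2c_1(A) + \dim_\R(U^\ell) + 2d' + 2d - 2\ell - 2|I^Y|\text{,}
\]
and then use $\dim_\R(U^\ell) \leq \dim_\R(M^\ell) - 2(d+d') = \dim_\R(M) + 2\ell - 2(d+d')$ to get the bound $\leq \dim_\C(X)\chi + 2c_1(A) + \dim_\R(M) - 2|I^Y|$. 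Once you correct the two issues above, your computation collapses to exactly this.
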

\begin{proof}
\begin{enumerate}
  \item This follows immediately from Lemmas \ref{Lemma_SFT_Compactness_II} and \ref{Lemma_Reduction_to_vanishing_A}, which implies that no smooth curve ends up in $\tilde{Y}^\ell$.
  \item Let $H_0 \definedas H^Y + H^0$ and $V \definedas \tilde{X}^\ell \setminus \tilde{Y}^\ell$. As usual, define
\[
\mathcal{M}^{\mathcal{D}}(\tilde{X}^\ell, \tilde{Y}^\ell, A, J, H_0 + \mathcal{H}^{00}(\tilde{X}^\ell, \tilde{Y}^\ell)) \definedas \coprod_{H^{00} \in \mathcal{H}^{00}(\tilde{X}^\ell, \tilde{Y}^\ell)} \mathcal{M}^{\mathcal{D}}(\tilde{X}^\ell, \tilde{Y}^\ell, A, J, H_0 + H^{00})\text{.}
\]
First, consider the case $I^Y = \emptyset$.
By Lemma \ref{Lemma_Main_transversality_result} and the lemmas up to and including Lemma \ref{Lemma_The_universal_moduli_space}, $\mathcal{M}^V(\tilde{X}^\ell|_{M^{\ell,i}}, A, J, H_0 + \mathcal{H}^{00}(\tilde{X}^\ell, \tilde{Y}^\ell))$ is a Banach manifold and the map
\[
\ev^{T^\ell} : \mathcal{M}^V(\tilde{X}^\ell|_{M^{\ell,i}}, A, J, H_0 + \mathcal{H}^{00}(\tilde{X}^\ell, \tilde{Y}^\ell) \to (T^\ell_1)^\ast \tilde{X}^\ell \oplus \cdots \oplus (T^\ell_\ell)^\ast \tilde{X}^\ell
\]
is a submersion. So
\begin{multline*}
\mathcal{M}^{\mathcal{D}}(\tilde{X}^\ell, \tilde{Y}^\ell, A, J, H_0 + \mathcal{H}^{00}(\tilde{X}^\ell, \tilde{Y}^\ell)) = \\ \left(\ev^{T^\ell}\right)\inv\left((T^\ell_1)^\ast \tilde{Y}^\ell \oplus \cdots \oplus (T^\ell_\ell)^\ast \tilde{Y}^\ell\right)
\end{multline*}
is a Banach manifold and
\[
\pi^{\mathcal{M}}_{\mathcal{H}} : \mathcal{M}^{\mathcal{D}}(\tilde{X}^\ell, \tilde{Y}^\ell, A, J, H_0 + \mathcal{H}^{00}(\tilde{X}^\ell, \tilde{Y}^\ell)) \to H_0 + \mathcal{H}^{00}(\tilde{X}^\ell, \tilde{Y}^\ell)
\]
a Fredholm map of index
\begin{multline*}
\dim_\C(X)\chi + 2c_1(A) + \dim_\R(M^{\ell,i}) - 2\ell \; = \\
= \; \dim_\C(X)\chi + 2c_1(A) + \dim_\R(M) - \underbrace{\codim_\R^{M^\ell}(M^{\ell,i})}_{\geq 2}\text{.}
\end{multline*}
Now apply the Sard-Smale theorem. \\
In case $I^Y \neq \emptyset$, for $r = 1, \dots, d$, let $N^{\ell,XY}_r = \hat{\iota}^\ell\circ N^{\ell,XY,X}_r = \hat{\iota}^\ell\circ N^{\ell,XY,Y}_r : U^\ell \to \Sigma^\ell_{U^\ell}$ be the nodal points at which $\Sigma^{\ell,X}_{U^\ell}$ and $\Sigma^{\ell,Y}_{U^\ell}$ meet.
Also, let $\ev^{T^\ell}_{K^X}$ be the evaluation at the marked point $T^\ell_j$ for $j \in K^X$.
Then
\begin{multline}
\ev^{T^\ell}_{K^X} \times \ev^{N^{\ell, XY, X}} \times \ev^{N^{\ell, XY, Y}} :  \\
\mathcal{M}^V(\tilde{X}^\ell|_{\Sigma^{\ell,X}_{U^\ell}}, A, J, H_0 + \mathcal{H}^{00}(\tilde{X}^\ell, \tilde{Y}^\ell)) \times \mathcal{M}^{\mathcal{D}}_Y(\tilde{X}^\ell, \tilde{Y}^\ell, J, H_0) \to \\
\bigoplus_{j\in K^X} \left(T^\ell_j\right)^\ast \tilde{X}^\ell \oplus \bigoplus_{r=1}^d \left(\left(N^{\ell, XY}_r\right)^\ast \tilde{X}^\ell \oplus \left(N^{\ell, XY}_r\right)^\ast \tilde{Y}^\ell\right)
\end{multline}
is transverse to $\bigoplus_{j\in K^X} \left(T^\ell_j\right)^\ast \tilde{Y}^\ell \oplus \bigoplus_{r=1}^d \Delta_r$, where $\Delta$ denotes the diagonal in $\left(N^{\ell, XY}_r\right)^\ast \tilde{Y}^\ell \oplus \left(N^{\ell, XY}_r\right)^\ast \tilde{Y}^\ell \subseteq \left(N^{\ell, XY}_r\right)^\ast \tilde{X}^\ell \oplus \left(N^{\ell, XY}_r\right)^\ast \tilde{Y}^\ell$.
So 
\begin{multline}
\tilde{\mathcal{M}}^{\mathcal{D}}(\tilde{X}^\ell, \tilde{Y}^\ell, A, J, H_0 + \mathcal{H}^{00}(\tilde{X}^\ell, \tilde{Y}^\ell)) \definedas \\
\left(\ev^{T^\ell}_{K^X} \times \ev^{N^{\ell, XY, X}} \times \ev^{N^{\ell, XY, Y}}\right)\inv \left(\bigoplus_{j\in K^X} \left(T^\ell_j\right)^\ast \tilde{Y}^\ell \oplus \bigoplus_{r=1}^d \Delta_r\right)
\end{multline}
is a split submanifold of codimension $\dim_\R(U^\ell) + 2|K^X| + 2d\dim_\C(X)$ and using Lemma \ref{Lemma_H_0_reg},
\[
\pi^\mathcal{M}_\mathcal{H} : \tilde{\mathcal{M}}^{\mathcal{D}}(\tilde{X}^\ell, \tilde{Y}^\ell, A, J, H_0 + \mathcal{H}^{00}(\tilde{X}^\ell, \tilde{Y}^\ell)) \to H_0 + \mathcal{H}^{00}(\tilde{X}^\ell, \tilde{Y}^\ell)
\]
is a Fredholm map of index
\begin{align*}
\ind\pi^\mathcal{M}_\mathcal{H} &= \dim_\C(X)\chi^X + 2c_1(A) + \dim_\R(U^\ell) \;+ \\
&\quad\; +\; \dim_\C(X)\chi^Y + \dim_\R(U^\ell) + 2d' - 2|I^Y| \;- \\
&\quad\; -\; (\dim_\R(U^\ell) + 2|K^X| + 2d\dim_\C(X)) \\
&= \dim_\C(X)\chi + 2c_1(A) + \dim_\R(U^\ell) + 2d' - 2|K^X| - 2|I^Y|\text{.}
\end{align*}
By the same reasoning leading to Lemma \ref{Lemma_Tangency_condition},
\begin{multline*}
\mathcal{M}^{\mathcal{D}}(\tilde{X}^\ell, \tilde{Y}^\ell, A, J, H_0 + \mathcal{H}^{00}(\tilde{X}^\ell, \tilde{Y}^\ell)) = \\
\{
(u^X,\xi)\in \tilde{\mathcal{M}}^{\mathcal{D}}(\tilde{X}^\ell, \tilde{Y}^\ell, A, J, H_0 + \mathcal{H}^{00}(\tilde{X}^\ell, \tilde{Y}^\ell)) \;|\; \\
b \definedas \pi^{\mathcal{M}}_M(u^X)\in U^\ell, \iota(u^X, \tilde{Y}^\ell|_{\Sigma_b}; N^{\ell,XY,X}_r(b)) = t_r \;\forall\, r = 1, \dots, d
\}
\end{multline*}
is a smooth submanifold of real codimension $2\sum_{r=1}^{d} t_r = 2(|K^Y| - d)$ and the projection
\[
\pi^{\mathcal{M}}_{\mathcal{H}} : \mathcal{M}^{\mathcal{D}}(\tilde{X}^\ell, \tilde{Y}^\ell, A, J, H_0 + \mathcal{H}^{00}(\tilde{X}^\ell, \tilde{Y}^\ell)) \to H_0 + \mathcal{H}^{00}(\tilde{X}^\ell, \tilde{Y}^\ell)
\]
is Fredholm of index ($|K^X| + |K^Y| = \ell$)
\[
\ind\pi^{\mathcal{M}}_{\mathcal{H}} = \dim_\C(X)\chi + 2c_1(A) + \dim_\R(U^\ell) + 2d' + 2d - 2\ell - 2|I^Y|\text{.}
\]
Because $\dim_\R(U^\ell)$ is $\dim_\R(M^\ell) = \dim_\R(M) + 2\ell$ minus $2$ times the total number of nodes, which is at least $2(d' + d)$,
\[
\ind\pi^{\mathcal{M}}_{\mathcal{H}} \leq \dim_\C(X)\chi + 2c_1(A) + \dim_\R(M) - 2|I^Y|\text{.} 
\]
So by the Sard-Smale theorem, there exists a generic subset of $\mathcal{H}^{00}(\tilde{X}^\ell, \tilde{Y}^\ell)$ \st for every $H^{00}$ in this subset, $\mathcal{M}^{\mathcal{D}}(\tilde{X}^\ell, \tilde{Y}^\ell, A, J, H_0 + H^{00})$ is a smooth manifold of dimension at most $\dim_\C(X)\chi + 2c_1(A) + \dim_\R(M) - 2|I^Y|$. \\
Taking the intersection of all the generic subsets from above for the at most countably many choices of $\mathcal{D}$, one gets a generic subset
\[
\mathcal{H}^{00}_{\mathrm{reg}}(\tilde{X}^\ell, \tilde{Y}^\ell, H^Y + H^0) \subseteq \mathcal{H}^{00}(\tilde{X}^\ell, \tilde{Y}^\ell)\text{.}
\]
\end{enumerate}
\end{proof}

\subsection{Finishing the proof of Theorem \ref{Theorem_Main_Theorem_2}}\label{Subsection_Proof_Main_Theorem_2}

In this subsection, the precise statements of the results used in Subsection \ref{Subsubsection_Proof_Main_Theorem_2} are stated and the necessary changes to the proofs above are outlined.
The first one is the following Lemma used in Steps \ref{Theorem_Main_Theorem_2_Step1}) and \ref{Theorem_Main_Theorem_2_Step2}) in Subsection \ref{Subsubsection_Proof_Main_Theorem_2}, which describes in which sense the choice of Donaldson hypersurface and adapted $\omega$-compatible almost complex structure is unique.
\begin{lemma}\label{Lemma_Existence_Transverse_Hypersurface}
Let $(Y_i,J_i)$ be Donaldson pairs of degrees $D_i \geq D_i^\ast$, $i = 0,1$, where $D_i^\ast = D^\ast(X, \omega, J_i)$ is as in Lemma \ref{Lemma_Nice_J_exist}.
Then there exist
\begin{itemize}
  \item an isotopy $\phi_\cdot : [0,1]\times X \to X$, $\phi_0 = \id$, through symplectomorphisms,
  \item an integer $\overline{D} \in \N$,
  \item a hypersurface $\overline{Y} \subseteq X$ of degree $\overline{D}$,
  \item a path $(\overline{J}_t)_{t\in [0,1]} \subseteq \mathcal{J}_{\omega}(X)$ \st $\overline{Y}$ is approximately $\overline{J}_t$-holomorphic for all $t\in [0,1]$,
  \item a constant $\varepsilon > 0$,
\end{itemize}
\st the following hold:
\begin{enumerate}[a)]
  \item\label{Donaldson_quadruples_a} $\mathcal{J}_{\omega, \mathrm{ni}}(X, \overline{Y}, \overline{J}_t, E) \neq \emptyset$ for all $t\in [0,1]$.
  \item\label{Donaldson_quadruples_b} There exists $J \in \mathcal{J}_{\omega}(X, Y_0, J_0, E) \cap \mathcal{J}_{\omega}(X, \overline{Y}, \overline{J}_0, E)$ and $Y_0$ and $\overline{Y}$ intersect $\varepsilon$-transversely. Setting $Z_0 \definedas Y_0 \cap \overline{Y}$, $J$ can be chosen \st $J|_{\overline{Y}} \in \mathcal{J}_{\omega, \mathrm{ni}}(\overline{Y}, Z_0, E)$ and $J|_{Y_0} \in \mathcal{J}_{\omega, \mathrm{ni}}(Y_0, Z_0, E)$.
  \item\label{Donaldson_quadruples_c} There exists $J \in \mathcal{J}_{\omega}(X, \phi_1(Y_1), (\phi_1)_\ast J_1, E) \cap \mathcal{J}_{\omega}(X, \overline{Y}, \overline{J}_1, E)$ and $\phi_1(Y_1)$ and $\overline{Y}$ intersect $\varepsilon$-transversely. Setting $Z_1 \definedas \phi_1(Y_1) \cap \overline{Y}$, $J$ can be chosen \st $J|_{\overline{Y}} \in \mathcal{J}_{\omega, \mathrm{ni}}(\overline{Y}, Z_1, E)$ and $J|_{\phi_1(Y_1)} \in \mathcal{J}_{\omega, \mathrm{ni}}(\phi_1(Y_1), Z_0, E)$.
\end{enumerate}
\end{lemma}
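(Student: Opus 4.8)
The plan is to reduce everything to the parametric relative version of Donaldson's construction of approximately holomorphic hypersurfaces in the sense of \cite{MR2399678}, Section~8 (see Theorem~8.1 there and the references quoted), combined with the two ``softening'' procedures already in use in this text: the passage to normally integrable structures via Theorem~A.2 of \cite{MR1954264} (as in Lemmas \ref{Lemma_J_ni_nonempty_connected} and \ref{Lemma_Nice_J_exist}) and the energy estimates of \cite{MR2399678}, Section~8 (Corollary~8.14, Propositions~8.11 and~8.12), which handle the conditions of Definition~\ref{Definition_Nice_J}. First I would fix a smooth path $(\hat{J}_t)_{t\in[0,1]}$ in $\mathcal{J}_\omega(X)$ with $\hat{J}_0 = J_0$, $\hat{J}_1 = J_1$, and then invoke the relative Donaldson theorem applied to this compact family and to the fixed submanifold $Y_0$: for all $\overline{D}$ large enough there is a symplectic hypersurface $\overline{Y}\subseteq X$ with $\operatorname{PD}(\overline{Y}) = \overline{D}[\omega]$ which is $\gamma$-approximately $\hat{J}_t$-holomorphic for \emph{every} $t\in[0,1]$, with $\gamma = \gamma(\overline{D})\to 0$ uniformly in $t$, and which is $\varepsilon_0$-transverse to $Y_0$ for some $\varepsilon_0>0$. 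The degree $\overline{D}$ will in addition be chosen $\geq \sup_t D^\ast(X,\omega,\overline{J}_t)$ (finite, since the constant of Lemma~\ref{Lemma_Nice_J_exist} depends only on continuous data of $\overline{J}_t$ and the path is compact), large enough for the analogues of Lemmas~\ref{Lemma_Main_compactness_result} and~\ref{Lemma_Reduction_to_vanishing_A} to hold for all the pairs below, and large enough that $\gamma$ is below the thresholds needed in what follows.

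Next I would produce the isotopy and the path $(\overline{J}_t)$. Since $\overline{Y}$ is approximately $\hat{J}_1 = J_1$-holomorphic and $Y_1$ is (being part of the Donaldson pair $(Y_1,J_1)$) approximately $J_1$-holomorphic, an ordinary parametric transversality argument inside the symplectic category --- perturb $Y_1$ by the time-$1$ flow of a $C^1$-small symplectic vector field, using a Weinstein neighbourhood of $Y_1$ (\cite{MR1698616}, Theorem~3.30) --- provides an isotopy $(\phi_t)_{t\in[0,1]}$ through symplectomorphisms, $\phi_0 = \mathrm{id}$, so small that $\phi_1(Y_1)$ is $\varepsilon_1$-transverse to $\overline{Y}$ for some $\varepsilon_1>0$, that $(\phi_1)_\ast J_1$ lies in the $C^0$-neighbourhood $\mathcal{J}_\omega(X,J_1)$ of Lemma~\ref{Lemma_Nice_J_exist}, that $\overline{Y}$ is still (say) $2\gamma$-approximately $(\phi_1)_\ast J_1$-holomorphic, and that $(\phi_1(Y_1),(\phi_1)_\ast J_1)$ is still a Donaldson pair of degree $D_1$. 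Put $\varepsilon\definedas\min(\varepsilon_0,\varepsilon_1)$. For the path, I would choose $\overline{J}_0\in\mathcal{J}_\omega(X,J_0)$ making $\overline{Y}$ complex near $\overline{Y}$ (a $C^0$-small modification of $J_0$, possible by approximate $\hat{J}_0$-holomorphicity, cf.\ \cite{MR2399678}, Section~8) and likewise $\overline{J}_1\in\mathcal{J}_\omega(X,(\phi_1)_\ast J_1)$, and interpolate so that $\overline{J}_t$ stays $C^0$-close to $\hat{J}_t$ for $t\le 1-\delta$ and $C^0$-close to $(\phi_1)_\ast J_1$ for $t$ near $1$; as $\{\hat{J}_t\}\cup\{(\phi_1)_\ast J_1\}$ is compact and $\overline{Y}$ is uniformly approximately holomorphic over it with tolerance $\gamma$, enlarging $\overline{D}$ keeps $\overline{Y}$ approximately $\overline{J}_t$-holomorphic for all $t$. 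Part~a) is then immediate from (the proof of) Lemma~\ref{Lemma_Nice_J_exist} applied to the Donaldson pair $(\overline{Y},\overline{J}_t)$ for each $t$, the degree bound holding by choice of $\overline{D}$.

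For b) --- and c), which is identical after replacing $(Y_0,J_0,\overline{J}_0,Z_0)$ by $(\phi_1(Y_1),(\phi_1)_\ast J_1,\overline{J}_1,Z_1)$ --- I would build $J$ in stages, all $C^0$-small so as to stay in the intersection of the two prescribed $C^0$-balls. Starting from $J_0$, first make $\overline{Y}$ complex by a $C^0$-small modification supported near $\overline{Y}$; by the $\varepsilon$-transversality of $Y_0$ and $\overline{Y}$ one can choose adapted local models near $Z_0$ in which $Y_0$ and $\overline{Y}$ are coordinate hyperplanes, so this modification is carried out keeping $Y_0$ complex, giving $J\in\mathcal{J}_\omega(X,Y_0)\cap\mathcal{J}_\omega(X,\overline{Y})$ close to $J_0$ (and to $\overline{J}_0$). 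Next, enforce the sphere conditions of Definition~\ref{Definition_Nice_J} for $Y_0$ and for $\overline{Y}$ by perturbations supported in the complements $X\setminus\overline{Y}$ and $X\setminus Y_0$ respectively, exactly as in \cite{MR2399678}, Propositions~8.11 and~8.12 (the degree bounds holding by choice of $\overline{D}$, $D_0$), so that $J\in\mathcal{J}_\omega(X,Y_0,E)\cap\mathcal{J}_\omega(X,\overline{Y},E)$. Finally, apply the $K$-perturbation from the proof of Theorem~A.2 in \cite{MR1954264} twice, \emph{inside} $Y_0$ to $(Y_0,Z_0)$ and \emph{inside} $\overline{Y}$ to $(\overline{Y},Z_0)$, extending each $C^0$-small perturbation off the respective hypersurface to $X$ as in Lemma~\ref{Lemma_Enough_ni_ham_perturbations}; near $Z_0$ these two perturbations concern the Nijenhuis components in the two independent normal directions of $Z_0$ and so are compatible, and being $C^0$-small and localised near $Z_0$ they preserve both the sphere conditions (no new spheres of energy $\le E$, as in Lemma~\ref{Lemma_Nice_J_exist}) and the $C^0$-closeness. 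This yields $J$ with $J|_{Y_0}\in\mathcal{J}_{\omega,\mathrm{ni}}(Y_0,Z_0,E)$ and $J|_{\overline{Y}}\in\mathcal{J}_{\omega,\mathrm{ni}}(\overline{Y},Z_0,E)$. I expect this last step to be the main obstacle: the two normal-integrability requirements have to be realised \emph{simultaneously} and compatibly with all conditions imposed before; this is exactly the point at which the first version of the lemma overreached (hence the weakening noted in the acknowledgements), and the argument only closes because one asks for normal integrability of the \emph{restricted} structures along the codimension-four intersection $Z_0$ --- where the two perturbations decouple --- rather than of $J$ itself on $X$ with respect to $Y_0$ and $\overline{Y}$ together.
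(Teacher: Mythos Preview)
Your overall route is genuinely different from the paper's and contains a real gap at exactly the point you flag as ``the main obstacle''. The paper does \emph{not} take an off-the-shelf $\overline{Y}$ transverse to $Y_0$ and then try to fix the two normal-integrability conditions afterwards. Instead it builds $\overline{Y}$ adapted to $Y_0$ from the start: first choose a Donaldson hypersurface $Z\subseteq Y_0$, modify $J|_{Y_0}$ to a $J_Y$ with $J_Y\in\mathcal{J}_{\omega,\mathrm{ni}}(Y_0,Z)$, extend $J_Y$ to $\tilde{J}$ on $X$ via a symplectic normal-bundle model (so a neighbourhood of $Y_0$ is identified with $TY_0^{\perp_\omega}$), and then construct $\overline{Y}$ following the Stabilization step of Theorem~8.1 in \cite{MR2399678} so that near $Z$ one has $\overline{Y}\cong TY_0^{\perp_\omega}|_Z$. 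Only with this specific geometry does the $K$-perturbation for $(\overline{Y},Z)$ admit an extension to $\End(TX)$ that \emph{vanishes along $Y_0$}: since $K$ vanishes on $Z$ and there is a retraction of a neighbourhood sending $Y_0$ to $Z$, one pulls back and the modification $e^{\tilde{J}K}\tilde{J}$ leaves $\tilde{J}|_{Y_0}$ (and hence the already-established NI condition there) untouched. The isotopy and path $(\overline{J}_t)$ then come from the uniqueness part of Donaldson's theorem, comparing this $\overline{Y}$ with the analogous construction starting from $Y_1$.

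Your decoupling claim does not survive a direct check. At $p\in Z_0$ write $T_pX = TZ_0\oplus N_1\oplus N_2$ with $N_1=(TZ_0)^{\perp_\omega}\cap TY_0$, $N_2=(TZ_0)^{\perp_\omega}\cap T\overline{Y}$. The $K$ from Theorem~A.2 of \cite{MR1954264} for $(Y_0,Z_0)$ maps $TZ_0\to N_1$ and $N_1\to TZ_0$; extended by zero on $N_2$, the resulting $e^{JK}J$ sends $TZ_0$ into $TZ_0\oplus N_1=TY_0$, not into $TZ_0\oplus N_2=T\overline{Y}$. So the modified $J$ no longer preserves $T\overline{Y}$ near $Z_0$ (off $Z_0$ itself), and any further correction risks destroying what you achieved on $Y_0$. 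The paper's construction is precisely engineered so that the second $K$-perturbation can be extended to vanish on the first hypersurface; without that, there is no reason the two modifications can be made compatible. Two smaller issues: your citation of Lemma~\ref{Lemma_Enough_ni_ham_perturbations} for extending the $K$-perturbation is misplaced (that lemma concerns Hamiltonian perturbations, not almost complex structures), and you do not address the sphere conditions inside $Y_0$ and $\overline{Y}$ needed for $J|_{Y_0}\in\mathcal{J}_{\omega,\mathrm{ni}}(Y_0,Z_0,E)$ and $J|_{\overline{Y}}\in\mathcal{J}_{\omega,\mathrm{ni}}(\overline{Y},Z_0,E)$ --- in the paper's approach these come for free because $Z$ is itself a Donaldson hypersurface of large degree in $Y_0$.
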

\begin{proof}
In the above, $\mathcal{J}_{\omega, \mathrm{ni}}(X, Y_0, J_0, E)$, etc., are as in Lemma \ref{Lemma_Nice_J_exist}. \\
It suffices to construct $\overline{Y}$ and show \ref{Donaldson_quadruples_b}), for \ref{Donaldson_quadruples_a}) follows the same way as in Lemma \ref{Lemma_Nice_J_exist} and \ref{Donaldson_quadruples_c}) follows by doing the construction below in the proof of \ref{Donaldson_quadruples_b}) and by then applying the uniqueness property in Theorem 8.1 in \cite{MR2399678}.

To explore the Nijenhuis condition, first consider how to achieve the Nijenhuis condition in a linear case: \\
Let $(Y,\omega_Y)$ be a symplectic manifold with $\omega_Y$-compatible almost complex structure $J_Y$ and consequently Riemannian metric $g_Y$.
Given a complex vector bundle $E \to Y$ of fibre dimension $d$ over $Y$ with fibrewise symplectic form $\omega_0 \in \Lambda^2 E^\ast$ and fibrewise $\omega_0$-compatible complex structure $J_0 \in \End(E)$, hence fibre-metric $g_0$.
Then every complex linear connection $TE = HE \oplus VE$ on $E$ defines a nondegenerate $2$-form $\omega'$ and an $\omega'$-compatible almost complex structure $J'$ given by the lifts of $\omega_Y$ and $J_Y$ on the horizontal tangent spaces and by the linear extension of $\omega_0$ and $J_0$ on the vertical tangent spaces.
$\omega'$ is not a closed form, for one can compute that $\d\omega'(X_0, X_1, X_2) = -\omega'(\Omega(X_0,X_1), X_2)$, if $X_0, X_1$ are horizontal and $X_2$ is vertical.
$\Omega$ here denotes the curvature of the connection.
Now one follows the method from the second proof of Theorem 6.21 ``(ii) $\Rightarrow$ (i)'' in \cite{MR1698616}, p.~224, to modify $\omega'$ to a closed form $\tilde{\omega}$ that coincides with $\omega'$ along the zero section:
For $y \in Y$ and $X_1, X_2 \in T_yY$, let $H_{X_1,X_2} : E_y \to \R$ be the unique Hamiltonian function \st $H_{X_1,X_2}(0) = 0$ and $\d H_{X_1, X_2} = \omega'(\Omega(X_1, X_2), \cdot)$.
Now set $\tilde{\omega}_y(X_1, X_2) \definedas \omega'_y(X_1, X_2) + H_{\pi_\ast X_1, \pi_\ast X_2}(y)$.
This is closed by op.~cit.~and in a neighbourhood of the zero section also nondegenerate, hence a symplectic form.
$J'$ is no longer $\tilde{\omega}$-compatible on the horizontal tangent spaces, but applying the usual procedure provided by Proposition 2.50 in \cite{MR1698616} along the horizontal tangent spaces, one finds a $\tilde{J}$ that is $\tilde{\omega}$-compatible. \\
With these choices, in a neighbourhood of the zero section, $\tilde{\omega}$ defines a symplectic form that coincides with $\omega'$ along $Y$ and $\omega_0$ along the fibres and an $\tilde{\omega}$-compatible almost complex structure that coincides with $J'$ along $Y$ and $J_0$ on the fibres.
Also note that the vertical and horizontal subspaces are still symplectic as well as complex subspaces and orthogonal \wrt the metric defined by $\tilde{\omega}$ and $\tilde{J}$.

To achieve the Nijenhuis condition for $\tilde{J}$ \wrt $Y \subseteq E$, one applies the procedure from \cite{MR1954264}, Theorem A.2, to $\tilde{J}$: Applying the construction given in op.~cit.~to the present simplified situation, define a section $K \in \End(TE)$ by $K_e \definedas -\frac{1}{2}(L_e + L_e^t)$, where on the right hand side one considers $e$ as lying in $V_0E$.
$L_e(v)$ is given as in op.~cit.~by $-\tilde{J}\circ \pr^{TE}_{VE}\circ N_{\tilde{J}}(e, v)$ for $v \in TY$ and extended by zero to $TE$.
Using that $K$ is self-adjoint and satisfies $K\tilde{J} = -\tilde{J}K$, $\overline{J} \definedas e^{\tilde{J}K}\tilde{J}$ defines an almost complex structure that satisfies the Nijenhuis condition \wrt $Y$.
Note that now outside of $Y$ the fibres of $E$ need no longer be complex subspaces.

In the nonlinear case, apply the above to $E \definedas TY^{\perp_\omega}$, the $\omega$-symplectic complement of $TY$ in $TX|_Y$ and the restrictions $\omega_0$ and $J_0$ of $\omega$ and $J$ to $TY^{\perp_\omega}$.
By the symplectic neighbourhood theorem there are neighbourhoods of $Y$ in $E$ and $Y$ in $X$ that are symplectomorphic under a symplectomorphism that is the identity on $Y$.
Pushing forward $J'$ to the neighbourhood of $Y$ in $X$ and extending via $J$ outside gives a perturbation of $J$ that satisfies the Nijenhuis condition.

To construct $\overline{Y}$, one uses the above to modify the proof of the (Stabilization) condition in Theorem 8.1, p.~43, in \cite{MR2399678}.
In the notation of that proof, first apply the construction to the line bundles $L^k|_Y$ to get sections $s_k^Y$ of $L^k|_Y \to Y$ that are $C$-bounded and $\eta$-transverse.
Let $Z \definedas (s_k^Y)\inv(0) \subseteq Y$ for $k$ large enough and modify $J|_Y$ in the way described above (with the modification that now $J|_{Z}$ and $J_0$ are chosen as well) to a $J_Y$ \st $Z$ is a $J_Y$-holomorphic submanifold of $Y$ that satisfies the Nijenhuis condition \wrt $Y$.
With this new complex structure $J_Y$ on $Y$ apply the above construction again, but without the last step (to achieve the Nijenhuis condition) to extend $J_Y$ to $\tilde{J}$ on $X$ \st a neighbourhood of $Y$ in $X$ is isomorphic to a neighbourhood $N$ of the zero section in $TY^{\perp_\omega} \to Y$.
Now proceed as in the proof of Theorem 8.1 in op.~cit.: If $\pi : N \to Y$ is the projection, then $\pi^\ast (L|_Y)$ carries the connection given by $A|_Y - \frac{i}{2}\iota_{\nabla r^2}\omega_0$ ($A$ is the connection of $L$).
Here $r$ is the radial distance function from the zero section in the fibres of $N$, $\nabla r^2$ is the gradient and $\omega_0$ is the symplectic form in the fibres.
With this identification, define a section $s_k^N$ of $L^k|_N$ by transfering $e^{-kr^2/4}\pi^\ast s_k^Y$ to $X$ as in op.~cit.
The zero set of $s_k^N$ is then identified with $N|_{Z} \subseteq TY^{\perp_\omega}|_{Z}\oplus \{0\}$ which is holomorphic.
Now extend the section $s_k^N$ to a section $s_k$ of $L^k$, only modifying outside a neighbourhood of $Y$, in the way described in op.~cit, so that the zero section $\overline{Y}$ of $s_k$ is symplectic.
With this, a neighbourhood $U$ of $Z = \overline{Y} \cap Y$ in $\overline{Y}$ looks like a neighbourhood of $Z$ in $TY^{\perp_\omega}|_{Z}$.
Applying the last step in the above construction (to achieve the Nijenhuis condition) produces a section $K$ of $\End(T\overline{Y})$ on $U$ that vanishes on $Z$.
Extend $K$ to a self-adjoint section of $\End(TX)$ with $K\tilde{J} = -\tilde{J}K$ on a small neighbourhood of $Z$ in $X$ that vanishes along $Y$ by first extending to a section of $\End(TX)|_U$ by setting $K_e|_{T_e\overline{Y}^{\perp_\omega}} = 0$.
Then extend to a neighbourhood $V$ of $U$ in $X$ by choosing $V$ \st $U$ is a strong deformation retract of $V$ via a retraction that sends $Y \cap V$ to $Z$.
By multiplying with a cutoff function extend $K$ to a section of $\End(TX)$.
Now follow the procedure above to use $K$ to make $\tilde{J}|_{\overline{Y}}$ satisfy the Nijenhuis condition \wrt $Z$ and do a further perturbation away from $Y$ to achieve compatibility with $\overline{Y}$ everywhere.
\end{proof}

What remains to finish the proof of Theorem \ref{Theorem_Main_Theorem_2} is the following generalisation of Theorem \ref{Theorem_Main_Theorem_1} used in Step \ref{Theorem_Main_Theorem_2_Step3}) of Subsection \ref{Subsubsection_Proof_Main_Theorem_2}.
But first, for convenience the relevant definition from Subsection \ref{Subsubsection_Proof_Main_Theorem_2} is repeated: \\
Let $(Y, J)$ and $(\overline{Y}, \overline{J})$ be $\varepsilon$-transversely intersecting Donaldson pairs of degrees $D$ and $\overline{D}$, respectively, and let $A \in H_2(X;\Z)$.
Define $Z \definedas Y\cap \overline{Y}$ as well as $\ell \definedas D\omega(A)$, $\overline{\ell} \definedas \overline{D}\omega(A)$ and $\hat{\ell} \definedas \ell + \overline{\ell}$.
Analogously to before, for $k \geq 0$, put $\tilde{X}^k \definedas \Sigma^k\times X$, $\tilde{Y}^k \definedas \Sigma^k \times Y$, $\tilde{\overline{Y}}{}^k \definedas \Sigma^k \times \overline{Y}$ and $\tilde{Z}^k \definedas \Sigma^k \times Z$.
Also, just for brevity, define $W \definedas Y \cup \overline{Y} \subseteq X$ and $\tilde{W}^k \definedas \Sigma^k \times W$.
Let $J' \in \mathcal{J}_{\omega}(X, Y, J, E) \cap \mathcal{J}_{\omega}(X, \overline{Y}, \overline{J}, E)$ satisfy the conditions in Lemma \ref{Lemma_Existence_Transverse_Hypersurface}, \ref{Donaldson_quadruples_b}).
Given an arbitrary neighbourhood $U \subseteq X$ of $Z$, one can also assume that $J' \in \mathcal{J}_{\omega, \mathrm{ni}}(X, Y\setminus \overline{U}) \cap \mathcal{J}_{\omega, \mathrm{ni}}(X, \overline{Y} \setminus \overline{U})$ by a small perturbation of  $J'$ outside of $U$ that leaves $J'|_{W}$ fixed.
Denoting $\tilde{U}^k \definedas \Sigma^k \times U$ and $\tilde{\overline{U}}{}^k \definedas \Sigma^k \times \overline{U}$, define
\begin{align*}
\mathcal{H}^{00}(\tilde{W}^k, \tilde{Z}^k) &\definedas \mathcal{H}^{00}(\tilde{Y}^k, \tilde{Z}^k) \times \mathcal{H}^{00}(\tilde{\overline{Y}}{}^k, \tilde{Z}^k) \\
\mathcal{H}^0_{\mathrm{ni}}(\tilde{W}^k, \tilde{Z}^k, J') &\definedas \mathcal{H}^0_{\mathrm{ni}}(\tilde{Y}^k, \tilde{Z}^k, J') \times \mathcal{H}^0_{\mathrm{ni}}(\tilde{\overline{Y}}{}^k, \tilde{Z}^k, J') \\
\mathcal{H}^{0}_{\mathrm{ni}}(\tilde{X}^k, \tilde{W}^k, \tilde{U}^k, J') &\definedas \{ H \in  \mathcal{H}^0_{\mathrm{ni}}(\tilde{X}^k, \tilde{Y}^k, J') \cap \mathcal{H}^0_{\mathrm{ni}}(\tilde{X}^k, \tilde{\overline{Y}}{}^k, J') \;|\; \supp(H) \subseteq \tilde{X}^k \setminus \tilde{U}^k\} \\
\mathcal{H}^{00}(\tilde{X}^k, \tilde{W}^k) &\definedas \mathcal{H}^{00}(\tilde{X}^k, \tilde{Y}^k) \cap \mathcal{H}^{00}(\tilde{X}^k, \tilde{\overline{Y}}{}^k) \\
\mathcal{H}(\tilde{X}^k, \tilde{W}^k) &\definedas \mathcal{H}(\tilde{X}^k, \tilde{Y}^k) \cap \mathcal{H}(\tilde{X}^k, \tilde{\overline{Y}}{}^k)\text{.}
\end{align*}
Note that by Lemma \ref{Lemma_Enough_ni_ham_perturbations} (applied twice) one can consider $\mathcal{H}^{00}(\tilde{W}^k, \tilde{Z}^k)$ as a subset of $\mathcal{H}^{0}_{\mathrm{ni}}(\tilde{W}^k, \tilde{Z}^k, J')$.
By extending via the construction in Lemma \ref{Lemma_Enough_ni_ham_perturbations} outside of $\tilde{U}^k$ and via the implicit function theorem, using that $Y$ and $\overline{Y}$ intersect transversely, in $\tilde{U}^k$, every element $H \in \mathcal{H}^{0}_{\mathrm{ni}}(\tilde{W}^k, \tilde{Z}^k, J')$ in turn can be considered as an element of $\mathcal{H}(\tilde{X}^k,\tilde{Y}^k) \cap \mathcal{H}(\tilde{X}^k, \tilde{\overline{Y}}{}^k)$ with the property that $H \in \mathcal{H}_{\mathrm{ni}}(\tilde{X}^k, \tilde{Y}^k\setminus \tilde{\overline{U}}{}^k) \cap \mathcal{H}_{\mathrm{ni}}(\tilde{X}^k, \tilde{\overline{Y}}{}^k\setminus \tilde{\overline{U}}{}^k)$.
By pullback via $\hat{\pi}^{k'}_k$ one can consider all of the above as subsets of $\mathcal{H}(\tilde{X}^{k'}, \tilde{W}^{k'})$ for any $k' \geq k$, which will be done from now on.
For $H \in \mathcal{H}(\tilde{X}^{\hat{\ell}}, \tilde{W}^{\hat{\ell}})$, define
\begin{align*}
\overset{\circ}{\mathcal{M}}(\tilde{X}^{\hat{\ell}}, \tilde{Y}^{\hat{\ell}}, \tilde{\overline{Y}}{}^{\hat{\ell}}, A, J', H) \definedas \{ u \in \mathcal{M}(\tilde{X}^{\hat{\ell}}|_{\overset{\circ}{M}{}^{\hat{\ell}}}, A, J', H) \;|\; & \im(u\circ T^{\hat{\ell}}_j) \subseteq \tilde{Y}^{\hat{\ell}},\; j=1,\dots, \ell, \\
& \im(u\circ T^{\hat{\ell}}_j) \subseteq \tilde{\overline{Y}}{}^{\hat{\ell}},\; j=\ell + 1,\dots, \hat{\ell}, \\
& \im(u) \cap \tilde{X}^{\hat{\ell}} \setminus (\tilde{Y}^{\hat{\ell}} \cup \tilde{\overline{Y}}{}^{\hat{\ell}}) \neq \emptyset\}
\end{align*}
and let
\[
\mathrm{gw}^{\hat{\ell}}_\Sigma(X, Y, \overline{Y}, A, J', H) : \overset{\circ}{\mathcal{M}}(\tilde{X}^{\hat{\ell}}, \tilde{Y}^{\hat{\ell}}, \tilde{\overline{Y}}{}^{\hat{\ell}}, A, J', H) \to M \times X^n
\]
be given by evaluation at the first $n$ marked points, as before.
\begin{theorem}\label{Theorem_Main_Theorem_3_Extension_of_1}
In the notation from above, let $J' \in \mathcal{J}_{\omega}(X, Y, J, E) \cap \mathcal{J}_{\omega}(X, \overline{Y}, \overline{J}, E)$ satisfy the conditions in Lemma \ref{Lemma_Existence_Transverse_Hypersurface}, \ref{Donaldson_quadruples_b}).
Then for $D,\overline{D}$ large enough, there exists a generic subset $\mathcal{H}_{\mathrm{reg}}(\tilde{Z}, J') \subseteq \mathcal{H}(\tilde{Z})$ and for any $H^Z \in \mathcal{H}_{\mathrm{reg}}(\tilde{Z}, J')$ there exists a generic subset $\mathcal{H}^{00}_{\mathrm{reg}}(\tilde{W}, \tilde{Z}, J', H^Z) \subseteq \mathcal{H}^{00}(\tilde{W}, \tilde{Z})$ \st for any $H^{W,Z} \in \mathcal{H}^{00}_{\mathrm{reg}}(\tilde{W}, \tilde{Z}, J', H^Z)$, setting $H^W \definedas H^Z + H^{W,Z}$ and for $k = \ell, \overline{\ell}, \hat{\ell}$, the following hold: \\
There exists
\begin{itemize}
  \item a generic subset $\mathcal{H}^0_{\mathrm{reg}}(\tilde{W}^k, \tilde{Z}^k, J', H^W) \subseteq \mathcal{H}^0_{\mathrm{ni}}(\tilde{W}^k, \tilde{Z}^k, J')$ of a neighbourhood of $0$ (depending on $H^W$) and
  \item for every $H^{Z,0} \in \mathcal{H}^0_{\mathrm{reg}}(\tilde{W}^k, \tilde{Z}^k, J', H^W)$ a neighbourhood $U \subseteq X$ of $Z$ (depending on choices so far) and a small perturbation $J''$ of $J'$ outside of $W\cup U$ \st $J'' \in \mathcal{J}_{\omega, \mathrm{ni}}(X, Y\setminus \overline{U}) \cap \mathcal{J}_{\omega, \mathrm{ni}}(X, \overline{Y}\setminus \overline{U})$ as well as a generic subset $\mathcal{H}^{0}_{\mathrm{reg}}(\tilde{X}^k, \tilde{W}^k, \tilde{U}^k, J'', H^W + H^{Z,0}) \subseteq \mathcal{H}^{0}_{\mathrm{ni}}(\tilde{X}^k, \tilde{W}^k, \tilde{U}^k, J'')$ and
  \item for every $H^{Z,0} \in \mathcal{H}^0_{\mathrm{reg}}(\tilde{W}^k, \tilde{Z}^k, J', H^W) = \mathcal{H}^0_{\mathrm{reg}}(\tilde{W}^k, \tilde{Z}^k, J'', H^W)$ and $H^{W,0}\in \mathcal{H}^{0,U}_{\mathrm{reg}}(\tilde{X}^k, \tilde{W}^k, \tilde{Z}^k, J'', H^W + H^{Z,0})$ a generic subset $\mathcal{H}^{00}_{\mathrm{reg}}(\tilde{X}^k, \tilde{W}^k, J'', H^W + H^{Z,0} + H^{W,0}) \subseteq \mathcal{H}^{00}(\tilde{X}^k, \tilde{W}^k)$,
\end{itemize}
\st for every $H^{Z,0} \in \mathcal{H}^0_{\mathrm{reg}}(\tilde{W}^k, \tilde{Z}^k, J'', H^W)$, $H^{W,0}\in \mathcal{H}^{0,U}_{\mathrm{reg}}(\tilde{X}^k, \tilde{W}^k, \tilde{Z}^k, J'', H^W + H^{Z,0})$ and $H^{X,00} \in \mathcal{H}^{00}_{\mathrm{reg}}(\tilde{X}^k, \tilde{W}^k, J'', H^W + H^{Z,0} + H^{W,0})$, setting $H \definedas H^W + H^{Z,0} + H^{W,0} + H^{X,00}$, 
\[
u \in \mathcal{M}(\tilde{X}^{k}|_{\overset{\circ}{M}{}^{k}}, A, J'', H) \quad \Rightarrow\quad \im(u) \cap \tilde{X}^{k} \setminus (\tilde{Y}^{k} \cup \tilde{\overline{Y}}{}^{k}) \neq \emptyset
\]
and
\begin{enumerate}[i)]
  \item\label{Case_i} for $k = \ell$
\[
\mathrm{gw}_{\Sigma}^\ell(X, Y, A, J'', H)\text{,}
\]
  \item\label{Case_ii} for $k = \overline{\ell}$
\[
\mathrm{gw}_{\Sigma}^{\overline{\ell}}(X, \overline{Y}, A, J'', H)\text{,}
\]
  \item\label{Case_iii} for $k = \hat{\ell}$
\[
\mathrm{gw}_{\Sigma}^{\hat{\ell}}(X, Y, \overline{Y}, A, J'', H)\text{,}
\]
\end{enumerate}
define pseudocycles of dimension
\[
\dim_\C(X)\chi + 2c_1(A) + \dim_\R(M)
\]
in $M\times X^n$ with image in $\overset{\circ}{M}\times X^n$, independent of the choice of $H$ up to cobordism.
\end{theorem}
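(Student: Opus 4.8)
The plan is to follow the proof of Theorem \ref{Theorem_Main_Theorem_1} line by line, distributing the argument over the same stages --- compactness, genericity via Sard--Smale, orientation, covering the boundary by manifolds of codimension at least two, cobordism invariance --- and only keeping track of the extra bookkeeping forced by the second hypersurface. Cases \ref{Case_i}) and \ref{Case_ii}) are, on the level of moduli spaces, literally the content of Theorem \ref{Theorem_Main_Theorem_1} for the single hypersurface $Y$, respectively $\overline{Y}$; the only difference is that the perturbations are additionally required to be compatible with the other hypersurface. Since $Y$ and $\overline{Y}$ intersect $\varepsilon$-transversely, a perturbation supported in $\tilde{X}^k \setminus \tilde{U}^k$ is unconstrained along $\tilde{Y}^k \setminus \tilde{\overline{U}}{}^k$ and along $\tilde{\overline{Y}}{}^k \setminus \tilde{\overline{U}}{}^k$, so the spaces $\mathcal{H}^{00}(\tilde{X}^k, \tilde{W}^k)$, $\mathcal{H}^0_{\mathrm{ni}}(\tilde{X}^k, \tilde{W}^k, \tilde{U}^k, J'')$ and $\mathcal{H}^0_{\mathrm{ni}}(\tilde{W}^k, \tilde{Z}^k, J'')$ stay ``large enough'' to run Lemmas \ref{Lemma_Main_transversality_result}, \ref{Lemma_The_universal_moduli_space}, \ref{Lemma_Tangency_condition} and \ref{Lemma_H_0_reg} away from a neighbourhood of $Z$, which is where all the relevant intersections take place. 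Hence the substance of the proof is case \ref{Case_iii}).

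First I would establish the analogues of Lemmas \ref{Lemma_Nice_J_exist} and \ref{Lemma_Main_compactness_result}: by Lemma \ref{Lemma_Existence_Transverse_Hypersurface} \ref{Donaldson_quadruples_b}) one may choose $J'$, and then the small perturbation $J''$ of $J'$ outside $W \cup U$, so that $J''$, $J''|_Y$, $J''|_{\overline{Y}}$ and $J''|_Z$ all satisfy the sphere-rigidity and three-point conditions of Definition \ref{Definition_Nice_J} (the $Z$-versions inside $Y$ and inside $\overline{Y}$), that $J''$ is normally integrable relative to $Y \setminus \overline{U}$ and $\overline{Y} \setminus \overline{U}$, and that $J''|_Y$, $J''|_{\overline{Y}}$ are normally integrable relative to $Z$. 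With these properties the Gromov-compactness argument of Lemma \ref{Lemma_Main_compactness_result} goes through verbatim and shows that, for $H$ of the prescribed form, the closure of $\overset{\circ}{\mathcal{M}}(\tilde{X}^{\hat{\ell}}, \tilde{Y}^{\hat{\ell}}, \tilde{\overline{Y}}{}^{\hat{\ell}}, A, J'', H)$ in $\overline{\mathcal{M}}(\tilde{X}^{\hat{\ell}}, A, J'', H)$ lies in $\mathcal{M}(\tilde{X}^{\hat{\ell}}, A, J'', H)$. Next, the argument of Lemma \ref{Lemma_Reduction_to_vanishing_A} applies separately to components mapping into $Y$ (using $H^Z \in \mathcal{H}_{\mathrm{reg}}(\tilde{Z}, J')$ extended to $\tilde{Y}^k$ via Lemma \ref{Lemma_Enough_ni_ham_perturbations}), into $\overline{Y}$, and into $Z$: for $D, \overline{D}$ large the Fredholm index of the relevant universal Cauchy--Riemann operator on a component of nonvanishing homology class becomes negative, the $Z$-case being strictly easier since $\dim_\C Z = \dim_\C X - 2$. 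This proves the implication $u \in \mathcal{M}(\tilde{X}^k|_{\overset{\circ}{M}{}^k}, A, J'', H) \Rightarrow \im(u) \cap \tilde{X}^k \setminus (\tilde{Y}^k \cup \tilde{\overline{Y}}{}^k) \neq \emptyset$ and reduces all boundary components lying in $\tilde{W}^{\hat{\ell}}$ to homology class zero. Combining with Lemmas \ref{Lemma_The_universal_moduli_space} and \ref{Lemma_Tangency_condition} and Sard--Smale over the $H^{X,00}$-variable, exactly as in Steps \ref{Step_2}) and \ref{Step_3}) of Subsubsection \ref{Subsubsection_Proof_Main_Theorem_1}, gives for generic choices a manifold of the expected dimension, an orientation (coorientation of the $\ev^{T^k}$-preimages of $\tilde{Y}^k$ and $\tilde{\overline{Y}}{}^k$, plus the orientation of the linearised Cauchy--Riemann kernel) and transversality to any prescribed countable family $\{B_i \times X^n\}$.

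The heart of the proof is the extension of Theorem \ref{Theorem_Step_4}. I would run the bookkeeping of Subsection \ref{Subsection_Description_of_the_closure} with the index set of components of a desingularisation split as $I^X \amalg I^Y \amalg I^{\overline{Y}} \amalg I^Z$ (components meeting $\tilde{X}^{\hat{\ell}} \setminus \tilde{W}^{\hat{\ell}}$; in $\tilde{Y}^{\hat{\ell}}$ but not $\tilde{Z}^{\hat{\ell}}$; in $\tilde{\overline{Y}}{}^{\hat{\ell}}$ but not $\tilde{Z}^{\hat{\ell}}$; in $\tilde{Z}^{\hat{\ell}}$), and apply the SFT compactness result (Lemma \ref{Lemma_SFT_Compactness}, in the packaged form of Lemma \ref{Lemma_SFT_Compactness_II}) once to $(V\tilde{Y}^{\hat{\ell}})^{\perp_\omega}$ along the $I^Y$-components, once to $(V\tilde{\overline{Y}}{}^{\hat{\ell}})^{\perp_\omega}$ along the $I^{\overline{Y}}$-components, and along the $I^Z$-components to both summands of the splitting $(V\tilde{Z}^{\hat{\ell}})^{\perp_\omega} \cong (V\tilde{Y}^{\hat{\ell}})^{\perp_\omega}|_{\tilde{Z}^{\hat{\ell}}} \oplus (V\tilde{\overline{Y}}{}^{\hat{\ell}})^{\perp_\omega}|_{\tilde{Z}^{\hat{\ell}}}$, legitimate precisely because of the $\varepsilon$-transversal intersection. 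Each $I^Y$-, $I^{\overline{Y}}$- or $I^Z$-component then carries one (resp. one, resp. two) nonvanishing meromorphic sections, with zero and pole orders matched to the tangency orders of the neighbouring $I^X$-parts (to $Y$, to $\overline{Y}$, or to both, separately) and to each other across nodes. Because $J''$ and $J''|_Y$, $J''|_{\overline{Y}}$ are normally integrable as above, the associated linear Cauchy--Riemann operators are complex linear (Corollary \ref{Corollary_Ddbar_complex_linear_II}), so the resulting moduli spaces of meromorphic sections carry free actions of a product of copies of $\C^\ast$ --- one for each $I^Y$- and $I^{\overline{Y}}$-component, two for each $I^Z$-component --- and perturbing as in Lemma \ref{Lemma_H_0_reg} over $H^{Z,0} \in \mathcal{H}^0_{\mathrm{reg}}(\tilde{W}^k, \tilde{Z}^k, J'', H^W)$ and $H^{W,0} \in \mathcal{H}^0_{\mathrm{reg}}(\tilde{X}^k, \tilde{W}^k, \tilde{U}^k, J'', \dots)$ cuts them out transversely of the correct dimension. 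Then, exactly as in the dimension count at the end of Subsubsection \ref{Subsubsection_Proof_Main_Theorem_1} and in the proof of Theorem \ref{Theorem_Step_4}, the Riemann--Roch excess on each $W$-component is cancelled termwise by the tangency conditions of Lemma \ref{Lemma_SFT_Compactness_II} a) and by dividing out the $\C^\ast$-actions, so every boundary stratum has codimension at least two; intersecting the resulting countably many generic subsets of $\mathcal{H}^{00}(\tilde{X}^k, \tilde{W}^k)$ yields $\mathcal{H}^{00}_{\mathrm{reg}}(\tilde{X}^k, \tilde{W}^k, J'', \dots)$. The main obstacle is the $I^Z$-components: one must check that the \emph{two} meromorphic sections and the \emph{two} separate matching conditions at a node joining a $Z$-component to an $X$-component combine so that the Riemann--Roch excess is absorbed in both normal directions at once, and that the fibre products over all the node-matching conditions --- now involving $\tilde{Z}^k$-diagonals as well as $\tilde{Y}^k$- and $\tilde{\overline{Y}}{}^k$-diagonals --- remain transversely cut out; this is exactly where the splitting of $\mathcal{H}^0$ into the $\tilde{U}^k$-avoiding piece $\mathcal{H}^0_{\mathrm{ni}}(\tilde{X}^k, \tilde{W}^k, \tilde{U}^k, J'')$ (which moves the normal data of $Y$- and $\overline{Y}$-components but not of $Z$-components) and the piece $\mathcal{H}^0_{\mathrm{ni}}(\tilde{W}^k, \tilde{Z}^k, J'')$ (which moves the $Z$-components) is used. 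With the boundary so covered, $\mathrm{gw}^\ell_\Sigma(X, Y, A, J'', H)$, $\mathrm{gw}^{\overline{\ell}}_\Sigma(X, \overline{Y}, A, J'', H)$ and $\mathrm{gw}^{\hat{\ell}}_\Sigma(X, Y, \overline{Y}, A, J'', H)$ are pseudocycles of the stated dimension with image in $\overset{\circ}{M} \times X^n$, and cobordism invariance under the choice of $H = H^W + H^{Z,0} + H^{W,0} + H^{X,00}$ follows as in Step \ref{Step_5}) of Subsubsection \ref{Subsubsection_Proof_Main_Theorem_1}, by the same construction over $M^\ell \times \R$ with perturbations interpolating between two generic choices for $t \le 0$ and $t \ge 1$ and free in between (possibly increasing $D^\ast$ by one, as in the proof of Lemma \ref{Lemma_Reduction_to_vanishing_A}).
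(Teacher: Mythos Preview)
Your overall architecture matches the paper's: split components as $I^X\amalg I^Y\amalg I^{\overline Y}\amalg I^Z$, apply SFT compactness to each of the three normal directions, then run a dimension count as in Theorem~\ref{Theorem_Step_4}. The compactness, vanishing-homology, orientation, and cobordism steps are handled as you describe. The gap is in the dimension count for $I^Z$-components, and it is exactly the point you flag as ``the main obstacle'' but do not resolve.

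With your approach---a free $(\C^\ast)^{I^Z}$-action for each of the two normal line bundles over a $Z$-component---the moduli space of pairs $(\xi^{ZY},\xi^{Z\overline Y})$ over $u^Z\in\mathcal M(\tilde Z^{\hat\ell}|_{\Sigma^{\hat\ell,Z}_{U^{\hat\ell}}},0,J',H)$ has dimension $\dim_\C(X)\chi^Z+\dim_\R(U^{\hat\ell})+4d^Z-4|I^Z|$. Feeding this into the fibred-product count as in Theorem~\ref{Theorem_Step_4}, in the simplified situation $I^Y=I^{\overline Y}=\emptyset$, gives
\[
\dim_\C(X)\chi+2c_1(A)+\dim_\R(M)-2d^X+2d^Z+2d^{XZ}-4|I^Z|\text{.}
\]
The terms $+2d^Z+2d^{XZ}$ arise because every internal $Z$-node and every $XZ$-node is seen twice (once by each normal direction), while the codimension of the stratum in $M^{\hat\ell}$ only accounts for each node once. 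For a single $Z$-component ($|I^Z|=1$, $d^Z=0$) attached to the $X$-part at two nodes ($d^{XZ}=2$, $d^X=0$) this equals the expected dimension, not two less; so the boundary is not covered in codimension $\ge 2$ and your pseudocycle argument breaks.

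The paper closes this gap by \emph{strengthening} Lemma~\ref{Lemma_SFT_Compactness} before using it here: one records, at every node where a meromorphic section has a zero or pole, the induced orthogonal map $|\xi|_{N}:(T_N S\setminus\{0\})/\R^+\to(E_N\setminus\{0\})/\R^+$, and imposes the matching conditions $|\xi_i|_{N^i_r}=|\xi_j|_{N^j_s}$ at internal nodes and $|\xi|_{N^{XY,Y}_r}=|u^X|_{N^{XY,X}_r}$ at interface nodes (and the two analogous conditions at $XZ$-nodes, one per normal direction). These are no longer $(\C^\ast)^{I}$-invariant; what survives is a free $(\R^+)^{k}$-action, $k$ the number of \emph{levels} of the holomorphic building. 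The resulting bound for the $Y$-part becomes $\dim_\C(X)\chi^Y+\dim_\R(U^{\hat\ell})+d^Y-k^Y$ (replacing $2d^Y-2|I^Y|$), and for the $Z$-part $\dim_\C(X)\chi^Z+\dim_\R(U^{\hat\ell})+2d^Z-k^{ZY}-k^{Z\overline Y}$, while each $XZ$-node now contributes an extra real codimension $2$ from the two angular matchings. The final count collapses to $\dim_\C(X)\chi+2c_1(A)+\dim_\R(M)-2d^X-(d^Y+d^{XY}+k^Y+d^{\overline Y}+d^{X\overline Y}+k^{\overline Y}+k^{ZY}+k^{Z\overline Y})$, which is $\le\,$expected$\,-2$ since $k^{ZY},k^{Z\overline Y}\ge 1$ whenever $I^Z\neq\emptyset$ (and $d^{XY},k^Y\ge1$ whenever $I^Y\neq\emptyset$, etc.). Without this refinement your termwise cancellation claim is false for $Z$-components.
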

\begin{proof}\emph{(Rough sketch only)}\quad
The proof runs along the lines of the proof of Theorem \ref{Theorem_Main_Theorem_1} as outlined in Subsection \ref{Subsubsection_Proof_Main_Theorem_1}. \\
In cases \ref{Case_i}) and \ref{Case_ii}) above, the compactness result in Step \ref{Step_1}) in Subsection \ref{Subsubsection_Proof_Main_Theorem_1} applies directly and in \ref{Case_iii}), the compactness of $\cl\overset{\circ}{\mathcal{M}}(\tilde{X}^{\hat{\ell}}, \tilde{Y}^{\hat{\ell}}, \tilde{\overline{Y}}{}^{\hat{\ell}}, A, J, H)$ is a simple extension of Lemma \ref{Lemma_Main_compactness_result}. \\
While Step \ref{Step_2}) is not relevant in the present situation, Steps \ref{Step_3}) and \ref{Step_5}) in Subsection \ref{Subsubsection_Proof_Main_Theorem_1} carry over pretty much ad verbatim. \\
Again, the most difficult part of the proof is the analogue of Step \ref{Step_4}) in Subsection \ref{Subsubsection_Proof_Main_Theorem_1}.
I will describe the necessary changes to the results from Subsections \ref{Subsection_Description_of_the_closure}--\ref{Subsection_Putting_it_all_together} above, which constituted the proof of Step \ref{Step_4}) in Subsection \ref{Subsubsection_Proof_Main_Theorem_1}, only for Case \ref{Case_iii}) above, Cases \ref{Case_i}) and \ref{Case_ii}) are done very similarly. \\
The major difference to the proof of Theorem \ref{Theorem_Main_Theorem_1} is that in $\cl\overset{\circ}{\mathcal{M}}(\tilde{X}^{\hat{\ell}}, \tilde{Y}^{\hat{\ell}}, A, J, H)$ instead of a curve having two parts, one that is mapped into $\tilde{Y}^{\hat{\ell}}$ and one that intersects $\tilde{Y}^{\hat{\ell}}$ only in a finite number of points, because $H$ needs to be compatible with both $\tilde{Y}^{\hat{\ell}}$ and $\tilde{\overline{Y}}{}^{\hat{\ell}}$, one now has to consider parts of a curve that are mapped into $\tilde{Z}^{\hat{\ell}}$, $\tilde{Y}^{\hat{\ell}}$ and $\tilde{\overline{Y}}{}^{\hat{\ell}}$ while intersecting $\tilde{Z}^{\hat{\ell}}$ only in finitely many points and finally the part that intersects both $\tilde{Y}^{\hat{\ell}}$ and $\tilde{\overline{Y}}{}^{\hat{\ell}}$ only in finitely many points, separately.
Correspondingly, in Subsection \ref{Subsection_Description_of_the_closure}, \ref{Subsection_Description_of_the_closure_2}., the index set $I$ for the components splits as $I = I^X \amalg I^Y \amalg I^{\overline{Y}} \amalg I^Z$.
Consequently, in \ref{Subsection_Description_of_the_closure_4}.~and \ref{Subsection_Description_of_the_closure_5}., one has a splitting $\Sigma^{\hat{\ell}}_{U^{\hat{\ell}}} = \Sigma_{U^{\hat{\ell}}}^{\hat{\ell},X} \cup \Sigma_{U^{\hat{\ell}}}^{\hat{\ell},Y} \cup \Sigma_{U^{\hat{\ell}}}^{\hat{\ell},\overline{Y}} \cup \Sigma_{U^{\hat{\ell}}}^{\hat{\ell},Z}$ into families of nodal Riemann surfaces of Euler characteristics $\chi^X, \chi^Y, \chi^{\overline{Y}}, \chi^Z$, respectively.
In \ref{Subsection_Description_of_the_closure_6}.~there are now two sets of additional marked points, with index sets $\{1, \dots, \ell\}$ and $\{1, \dots, \overline{\ell}\}$, which split as $\{1, \dots, \ell\} = K^X \amalg K^{Y} \amalg K^Z$ and $\{1, \dots, \overline{\ell}\} = L^X \amalg L^{\overline{Y}} \amalg L^Z$.
The nodal points $(N^{\hat{\ell},i}_r)_{i=1,2}$, split into disjoint sets $(N^{\hat{\ell},X,i}_r)_{i=1,2}$, $(N^{\hat{\ell},Y,i}_r)_{i=1,2}$, $(N^{\hat{\ell},\overline{Y},i}_r)_{i=1,2}$, $(N^{\hat{\ell},Z,i}_r)_{i=1,2}$, $r = 1, \dots, d^X,d^Y,d^{\overline{Y}},d^Z$, where both lie on $\Sigma_{U^{\hat{\ell}}}^{\hat{\ell},X}$, $\Sigma_{U^{\hat{\ell}}}^{\hat{\ell},Y}$, $\Sigma_{U^{\hat{\ell}}}^{\hat{\ell},\overline{Y}}$, $\Sigma_{U^{\hat{\ell}}}^{\hat{\ell},Z}$, respectively, and sets $(N^{\hat{\ell},XY,X}_r, N^{\hat{\ell},XY,Y}_r)$, $(N^{\hat{\ell},X\overline{Y},X}_r, N^{\hat{\ell},X\overline{Y},\overline{Y}}_r)$, $(N^{\hat{\ell},XZ,X}_r, N^{\hat{\ell},XZ,Z}_r)$, $(N^{\hat{\ell},YZ,Y}_r, N^{\hat{\ell},YZ,Z}_r)$, $(N^{\hat{\ell},\overline{Y}Z,\overline{Y}}_r, N^{\hat{\ell},\overline{Y}Z,Z}_r)$, $r = 1, \dots d^{XY},d^{X\overline{Y}},d^{XZ},d^{YZ},d^{\overline{Y}Z}$, where the first one lies on $\Sigma^{\hat{\ell},X}_{U^{\hat{\ell}}}$, $\Sigma^{\hat{\ell},X}_{U^{\hat{\ell}}}$, $\Sigma^{\hat{\ell},X}_{U^{\hat{\ell}}}$, $\Sigma^{\hat{\ell},Y}_{U^{\hat{\ell}}}$, $\Sigma^{\hat{\ell},\overline{Y}}_{U^{\hat{\ell}}}$, the second one on $\Sigma^{\hat{\ell},Y}_{U^{\hat{\ell}}}$, $\Sigma^{\hat{\ell},\overline{Y}}_{U^{\hat{\ell}}}$, $\Sigma^{\hat{\ell},Z}_{U^{\hat{\ell}}}$, $\Sigma^{\hat{\ell},Z}_{U^{\hat{\ell}}}$, $\Sigma^{\hat{\ell},Z}_{U^{\hat{\ell}}}$, respectively. \\
Consider $u \in \cl\overset{\circ}{\mathcal{M}}(\tilde{X}^{\hat{\ell}}, \tilde{Y}^{\hat{\ell}}, A, J, H)$ with $\pi^{\mathcal{M}}_M(u) = b$ that has a fixed set of data as above associated.
Denote by $u^X$, $u^Y$, $u^{\overline{Y}}$ and $u^Z$ the corresponding parts of $u$.

The results from Subsection \ref{Subsection_Vanishing_homology_class} carry over to show the following:
First, observe that $Z$ as a complex hypersurface of $Y$ and $\overline{Y}$ has degrees $\overline{D}$ and $D$, respectively, so for generic $H^Z \in \mathcal{H}(\tilde{Z})$, for any choice of data as above, $\mathcal{M}(\tilde{Z}^{\hat{\ell}}|_{\Sigma^{\hat{\ell},Z}_{U^{\hat{\ell}}}}, B, J, H^Z)$ is either empty, for $B \neq 0$, or a smooth manifold of dimension $\dim_\C(Z)\chi^Z + \dim_\R(U^{\hat{\ell}})$.
Then, for a generic choice of $H^{W,Z} \in \mathcal{H}^{00}(\tilde{W}, \tilde{Z})$, $\mathcal{M}^{\tilde{Y}^{\hat{\ell}} \setminus \tilde{Z}^{\hat{\ell}}}(\tilde{Y}^{\hat{\ell}}|_{\Sigma^{\hat{\ell},Y}_{U^{\hat{\ell}}}}, B, J, H^W)$, $H^W \definedas H^Z + H^{W,Z}$, is either empty, for $B \neq 0$, or a smooth manifold of dimension $\dim_\C(Y)\chi^Y + \dim_\R(U^{\hat{\ell}})$, and likewise for $\overline{Y}$.
Note that using Gromov compactness, the condition $\mathcal{M}^{\tilde{Y}^{\hat{\ell}} \setminus \tilde{Z}^{\hat{\ell}}}(\tilde{Y}^{\hat{\ell}}|_{\Sigma^{\hat{\ell},Y}_{U^{\hat{\ell}}}}, B, J, H^W) = \emptyset$ for $B \neq 0$, and likewise for $\overline{Y}$, is an open condition, so is satisfied for arbitrary perturbations of $H^W$, provided they are small enough.
Also note that if $u^Y$ and $u^{\overline{Y}}$ represent vanishing homology class, then positivity of intersections implies that they do not intersect $\tilde{Z}^{\hat{\ell}}$.
In particular, one can assume that $d^{YZ} = d^{\overline{Y}Z} = 0$.

Next, the results from Subsection \ref{Subsection_Refined_compactness_result} can be strengthened in the following way: Let $E \to S$ be any holomorphic line bundle over a Riemann surface and let $\xi$ be a nonvanishing meromorphic section. If $b \in S$ is a zero or pole of $\xi$, then $\xi$ induces an orthogonal map $|\xi|_b : S^1 \cong (T_bS \setminus \{0\})/_{\R^+} \to (E_b\setminus \{0\})/_{\R^+}\cong S^1$.
This can be applied directly to the meromorphic sections $\xi_i$ of $(u_i)^\ast (V\tilde{Y}^{\ell})^{\perp_\omega}$ from Lemma \ref{Lemma_SFT_Compactness} to get well-defined orthogonal maps (denote by $\xi^Y$ the collection of the $\xi_i$)
\[
|\xi^Y|_{N^{\ell,Y,i}_r} : (T_{N^{\ell,Y,i}_r} \hat{S}^{\ell}_b \setminus \{0\})/_{\R^+} \to ((V\tilde{Y}^{\ell})^{\perp_\omega}_{u^Y(N^{\ell,Y,i}_r)} \setminus \{0\})/_{\R^+}\text{,}
\]
for $i=1,2$ and those values of $r$ for which the corresponding meromorphic sections on the components $\hat{S}^{\ell}_{j,b}$ containing $N^{\ell,Y,i}_r(b)$ have a zero/pole.
Similarly, one gets
\[
|\xi^Y|_{N^{\ell,XY,Y}_r} : (T_{N^{\ell,XY,Y}_r} \hat{S}^{\ell}_b \setminus \{0\})/_{\R^+} \to ((V\tilde{Y}^{\ell})^{\perp_\omega}_{u^Y(N^{\ell,XY,Y}_r)} \setminus \{0\})/_{\R^+}\text{,}
\]
Using local coordinates in a neighbourhood of any of the $N^{\ell,XY,X}_r$ and a neighbourhood of $u^X(N^{\ell,XY,X}_r)$, one can also apply it to the normal component of $u^X$ to also get a well-defined orthogonal map
\[
|u^X|_{N^{\ell,XY,X}_r} : (T_{N^{\ell,XY,X}_r} \hat{S}^{\ell}_b \setminus \{0\})/_{\R^+} \to ((V\tilde{Y}^{\ell})^{\perp_\omega}_{u^X(N^{\ell,XY,X}_r)} \setminus \{0\})/_{\R^+}\text{.}
\]
The compactness results from \cite{MR2026549} then allow one to put the following additional restrictions on the $\xi_i$ (notation is from Lemma \ref{Lemma_SFT_Compactness}): \\
There exist smooth orthogonal identifications of $(T_{N^{\ell,Y,i}_r} \hat{S}^{\ell}_b \setminus \{0\})/_{\R^+}$, $i=1,2$, $(T_{N^{\ell,XY,X}_r} \hat{S}^{\ell}_b \setminus \{0\})/_{\R^+}$ and $(T_{N^{\ell,XY,Y}_r} \hat{S}^{\ell}_b \setminus \{0\})/_{\R^+}$ with $S^1$, for $b \in U^\ell$, \st the following hold:
\begin{itemize}
  \item If $i,j$ are such that $N^{\ell,Y,1}_r(b) \in \hat{S}^\ell_{i,b}$ and $N^{\ell,Y,2}_r(b) \in \hat{S}^\ell_{j,b}$ and \st $\xi_i(N^{\ell,Y,1}_r(b)) \neq 0,\infty$ (and consequently $\xi_j(N^{\ell,Y,2}_r(b)) \neq 0,\infty$), then $\xi_i(N^{\ell,Y,1}_r(b)) = \xi_j(N^{\ell,Y,2}_r(b))$;
  \item if $i,j$ are such that $N^{\ell,Y,1}_r(b) \in \hat{S}^\ell_{i,b}$ and $N^{\ell,Y,2}_r(b) \in \hat{S}^\ell_{j,b}$ and \st $\xi_i(N^{\ell,Y,1}_r(b)) = \infty$ (and consequently $\xi_j(N^{\ell,Y,2}_r(b)) = 0$), then $|\xi^Y|_{N^{\ell,Y,1}_r(b)} = |\xi^Y|_{N^{\ell,Y,2}_r(b)}$;
  \item $|\xi^Y|_{N^{\ell,XY,Y}_r(b)} = |u^X|_{N^{\ell,XY,X}_r(b)}$.
\end{itemize}
The above conditions are no longer invariant under the action of $(\C^\ast)^{I^Y}$ by multiplication in the fibres of the $(u_i)^\ast (V\tilde{Y}^{\ell})^{\perp_\omega}$, for $i \in I^Y$.
But if $k^Y\in\N$ is the number of levels of the holomorphic building formed by the meromorphic sections, as in the proof of Lemma \ref{Lemma_SFT_Compactness} (where this number was just called $k$), then there still is a free $(\R^+)^{k^Y}$-action on the space of meromorphic sections under which the above conditions are invariant.
One can hence modify Definition \ref{Definition_M_D_Y} to say
\begin{align*}
\mathcal{M}^{\mathcal{D}}_{Y,u}(\tilde{X}^{\ell}, \tilde{Y}^{\ell}, H) \definedas \{(\xi_i)_{i\in I^Y} \;|\; & \xi_i \text{ a meromorphic section of } \\
& u_i^\ast (V\tilde{Y}^{\ell})^{\perp_\omega} \text{ with zeroes/nodes} \\
& \text{at the $N^\ast_\ast$ given by the $p^\ast_\ast$ and} \\
& \forall\, i,j\in I^Y, r,s \text{ with } u(N^i_r) = u(N^j_s) : \\
& p^i_r = p^j_s = 0 \;\Rightarrow\; \xi_i(N^i_r) = \xi_j(N^j_s) \\
& p^i_r = -p^j_s \neq 0 \;\Rightarrow\; |\xi_i|_{N^i_r} = |\xi_j|_{N^j_s}\}/_{(\R^+)^{k^Y}}\text{.}
\end{align*}
Lemma \ref{Lemma_SFT_Compactness_II} then still holds true whereas in Lemma \ref{Lemma_H_0_reg}, the dimension can instead be estimated by
\begin{align*}
\dim_\R\left(\mathcal{M}^{\mathcal{D}}_Y(\tilde{X}^{\ell}, \tilde{Y}^{\ell}, J, H)\right) \leq \dim_\C(X)\chi^Y + \dim_\R(U^{\ell}) + d^Y - k^Y\text{,}
\end{align*}
($d'$ has been replaced by $d^Y$ in the present notation) because any condition of the form $\xi_i(N^i_r) = \xi_j(N^j_s)$ cuts the (real) expected dimension down by $2$ and any condition of the form $|\xi_i|_{N^i_r} = |\xi_j|_{N^j_s}$ cuts the (real) expected dimension down by $1$.

Now assume that $I^Y, I^{\overline{Y}}, I^Z \neq 0$, the case $I^Y = I^{\overline{Y}} = I^Z = \emptyset$ is treated exactly the same way as the case $I^Y = \emptyset$ was treated in Subsection \ref{Subsection_Putting_it_all_together}, and the other cases are treated similarly.
Also assume that $K^X, K^Y, K^Z, L^X, L^{\overline{Y}}, L^Z$ are all nonempty, the other cases being similar. \\
Still consider $u \in \cl\overset{\circ}{\mathcal{M}}(\tilde{X}^{\hat{\ell}}, \tilde{Y}^{\hat{\ell}}, A, J, H)$ that has a fixed set of data as above associated and still denote by $u^X$, $u^Y$, $u^{\overline{Y}}$ and $u^Z$ the corresponding parts of $u$.
First, apply the modified compactness result above twice for the component $u^Z$, projecting onto $\tilde{Z}^k$.
Once rescaling in $(V\tilde{Y}^{\hat{\ell}})^{\perp_\omega}|_{\tilde{Z}^{\hat{\ell}}} = (V\tilde{Z}^{\hat{\ell}})^{\perp_\omega}$, where on the right hand side the complement is taken in $V\tilde{\overline{Y}}{}^{\hat{\ell}}$.
And a second time rescaling in $(V\tilde{\overline{Y}}{}^{\hat{\ell}})^{\perp_\omega})|_{\tilde{Z}^{\hat{\ell}}} = (V\tilde{Z}^\ell)^{\perp_\omega}$, where on the right hand side the complement is taken in $V\tilde{Y}^{\hat{\ell}}$.
For a generic perturbation $H^{Z,0} \in \mathcal{H}^0_{\mathrm{ni}}(\tilde{W}^{\hat{\ell}}, \tilde{Z}^{\hat{\ell}}, J')$, achieve transversality for all the resulting pairs of meromorphic buildings $(\xi^{ZY}, \xi^{Z\overline{Y}})$.
Because the components $u^Y$ and $u^{\overline{Y}}$ then do not intersect $\tilde{Z}^{\hat{\ell}}$, by compactness one can find a neighbourhood $U \subseteq X$ of $Z$ \st the images of the components $u^Y$ and $u^{\overline{Y}}$ do not intersect $\tilde{U}^{\hat{\ell}}$. \\
Next, perturb $J'$ outside of $W \cup U$ to a $J''$ as in the statement of the theorem and extend all Hamiltonian perturbations so far to all of $X$ in such a way that they satisfy normal integrability outside of $\tilde{U}^{\hat{\ell}}$ as described in the paragraph before the statement of the theorem. \\
Then apply the modified compactness result described above twice more for the components $u^Y$ and $u^{\overline{Y}}$ (alternatively one can also apply the compactness result from \cite{1103.3977}), so $u^Y$ and $u^{\overline{Y}}$ come with meromorphic buildings $\xi^Y$ and $\xi^{\overline{Y}}$ as above with numbers of levels $k^Y$ and $k^{\overline{Y}}$, respectively, while $u^Z$ comes with a pair of meromorphic buildings $(\xi^{ZY}, \xi^{Z\overline{Y}})$ with numbers of levels $(k^{ZY}, k^{Z\overline{Y}})$.
It is fairly easy to see that the expected dimension of the moduli space of such pairs can be estimated by
\[
\dim_\C(X)\chi^Z + \dim_\R(U^{\hat{\ell}}) + 2d^Z - k^{ZY} - k^{Z\overline{Y}}\text{.}
\]
Denoting by $s^Y, s^Z$ and $t^{\overline{Y}}, t^Z$ the total orders of tangency of $u^X$ to $\tilde{Y}^{\hat{\ell}}$ at the $N^{\hat{\ell},XY,X}_r, N^{\hat{\ell},XZ,X}_r$ and to $\tilde{\overline{Y}}{}^{\hat{\ell}}$ at the $N^{\hat{\ell},X\overline{Y},X}_r, N^{\hat{\ell},XZ,X}_r$, the following equations hold:
\begin{align*}
d^{XY} + s^Y &= |K^Y| & d^{XZ} + s^Z &= |K^Z| \\
d^{X\overline{Y}} + t^{\overline{Y}} &= |L^{\overline{Y}}| & d^{XZ} + t^Z &= |L^Z|\text{.}
\end{align*}
Making further successive choices of perturbations as described in the statement of the theorem and also factoring in the compatibility conditions (described below), the tuple $(u^X, \xi^Y, \xi^{\overline{Y}}, (\xi^{ZY}, \xi^{Z\overline{Y}}))$ lies in a moduli space of real dimension at most
\begin{align*}
& \dim_\C(X)\chi^X + \dim_\R(U^{\hat{\ell}}) + 2c_1(A) \;+ \\
+\; & \dim_\C(X)\chi^Y + \dim_\R(U^{\hat{\ell}}) + d^Y - k^{Y} \;+ \\
+\; & \dim_\C(X)\chi^{\overline{Y}} + \dim_\R(U^{\hat{\ell}}) + d^{\overline{Y}} - k^{\overline{Y}} \;+ \\
+\; & \dim_\C(X)\chi^Z + \dim_\R(U^{\hat{\ell}}) + 2d^Z - k^{ZY} - k^{Z\overline{Y}} \;- \\
-\; & (\dim_\R(U^{\hat{\ell}}) + \dim_\C(X)2d^{XY} + d^{XY}) \;- \\
-\; & (\dim_\R(U^{\hat{\ell}}) + \dim_\C(X)2d^{X\overline{Y}} + d^{X\overline{Y}}) \;- \\
-\; & (\dim_\R(U^{\hat{\ell}}) + \dim_\C(X)2d^{XZ} + 2d^{XZ}) \;- \\
-\; & (2|K^X| + 2s^Y + 2s^Z + 2|L^X| + 2t^{\overline{Y}} + 2t^Z) \\
=\; & \dim_\C(X)\chi + 2c_1(A) \;+ \\
+\; & \dim_\R(U^{\hat{\ell}}) - 2(|K^X| + |K^Y| + |K^Z| + |L^X| + |L^{\overline{Y}}| + |L^Z|) \;+ \\
+\; & d^Y + d^{XY} - k^Y + d^{\overline{Y}} + d^{X\overline{Y}} - k^{\overline{Y}} + 2d^Z + 2d^{XZ} - k^{ZY} - k^{Z\overline{Y}} \\
=\; & \dim_\C(X)\chi + 2c_1(A) + \dim_\R(M) - 2d^X \;- \\
-\; & (d^Y + d^{XY} + k^Y + d^{\overline{Y}} + d^{X\overline{Y}} + k^{\overline{Y}} + k^{ZY} + k^{Z\overline{Y}}) \\
\leq\; & \dim_\R\Bigl(\overset{\circ}{\mathcal{M}}(\tilde{X}^{\hat{\ell}}, \tilde{Y}^{\hat{\ell}}, \tilde{\overline{Y}}{}^{\hat{\ell}}, A, J', H)\Bigr) - 2\text{.}
\end{align*}
The first four lines above are the maximal expected dimensions of the moduli spaces containing $u^X$, $\xi^Y$, $\xi^{\overline{Y}}$ and $(\xi^{ZY}, \xi^{Z\overline{Y}})$, respectively.
The $5^\text{th}$ line is given by the compatibility conditions $u^X(N^{\hat{\ell},XY,X}_r) = u^Y(N^{\hat{\ell},XY,Y}_r)$ and the conditions $|\xi^Y|_{N^{\hat{\ell},XY,Y}_r} = |u^X|_{N^{\hat{\ell},XY,X}_r}$, and similarly for the $6^\text{th}$ line.
The $7^\text{th}$ line is given by the compatibility conditions $u^X(N^{\hat{\ell},XZ,X}_r) = u^Z(N^{\hat{\ell},XZ,Z}_r)$ and the conditions $|\xi^{ZY}|_{N^{\hat{\ell},XZ,Z}_r} = |u^X|_{N^{\hat{\ell},XZ,X}_r}$ as well as $|\xi^{Z\overline{Y}}|_{N^{\hat{\ell},XZ,Z}_r} = |u^X|_{N^{\hat{\ell},XZ,X}_r}$.
The $8^\text{th}$ line is given by the tangency conditions of $u^X$ to $\tilde{Y}^{\hat{\ell}}$ and $\tilde{\overline{Y}}{}^{\hat{\ell}}$ at the additional marked points on $\Sigma^{\hat{\ell},X}_{U^{\hat{\ell}}}$ and at the $N^{\hat{\ell},XY,X}_r$, $N^{\hat{\ell},X\overline{Y},X}_r$ and $N^{\hat{\ell},XZ,X}_r$. \\
In the second to last line, if $I^Y \neq \emptyset$, then $d^{XY}, k^Y \geq 1$, otherwise the first three summands vanish.
And similarly, if $I^{\overline{Y}} \neq \emptyset$, then $d^{X\overline{Y}}, k^{\overline{Y}} \geq 1$, otherwise summands $4$--$6$ vanish.
Finally, if $I^Z \neq \emptyset$, then $k^{ZY}, k^{Z\overline{Y}} \geq 1$, otherwise the last two summands vanish.
Hence, if at least one of $I^Y, I^{\overline{Y}}, I^Z$ is not empty, then the estimate in the last line above holds.
\end{proof}

\clearpage

\bibliographystyle{alpha}
\addcontentsline{toc}{section}{Bibliography}
\bibliography{Bibliography}

\end{document}